\definecolor{couleur_cite}{rgb}{0.05,.4,0.05}
\definecolor{couleur_link}{rgb}{0.05,0.05,0.4}
\DeclareFontFamily{U}{mathx}{}
\DeclareFontShape{U}{mathx}{m}{n}{<-> mathx10}{}
\DeclareSymbolFont{mathx}{U}{mathx}{m}{n}
\DeclareMathAccent{\widecheck}{0}{mathx}{"71}
\def\aa{\mathfrak{a}}
\newcommand\dd{\mathfrak{d}}
\newcommand\h{\mathfrak{h}}
\newcommand\ee{\mathfrak{e}}
\newcommand{\m}{\mathfrak{m}}
\newcommand\nn{\mathfrak{n}}
\newcommand{\p}{\mathfrak{p}}
\newcommand{\pp}{\mathfrak{p}}
\newcommand{\q}{\mathfrak{q}}
\newcommand{\A}{\mathbb{A}}
\newcommand{\C}{\mathbb{C}}
\newcommand{\N}{\mathbb{N}}
\newcommand{\PP}{\mathbb{P}}
\newcommand{\Q}{\mathbb{Q}}
\newcommand{\R}{\mathbb{R}}
\newcommand{\Z}{\mathbb{Z}}
\newcommand{\GL}{\mathrm{GL}}
\newcommand{\PGL}{\mathrm{PGL}}
\newcommand{\SL}{\mathrm{SL}}
\newcommand{\SO}{\mathrm{SO}}
\newcommand{\U}{\mathrm{U}}
\def\norm{\mathbf{N}}
\def\bfk{\mathbf{K}}
\newcommand{\Ad}{\mathrm{Ad}}
\def\tr{\mathop{\mathrm{tr}}}
\def\mRe{\mathop{\mathrm{Re}}}
\def\mIm{\mathop{\mathrm{Im}}}
\def\rest{\mathop{\mathrm{Res}}}
\def\pl{\mathrm{pl}}
\def\vol{\mathop{\textnormal{vol}}\nolimits}
\theoremstyle{definition}
\newtheorem{remark}{Remark}
\newtheorem{remarks}[remark]{Remarks}
\newtheorem{defn}{Definition}
\theoremstyle{plain}
\newtheorem{theorem}{Theorem}[section]
\newtheorem{conj}{Conjecture}
\newtheorem{cor}[theorem]{Corollary}
\newtheorem{lemma}[theorem]{Lemma}
\newtheorem{prop}[theorem]{Proposition}
\numberwithin{equation}{section}
\begin{document}

\title[Counting cusp forms by analytic conductor]{Counting cusp forms by analytic conductor\\ [0.3cm] Comptage des formes cuspidales\\ par conducteur analytique}
\date{\today}                                         

\author[]{Farrell Brumley}
\address{Institut Galil\'ee, Universit\'e Sorbonne Paris Nord, 93430 Villetaneuse, France}
\email{brumley@math.univ-paris13.fr}

\author[]{Djordje Mili\'cevi\'c}
\address{Bryn Mawr College, Department of Mathematics, 101 North Merion Avenue, Bryn Mawr, PA 19010, U.S.A.}
\email{dmilicevic@brynmawr.edu}

\thanks{The first author was partially supported by the ANR Grant PerCoLaTor: ANR-14-CE25. The second author was partially supported by the National Security Agency (Grant H98230-14-1-0139), National Science Foundation (Grants DMS-1503629 and DMS-1903301), and through ARC grant DP130100674. He thanks the Max Planck Institute for Mathematics for their support and exceptional research infrastructure. The United States Government is authorized to reproduce and distribute reprints notwithstanding any copyright notation herein.}

\keywords{Automorphic representations, analytic conductor, families of automorphic forms, trace formula, Plancherel measure, Weyl law}

\subjclass[2010]{Primary 11F72; Secondary 11F66, 11F70, 22E55, 58J50}

\begin{abstract}
Let $F$ be a number field and $n\geqslant 1$ an integer. The {\it universal family} is the set $\mathfrak{F}$ of all unitary cuspidal automorphic representations on ${\rm GL}_n$ over $F$, ordered by their analytic conductor. We prove an asymptotic for the size of the truncated universal family $\mathfrak{F}(Q)$ as $Q\rightarrow\infty$, under a spherical assumption at the archimedean places when $n\geqslant 3$. We interpret the leading term constant geometrically and conjecturally determine the underlying Sato--Tate measure. Our methods naturally provide uniform Weyl laws with logarithmic savings in the level and strong quantitative bounds on the non-tempered discrete spectrum for ${\rm GL}_n$.

\medskip

\noindent
Soient $F$ un corps de nombres et $n\geqslant 1$ un entier. La {\it famille universelle} $\mathfrak{F}$ est l'ensemble de toutes les repr\'esentations cuspidales unitaires automorphes de ${\rm GL}_n$ sur $F$, muni de l'ordre induit par le conducteur analytique. Nous obtenons un \'equivalent asymptotique pour le cardinal de la famille universelle tronqu\'ee $\mathfrak{F}(Q)$ lorsque $Q\rightarrow \infty$, sous une hypoth\`ese de sph\'ericit\'e aux places archim\'ediennes si $n\geqslant 3$. Nous interpr\'etons g\'eom\'etriquement le terme dominant and d\'eterminons conjecturalement la mesure de Sato--Tate sous-jacente. Nos m\'ethodes fournissent une loi de Weyl uniforme avec un gain logarithmique dans le niveau et des bornes quantitatives fortes sur le spectre discret non temp\'er\'e pour ${\rm GL}_n$.
\end{abstract}

\maketitle

While automorphic forms can be notoriously difficult to study individually using analytic techniques, desired results can often be obtained by embedding them into a larger family of cusp forms of favorable size. In this article, we address the question of the asymptotic size of the universal family, which contains all cuspidal automorphic representations on $\GL_n$ over a fixed number field $F$, and is ordered by the analytic conductor of Iwaniec and Sarnak~\cite{IwaniecSarnak2000}.

The analytic conductor $Q(\pi)$ of a cusp form $\pi$ is an archimedean fattening of the classical arithmetic conductor of Casselman~\cite{Casselman1973} and Jacquet--Piatetski-Shapiro--Shalika~\cite{JacquetPiatetski-ShapiroShalika1981}: indeed, it is the product of the conductors $q(\pi_v)$ of all local components $\pi_v$, each of these arising from the local functional equation of the standard $L$-function $L(s,\pi_v)$. One way to understand the significance of the analytic conductor to the theory of $L$-functions is that its square-root determines the effective length of the partial sums appearing in the global approximate functional equation for $L(s,\pi)$. In turn, the analytic conductor controls the complexity of an array of analytic problems involving $L$-functions, such as the evaluation of moments, subconvexity, nonvanishing, extreme value problems, and numerical computations. From a broader and related perspective, $Q(\pi)$ quantifies the size of a system of equations large enough to pin down $\pi$ exactly. In this respect, it has a close connection with the requisite number of twists in the Converse Theorem.

Our interest here is in the role that $Q(\pi)$ plays as a natural height function in the automorphic context. To this end, we denote by $\mathfrak{F}$ the countable discrete set of all irreducible unitary cuspidal automorphic representations $\pi$ of $\GL_n(\A_F)$, considered up to unitary twist $|\det |^{it}$, organized into a family under the ordering induced by $Q(\pi)$. Following \cite{Sarnak08}, we shall refer to $\mathfrak{F}$ as the \textit{universal family}. In recent years, Sarnak has repeatedly emphasized the importance of understanding the statistical properties of the set
\[ \mathfrak{F}(Q)=\{\pi\in\mathfrak{F}: Q(\pi)\leqslant Q\}. \]
It may come as a surprise how little is known about $\mathfrak{F}(Q)$.

In this paper we investigate the cardinality $|\mathfrak{F}(Q)|$, for increasing $Q$.
Historically, the first result in this direction is the finiteness of $\mathfrak{F}(Q)$, which was established in \cite{Brumley2006} (see also \cite{Moreno1985}). Later, Michel and Venkatesh~\cite{MichelVenkatesh2010} showed that $|\mathfrak{F}(Q)|$ has at most polynomial growth in $Q$.

Our main theorem is the determination of the asymptotic size of $\mathfrak{F}(Q)$, subject to a spherical hypothesis on the archimedean component of $\pi$ when $n>2$. This allows us to answer in the affirmative the question posed by Michel--Venkatesh in \cite{MichelVenkatesh2010} regarding the limiting behavior of $\log |\mathfrak{F}(Q)|/\log Q$. More precisely, we find that $|\mathfrak{F}(Q)|$ has pure power growth of the order $Q^{n+1}$, with no logarithmic factors. Moreover, we interpret the leading term constant in a way which is consistent with analogous problems for counting rational points of bounded height.

\tableofcontents

\section{Introduction}\label{intro1}

Having briefly described the central problem, we now proceed to stating more precisely our main asymptotic result on the counting function of $\mathfrak{F}(Q)$, the trace formula input on which it depends, as well as an interpretation of the leading term constant.

\subsection{Weyl--Schanuel law}\label{into-1.1}

We formulate, in Conjecture \ref{weyl-schanuel-conj} below, the expected asymptotic behavior of $|\mathfrak{F}(Q)|$. Following \cite{SarnakShinTemplier2016}, we refer to this asymptotic as the \emph{Weyl--Schanuel law}. Indeed, it can si\-mul\-ta\-ne\-ous\-ly be viewed as a sort of universal Weyl law, and as an automorphic analogue to Schanuel's well-known result on the number of rational points of bounded height on projective~spaces. 

To describe the conjecture, we shall need to set up some notation. Let $\Pi(\GL_n(\A_F))$ denote the restricted direct product, over all places $v$ of $F$, of the local unitary duals of $\GL_n(F_v)$ relative to the local unramified duals. Let $\Pi(\GL_n(\A_F)^1)$ be the subset consisting of those $\pi$ whose central characters are trivial on the diagonal embedding of the positive reals. We give $\Pi(\GL_n(\A_F)^1)$ the subspace topology derived from the direct product topology. We may embed the universal family $\mathfrak{F}$ into $\Pi(\GL_n(\A_F)^1)$ by taking local components, and the notion of analytic conductor extends to all of the latter space.

Let $\GL_n(\A_F)^1$ be the subgroup of $g\in\GL_n(\A_F)$ with $|\det g|_{\A_F}=1$.  We equip $\GL_n(\A_F)^1$ with Tamagawa measure, denoted by $\omega_{\GL_n}$; in particular, $\omega_{\GL_n}$ assigns the automorphic quotient $\GL_n(F)\backslash\GL_n(\A_F)^1$ volume 1. This then induces a normalization of Plancherel measure $\widehat{\omega}^{\rm pl}$ on $\Pi(\GL_n(\A_F)^1)$. Let
 \begin{equation}\label{eq:def:tau-hat}
\mathscr{C}(\mathfrak{F})=\frac{1}{n+1}\mathop{\int\nolimits}\limits_{\Pi(\GL_n(\A_F)^1)}Q(\pi)^{-n-1}\,{\rm d}\widehat{\omega}^{\rm pl}(\pi),
\end{equation}
with the integral being regularized as in \S\ref{leading-term}. 

We may now state the following
\begin{conj}[Weyl--Schanuel law]
\label{weyl-schanuel-conj}
Fix a number field $F$ and an integer $n\geqslant 1$. Then
\[
|\mathfrak{F}(Q)|\sim \mathscr{C}(\mathfrak{F}) Q^{n+1} \quad\text{as }\,Q\to\infty.
\]
\end{conj}

Conjecture \ref{weyl-schanuel-conj} is motivated by the heuristic, borrowed from the setting of Schanuel's theorem, that the asymptotic behavior of $|\mathfrak{F}(Q)|$ should align with that of the ``Plancherel volume of the conductor ball'':
\begin{equation}\label{eq:big-adelic-integral}
V_{\mathfrak{F}}(Q)=\int\limits_{\substack{\pi\in\Pi(\GL_n(\A_F)^1)\\ Q(\pi)\leqslant Q}}{\rm d}\widehat{\omega}^{\rm pl}(\pi).
\end{equation}
In Proposition \ref{MainTermLemma} an asymptotic evaluation of the finite integral $V_{\mathfrak{F}}(Q)$ is given, in which $\mathscr{C}(\mathfrak{F})$ appears as the leading term constant. Note that $V_{\mathfrak{F}}(Q)$ contains no automorphic information.
 
In this paper, we establish the above predicted asymptotics for $|\mathfrak{F}(Q)|$ in many cases, with explicit logarithmic savings in the error term. Namely, we prove the following

\begin{theorem}\label{main-theorem}
Let $F$ be a number field and $n\geqslant 1$ be an integer. The Weyl--Schanuel law holds for $n\leqslant 2$, as well as for $n\geqslant 3$, when restricted to the archimedean spherical spectrum.
\end{theorem}

In addition, we address related counting and equidistribution problems and prove uniform Weyl laws (with explicit savings in the level aspect), estimates on the size of complementary spectrum, and uniform estimates on terms appearing in Arthur's trace formula for $\GL_n$.

\subsection{Main auxiliary results}\label{sec:results}

To prove Theorem \ref{main-theorem}, we first reduce Conjecture \ref{weyl-schanuel-conj} to certain trace formula estimates, and then prove these estimates in many cases. We elaborate on the precise form of these estimates in \S\ref{sec:ELM}, where we define the Effective Limit Multiplicity property, or {\it Property (ELM)}, which encapsulates them. The archimedean unitary dual $\Pi(\GL_n(F_{\infty})^1)$ enjoys a decomposition \eqref{eq:intro:discrete-decomp} according to the full set $\mathcal{D}$ of discrete inducing parameters, one of which gives the spherical spectrum. For every subset $\Delta\subseteq\mathcal{D}$, we may consider the corresponding subset $\Pi(\GL_n(F_{\infty})^1)_{\Delta}$ and a refined version of Property (ELM) with respect to $\Delta$. We refer the reader to Definition \ref{TFhyp} for more details (including some of the notation used here), and proceed now to a description of the two main auxiliary results which are used to prove Theorem \ref{main-theorem}.

Our first main theorem, proved in Part \ref{part1}, is the reduction of Conjecture \ref{weyl-schanuel-conj} to Property $(\mathrm{ELM})$.

\begin{theorem}\label{main-implication}
Let $F$ be a number field and $n\geqslant 1$ an integer. Property $(\mathrm{ELM})$ implies Conjecture \ref{weyl-schanuel-conj} in the following effective form
\[
|\mathfrak{F}(Q)|=\mathscr{C}(\mathfrak{F})Q^{n+1}\left(1+\textnormal{O}\left(\frac{1}{\log Q}\right)\right).
\]
Moreover, if Property $(\mathrm{ELM})$ holds with respect to some given subset $\Delta\subseteq\mathcal{D}$, then the restricted family
$\mathfrak{F}_\Delta(Q)$ consisting of those $\pi\in \mathfrak{F}(Q)$ for which $\pi_\infty\in\Pi(\GL_n(F_\infty)^1)_{\Delta}$ satisfies
\[
|\mathfrak{F}_\Delta(Q)|=\mathscr{C}_{\Delta}(\mathfrak{F}) Q^{n+1}\left(1+\textnormal{O}\left(\frac{1}{\log Q}\right)\right),
\]
where, making any choice of normalization of archimedean Plancherel measure ${\rm d}\pi_\infty$,
\[
\mathscr{C}_{\Delta}(\mathfrak{F})=\mathscr{C}(\mathfrak{F})\frac{\int_{\Pi(\GL_n(F_\infty)^1)_{\Delta}}q(\pi_\infty)^{-n-1}\, {\rm d}\pi_\infty}{\int_{\Pi(\GL_n(F_\infty)^1)}q(\pi_\infty)^{-n-1}\, {\rm d}\pi_\infty}.
\]
All implied constants depend on $F$ and $n$ (as well as $\Delta$, where applicable).
\end{theorem}

One of the crucial ingredients in the proof of Theorem \ref{main-implication} is Proposition \ref{comp-mu}, in which we provide upper bounds on the sum over the discrete spectrum of $\GL_n$, where each $\pi$ appears with an exponential weight measuring how far its archimedean component $\pi_\infty$ is from being tempered. The role of the latter proposition is to show, at various places in our arguments, that discrete $\pi$ for which $\pi_\infty$ is non-tempered contribute negligibly to the total automorphic count. In particular, the combination of Propositions~\ref{KR} and \ref{temp-err} shows that the number of cuspidal $\pi\in\mathfrak{F}(Q)$ with $\pi_\infty$ non-tempered is in fact $\mathrm{O}(Q^{n+1}/\log^3Q)$. (The Ramanujan conjecture states that such $\pi$ do not exist.) The fact that the archimedean place is singled out here is due to the expanding support conditions on the test functions we consider at infinity.

In our second main theorem, formulated in Theorem \ref{money-cor} and proved throughout Part \ref{part3}, we establish Property (ELM) in certain cases. These are described in the following result.

\begin{theorem}\label{master}
Let $F$ be a number field and $n\geqslant 1$ be an integer.
\begin{enumerate}
\item For $n\leqslant 2$, Property $(\mathrm{ELM})$ holds.
\item For $n\geqslant 3$, Property $(\mathrm{ELM})$ holds with respect to the spherical part of $\Pi(\GL_n(F_\infty)^1)$.
\end{enumerate}
\end{theorem}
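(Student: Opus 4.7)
The plan is to derive Property (ELM) from Arthur's invariant trace formula for $\GL_n$ applied to carefully chosen test functions. At the finite places, we build $f_\fin$ from projectors $\one_{K_0(\nn)}/\vol(K_0(\nn))$ onto congruence-subgroup-fixed vectors, combined by M\"obius inversion so that its spectral action recovers the indicator of the conductor divisibility condition $\q(\pi_\fin)\mid\nn$. At the archimedean place, $f_\infty$ is a Paley--Wiener function localizing at the desired spectral window in $\Pi(\GL_n(F_\infty)^1)$; for part (2) we further restrict $f_\infty$ to lie in the spherical Hecke algebra. The identity term of the geometric side produces the Plancherel main term through Harish-Chandra's formula, and Property (ELM) amounts to the claim that the residual Eisenstein contribution and all non-identity geometric contributions together give a power saving in $\norm(\nn)$.

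For $n=1$, the trace formula collapses to Poisson summation, and Property (ELM) reduces to the classical count of Hecke characters of conductor dividing $\nn$ with archimedean parameter in a prescribed window. For $n=2$, the Arthur--Selberg trace formula applies and the hyperbolic, elliptic, and unipotent contributions on the geometric side are known to admit uniform bounds with polynomial savings in $\norm(\nn)$, while the continuous-spectrum term is controlled by bounds on $\Lambda'/\Lambda$ of Hecke $L$-functions at the edge of the critical strip.

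For $n\geqslant 3$ in the spherical case, the spherical choice of $f_\infty$ makes the archimedean transfer tractable through Harish-Chandra's spherical Plancherel formula. On the geometric side I would marry the Finis--Lapid uniform bounds on the global coefficients $a^M(\gamma)$ of Arthur's trace formula with the spherical archimedean orbital integral estimates of Matz (and Matz--Templier). Together these yield polynomial savings in $\norm(\nn)$ for every non-identity geometric contribution, including the delicate unipotent and weighted orbital terms, the latter of which invoke Shintani-type analysis of prehomogeneous zeta functions attached to $\GL_n$. The continuous-spectrum contribution, indexed by cuspidal data on proper Levi subgroups $M\subsetneq\GL_n$, is bounded by the Finis--Lapid--M\"uller theorem on uniform integrability of logarithmic derivatives of Rankin--Selberg $L$-functions, with an induction on $n$ that pushes the cuspidal data on each $M$ into a smaller ELM.

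The principal obstacle will be the non-identity geometric side for $n\geqslant 3$: achieving the polynomial level saving uniformly in the archimedean test function requires a delicate coupling of Finis--Lapid combinatorics with archimedean orbital integrals, and the unipotent orbits (the source of the only possible logarithmic loss, as already visible for $n=1$) demand the most care. The spherical restriction at infinity is precisely what unlocks the effective Matz-type bounds that make this step tractable; removing it for $n\geqslant 3$ would require effective bounds on archimedean orbital integrals for non-spherical test functions that are currently out of reach. A secondary bookkeeping difficulty is matching the Plancherel normalization on the spectral side with the Tamagawa measure underlying $\mathscr{C}(\mathfrak{F})$ through the appropriate local volume factors.
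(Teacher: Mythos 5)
Your overall strategy is the paper's: Arthur's trace formula with $\varepsilon_{K_1(\q)}\otimes f_R^{\delta,\mu}$, Finis--Lapid intersection-volume bounds for the power saving in $\norm\q$ on the geometric side, Matz and Matz--Templier for the archimedean orbital integrals, and Finis--Lapid--M\"uller (Tempered Winding Number and Bounded Degree) with induction on $n$ for the Eisenstein contribution. But two genuine gaps remain. First, you never explain how to produce an archimedean test function that simultaneously has the prescribed spectral transform $h_R^{\delta,\mu}$, support in $\bfk_\infty\exp(B(0,R))\bfk_\infty$, \emph{and} a sup-norm bounded by $\|h_R^{\delta,\mu}\|_{L^1(\widehat\mu_\infty^{\rm pl})}$. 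The last point is essential, because the geometric side is only controlled in terms of $\|f\|_\infty$ times the characteristic function of the support; Clozel--Delorme gives existence of $f$ but no sup-norm control (the function is not even unique). The paper's resolution is explicit inversion against ($\tau$-)spherical functions, which requires the commutativity of $\mathcal{H}(G',\tau)$, i.e.\ multiplicity one of the $\bfk_\infty$-type $\tau$ — true for every $\tau$ only when $n\leqslant 2$ (Ehrenpreis--Mautner, Camporesi) and for the trivial type in general. This, not the orbital-integral estimates themselves, is where the spherical restriction for $n\geqslant 3$ actually enters, and your $n=2$ case (discrete series and nontrivial $K$-types at infinity) silently depends on it. Second, your treatment of the continuous spectrum is incomplete: after applying the logarithmic-derivative bounds one is left with a sum over $\pi\in\Pi_{\rm disc}(\bm{M}(\A_F)^1)$ weighted by $\dim V_{\pi_f}^{K_1^{\bm{M}}(\q)}$ and by $e^{R\|\mathrm{Re}\,\lambda_{\pi_\infty}\|}$ (the Paley--Wiener localizer grows exponentially off the tempered axis). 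Controlling this requires an exponentially weighted limit-multiplicity bound for the Levi blocks — in the paper, Proposition \ref{comp-mu} and Corollary \ref{money-cor2}, which rest on a delicate construction of positive-definite dominating test functions and on ELM in lower rank — together with a comparison of conductors of induced representations with those of the inducing data (via Reeder's dimension formulas and additivity of conductors under local Langlands). ``Induction that pushes the cuspidal data into a smaller ELM'' is the right slogan, but without these two mechanisms the induction step does not close.

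Two smaller corrections. Property (ELM) is a statement about the single idempotent $\varepsilon_{K_1(\q)}$; the M\"obius inversion over divisors that sieves out newforms belongs to the deduction of the Weyl--Schanuel law from (ELM) (Part \ref{part1}), not to the proof of (ELM) itself, and $K_1(\q)$ rather than $K_0(\q)$ is what isolates conductor-dividing-$\q$ vectors with the dimension formula $d_n(\q/\q(\pi))$. Also, the unipotent contributions are handled via Finis--Lapid's volume estimates on $\{u:\lambda_v(u)\geqslant m\}$ inside Richardson parabolics, not via prehomogeneous zeta functions; the latter route is not known to deliver the required uniformity in the level and in the support parameter $R\asymp\log\norm\q$.
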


The combination of the above two theorems yields our main result, Theorem \ref{main-theorem}. The restriction to the archimedean spherical spectrum for $n\geqslant 3$ in Theorem \ref{master} is a purely technical constraint, having only to do with explicit spectral inversion of archimedean test functions. We believe that this restriction can be removed, by following a different approach to bounding the weighted archimedean orbital integrals appearing in the Arthur trace formula. We plan to address this in a subsequent work.

\subsection{Effective Limit Multiplicity (ELM) property}\label{sec:ELM}

A natural framework for counting automorphic representations is provided by Arthur's non-invariant trace formula. This is an equality of distributions $J_{\rm spec}=J_{\rm geom}$, along with an expansion of both sides according to primitive spectral or geometric data. Roughly speaking, the most regular part of the spectral side of the trace formula $J_{\rm cusp}$, coming from the cuspidal contribution, is expected to be governed by the most singular part of the geometric side $J_{\rm cent}$, coming from the central elements.

In the body of the paper, we shall use measure conventions for the trace formula which align with the literature we cite. In particular, rather than $\omega_{\GL_n}$, we shall equip $\GL_n(\A_F)^1$ with the measure $\mu_{\GL_n}$ which assigns the standard maximal compact subgroup at finite places unit volume.\footnote{\label{rem:intro:measures}
One might ask what shape Conjecture \ref{weyl-schanuel-conj} would take under this alternative choice of measures. To describe this, we follow the notational convention in \cite{Shin2012,ShinTemplier2016} in which, if a Haar measure $m$ on a group $G$ is fixed, then $\widehat{m}^{\rm pl}$ denotes the induced normalization of Plancherel measure on the unitary dual $\Pi(G)$. Let $\widehat{\mu}^{\, \rm pl}$ be the normalization of Plancherel measure corresponding to $\mu_{\GL_n}$. Using $\widehat{\mu}^{\, \rm pl}$ in place of $\widehat{\omega}^{\, \rm pl}$ in the integral \eqref{eq:big-adelic-integral} would have the effect of decomposing the adelic integral in \eqref{eq:def:tau-hat} into a product of two factors, one coming from the automorphic volume $\mu_{\GL_n}(\GL_n(F)\backslash\GL_n(\A_F)^1)$, the other coming from the analogous spectral integral involving $\widehat{\mu}^{\, \rm pl}$.} Then $\mu_{\GL_n}$ assigns the automorphic space $\GL_n(F)\backslash\GL_n(\A_F)^1$ the measure $D_F^{n^2/2}\Delta_F^*(1)$, where $D_F$ is the absolute discriminant of $F$, and $\Delta_F^*(1)$ is the residue at $s=1$ of the global motivic $L$-function $\Delta_F(s)=\zeta_F(s)\zeta_F(s+1)\cdots \zeta_F(s+n-1)$ for $\GL_n$. Then, for a function $\phi\in \mathcal{H}(\GL_n(\A_F)^1)$ we let 
\[
J_1(\phi)=D_F^{n^2/2}\Delta_F^*(1)\phi(1)\quad\text{and}\quad J_{\rm cent}(\phi)=D_F^{n^2/2}\Delta_F^*(1)\sum_{\gamma\in Z(F)}\phi(\gamma)
\]
be the identity and central contributions to the trace formula. Furthermore, let
\[
J_{\rm cusp}(\phi)={\rm tr}(R_{\rm cusp}(
\phi))\qquad\text{and}\qquad J_{\rm disc}(\phi)={\rm tr}(R_{\rm disc}(
\phi))
\]
be the cuspidal and discrete contributions, where $R_\bullet$ is the restriction of the right-regular representation of $\GL_n(\A_F)^1$ on $L^2_\bullet(\GL_n(F)\backslash\GL_n(\A_F)^1)$.
Finally put
\begin{equation}
\label{DefEqh}
J_{\rm error}(\phi)=J_{\rm disc}(\phi)-J_{\rm cent}(\phi),
\end{equation}
the estimation of which will be our primary concern. See \S\S\ref{fine-geom}--\ref{JMspec} for more precise definitions of the distributions $J_{\rm cent}$ and $J_{\rm disc}$.

We shall in fact be interested in $J_{\rm error}(\phi)$ for $\phi$ of the form $\varepsilon_{K_1(\q)}\otimes f$, where $f\in C^\infty_c(\GL_n(F_\infty)^1)$ and $\varepsilon_{K_1(\q)}$ is the idempotent element in the Hecke algebra associated with the standard Hecke congruence subgroup $K_1(\q)$. The latter subgroup, by the work of Casselman \cite{Casselman1973} and Jacquet--Piatetski-Shapiro--Shalika \cite{Jacquet2012, JacquetPiatetski-ShapiroShalika1981}, is known to pick out from the cuspidal spectrum those representations of conductor dividing $\q$. As with many spectral counting problems, one expects $J_{\rm error}(\varepsilon_{K_1(\q)}\otimes f)$ to be small in terms of various quantities involving $\q$ and $f$. If this can be properly quantified, one can hope to deduce that a sharp cuspidal count modeled by $J_{\rm cusp}(\varepsilon_{K_1(\q)}\otimes f)$ is roughly equal to $J_1(\varepsilon_{K_1(\q)}\otimes f)$. Indeed, with our choices of $f$, the passage from $J_{\rm cent}$ to $J_1$ causes no difficulty, and we will further be able to show (see Section~\ref{section:outline}) that $J_{\rm cusp}$ provides the dominant contribution to $J_{\rm disc}$.

Our interest is in taking $f$ whose Fourier transform $h(\pi_\infty)={\rm tr}\, \pi_\infty(f)$ satisfies a localization property around a given tempered unitary representation in $\Pi(\GL_n(F_\infty)^1)$. Moreover, we would like to have some control over the error in the localization. In general this error is quantified by the support of the test function $f$. Indeed, if ${\rm supp} f\subset \bfk_\infty \exp (B(0,R)) \bfk_\infty$, where $B(0,R)$ is the ball of radius $R$ in the Lie algebra of the diagonal torus, then the walls of the corresponding $h$, that is, the regions where it transitions to rapid decay, have width on the scale of $1/R$. The following few paragraphs make this precise.

The unitary dual of $\GL_n(F_\infty)^1$ breaks up as a disjoint union 
\begin{equation}\label{eq:intro:discrete-decomp}
\Pi(\GL_n(F_\infty)^1)=\bigcup_{\underline{\delta}\in\mathcal{D}}\Pi(\GL_n(F_\infty)^1)_{\underline{\delta}}
\end{equation}
indexed by discrete data $\mathcal{D}$. More precisely, $\mathcal{D}$ is the set of conjugacy classes of pairs $(M,\delta)$ consisting of a Levi subgroup $M$ of $\GL_n(F_\infty)^1$ and a discrete series representation $\delta$ of $M^1$. Given a discrete spectral parameter $\underline{\delta}\in\mathcal{D}$ represented by $(M,\delta)$, a continuous spectral parameter $\mu\in i\mathfrak{h}_M^*$, and a real number $R>0$, we will be interested in test functions $f_R^{\delta,\mu}$ whose support lies in $\bfk_\infty \exp (B(0,cR)) \bfk_\infty$ (for a suitable $c>0$) and whose Fourier transform $h_R^{\delta,\mu}$ localizes about the irreducible tempered unitary representation indexed by the data $(\delta,\mu)$. We shall need bounds on $J_{\rm error}(\varepsilon_{K_1(\q)}\otimes f_R^{\delta,\mu})$ for such archimedean localizing functions $h_R^{\delta,\mu}$. The exact conditions we impose upon $h_R^{\delta,\mu}$ (where the parameter $R$ controls the rate of localization and the corresponding Paley--Wiener type conditions) are formulated in Definition \ref{defn-localizer}.

The following definition expresses uniform bounds on this quantity which are sufficiently strong for our applications. The dependence in the error term with respect to the archimedean spectral data is via a natural Plancherel majorizer $\beta_M^G(\delta,\nu)\,\text{d}\nu$, introduced in Definition \ref{BetaMajorizer}.

\begin{defn}[Effective Limit Multiplicity $(\mathrm{ELM})$]\label{TFhyp}
Let a number field $F$ and an integer $n\geqslant 1$ be fixed. We say that \emph{Property {\rm (ELM)} holds with respect to a subset $\Delta\subseteq\mathcal{D}$} if there exist constants $C\geqslant 0$ and $c,\theta>0$ such that the following holds: for all $\underline{\delta}\in\Delta$ represented by $(M,\delta)$, $\mu\in i\mathfrak{h}_M^*$,
and $R\geqslant 1$, and for every spectral localizer $h_R^{\delta,\mu}$ about $[\underline{\delta},\mu]$ in $\mathcal{PW}_{R,\underline{\delta}}$, there exists an $f_R^{\delta,\mu}\in\mathcal{H}(G_{\infty}^1)_{cR,\underline{\delta}}$ such that $h_R^{\delta,\mu}(\nu)=\tr\pi_{\delta,\nu}(f_R^{\delta,\mu})$ and such that, for every integral ideal $\q$ of $\mathcal{O}_F$, we have
\[
J_{\rm error}(\varepsilon_{K_1(\q)}\otimes f_R^{\delta,\mu})\ll e^{CR}\norm{\q}^{n-\theta}\beta_M^G(\delta,\mu).
\]
When (ELM) holds with respect to all of $\mathcal{D}$, one then says that \emph{Property $(\mathrm{ELM})$ holds}.
\end{defn}

Property (ELM) depends on the choice of number field $F$ and integer $n\geqslant 1$. Since these are fixed throughout the paper, we will not recall this dependency elsewhere. We shall make more extensive comments about Property (ELM) in \S\ref{blurb}. For the moment, we content ourselves to a few brief remarks.

\begin{remarks}
${}$
\begin{enumerate}
\item We have expressed Property (ELM) with respect to the particular subgroups $K_1(\q)$ since only these arise in our applications. More generally, one could ask for analogous bounds for arbitrary sequences of compact open subgroups in $\GL_n(\A_f)$ whose volumes tend to zero. Our proof of Theorem \ref{master}, which establishes Property (ELM) in many cases, would continue to hold for such subgroups, since we are appealing to the powerful results \cite{FinisLapid2018} of Finis--Lapid. 
\item The estimate is trivial in the archimedean spectral parameters $\underline{\delta}$ and $\mu$. Indeed, as will be seen in Section~\ref{spec-loc}, we have
\begin{equation}\label{eq:arch-MT}
\int_{i\h_M^*}h_R^{\delta,\mu}(\nu)\beta_M^G(\delta,\nu)\, {\rm d}\nu\asymp\int_{B_M(\mu,1/R)}\beta_M^G(\delta,\nu)\,\text{d}\nu\asymp R^{-\dim\mathfrak{h}_M}\beta_M^G(\delta,\mu).
\end{equation}
Since the polynomial factor $R^{\dim\mathfrak{h}_M}$ can be subsumed into the exponential factor $e^{CR}$ (with a different constant $C$), it is in fact equivalent to state Property (ELM) with the majorant $\beta_M^G(\delta,\mu)$ replaced by any of the three expressions appearing in \eqref{eq:arch-MT}. All three appear naturally in our proofs and are generically comparable to the archimedean component of the identity contribution to the trace formula $J_1(\varepsilon_{K_1(\q)}\otimes f_R^{\delta,\mu})$; see \S\ref{UniformUpperBoundsSubsection}. While it might at first seem surprising that no non-trivial savings at infinity is needed in order to deduce our main result, it is rather the power savings in the level which is of critical importance in our applications. To get a better feeling for the various ranges of parameters, and corresponding savings, see \S\ref{Approach2}.
\item The terminology ``Effective Limit Multiplicity" was chosen in reference to the power savings in the level as well as the uniformity in all parameters that the statement provides, which can be seen as a quantification of the limit multiplicity property for $\GL_n$ at the archimedean place.
\end{enumerate}
\end{remarks}

\subsection{On the leading term}\label{leading-term}

We now give a precise definition of the leading term constant in the Weyl--Schanuel conjecture.

We now recall the Tamagawa measure for $\GL_n$. We let $\omega(g)=\det (g)^{-n}({\rm d}g_{11}\wedge\cdots\wedge {\rm d}g_{nn})$ be the unique (up to scalar) invariant rational differential form of highest degree on $\GL_n$. Then $\omega$ induces a Haar measure $\omega_{\GL_n,v}$ on $\GL_n(F_v)$ at every place $v$. We put the $\omega_{\GL_n,v}$ together into a global measure in the following way. For every finite place $v$ of $F$ we let $\Delta_v(s)=\zeta_v(s)\zeta_v(s+1)\cdots\zeta_v(s+n-1)$ be the local factor of the motivic $L$-function $\Delta_F(s)$ described in \S\ref{sec:ELM}. Similarly, let $\zeta_F(s)=\prod_{v<\infty}\zeta_v(s)$ be the Dedekind zeta function for $F$, with residue $\zeta_F^*(1)$ at $s=1$. Define a global Haar measure on $\GL_n(\A_F)$ by
\begin{equation}\label{eq:defn:omegaG-ast}
\frac{1}{\zeta_F^*(1)}\frac{1}{D_F^{n^2/2}}\prod_{v<\infty}\zeta_v(1)\omega_{\GL_n,v} \prod_{v\mid \infty}\omega_{\GL_n,v} =\frac{1}{\Delta_F^*(1)}\frac{1}{D_F^{n^2/2}}\prod_v\Delta_v(1)\omega_{\GL_n,v}\prod_{v\mid\infty} \omega_{\GL_n,v}.
\end{equation}
Using the short exact sequence
\[
1\rightarrow \GL_n(\A_F)^1\rightarrow\GL_n(\A_F)\xrightarrow{\log |\det|_{\A_F}}\R\rightarrow 0,
\]
we factorize $\GL_n(\A_F)\simeq \GL_n(\A_F)^1 \times \R$. Recall from \cite[\S 2]{Ono1966} that the Tamagawa measure $\omega_{\GL_n}$ on $\GL_n(\A_F)^1$ is defined as the unique measure which decomposes the measure in \eqref{eq:defn:omegaG-ast} as $\omega_{\GL_n}\times {\rm d}t$, where ${\rm d}t$ is the standard Lebesgue measure on $\R$. Then it follows from \cite[Theorem 3.1.1]{Weil1982} that the Tamagawa number $\tau(\GL_n)=\int_{\GL_n(F)\backslash \GL_n(\A_F)^1}{\rm d}\omega_{\GL_n}$ of $\GL_n$ is $1$.

On the spectral side, for every place $v$ of $F$, we now fix a normalization of local Plancherel measures $\widehat{\omega}_v^{\rm pl}$ on $\Pi(\GL_n(F_v))$ by taking Plancherel inversion to hold relative to $\omega_{\GL_n,v}$. At finite places $v$ the measure $\widehat{\omega}_v^{\rm pl}$ assigns the unramified unitary dual volume $\Delta_v(1)$; see \cite[p. 31]{Weil1982}. Similarly, we let $\widehat{\omega}^{\rm pl}$ be the adelic Plancherel measure  on $\Pi(\GL_n(\A_F)^1)$ for which the Plancherel inversion formula holds relative to $\omega_{\GL_n}$. This is the Plancherel measure appearing in \eqref{eq:big-adelic-integral}.

We now explain the regularization of the integral in \eqref{eq:def:tau-hat}. We let $\omega_\infty$ be the unique measure on $\GL_n(F_\infty)^1$ decomposing $\omega_{\GL_n,\infty}$ as $\omega_\infty\times {\rm d}t$ and write $\widehat{\omega}_\infty^{{\rm pl}}$ for the Plancherel measure on $\Pi(\GL_n(F_\infty)^1)$ corresponding to $\omega_\infty$. We define local measures $\tau_{\mathfrak{F},v}$ on $\Pi(\GL_n(F_v))$, for $v$ finite, and $\tau_{\mathfrak{F},\infty}$ on $\Pi(\GL_n(F_\infty)^1)$, by setting (for open subsets $A$)
\begin{equation}\label{eq:defn:plv}
\tau_{\mathfrak{F},v}(A)=\int_A q(\pi_v)^{-n-1}\,{\rm d}\widehat{\omega}_v^{\rm pl}(\pi_v)\quad (v<\infty),\quad \tau_{\mathfrak{F},\infty}(A)=\int_A q(\pi_\infty)^{-n-1}\,{\rm d}\widehat{\omega}_\infty^{{\rm pl}}(\pi_\infty).
\end{equation}
Since $\widehat{\omega}_v^{\rm pl}$ is supported on the tempered spectrum, so too is $\tau_{\mathfrak{F},v}$. In Lemma \ref{local-count} we show that for finite places $v$ the volume of $\tau_{\mathfrak{F},v}$ is finite and, using $\mu_{\GL_n,v}=\Delta_v(1)\omega_{\GL_n,v}$, equals $\Delta_v(1)\zeta_v(1)/\zeta_v(n+1)^{n+1}$.  We deduce that the measure on $\Pi(\GL_n(\A_F)^1)$ given by the regularized product
\begin{equation}\label{defn-hat-nu}
\tau_{\mathfrak{F}}=D^{n^2/2}\Delta_F^*(1)\zeta_F^*(1)\bigg(\prod_{v<\infty} \Delta_v(1)^{-1}\zeta_v(1)^{-1}\tau_{\mathfrak{F},v}\bigg)\tau_{\mathfrak{F},\infty}
\end{equation}
converges. Then the regularized integral \eqref{eq:def:tau-hat} in Conjecture \ref{weyl-schanuel-conj} is, \textit{by definition}, equal to the volume
\begin{equation}\label{eq:defn:tau=pl}
{\rm vol}(\tau_\mathfrak{F})=\tau_{\mathfrak{F}}(\Pi(\GL_n(\A_F)^1))
\end{equation}
of the finite measure $\tau_{\mathfrak{F}}$.
 
From the definition \eqref{eq:defn:plv}, one can interpret ${\rm vol}(\tau_{\mathfrak{F}})$ as the regularized Euler product of local conductor zeta functions evaluated at $s=n+1$. This point of view is emphasized in Section \ref{sec:cond-zeta}; see, for example, Remark \ref{rem:cond-zeta}. The measure $\tau_{\mathfrak{F}}$ also features in the conjectured equidistribution of the universal family which we discuss in Section~\ref{equidsitribution-sec}. 

\subsection{Schanuel's theorem}\label{sec:Schanuel}
The Weyl--Schanuel law of Conjecture \ref{weyl-schanuel-conj} is reminiscent of the familiar problem of counting rational points on projective algebraic varieties. In particular, one can set up an analogy between counting $\pi\in\mathfrak{F}$ with analytic conductor $Q(\pi)\leqslant Q$ and counting $x\in\PP^n(F)$ with exponential Weil height $H(x)\leqslant B$. 

To be more precise, let us recall some classical results on counting rational points in projective space $\PP^n$, where $n\geqslant 1$. For a rational point $x\in\PP^n(F)$, given by a system of homogeneous coordinates $x=[x_0:x_1:\cdots :x_n]$, we denote by
\[
H(x)=\prod_v\max (|x_0|_v,|x_1|_v,\ldots ,|x_n|_v)^{1/d} \qquad (d=[F:\Q])
\]
the absolute exponential Weil height of $x$, the product being taken over the set of normalized valuations of $F$. An asymptotic for the above counting function was given by Schanuel~\cite{Schanuel1979}. The leading constant was later reinterpreted by Peyre \cite{Peyre1995}, as part of his refinement of the conjectures of Batyrev--Manin~\cite{BatyrevManin1990}. Following Peyre, we write $\tau_H(\PP^n)$ for the volume of the Tamagawa measure of $\PP^n$ with respect to $H$; then $\tau_H(\PP^n)$ is equal to $\zeta^*_F(1)/\zeta_F(n+1)$ times some archimedean volume factors. Then Schanuel proved that
\[
|\{x\in\PP^n(F):H(x)\leqslant B\}|\sim \frac{1}{n+1}\tau_H(\PP^n)B^{n+1}\qquad\text{ as } \quad B\rightarrow\infty.
\]
Schanuel in fact gave an explicit error term of size $\text{O}(B\log B)$ when $n=1$ and $F=\Q$ and $\text{O}(B^{n+1-1/d})$ otherwise. Later, Chambert-Loir and Tschinkel \cite{Chambert-LoirTschinkel2010} showed how the Tamagawa measure $\tau_H(\PP^n)$ appears naturally when calculating the volume of a height ball.

More generally, in the same spirit as the Batyrev--Manin--Peyre conjectures for counting rational points on Fano varieties, given a reductive algebraic group $\bm{G}$ over $F$ and a representation $\rho: {}^L\bm{G}\rightarrow\GL_n(\C)$ of the $L$-group (see \cite[\S 2.6]{Borel1979}), then assuming an appropriate version of the local Langlands conjectures, one can pull back the $\GL_n$ conductor to $\bm{G}$. Under appropriate conditions on $\rho$ assuring a finiteness property, one would like to understand the asymptotic properties of the counting function associated to global cuspidal automorphic $L$-packets of $\bm{G}(\A_F)$ of bounded analytic conductor. This problem has been investigated in some cases by Lesesvre \cite{Lesesvre2020} and Brooks--Petrow \cite{BrooksPetrow2018}, as well as in Petrow \cite{Petrow2018}, which furthermore explores eligibility requirements on $\rho$. Our methods suggest that, any time this problem can be solved, the leading term constant will be given in terms of the Plancherel volume of the $\rho$-conductor ball
\[
\int\limits_{\substack{\pi\in\Pi(\GL_n(\A)^1)\\ Q(\pi,\rho)\leqslant Q}} {\rm d}\widehat{\omega}^{\rm pl}(\pi).
\]
This analogy served as an inspiration and organizing principle throughout the elaboration of this article, where we address the setting of general linear groups and the standard embedding. Finally, we emphasize that we exploit several important features of $\GL_n$ throughout the proofs of Theorems \ref{main-theorem} and \ref{master}, including the well-understood newform theory in Section \ref{sec:cond-zeta}, and strong estimates on the continuous spectrum in Section \ref{EisensteinSection}.

\subsection{Hecke congruence subgroups}

What makes the general linear groups particularly amenable to conductor counting are the well-understood multiplicity and volume growth properties of the Hecke congruence subgroups $K_1(\q)$ and $K_0(\q)$ defining the arithmetic conductor, in the newform theory of Casselman~\cite{Casselman1973} and Jacquet--Piatetski-Shapiro--Shalika~\cite{JacquetPiatetski-ShapiroShalika1981}.

To better understand the role of $K_1(\q)$ in the leading term asymptotics of the Weyl--Schanuel law, consider the ``index zeta function" of the system of Hecke congruence subgroups $K_1(\q)$:
\[
\prod_{v<\infty}\sum_{r\geqslant 0}\frac{[\GL_n(\mathfrak{o}_v): K_1(\p_v^r)]}{\norm\p_v^{rs}}.
\]
If $p_n$ denotes the arithmetic function $\mathfrak{a}\mapsto\norm\mathfrak{a}^n$ on ideals, and $\mu$ is the M\"obius function over $F$, then it follows from \eqref{eq:Euler-phi} that the above can be rewritten as
\[
\prod_{v<\infty}\sum_{r\geqslant 0}\frac{(p_n\star\mu)(\p_v^r)}{\norm\p_v^{rs}}=\prod_{v<\infty}\frac{\zeta_v(s-n)}{\zeta_v(s)}=\frac{\zeta_F(s-n)}{\zeta_F(s)}.
\]
The abscissa of convergence of the above Euler product determines the order of magnitude of the asymptotic growth of $|\mathfrak{F}(Q)|$. Its regularized value at $s=n+1$, $\zeta_F^*(1)/\zeta_F(n+1)$, contributes to the volume ${\rm vol}(\tau_\mathfrak{F})$; see \eqref{eq:adelic-int-w-zeta}, where, it should be remarked that the arithmetic factor $\zeta_F(n+1)^{-n}$ comes from inverting, through a sieving process, the series formed from the dimensions of old forms.

\subsection{Comments on other asymptotic aspects}

Throughout this paper, both $n$ and $F$ will be considered as fixed. Nevertheless, it would be interesting to understand the behavior of $|\mathfrak{F}(Q)|$ as $n$ and $F$ vary (either simultaneously with $Q$ or for $Q$ fixed). We remark on two aspects:

\medskip

\begin{enumerate}
\item In Conjecture \ref{weyl-schanuel-conj}, one could set $Q=1+\epsilon$ (for a small $\epsilon>0$) and vary either $n$ or $F$. This would count the number of everywhere unramified cuspidal automorphic representations of $\GL_n$ over $F$ whose archimedean spectral parameters are constrained to a small ball about the origin. In this set-up, if $F$ is fixed and $n$ gets large, we recover the number field version of a question of Venkatesh, as described for function fields in \cite[\S 4]{FouvryKowalskiMichel2013}.

\medskip

\item On the other hand, we may fix $n$ (again keeping $Q=1+\epsilon$) and allow $D_F$ to get large. For example, when $n=1$ this counts the size of a ``regularization" of the group of ideal class characters, which has size about $D_F^{1/2+o(1)}$ by Siegel's theorem; this lines up with the power of $D_F$ in Conjecture  \ref{weyl-schanuel-conj}. When $n=2$, we recover the number field version of the famous result of Drinfeld \cite{Drinfeld1981}. Note that in the function field case the role of $D_F^{1/2}$ is played by the quantity $q^{g-1}$, as one can see by comparing the Tamagawa measures in \cite{Ono1965} and \cite[\S 3.8]{DeligneFlicker2013}. Thus, when $n=2$ the factor of $D_F^2$ in the leading term in Conjecture  \ref{weyl-schanuel-conj} corresponds to $q^{4(g-1)}$ for function fields, and when multiplied by $|{\rm Pic}(X_0)|=q-1$ this recovers the leading term of $q^{4g-3}$. 

\end{enumerate}

\medskip

\noindent We emphasize that we are not making any conjectures about the nature of the above asymptotic counts (1) and (2) when either $F$ or $n$ is allowed to move. The above discussion is meant purely to evoke parallels with other automorphic counting problems in the literature.

\subsection{Acknowledgements}

We would like to thank Nicolas Bergeron, Andrew Booker, Laurent Clozel, James Cogdell, Guy Henniart, Emmanuel Kowalski, Erez Lapid, Dipendra Prasad, Andre Reznikov, and Peter Sarnak for various enlightening conversations. We are particularly indebted to Simon Marshall for suggestions leading to a simplification of the proof of Proposition \ref{comp-mu} and to Peter Sarnak for originally suggesting this problem. Finally, we would like to express our most sincere thanks to the referees; their extensive and detailed reports led us to make substantial improvements to the exposition.

\section{Equidistribution and Sato--Tate measures: conjectures}\label{equidsitribution-sec}

Beyond the counting statement of Conjecture~\ref{weyl-schanuel-conj}  we in fact conjecture that the universal family $\mathfrak{F}(Q)$ {\it equidistributes}, as $Q\rightarrow\infty$, to a probability measure on $\Pi(\GL_n(\A_F)^1)$ that we now explicitly identify. This allows us to properly interpret the leading term constant in the conjectural Schanuel--Weyl law and our main theorems. We expect that our techniques can be leveraged to yield a proof of these equidistribution conjectures and plan to address this in follow up work.

The universal family $\mathfrak{F}(Q)$ gives rise, by way of the embedding into $\Pi(\GL_n(\A_F)^1)$ via local components, to two automorphic counting measures
\begin{equation}
\label{CountingMeasuresDef}
\frac{1}{Q^{n+1}}\sum_{\pi\in\mathfrak{F}(Q)}\delta_\pi\qquad\text{ and }\qquad\frac{1}{|\mathfrak{F}(Q)|}\sum_{\pi\in\mathfrak{F}(Q)}\delta_\pi
\end{equation}
on $\Pi(\GL_n(\A_F)^1)$. We would like to understand their limiting behavior as $Q\rightarrow\infty$.

Recall the measure $\tau_{\mathfrak{F}}$ on $\Pi(\GL_n(\A_F)^1)$ of \S\ref{leading-term}, whose volume enters Conjecture~\ref{weyl-schanuel-conj}. Denoting by $\Pi_0(\GL_n(\A_F)^1)$ the subset of $\Pi(\GL_n(\A_F)^1)$ consisting of $\pi$ with $\pi_\infty$ spherical, the statement of our Theorem~\ref{master} verifies that
\begin{equation}
\frac{1}{Q^{n+1}}\sum_{\pi\in\mathfrak{F}(Q)}\delta_\pi(A)\longrightarrow  \frac1{n+1}\tau_{\mathfrak{F}}(A),
\end{equation}
for the sets $A=\Pi(\GL_n(\A_F)^1)$ for $n\leqslant 2$ and for $A=\Pi_0(\GL_n(\A_F)^1)$ for every $n\in\mathbb{N}$.

We conjecture that this holds more generally:
\begin{conj}[Equidistribution]
\label{master-conj}
As $Q\to\infty$,
\[ \frac{1}{Q^{n+1}}\sum_{\pi\in\mathfrak{F}(Q)}\delta_\pi\longrightarrow  \frac{1}{n+1} \tau_{\mathfrak{F}}. \]
\end{conj}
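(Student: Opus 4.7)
The plan is to upgrade Theorem~\ref{main-implication} from the bare count $|\mathfrak{F}(Q)|$ to a weighted count $\sum_{\pi\in\mathfrak{F}(Q)}\phi(\pi)$ for a sufficiently rich class of test functions $\phi$ on $\Pi(\GL_n(\A_F)^1)$. By the Fell product topology and Stone--Weierstrass, weak-$*$ convergence of the measures in \eqref{CountingMeasuresDef} reduces to verifying the limit on factorizable $\phi = \phi_\infty\cdot\prod_v\phi_v$, where at each finite place $\phi_v$ is the indicator of a local conductor stratum $\{\pi_v : c(\pi_v)\mid\p_v^{r_v}\}$ with $r_v=0$ for almost all $v$, and at infinity $\phi_\infty$ is a continuous, compactly supported function on $\Pi(\GL_n(F_\infty)^1)$, which by linearity we may assume to meet only a single discrete piece $\Pi(\GL_n(F_\infty)^1)_{\underline{\delta}}$.

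For such $\phi$, set $\q=\prod_v\p_v^{r_v}$. By the newform theory of Casselman and Jacquet--Piatetski-Shapiro--Shalika, the product $\prod_v\phi_v(\pi_v)$ is related to $\dim\pi_f^{K_1(\q)}$ by the explicit oldform/M\"obius formula already used in the proof of Theorem~\ref{main-implication} to extract the local factors of $\widehat{\rm pl}^*\!(\mathfrak{F})$. For the archimedean factor, approximate $\phi_\infty$ by Paley--Wiener localizers $h_R^{\delta,\mu}$ of Definition~\ref{defn-localizer}, and apply the Arthur trace formula to $\varepsilon_{K_1(\q)}\otimes f_R^{\delta,\mu}$: the identity contribution $J_1$, after summing over admissible $\q$ with $\norm{\q}\leqslant Q/q(\pi_\infty)$ and integrating in $\mu$ against $\phi_\infty$, produces exactly the Plancherel integral appearing on the right-hand side of Conjecture~\ref{master-conj}; the error $J_{\rm error}$ is bounded by Property (ELM); and the passage from $J_{\rm disc}$ back to $J_{\rm cusp}$ is controlled by Propositions~\ref{KR} and~\ref{temp-err} exactly as in the counting argument.

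The hard part is twofold. First, for $n\geqslant 3$ the conjecture in full generality requires Property (ELM) to hold beyond the archimedean spherical spectrum, which the authors identify as the outstanding technical obstacle in Theorem~\ref{master}; our plan therefore must await the promised alternative treatment of the weighted archimedean orbital integrals in Arthur's trace formula, currently limited to the spherical case. Second, and more novel to the equidistribution setting, one must prevent non-tempered $\pi_\infty$ from clustering in spectral regions of small Plancherel measure: the density bounds of Propositions~\ref{comp-mu} and~\ref{KR} already give the requisite uniform control in the level, but their use here requires calibration against the Plancherel majorizer $\beta_M^G(\delta,\mu)$ uniformly in the archimedean localization data $(\delta,\mu)$, so that the complementary spectrum is negligible after testing against an arbitrary continuous $\phi_\infty$. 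Once these two ingredients are secured, the equidistribution follows from the same machinery that establishes Conjecture~\ref{weyl-schanuel-conj}, with the indicator of a conductor ball replaced throughout by a product of local test functions, and with the density of $\mathfrak{F}$ in $\Pi(\GL_n(\A_F)^1)$ of Piatetski-Shapiro--Sarnak providing the underlying support statement.
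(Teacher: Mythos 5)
The statement you are addressing is a \emph{conjecture} in the paper, not a theorem: the authors explicitly state that they ``expect that our techniques can be leveraged to yield a proof of these equidistribution conjectures and plan to address this in follow up work.'' What the paper actually proves (Theorems~\ref{main-theorem}--\ref{master}) is the convergence of $Q^{-n-1}\sum_{\pi\in\mathfrak{F}(Q)}\delta_\pi$ evaluated on the two particular sets $A=\Pi(\GL_n(\A_F)^1)$ (for $n\leqslant 2$) and $A=\Pi_0(\GL_n(\A_F)^1)$, i.e.\ the counting statement. So there is no proof in the paper to compare against, and your proposal should be judged as a research plan.

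As such it contains a genuine gap at the very first step. Your reduction ``by the Fell product topology and Stone--Weierstrass'' to factorizable test functions whose finite components are indicators of conductor strata $\{\pi_v: c(\pi_v)\mid\p_v^{r_v}\}$ is not a valid reduction: the unitary dual with the Fell topology is not Hausdorff and Stone--Weierstrass does not apply, and, more importantly, functions that depend on $\pi_v$ only through its conductor cannot distinguish two representations of the same conductor (e.g.\ two unramified representations with different Satake parameters). Such test functions can only ever recover the counting statement $|\mathfrak{F}(Q)|$, never weak-$*$ convergence on $\Pi(\GL_n(\A_F)^1)$ --- which is precisely why the paper invokes Sauvageot's density principle and reduces to testing against $\pi\mapsto{\rm tr}\,\pi(f)$ for general Hecke functions $f$. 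That reduction forces one to insert nontrivial Hecke operators at the finite places, and the paper's entire trace-formula apparatus (Property (ELM), Theorem~\ref{geom-side}, the Finis--Lapid intersection-volume bounds) is established only for test functions of the form $\varepsilon_{K_1(\q)}\otimes f_\infty$; the authors even remark that ``many aspects of our argument are simplified by the absence of Hecke operators in our context.'' Extending the geometric-side estimates uniformly in a Hecke operator at the finite places is the substantive new input that equidistribution requires, and your proposal does not address it. The two difficulties you do flag (non-spherical (ELM) for $n\geqslant 3$, and uniform control of the complementary spectrum) are real but secondary to this one.
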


Conjecture \ref{master-conj} implies, in particular, the Weyl--Schanuel law (Conjecture~\ref{weyl-schanuel-conj}). It moreover implies that the closure of $\mathfrak{F}$, relative to the direct product topology, contains the support of $\tau_{\mathfrak{F}}$, namely, the everywhere tempered subspace of $\Pi(\GL_n(\A_F)^1)$. Concretely, this means that given a finite set of places $S$, and an unramified tempered representation $\sigma_S\in\Pi(\GL_n(F_S))$, one can find $\pi\in\mathfrak{F}$ whose $S$-components $\pi_S$ are arbitrarily close to $\sigma_S$. This would extend to a wide degree of generality an old observation of Piatetski-Shapiro and Sarnak~\cite{Sarnak1987} for level one Maass forms for compact congruence quotients of $\SL_n$ (see Corollary 1.3 and the discussion on page 330 of \cite{Sarnak1987}). 

\begin{remark}
By contrast, if we give $\Pi(\GL_n(\A_F))$ the {\it restricted} product topology, the set $\mathfrak{F}$ is discrete in $\Pi(\GL_n(\A_F))$; this is a point of view more adapted to computational problems of isolating and numerically computing cusp forms \cite{Sarnak2015}.
\end{remark}

Note that neither object in Conjecture \ref{master-conj} is a probability measure. By contrast, the second measure in \eqref{CountingMeasuresDef} is. We therefore set
\[ \tau_{\mathfrak{F}}^\circ=\frac{\tau_{\mathfrak{F}}}{{\rm vol}(\tau_\mathfrak{F})}; \]
this is a well-defined probability measure on $\Pi(\GL_n(\A_F)^1)$. Conjecture \ref{master-conj} then implies
\begin{equation}\label{master-conj2}
\frac{1}{|\mathfrak{F}(Q)|}\sum_{\pi\in\mathfrak{F}(Q)}\delta_\pi\longrightarrow \tau_{\mathfrak{F}}^\circ.
\end{equation}

\subsection{Sato--Tate measure}\label{sec:S-T}

Granting ourselves the statement \eqref{master-conj2}, we may identify the Sato--Tate measure $\mu_{\rm ST}(\mathfrak{F})$ of the universal family $\mathfrak{F}$.

Let us consider the local components of $\tau_{\mathfrak{F}}^\circ$. For a place $v$ we define $\tau_{\mathfrak{F},v}^\circ=\tau_{{\mathfrak{F},v}}/{\rm vol}(\tau_{\mathfrak{F},v})$, a probability measure on $\Pi(\GL_n(F_v))$ supported on the tempered spectrum; then $\tau_{\mathfrak{F}}^\circ=\prod_v \tau_{\mathfrak{F},v}^\circ$. Let $T$ denote the diagonal torus inside the Langlands dual group $\GL_n(\C)$ of $\GL_n$, and let $W$ be the associated Weyl group. For finite places $v$, the Satake isomorphism identifies the unramified admissible dual of $\mathrm{GL}_n(F_v)$ with the quotient $T/W$. It then makes sense to speak of the restriction of $\tau_{\mathfrak{F},v}^\circ$ to $T/W$, which we write (abusing notation) as $\tau_{\mathfrak{F},v}^\circ|_T$. In~\cite{SarnakShinTemplier2016}, Sarnak, Shin, and Templier define $\mu_{\rm ST}$ by the formula
\[
\mu_{\rm ST}(\mathfrak{F})=\lim_{x\rightarrow\infty}\frac{1}{x}\sum_{q_v < x}(\log q_v)\cdot \tau_{\mathfrak{F},v}^\circ|_T;
\]
thus $\mu_{\rm ST}(\mathfrak{F})$ is a measure on $T/W$, provided that the limit exists (which we shall presently show to be the case).

Note that, under the above identification, the tempered unramified unitary dual corresponds with $T_c/W$ where $T_c$ is the compact torus $U(n)\cap T$. Thus the restriction $\tau_{\mathfrak{F},v}^\circ|_T$ is supported on $T_c/W$ and we may think of the Sato--Tate measure as being defined on $T_c/W$. Now Lemma \ref{local-count} implies that for finite places $v$ the volume of $\tau_{\mathfrak{F},v}$ is given by $\Delta_v(1)\frac{\zeta_v(1)}{\zeta_v(n+1)^{n+1}}$. Thus, letting $\widehat{\omega}_v^{\rm pl}|_{T_c}$ denote the restriction of $\widehat{\omega}_v^{\rm pl}$ to $T_c/W$, we have
\[
\tau_{\mathfrak{F},v}^\circ|_{T_c}=\frac{\zeta_v(n+1)^{n+1}}{\Delta_v(1)\zeta_v(1)}\widehat{\omega}_v^{\rm pl}|_{T_c}=(1+{\rm O}(q_v^{-1}))\widehat{\omega}_v^{\rm pl}|_{T_c}.
\]
Since $\widehat{\omega}_v^{\rm pl}=\Delta_v(1)\widehat{\mu}_v^{\rm pl}$, we deduce that
\[
\mu_{\rm ST}(\mathfrak{F})=\lim_{q_v\rightarrow\infty}\widehat{\omega}_v^{\rm pl}|_{T_c}=\lim_{q_v\rightarrow\infty}\widehat{\mu}_v^{\rm pl}|_{T_c},
\]
where the latter limit is known to exist and have the following description (see, for example, \cite[p. 330]{Sarnak1987}).
\begin{cor}\label{cor-conj}
Assume Conjecture \ref{master-conj}. Then the Sato--Tate measure $\mu_{\rm ST}(\mathfrak{F})$ of the universal family is the push-forward of the probability Haar measure on $U(n)$ to $T_c/W$.\end{cor}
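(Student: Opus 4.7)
The plan is to compute the local limit $\lim_{q_v\to\infty}\widehat{\mu}_{v|T_c}^{\rm pl}$ explicitly via Macdonald's formula for the Plancherel density on the unramified tempered dual of $\GL_n(F_v)$ at a finite place $v$, and then to recognize the limiting measure as the Weyl integration density for $U(n)$ on $T_c/W$.

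First I would record the Macdonald--Plancherel formula: with the Satake parametrization identifying $T_c/W = (S^1)^n/S_n$ with the unramified tempered dual, and Haar measure on $\GL_n(F_v)$ normalized so that $\GL_n(\mathfrak{o}_v)$ has volume $1$, one has
\[
{\rm d}\widehat{\mu}_{v|T_c}^{\rm pl}(t)=\frac{1}{|W|}\prod_{1\leqslant i<j\leqslant n}\frac{|1-t_i/t_j|^2}{|1-q_v^{-1}t_i/t_j|^2}\,{\rm d}t,
\]
where ${\rm d}t$ is the normalized Haar measure on $T_c$. This is the standard expression ${\rm d}\widehat{\mu}_{v|T_c}^{\rm pl} = |W|^{-1}|c(t)|^{-2}\,{\rm d}t$ in terms of Harish-Chandra's $c$-function for the split group $\GL_n$, and its total mass on $T_c/W$ is $1$ by Plancherel inversion against the unit of the spherical Hecke algebra.

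Next, I would pass to the limit $q_v \to \infty$. Since $|t_i/t_j| = 1$ on $T_c$, each denominator $|1 - q_v^{-1}t_i/t_j|^2$ tends to $1$ uniformly in $t$, the numerator is independent of $q_v$, and the whole density is uniformly bounded on the compact quotient $T_c/W$. Dominated convergence then yields the weak convergence
\[
\lim_{q_v\to\infty}\widehat{\mu}_{v|T_c}^{\rm pl}=\frac{1}{|W|}\prod_{1\leqslant i<j\leqslant n}|1-t_i/t_j|^2\,{\rm d}t.
\]

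Finally, I would invoke the Weyl integration formula for $U(n)$: writing ${\rm d}g$ for the Haar probability measure on $U(n)$, every continuous class function $f$ satisfies
\[
\int_{U(n)}f(g)\,{\rm d}g=\frac{1}{|W|}\int_{T_c}f(t)\prod_{1\leqslant i<j\leqslant n}|1-t_i/t_j|^2\,{\rm d}t.
\]
This identifies the limit density above as the push-forward of ${\rm d}g$ under the diagonalization map $U(n) \to T_c/W$, which, combined with the identity $\widehat{\mu}_{\rm ST}(\mathfrak{F})=\lim_{q_v\to\infty}\widehat{\mu}_{v|T_c}^{\rm pl}$ established in the preceding paragraph, completes the proof. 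No substantive obstacle arises; the only nontrivial input is the explicit Macdonald formula, and the rest is the Weyl integration formula.
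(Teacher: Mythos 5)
Your argument is correct and follows essentially the same route as the paper, which reduces $\widehat{\mu}_{\rm ST}(\mathfrak{F})$ to $\lim_{q_v\to\infty}\widehat{\mu}^{\rm pl}_{v\,|T_c}$ in the preceding paragraph and then simply cites this limit as well known, whereas you supply the Macdonald-formula computation and the Weyl integration identification explicitly. One small correction: the density you wrote down does not have total mass $1$ as claimed --- Macdonald's formula carries an extra constant (a $q_v$-multinomial factor, equal to $1+q_v^{-1}$ for $n=2$) in front of $|W|^{-1}\prod_{i<j}|1-t_i/t_j|^2\,|1-q_v^{-1}t_i/t_j|^{-2}$ to normalize the mass --- but since that constant is $1+\mathrm{O}(q_v^{-1})$, the limit and hence your conclusion are unaffected.
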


Using the Weyl integration formula, we have
\[
\mathrm{d}\mu_{\rm ST}(\mathfrak{F})(e^{it_1},\ldots ,e^{it_n})=\frac{1}{n!}\prod_{j<k}\big|e^{it_j}-e^{it_k}\big|^2\,\frac{\mathrm{d}t_1}{2\pi}\cdots\frac{\mathrm{d}t_n}{2\pi}.
\]
In particular, it follows from Corollary \ref{cor-conj} that the indicators
\[
i_1(\mathfrak{F})=\int_T|\chi(t)|^2 \,{\rm d}\mu_{\rm ST}(\mathfrak{F})(t),\quad i_2(\mathfrak{F})=\int_T\chi(t)^2\,{\rm d}\mu_{\rm ST}(\mathfrak{F})(t),\quad i_3(\mathfrak{F})=\int_T \chi(t^2)\,{\rm d}\mu_{\rm ST}(\mathfrak{F})(t)
\]
introduced in~\cite{SarnakShinTemplier2016}, where $\chi(t)={\rm tr}(t)$, take values $i_1(\mathfrak{F})=1$, $i_2(\mathfrak{F})=0$, and $i_3(\mathfrak{F})=0$ on the universal family. This is consistent with the expectation that the universal family $\mathfrak{F}$ is of unitary symmetry type.

\section{Outline of the proof}\label{section:outline}

To set up the proofs of Theorems \ref{main-theorem}--\ref{master}, we begin by decomposing the universal family according to discrete data:
\begin{enumerate}
\item the first such datum is the arithmetic conductor ideal $\q$ in the ring of integers of $F$ (see \S\ref{sec:global-cond});
\item the second is an archimedean spectral parameter, which enters through the decomposition of the admissible dual of $\GL_n(F_\infty)^1$ into a disjoint union over a discrete set of parameters $\underline{\delta}\in\mathcal{D}$. See \S\ref{disc-aut} for more details.
\end{enumerate}
The set $\mathcal{D}$ consists of equivalence classes of square-integrable representations on Levi subgroups of $\GL_n(F_{\infty})$. More precisely, the elements $\underline{\delta}$ of $\mathcal{D}$ are conjugacy classes of pairs $(M,\delta)$ consisting of a cuspidal Levi subgroup $M$ and an essentially discrete series representation $\delta$ of $M$. We can represent any class $\underline{\delta}\in\mathcal{D}$ by a square-integrable representation $\delta$ of $M^1=M/A_M$, where $M$ is a standard Levi subgroup (with blocks of descending size) and $A_M$ is the split component of its center. We shall call $(M,\delta)$ a \textit{standard representative} of $\underline{\delta}$.

It remains to impose a condition on the continuous archimedean spectral parameter. This can be done by specifying a nice $W(A_M)_\delta$-invariant subset $\Omega$ of the $\delta$-unitary spectrum $\mathfrak{h}_{\delta,{\rm un}}^*$, defined in \eqref{delta-unitary}. For example, if $\underline{\delta}\in\mathcal{D}$ has standard representative $(M,\delta)$, the set
\begin{equation}
\label{DefinitionOmegaX}
\Omega_{\underline{\delta},X}=\{\nu\in\mathfrak{h}_{\delta,{\rm un}}^*: q(\pi_{\delta,\nu})\leqslant X\}
\end{equation}
selects unitary representations $\pi_\infty\in\Pi(\GL_n(F_\infty)^1)_{\underline{\delta}}$ of archimedean conductor $q(\pi_\infty)\leqslant X$.

Given an ideal $\q$ and archimedean spectral data $[\delta,\Omega]$ as above, let $\mathfrak{H}(\q,\underline{\delta},\Omega)$ denote the set of $\pi\in\mathfrak{F}$ such that $\q(\pi)=\q$, $\underline{\delta}_\pi=\underline{\delta}$, and $\nu_\pi\in\Omega$. Then
\begin{equation}\label{obvious-decomp}
|\mathfrak{F}(Q)|=\sum_{1\leqslant\norm\q\leqslant Q}\sum_{\underline{\delta}\in\mathcal{D}}|\mathfrak{H}(\q,\underline{\delta},\Omega_{\underline{\delta},Q/\norm\q})|.
\end{equation}
In the parlance of \cite{SarnakShinTemplier2016,ShinTemplier2016}, the set $\mathfrak{H}(\q,\underline{\delta},\Omega)$ is what is called a {\it harmonic family}. One of the hallmarks of a harmonic family is that it can be studied by means of the trace formula. For this reason, it will be more convenient to work with the weighted sum
\begin{equation}\label{weightedsum}
N(\q,\underline{\delta},\Omega)=\sum_{\pi\in\mathfrak{H}(\q,\underline{\delta},\Omega)} \dim V_{\pi_f}^{K_1(\q)},
\end{equation}
which counts each $\pi\in\mathfrak{H}(\q,\underline{\delta},\Omega)$ with a weight corresponding to the dimension of the space of old forms for $\pi_f$. The quantities $|\mathfrak{H}(\q,\underline{\delta},\Omega)|$ and $N(\q,\underline{\delta},\Omega)$ can be related via newform theory, yielding
\begin{equation}\label{univ-decomp}
|\mathfrak{F}(Q)|=\sum_{1\leqslant\norm\q\leqslant Q}\; \sum_{\dd|\q}
\sum_{\underline{\delta}\in\mathcal{D}}\lambda_n(\q/\dd)N(\dd,\underline{\delta},\Omega_{\underline{\delta},Q/\norm\q}),
\end{equation}
where $\lambda_n=\mu\star\cdots\star\mu$ is the $n$-fold Dirichlet convolution of the M\"obius function on $F$. Indeed, from the dimension formula \eqref{eq:Reeder} of Reeder we deduce
\[
N(\q,\underline{\delta},\Omega)=\sum_{\dd\mid\q}\;\sum_{\pi\in \mathfrak{H}(\dd,\underline{\delta},\Omega)}d_n(\q/\dd)=\sum_{\dd\mid\q}d_n(\q/\dd)|\mathfrak{H}(\dd,\underline{\delta},\Omega)|,
\]
where $d_n=1\star\cdots\star 1$ is the Dirichlet inverse to $\lambda_n$. This equality holds for every integral ideal $\q$ and is hence an equality of arithmetical functions. Since the inverse of $d_n(\m)$ under Dirichlet convolution is $\lambda_n(\m)$, M\"obius inversion yields
\[
|\mathfrak{H}(\q,\underline{\delta},\Omega)|=\sum_{\dd|\q}\lambda_n(\q/\dd)N(\dd,\underline{\delta},\Omega).
\]
Taking $\Omega=\Omega_{\delta,Q/\norm\q}$, the claim \eqref{univ-decomp} then follows from \eqref{obvious-decomp}.

From this point, the proof of Theorem \ref{main-implication} proceeds as follows. We approximate $N(\q,\underline{\delta},\Omega)$ by the discrete spectral distribution $J_{\rm disc}$ of the trace formula, using a test function which 
\begin{enumerate}
\item {\it exactly picks out} the weight $\dim V_{\pi_f}^{K_1(\q)}$ and the condition $\underline{\delta}_\pi=\underline{\delta}$,
\item but which {\it smoothly approximates} the condition $\nu_\pi\in \Omega$, with the auxiliary parameter $R>0$ controlling the degree of localization. 
\end{enumerate}
The quality of this approximation is estimated in Part \ref{part1}, where we execute the passage from smooth to sharp count of the tempered spectrum in harmonic families. We obtain asymptotic results on the size of the spectrum and strong upper bounds on the size of the complementary spectrum for individual large levels $\q$, which are of independent interest. Here it should be noted that we require uniformity in $\q$, $\underline{\delta}$, and the domain $\Omega$, as all of them vary in our average~\eqref{univ-decomp}.

The successful execution of these steps of course depends on the trace formula input, which enters our argument through suitable applications of Property (ELM). Summing over $\q$ and appropriate spectral data as in \eqref{univ-decomp} then proves Theorem \ref{main-implication}.

\subsection{Prototypical example: classical Maass forms}\label{Approach2}

Since much of the work required to prove Theorem \ref{main-implication} involves the treatment of the continuous parameter $\nu_\pi$, it makes sense to illustrate the difficulties by describing the simplest case, where we restrict to the spherical spectrum for $\mathrm{GL}_2$ over $\mathbb{Q}$, consisting of even Maa{\ss} cusp forms. In classical language, we seek an asymptotic for the number of Hecke--Maa{\ss} cuspidal newforms on congruence quotients $Y_1(q)=\Gamma_1(q)\,\backslash\,\mathbb{H}$ of level $q$ and Laplacian eigenvalue $\lambda=1/4+r^2$ satisfying the bound $q(1+|r|)^2\leqslant Q$.

\subsubsection{Why existing results are insufficient}

A familiar environment for automorphic counting problems is that of Weyl's law. A Weyl law for $\GL_2$ over $\Q$, which is uniform in the level $q$, can be found in \cite[Corollary 3.2.3]{Palm2012}, where it is shown that $N_{\Gamma_1(q)}(T)$, the count of the cuspidal spectrum on $Y_1(q)$ with spectral parameter up to $T$, satisfies
\begin{equation}\label{MP}
 N_{\Gamma_1(q)}(T)=\frac{\mathrm{Vol}(Y_1(q))}{4\pi}T^2-\varphi(q)d(q)\frac{2}{\pi}T\log T+\mathrm{O}(q^2T).
 \end{equation}
Since $\mathrm{Vol}(Y_1(q))\asymp q^2$, by taking $1+T=\sqrt{Q/q}$ and summing over $q$, one expects the main term in the asymptotic for $|\mathfrak{F}(Q)|$ to be of size $\sum_{q\leqslant Q}q^2\big(\sqrt{Q/q}-1\big)^2\asymp Q^3$. Unfortunately, the total error term is also of size $Q^3$.

We see from this that one cannot simply sum \eqref{MP} over $q$ to count the universal family. This is not surprising, since when the level $q$ is of size comparable to $Q$ and $1+T=\sqrt{Q/q}$ is bounded, the error term in \eqref{MP} is of the same size as the main term, yielding only an upper bound. The loss of information in this range is deadly, since limit multiplicity theorems \cite{Sarnak1987} (or  Conjecture~\ref{weyl-schanuel-conj} more generally) suggest that $N_{\Gamma_1(q)}(1)\asymp q^2$, which would then in turn show that the bounded eigenvalue range contributes to $|\mathfrak{F}(Q)|$ with positive proportion. 

The important point here is that we cannot assume even a condition of the form $T\geqslant\frac1{100}$ if we wish to recover the correct leading constant in Theorem~\ref{main-implication}, since the complementary range contributes with positive proportion to the universal count.

\subsubsection{Weak spectral localization}

For our purposes, what we require from a uniform Weyl law for $N_{\Gamma_1(q)}(T)$ is an error term that is not only uniform in $q$ but in fact gives explicit savings in $q$ for bounded $T$. The gain in such an error term is the measure by which one can localize about a given eigenvalue in the cuspidal spectrum of $Y_1(q)$. For example, \eqref{log-q-savings} below implicitly requires the ability to estimate this spectrum for $T$ in ranges of size comparable to $1/\log q$.

Nevertheless, observe that even a modest improvement in the $q$-dependence in the error term with a complete loss of savings in the eigenvalue aspect -- something of the form
\begin{equation}\label{log-q-savings}
 N_{\Gamma_1(q)}(T)=\frac{\mathrm{Vol}(Y_1(q))}{4\pi}\left(\int_{-T}^Tr\tanh\pi r\, {\rm d}r+\mathrm{O}\left(\frac{1+T^2}{\log q}\right)\right)
\end{equation}
for $T>0$ -- is sufficient and yields an asymptotic of the form $c_0Q^3+\mathrm{O}(Q^3/\log Q)$, with an absolute $c_0>0$. The gain by $\log q$ in the error term, along with the absence of savings in the $T$ aspect, in the above expression coincides precisely with the type of estimate for error terms that we have encoded into Property (ELM).

Note that the demands one places on the savings in the $T$- and $q$-aspects are on unequal footings: we lose if we fail to show savings in the $q$-aspect (which is hard to acquire), but can afford to use the trivial bound in $T$ (which is easy to improve). For example, when $q=1$ we may clearly get by with the bound of $T^2$ -- or worse! This is essentially due to the fact that the parameter $T$ corresponds to one place only, whereas $q$ encodes all finite places.

\subsubsection{Correspondence with expanding geometric support}

One approaches \eqref{log-q-savings} through an application of the Selberg trace formula, which states
\begin{equation}
\label{SelbergTraceFormula}
\sum_{j\geqslant 0}h(r_j)=\frac{\mathrm{vol}(Y_1(q))}{4\pi}\int_{\mathbb{R}}h(r) r\tanh\pi r\,\text{d}r+\sum_{[\gamma]}f(\log N\gamma)\frac{\log N\gamma_0}{N\gamma^{1/2}-N\gamma^{-1/2}}+\dots,
\end{equation}
where $h$ is an even Paley--Wiener function, $\frac12+ir_j$ runs through spectral parameters of cuspidal eigenforms on $Y_1(q)$, $f$ is the inverse Fourier transform of $h$, and $[\gamma]$ runs through hyperbolic conjugacy classes in $\Gamma_1(q)$. Here, the real number $N\gamma>1$ satisfies ${\rm tr}\,\gamma= N\gamma^{1/2}+N\gamma^{-1/2}$, and $[\gamma_0]$ is the unique primitive hyperbolic conjugacy class associated with $[\gamma]$. The remaining terms arise from non-hyperbolic conjugacy classes on the geometric side and the Eisenstein spectrum on the spectral side. 

One generally works with functions $h$ which approximate the characteristic function $\chi_I$ of a spectral interval $I$ (or ball, in higher rank). One way of constructing such $h$ is by convolving $\chi_I$ with a suitably nice $h_0\in\mathcal{PW}(\C)$, centered at the origin, or, in fact, for a parameter $R>1$, with the rescaled function $\nu\mapsto h_0( R\nu)$, which makes the walls around the interval $I$ of length $1/R$. For example, if $I=[-T,T]$, we may essentially localize $r$ to $[-T+\mathrm{O}(1/R),T+\mathrm{O}(1/R)]$ and the de-smoothing process in Weyl's Law incurs an error of size $T/R$.

Note, however, that the spectral test functions $h$ obtained in this way, while serving our purposes, have two inevitable drawbacks:
\begin{enumerate}
\item they assign an exponential weight to the non-tempered spectrum (as large as $e^{CR|r_j|}$ for $\frac12+ir_j$, $r_j\not\in\mathbb{R}$),
\item have Fourier transforms $f$ supported on a ball of radius $R$ about the origin.
\end{enumerate}
In the remaining subsections of this exposition of the Maass case, we discuss how we deal with the obstacles created by these two properties.

\subsubsection{Exponentially weighted discrete spectrum}

After estimating the contributions from the Eisenstein series and the non-identity terms in the geometric side, a Weyl law of the form \eqref{log-q-savings} follows by converting the smooth count of \eqref{SelbergTraceFormula} to a sharp-cutoff. This conversion requires local bounds on the discrete spectrum, which itself involves another application of the trace formula. See \cite[Section 2]{LapidMuller2009} for a nice overview of this, by now, standard procedure.

Note, however, that by the first drawback above, the bounds on the discrete spectrum we require are exponentially weighted by the distance to the tempered axis. Estimating this weighted count is closely related to density estimates for exceptional Maass forms. In the context of quotients by the upper half-plane this is classical, but in higher rank a delicate construction of positive-definite dominating test functions is required. This is described in more detail in \S\ref{S10-desciption}.

\subsubsection{Expanding geometric side}\label{sec-low-lying-length-spectrum}

In the case of a fixed level and large eigenvalue, it is possible to localize $r$ within ${\rm O}_{\Gamma_1(q)}(1)$, without seeing any of the hyperbolic spectrum. In light of the Prime Geodesic Theorem, which states that
\begin{equation}
\label{PrimeGeodesicTheorem}
\#\{\text{primitive }\gamma:\log N\gamma\leqslant T\}\sim_{\Gamma_1(q)}e^T/T,
\end{equation}
this approach can be pushed to the limit by entering up to $\mathrm{O}_{\Gamma_1(q)}(\log T)$ of the hyperbolic spectrum, which leads to the familiar (and currently best available) error term $\mathrm{O}_{\Gamma_1(q)}(T/\log T)$ in a refinement of \eqref{MP}, for a fixed level $q$. In fact, it is well-known that a purely analytic use of the trace formula can only give $\log T$ savings over the local Placherel bound even in the upper bounds for multiplicities of Laplacian eigenvalues.  Analogous reasoning holds in the $q$-aspect, as we now describe.

Estimates on $N_{\Gamma_1(q)}(T)$ in bounded ranges of $T$ with error terms that feature explicit savings in $q$ correspond to instances of \eqref{SelbergTraceFormula} such that the support of $f$ is expanding for large $q$; thus, controlling the number and magnitude of conjugacy classes of $\gamma\in\Gamma_1(q)$ in \eqref{SelbergTraceFormula} is an essential ingredient in any limit multiplicity-type statement. One can use effective Benjamini--Schramm type statements \cite{AbertBergeronBiringerGelanderNikolovRaimbaultSamet2017}, adapted to this non-compact setting \cite{Raimbault2017}, to show that the number of closed geodesics of length at most $R$ in $Y_1(q)$ is at most ${\rm O}(e^{CR})$, for some constant $C>0$.\footnote{For the example $Y_1(q)$, an elementary argument shows that there are in fact no closed geodesics of length $\ll\log q$. However, this fact is not robust: it already disappears for $\Gamma_0(q)$ or for the analog of $\Gamma_1(q)$ over number fields.} This control allows us to use functions $h$ arising as Fourier transforms of functions supported up to $\mathrm{O}(\log q)$. After some work\footnote{Working directly with the Selberg trace formula, and being less wasteful in the $T$ aspect, would yield a $\mathrm{O}(q^2T/\log q)$ error term in \eqref{MP}.} to estimate all other contributions to \eqref{SelbergTraceFormula}, we obtain \eqref{log-q-savings}. More precisely, Proposition \ref{MainCountingResultGLn}, when specialized to the case of $\GL_2$ over $\Q$ yields the following refinement of \eqref{log-q-savings}: there is $\theta>0$ such that
\begin{equation}\label{eq:GL2Q-uniform-Weyl}
N_{\Gamma_1(q)}(T)=\frac{\mathrm{Vol}(Y_1(q))}{4\pi}\left(\int_{-T}^Tr\tanh\pi r\, {\rm d}r+\mathrm{O}\left(\frac{\min \{T,T^2\}}{\log q}+\frac{1}{\log^3 q}+q^{-\theta}T^2\right)\right)
\end{equation}
for all $T>0$. Taking $T=1/\log q$, this in particular implies that the dimension of the $\lambda=1/4$ eigenspace is at most ${\rm O}(\mathrm{Vol}(Y_1(q))/\log^3 q)$.

\subsection{Overview of Part \ref{part1}}
 
We now return to the general setting, and describe in more detail the contents of each section.

\subsubsection*{Section~\ref{sec:mainimplicationproof}: Preliminaries}
In this section, we set up the notation for Proposition \ref{MainCountingResultGLn}, the basic estimate of Part \ref{part1}. This result roughly states that for fixed discrete data $\q$ and $\underline{\delta}$, and a nice set $P$ in the tempered subspace $i\mathfrak{h}_M^*$, Property (ELM) allows us to control the difference between the sharp count $N(\q,\underline{\delta},P)$ of \eqref{weightedsum} and the expected main term $D_F^{n^2/2}\Delta_F^*(1)\varphi_n(\q)\int_P{\rm d}\widehat{\omega}^{\rm pl}_\infty$. The error terms appearing in Proposition \ref{MainCountingResultGLn} depend on an approximation parameter $1\leqslant R\ll \log (2+\norm\q)$ and arise from several sources, namely, the passage from smooth to sharp count (in Section~\ref{sec:smooth-to-sharp-tempered}), the estimation of complementary spectrum (in Section~\ref{ExceptionalSpectrumSection}), and the applications of Property (ELM). As we explain in Section~\ref{sec:mainimplicationproof}, Proposition~\ref{MainCountingResultGLn} provides a Weyl law with explicit level savings and uniformity in the region $P$ and the discrete data.

\subsubsection*{Section \ref{spec-loc}: Spectral localizers}
In this section we define various archimedean Paley--Wiener functions $h_R^{\delta,\mu}$ which localize around given spectral parameters $(\delta,\mu)$, for $\mu\in i\h_M^*$, and provide some basic estimates for their analytic behavior. Similarly, we define $h_R^{\delta,P}$ for nice subsets $P$ of $i\h_M^*$, with the basic idea that the characteristic function $\chi_P(\nu)$ is very well approximated by $h_R^{\delta,P}(\nu)$ on points $\nu$ that are firmly inside or outside $P$. The test functions $f_R^{\delta,\mu}$ or $f_R^{\delta,P}$ associated with these spectral localizers, through an invocation of the Paley--Wiener theorem of Clozel--Delorme, will be used in the trace formula to prove Proposition \ref{MainCountingResultGLn}.

\subsubsection*{Section~\ref{ExceptionalSpectrumSection}: Exponentially weighted discrete spectrum}\label{S10-desciption} We would like to approximate $N(\q,\underline{\delta},P)$ using $J_{\rm temp}(\varepsilon_{K_1(\q)}\otimes f_R^{\delta,P})$, the contribution of the tempered discrete spectrum to the trace formula. But an application of Property (ELM) requires working with $J_{\rm disc}$ rather than $J_{\rm temp}$. We must therefore control the difference
\begin{equation}\label{sec3.2-defn-J-comp}
J_{\rm comp}(\varepsilon_{K_1(\q)}\otimes f_R^{\delta,P})=J_{\rm disc}(\varepsilon_{K_1(\q)}\otimes f_R^{\delta,P})-J_{\rm temp}(\varepsilon_{K_1(\q)}\otimes f_R^{\delta,P}).
\end{equation}
Note that the spectral sampling functions $h_R^{\delta,\mu}$ and $h_R^{\delta,P}(\nu)$, being of Paley--Wiener type, act differently on spectral parameters $\nu_{\pi}$ off the tempered spectrum $i\mathfrak{h}_M^{\ast}$: they exhibit exponential growth in $\|{\rm Re}\,\nu_{\pi}\|$. In fact, the rate of exponential growth is directly related to the size $R$ of the support of the test functions used on the geometric side; see \S\ref{PWCD} for details. For this reason, the contributions from $\pi\in \Pi_{\rm disc}(\bm{G}(\A_F)^1)_{\underline{\delta}}$ for which $\pi_{\infty}$ is not tempered must be estimated separately; specifically, for a suitable parameter $R>0$ we require an upper bound for the exponentially weighted sum of the shape
\[
\sum_{\substack{\pi\in \Pi_{\rm disc}(\GL_n(\A_F)^1)_{\underline{\delta}}\\ \|{\rm Im}\, \nu_{\pi}-\mu\|\ll 1/R}}\dim V_{\pi_f}^{K_1(\q)}e^{R\|{\rm Re}\,\nu_{\pi}\|}.
\]
This is majorized, using an application of Property (ELM) and a very delicate construction of archi\-mede\-an positive-definite dominating test functions, in Section \ref{ExceptionalSpectrumSection}, combining inputs from Sections \ref{ExceptionalSpectrumSection} and \ref{sec:Paley-Wiener}. The principal difficulty in this construction is that every spectral localizer which detects, with exponential weights, the desired exceptional eigenvalues $\nu_{\pi}$ with $\|\mathrm{Im}\,\nu_{\pi}-\mu\|\ll 1/R$ also picks up other unwanted, and difficult to control, terms. We separate such ``good'' and ``bad'' contributions and construct the dominating test function in \S\ref{existence-test-functions-subsection}, using a delicate combination of geometric and analytic ingredients. Two main ingredients of geometric nature (the inductive construction of ``good'' and ``bad'' tubular neighborhoods and a lemma guaranteeing that the real and imaginary parts of ``bad'' contributions are out of alignment) are proved in \S\ref{good-bad-subsec} and \S\ref{angles-subsec}, while the purely analytic construction of Paley--Wiener functions with desirable asymptotics is the subject of Section \ref{sec:Paley-Wiener}.

For suitable $R\ll\log(2+\norm\q)$, the upper bound of Proposition~\ref{comp-mu} is typically comparable to the expected contribution of the tempered spectrum in the above sum. In fact, we achieve an additional savings by a power of $R$ in this majorization when the spectral center $\mu$ is singular, coming from the degree of vanishing of the Plancherel measure at $\mu$. In particular, this latter condition is automatic when estimating the exceptional spectrum, as in Proposition~\ref{KR}. This observation accounts for the extra savings by $\log^3Q$ in the contribution to $|\mathfrak{F}(Q)|$ of cusp forms which are non-tempered at infinity.

\subsubsection*{Section~\ref{sec:Paley-Wiener}: A Paley--Wiener function} In this purely analytic section, we construct a Paley--Wiener function $h(\nu)$ such that, for sizable $\mathrm{Re}\,\nu\gg 1/R$, $h(R\nu)$ exhibits exponential growth in $R\|\mathrm{Re}\,\nu\|$ if $\mathrm{Im}\,\nu$ is small (the ``good'' terms in the application in Section \ref{ExceptionalSpectrumSection}), and is sufficiently smaller if $\mathrm{Im}\,\nu$ is a bit bigger and away from full alignment with $\mathrm{Re}\,\nu$ (the ``bad'' terms). At the heart of this argument are new asymptotics for the complex Fourier transform of the smooth radial bump function on the unit ball, which we prove by a rather intricate application of the method of stationary (complex) phase and which may be of independent interest.

\subsubsection*{Section~\ref{sec:smooth-to-sharp-tempered}: Smooth to sharp for tempered parameters} Here we put to use the preceding results to prove Proposition \ref{MainCountingResultGLn}. Using the analytic properties of $h_R^{\delta,P}$, we first identify $N(\q,\underline{\delta},P)$ as the sum of $J_{\rm temp}(\varepsilon_{K_1(\q)}\otimes f_R^{\delta,P})$ with a boundary error, of the form $N(\q,\underline{\delta},\partial P(1/R))$, where $\partial P(1/R)$ is the $1/R$-fattened boundary of $P$. Then, using the results from Section \ref{ExceptionalSpectrumSection}, the term $J_{\rm temp}(\varepsilon_{K_1(\q)}\otimes f_R^{\delta,P})$ is amenable to the application of Property (ELM). Finally, we show that the boundary contributions can also be estimated from above by smooth sums, which can in turn be estimated by further applications of Property (ELM). 

\subsubsection*{Section~\ref{sec:temp-cor}: Summing error terms over discrete data} With Proposition \ref{MainCountingResultGLn} established, we can sum $N(\q,\underline{\delta},P)$ over all discrete parameters $\underline{\delta}$ and levels $\q$ to obtain the full count $|\mathfrak{F}(Q)|$ in \eqref{univ-decomp}. Bounding the resulting averages of errors terms proves Theorem \ref{main-implication}.

\subsection{Overview of Part \ref{part3}}\label{blurb}

In Part \ref{part3}, we establish Theorem \ref{master}. The proof naturally divides into two parts, corresponding to bounding $J_{\rm geom}-J_{\rm cent}$ on the geometric side and $J_{\rm spec}-J_{\rm disc}$ on the spectral side. The estimations are not symmetric in the way they are proved, nor in the degree of generality in which they are stated. We would like to briefly describe these results here, and in particular explain why we are at present unable to establish Property (ELM) in all cases.

The main result on the geometric side is Theorem~\ref{geom-side} in which we show the existence of constants $C,\theta>0$ such that for any $R>0$, integral ideal $\q$, and test function $f\in\mathcal{H}(\GL_n(F_\infty)^1)_R$, we have
\begin{equation}\label{intro:geom}
J_{\rm geom}(\varepsilon_{K_1(\q)}\otimes f)-J_{\rm cent}(\varepsilon_{K_1(\q)}\otimes f)\ll e^{CR}\norm\q^{n-\theta}\|f\|_\infty.
\end{equation}
This can be thought of as a sort of geometric limit multiplicity theorem, although it is only non-trivial in the $\q$ aspect. The exponential factor $e^{CR}$ should be compared to \eqref{PrimeGeodesicTheorem}. The latter shows that $R\ll\log\norm\q$ is an allowable range in which the main term dominates. The proof of Theorem~\ref{geom-side} occupies most of Sections \ref{sec:geom1} and \ref{sec:geom2}. Indeed, in Section \ref{sec:geom1} we reduce the problem to a local one, and in Section \ref{sec:geom2} we bound the relevant local weighted orbital integrals. 

The proof of our local estimates relies crucially on several recent developments, due to Finis--Lapid, Matz, Matz--Templier, and Shin--Templier. In particular, a central ingredient in the power savings in $\norm\q$ comes from the work of Finis--Lapid \cite{FinisLapid2018} on the intersection volumes of conjugacy classes with open compact subgroups. On the other hand, the source of the factor $\|f\|_\infty$ comes from estimating archimedean weighted orbital integrals {\it trivially}, by replacing $f$ by the product of $\|f\|_\infty$ with the characteristic function of its support. As the latter is, by hypothesis, contained in $\bfk_\infty \exp(B(0,R))\bfk_\infty$, it is enough then to have polynomial control in the support of the test function on these weighted orbital integrals. This can be viewed as the archimedean analogue of polynomial control in the Hecke depth aspect, as developed in a variety of contexts by the aforementioned authors. We in fact extract this from a careful reading of the papers of Matz \cite{Matz2017} and Matz--Templier \cite{MatzTemplier2021}. 

Comparing the bound \eqref{intro:geom} to the statement of Property (ELM), it is clear that if one takes $f=f_R^{\delta,\mu}$, then one wants to understand $\|f_R^{\delta,\mu}\|_\infty$ in terms of $R^{-\dim\mathfrak{h}_M}\beta_M^G(\delta,\mu)$, which, as we discuss after Property (ELM), is a natural majorizer of the Plancherel mass of $h_R^{\delta,\mu}$. It is at this point that we impose the condition that for $n>2$ the discrete parameter $\underline{\delta}$ is the trivial character on the torus (and omit it from the notation). In this case, the Paley--Wiener functions that we use for spectral localization can be inverted by integration against the spherical function $\varphi_{\nu}$. Since $\|\varphi_{\nu}\|_\infty\leqslant 1$ for tempered parameters $\nu$, we obtain $\|f^{\mu}_R\|_\infty\leqslant \|h_R^{\mu}\|_{L^1(\widehat{\omega}_\infty^{\rm pl})}$, as desired. For $\GL_2$, we use a slightly more general inversion formula, valid for $\tau$-spherical functions, where $\tau$ is an arbitrary $\bfk_\infty$-type; see \S\ref{sec:sph}. In any case, the test functions $f_R^{\delta,\mu}$ which we use in Property (ELM) are all defined in Section \ref{PW}, and their main properties are summarized in Proposition \ref{test-fn-bd}.

On the spectral side, our main result is Theorem \ref{spec-est}, which roughly states that, for $R\ll\log \norm\q$,
\[
J_{\rm spec}(\varepsilon_{K_1(\q)}\otimes f_R^{\delta,\mu})-J_{\rm disc}(\varepsilon_{K_1(\q)}\otimes f_R^{\delta,\mu})\ll \norm\q^{n-\theta} R^{-\dim\mathfrak{h}_M}\beta_M^G(\delta,\mu)
\]
for some $\theta>0$. The argument uses induction on $n$. Indeed the above difference can be written as a sum over proper standard Levi subgroups $\bm{M}\neq \bm{G}$ of $J_{{\rm spec}, \bm{M}}(\varepsilon_{K_1(\q)}\otimes f_R^{\delta,\mu})$, and each $\bm{M}$ is a product of $\GL_m$'s for $m<n$. The induction step itself relies critically on several ingredients. Besides the bounds on the geometric side of the trace formula of Sections \ref{sec:geom1} and \ref{sec:geom2}, and the properties of the test functions of Section \ref{PW}, the proof uses in an essential way the Tempered Winding Number property of \cite{FinisLapidMuller2015} and the Bounded Degree Property of \cite{FinisLapidMuller2012}. Our presentation follows that of several recent works, such as \cite[\S 15]{Matz2017}, but differs in that we make explicit the dependence in the parameter $R$ and in the level $\q$.

Putting all estimates together, we prove Theorem \ref{master} in Theorem \ref{money-cor}.

\part{Preliminaries}

\section{Global structures and the Arthur trace formula}\label{sec:ATF}

The goal of this section is to put in place the basic notation associated with the algebraic group $\bm{G}=\GL_n$ defined over a number field $F$, and then to state the non-invariant Arthur trace formula, for use in Part \ref{part3} of the paper.

\subsection{Field notation}\label{field}

We recall some standard notation relative to the number field $F$. 

Let $d=[F:\Q]$ be the degree of $F$ over $\Q$. Let $r_1$ and $r_2$ be the number of real and inequivalent complex embeddings of $F$, so that $r_1+2r_2=d$. Write $r=r_1+r_2$. Then $r$ is the number of inequivalent embeddings of $F$ into $\C$. Let $\mathcal{O}_F$ be the ring of integers of $F$. For an ideal $\mathfrak{n}$ of $\mathcal{O}_F$ let $\norm (\mathfrak{n})=|\mathcal{O}_F/\mathfrak{n}|$ be its norm. Write $D_F$ for the absolute discriminant of $F$.

For a normalized valuation $v$ of $F$, inducing a norm $|\cdot |_v$, we write $F_v$ for the completion of $F$ relative to $v$. For $v<\infty$ let $\mathcal{O}_v$ be the ring of integers of $F_v$, $\p_v$ the maximal ideal of $\mathcal{O}_v$, $\varpi_v$ any choice of uniformizer, and $q_v$ the cardinality of the residue field.

Let $\zeta_F(s)=\prod_{v<\infty}\zeta_v(s)$ for ${\rm Re}(s)>1$ be the Dedekind zeta function of $F$. Write $\zeta_F^*(1)$ for the residue of $\zeta_F(s)$ at $s=1$. We let $\A_F$ denote the ring of adele ring of $F$ and $\A_f$ the ring of finite adeles.

\subsection{Levi and parabolic subgroups}\label{sec:subgp}

We let $\bm{G}=\GL_n$, viewed as an algebraic group defined over $F$. Let $\bm{P_0}$ denote the standard Borel subgroup  of upper triangular matrices and $\bm{T}_0$ the diagonal torus of $\bm{G}$. Let $\Phi^{\bm{G}}=\Phi(\bm{T}_0,\bm{G})$ be the set of roots of $\bm{T}_0$ on the Lie algebra $\mathfrak{g}$ of $\bm{G}$ and $\Phi^{\bm{G},+}$ the subset of positive roots with respect to $\bm{P}_0$. Let $\bm{Z}$ denote the center of $\bm{G}$.

A Levi subgroup of $\bm{G}$ is called {\it semistandard} if it contains $\bm{T}_0$; it is automatically defined over $F$. Let $\mathcal{L}$ denote the finite set of all semistandard Levi subgroups of $\bm{G}$. For $\bm{M}\in\mathcal{L}$ we let $\Phi^{\bm{M}}=\Phi(\bm{T}_0,\bm{M})\subset \Phi$ be the set of roots of $\bm{T}_0$ on the Lie algebra $\mathfrak{m}$ of $\bm{M}$. Write $\bm{Z}_{\bm{M}}$ for the center of $\bm{M}$. Let $\mathcal{L}(\bm{M})=\{\bm{L}\in\mathcal{L}: \bm{M}\subset \bm{L}\}$.

An $F$-parabolic subgroup $\bm{P}$ of $\bm{G}$ is called {\it semistandard} if it contains $\bm{T}_0$. Let $\mathcal{F}$ denote the finite set of all semistandard $F$-parabolic subgroups. For $\bm{P}\in\mathcal{F}$, let $\bm{U}_{\bm{P}}$ denote the unipotent radical of $\bm{P}$ and $\bm{M}_{\bm{P}}$ the unique semistandard Levi subgroup such that $\bm{P}=\bm{M}_{\bm{P}}\bm{U}_{\bm{P}}$. When $\bm{P}=\bm{P}_0$ we write $\bm{U}_0$ for $\bm{U}_{\bm{P}_0}$ and of course $\bm{M}_{\bm{P}_0}$ is simply $\bm{T}_0$. For $\bm{M}\in\mathcal{L}$ let $\mathcal{F}(\bm{M})=\{\bm{P}\in\mathcal{F}: \bm{M}\subset \bm{P}\}$; clearly, $\mathcal{F}(\bm{T}_0)=\mathcal{F}$ and $\mathcal{F}(\bm{G})=\{\bm{G}\}$. We have a map $\mathcal{F}(\bm{M})\rightarrow\mathcal{L}(\bm{M})$ sending $\bm{P}$ to $\bm{M}_{\bm{P}}$. Denote by $\mathcal{P}(\bm{M})$ the subset of $\mathcal{F}(\bm{M})$ consisting of those $F$-parabolic subgroups having Levi component $\bm{M}$. Thus $\mathcal{P}(\bm{M})=\mathcal{F}(\bm{M})- \bigcup_{\bm{L}\supsetneq \bm{M}}\mathcal{F}(\bm{L})$.

We call an $F$-parabolic subgroup $\bm{P}$ of $\bm{G}$ {\it standard} if it contains $\bm{P}_0$. Each $\bm{G}$-conjugacy class of parabolic subgroups contains a unique standard member. Similarly, a semistandard Levi subgroup $\bm{M}$ of $\bm{G}$ is {\it standard} if it is contained in a standard parabolic subgroup. Two standard Levi subgroups $\bm{M}_{\bm{P}}\subset\bm{P}$ and $\bm{M}_{\bm{Q}}\subset\bm{Q}$ may be conjugate. Write $\mathcal{F}_{\rm st}$ and $\mathcal{L}_{\rm st}$ for the respective subsets of standard elements. Then $\mathcal{F}_{\rm st}$ (resp. $\mathcal{L}_{\rm st}$) is in bijection with the set of ordered (resp., unordered) partitions of $n$. Namely, the ordered partition $(n_1,\ldots ,n_m)$ is sent to the block upper triangular subgroup of $\bm{G}$ whose diagonal blocks have successive dimensions given by $(n_1,\ldots ,n_m)$; forgetting the ordering then corresponds with taking the conjugacy class of its Levi subgroup.

For $\bm{M}\in\mathcal{L}$ we let $W_{\bm{M}}=N_{\bm{G}}(\bm{M})/\bm{M}$ denote the Weyl group of $\bm{M}$. When $\bm{M}=\bm{T}_0$ we simplify $W_{\bm{T}_0}$ to $W_0$. Then each $W_{\bm{M}}$ can be identified with a subgroup of $W_0$. More concretely,  $W_{\bm{M}}$ permutes the blocks of equal size.

For a finite non-empty collection of places $S$ of $F$ and $\bm{M}\in\mathcal{L}$ we write $\bm{M}_S=\bm{M}(F_S)$. When $\bm{M}=\bm{T}_0$ we shorten this to $\bm{T}_S=\bm{T}_0(F_S)$. In particular, when $S$ consists of all archimedean places, we write $\bm{M}_\infty$. 

\begin{remark}\label{rem:boldface-notation}
When $\bm{M}=\bm{G}$ or $\bm{M}=\bm{T}_0$, we shall often write $G_S$ in place of $\bm{G}_S$ and $T_S$ in place of $\bm{T}_S$ (dropping the boldface). We shall refrain from doing so more systematically, however, since in later sections we shall need to distinguish between two types of Levi subgroups of $G_S$: those that are defined rationally, as above, and those that are place-by-place products of Levi subgroups for each $G_v$. No such confusion can arise for the two distinguished Levi subgroups $\bm{M}=\bm{G}$ and $\bm{M}=\bm{T}_0$.
\end{remark}

For any subgroup $\bm{H}$ of $\bm{G}$ we may consider the $\R$-points $\bm{H}(\R)$, where $\R$ is embedded diagonally in $F_\infty\subset\A_F$. In particular, $\bm{G}(\R)$ is the diagonally embedded copy of $\GL_n(\R)$ inside $G_\infty=\prod_{v\mid\infty}\bm{G}(F_v)$.

\subsection{Characters, cocharacters, and split components}\label{sec:global-characters}

For $\bm{M}\in\mathcal{L}$ we let $X^*(\bm{M})$ be the group of $F$-rational characters of $\bm{M}$. Then $X^*(\bm{M})$ can be identified with $\Z^m$ for some $m\geqslant 1$. Namely, if $\bm{M}$ is isomorphic to $\GL_{n_1}\times\cdots\times\GL_{n_m}$, then $\lambda=(\lambda_i)\in\Z^m$ corresponds to the character $\chi^\lambda(g)=\prod\det g_i^{\lambda_i}$. Let $X_*(\bm{M})$ be the lattice of $F$-rational cocharacters. We then write $X_*^+(\bm{M})=\{\lambda\in X_*(\bm{M}): \langle \alpha,\lambda\rangle\geqslant 0 \text{ for all simple }\alpha\in\Phi^+\}$ for the cone of positive cocharacters.

Let $A_{\bm{M}}$ be the connected component of the identity in $\bm{Z}_{\bm{M}}(\R)$, the real points of the center of $\bm{M}$; then $A_{\bm{M}}$ is of dimension $m$ over $\R$. For example, $A_{\bm{G}}$ is the image of $\R_{>0} I_n$ embedded diagonally across all archimedean places. When $\bm{M}=\bm{T}_0$ we write $A_0$ for $A_{\bm{T}_0}$.

We put $\bm{M}(\A_F)^1=\bigcap_{\chi\in X^*(\bm{M})}\ker (|\chi|_{\A_F})$. Then $\bm{M}(F)$ is a discrete subgroup of finite covolume in $\bm{M}(\A_F)^1$. For example, if $\bm{M}=\bm{G}$ then $\bm{G}(\A_F)^1=\{g\in\GL_n(\A_F): |\det g|_{\A_F}=1\}$. More generally, if $\bm{M}$ is isomorphic to $\GL_{n_1}\times\cdots\times\GL_{n_m}$, then $\bm{M}(\A_F)^1$ is isomorphic to $\GL_{n_1}(\A_F)^1\times\cdots\times\GL_{n_m}(\A_F)^1$. One has a direct product decomposition $\bm{M}(\A_F)=\bm{M}(\A_F)^1\times A_{\bm{M}}$. 

When $S$ contains all archimedean places, we have similar decompositions for coefficients in $F_S$, namely $\bm{M}_S=\bm{M}_S^1\times A_{\bm{M}}$, where $\bm{M}_S^1=\bm{M}_S\cap\bm{M}(\A_F)^1$. Concretely, when $\bm{M}\in\mathcal{L}_{\rm st}$ is a standard Levi, of block diagonal form $\GL_{n_1}\times\cdots\times\GL_{n_m}$, then
\begin{equation}\label{eq:defn:G1}
\bm{M}_S^1=\bigg\{{\rm diag}(g_1,\ldots ,g_{n_m})\in \bm{M}(F_S): g_i=(g_{vi})_{v\in S}\in\GL_{n_i}(F_S),\, \prod_{v\in S} |\det g_{vi}|_v=1\bigg\}.
\end{equation}
Of particular importance is the case $S=\{v\mid\infty\}$, as it will be throughout Section \ref{sec:arch-rep-PWCD}.

We set $\mathfrak{a}_{\bm{M}}^*=X^*(\bm{M})\otimes_\Z\R$ and $\mathfrak{a}_{\bm{M}}={\rm Hom}_\R(\mathfrak{a}_{\bm{M}}^*,\R)={\rm Hom}_\Z(X^*(\bm{M}),\R)$. When $\bm{M}=\bm{T}_0$ we write $\aa_0$ for $\aa_{\bm{T}_0}$. Then $\mathfrak{a}_{\bm{M}}$ is spanned by the cocharacter lattice $X_*(\bm{M})$ and the map $\log: A_{\bm{M}}\rightarrow \mathfrak{a}_{\bm{M}}$, defined by $e^{\langle \chi,\log a\rangle}=|\chi(a)|$, for $a\in A_{\bm{M}}$ and $\chi\in X^*(\bm{M})$, is an isomorphism. We may furthermore identify $\aa_{\bm{M}}$ with the Lie algebra ${\rm Lie}(A_{\bm{M}})$ inside $\mathfrak{g}$, by composing the preceding isomorphism with the exponential map on Lie groups. For example, when $\bm{M}$ is standard, with (ordered) block decomposition of the form $\GL_{n_1}\times\cdots\times\GL_{n_m}$, then $\aa_{\bm{M}}={\rm Lie}(A_{\bm{M}})$ is the subspace in $\mathfrak{g}$ consisting of diagonal matrices of the form ${\rm diag}(\R I_{n_1},\cdots , \R I_{n_m})$.

For $\bm{L}\in\mathcal{L}(\bm{M})$, there is a natural inclusion $\mathfrak{a}_{\bm{L}}^*\rightarrow\mathfrak{a}_{\bm{M}}^*$ induced by restriction of cocharacters $X^*(\bm{L})$ to $X^*(\bm{M})$. By dualizing we obtain a surjective map $\aa_{\bm{M}}\rightarrow\aa_{\bm{L}}$, whose kernel we denote by $\aa_{\bm{M}}^{\bm{L}}$. Equivalently, $\aa_{\bm{M}}^{\bm{L}}$ is the annihilator of $\aa_{\bm{L}}^*$ in $\aa_{\bm{M}}$. For example, when $\bm{M}$ is standard as in the last paragraph, $\aa_{\bm{M}}^{\bm{G}}$ can be identified with the diagonal matrices ${\rm diag}(\R I_{n_1},\cdots , \R I_{n_m})$ of trace zero. Note, furthermore that there is a canonical linear isomorphism $\mathfrak{a}_M^*=X^*(M)\otimes\R\rightarrow X^*(A_M)\otimes \R$. The surjective restriction homomorphism $X^*(A_M)\rightarrow X^*(A_L)$ then induces a dual  injection $\mathfrak{a}_L\rightarrow\mathfrak{a}_M$, which splits the exact sequence $0\rightarrow\mathfrak{a}_{\bm{M}}^{\bm{L}}\rightarrow\mathfrak{a}_{\bm{M}}\rightarrow\mathfrak{a}_{\bm{L}}\rightarrow 0$. We therefore have a direct sum decomposition $\mathfrak{a}_{\bm{M}}=\mathfrak{a}_{\bm{L}}\oplus\mathfrak{a}_{\bm{M}}^{\bm{L}}$.

\subsection{Maximal compacts and the Iwasawa decomposition}\label{sec:Iwasawa-decomp}

For a finite place $v$ we let $\bfk_v=\GL_n(\mathcal{O}_v)$ be the standard maximal compact open subgroup of $G_v=\bm{G}(F_v)$ and put $\bfk_f=\prod_{v<\infty} \bfk_v$. For archimedean $v$, we let $\bfk_v$ be ${\rm O}(n)$ or ${\rm U}(n)$, according to whether $v$ is real or complex. For a finite set of places $S$, denote $\bfk_S=\prod_{v\in S}\bfk_v$ and $\bfk^S=\prod_{v\notin S}\bfk_v$. When $S=\{v\mid\infty\}$, we shall prefer to write $\bfk_\infty=\prod_{v\mid\infty}\bfk_v$ and $\bfk_f=\prod_{v<\infty}\bfk_v$, respectively. Then $\bfk=\prod_v\bfk_v=\bfk_f\bfk_\infty$ is a maximal compact subgroup of $\bm{G}(\A_F)$. 

When $\bm{M}\in\mathcal{L}_{\rm st}$ and $\bm{P}\in\mathcal{P}(\bm{M})$ has unipotent radical $\bm{U}_{\bm{P}}$, there is a global Iwasawa decomposition $\bm{G}(\A_F)=\bm{P}(\A_F)\bfk=\bm{U}_{\bm{P}}(\A_F)\bm{M}(\A_F)\bfk$. Similarly, at every place $v$ of $F$ we have an Iwasawa decomposition $G_v=\bm{P}(F_v)\bfk_v=\bm{U}_{\bm{P}}(F_v)\bm{M}(F_v)\bfk_v$.

We may write the global Iwasawa decomposition alternatively as $\bm{G}(\A_F)=\bm{U}_{\bm{P}}(\A_F)\bm{M}(\A_F)^1 A_{\bm{M}}\bfk$ and define the associated projection 
\[
H_{\bm{P}}: \bm{G}(\A_F)=\bm{U}_{\bm{P}}(\A_F)\bm{M}(\A_F)^1 A_{\bm{M}}\bfk\longrightarrow \aa_{\bm{M}}, \qquad H_{\bm{P}}(ume^Xk)=X.
\]
When $\bm{M}=\bm{T}_0$, so that $\bm{P}=\bm{P}_0$, we write this as $H_0$.

\subsection{Weyl discriminant}\label{subsec:Weyl-disc}

Let $v$ be a place of $F$ and $\sigma$ a semisimple element in $G_v$. Let $G_{\sigma,v}$ be the centralizer of $\sigma$ and $\mathfrak{g}_{\sigma,v}$ its Lie algebra inside $\mathfrak{g}_v$, the Lie algebra of $G_v$. Then the Weyl discriminant of $\sigma$ in $G_v$ is defined to be
\begin{equation}\label{def:Weyl-disc}
D^G_v(\sigma)=|\det (1-{\rm Ad}(\sigma)|_{\mathfrak{g}_v/\mathfrak{g}_{\sigma,v}})|_v=\prod_{\substack{\alpha\in\Phi_v\\\alpha(\sigma)\neq 1}}|1-\alpha(\sigma)|_v.
\end{equation}
If an arbitrary $\gamma\in G_v$ has Jordan decomposition $\gamma=\sigma\nu$, where $\sigma$ is semisimple and $\nu\in G_{\sigma,v}$ is unipotent, then we extend the above notation to $\gamma$ by setting $D^G_v(\gamma)=D^G_v(\sigma)$. Whenever there is no risk of confusion we shall simplify $D_v^G(\gamma)$ to $D_v(\gamma)$. For a finite set of places $S$, let $D_S(\gamma)=\prod_{v\in S}D_v(\gamma)$.

\begin{remark}\label{rem:Weyl-disc}
The function $\gamma\mapsto D_S(\gamma)$ on $G_S$ is locally bounded and discontinuous at irregular elements. We have, for example, $D_S(\sigma)=1$ for all central $\sigma$. In our estimates on orbital integrals in the latter sections, it is this function which will measure their dependency on $\gamma$.
\end{remark}

\subsection{Twisted Levi subgroups}\label{sec:twisted-Levi}

Although our interest in this paper is solely in $\bm{G}=\GL_n$, in applications of the trace formula one encounters more general connected reductive groups, through the centralizers of semisimple elements in $\bm{G}(F)$.

If $\gamma\in \bm{G}(F)$, we write $\bm{G}_\gamma$ for the centralizer of $\gamma$. If $\gamma=\sigma$ is semisimple, then $\bm{G}_\sigma$ is connected and reductive. Moreover, one knows that in this case $\bm{G}_\sigma$ is a twisted Levi subgroup, meaning that there are field extensions $E_1,\ldots ,E_m$ of $F$ and non-negative integers $n_1,\ldots ,n_m$ such that $\bm{G}_\sigma\simeq\rest_{E_1/F}\GL_{n_1}\times\cdots\times\rest_{E_m/F}\GL_{n_m}$. For example, if $\sigma\in\bm{G}(F)$ is regular elliptic, then $\bm{G}_\sigma$ is the torus $\rest_{E/F}\mathbb{G}_m$ given by the restriction of scalars of a degree $n$ field extension $E$ of $F$. See \cite[\S 10A]{MatzTemplier2021} for more details.

If $\bm{H}$ is a twisted Levi subgroup of $\bm{G}$ containing some $\bm{M}\in\mathcal{L}$, then an $F$-Levi subgroup of $\bm{H}$ will be called semistandard (resp., standard) if it is the restriction of scalars of a semistandard (resp., standard) Levi subgroup. We similarly extend the notions of semistandard and standard to $F$-parabolic subgroups of $\bm{H}$. We let $\mathcal{L}^{\bm{H}}$ (resp., $\mathcal{F}^{\bm{H}}$) denote the set of semistandard $F$-Levi subgroups (resp., $F$-parabolic subgroups) of $\bm{H}$. If $\bm{M}\in\mathcal{L}^{\bm{H}}$ we write $\mathcal{L}^{\bm{H}}(\bm{M})=\{\bm{L}\in\mathcal{L}^{\bm{H}}: \bm{M}\subset \bm{L}\}$ and $\mathcal{F}^{\bm{H}}(\bm{M})=\{\bm{P}\in\mathcal{F}^{\bm{H}}: \bm{M}\subset \bm{P}\}$. Finally, $\mathcal{P}(\bm{M})$ will denote the subset of $\mathcal{F}^{\bm{H}}$ consisting of parabolics having Levi component $\bm{M}$.

If $\bm{H}$ is a twisted Levi subgroup of $\bm{G}$, and $v$ is a place of $F$, let $H_v=\bm{H}(F_v)$. When $v$ is non-archimedean, we write $\bfk_v^{\bm{H}}=\bfk_v\cap H_v$. Recall that a semisimple element $\sigma\in\bm{G}(F)$ has \textit{good reduction} at $v$ if $1-\alpha(\sigma)\in\mathcal{O}_{\bar{F}}$ is either zero or a $v$-adic unit for every $\alpha\in\Phi$; equivalently, $D^G_v(\sigma)=1$. When $\bm{H}=\bm{G}_\sigma$ for $\sigma$ semisimple with good reduction at $v$ then $\bfk_v^{\bm{H}}=\bfk_v\cap H_v$ is a maximal compact subgroup of $H_v$; see \cite[Prop 7.1]{Kottwitz1986}. Finally, for archimedean $v$ it follows from \cite[Lemma 5.3]{MatzTemplier2021} that $\bfk_v^{\bm{H}}=\bfk_v\cap H_v$ is again a maximal compact subgroup of $H_v$; let $\bfk_\infty^{\bm{H}}=\prod_{v\mid\infty}\bfk_v^{\bm{H}}$.

\subsection{Measure normalization on $\bm{G}$}\label{sec:measures} 

Although the various theorems and conjectures from Sections \ref{intro1} and \ref{equidsitribution-sec} were expressed using the Plancherel measure $\widehat{\omega}^{\rm pl}$ on $\Pi(\bm{G}(\A_F)^1)$ induced by Tamagawa measure $\omega_{\bm{G}}$, it will be useful, when it comes to estimates involving the trace formula (and citation of the literature) to work with a different measure normalization throughout the rest of this paper.

For finite places $v$, let $\mu_{\bm{G},v}$ be the unique Haar measure assigning the maximal compact $\bfk_v=\bm{G}(\mathcal{O}_v)$ volume 1. At archimedean places $v$, we will continue to take the local measure $\omega_{\bm{G},v}$ defined in \S\ref{leading-term}, although it will sometimes be convenient, for notational uniformity, to denote it by $\mu_{\bm{G},v}$. We then obtain a measure $\mu_{\bm{G}}$ on $\bm{G}(\A_F)^1$ by decomposing $\prod_v\mu_{\bm{G},v}$ as $\mu_{\bm{G}}\times {\rm d}t$ via $\bm{G}(\A_F)\simeq\bm{G}(\A_F)^1\times\R$. We give the automorphic space $\bm{G}(F)\backslash \bm{G}(\A_F)^1$ the quotient measure of $\mu_{\bm{G}}$ by the counting measure on $\bm{G}(F)$. Abusing notation, we again denote this measure by $\mu_{\bm{G}}$.

We now explicitly compute the volume that $\mu_{\bm{G}}$ assigns to the automorphic space, using  the notation from \S\ref{leading-term}. Since $\omega_{{\bm G},v}$ assigns $\bfk_v$ volume $\Delta_v(1)^{-1}$ (see \cite[p. 31]{Weil1982}), it follows that $\Delta_v(1)\omega_{{\bm G},v}=\mu_{\bm{G},v}$ for all finite places. Comparing with \eqref{eq:defn:omegaG-ast}, we find $\mu_{\bm{G}}=D_F^{n^2/2}\Delta_F^*(1)\omega_{\bm{G}}$. It follows that $\mu_{\bm{G}}(\GL_n(F)\backslash\GL_n(\A_F)^1)=D_F^{n^2/2}\Delta_F^*(1)$, since the Tamagawa number of $\bm{G}=\GL_n$ is~1.

\subsection{Measure normalizations on subgroups}\label{sec:measures-on-subgps}

Now let $\bm{H}$ be a twisted Levi subgroup of $\GL_n$, defined over $F$, as in \S\ref{sec:twisted-Levi}, and write
\[
\bm{H}(\A_F)^1=\bigcap_{\chi\in X^*(\bm{H})}\ker (|\chi|_{\A_F}).
\]
In order to define the global arithmetic coefficients $a^{\bm{M}}(\gamma,S)$, as well as the weighted orbital integrals $J_{\bm{M}}(\gamma,{\bf 1}_{\bfk^S}\otimes\phi_S)$ on the geometric side of the trace formula in \S\S\ref{fine-geom} and \ref{sec-review}, we must define a normalization of Haar measure on $\bm{H}(\A_F)^1$. As was the case for $\bm{H}=\bm{G}$, we shall adopt the canonical normalization of Gross \cite{Gross1997}.

For a finite place $v$ let $\mu_{\bm{H},v}$ denote the Gross canonical measure. When $\bm{H}$ is the centralizer of a semisimple element with good reduction at a finite place $v$, then $\mu_{{\bm H},v}$ assigns the maximal compact subgroup $\bfk_v^{\bm{H}}$ volume 1. At archimedean places $v$, whenever $\bm{H}\neq\bm{G}$, we let $\mu_{\bm{H},v}$ denote any choice of Haar measure. The product $\mu_{\bm{H}}^{\ast}=\prod_v\mu_{\bm{H},v}$ is a well-defined measure on $\bm{H}(\A_F)$. Let $\{\chi_i: 1\leqslant i\leqslant m\}$ be a $\Z$-basis for $X^*(\bm{H})$, where $m={\rm rank}_\Z X^*(\bm{H})$, and consider the homomorphism
\[
\log: \bm{H}(\A_F)\ni g\mapsto (\log |\chi_1(g)|_{\A_F},\ldots ,\log|\chi_m(g)|_{\A_F})\in \R^m.
\]
Then we have a split exact sequence
\[
1\rightarrow \bm{H}(\A_F)^1\rightarrow \bm{H}(\A_F)\xrightarrow{\log}\R^m\rightarrow 0.
\]
This yields an isomorphism $\bm{H}(\A_F)\simeq \bm{H}(\A_F)^1 \times\R^m$. Let ${\rm d}t$ denote Lebesgue measure on $\R^m$. Then we let $\mu_{\bm{H}}$ denote the unique measure on $\bm{H}(\A_F)^1$ factorizing $\mu_{\bm{H}}^{\ast}$ as $\mu_{\bm{H}}\times {\rm d}t$. We again write $\mu_{\bm{H}}$ for the measure on the automorphic space $\bm{H}(F)\backslash \bm{H}(\A_F)^1$ similarly to that for $\bm{G}$.

Finally, for every $\bm{M}\in\mathcal{L}_{\rm st}$, $\bm{P}\in\mathcal{P}(\bm{M})$, and any place $v$, there is a uniquely defined Haar measure on $U_v=\bm{U}_{\bm{P}}(F_v)$ defined via the integration formula
\[
\int_{G_v} f(g){\rm d}\mu_{\bm{G},v}(g)=\int_{\bfk_v}\int_{M_v}\int_{U_v} f(muk) {\rm d}u \,{\rm d} \mu_{\bm{M},v}(m)\, {\rm d}k\qquad (f\in C_c^\infty(G_v)),
\]
where ${\rm d}k$ is the Haar probability measure on $\bfk_v$. For example, at finite places $v$, by testing this equality against the characteristic function of $\bfk_v$ it follows from the definition of $\mu_{\bm{G},v}$ and $\mu_{\bm{M},v}$ that $U_v\cap \bfk_v$ receives volume 1. The product measures on $\bm{U}_{\bm{P}}(\A_F)$, $\bm{M}(\A_F)$, and $\bfk$ factorize $\mu_{\bm{G}}=\prod_v\mu_{\bm{G},v}$ by a similar formula.

\subsection{Hecke algebras}\label{Hecke-section}

We now introduce the local and global Hecke algebras for use in the trace formula.

\subsubsection{Local case}\label{local-Hecke}

At any place $v$ we define $C^\infty_c(G_v)$ to be the space of functions on $G_v$ which are locally constant and of compact support, for $v$ finite, and smooth and of compact support for $v$ infinite. We then let $\mathcal{H}(G_v)$ denote $C^\infty_c(G_v)$, when considered as a convolution algebra with respect to the measure $\mu_{\bm{G},v}$. Similarly, for a finite set of places $S$ containing all archimedean places, we denote by $\mathcal{H}(G_S)$ or $\mathcal{H}(G_S^1)$ the convolution algebra of smooth, compactly supported, functions on $G_S$ or $G_S^1$, respectively. For non-archimedean $v$, and an open compact subgroup $K_v$ of $G_v$, let
\begin{equation}\label{idempotent}
\varepsilon_{K_v}=\frac{1}{\mu_{\bm{G},v}(K_v)}{\bf 1}_{K_v}
\end{equation}
denote the corresponding idempotent in $\mathcal{H}(G_v)$. 

Given an admissible representation $\pi_v$ of $G_v$ any $\phi_v\in \mathcal{H}(G_v)$ define an operator on the space of $\pi_v$ via the averaging
\[
\pi_v(\phi_v)=\int_{G_v}\phi_v(g)\pi_v(g)\, {\rm d}\mu_{\bm{G},v}(g).
\]
This is a trace class operator; we write ${\rm tr}\, \pi_v(\phi_v)$ for its trace. If, for a finite place $v$, $K_v$ is an open compact subgroup of $G_v$, it is straightforward to see that ${\rm tr}\,\pi_v (\varepsilon_{K_v})=\dim\pi_v^{K_v}$. 

We write $\Pi(G_v)$ for the unitary dual of $G_v$, endowed with the Fell topology. Let $\widehat{\mu}_v^{\rm pl}$ denote the Plancherel measure for $\Pi(G_v)$, normalized to satisfy
\begin{equation}\label{eq:plancherel-inv}
\phi_v(e)=\int_{\Pi(G_v)}{\rm tr}\, \pi_v(\phi_v^\vee)\, {\rm d}\widehat{\mu}_v^{\rm pl}(\pi_v)
\end{equation}
for any $\phi_v\in C_c^\infty(G_v)$, where $\phi_v^\vee(g)=\phi_v(g^{-1})$.

\subsubsection{Global case}\label{Hecke-global}

Let $\mathcal{H}(\bm{G}(\A_f))$ denote the space of finite linear combinations of factorizable functions $\otimes_{v<\infty}\, \phi_v$, where each $\phi_v$ lies in $\mathcal{H}(G_v)$ and $\phi_v={\bf 1}_{\bfk_v}$ for almost all $v$. We then take as the global Hecke algebra $\mathcal{H}(\bm{G}(\A_F)^1)$ the tensor product $\mathcal{H}(\bm{G}(\A_f))\otimes \mathcal{H}(\bm{G}_\infty^1)$. Convolution is taken with respect to the global measure $\mu_{\bm{G}}$. For admissible $\pi=\otimes_v\pi_v$ and $\phi\in\mathcal{H}(\bm{G}(\A_F)^1)$ we define the trace-class operator $\pi(\phi)$ with respect to $\mu_{\bm{G}}$. Moreover, for admissible $\pi=\otimes_v\pi_v$ and factorizable $\phi=\otimes_v\,\phi_v\in\mathcal{H}(\bm{G}(\A_F)^1)$ the global trace ${\rm tr}\,\pi(\phi)$ factorizes as $\prod_v {\rm tr}\,\pi_v(\phi_v)$.

Similarly, if $S$ is any finite set of places of $F$ containing all archimedean places, and $\bm{G}(\A_F^S)$ is the restricted tensor product of $G_v$ over places outside of $S$, we let $\mathcal{H}(\bm{G}(\A_F^S))$ denote the analogous space, with convolution taken with respect to the measure $\mu^S_{\bm{G}}=\prod_{v\notin S}\mu_{\bm{G},v}$.

\subsection{The geometric side of the Arthur trace formula}\label{fine-geom}

The next three subsections describe the non-invariant trace formula of Arthur, alluded to in \S\ref{sec:ELM} and written there as $J_{\rm geom}=J_{\rm spec}$. We note that the Arthur trace formula can be seen as a vast generalization of Poisson summation, to which it reduces in the case of $n=1$. To guide the reader, we note that \S\S\ref{fine-geom} and \ref{sec-review} describe the geometric side, while \S\ref{JMspec} describes the spectral side.

Let $\mathfrak{O}$ denote the set of semisimple conjugacy classes of $\bm{G}(F)$. As we are taking $\bm{G}=\GL_n$, a semisimple conjugacy class consists of all those $\gamma\in \bm{G}(F)$ sharing the same characteristic polynomial. Associated with each $\mathfrak{o}\in\mathfrak{O}$ Arthur \cite{Arthur1978} defines a global distribution $J_\mathfrak{o}$ on $C_c^\infty (\bm{G}(\A)^1)$, as the value at $T=0$ of a geometric distribution $J_{\mathfrak{o}}^T$ coming from his truncated kernel \cite[\S 8]{Arthur1978}. These distributions fit into the \textit{coarse geometric expansion}, given by
\begin{equation}\label{eq:J-geom-def}
J_{\rm geom}(\phi)=\sum_{\mathfrak{o}\in\mathfrak{O}}J_\mathfrak{o}(\phi).
\end{equation}
The \textit{fine geometric expansion} expresses each $J_\mathfrak{o}(\phi)$ as a linear combination of weighted orbital integrals $J_{\bm{M}}(\gamma,\phi)$, where $M\in\mathcal{L}$ and $\gamma\in \bm{M}(F)$, which are local in nature.

More precisely, in \cite[Theorem 8.1]{Arthur1986}, Arthur shows that for every equivalence class $\mathfrak{o}\in\mathfrak{O}$, there is a finite set of places $S_{\rm adm}(\mathfrak{o})$ (containing all archimedean places) which is {\it admissible} in the following sense. For any finite set of places $S$ containing $S_{\rm adm}(\mathfrak{o})$, there are real numbers $a^{\bm{M}}(\gamma,S)$, indexed by $\bm{M}\in\mathcal{L}$ and $\bm{M}(F)$-conjugacy classes of elements $\gamma\in \bm{M}(F)$ (and, in general, depending on $S$ as well as the measure normalizations on Levi subgroups from \S\ref{sec:measures}), such that 
\begin{equation}\label{eq:J-geom-fine}
J_\mathfrak{o}({\bf 1}_{\bfk^S}\otimes\phi_S)=\sum_{\bm{M}\in\mathcal{L}}\frac{|W_{\bm{M}}|}{|W|}\sum_\gamma a^{\bm{M}}(\gamma,S)J_{\bm{M}}(\gamma,{\bf 1}_{\bfk^S}\otimes\phi_S)
\end{equation}
for any function $\phi_S\in C^\infty_c(\bm{G}(F_S)^1)$. In the inner sum, $\gamma$ runs over those $\bm{M}(F)$-conjugacy classes of elements in $\bm{M}(F)$ meeting $\mathfrak{o}$. Furthermore, $J_{\bm{M}}(\gamma,{\bf 1}_{\bfk^S}\otimes\phi_S)=0$ for any $\gamma\notin \bfk^S\cap\mathfrak{o}$ and $J_{\bm{M}}(\gamma,{\bf 1}_{\bfk^S}\otimes\phi_S)=J_{\bm{M}_S}(\gamma,\phi_S)$ otherwise, the latter being an $S$-adic integral to be described in \S\ref{sec-review}. Moreover, when $\gamma$ is semisimple, Arthur \cite[Theorem 8.2]{Arthur1986} shows that for $S$ large enough the global coefficient $a^{\bm G}(\gamma,S)$ is independent of $S$ and equal to the volume of ${\bm G}_\gamma(F)\backslash\bm{G}(\A_F)^1$.

Let $\mathcal{U}_{G_\sigma}$ denote (the $F_S$-points of the) algebraic variety of unipotent elements in the centralizer of $\sigma$. Then the Jordan decomposition $\gamma=\sigma\nu$ requires $\nu$ to lie in $\mathcal{U}_{G_\sigma}$. Following \cite[\S 6]{Matz2017} and \cite[\S11D]{MatzTemplier2021}, for $\mathfrak{o}\in\mathfrak{O}$ and $\gamma=\sigma\nu\in \sigma\mathcal{U}_{\bm{G}_\sigma}(F)\cap\mathfrak{o}$ we let
\begin{equation}\label{def-S-o}
S_\mathfrak{o}=S_{\rm wild}\cup\{ v<\infty: D^G_v(\sigma)\neq 1\},
\end{equation}
where $S_{\rm wild}$ is a certain finite set of finite places depending only on $n$. Then \cite[Lemma 6.2]{Matz2017} or \cite[Remark 11.8]{MatzTemplier2021} shows that one can take $S_{\rm adm}(\mathfrak{o})=S_\mathfrak{o}\cup \{v\mid\infty\}$ in the fine geometric expansion. Note that, for any $S$ containing $S_{\rm adm}(\mathfrak{o})$, every $\sigma\in\mathfrak{o}$ has good reduction outside of $S$.

When $\mathfrak{o}$ is the semisimple conjugacy class of a central element $\gamma\in Z(F)$, we shall be particularly interested in the contribution that $\gamma$ itself makes to the $\bm{M}=\bm{G}$ part of the fine geometric expansion \eqref{eq:J-geom-fine}, namely $a^{\bm{G}}(\gamma,S)J_{\bm{G}}(\gamma,{\bf 1}_{\bfk^S}\otimes\phi_S)$. From the formula for $a^{\bm{G}}(\gamma,S)$ stated above, applied to the central element $\gamma$ and $S\supset S_{\rm wild}\cup \{v\mid\infty\}$, we obtain $a^{\bm{G}}(\gamma,S)=\mu_{\bm{G}}(\bm{G}(F)\backslash\bm{G}(\A_F)^1)$. It is natural then to define
\begin{equation}\label{eq:defn:central-cont}
J_{\rm cent}({\bf 1}_{\bfk^S}\otimes\phi_S)=D_F^{n^2/2}\Delta_F^*(1)\sum_{\gamma\in Z(F)}J_{\bm G}(\gamma,{\bf 1}_{\bfk^S}\otimes\phi_S).
\end{equation}
This is the central contribution of the trace formula that was introduced in \S\ref{sec:ELM}.

\subsection{The weighted orbital integrals}\label{sec-review}

In this section we review the definition of the weighted orbital integrals $J_{\bm{M}_S}(\gamma,\phi_{S})$. Following \cite{Arthur1988}, we define $J_{\bm{M}_S}(\gamma,\phi_S)$ in two steps, first for unipotent elements then for general $\gamma$. We emphasize that the structures in this section are purely local.

We fix a finite non-empty set of places $S$ of $F$. Where possible, we will drop the subscript $S$ (and the boldface font, violating momentarily the notational convention in Remark \ref{rem:boldface-notation}). So, for example, $G=\bm{G}_S$, $G_\sigma=\bm{G}_{\sigma}(F_S)$, $M=\bm{M}_S$, and $\bfk=\bfk_S$. By a parabolic $P\in\mathcal{F}^H(M)$ we mean the $F_S$ points of some $\bm{P}\in\mathcal{F}^{\bm{H}}(\bm{M})$. Throughout, we let $\gamma=\sigma\nu\in G$ be the Jordan decomposition of $\gamma$.

\subsubsection{The unipotent case}

Let $\nu$ be a unipotent element lying in a standard Levi subgroup $M$. The weighted orbital integral $J_M(\nu,\phi)$ in this case was introduced in \cite{Arthur1985}.

To define $J_M(\nu,\phi)$ we shall need to specify a certain unipotent radical containing $\nu$ as well as a weight function. To begin, we let $\mathcal{V}_0$ be the $M$-conjugacy class of $\nu$ and denote by $\mathcal{V}_1$ the $G$-conjugacy class given by inducing $\mathcal{V}_0$ from $M$ to $G$ along any parabolic $P$ of $G$ containing $M$ as a Levi subgroup. (The induced class $\mathcal{V}_1$, defined as the unique class which intersects $\mathcal{V}_0U_P$ densely, is independent of the choice of $P$.) Then, since unipotent conjugacy classes in $G=\GL_n$ are of Richardson type (i.e., induced from the trivial class), there is a unique standard parabolic subgroup $P_1\in\mathcal{F}_S$, say with Levi decomposition $P_1=L_1U_1$, such that $\mathcal{V}_1$ has dense intersection with $U_1$. (Specifically, the Levi factor of $P_1$ is given by the dual partition of the Jordan form of $\mathcal{V}_1$. See, for example, \cite[\S 5.5]{Humphreys1995}.) Then
\begin{equation}\label{unip-woi}
J_M(\nu,\phi)=\int_\bfk\int_{U_1} \phi(k^{-1}uk) w_{M,U_1} (u)\, \text{d}u\, \text{d}k.
\end{equation}
Here, the weight function $w_{M,U_1}$ is a complex-valued function defined on $U_1$; it is invariant under conjugacy by $\bfk^{L_1}$ (so that the above integral is well-defined) and constantly equal to $1$ when $M=G$. See \cite[p. 143]{LapidMuller2009} for more details.

\subsubsection{The general case}

The general formula, for elements $\gamma=\sigma\nu\in \sigma\mathcal{U}_{G_\sigma}\cap M$, is considerably more complicated. It will be expressed in terms of weighted unipotent orbital integrals for the Levi components of parabolic subgroups in $\mathcal{F}^{G_\sigma}(M_\sigma)$. (The latter set reduces to $\mathcal{F}(M)$ when $\sigma=1$.) We first state the original definition of Arthur then proceed to give a simplification in the case of $\GL_n$.

More precisely, if $\gamma=\sigma\nu\in \sigma\mathcal{U}_{G_\sigma}\cap M$, then \cite[Corollary 8.7]{Arthur1988} states that 
\begin{equation}\label{woi}
J_M(\gamma,\phi)=D(\sigma)^{1/2}\int_{G_\sigma\backslash G}\left(\sum_{R\in\mathcal{F}^{G_\sigma}(M_\sigma)}J_{M_\sigma}^{M_R}(\nu,\Phi_{R,y})\right)\, \text{d}y,
\end{equation}
where, for $m\in M_R$ and $y\in G$, we have put 
\[
\Phi_{R,y}(m)=\delta_R(m)^{1/2}\int_{\bfk^{G_\sigma}}\int_{N_R}\phi(y^{-1}\sigma k^{-1}mnky)v_R'(ky)\, \text{d}n\, \text{d}k.
\]
The complex-valued weight function $v'_R$ on $G$ is set to be
\begin{equation}\label{def:weight}
v_R'(z)=\sum_{\substack{Q\in\mathcal{F}(M): \; Q_\sigma=R\\ \mathfrak{a}_Q=\mathfrak{a}_R}} v_Q'(z),
\end{equation}
where $v'_Q$, defined in \cite[\S2]{Arthur1981}.

Using the expression \eqref{unip-woi} for the unipotent weighted orbital integral, valid for $\GL_n$, we may write $J_{M_\sigma}^{M_R}(\nu,\Phi_{R,y})$ more conveniently. To see this, similarly to before, we first let $\mathcal{V}_0$ denote the $M_\sigma$-conjugacy class of the unipotent element $\nu\in M_\sigma$. Next we write $\mathcal{V}_1$ for the induced unipotent class of $\mathcal{V}_0$ to $M_R$ along any parabolic in $M_R$ containing $M_\sigma$ as a Levi subgroup. Let $P_1=L_1U_1\subset M_R$ be a Richardson parabolic for $\mathcal{V}_1$. Finally, let $\mathcal{V}$ be the induced unipotent class of $\mathcal{V}_1$ to $G_\sigma$ along $R$. Then the Richardson parabolic $P=LV\subset G_\sigma$ of $\mathcal{V}$ satisfies $U=U_1N_R$. We deduce that
\begin{equation}\label{J-sigma}
J_{M_\sigma}^{M_R}(\nu,\Phi_{R,y})=\int_{\bfk^{G_\sigma}}\int_U\phi(y^{-1}\sigma k^{-1}uky)w_{M_\sigma,U}^{M_R}(u)v'_R(ky)\, \text{d}u\, \text{d}k,
\end{equation}
where $w_{M_\sigma,U}^{M_R}$ is the trivial extension of $w_{M_\sigma,U_1}^{M_R}$ to all of $U$. For more details, see \cite[\S 10.4]{Matz2017}.

\subsubsection{Remarks on special cases}

In the next few remarks, in an effort to render more comprehensible the general definition $J_M(\gamma,\phi)$, we examine several special cases.

\begin{remark}
When $\sigma=1$ so that $\gamma=\nu$ is unipotent, the outer integral in \eqref{woi} is trivial. Moreover, the function $v_R'$ vanishes on $\bfk$ unless $R=G$ when it is constantly equal to $1$. From \eqref{J-sigma} we deduce that $J_{M}^{M_R}(\nu,\Phi_{R,e})=0$ unless $R=G$, in which case (since $U=U_1$) we obtain
\[
\int_\bfk\int_{U_1}\phi(k^{-1}uk)w_{M,U_1}^G(u)\, \text{d}u\, \text{d}k,
\]
recovering the previous expression \eqref{unip-woi} for $J_{M}(\nu,\phi)$.
\end{remark}

\begin{remark}\label{rem:WOI-simplify}
Under the assumption that $G_\gamma\subset M$, the general definition simplifies greatly, yielding
\begin{equation}\label{basic-eq}
J_M(\gamma,\phi)=D(\gamma)^{1/2}\int_{G_\gamma\backslash G}\phi_v(y^{-1}\gamma y) v_M'(y)\, \text{d}y.
\end{equation}
Here, the weight function $v_M'$ is the volume of a certain complex hull; as a function on $G$, it is left invariant under $M$ (so the above integral is well-defined) and constantly equal to $1$ when $M=G$. In fact, the weight $v_Q'$ appearing in \eqref{def:weight} generalizes  $v'_M$ to arbitrary parabolics $Q\in\mathcal{F}(M)$. In particular, when $\gamma\in Z(F)$ is a central element one obtains $J_G(\gamma,\phi)=\phi(\gamma)$. From this and the definition \eqref{eq:defn:central-cont} it follows that
\begin{equation}\label{eq:def:explicit-Jcentral}
J_{\rm cent}({\bf 1}_{\bfk^S}\otimes\phi_S)=D_F^{n^2/2}\Delta_F^*(1)\sum_{\gamma\in Z(F)}\phi_S(\gamma),
\end{equation}
which makes more precise the expression given in \S\ref{sec:ELM}.

To see \eqref{basic-eq}, observe that the condition $G_\gamma\subset M$ is equivalent to $G_\gamma=M_\gamma$ and the uniqueness of the Jordan decomposition then implies $M_\sigma=G_\sigma$. In this case $\mathcal{F}^{G_\sigma}(M_\sigma)=\{G_\sigma\}$ and the sum over $R$ in \eqref{woi} reduces to the single term $R=G_\sigma$; clearly, $M_R=G_\sigma$ and $N_R=\{e\}$. Thus $U=U_1$ and the weight function $w_{M_\sigma,U}^{M_\sigma}$ is constantly equal to $1$ on all of $U$. Furthermore, $v_R'=v_M'$ in this case. From the left $M$-invariance of $v_M'$, we have $v_M'(ky)=v_M'(y)$ for $k\in \bfk^{G_\sigma}$ and $y\in G$. The corresponding integral in \eqref{J-sigma} then reduces to 
\[
v_M'(y)\int_{\bfk^{G_\sigma}}\int_{U_1}\phi(y^{-1}\sigma k^{-1}uky)\, \text{d}u\, \text{d}k=v_M'(y)J_{G_\sigma}^{G_\sigma}(\sigma\nu,\phi^y),
\]
where $\phi^y(x)=\phi(y^{-1}xy)$. Note that the latter integral is
\[
\int_{G_\gamma\backslash G_\sigma}\phi(y^{-1}x^{-1}\sigma \nu xy)\, \text{d}x.
\]
Inserting this into the integral over $y\in G_\sigma\backslash G$ we obtain the expression \eqref{basic-eq}.
\end{remark}

\begin{remark}
Note that for $\nu$ unipotent $G_\nu\not\subset M$, unless $M=G$. When $\gamma=\nu$ is unipotent and $M=G$ then the two formulae coincide, giving the invariant unipotent orbital integral. For example, the Richardson parabolic of the trivial class in $G$ is of course $G$ itself, so that both formulae collapse in this case to $J_G(1,\phi)=\phi(1)$. 
\end{remark}

\subsection{The spectral side of the Arthur trace formula}\label{JMspec}

We now turn to the spectral side of the trace formula. The work of Arthur for general groups \cite[Theorem 8.2]{Arthur82}, coupled with the absolute convergence of the spectral side of the trace for $\GL_n$, established by M\"uller--Speh \cite{MullerSpeh2004}, yields the following form of the fine spectral expansion
\[
J_{\rm spec}=\sum_{\bm{M}\in\mathcal{L}}J_{{\rm spec},\bm{M}},
\]
for distributions $J_{{\rm spec},\bm{M}}(\phi)$ to be described below. The term corresponding to $\bm{M}=\bm{G}$ is relatively straightforward, whereas the terms $J_{{\rm spec},\bm{M}}$ for $\bm{M}\in\mathcal{L}$, $\bm{M}\neq G$, will necessitate a great deal of notation; we borrow essentially from \cite[\S 4]{FinisLapidMuller2015}.  Taking for granted their existence, we let
\begin{equation}\label{def:J-Eis}
J_{\rm disc}(\phi)=J_{{\rm spec},\bm{G}}(\phi) \quad\textrm{and}\quad J_{\rm Eis}(\phi)=\sum_{\bm{M}\neq \bm{G}} J_{{\rm spec},\bm{M}}(\phi)
\end{equation}
denote the discrete and continuous contributions to the trace formula.

For $\bm{M}\in\mathcal{L}$ we write $\Pi_{\rm disc}(\bm{M}(\mathbb{A})^1)$ for the set of isomorphism classes of irreducible discrete unitary automorphic representations of $\bm{M}(\mathbb{A})^1$. We may view $\Pi_{\rm disc}(\bm{M}(\A_F)^1)$ alternatively as the set of irreducible subrepresentations of the right-regular representation on $L^2_{\rm disc}(\bm{M}(F)\backslash \bm{M}(\A_F)^1)$. Indeed, the multiplicity one theorem \cite{JacquetLanglands1970, Shalika1974, Piatetski-Shapiro1979} for the cuspidal spectrum of $\bm{G}=\GL_n$, and the description of the residual spectrum of $\GL_n$ by \cite{MoeglinWaldspurger1989}, together imply that the multiplicity with which $\pi\in\Pi_{\rm disc}(\bm{M}(\A_F)^1)$ appears in the right-regular representation $L^2(\bm{M}(F)\backslash \bm{M}(\A_F)^1)$ is one.

When $\bm{M}=\bm{G}$ then according to M\"uller--Speh \cite{MullerSpeh2004} we have
\begin{equation}\label{defn:J-spec-G}
J_{{\rm spec},\bm{G}}(\phi) =\sum_{\pi\in\Pi_{\rm disc}(\bm{G}(\mathbb{A})^1)} {\rm tr}(\pi(\phi)),
\end{equation}
justifying the notation in \eqref{def:J-Eis}. As in \S\ref{sec:ELM}, we let $J_{\rm cusp}$ denote the restriction of $J_{\rm disc}$ to those $\pi$ in the cuspidal subspace $\Pi_{\rm cusp}(\bm{G}(\mathbb{A})^1)$.

Now let $\bm{M}\in\mathcal{L}$ be a \textit{proper} standard Levi subgroup. We choose a parabolic $\bm{P}\in \mathcal{P}(\bm{M})$ containing $\bm{M}$ as a Levi subgroup. We denote by $\mathcal{A}^2(\bm{P})$ the space of all complex-valued functions $\varphi$ on the boundary degeneration $\bm{U}_{\bm{P}}(\A_F)\bm{M}(F)\backslash\bm{G}(\A_F)^1$ such that for every $x\in \bm{G}(\A_F)$ the function $\varphi_x(g)=\delta_P(g)^{-1/2}\varphi(gx)$, where $g\in \bm{M}(\A_F)$, lies in $L^2(\bm{M}(F)\backslash \bm{M}(\A_F)^1)$. We require that $\varphi$ be $\mathfrak{z}$-finite and $\bfk_f$-finite, where $\mathfrak{z}$ is the center of the universal enveloping algebra of $\mathfrak{g}_\C=\mathfrak{g}\otimes_\R\C$. Let $\bar{\mathcal{A}}^2(\bm{P})$ be the Hilbert space completion of $\mathcal{A}^2(\bm{P})$. For every $\lambda\in\aa_{\bm{M},\C}^*$ the space $\bar{\mathcal{A}}^2(\bm{P})$ receives an action $\rho(\bm{P},\lambda)$ by $\bm{G}(\A_F)^1$ given by
\[
(\rho(\bm{P},\lambda,y)(\varphi))(x)=\varphi(xy)e^{\langle\lambda,H_{\bm{P}}(xy)- H_{\bm{P}}(x)\rangle},
\]
which makes it isomorphic to the induced representation
\begin{equation}\label{L2-induced}
{\rm Ind}(L^2_{\rm disc}(\bm{M}(F)\backslash \bm{M}(\A_F)^1)\otimes e^{\langle\lambda,H_{\bm{M}}(\cdot)\rangle}).
\end{equation}
Applying this procedure for two parabolics $\bm{P}, \bm{Q}\in\mathcal{P}(\bm{M})$ as above, we obtain the (analytic continuation of the) standard intertwining operator $\mathcal{M}_{\bm{Q}|\bm{P}}(\lambda):\mathcal{A}^2(\bm{P})\rightarrow\mathcal{A}^2(\bm{Q})$, where $\lambda\in\aa_{\bm{M},\C}^*$; then $\mathcal{M}_{\bm{Q}|\bm{P}}(\lambda)$ is unitary for all $\lambda\in i\aa_{\bm{M}}^*$. For these facts see \cite[\S1]{Arthur82}.

The spectral distributions $J_{{\rm spec},\bm{M}}$ for proper Levi subgroups $\bm{M}$ will involve, as an essential input, the logarithmic derivatives of the intertwining operators $\mathcal{M}_{\bm{Q}|\bm{P}}(\lambda)$, for varying choices of $\bm{P}$ and $\bm{Q}$. These can be packaged together rather compactly, in the case of $\GL_n$. The Weyl group $W_{\bm{M}}$ acts on $\mathcal{P}(\bm{M})$ by sending $\bm{P}$ to $w_s\bm{P} w_s^{-1}$, where $w_s\in N_G(\bm{T}_0)$ is a representative. This gives rise to a map on $\bm{P}$-induced automorphic forms $s:\mathcal{A}^2(\bm{P})\rightarrow\mathcal{A}^2(s\bm{P})$ given by left-translation by $w_s^{-1}$. We write $\mathcal{M}(\bm{P},s)$ for the composition $\mathcal{M}_{\bm{P}|s\bm{P}}(0)\circ s:\mathcal{A}^2(\bm{P})\rightarrow\mathcal{A}^2(\bm{P})$. Then $\mathcal{M}(\bm{P},s)$ is a unitary operator which for $\lambda\in i\aa_{\bm{L}_s}^*$ intertwines $\rho(\bm{P},\lambda)$ with itself, where $\bm{L}_s$ denotes the smallest Levi subgroup of $\bm{G}$ containing $w_s$; note that when $s\in W_{\bm{M}}$ we have $\bm{L}_s\supset\bm{M}$ (i.e., $\bm{L}_s\in\mathcal{L}(\bm{M})$). Then the work of Finis--Lapid--M\"uller \cite[\S 2]{FinisLapidMuller2011} (see also \cite[\S 4]{FinisLapidMuller2015}) associates with certain $\dim\mathfrak{a}_{\bm{L}_s}^{\bm{G}}$-tuples $\mathcal{X}_{\bm{L}_s}(\underline{\beta})$, consisting of pairs of adjacent parabolic subgroups associated with root tuples $\underline{\beta}\in\mathfrak{B}_{\bm{P},\bm{L}_s}$, a corresponding iterated logarithmic derivative of intertwining operators, denoted $\Delta_{\mathcal{X}_{\bm{L}_s(\underline{\beta})}}(\bm{P},\lambda)$.

With the above notation in place, we may now give the definition of $J_{{\rm spec},\bm{M}}$ for $\bm{M}\neq\bm{G}$. Recall from \S\ref{sec:global-characters} that $\aa_{\bm{L}_s}=\aa_{\bm{G}}\oplus\aa_{\bm{L}_s}^{\bm{G}}$. For any $s\in W_{\bm{M}}$ and $\underline{\beta}\in\mathfrak{B}_{\bm{P},\bm{L}_s}$, we put
\begin{equation}\label{Jintegral}
J_{{\rm spec},\bm{M}}(\phi; s,\underline{\beta})=\int_{i(\mathfrak{a}_{\bm{L}_s}^{\bm{G}})^*}{\rm tr}\left(\Delta_{\mathcal{X}_{\bm{L}_s}(\underline{\beta})}(\bm{P},\lambda)\mathcal{M}(\bm{P},s)\rho(\bm{P},\lambda,\phi)\right){\rm d}\lambda,
\end{equation}
where the operators are of trace class and the integrals are absolutely convergent \cite{MullerSpeh2004}. Finally, let
\begin{equation}\label{JspecM}
J_{{\rm spec},\bm{M}}(\phi)=\frac{1}{|W_{\bm{M}}|}\sum_{s\in W_{\bm{M}}}|\det(s-1)_{\mathfrak{a}^{\bm{L}_s}_{\bm{M}}}|^{-1}\sum_{\underline{\beta}\in\mathfrak{B}_{\bm{P},\bm{L}_s}}J_{{\rm spec},\bm{M}}(\phi; s,\underline{\beta});
\end{equation}
see \cite[Corollary 1]{FinisLapidMuller2011} or \cite[Theorem 4.1]{FinisLapidMuller2015}.

\section{Archimedean structures}\label{sec:arch-rep-PWCD}

The goal of this section is to introduce the archimedean structures that will be required in our construction of spectral localizer functions in Sections \ref{spec-loc} and \ref{ExceptionalSpectrumSection}. In particular, we shall describe various Lie algebra decompositions, and recall the parametrizations of the admissible, unitary, and Hermitian duals of $\GL_n(F_\infty)^1$. Since we work in the setting of a general number field $F$, some care must be taken in the presence of more than one archimedean place.

\subsection{Local and $S$-adic Lie algebras}\label{sec:v-adic}

Let $\mathcal{L}_v$ and $\mathcal{F}_v$ denote the set of semistandard Levi and parabolic subgroups of $G_v$. We will write $M_v$ for an arbitrary element in $\mathcal{L}_v$.

For $M_v\in\mathcal{L}_v$ we denote by $M_v^1$ the largest closed subgroup of $M_v$ on which all $|\chi_v|$ are trivial, as $\chi_v$ runs over the lattice $X^*(M_v)$ of $F_v$-rational characters of $M_v$. When $v\mid\infty$, we let $A_{M_v}$ be the identity component of the $\R$-split part of the center of $M_v$; we have $M_v=M_v^1\times A_{M_v}$. When $v<\infty$, let
\[
A_{M_v}=M_v\cap \{{\rm diag}(\varpi_v^{i_1},\ldots ,\varpi_v^{i_n}): i_1,\ldots ,i_n\in\Z\},
\]
for a fixed choice of uniformizer $\varpi_v$ of $F_v$. When $M_v=T_{0,v}$, we write $A_{0,v}=A_{T_{0,v}}$.

We set $\mathfrak{a}_{M_v}^*=X^*(M_v)\otimes\R$ and $\mathfrak{a}_{M_v}={\rm Hom}_\R(\mathfrak{a}^*_{M_v},\R)$. Their complexifications are denoted $\mathfrak{a}_{M_v,\C}$ and $\mathfrak{a}_{M_v,\C}^*$. If $M_v$ is isomorphic to $\GL_{n_1}(F_v)\times\cdots\times\GL_{n_{m_v}}(F_v)$ then $\mathfrak{a}_{M_v}$ is isomorphic to $\R^{m_v}$, where $m_v=\dim \mathfrak{a}_{M_v}$. The map $\log_v: A_{M_v}\rightarrow\mathfrak{a}_{M_v}$, defined as in \S\ref{sec:global-characters}, is injective and in fact an isomorphism when $v\mid\infty$. If $M_v\subset L_v$, we let $\aa_{M_v}^{L_v}$ denote the kernel of the natural map from $\aa_{M_v}$ to $\aa_{L_v}$. For example, $\aa_{G_v}=\R I_n\subset\aa_{M_v}$, making $\aa_{M_v}^{G_v}$ consist of trace zero matrices in $\aa_{M_v}$. Similarly to \S\ref{sec:global-characters}, we have a direct sum decomposition $\aa_{M_v}=\aa_{L_v}\oplus\aa_{M_v}^{L_v}$.

The above definitions can be extended in the obvious way to the $S$-adic setting. For a finite non-empty collection of places $S$ of $F$, let $\mathcal{L}_S=\prod_{v\in S}\mathcal{L}_v$. We will write $M_S$, or simply $M$ if the context is clear, for an arbitrary element in $\mathcal{L}_S$. An $M\in \mathcal{L}_S$ is then of the form $M=\prod_{v\in S}M_v$, where $ M_v\in\mathcal{L}_v$. When $S=\{v\mid\infty\}$ and $M=\prod_{v\mid\infty}M_v\in\mathcal{L}_\infty$ we let $A_M=\prod_v A_{M_v}$ and
\[
\aa_{M,\infty}=\bigoplus_{v\mid\infty}\aa_{M_v}.
\]
When $M$ is the archimedean diagonal torus $T_{0,\infty}=\prod_{v\mid\infty}T_{0,v}$ we write $\aa_{0,\infty}$ for $\aa_{M,\infty}$. Thus, 
\begin{equation}\label{eq:a0infty-in-ginfty}
\mathfrak{a}_{0,\infty}=\left\{\prod_{v\mid\infty}{\rm diag}(a_{v1},\ldots ,a_{vn}): a_{vi}\in\R\right\}\subset \mathfrak{g}_\infty=\mathfrak{gl}_n(F\otimes\R)
\end{equation}
is of dimension $rn$ and $\aa_{G,\infty}$, consisting of central elements at every place, is of dimension $r$. For every $M\in\mathcal{L}_\infty$, the Weyl group $W(A_M)=\prod_{v\mid\infty} W(A_{M_v})$ acts by orthogonal transformations on $\aa_{M,\infty}$ and $\aa_{M,\infty}^*$. When $M\subset L$ the subspace $\aa_{M,\infty}^L=\bigoplus_v \aa_{M_v}^{L_v}$ is the kernel of the natural map $\aa_{M,\infty}\rightarrow\aa_{L,\infty}$. For any $M\in\mathcal{L}_\infty$, let $\Phi^M=\Phi(A_0,M)$ denote the set of roots of $A_0=\prod_{v\mid\infty} A_{0,v}$ on the Lie algebra of $M$. Let $\Phi^{G,+}\subset\Phi^G$ be the subset of positive roots, with respect to the ordering induced by the standard Borel subgroup, and put $\Phi^{M,+}=\Phi^M\cap \Phi^{G,+}$.

\subsection{Norms and Weyl groups}\label{norms-weyl-groups-subsec}

In this subsection we let $v\mid\infty$. We let $\mathfrak{g}_v$ be the Lie algebra of $G_v$. We denote by $\theta_v$ the usual Cartan involution of minus conjugate transpose, and let $\mathfrak{g}_v=\mathfrak{p}_v\oplus\mathfrak{k}_v$ be the corresponding Cartan decomposition. Fix an ${\rm Ad}$-invariant (i.e., conjugation invariant) non-degenerate bilinear form $B_v$ on $\mathfrak{g}_v$, which is positive-definite on $\mathfrak{p}_v$ and negative definite on $\mathfrak{k}_v$. Then $B_v$ defines an inner product $\langle ,\rangle_v$ on $\mathfrak{g}_v$ by the rule $\langle X,Y\rangle_v =-B_v(X,\theta_v Y)$. We may, for example, take $B_v(X,Y)={\rm tr}(XY)$, so that $\langle X,Y\rangle_v={\rm tr}(X {}^t\overline{Y})$. We may extend $\langle \,,\,\rangle_v$ to a Hermitian inner product on $\mathfrak{g}_{v,\C}$ in the natural way. We may restrict $\langle \, ,\, \rangle_v$ to $\mathfrak{a}_{0,\C}$ and its subspaces. For $L_v\in\mathcal{L}(M_v)$, it then follows from the definition of $\aa_{M_v}^{L_v}$ and the description of $B_v$ that the direct sum decomposition $\mathfrak{a}_{M_v}=\mathfrak{a}_{L_v}\oplus\mathfrak{a}_{M_v}^{L_v}$ is orthogonal with respect to $\langle \cdot,\cdot\rangle_v$. We shall write $\|\cdot\|$ for the norm induced by $\langle\cdot,\cdot\rangle_v$. Using $B$ to identify $\mathfrak{a}_{0,\C}$ and $\mathfrak{a}_{0,\C}^*$ we obtain a norm on the latter space, again denoted by $\| \cdot\|$. 

Recall  \S\ref{sec:Iwasawa-decomp} that $\bfk_v={\rm O}(n)$ or ${\rm U}(n)$ according to whether $v=\R$ or $v=\C$; thus $\bfk_v$ is the group of fixed points of standard Cartan involution $\Theta_v$ of inverse conjugate transpose on $G_v$, and ${\rm Lie}(\bfk_v)=\mathfrak{k}_v$. It follows from the $\Ad$-invariance of $B$ that $N_{\bfk_v}(\mathfrak{a}_{M_v})=\{k\in \bfk_v: \Ad_k \mathfrak{a}\subset\mathfrak{a}\}$ acts by norm preserving transformations on $\aa_{M_v,\C}$ and $\aa_{M_v,\C}^*$. Letting $Z_{\bfk_v}(\mathfrak{a}_{M_v})$ denote the kernel of this action we put $W(A_{M_v})=N_{\bfk_v}(\mathfrak{a}_{M_v})/Z_{\bfk_v}(\mathfrak{a}_{M_v})$. Note that since $M_v$ is split, the algebraic and analytic Weyl groups -- $W(A_{M_v})$ and $W_{M_v}=N_{G_v}(M_v)/M_v$ -- coincide \cite[\S 5.1]{BorelTits}.

Once again, these definitions extend in the obvious way to the $S$-adic setting. For example, when $S=\{v\mid\infty\}$, we denote by $\mathfrak{g}_\infty=\bigoplus_{v\mid\infty}\mathfrak{g}_v$ the Lie algebra of $G_\infty$. We extend in the obvious way the inner product on $\mathfrak{g}_v$ of the previous subsection to $\mathfrak{g}_\infty$.

\subsection{The trace zero subspace $\mathfrak{h}_M$}\label{sec:lie}

We need to adapt the purely archimedean structures of the previous paragraph to be compatible with $\bm{G}(\A_F)^1$, or more generally with $\bm{M}(\A_F)^1$, where $\bm{M}\in\mathcal{L}$ is a rational Levi subgroup. 

The diagonal embedding of $\bm{M}(\R)$ in $\bm{M}(F_\infty)$ described in \S\ref{sec:subgp} induces a diagonal embedding of $\mathfrak{m}={\rm Lie}(\bm{M})$ in $\mathfrak{m}_\infty$. For a rational Levi $\bm{M}\in\mathcal{L}$, the Lie algebra $\aa_{\bm{M}}$ should not be confused with $\aa_{M,\infty}$, where $M=\bm{M}_\infty$; the former is of dimension $m$, the latter of dimension $rm$, where $r$ is the number of places of $F$. For a rational Levi $\bm{M}\in\mathcal{L}$, we shall, as a general rule, identify $\aa_{\bm{M}}$ with its diagonal image in $\aa_{0,\infty}$. For example, when $\bm{M}=\bm{T}_0$, the following global Lie algebras, 
\begin{equation}\label{eq:a0G-in-a0}
\aa_0^{\bm{G}}=\left\{\prod_{v\mid\infty}{\rm diag}(a_1,\ldots ,a_n) : \sum_i a_i=0\right\}\subset \mathfrak{a}_0=\left\{\prod_{v\mid\infty}{\rm diag}(a_1,\ldots ,a_n): a_i\in\R\right\},
\end{equation}
defined in \S\ref{sec:global-characters}, are diagonally embedded in $\mathfrak{a}_{0,\infty}$ as in \eqref{eq:a0infty-in-ginfty}.

For $M\in\mathcal{L}_\infty$ we let $\bm{M}\in\mathcal{L}$ be a rational Levi such that $M\subset\bm{M}_\infty$. We let $\h_M^{\bm{M}}$ denote the orthogonal complement of $\aa_{\bm{M}}$ in $\aa_{M,\infty}$. As usual, when $M=T_{0,\infty}$ we shall write $\h_0^{\bm{M}}$ in place of $\h_{T_{0,\infty}}^{\bm{M}}$ and when $\bm{M}=\bm{G}$ we shall drop the superscript and write $\h_M$ for $\h_M^{\bm{G}}$. The spaces $\mathfrak{h}_M$ will be ubiquitous in everything that follows: indeed, much of the work to execute the strategy of Section \ref{section:outline} amounts to exhibiting appropriate functions of Paley--Wiener type on the dual $\mathfrak{h}_{M,\C}^*$. 

For $H\in\mathfrak{h}_M^{\bm{M}}$, let $B_M^{\bm{M}}(H,R)$ denote the ball of radius $R>0$ about $H$ in $\mathfrak{h}_M^{\bm{M}}$. Similarly $B_M^{\bm{M}}(\mu,r)$ shall denote the ball of radius $r$ in $(\mathfrak{h}_M^{\bm{M}})^*$ about $\mu\in (\mathfrak{h}_M^{\bm{M}})^*$. When $M=T_{0,\infty}$ we shall drop the $M$ subscript in the ball notation, and when $\bm{M}=\bm{G}$ we drop the superscript. In view of the Cartan decomposition $\bm{M}_\infty^1=\bfk_\infty^{\bm{M}}\exp(\mathfrak{h}_0^{\bm{M}})\bfk_\infty^{\bm{M}}$ the sets $\mathbf{K}_\infty^{\bm{M}}\exp B^{\bm{M}}(0,R)\mathbf{K}_\infty^{\bm{M}}$ form an exhaustive system of neighborhoods of the identity in $\bm{M}_\infty^1$. When $\bm{M}=\bm{G}$, we recall the boldface convention in Remark \ref{rem:boldface-notation} and agree to write $G_{\infty,\leqslant R}^1=\mathbf{K}_\infty\exp B(0,R)\mathbf{K}_\infty$. Denote by $\mathcal{H}(G_\infty^1)_R$ the space of all smooth functions on $G_\infty^1$ supported in $G_{\infty,\leqslant R}^1$.

\subsection{Subspace decompositions of $\mathfrak{h}_M$}\label{sec:hM-decomp}

We now provide several subspace decompositions of $\mathfrak{h}_M$ which will be of use later in the paper.

Since $\aa_{\bm{M}}=\aa_{\bm{G}}\oplus\aa_{\bm{M}}^{\bm{G}}$, we have $\aa_{M,\infty}=\aa_{\bm{G}}\oplus\aa_{\bm{M}}^{\bm{G}}\oplus\h^{\bm{M}}_M$ so that
\begin{equation}\label{eq:hM-aMG}
\h_M=\aa_{\bm{M}}^{\bm{G}}\oplus\h^{\bm{M}}_M.
\end{equation}
For $H\in\h_M$ we shall write $H=H_{\bm{M}}+H^{\bm{M}}$ according to this decomposition, and similarly for $\nu\in i\h_M^*$. For example, when $M=T_{0,\infty}$ and $\bm{M}=\bm{T}_0$, the inclusion $\h_0^{\bm{T}_0}\subset\h_0$ is given by
\[
\left\{\prod_{v\mid\infty}{\rm diag}(a_{v1},\ldots ,a_{vn}): \sum_v d_v a_{vi}=0 \; \forall\; i\right\}\subset\left\{\prod_{v\mid\infty}{\rm diag}(a_{v1},\ldots ,a_{vn}): \sum_vd_v\sum_i a_{vi}=0\right\},
\]
where $d_v=[F_v:\R]$. Recalling \eqref{eq:a0G-in-a0}, we deduce that $\h_0=\aa_0^{\bm{G}}\oplus\h_0^{\bm{T}_0}$, verifying \eqref{eq:hM-aMG} in this case. The trace-zero conditions defining $\h_0^{\bm{T}_0}$ and $\h_0$ are over all places $v$, which is a reflection of the fact that they are orthogonal complements of globally defined Lie algebras. Using $\langle \, ,\, \rangle$, we may identify $\mathfrak{h}_M$ with $\mathfrak{h}_M^*$ to obtain an analogous decomposition of the latter space. The decomposition \eqref{eq:hM-aMG} according to \textit{rational} Levi subgroups will be used in the trace formula estimates (the spectral side) of Section \ref{EisensteinSection}, in the special case when $M=T_{0,\infty}$.

Similarly, if $L\in\mathcal{L}_\infty$ is any archimedean Levi such that $M\subset L$, we let $\h_M^L$ denote the orthogonal complement of $\aa_{L,\infty}$ inside $\aa_{M,\infty}$. When $M=T_{0,\infty}$ we shall write $\h_0^L$ in place of $\h_{T_{0,\infty}}^L$. When $L=G$, the space $\h^G_M$ is the codimension $r$ subspace of $\aa_{M,\infty}$ consisting of elements whose $v$-adic trace, for each $v\mid\infty$, vanishes; in other words,
\begin{equation}
\label{semisimple-decomp}
\h_M^G=\bigoplus_{v\mid\infty}\aa_{M_v}^{G_v}.
\end{equation}
With $\h_G$ as above, we have the decomposition
\begin{equation}\label{eq:hM-hG}
\h_{M}=\h_G\oplus \h_M^G
\end{equation}
into \textit{central and semisimple components}, respectively. For example, when $M=T_{0,\infty}$, we have
\[
\h_G=\left\{\prod_{v\mid\infty}{\rm diag}(a_v,\ldots ,a_v): \sum_v d_va_v=0\right\}\;\textrm{ and }\;\h_0^G=\left\{\prod_{v\mid\infty}{\rm diag}(a_{v1},\ldots ,a_{vn}): \sum_i a_{vi}=0\;\forall\; v\right\}.
\]
We obtain the decomposition $\h_0=\h_G\oplus \h_0^G$ in \eqref{eq:hM-hG}, which furthermore recovers that of \cite[\S 4.2]{Matz2016}. The decompositions \eqref{eq:hM-hG} and \eqref{semisimple-decomp} will be used in the construction of spectral localizing functions in Sections \ref{spec-loc}--\ref{sec:Paley-Wiener} and \ref{PW}. The terminology of \textit{central} and \textit{semisimple} components is meant to reflect the fact that the continuous parameter $\nu\in\mathfrak{h}_{M,\C}^*$ (see \S\ref{rep-notation}) of an irreducible representation $\pi_\infty$ of $G_\infty^1$ will decompose as $\nu=\nu_Z+\nu^0$ in the dual decomposition $\h_{M,\C}^*=\h_{G,\C}^*\oplus (\h_M^G)_\C^*$, with $\nu_Z$ arising from the central character of $\pi_\infty$. We also often further write $\nu^0=(\nu^0_v)_{v\mid\infty}$ relative to the decomposition \eqref{semisimple-decomp}.

Then $W(A_M)$ acts by orthogonal transformations on $\h_{M}$, leaving $\h_G$ fixed elementwise. Moreover, $\h_M^G=\h_L^G\oplus \h_M^L$, yielding
\begin{equation}\label{eq:decomp:arch-Levis}
\h_M=(\h_G\oplus \h_L^G)\oplus \h_M^L=\h_L\oplus \h_M^L,
\end{equation}
and similarly for $\h_M^*$. For $H\in\h_M$ (resp. $\nu\in\h_M^*$) we shall write $H=H_L+H^L$ (resp. $\nu=\nu_L+\nu^L$) according to this decomposition. Note that when $L=G$ in \eqref{eq:decomp:arch-Levis} we recover \eqref{eq:hM-hG}. The decomposition \eqref{eq:decomp:arch-Levis} according to \emph{archimedean} Levi subgroups will be used in Section \ref{ExceptionalSpectrumSection}, where we estimate the contribution of the complementary spectrum.

\subsection{Unitary and admissible archimedean duals}\label{rep-notation}

Let ${\rm Irr}(G_v)$ and ${\rm Irr}(G_\infty^1)$ denote the admissible duals of  $G_v$ and $G_\infty^1$, respectively. The main goal of this subsection is to review the description of ${\rm Irr}(G_v)$ and ${\rm Irr}(G_\infty^1)$ (due to Langlands and Knapp--Zuckerman, and nicely described in \cite[\S 2]{Knapp1994}), and, in particular, to associate with every infinitesimal class of irreducible admissible representations $\pi$ an equivalence class of spectral data $[\delta,\nu]$, consisting of a discrete parameter $\underline{\delta}$ and a continuous parameter $\nu$, the latter being taken up to Weyl group symmetry. Since the unitary duals $\Pi(G_v)$ and $\Pi(G_\infty^1)$ can be identified with subsets of the respective admissible duals, this parametrization can also be applied to isomorphism classes of irreducible unitary representations.

As in Vogan \cite{Vogan1979,Vogan1986}, we may put a norm on $\Pi(\bfk_\infty)$ in the following way. Let $\bfk_\infty^0$ denote the connected component of the identity of $\bfk_\infty$;  then $\bfk_\infty^0$ is the analytic subgroup corresponding to the Lie subalgebra $\mathfrak{k}$, as in \S\ref{sec:v-adic}. Let $\bfk^T_\infty=T_{0,\infty}\cap\bfk_\infty^0$ have corresponding Cartan subalgebra $\mathfrak{k}_T$. For any $\tau\in\Pi(\bfk_\infty)$ let $\lambda_\tau\in \mathfrak{k}_T^*$ be the highest weight associated with any irreducible component of $\tau|_{\bfk_\infty^0}$. Let $\|\cdot\|^2=-B(\cdot,\theta\cdot)$ be the norm on $\mathfrak{k}_T$ given by restricting the norm on $\mathfrak{g}_\infty$ from \S\ref{norms-weyl-groups-subsec}; under the identification of $\mathfrak{k}_T$ with its dual $\mathfrak{k}_T^*$ induced by the non-degenerate bilinear form $B$, this defines a norm on $\mathfrak{k}_T^*$, which we again denote by $\|\cdot\|^2$. Then, we put $\|\tau\|=\|\lambda_\tau+2\rho_{\bfk_\infty}\|^2$, where $\rho_{\bfk_\infty}$ denotes half the sum of the positive roots of $\bfk_\infty^T$ inside $\bfk_\infty$. Given $\pi\in\Pi(G_\infty)$, Vogan defines a minimal $\bfk_\infty$-type of $\pi$ as any $\tau\in\Pi(\bfk_\infty)$ of minimal norm appearing in $\pi|_{\bfk_\infty}$. In general, a minimal ${\bf K}_\infty$-type is not necessarily unique, but for $G_{\infty}=\GL_n(F_\infty)$ it is \cite[Theorem 4.9]{Vogan1986}.

For $M\in\mathcal{L}_v$ or $M\in\mathcal{L}_\infty$ we let $\mathscr{E}^2(M^1)$ denote the set of isomorphism classes of square-integrable representations of $M^1$. We say that $M$ is {\it cuspidal} if $\mathscr{E}^2(M^1)$ is non-empty. If $M\in\mathcal{L}_v$ and $M$ is isomorphic to $\GL_{n_1}(F_v)\times\cdots\times\GL_{n_m}(F_v)$, where $n_1+\cdots +n_m=n$, then $M$ is cuspidal precisely when $1\leqslant n_j\leqslant 2$ for $v$ real and $n_j=1$ for $v$ complex. 

Following \cite[\S 6]{FinisLapidMuller2015} we let $\mathcal{D}$ denote the $G_\infty$-conjugacy classes of pairs $(M,\delta)$ consisting of a cuspidal Levi subgroup $M$ of $G_\infty$ and $\delta\in\mathscr{E}^2(M^1)$. We note that $W(A_M)=N_G(M)/M$ acts on $\mathscr{E}^2(M^1)$ via $(w.\delta)(m)=\delta(w^{-1}mw)$. We let $W(A_M)_\delta$ denote the stabilizer of $\delta$ in $W(A_M)$. Furthermore, we put
\begin{equation}\label{relative-Weyl}
W(A^{M'}_M)_\delta=\{w\in W(A_M)_\delta: w\nu=\nu\;\;\forall\,\nu\in\h_{M'}^*\}.
\end{equation}

Fixing $\underline{\delta}\in\mathcal{D}$, we may and shall (in this paragraph) take $\underline{\delta}$ to be represented by a pair $(M,\delta)$, where $M\in\mathcal{L}_{{\rm st},\infty}$ has blocks of descending size. We call such a pair $(M,\delta)$ a \textit{standard representative} of $\underline{\delta}$. We caution, however, that the representation $\delta$ in the pair $(M,\delta)$ is not uniquely determined; if $(M,\delta)$ is a standard representative for $\underline{\delta}$ then so is $(M,w.\delta)$ for all $w\in W(A_M)$. For $\nu\in\mathfrak{h}_{M,\C}^*$, we may form an essentially square-integrable representation $\delta\otimes e^\nu$ of $M$. Let $P$ be the unique standard parabolic containing $M$ as a Levi subgroup. We may then consider the (unitarily) induced representation $\pi(\delta,\nu)={\rm Ind}_P^G(\delta\otimes e^\nu)$. This is not, in general, irreducible. Nevertheless, if ${\rm Re}\,\nu$ is taken to lie in $\{\lambda\in \h_M^*: \langle \lambda,\alpha\rangle\geqslant 0 \,\textrm{ for all simple }\, \alpha\in \Phi^G\setminus \Phi^M\}$ (which one can achieve by replacing $(\delta,\nu)$ by $(w.\delta,w.\nu)$ for some $w\in W(A_M)$), then $\pi(\delta,\nu)$ admits a unique irreducible quotient; see \cite[Theorems 1 \& 4]{Knapp1994}. The infinitesimal equivalence class of this quotient is denoted $\pi_{\delta,\nu}$. Then $\pi_{\delta,\nu}\in {\rm Irr}(G_\infty^1)$. Conversely, any $\pi\in {\rm Irr}(G_\infty^1)$ arises from the preceding construction, as follows: there is a standard cuspidal Levi subgroup $M$ with blocks in descending size, a $\delta\in\mathscr{E}^2(M^1)$, and $\nu\in\mathfrak{h}_{M,\C}^*$ such that $\pi\simeq\pi_{\delta,\nu}$. Given $M$, a pair $(\delta,\nu)$ for which the last isomorphism holds is unique up to the diagonal action by $W(A_M)$, in which $w\in W(A_M)$ sends $(\delta,\nu)$ to $(w.\delta,w.\nu)$. We write $[\delta_\pi,\nu_\pi]$ for the $W(A_M)$-orbit of any such pair, and $\underline{\delta}_\pi\in\mathcal{D}$ for the conjugacy class generated by $(M,\delta)$.

Let $G'=\prod_{v\mid\R}\SL_n^\pm(\R)\prod_{v\mid\C}\SL_n(\C)$, where $\SL_n^\pm(\R)=\{g\in\GL_n(\R): \det (g)=\pm 1\}$. We furthermore put $\bfk_\infty'=\bfk_\infty\cap G'$. For $\underline{\delta}\in\mathcal{D}$, with standard representative $(M,\delta)$, we let $\tau(\pi_\delta)\in\Pi(\bfk_\infty')$ be the minimal $\bfk_\infty'$-type of $\pi_{\delta,\nu}$, for any choice of $\nu\in i\mathfrak{h}_M^*$. This is well-defined, since the $\bfk_\infty$-type decomposition of $\pi(\delta,\nu)$ is independent of $\nu$, and $\pi(\delta,\nu)$ is irreducible for $\nu\in i\mathfrak{h}_M^*$ (see \cite[Proposition 3]{Moeglin1997}). We set $\|\delta\|=\|\tau(\pi_\delta)\|$. The set $\mathcal{D}$ is equipped with a partial ordering (see \cite[\S 2.3]{ClozelDelorme1984}), in which $\underline{\delta}'\prec\underline{\delta}$ if $\|\delta'\|<\|\delta\|$ or $\underline{\delta}'=\underline{\delta}$. This partial ordering will be used in \S\ref{CD-appl} and again for the specific case of $\GL_2$ in Sections \ref{PW}--\ref{EisensteinSection}.

\subsection{Decomposition according to $\delta$-type}\label{disc-aut}

We shall in fact need to parametrize the admissible dual of $\bm{M}_\infty^1$ for any rational Levi $\bm{M}\in\mathcal{L}$. Similarly to the above, we let $\mathcal{D}^{\bm{M}}$ denote the $\bm{M}_\infty$-conjugacy classes of pairs $(M,\delta)$ consisting of a cuspidal Levi subgroup $M$ of $\bm{M}_\infty$ and $\delta\in\mathscr{E}^2(M^1)$. (When $\bm{M}=\bm{G}$ we simply write $\mathcal{D}=\mathcal{D}^{\bm{G}}$.) Extending the earlier notion, we shall say that the pair $(M,\delta)$ is a standard representative of $\underline{\delta}\in\mathcal{D}^{\bm{M}}$ if, for every block of $\bm{M}$, the corresponding subblocks of $M$ are of descending size. Given $\underline{\delta}\in\mathcal{D}^{\bm{M}}$, we write ${\rm Irr}(\bm{M}_\infty^1)_{\underline{\delta}}$ for the subset of $\pi\in {\rm Irr}(\bm{M}_\infty^1)$ with $\underline{\delta}_\pi=\underline{\delta}$. We have
\[
{\rm Irr}(\bm{M}_\infty^1)=\bigsqcup_{\underline{\delta}\in\mathcal{D}^{\bm{M}}}{\rm Irr}(\bm{M}_\infty^1)_{\underline{\delta}}.
\]
From the inclusion $\Pi(\bm{M}_\infty^1)\subset {\rm Irr}(\bm{M}_\infty^1)$, we deduce
\begin{equation}\label{Pi-delta}
\Pi(\bm{M}_\infty^1)=\bigsqcup_{\underline{\delta}\in\mathcal{D}^{\bm{M}}}\Pi(\bm{M}_\infty^1)_{\underline{\delta}},
\end{equation}
extending to $\bm{M}$ the decomposition \eqref{eq:intro:discrete-decomp} from the introduction.

We may furthermore decompose the discrete automorphic spectrum $\Pi_{\rm disc}(\bm{M}(\mathbb{A}_F)^1)$, recalled earlier in \S\ref{JMspec}, according to the archimedean decomposition \eqref{Pi-delta}. For $\pi\in\Pi_{\rm disc}(\bm{M}(\mathbb{A}_F)^1)$, we let $\underline{\delta}_\pi$ denote the discrete parameter $\underline{\delta}_{\pi_\infty}\in\mathcal{D}^{\bm{M}}$ associated with $\pi_\infty$. We have
\[
\Pi_{\rm disc}(\bm{M}(\A_F)^1)=\bigsqcup_{\underline{\delta}\in\mathcal{D}^{\bm{M}}}\Pi_{\rm disc}(\bm{M}(\A_F)^1)_{\underline{\delta}},
\]
where $\Pi_{\rm disc}(\bm{M}(\mathbb{A}_F)^1)_{\underline{\delta}}$ consists of those $\pi$ for which $\underline{\delta}_\pi=\underline{\delta}$. Finally, when $\pi\in\Pi_{\rm disc}(\bm{M}(\mathbb{A}_F)^1)_{\underline{\delta}}$, with $\underline{\delta}$ having standard representative $(M,\delta)$, we shall write $[\delta_\pi,\nu_\pi]$ for the $W(A_M^{\bm{M}})$-orbit of a pair $(\delta_\pi,\nu_\pi)$ consisting of a discrete parameter $\delta_\pi\in\mathscr{E}^2(M^1)$ and a continuous parameter $\nu_\pi\in (\mathfrak{h}_M^{\bm{M}})_\C^*$ associated with $\pi_\infty$.

\subsection{Hermitian archimedean dual}\label{sec:herm} 

We now describe the Hermitian dual of $G_\infty$. We are inspired by the treatment in Lapid--M\"uller \cite[\S 3.3]{LapidMuller2009}, who deal with the spherical case. 

For $M\in\mathcal{L}_\infty$ and $w\in W(A_M)$ we let
\begin{equation}\label{hMw*}
\mathfrak{h}_{M,w}^*=\{\nu\in\mathfrak{h}_{M,\C}^*: w\nu=-\overline{\nu}\}.
\end{equation}
For $\delta\in\mathscr{E}^2(M^1)$, we introduce the $\delta$-Hermitian spectrum
\begin{equation}\label{delta-hm}
\mathfrak{h}_{\delta,{\rm hm}}^*=\bigcup_{w\in W(A_M)_\delta}\mathfrak{h}_{M,w}^*.
\end{equation}
The key property of $\mathfrak{h}_{\delta,{\rm hm}}^*$ is that whenever $\nu\in \mathfrak{h}_{\delta,{\rm hm}}^*$ the representation $\pi_{\delta,\nu}$ admits a non-degenerate Hermitian structure \cite[Theorem 16.6]{Knapp2001}. If we put
\begin{equation}\label{delta-unitary}
\mathfrak{h}^*_{\delta,{\rm un}}=\{\nu\in\mathfrak{h}_{M,\C}^*:\pi_{\delta,\nu}\text{ unitarizable}\},
\end{equation}
then $\mathfrak{h}^*_{\delta,{\rm un}}\subset \mathfrak{h}^*_{\delta, {\rm hm}}$. For $\nu\in\mathfrak{h}_{\delta,{\rm un}}^{\ast}$ we have $\|{\rm Re}\, \nu\|\leqslant \|\rho\|$, where $\rho$ is the half-sum of positive roots on $G_\infty$ \cite[Chapter IV, Theorem 8.1]{Helgason2000}.

A key feature of the $\delta$-Hermitian spectrum is that if $\nu\in\mathfrak{h}^*_{\delta, {\rm hm}}$ is such that ${\rm Re}\, \nu\neq 0$ then ${\rm Im}\,\nu$ is forced to belong to a positive codimension subspace in $\h_M^*$. We would like to be more precise about the collection of such singular subspaces in $\h_M^*$. We begin by noting that if $\mathfrak{h}_{M,w,\pm 1}^*$ denotes the $\pm 1$ eigenspaces for $w$ acting on $\mathfrak{h}_M^*$, then $\mathfrak{h}_{M,w}^*=\mathfrak{h}_{M,w,-1}^*+i\mathfrak{h}_{M,w,+1}^*$. From \cite[Theorem 6.27]{OrlikTerao1992} it follows that for every $w\in W(A_M)$ we have $\h_{M,w,+1}^*=\h_{M_w}^*$, where $M_w$ is the smallest Levi subgroup in $\mathcal{L}_\infty$ containing $M$ and (a representative of) $w$.

Let $\mathcal{L}_\infty(\delta)$ be the collection of the $M_w$ as $w$ varies over $W(A_M)_\delta$. In view of \eqref{delta-hm}, as well as the observations of the preceding paragraph, we obtain the following inclusion
\[
\{{\rm Im}\,\nu: \nu\in\mathfrak{h}_{\delta,{\rm hm}}^*,\; {\rm Re}\, \nu\neq 0\}\subset \mathfrak{h}_{\delta,{\rm sing}}^*
\]
where
\begin{equation}\label{delta-sing}
\mathfrak{h}_{\delta,{\rm sing}}^*=\bigcup_{\substack{M'\in\mathcal{L}_\infty(\delta)\\ M'\supsetneq M}}\h_{M'}^*.
\end{equation}
We shall refer to $\mathfrak{h}_{\delta,{\rm sing}}^*$ as the \textit{$\delta$-singular subset of $\h_M^*$}.

The following lattice-type property of $\mathcal{L}_\infty(\delta)$, extending \cite[\S 3.2]{LapidMuller2009}, will be used repeatedly in Section \ref{ExceptionalSpectrumSection}.

\begin{lemma}
If $L_1$ and $L_2$ lie in $\mathcal{L}_\infty(\delta)$ then the Levi subgroup they generate $\langle L_1,L_2\rangle$ also lies in $\mathcal{L}_\infty(\delta)$. 
\end{lemma}

\begin{proof}
It suffices to prove the proposition for each archimedean place $v$, taking $L_1,L_2$ to lie in $\mathcal{L}_v(\delta_v)$. We therefore fix $v\mid\infty$ and drop the $v$ subscripts throughout the proof. 

We begin by giving an explicit description of $W(A_M)_\delta$. Conjugating $M$ if necessary, we may assume that $M$ is a standard Levi subgroup, whose block decomposition is ordered, with respect to $\delta$, in such a way that
\[
M=M_1^{m_1}\times \cdots \times M_t^{m_t},\qquad\delta=\delta_1^{\otimes m_1}\otimes\cdots\otimes\delta_t^{\otimes m_t},
\]
where the $\delta_i\in\mathscr{E}^2(M_i^1)$ are pairwise distinct. Then $W(A_M)_\delta$ is isomorphic to $\mathfrak{S}_{m_1}\times\cdots\times\mathfrak{S}_{m_t}$, and we may write $w\in W(A_M)_{\delta}$ as $(w_1,\ldots ,w_t)$, where $w_i\in\mathfrak{S}_{m_i}$, so that $M_w=M_{w_1}\times\cdots \times M_{w_t}$. It therefore suffices to prove the claim for pairs of the form $(M^m,\delta^{\otimes m})$.

We can describe any $L\in\mathcal{L}(\delta)$ explicitly as follows: if $w\in\mathfrak{S}_m$ is such that $L=M_w$, then $L$ is a product of Levi subgroups indexed by the disjoint factors in the cyclic decomposition of $w$, with a cycle of support $\{i_1,\ldots ,i_r\}$ being associated with the Levi subgroup generated by $M_{i_1},\ldots, M_{i_r}$. In this way, $M_w$ depends only on the partition of $\{1,\ldots ,m\}$ determined by $w$. With this description, it can now be checked that, given $w_1,w_2\in\mathfrak{S}_m$, we have $\langle M_{w_1},M_{w_2}\rangle=M_w$, where the partition associated with $w\in\mathfrak{S}_m$ is the finest common coarsening of those associated with $w_1,w_2$.
 \end{proof}

Finally, for every $L\in\mathcal{L}_\infty(\delta)$, we denote by $W(M,L)_\delta$ the subset of $W(A_M)_{\delta}$ consisting of transpositions fixing $\mathfrak{h}_L$. (This is not a group, as for example, it does not contain the identity.) Note that $W(M,M)_\delta=\emptyset$ but that $|W(M,L)_\delta|\geqslant 1$ for $L\supsetneq M$. See Remark \ref{rem:use-of-W(M,L)} for the role of this subset in our main estimates.

\subsection{Paley--Wiener classes}\label{PWCD}

Let $M\in\mathcal{L}_\infty$. For a function $g\in C^\infty_c(\mathfrak{h}_M)$ and $\nu\in\mathfrak{h}_{M,\C}^*$ let $\widehat{g}(\nu)=\int_{\mathfrak{h}_M} g(X)e^{\langle \nu,X\rangle}dX$ denote the Fourier transform of $g$ at $\nu$. The image of $C^\infty_c(\mathfrak{h}_M)$ under this map is the Paley--Wiener space $\mathcal{PW}(\mathfrak{h}_{M,\C}^*)$. Recall that
\begin{equation}
\label{PWDecomposition}
\mathcal{PW}(\mathfrak{h}_{M,\C}^*)=\bigcup_{R>0}\mathcal{PW}(\mathfrak{h}_{M,\C}^*)_R,
\end{equation}
where $\mathcal{PW}(\mathfrak{h}_{M,\C}^*)_R$ consists of those entire functions $h$ on $\mathfrak{h}_{M,\C}^*$ such that for all $k\geqslant 0$ we have
\[
\sup_{\nu\in\mathfrak{h}_{M,\C}^*}\left\{ |h(\nu)|e^{-R\|{\rm Re}\, \nu\|}(1+\|\nu\|)^k\right\}<\infty.
\]
Then the Fourier transform $C^\infty_c(\mathfrak{h}_M)\rightarrow\mathcal{PW}(\mathfrak{h}_{M,\C}^*)$ is an isomorphism of topological algebras, each of these spaces being taken with their natural Fr\'echet topologies. Moreover, for $R>0$, if $C^\infty_c(\mathfrak{h}_M)_R$ denotes the subspace of $g\in C^\infty_c(\mathfrak{h}_M)$ having support in the ball $B_M(0,R)$, then the Fourier transform maps $C^\infty_c(\mathfrak{h}_M)_R$ onto $\mathcal{PW}(\mathfrak{h}_{M,\C}^*)_R$; see \cite[Theorem~3.5]{Gangolli1971}.

We will also be working with decompositions $\h_{M,\C}^{\ast}=\h_{G,\C}^{\ast}\oplus(\h_M^G)^{\ast}_{\C}$ and $(\h_M^G)^{\ast}_C=\bigoplus_{v\mid\infty}(\aa_{M_v}^{G_v})^{\ast}_{\C}$ dual to \eqref{eq:hM-hG} and \eqref{semisimple-decomp}, and with the analogously defined  Paley--Wiener spaces $\mathcal{PW}((\h_{G,\C})^{\ast})_R$, $\mathcal{PW}((\h_M^G)^{\ast}_{\C})_R$, and $\mathcal{PW}((\aa_{M_v}^{G_v})^{\ast}_{\C})_R$.

We write $\mathcal{PW}$ for the space of collections of complex valued functions $H_M(\delta,\nu)$, indexed by standard Levi subgroups $M$ (with blocks in descending order), and defined on pairs $(\delta,\nu)\in \mathscr{E}^2(M^1)\times\h_{M,\C}^*$, such that, for each $M$,
\begin{enumerate}
\item $H_M(\cdot,\nu)$ is supported on a finite set of $\delta$, in the sense that for all but a finite number of $\delta\in \mathscr{E}^2(M^1)$ the function $\nu\mapsto H_M(\delta,\nu)$ is identically zero on $\h_{M,\C}^*$;
\item\label{PW-fixed-delta} for all $\delta\in \mathscr{E}^2(M^1)$ the function $H_M(\delta,\cdot)$ lies in $\mathcal{PW}(\mathfrak{h}_{M,\C}^*)$;
\item\label{PW3} $H_M(w.\delta,w.\nu)=H_M(\delta,\nu)$ for all $w\in W(A_M)$.
\end{enumerate}
Similarly, one defines $\mathcal{PW}_R$ using $\mathcal{PW}(\mathfrak{h}_{M,\C}^*)_R$ in \eqref{PW-fixed-delta}.

Lastly, for a fixed $\underline{\delta}\in\mathcal{D}$, we let $\mathcal{PW}_{\underline{\delta}}$ (resp. $\mathcal{PW}_{R,\underline{\delta}}$) be the subspace of $\mathcal{PW}$ (resp. $\mathcal{PW}_R$) consisting of functions $H_L$ on $\mathscr{E}^2(L^1)\times\h_{L,\C}^*$ for which $\nu\mapsto H_L(\sigma ,\nu)$ is identically zero on $\mathfrak{h}_{L,\C}^*$, unless $(L,\sigma)\in \underline{\delta}$. Similarly, we let $\mathcal{H}(G_\infty^1)_{\underline{\delta}}$ (resp. $\mathcal{H}(G_\infty^1)_{R,\underline{\delta}}$) denote the subspace of $\mathcal{H}(G_\infty^1)$ (resp. $\mathcal{H}(G_\infty^1)_R$) consisting of those $f$ that are mapped into $\mathcal{PW}_{\underline{\delta}}$ (resp. $\mathcal{PW}_{R,\underline{\delta}}$) under $f\mapsto {\rm tr}\,\pi_{\sigma,\nu}(f)$. (Since $\pi_{\sigma,\nu}$ depends only on the diagonal $W(A_L)$-orbit of $(\sigma,\nu)$, property \eqref{PW3} is always met.) Non-trivial examples of functions in $\mathcal{H}(G_\infty^1)_{\underline{\delta}}$ will be given in Section \ref{PW}.

\begin{remark}
Given a standard representative $(M,\delta)$ of $\underline{\delta}$, a function in $\mathcal{PW}_{\underline{\delta}}$ is uniquely determined by the choice of a single $W(A_M)_\delta$-invariant function $h\in\mathcal{PW}(\mathfrak{h}_{M,\C}^*)$. Indeed, from the latter, one can construct the associated collection of $H_L$ in $\mathcal{PW}_{\underline{\delta}}$ by setting $H_L(\sigma,\nu)$ equal to zero unless there exists $w\in W(A_M)$ satisfying $\sigma=w.\delta$, in which case we put $H_M(\sigma,\nu)=h(w^{-1}.\nu)$. This is well-defined, in light of the $W(A_M)_\delta$-invariance of $h$. Throughout this paper, we will identify $\mathcal{PW}_{\underline{\delta}}$ with the space of $W(A_M)_\delta$-invariants of $\mathcal{PW}(\mathfrak{h}_{M,\C}^*)$ via the above construction. The same discussion applies \textit{mutatis mutandis} to $\mathcal{PW}_{R,\underline{\delta}}$ and $\mathcal{PW}(\mathfrak{h}_{M,\C}^*)_R$.
\end{remark}

\section{Asymptotics of global Plancherel volume}\label{sec:cond-zeta}

In this section we asymptotically evaluate the global Plancherel volume $V_{\mathfrak{F}}(Q)$ from \eqref{eq:big-adelic-integral}, in the limit as $Q\rightarrow\infty$. Recall the constant $\mathscr{C}(\mathfrak{F})$ from \eqref{eq:def:tau-hat}. As usual we put $d=[F:\Q]$. The following is our main result.

\begin{prop}\label{MainTermLemma}
Let $0<\theta\leqslant 2/(d+1)$ when $n\geqslant 2$ and $0<\theta<\min \{1,2/(d+1)\}$ when $n=1$. Then for all $Q\geqslant 1$ we have
\[
V_{\mathfrak{F}}(Q)=\mathscr{C}(\mathfrak{F})Q^{n+1}+\textnormal{O}_\theta\big(Q^{n+1-\theta}\big).
\]
\end{prop}

It is the above proposition which essentially gives the shape of the leading term constant, as well as the precise power growth, in the Weyl--Schanuel law. The proof of Proposition \ref{MainTermLemma} culminates in \S\ref{sec:computing-zeta-residue}, where we shall also indicate how to deduce from it the following corollary.

\begin{cor}\label{MT-is-vol}
We have
\[
D_F^{n^2/2}\Delta_F^*(1)\sum_{1\leqslant \norm\q\leqslant Q}\sum_{\dd\mid\q}\lambda_n(\q/\dd)\varphi_n(\dd)\int\limits_{\substack{\pi_\infty\in\Pi(G_\infty^1)\\ q(\pi_\infty)\leqslant Q/\norm\q}}\mathrm{d}\widehat{\omega}_{\infty}^{\pl}(\pi_\infty)=\mathscr{C}(\mathfrak{F})Q^{n+1}+\textnormal{O}_\theta\big(Q^{n+1-\theta}\big),
\]
where $\varphi_n(\q)=\norm (\q)^n\prod_{\p\mid\q}\left(1-\frac{1}{\norm (\p)^n}\right)$.
\end{cor}
In view of the decomposition \eqref{univ-decomp}, the left-hand side of the above expression will be what naturally arises from our methods.

\subsection{Conversion to the canonical measure}\label{sec:conversion-to-Gross}

In order to line up with the measure conventions for Plancherel inversion in \eqref{eq:plancherel-inv}, we renormalize the spectral measures appearing in Proposition \ref{MainTermLemma}, to be taken with respect to the measure $\mu_{\bm{G},v}$.

The relation $\mu_{\bm{G}}=D_F^{n^2/2}\Delta_F^*(1)\omega_{\bm{G}}$ implies $\widehat{\omega}^{\rm pl}=D_F^{n^2/2}\Delta_F^*(1)\widehat{\mu}^{\rm pl}$. Similarly, for finite places $v$ the relation $\mu_{\bm{G},v}=\Delta_v(1)\omega_{{\bm G},v}$ implies $\widehat{\mu}_v^{\rm pl}=\Delta_v(1)^{-1}\widehat{\omega}_v^{\rm pl}$. When these local formulae are inserted into \eqref{defn-hat-nu},  we obtain
\[
\tau_{\mathfrak{F}}=D_F^{n^2/2}\Delta_F^*(1)\zeta_F^*(1)\big(\prod_{v<\infty} \zeta_v(1)^{-1}\tau_{\mathfrak{F},v}'\big)\tau_{\mathfrak{F},\infty},
\]
where the $\tau_{\mathfrak{F},v}'$ are defined as in \eqref{eq:defn:plv} but with respect to $\widehat{\mu}_v^{\rm pl}$. This shows that the quantity ${\rm vol}(\tau_{\mathfrak{F}})$ defined in \eqref{eq:defn:tau=pl} can be written as
\[
{\rm vol}(\tau_{\mathfrak{F}})=D_F^{n^2/2}\Delta_F^*(1)\zeta_F^*(1)\left(\prod_{v<\infty}\zeta_v(1)^{-1}Z_v(n+1)\right)Z_\infty(n+1),
\]
where
\begin{equation}\label{def:local-cond-zeta}
Z_v(s)=\int_{\Pi(\bm{G}(F_v))} q(\pi_v)^{-s}\, {\rm d}\widehat{\mu}_v^{\rm pl}(\pi_v),\qquad Z_\infty(s)=\int_{\Pi(\bm{G}(F_\infty)^1)}q(\pi_\infty)^{-s}\, {\rm d}\widehat{\omega}^{{\rm pl}}_\infty(\pi_\infty)
\end{equation}
are the local conductor zeta functions at finite and infinite places, respectively.

Proposition \ref{MainTermLemma} is then equivalent to the asymptotic
\begin{equation}\label{eq:Prop-asymp-mu-version}
\int\limits_{\substack{\pi\in\Pi(\bm{G}(\A_F)^1)\\ Q(\pi)\leqslant Q}}{\rm d}\widehat{\mu}^{\rm pl}(\pi)=\frac{1}{n+1}\zeta_F^*(1)\left(\prod_{v<\infty}\zeta_v(1)^{-1}Z_v(n+1)\right) Z_\infty(n+1)Q^{n+1}+\textnormal{O}_\theta\big(Q^{n+1-\theta}\big),
\end{equation}
the implied constants depending, as always, on $F$ and $n$. It is rather this version which we shall prove over the course of this section.

\subsection{Local and global conductors}\label{conductors}

We now define the local conductor of an irreducible generic representation of $\GL_n(F_v)$ for all places $v$ of $F$.

Let $\pi_v$ be an irreducible admissible representation of $\GL_n(F_v)$. The local conductor can be read off from the local functional equation of the standard $L$-function associated with $\pi_v$. Let $L(s,\pi_v)$ denote the standard $L$-function of $\pi_v$, as defined by Tate \cite{Tate1967} (for $n=1$) and Godement--Jacquet \cite{GodementJacquet1972} (for $n\geqslant 1$). When $v$ is finite, the local conductor appears in the epsilon factor, taken with respect to an unramified additive character of $F_v$. It was shown in \cite{Casselman1973, Jacquet2012, JacquetPiatetski-ShapiroShalika1981} that when, furthermore,  $\pi_v$ is generic, the local conductor encodes the existence of non-zero invariant vectors under a certain Hecke congruence subgroup. When $v$ is archimedean, the local $L$-factor is a product of Gamma factors, and the shifts that appear in them define the archimedean conductor. We review these definitions now. A more uniform approach, using the local $\gamma$-factors, can be found in \cite[\S 3.1.12]{MichelVenkatesh2010}.

\subsubsection{Non-archimedean case}\label{sec:def-cond}

Let $v$ be a non-archimedean place. For an additive character of level zero $\psi_v$, let $\epsilon(s,\pi_v,\psi_v)$ be the local espilon factor of $\pi_v$. Then there is an integer $f(\pi_v)$, independent of $\psi_v$, and a complex number $\epsilon(0,\pi_v,\psi_v)$ of absolute value 1 such that $\epsilon(s,\pi_v,\psi_v)=\epsilon(0,\pi_v,\psi_v)q_v^{-f(\pi_v)s}$. Moreover, $f(\pi_v)=0$ whenever $\pi_v$ is unramified.

Under the additional assumption that $\pi_v$ is generic, Jacquet, Piatetski-Shapiro, and Shalika~\cite{JacquetPiatetski-ShapiroShalika1981} show that the integer $f(\pi_v)$ is in fact always non-negative. One then calls $f(\pi_v)$ the conductor exponent of $\pi_v$. The conductor $q(\pi_v)$ of a generic irreducible $\pi_v$ is then defined to be $q(\pi_v)=q_v^{f(\pi_v)}$. In particular, $q(\pi_v)=1$ whenever $\pi_v$ is unramified.

For an integer $r\geqslant 0$ write $K_{1,v}(\p_v^r)$ for the subgroup of $\bfk_v$ consisting of matrices whose last row is congruent to $(0,0,\ldots ,1)$ mod $\p_v^r$. In particular, when $r=0$ we obtain the maximal compact $\bfk_v$. For ideals $\q_v=\p_v^r$ of $\mathcal{O}_v$, we have
\begin{equation}\label{eq:Euler-phi}
|\bfk_v/K_{1,v}(\q_v)|=\norm (\q_v)^n\left(1-\frac{1}{\norm (\p_v)^n}\right)=(\mu\star p_n)(\q_v)=\varphi_n(\q_v),
\end{equation}
where $p_n(\q_v)=\norm (\q_v)^n$ is the $n$-power function on $\mathcal{O}_v$-ideals. Jacquet, Piatetski-Shapiro, and Shalika~\cite{Jacquet2012, JacquetPiatetski-ShapiroShalika1981}, building on work of Casselman \cite{Casselman1973}, show that for any irreducible generic representation $\pi_v$ of $\GL_n(F_v)$ the conductor exponent $f(\pi_v)$ is equal to the smallest non-negative integer $r$ such that $\pi_v$ admits a non-zero fixed vector under $K_{1,v}(\p_v^r)$. Moreover, the space of all such fixed vectors is of dimension 1. Letting $\q_v(\pi_v)=\p_v^{f(\pi_v)}$, by the subsequent work of Reeder \cite{Reeder1991} it follows that for an $\mathcal{O}_v$-ideal $\q_v$ and an irreducible generic $\pi_v$ with $\q_v(\pi_v)|\q_v$, one has 
\begin{equation}\label{eq:Reeder}
\dim\pi_v^{K_{1,v}(\q_v)}=d_n(\q_v/\q_v(\pi_v)),
\end{equation}
where $d_n=1\star \cdots \star 1$ is the $n$-fold convolution of $1$ with itself. In particular, if $\pi_v$ is an irreducible generic representation of $\GL_n(F_v)$, one has
\begin{equation}\label{idem-trace}
{\rm tr} \left(\pi_v (\varepsilon_{K_{1,v}(\q_v(\pi_v)\p_v^r)})\right)=d_n(\p_v^r).
\end{equation}

\subsubsection{Archimedean conductor}\label{arch-cond}

For $v$ archimedean the local $L$-factor of $\pi_v$ is a product of shifted Gamma factors. We shall first describe these shifts relative to the inducing data for $\pi_v$, then use this expression to define the local conductor of $\pi_v$.

As in \S\ref{rep-notation}, we shall realize $\pi_v$ as $\pi_{\delta,\nu}$, for some $\delta\in\mathscr{E}^2(M^1)$ and $\nu\in\mathfrak{h}_{M,\C}^*$, where $M$ is a standard cuspidal Levi subgroup. Let $\sigma$ denote the essentially square-integrable representation $\delta\otimes e^\nu$ of $M$. Since $M$ is cuspidal it is isomorphic to $\GL_{n_1}(F_v)\times\cdots\times\GL_{n_m}(F_v)$, where $n_1+\cdots +n_m=n$, $1\leqslant n_j\leqslant 2$ for $v$ real and $n_j=1$ for $v$ complex. We may then decompose $\delta=\delta_1\otimes\cdots\otimes\delta_{n_m}$ and $\nu=\nu_1 \varepsilon_1+\cdots +\nu_m \varepsilon_m$, where $\nu_j=\langle \nu,\varepsilon_j\rangle\in\C$ and $(\varepsilon_j)$ is the standard basis for $\mathfrak{a}_{M,\C}^*=X^*(M)\otimes\C$. Thus $\sigma=\sigma_1\otimes\cdots\otimes\sigma_m$, where $\sigma_j=\delta_j\otimes e^{\nu_j\varepsilon_j}$. Then
\begin{equation}\label{eq:defn:L-factor-prod}
L_v(s,\pi_{\delta,\nu})=\prod_{j=1}^mL_v(s,\sigma_j)=\prod_{j=1}^mL_v(s+\nu_j,\delta_j).
\end{equation}
It therefore suffices to describe $L_v(s,\delta)$ for $\delta$ in $\mathscr{E}^2(\GL_1(\C)^1)$, $\mathscr{E}^2(\GL_1(\R)^1)$, or $\mathscr{E}^2(\GL_2(\R)^1)$. (Of course $\GL_1(\C)^1$ is just the circle group, and $\GL_1(\R)^1=\{\pm 1\}$.)

We have $\mathscr{E}^2(\GL_1(\C)^1)=\{\chi_k: k\in\Z\}$, where $\chi_k$ is the unitary character $z\mapsto (z/|z|)^k$; in this case, $L_v(s,\chi_k)=\Gamma_\C(s+|k|/2)$. Moveover, $\mathscr{E}^2(\GL_1(\R)^1)=\{{\rm sgn}^\epsilon: \epsilon=0,1\}$ and $L_v(s,{\rm sgn}^\epsilon)=\Gamma_\R(s+\epsilon)$. Finally, $\mathscr{E}^2(\GL_2(\R)^1)=\{D_k: k\geqslant 2\}$, where $D_k$ denotes the weight $k$ discrete series representation, and we have $L_v(s,D_k)=\Gamma_\C(s+(k-1)/2)$. 

We now insert these expressions into \eqref{eq:defn:L-factor-prod}. To do so in a uniform way, we include $v$ subscripts on various parameters. In particular, $m_v$ denotes the number of blocks in the Levi $M_v$, necessarily equal to $n$ when $v$ is complex. Furthermore, for $v$ real, we denote by $a_v$ the number of $\GL_1$ blocks and by $b_v$ the number of $\GL_2$ blocks of $M_v$, so that $m_v=a_v+b_v$ and $n=a_v+2b_v$. Now set $\delta_{vj}=|k_{vj}|/2$, for indices $j=1,\ldots ,n$ and $v$ complex, and
\[
\delta_{vj}=\begin{cases}
\epsilon_{vj},&  j=1,\ldots ,a_v;\\
(k_{vj}-1)/2,& j=a_v+1,\ldots ,m_v;\\
1+(k_{vj}-1)/2,& j=m_v+1,\ldots ,n,
\end{cases}
\quad\textrm{and}\quad \nu_{v(j+b_v)}=\nu_{vj}, \; j=a_v+1, \ldots, m_v,
\]
for $v$ real. Finally, for $1\leqslant j\leqslant n$, we put $\mu_{vj}=\nu_{vj}+\delta_{vj}\in\C$ for any $v\mid\infty$.

Then it follows from the above computations, as well as the duplication formula for the Gamma function, in the form $\Gamma_\C(s)=\Gamma_\R(s)\Gamma_\R(s+1)$, that
\[
L_v(s,\pi_{\delta,\nu})=\prod_{j=1}^n\Gamma_v (s+\mu_{vj}) 
\]
for any $v\mid\infty$. Iwaniec and Sarnak \cite{IwaniecSarnak2000} then define the conductor $q(\pi_v)$ of $\pi_v$ as 
\begin{equation}\label{IS-cond-first}
q(\pi_v)=\prod_{j=1}^n(1+|\mu_{vj}|)^{d_v},
\end{equation}
where $d_v=[F_v:\R]$.

\subsection{Non-archimedean local integrals}\label{sec:local-measure}

In this section we examine the local conductor zeta functions \eqref{def:local-cond-zeta}. In the definition of $Z_v(s)$, the complex parameter $s$ has large enough real part to ensure absolute convergence. For a finite place $v$ and an ideal $\q=\p_v^d$ let
\[
\mathfrak{M}_v(\q)=\int_{\q(\pi_v)=\q}{\rm d}\widehat{\mu}_v^{\rm pl}(\pi_v)
\]
be the Plancherel measure of those tempered $\pi_v$ with $\q(\pi_v)=\q$. For ${\rm Re}(s)$ large enough, we have
\[
Z_v(s)=\sum_{r\geqslant 0}\mathfrak{M}(\p_v^r)q_v^{-rs}.
\]
 
\begin{lemma}\label{local-count}
We have
\[
\mathfrak{M}_v(\q)=\sum_{\dd\mid\q}\lambda_{n+1}(\dd)\norm (\q/\dd)^n\qquad\text{ and }\qquad
Z_v(s)=\frac{\zeta_v(s-n)}{\zeta_v(s)^{n+1}}.
\]
In particular, $Z_v(n+1)=\zeta_v(1)/\zeta_v(n+1)^{n+1}$.
\end{lemma}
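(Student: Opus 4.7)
\medskip

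My approach would go through the idempotent $\varepsilon_{K_{1,v}(\mathfrak{q})}$ and Plancherel inversion. First, I would use the fact, noted after \eqref{idempotent}, that ${\rm tr}\,\pi_v(\varepsilon_{K_{1,v}(\mathfrak{q})}) = \dim \pi_v^{K_{1,v}(\mathfrak{q})}$, combined with Plancherel inversion evaluated at $g = e$. Since $\varepsilon_{K_{1,v}(\mathfrak{q})}$ is self-dual (its support is the group $K_{1,v}(\mathfrak{q})$ which contains $e$), inversion yields
\[
\frac{1}{\mu_{\bm{G},v}(K_{1,v}(\mathfrak{q}))} = \varepsilon_{K_{1,v}(\mathfrak{q})}(e) = \int_{\Pi(G_v)} \dim \pi_v^{K_{1,v}(\mathfrak{q})}\, {\rm d}\widehat{\mu}_v^{\rm pl}(\pi_v).
\]
Since the canonical measure assigns volume $1$ to $\bfk_v$, we have $\mu_{\bm{G},v}(K_{1,v}(\mathfrak{q}))^{-1} = \varphi_n(\mathfrak{q})$ by \eqref{eq:Euler-phi}.

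Next, I would exploit that the support of $\widehat{\mu}_v^{\rm pl}$ is tempered, hence (for $\GL_n(F_v)$) generic, so that Reeder's formula \eqref{eq:Reeder} applies to every $\pi_v$ in the support: $\dim \pi_v^{K_{1,v}(\mathfrak{q})} = d_n(\mathfrak{q}/q(\pi_v))$ if $q(\pi_v)\mid\mathfrak{q}$, and $0$ otherwise. Partitioning the integral on the right-hand side above according to the value of $q(\pi_v) = \mathfrak{d} \mid \mathfrak{q}$ gives the identity
\[
\varphi_n(\mathfrak{q}) = \sum_{\mathfrak{d}\mid\mathfrak{q}} d_n(\mathfrak{q}/\mathfrak{d})\,\mathfrak{M}_v(\mathfrak{d}).
\]
This holds for every prime power $\mathfrak{q} = \mathfrak{p}_v^r$, so it is an identity of arithmetical functions at the place $v$. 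M\"obius inversion against the convolution inverse $\lambda_n$ of $d_n$ yields $\mathfrak{M}_v = \lambda_n \star \varphi_n$, and combining with the identity $\varphi_n = \mu_F \star p_n$ from \eqref{eq:Euler-phi} gives
\[
\mathfrak{M}_v = \lambda_n \star \mu_F \star p_n = \lambda_{n+1} \star p_n,
\]
which is the first claim.

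For the second claim, I would simply pass to the local Dirichlet series. From $\mathfrak{M}_v = \lambda_{n+1} \star p_n$, the Euler factor $Z_v(s) = \sum_{r\geqslant 0} \mathfrak{M}_v(\mathfrak{p}_v^r) q_v^{-rs}$ factorizes as the product of the local Dirichlet series of $\lambda_{n+1}$ and $p_n$. The former equals $\zeta_v(s)^{-(n+1)}$ since $\lambda_{n+1}$ is the convolution inverse of $d_{n+1}$, and the latter equals $\sum_{r\geqslant 0} q_v^{-r(s-n)} = \zeta_v(s-n)$, giving $Z_v(s) = \zeta_v(s-n)/\zeta_v(s)^{n+1}$. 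Specializing to $s = n+1$ and using $\zeta_v(1) = (1-q_v^{-1})^{-1}$ yields the final numerical value. The only genuinely non-formal input is the combination of Plancherel inversion against the idempotent with the tempered-implies-generic property and Reeder's formula; once those are in hand the rest is bookkeeping with Dirichlet convolutions.
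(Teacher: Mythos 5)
Your proposal is correct and follows essentially the same route as the paper: Plancherel inversion applied to the idempotent $\varepsilon_{K_{1,v}(\q)}$, the identity $\varphi_n(\q)=(d_n\star\mathfrak{M}_v)(\q)$ via Reeder's dimension formula on the (tempered, hence generic) support, Möbius inversion giving $\mathfrak{M}_v=\lambda_n\star\varphi_n=\lambda_{n+1}\star p_n$, and the factorization of the local Dirichlet series. No gaps.
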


\begin{proof}
Applying Plancherel inversion to the idempotent $\varepsilon_{K_{1,v}(\q)}$ we obtain
\[
\frac{1}{\mu_{G,v}(K_{1,v}(\q))}=\int_{\Pi(G_v)}{\rm tr} \left(\pi_v (\varepsilon_{K_{1,v}(\q(\pi_v)\p_v^r)})\right)\,{\rm d}\widehat{\mu}_v^{\rm pl}(\pi_v)=\int_{\Pi(G_v)}\dim V_{\pi_v}^{K_{1,v}(\q)}\,{\rm d}\widehat{\mu}_v^{\rm pl}(\pi_v).
\]
From \eqref{eq:Euler-phi}, the left-hand side is $[K_{1,v}(\q):\bfk_v]/\mu_{G,v}(\bfk_v)=[K_{1,v}(\q):\bfk_v]=\varphi_n(\q)$. Thus, from \eqref{idem-trace}, we get
\[
\varphi_n(\q)=\int_{\q(\pi_v)\mid\q}d_n(\q/\q(\pi_v))\,{\rm d}\widehat{\mu}_v^{\rm pl}(\pi_v)=\sum_{\dd\mid\q}d_n(\q/\dd)\mathfrak{M}_v(\dd)=(d_n\star \mathfrak{M}_v)(\q).
\]
By M\"obius inversion (and associativity of Dirichlet convolution) this gives $\mathfrak{M}_v(\q)=(\lambda_n\star \varphi_n)(\q)=(\lambda_{n+1}\star p_n)(\q)=\sum_{\dd\mid\q}\lambda_{n+1}(\dd)\norm(\q/\dd)^n$. From
\[
\sum_{r\geqslant 0}\lambda_{n+1}(\p_v^r)q_v^{-rs}=\frac{1}{\zeta_v(s)^{n+1}},\qquad \sum_{r\geqslant 0}p_n(\p_v^r)q_v^{-rs}=\zeta_v(s-n),
\]
we obtain the value of $Z_v(s)$.
\end{proof}

\subsection{Archimedean local integral}\label{sec-arch-loc-int}

In this section, we shall work with the group $G^1=G_\infty^1$, viewed as a reductive group over $\R$. Wherever possible, we shall drop the subscript $\infty$ from the notation. So, for example, $\widehat{\omega}^{\rm pl}=\widehat{\omega}_\infty^{{\rm pl}}$ and $\pi=\pi_\infty$.

The work of Harish-Chandra \cite[Theorem 27.3]{Harish-Chandra1976} allows us to explicitly describe the Plancherel measure $\widehat{\omega}^{\rm pl}$ on $\Pi(G^1)$. Namely, for every $\underline{\delta}\in\mathcal{D}$ with standard representative $(M,\delta)$, Harish-Chandra defines constants $C_M>0$, depending only on the class of $M$, and a function $\mu_M^G(\delta,\nu)$, such that for every $h\in L^1(\widehat{\omega}^{\rm pl})$ we have
\begin{equation}\label{Pl-decomp}
\int_{\Pi(G^1)}h(\pi)\, {\rm d}\widehat{\omega}^{\rm pl}(\pi)= \sum_{\underline{\delta}\in\mathcal{D}}C_M\deg(\delta)\int_{i\mathfrak{h}_M^*} h(\pi_{\delta,\nu})\mu_M^G(\delta,\nu)\, {\rm d}\nu,
\end{equation}
where ${\rm deg}(\delta)$ is the formal degree of $\delta$. A standard reference in the setting of general groups is \cite[Theorem 13.4.1]{Wallach1992}. The density function $\mu_M^G(\delta,\nu)$ is a normalizing factor for an intertwining map \cite[Theorem 10.5.7]{Wallach1992}. In the specific context of $\GL_n$, more approachable texts are available, such as that of Knapp and Stein \cite[(6.3)]{KnappStein2015} over $\R$ and the classical work of Gelfand and Naimark \cite[p.\ 159]{GelfandNaimark1957} over $\C$.

Let $\sigma=\delta \otimes e^\nu$, where $\delta\in\mathscr{E}^2(M^1)$ and $\nu\in i\mathfrak{h}_M^*$ as above. We write $M=\prod_{v\in S_\infty}M_v$ and $\sigma=\prod_{v\in S_\infty}\sigma_v$, where $\sigma_v=\delta_v\otimes e^{\nu_v}$. We may then factorize $\sigma_v=\sigma_{v1}\otimes\cdots\otimes\sigma_{vm_v}$ according to the block decomposition of $M_v$. We have a factorization of the form
\[
\mu_M^G(\delta,\nu)=\prod_{v\in S_\infty}\prod_{1\leqslant i<j\leqslant m_v}\mu_{\GL_{n_{vi}}\times\GL_{n_{vj}}}^{\GL_{n_{vi}+n_{vj}}}(\sigma_{vi}\otimes\sigma_{vj}).
\]
Applying standard identities for the Gamma function to the presentation in \cite{KnappStein2015,GelfandNaimark1957}, we may describe the latter factors can be described in terms of archimedean Rankin--Selberg local factors. The latter are described, in all cases, in \cite[Appendix A.3]{RudnickSarnak1996}. We obtain
\begin{equation}\label{eq:gen-defn-muMG}
\mu_{\GL_{n_{vi}}\times\GL_{n_{vj}}}^{\GL_{n_{vi}+n_{vj}}}(\sigma_{vi}\otimes\sigma_{vj})=c\bigg|\frac{L_v(1,\sigma_{vi}\times\widetilde{\sigma}_{vj})}{L_v(0,\sigma_{vi}\times\widetilde{\sigma}_{vj})}\bigg|^2,
\end{equation}
where $c\neq 0$ is a constant depending only on measure normalizations. The above formula is analogous to that of the $p$-adic setting \cite[Theorem 6.1]{Shahidi1984}, and moreover $\mu_M^G(\delta,\nu)$ reduces to $|c(\nu)c(\rho)^{-1}|^{-2}$ in the case of $M=T_{0,\infty}$ and $\delta$ trivial, where
\begin{equation}\label{eq:H-C-c-function}
c(\nu)=\prod_{\alpha\in\Phi^{G,+}}\frac{\Gamma_\R(\langle\alpha,\nu\rangle)}{\Gamma_\R(\langle\alpha,\nu\rangle+1)}
\end{equation}
is the Harish-Chandra $c$-function \cite[(8.3)]{MatzTemplier2021}.

\begin{remark}\label{rem:use-of-W(M,L)}
Recall the subset $W(M,L)_\delta\subset W(A_M)_\delta$ from \S\ref{sec:herm}. The significance of this subset to us is that the Plancherel density function $\mu_M^G(\delta,\nu)$ defined above vanishes to order $2|W(M,L)_\delta|$ at generic $\nu\in i\mathfrak{h}_L^{\ast}$. In particular, the quantity $R^{-\dim\mathfrak{h}_M-2|W(M,L)_\delta|}$ featuring in several of our estimates is commensurable with the $\mu_M^G(\delta,\nu)$-measure of the ball of radius $1/R$, for $R\gg 1$.
\end{remark}

We would now like to define a majorizer of the (normalized) density function $\deg(\delta)\mu_M^G(\delta,\nu)$. Recall the notation from \S\ref{arch-cond}. 

\begin{defn}\label{BetaMajorizer}
Let $\nu_v\in i\mathfrak{h}_{M_v}^*$. When $v$ is complex (in which case $M_v=T_{0,v}$) we let
\[
\beta_0^{G_v}(\delta_v,\nu_v)=\prod_{1\leqslant i<j\leqslant n}\big(1+\big|k_{vi}-k_{vj}+\nu_{vi}-\nu_{vj}\big|\big)^2,
\]
and when $v$ is real we let
\begin{align*}
\beta_{M_v}^{G_v}(\delta_v,\nu_v)=\prod_{a_v< i\leqslant m_v}k_{vi}&\prod_{1\leqslant i<j\leqslant a_v} \big(1+\big|\nu_{vi}-\nu_{vj}\big|\big)\prod_{\substack{1\leqslant i\leqslant a_v\\ a_v< j\leqslant m_v}} \big(1+\big|k_{vj}+\nu_{vi}-\nu_{vj}\big|\big)^2\\
&\prod_{a_v< i<j\leqslant m_v} \big(1+\big|k_{vi}-k_{vj}+\nu_{vi}-\nu_{vj}\big|\big)^2\big(1+\big|k_{vi}+k_{vj}+\nu_{vi}-\nu_{vj}\big|\big)^2.
\end{align*}
Next, set $\beta_M^G(\delta,\nu)=\prod_{v\mid\infty}\beta_{M_v}^{G_v}(\delta_v,\nu_v)$.
\end{defn}

\begin{remark}
We have restricted the definition of $\beta_M^G(\delta,\nu)$ to $\nu\in i\mathfrak{h}_M^*$, since that is the support of the Plancherel measure. Using this assumption, it also follows that $\beta_M^G(\delta,\nu)$ is unchanged when $\nu$ is replaced by $w.\nu$, where $w\in W(A_M)$.
\end{remark}

\begin{lemma}\label{prop:density}${\,}$
\begin{enumerate}
\item\label{beta1}
We have $\deg(\delta)\mu_M^G(\delta,\nu)\ll \beta_M^G(\delta,\nu)$.
\item\label{beta2} For every $\lambda,\nu\in i\mathfrak{h}_M^{\ast}$,
\[ \beta_M^G(\delta,\lambda)\ll(1+\|\lambda-\nu\|)^{d_M}\beta_M^G(\delta,\nu),
\]
where
 \begin{equation}\label{def-dM}
d_M=\sum_{v\mid\infty}\sum_{1\leqslant i<j\leqslant m_v}d_vn_{vi}n_{vj}.
\end{equation}
\item \label{beta3} Let $L\in\mathcal{L}_{\infty}(\delta)$. Then, for every $\nu\in i\mathfrak{h}_L^{\ast}$ and every $\lambda\in i\mathfrak{h}_M^{\ast}$,
\[ \deg(\delta)\mu_M^G(\delta,\lambda)\ll\|\lambda-\nu\|^{2|W(M,L)_\delta|}(1+\|\lambda-\nu\|)^{d_M-|W(M,L)_\delta|}\beta_M^G(\delta,\nu). \]
\end{enumerate}
\end{lemma}

\begin{proof}
We must first explicate the Rankin--Selberg $L$-factors involved in the definition \eqref{eq:gen-defn-muMG} of the density function $\mu_M^G(\delta,\nu)$. It suffices to do so in the following cases (see \cite[p.319]{Tate1967} and \cite[Appendix A.3]{RudnickSarnak1996}):
\begin{align*}
L_\C(s, \chi_{k_1} e^{\nu_1}\times \chi_{k_2} e^{\nu_1})&=L_\C(s+\nu_1+\nu_2, \chi_{k_1+k_2})=\Gamma_\C(s+\nu_1+\nu_2+|k_1+k_2|/2),\\
L_\R(s,{\rm sgn}^{\epsilon_1}e^{\nu_1}\times {\rm sgn}^{\epsilon_2}e^{\nu_2})&=L_\R(s+\nu_1+\nu_2,{\rm sgn}^{\epsilon})=\Gamma_\R(s+\nu_1+\nu_2+\epsilon),\quad (\epsilon\equiv\epsilon_1+\epsilon_2 \;{\rm mod}\; 2)\\
L_\R(s,D_k\otimes e^{\nu_1}\times {\rm sgn}^{\epsilon}e^{\nu_2})&=L_\R(s+\nu_1+\nu_2,D_k\times {\rm sgn}^{\epsilon})=\Gamma_\C(s+\nu_1+\nu_2+(k-1)/2),\\
L_\R(s,D_{k_1}\otimes e^{\nu_1}\times D_{k_2}\otimes e^{\nu_2})&=\Gamma_\C(s+\nu_1+\nu_2+|k_1-k_2|/2)\Gamma_\C(s+\nu_1+\nu_2+(k_1+k_2)/2).
\end{align*}
We shall apply these formulae when the second member is a contragredient representation. In light of this, we note that the contragredient representations of $\chi_ke^{\nu}$, ${\rm sgn}^\epsilon e^{\nu}$, and $D_k\otimes e^\nu$ are $\chi_{-k}e^{-\nu}$, ${\rm sgn}^\epsilon e^{-\nu}$, and $D_k\otimes e^{-\nu}$, respectively. From this description of the local Rankin--Selberg $L$-factors, as well as Stirling's formula, in the form
\begin{equation}\label{eq:stirlings-formula}
\Gamma_v(1+s)/\Gamma_v(s)\ll \min(|s|,(1+|s|)^{d_v/2}),
\end{equation}
we deduce that $\mu_M^G(\delta,\nu)$ is majorized by the product over $i,j$ in $\beta_M^G(\delta,\nu)$. Taking $\deg(\delta)$ into account yields the first statement.

The second statement follows directly from Definition \ref{BetaMajorizer}, applied to $\lambda=(\lambda-\nu)+\nu$.

For the third statement, we write again $\lambda=(\lambda-\nu)+\nu$ and use the first bound in \eqref{eq:stirlings-formula} to estimate those factors of $\mu_M^G(\delta,\lambda)$ that vanish along $i\mathfrak{h}_L^{\ast}$ and the second bound for the remaining factors. This implies \eqref{beta3}, in view of Definition~\ref{BetaMajorizer}.
\end{proof}

We define a measure $\beta(\pi)\,{\rm d}\pi$ on $\Pi(G^1)$ by putting, for any $h\in L^1(\widehat{\mu}^{\rm pl})$, 
\begin{equation}\label{Pl-maj}
\int_{\Pi(G^1)}h(\pi) \beta(\pi)\, {\rm d}\pi=\sum_{\underline{\delta}\in\mathcal{D}}\; \int_{i\mathfrak{h}_M^*} h(\pi_{\delta,\nu})\beta_M^G(\delta,\nu) \,{\rm d}\nu.
\end{equation}

\begin{lemma}
\label{Plemma}
For $Q\geqslant 1$ we have $\displaystyle\int_{Q\leqslant q(\pi)\leqslant 2Q}\beta(\pi)\, {\rm d}\pi\ll Q^{n-1/d}$.
\end{lemma}

\begin{proof}
We begin by estimating the $\underline{\delta}$ sum in \eqref{Pl-maj} by an integral, as follows. Let $\underline{\delta}$ have standard representative $(M,\delta)$, with $\delta=\otimes_{v\mid\infty}\delta_v$ on $M=\prod_{v\mid\infty}M_v$. We define 
\begin{equation}\label{eq:def-HM}
\mathcal{H}_M=\left\{\mu\in \prod_{v\mid \infty}\C^n: 
\; \begin{aligned}
&\sum_{v\mid\infty} d_v\sum_{j=1}^n{\rm Im}\,\mu_{vj}=0\\
  & {\rm Re}(\mu_{vj})=0 \textrm{ for } v=\R,\; j=1,\ldots ,a_v\\
    & \mu_{vj}=\mu_{v(j+b_v)} \textrm{ for } v=\R,\; j=a_v+1,\ldots ,m_v
  \end{aligned}
 \right\}.
\end{equation}
The space $\mathcal{H}_M$ captures the values of the parameters $\mu_{vj}$ involved in the analytic conductor \eqref{IS-cond-first}, up to real shifts of bounded size. Finally write $\mathcal{H}_M(Q)=\big\{z\in \mathcal{H}_M: q(\mu)\sim Q\big\}$, where $q(\mu)=\prod_{v\mid\infty}\prod_{j=1}^n(1+|\mu_{vj}|)^{d_v}$. Here, and throughout the proof, we shall use the symbol $q\sim Q$ to mean $c_1 Q\leqslant q\leqslant c_2 Q$, where $0< c_1<c_2$ are constants that can change from line to line.

We now define a function $\beta_M^G$ on $\mathcal{H}_M$ that will capture the Plancherel majorizer of Definition \ref{BetaMajorizer}. Namely, we put $\beta_M^G(\mu)=\prod_{v\mid\infty}\beta_{M_v}^{G_v}(\mu_v)$, with
\[
\beta_{M_v}^{G_v}(\mu_v)=\prod_{\substack{a_v< i\leqslant m_v\\ v \textrm{ real}}}|{\rm Re}(\mu_{vi})|\prod_{\substack{1\leqslant i\leqslant a_v\\ i< j\leqslant n}} \big(1+\big|\mu_{vi}-\mu_{vj}\big|\big)^{d_v}\prod_{a_v< i<j\leqslant m_v} \big(1+\big|\mu_{vi}-\mu_{vj}\big|\big)^2\big(1+\big|\mu_{vi}+\overline{\mu}_{vj}\big|\big)^2.
\]
Observe that this reduces to $\prod_{1\leqslant i<j\leqslant n} \big(1+\big|\mu_{vi}-\mu_{vj}\big|\big)^{d_v}$ when $M_v=T_{0,v}$. Then, by construction, we have the upper bound
\[
\int_{q(\pi)\sim Q}\beta(\pi)\,{\rm d}\pi\ll \max_M\int_{\mathcal{H}_M(Q)}\beta_M^G(\mu)\, {\rm d}\mu,
\]
the max being taken over cuspidal Levi subgroups of $G_\infty^1$.

Fixing $M$, we now dyadically decompose the integral over ${\mathcal{H}_M(Q)}$. Let $\mathcal{R}$ denote the collection of all tuples $R=\{R_{vj}\}$ of dyadic integers, indexed by archimedean places $v$ and $j=1,\ldots ,n$. For $R\in\mathcal{R}$ we let $\mathcal{H}_{M,R}$ denote the intersection of $\mathcal{H}_M$ with $\{\mu\in\prod_{v\mid\infty}\C^n:  (1+|\mu_{vj}|)\sim R_{vj}\;\forall\; v\mid\infty\}$. If $\mathcal{R}(Q)=\{R\in\mathcal{R}:\prod_{v,j}R_{vj}^{d_v}\sim Q\}$, then $\mathcal{H}_M(Q)$ is contained in the union of the $\mathcal{H}_{M,R}$, as $R$ runs over $\mathcal{R}(Q)$. We deduce that
\begin{equation}\label{eq:Pl-dyadic-decomp}
\int_{\mathcal{H}_M(Q)}\beta_M^G(\mu)\, {\rm d}\mu\ll \sum_{R\in \mathcal{R}(Q)}\max_{\mathcal{H}_{M,R}}\beta_M^G(\mu)\vol \mathcal{H}_{M,R}. 
\end{equation}

We now bound the Plancherel majorizer $\beta_M^G(\mu)$ by a factorizable expression in the coordinates $\mu_{vj}$. Fix an $R\in\mathcal{R}$. For each $v\mid\infty$ we re-index the $R_{vj}$, if necessary, so that $R_{vn}\leqslant \cdots \leqslant R_{v1}$. We claim that, for $\mu\in\mathcal{H}_{M,R}$, 
\begin{equation}\label{eq:from-b-to-mu}
\beta_M^G(\mu)\ll \prod_{v\mid\infty}\prod_{j=1}^n \mathscr{R}_{vj}^{n-j} \qquad (\mathscr{R}_{vj}=R_{vj}^{d_v}).
\end{equation}
To see this, for indices $1\leqslant i,j\leqslant n$, let $M_{vij}=\max_{\ell\in\{i,j\}} (1+|\mu_{v\ell}|)$. Then, for $v$ complex, the claim follows by inserting $1+\big|\mu_{vi}-\mu_{vj}\big|\ll M_{vij}$ and taking into account the ordering of the $R_{vj}$. For real places $v$, a similar reasoning shows that the product over $1\leqslant i\leqslant a_v, i<j\leqslant n$ in the definition of $\beta_{M_v}^{G_v}(\mu_v)$ is at most $\prod_{1\leqslant i\leqslant a_v, i<j\leqslant n}M_{vij}$. For the remaining factors, we first observe that
\[
\prod_{a_v< i<j\leqslant m_v} \big(1+\big|\mu_{vi}-\mu_{vj}\big|\big)^2\big(1+\big|\mu_{vi}+\overline{\mu}_{vj}\big|\big)^2\ll \prod_{a_v< i<j\leqslant m_v} M_{vij}^4,
\]
and the latter, in view of the symmetry $M_{vij}=M_{vji}$ and the third condition in \eqref{eq:def-HM}, is
\[
\prod_{a_v< i<j\leqslant m_v} M_{vij}M_{vj(i+b_v)} M_{vi(j+b_v)}M_{v(i+b_v)(j+b_v)}=\prod_{\substack{a_v<i<j\leqslant n\\ j\neq i+b_v}}M_{vij}.
\]
We furthermore bound $\prod_{a_v<i\leqslant m_v}|{\rm Re}(\mu_{vi})|$ by $\prod_{a_v<i\leqslant m_v} (1+|\mu_{vi}|)=\prod_{a_v<i\leqslant m_v}M_{vi(i+b_v)}$. Putting all everything together we find $\beta_{M_v}^{G_v}(\mu_v)\ll \prod_{1\leqslant i< j\leqslant n} M_{vij}$ in the real case, and we again deduce \eqref{eq:from-b-to-mu} using the ordering of the $R_{vj}$.

To estimate the volume factor in \eqref{eq:Pl-dyadic-decomp}, let $v_0\mid\infty$ satisfy $R_{v_01}=\max_vR_{v1}$ and write $R_0=R_{v_01}$. From the first condition in \eqref{eq:def-HM}, we have $\vol(\mathcal{H}_{M,R})\ll R_0^{-1}\prod_{v,j}\mathscr{R}_{vj}$. Together, the above estimates yield
\begin{equation}\label{eq:R-upper-bd}
\max_{\mathcal{H}_{M,R}}\beta_M^G(\mu)\vol \mathcal{H}_{M,R}\ll R_0^{-1}\prod_{v\mid\infty}\prod_{j=1}^n \mathscr{R}_{vj}^{n-j+1}.
\end{equation}

We first treat the case when $d=1$, so that there is a unique archimedean place, which is real. Then \eqref{eq:R-upper-bd} yields an upper bound of $R_1^{n-1}\prod_{j\geqslant 2} R_j^{n-j+1}$, upon dropping the $v$ from the notation. Since $\prod_j R_j\sim Q$, the latter product is at most $Q^{n-1}\prod_{j\geqslant 2} R_j^{2-j}$. The sum over the dyadic integers $R_j$, for $j\geqslant 3$, is a convergent uniformly bounded geometric series. There are only ${\rm O}(1)$ remaining choices of $R_1, R_2$; indeed, from $\mathcal{H}_{M,R}\neq\emptyset$ we have $R_1\sim R_2$, and there is exactly one dyadic integer lying in any given dyadic interval. This produces the desired bound of $Q^{n-1}$ in this case.

Henceforth, we assume that $d\geqslant 2$. Using $\prod_{v,j}\mathscr{R}_{vj}\sim Q$, the right-hand side of \eqref{eq:R-upper-bd} is of size
\[
Q^{n-1}R_0^{-1}\prod_{v\mid\infty}\prod_{j\geqslant 1} \mathscr{R}_{vj}^{2-j}=Q^{n-1}\prod_v\prod_{j\geqslant 2}\mathscr{R}_{vj}^{2-j}R_0^{-1}\prod_{v\mid\infty}\mathscr{R}_{v1}.
\]
It remains to execute the sum over $R\in\mathcal{R}(Q)$. We begin by noting that for $R\in \mathcal{R}(Q)$,
\[
R_0^{-1}\prod_{v\mid\infty}\mathscr{R}_{v1}=\prod_{v\mid\infty} \mathscr{R}_{v1}^{\frac{d-1}{d}}\prod_{v\mid\infty, v\neq v_0}(R_{v1}/R_0)^{d_v/d}\ll \big(Q/\prod_{v\mid\infty}\prod_{j\geqslant 2}\mathscr{R}_{vj}\big)^{\frac{d-1}{d}}\prod_{v\mid\infty, v\neq v_0}(R_{v1}/R_0)^{d_v/d}.
\]
Thus, grouping the sum on $R_{v1}$ according to $(\min_{v\neq v_0}R_{v1}/R_0)\sim 1/N$, we obtain 
\[
\sum_{R_{v1}\in 2^\N}R_0^{-1}\prod_{v\mid\infty}\mathscr{R}_{v1}\ll \big(Q/\prod_{v\mid\infty}\prod_{j\geqslant 2}\mathscr{R}_{vj}\big)^{\frac{d-1}{d}}\sum_{N\in 2^\N}N^{-1/d}\log^{d-1}(1+N)\ll \big(Q/\prod_{v\mid\infty}\prod_{j\geqslant 2}\mathscr{R}_{vj}\big)^{\frac{d-1}{d}}.
\]
Inserting this into the remaining sum we get
\[
\sum_{R\in \mathcal{R}(Q)}\max_{\mathcal{H}_{M,R}}\beta_M^G(\mu)\vol \mathcal{H}_{M,R}\ll Q^{n-1/d}\prod_{v\mid\infty}\prod_{j\geqslant 2}\sum_{R_{vj}\in 2^\N}\mathscr{R}_{vj}^{1-j+1/d}.
\]
Since $d\geqslant 2$, the exponents in each factor are all strictly negative, in which case the geometric series are absolutely bounded. We finally obtain ${\rm O}(Q^{n-1/d})$ in all cases.
\end{proof}

\begin{cor}\label{Plemma2}
The archimedean conductor zeta function
\[
Z_{\infty}(s)=\int_{\Pi(G^1)}q(\pi)^{-s}\,\textnormal{d}\widehat{\omega}^{\pl}_{\infty}(\pi)
\]
converges absolutely for $s\in\mathbb{C}$ with ${\rm Re}\, s>n-1/d$.
\end{cor}

\begin{proof}
This follows from Lemma \ref{prop:density}\,\eqref{beta1} and Lemma \ref{Plemma}.
\end{proof}

\begin{remark}
Despite the ``spikes'' introduced by the product condition $q(\pi_{\delta,\nu})\leqslant X$, the asymptotics of the Plancherel measure of the sets $\{ \nu\in i\mathfrak{h}_M^*: q(\pi_{\delta,\nu})\leqslant X\}$ as $X\to\infty$ feature pure power growth without logarithmic factors. This is due to the following two facts: first, the Plancherel density increases into the spikes; and, second, these spikes are somewhat moderated by the trace-zero condition. To visualize this latter feature, we include the following graphics.
\begin{center}
\begin{figure}[!htbp]
\minipage{0.2\textwidth}
\captionsetup{width=.9\linewidth}
  \includegraphics[width=\linewidth]{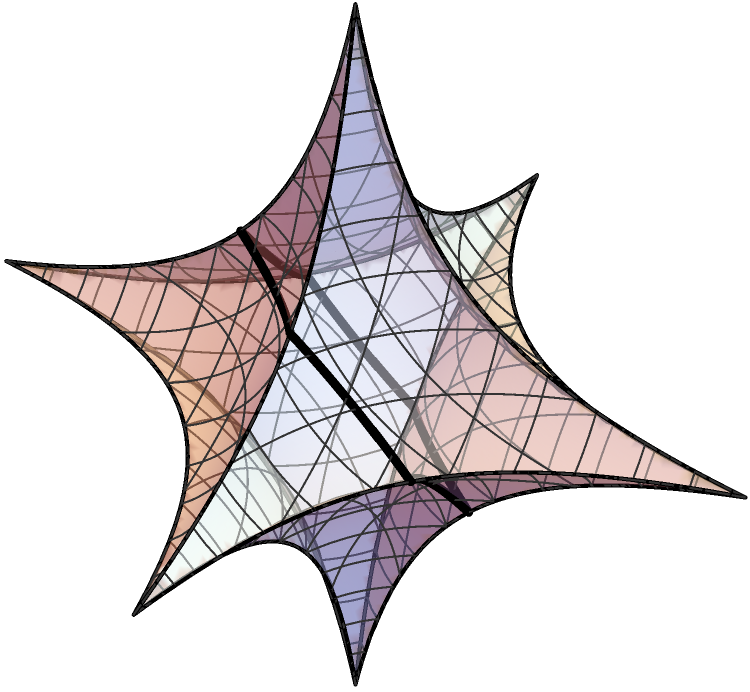}
  \caption{}
\endminipage\hfill
\minipage{0.2\textwidth}
\captionsetup{width=.9\linewidth}
  \includegraphics[width=\linewidth]{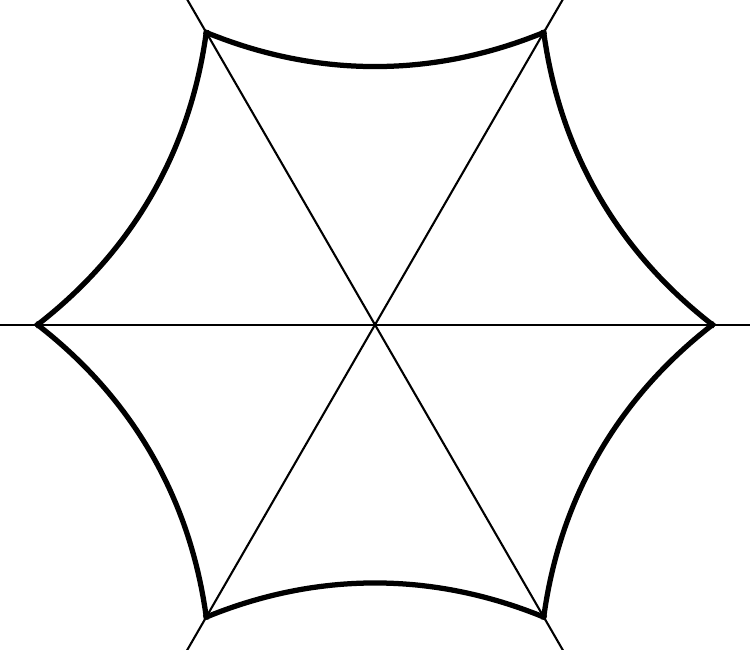}
  \caption{}
\endminipage\hfill
\minipage{0.2\textwidth}
\captionsetup{width=.9\linewidth}
  \includegraphics[width=\linewidth]{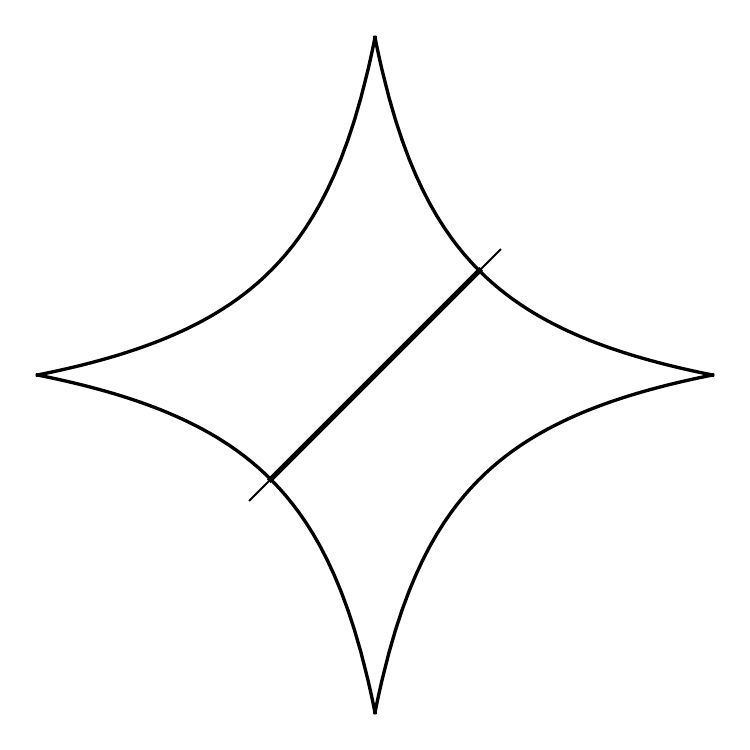}
  \caption{}
\endminipage
\end{figure}
\end{center}

\noindent In Figure 1, the region $\{(1+|x|)(1+|y|)(1+|z|)\leqslant X\}$ is drawn in $\R^3$. The spikes extend as far as $\asymp X$. The intersection with $x+y+z=0$ is indicated in bold and reproduced in the plane in Figure 2. The spikes extend as far as $\asymp X^{1/2}$. 

In Figure 3, the set $\{(1+|x|)(1+|y|)\leqslant X\}$ is drawn in $\R^2$ with spikes as far as $\asymp X$ and volume $\asymp X\log X$. The intersection with $x+y=0$ is in bold. This produces a segment of length $\asymp X^{1/2}$.
\end{remark}

\subsection{Global analytic conductor}\label{sec:global-cond}
In this section, we define the analytic condutor $Q(\pi)$ of everywhere locally generic $\pi$ in the adelic unitary dual $\Pi(\GL_n(\A_F)^1)$. This includes, on one hand, cuspidal automorphic representations $\pi$ of $\GL_n(\A_F)^1$, since their local components are generic \cite{JacquetPiatetskii-ShapiroShalika1983}, and on the other, any $\pi\in\Pi(\GL_n(\A_F)^1)$ appearing in the support of $\widehat{\omega}^{\rm pl}$, since such $\pi$ are everywhere tempered, and therefore generic.

Let $\pi=\otimes_v\pi_v \in\Pi(\GL_n(\A_F)^1)$ be everywhere locally generic. Let $\q(\pi)=\prod_{v<\infty} (\q_v(\pi_v)\cap \mathcal{O}_F)$ with each $\q_v(\pi_v)$ being defined in \S\ref{sec:def-cond}. Then $\q(\pi)$ is an  $\mathcal{O}_F$-ideal called the \textit{arithmetic conductor ideal} of $\pi$. Its absolute norm $q(\pi)=\norm(\q(\pi))\in\N$ factors as $\prod_{v<\infty} q(\pi_v)$. As almost all local components of $\pi$ are unramified, these products make sense as finite products. 

The \textit{analytic conductor} of $\pi$ is defined as $Q(\pi)=\prod_v q(\pi_v)$. Writing $q(\pi_\infty)=\prod_{v\mid\infty}q(\pi_v)$ for the archimedean conductor, it follows that $Q(\pi)=q(\pi)q(\pi_\infty)$.

\begin{remark}
For an integral ideal $\q$ in $\mathcal{O}_F$, whose completion in $\A_f$ factorizes as $\prod\p_v^{r_v}$, we agree to write $K_1(\q)$ for the open compact subgroup of $\GL_n(\A_f)$ given by $\prod_{v<\infty} K_{1,v}(\p_v^{r_v})$, with each local factor being defined in \S\ref{sec:def-cond}. Its index in $\bfk_f$ is the product over all finite places of the local indices described in \eqref{eq:Euler-phi}. Thus
$|\bfk_f/K_1(\q)|=\varphi_n(\q)$, with $\varphi_n$ as in the statement of Corollary \ref{MT-is-vol}.
\end{remark}

\subsection{Proof of Proposition \ref{MainTermLemma} and Corollary \ref{MT-is-vol}}\label{sec:computing-zeta-residue}
We deduce from Lemma \ref{local-count} that
\[
\int\limits_{\substack{\pi\in\Pi(\bm{G}(\A_F)^1)\\ Q(\pi)\leqslant Q}}{\rm d}\widehat{\mu}^{\rm pl}(\pi)=\sum_{\norm\q\leqslant Q}\prod_{\p_v^r\mid\mid\q}\mathfrak{M}(\p_v^r)\int\limits_{\substack{\pi_\infty\in\Pi(G_\infty^1)\\ q(\pi_\infty)\leqslant Q/\norm\q}}\mathrm{d}\widehat{\omega}_{\infty}^{\pl}(\pi_\infty)=\sum_{\norm\q\leqslant Q}w_n(\q)\int\limits_{\substack{\pi_\infty\in\Pi(G_\infty^1)\\ q(\pi_\infty)\leqslant Q/\norm\q}}\mathrm{d}\widehat{\omega}_{\infty}^{\pl}(\pi_\infty),
\]
where $w_n=p_n\star\lambda_{n+1}$. Let $W_n(X)=\sum_{\norm\q\leqslant X}w_n(\q)$. Exchanging the order of summation and integration,
\begin{equation}\label{adelic-vol}
\int\limits_{\substack{\pi\in\Pi(\bm{G}(\A_F)^1)\\ Q(\pi)\leqslant Q}}{\rm d}\widehat{\mu}^{\rm pl}(\pi)=\int_{\Pi(G_\infty^1)}W_n(Q/q(\pi_{\infty}))\,\mathrm{d}\widehat{\omega}_{\infty}^{\pl}(\pi_{\infty}).
\end{equation}
The statement of the proposition will follow from an asymptotic evaluation of $W_n(X)$.

Recall the classical estimate $\sum_{\norm\dd\leqslant X}1=\zeta_F^*(1)X+\text{O}(X^{1-2/(d+1)})$ on the ideal-counting function~\cite[Satz 210, p.~131]{Landau1918}. From this we deduce that given any $\sigma>-1$, $0<\theta\leqslant 2/(d+1)$, and $X>0$, we have
\begin{equation}
\label{SigmaIdealCountingAsymptotics}
\sum_{\norm\q\leqslant X}\norm\q^{\sigma}=\frac{\zeta_F^{\ast}(1)}{\sigma+1}X^{\sigma+1}+\text{O}_{\sigma,\theta}\big(X^{\sigma+1-\theta}\big),
\end{equation}
where for $X\geqslant 1$ we simply estimate $X^{\sigma+1-2/(d+1)}=\text{O}(X^{\sigma+1-\theta})$, and for $X<1$ the estimate \eqref{SigmaIdealCountingAsymptotics} holds vacuously. Using \eqref{SigmaIdealCountingAsymptotics}, we find that, for every $X>0$,
\[
\begin{aligned}
W_n(X)
=\mathop{\sum\sum}_{\norm(\dd\ee)\leqslant X}\lambda_{n+1}(\ee)\norm\dd^n&=\sum_{\norm\ee\leqslant X}\lambda_{n+1}(\ee)\bigg(\frac{\zeta_F^{\ast}(1)}{n+1}\bigg(\frac{X}{\norm\ee}\bigg)^{n+1}+\text{O}\bigg(\frac{X}{\norm\ee}\bigg)^{n+1-\theta}\bigg)\\
&=\frac{\zeta_F^{\ast}(1)}{n+1}X^{n+1}\sum_{\norm\ee\leqslant X}\frac{\lambda_{n+1}(\ee)}{\norm\ee^{n+1}}+\text{O}_F\bigg(X^{n+1-\theta}\sum_{\norm\ee\leqslant X}\frac{|\lambda_{n+1}(\ee)|}{\norm\ee^{n+1-\theta}}\bigg).
\end{aligned}
\]
From $|\lambda_{n+1}(\nn)|\leqslant d_{n+1}(\nn)\ll_\epsilon(\norm\nn)^{\epsilon}$ and the identity $\sum\lambda_{n+1}(\nn)\norm\nn^{-s}=\zeta_F(s)^{-n-1}$ we obtain
\begin{equation}\label{AsymptoticForW}
W_n(X)=\frac1{n+1}\frac{\zeta_F^{\ast}(1)}{\zeta_F(n+1)^{n+1}}X^{n+1}+\text{O}\big(X^{n+1-\theta}\big).
\end{equation}

Using \eqref{adelic-vol} and \eqref{AsymptoticForW} we see that
\begin{equation}\label{eq:adelic-int-w-zeta}
\int\limits_{\substack{\pi\in\Pi(\bm{G}(\A_F)^1)\\ Q(\pi)\leqslant Q}}{\rm d}\widehat{\mu}^{\rm pl}(\pi)=\frac1{n+1}\frac{\zeta_F^{\ast}(1)}{\zeta_F(n+1)^{n+1}}Z_\infty(n+1)Q^{\,n+1}
+\text{O}\big(Z_\infty(n+1-\theta)Q^{n+1-\theta}\big).
\end{equation}
In light of Corollary \ref{Plemma2}, both integrals converge. Inserting Lemma \ref{local-count}, we recover \eqref{eq:Prop-asymp-mu-version}.

Finally, to establish Corollary \ref{MT-is-vol}, we observe that $w_n=p_n\star\lambda_{n+1}=p_n\star\mu^{\star(n+1)}=(p_n\star\mu)\star\mu^{\star n}=\varphi_n\star\lambda_n$. Thus, arguing similarly to the beginning of this paragraph,
\[
D_F^{n^2/2}\Delta_F^*(1)\sum_{1\leqslant \norm\q\leqslant Q}\sum_{\dd\mid\q}\lambda_n(\q/\dd)\varphi_n(\dd)\int\limits_{\substack{\pi_\infty\in\Pi(G_\infty^1)\\ q(\pi_\infty)\leqslant Q/\norm\q}}\mathrm{d}\widehat{\omega}_{\infty}^{\pl}(\pi_\infty)=\int\limits_{\substack{\pi\in\Pi(\bm{G}(\A_F)^1)\\ Q(\pi)\leqslant Q}}{\rm d}\widehat{\omega}^{\rm pl}(\pi),
\]
to which we may then apply Proposition \ref{MainTermLemma}.\qed

\begin{remark}\label{rem:cond-zeta}
A natural way of determining the asymptotics of $V_{\mathfrak{F}}(Q)$ is through a Tauberian theorem, such as the quite general one to be found in \cite[Theorem A.1]{Chambert-LoirTschinkel2010}. In this way we would consider the associated zeta function
\[
\int_{\Pi(\bm{G}(\A_F)^1)}Q(\pi)^{-s}\,{\rm d}\widehat{\omega}^{\rm pl}(\pi),
\]
for $s\in\C$ of sufficiently large real part. Although we do not explicitly take this point of view in this section, our calculations suggest that the abscissa of convergence of the above integral is $s=n+1$, where it has a simple pole with residue ${\rm vol}(\tau_\mathfrak{F})$ (thus $a=n+1,b=1$, and $\Theta=\frac{1}{n+1}{\rm vol}(\tau_\mathfrak{F})$ in the notation of \cite[Theorem A.1]{Chambert-LoirTschinkel2010}).
\end{remark}

\part{Proof of Theorem \ref{main-implication}}\label{part1}

\section{Preparations}\label{sec:mainimplicationproof}

Our principal aim in Part \ref{part1} is to establish Theorem~\ref{main-implication}. For this we need to understand the behavior of $N(\q,\underline{\delta},\Omega)$ from \eqref{weightedsum} in all parameters. The bulk of the work will be to approximate $N(\q,\underline{\delta},P)$ for nice enough sets $P$ which lie in the tempered subspace $i\h_M^*$.

To formulate this precisely, we will first need to define what class of subsets $P$ we consider and associate with them appropriate boundary volumes. Once these concepts are in place we state, at the end of this short section, the desired asymptotic expression for $N(\q,\underline{\delta},P)$ in Proposition \ref{MainCountingResultGLn}.

\subsection{Nice sets and their boundaries}\label{sec:sets}

For $M\in\mathcal{L}_{\infty}$, we let $\mathfrak{B}_M$ be the $\sigma$-algebra of all Borel-measurable subsets of $i\mathfrak{h}_M^*$. For every $P\in\mathfrak{B}_M$ and $\rho>0$, let
\[
P^{\circ}(\rho)=\{\mu\in i\mathfrak{h}_M^*:B(\mu,\rho)\subseteq P\},\quad P^{\bullet}(\rho)=\{\mu\in i\mathfrak{h}_M^*:B(\mu,\rho)\cap P\neq\emptyset\},\quad \partial P(\rho)=P^{\bullet}(\rho)\setminus P^{\circ}(\rho),
\]
where $B(\mu,\rho)$ denotes the open ball of radius $\rho$ centered at $\mu$. Then, for every point $\mu\in\partial P(\rho)$, there are points $\nu_1,\nu_2\in B(\mu,\rho)$ with $\nu_1\in P$, $\nu_2\not\in P$, and hence by a continuity argument there is a point $\nu$ on the boundary $\partial P$ such that $|\mu-\nu|<\rho$; in other words,
\begin{equation}
\label{ContainmentPartialP}
\partial P(\rho)\subset\bigcup_{\nu\in\partial P}B(\nu,\rho).
\end{equation}

We record a few simple facts. For any bounded Borel set $P\in\mathfrak{B}_M$ and $\rho_2>\rho_1>0$, let
\[ P^{\bullet}(\rho_1,\rho_2)=P^{\bullet}(\rho_2)\setminus P^{\bullet}(\rho_1),\quad P^{\circ}(\rho_1,\rho_2)=P^{\circ}(\rho_1)\setminus P^{\circ}(\rho_2). \]

\begin{defn}
\label{ContainedInDefinition}
Let $X,Y\subseteq i\mathfrak{h}_M^*$ and $r>0$. We say that $X$ is \emph{$r$-contained in $Y$} if, for every $\mu\in X$, $B(\mu,r)\subseteq Y$.
\end{defn}

With these notions, we are ready for the following simple lemma.

\begin{lemma}
\label{SetContainments}
Let $P\in\mathfrak{B}_M$ be a bounded Borel set. Then:
\begin{enumerate}
\item \label{ClaimPartialP} For every $\rho,r>0$, the set $\partial P(\rho)$ is $r$-contained in $\partial P(\rho+r)$.
\item \label{ClaimBulletCirc} For every $\rho_2>\rho_1>r>0$, the set $P^{\bullet}(\rho_1,\rho_2)$ is $r$-contained in $P^{\bullet}(\rho_1-r,\rho_2+r)$, and the set $P^{\circ}(\rho_1,\rho_2)$ is $r$-contained in $P^{\circ}(\rho_1-r,\rho_2+r)$.
\end{enumerate}
\end{lemma}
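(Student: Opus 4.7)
The plan is to verify both claims directly from the definitions using the triangle inequality; no substantive input beyond the definitions of $P^\circ(\rho)$, $P^\bullet(\rho)$, and $r$-containment is needed.

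For part \eqref{ClaimPartialP}, I would fix $\mu\in\partial P(\rho)$ and $\nu\in B(\mu,r)$, and verify the two membership conditions separately: $\nu\in P^\bullet(\rho+r)$ and $\nu\notin P^\circ(\rho+r)$. For the first, since $\mu\in P^\bullet(\rho)$, pick $p\in B(\mu,\rho)\cap P$; the triangle inequality gives $|p-\nu|<\rho+r$, hence $p\in B(\nu,\rho+r)\cap P$. For the second, since $\mu\notin P^\circ(\rho)$, pick $q\in B(\mu,\rho)\setminus P$; then $|q-\nu|<\rho+r$, so $B(\nu,\rho+r)\not\subseteq P$.

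For part \eqref{ClaimBulletCirc}, the arguments are essentially the same four-line triangle-inequality checks, done once for each of the two set systems. Consider first $\mu\in P^\bullet(\rho_1,\rho_2)$ and $\nu\in B(\mu,r)$. To show $\nu\in P^\bullet(\rho_2+r)$, transport a witness $p\in B(\mu,\rho_2)\cap P$ as above. To show $\nu\notin P^\bullet(\rho_1-r)$, observe that any $q\in B(\nu,\rho_1-r)$ satisfies $|q-\mu|<(\rho_1-r)+r=\rho_1$, so $q\in B(\mu,\rho_1)$; as $B(\mu,\rho_1)\cap P=\emptyset$ by hypothesis, $B(\nu,\rho_1-r)\cap P=\emptyset$. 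The analogous argument for $P^\circ(\rho_1,\rho_2)$ starts with $\mu\in P^\circ(\rho_1)\setminus P^\circ(\rho_2)$ and $\nu\in B(\mu,r)$: every $q\in B(\nu,\rho_1-r)$ lies in $B(\mu,\rho_1)\subseteq P$, giving $\nu\in P^\circ(\rho_1-r)$; and a point $q\in B(\mu,\rho_2)\setminus P$ lies in $B(\nu,\rho_2+r)\setminus P$, so $\nu\notin P^\circ(\rho_2+r)$.

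There is no real obstacle; the only thing to be careful about is the asymmetry between the inner and outer radii in part~\eqref{ClaimBulletCirc}, which is precisely what forces the hypothesis $\rho_1>r$ (so that $\rho_1-r>0$ and $B(\nu,\rho_1-r)$ is genuinely the set of points at distance less than $\rho_1$ from $\mu$). Once the four triangle-inequality estimates are arranged carefully, the lemma is immediate, and it is the form in which it will be invoked later when transferring boundary estimates between scales.
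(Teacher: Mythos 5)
Your proof is correct and follows essentially the same route as the paper's: all three containments are verified by the same direct triangle-inequality transport of witness points (the paper writes out only the first claim of part~(2) and declares the rest analogous, whereas you spell out each case). No gaps.
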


\begin{proof}
These statements follow essentially by the triangle inequality. For example, for the first claim of \eqref{ClaimBulletCirc}, we need to prove that, if $\nu\in P^{\bullet}(\rho_1,\rho_2)$, then $B(\nu,r)\subset P^{\bullet}(\rho_1-r,\rho_2+r)$. Indeed, there is a $\nu_2\in B(\nu,\rho_2)\cap P$ while $B(\nu,\rho_1)\cap P=\emptyset$. Therefore, if $\nu_1\in B(\nu,r)$, then $\nu_2\in B(\nu_1,\rho_2+r)\cap P$ and so $\nu_1\in P^{\bullet}(\rho_2+r)$. On the other hand, we must have $B(\nu_1,\rho_1-r)\cap P=\emptyset$, for if $\nu_3\in B(\nu_1,\rho_1-r)\cap P$, then $\nu_3\in B(\nu,\rho_1)\cap P$, a contradiction; and so $\nu_1\not\in P^{\bullet}(\rho_1-r)$, as was to be shown. The other two claims are proved analogously.
\end{proof}

It will be convenient to consider the following family
\[
\mathscr{B}_M=\big\{P\in\mathfrak{B}_M:P\text{ is bounded and }\forall\rho>0,\,\,P^{\circ}(\rho),P^{\bullet}(\rho),\partial P(\rho)\in\mathfrak{B}_M\big\}.
\]
For example, every compact region with a piecewise smooth boundary clearly belongs to $\mathscr{B}_M$. Finally, if $\underline{\delta}\in\mathcal{D}$ has standard representative $(M,\delta)$, we define $\mathscr{B}(\underline{\delta})$ to be the family of all $W(A_M)_\delta$-invariant sets in $\mathscr{B}_M$.

\subsection{Archimedean Plancherel volumes}\label{sec:temp-bd}

Fix $M\in\mathcal{L}_\infty$ and $\delta\in\mathscr{E}^2(M^1)$. Let $P\in\mathscr{B}_M$. For $R>0$ and large enough $N\in\mathbb{N}$, we define
\begin{equation}\label{BoundaryTermsDefinitions1}
\partial\vol_{R,N}(\delta,P)=\sum_{\ell=1}^{\infty}\ell^{-N}\int_{\partial P(\ell/R)}\beta_M^G(\delta,\nu)\,\mathrm{d}\nu.
\end{equation}
We shall often drop the dependence on $N$ in the notation, writing simply $\partial\vol_R(\delta,P)$.

Intuitively, the sum in \eqref{BoundaryTermsDefinitions1} should be dominated by terms with small $\ell$, and indeed by estimating the Plancherel majorizer, the following lemma shows that $\partial\vol_R(\delta,P)$ is essentially the Plancherel majorizer volume of the $1/R$-thickened boundary of $P$.

\begin{lemma}
\label{boundary-term-reduction}
For all sufficiently large $N$,
\[ \partial\vol_{R,N}(\delta,P)\asymp\int_{\partial P(1/R)}\beta_M^G(\delta,\nu)\,\mathrm{d}\nu. \]
\end{lemma}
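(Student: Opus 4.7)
Write $V(\rho)=\int_{\partial P(\rho)}\beta_M^G(\delta,\nu)\,\mathrm{d}\nu$, so that the asserted equivalence reads $\sum_{\ell\geqslant 1}\ell^{-N}V(\ell/R)\asymp V(1/R)$. The lower bound $\partial\vol_{R,N}(\delta,P)\geqslant V(1/R)$ is immediate from the $\ell=1$ term (the remaining terms being non-negative), so the content is the upper bound. I will reduce this to the polynomial comparison
\[
V(\ell/R)\ll\ell^{\dim\mathfrak{h}_M+d_M}V(1/R)\qquad(\ell\geqslant 1), \tag{$\ast$}
\]
from which the lemma follows upon summing, for any $N>\dim\mathfrak{h}_M+d_M+1$; this fixes the admissible range of $N$.

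To prove $(\ast)$, I would use a Vitali-type packing of the topological boundary. Choose a maximal collection $\{\nu_i\}\subset\partial P$ such that the open balls $B(\nu_i,1/(2R))$ are pairwise disjoint; maximality forces every $\nu\in\partial P$ to lie within $1/R$ of some $\nu_i$. The triangle inequality combined with \eqref{ContainmentPartialP} then yields the covering
\[
\partial P(\ell/R)\subset\bigcup_i B\big(\nu_i,(\ell+1)/R\big).
\]
On each such ball, Lemma \ref{prop:density}(\ref{beta2}) gives $\beta_M^G(\delta,\lambda)\ll(1+(\ell+1)/R)^{d_M}\beta_M^G(\delta,\nu_i)\ll\ell^{d_M}\beta_M^G(\delta,\nu_i)$ (the regime $R\geqslant 1$ that appears throughout the paper makes the second bound uniform; otherwise one carries along the innocuous factor). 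Integrating,
\[
\int_{B(\nu_i,(\ell+1)/R)}\beta_M^G(\delta,\lambda)\,\mathrm{d}\lambda\ll\ell^{\dim\mathfrak{h}_M+d_M}R^{-\dim\mathfrak{h}_M}\beta_M^G(\delta,\nu_i).
\]

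To convert the point values $\beta_M^G(\delta,\nu_i)$ back into an integral over the boundary region, I apply Lemma \ref{prop:density}(\ref{beta2}) in the opposite direction on the small ball $B(\nu_i,1/(2R))$, where $\beta_M^G(\delta,\nu_i)\asymp\beta_M^G(\delta,\nu)$ with uniform constants; this yields $R^{-\dim\mathfrak{h}_M}\beta_M^G(\delta,\nu_i)\ll\int_{B(\nu_i,1/(2R))}\beta_M^G(\delta,\nu)\,\mathrm{d}\nu$. Since $\nu_i\in\partial P$, each ball $B(\nu_i,1/(2R))$ is contained in $\partial P(1/R)$ (any point of such a ball is within $1/(2R)<1/R$ of both $P$ and $P^c$, by the definition of $\partial P$). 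Summing over $i$ and using that the $B(\nu_i,1/(2R))$ are disjoint produces the estimate $(\ast)$, completing the argument.

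The proof is essentially a clean two-sided application of the Plancherel-majorizer comparison in Lemma \ref{prop:density}(\ref{beta2}): once to trivialize $\beta_M^G(\delta,\cdot)$ on the expanding balls and once to reinstate an integral form on the small balls that live inside $\partial P(1/R)$. The only genuine technical point is bookkeeping the scales $1/(2R)$, $1/R$, $(\ell+1)/R$ in the Vitali cover so that the disjoint small balls fit inside $\partial P(1/R)$ while the $\ell$-dilates cover $\partial P(\ell/R)$. I expect no serious obstacle.
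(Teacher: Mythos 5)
Your argument is correct, and it reaches the same two-sided comparison via a slightly different mechanism than the paper. The paper routes both inequalities through an auxiliary double integral $I=\int_{\partial P(1/R)}\int_{i\mathfrak{h}_M^*}(1+R\|\lambda-\nu\|)^{-N}\beta_M^G(\delta,\lambda)\,\mathrm{d}\lambda\,\mathrm{d}\nu$: the upper bound on $I$ comes from Lemma \ref{prop:density}\eqref{beta2} exactly as in your argument, while the lower bound uses, for each $\lambda\in\partial P(\ell/R)$, a midpoint construction producing a ball $B(\nu_0,1/(2R))\subseteq\partial P(1/R)$ within distance $(\ell+\tfrac12)/R$ of $\lambda$ — the same geometric fact that underlies your packing. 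What you do differently is replace the double integral by a Vitali-type maximal $1/(2R)$-packing of the topological boundary, whose small balls sit disjointly inside $\partial P(1/R)$ and whose $(\ell+1)/R$-dilates cover $\partial P(\ell/R)$; applying Lemma \ref{prop:density}\eqref{beta2} once in each direction then yields the clean intermediate estimate $V(\ell/R)\ll\ell^{\dim\mathfrak{h}_M+d_M}V(1/R)$, after which the sum over $\ell$ converges trivially for $N>\dim\mathfrak{h}_M+d_M+1$. This buys you an explicit pointwise-in-$\ell$ growth bound (potentially reusable elsewhere) and makes the summation over $\ell$ completely transparent, at the cost of having to verify the packing bookkeeping ($B(\nu_i,1/(2R))\subseteq\partial P(1/R)$ for $\nu_i$ on the topological boundary, and the $(\ell+1)/R$ covering radius), which you do correctly. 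The threshold on $N$ and the implicit restriction $R\geqslant 1$ match what the paper's proof also requires.
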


\begin{proof}
The lower bound follows trivially by positivity. For the upper bound, consider the double integral
\[ I=\int_{\partial P(1/R)} \int_{i\h_M^{\ast}} (1+R\|\lambda-\nu\|)^{-N} \beta_M^G(\delta,\lambda)\,\mathrm{d}\lambda\,\mathrm{d}\nu. \]
The lemma will follow from the follow two inequalities:
\begin{equation}\label{middle-I}
R^{-\dim\mathfrak{h}_M}\partial\vol_R(\delta,P)\ll I\ll R^{-\dim\mathfrak{h}_M}\int_{\partial P(1/R)}\beta_M^G(\delta,\nu)\,\mathrm{d}\nu.
\end{equation} 

For the lower bound in \eqref{middle-I}, we first restrict the inner integral over $\lambda$ to $\partial P(\ell/R)$ for any $\ell\in\N$. We switch the order of integration and let $\lambda\in\partial P(\ell/R)$ be arbitrary. By definition, $B(\lambda,\ell/R)$ contains a point in $P$ and a point in $P^c$ and thus by convexity and continuity two such points at a distance no more than $1/R$; taking $\nu_0$ to be the midpoint between these two, we have that $B(\nu_0,1/2R)\subseteq\partial P(1/R)$ and $\|\lambda-\nu\|<(\ell+\frac12)/R$ for every $\nu\in B(\nu_0,1/2R)$. We may therefore restrict the inner integral over $\nu$ to the ball $B(\nu_0,1/2R)$, minorize the integrand by $\ell^{-N}$, while picking up the volume of the ball $R^{-\dim\h_M}$, so that
\[
I\geqslant\int_{\partial P(\ell/R)} \beta_M^G(\delta,\lambda) \int_{\partial P(1/R)} (1+R\|\lambda-\nu\|)^{-N} \,\mathrm{d}\nu \,\mathrm{d}\lambda\gg R^{-\dim\mathfrak{h}_M}\ell^{-N}\int_{\partial P(\ell/R)} \beta_M^G(\delta,\lambda)  \,\mathrm{d}\lambda.
\]
Multiplying both sides by $\ell^{-2}$ and summing on $\ell$ establishes the first inequality in \eqref{middle-I}.

To prove the second inequality in \eqref{middle-I}, we use Part \eqref{beta2} of Lemma \ref{prop:density}. The inner integral in $I$ is
\begin{align*}
&\ll \int_{i\mathfrak{h}_M^{\ast}}\big(1+R\|\lambda-\nu\|)^{-N}\big(1+\|\lambda-\nu\|\big)^{d_M}\beta_M^G(\delta,\nu)\,\mathrm{d}\lambda\\
&\ll R^{-\dim\mathfrak{h}_M}\beta_M^G(\delta,\nu)\int_{i\mathfrak{h}_M^{\ast}}(1+\|\lambda\|)^{-N+d_M}\,\mathrm{d}\lambda
\ll R^{-\dim\mathfrak{h}_M}\beta_M^G(\delta,\nu),
\end{align*}
uniformly for every $N>d_M+\dim\mathfrak{h}_M$.
\end{proof}

Next, recalling the description of Hermitian archimedean dual and the notation $|W(M,L)_\delta|$ from \S\ref{sec:herm}, we define for every $L\in\mathcal{L}_{\infty}(\delta)$
\begin{equation}\label{BoundaryTermsDefinitions2}
\overline{\vol}_{R,L}(\delta,P)=R^{-{\rm codim}_{\h_M}(\h_L)-2|W(M,L)_\delta|}\int_{i\h_L^*}(1+d(\nu,P)\cdot R)^{-N}\beta_M^G(\delta,\nu)\,\mathrm{d}\nu.
\end{equation}
When $L=M$ we drop the dependence on $L$ from the notation and simply write $\overline{\vol}_R(\delta,P)=\overline{\vol}_{R,M}(\delta,P)$. Note that
\[
\overline{\vol}_R(\delta,P)=\int_P\beta_M^G(\delta,\nu)\,\mathrm{d}\nu+\mathrm{O}(\partial\vol_R(\delta,P));
\]
moreover, $\overline{\vol}_{R_1}(\delta,P)\asymp\overline{\vol}_{R_2}(\delta,P)$ if $R_1\asymp R_2$.

Finally we shall write
\begin{equation}\label{BoundaryTermsDefinitions3}
\vol_R^\star(\delta,P)=\sum_{\substack{L\in \mathcal{L}_{\infty}(\delta)\\ L\neq M}}\overline{\vol}_{R,L}(\delta,P).
\end{equation}

\begin{remarks}\label{N-dependence}
${}$
\begin{enumerate}
\item It will be plain from our arguments that a sufficiently large $N\in\mathbb{N}$ (in terms of $n$ and $F$) can be chosen once and for all to ensure convergence of sums we later encounter, and we normally suppress the dependence on $N$ in the notation; if we want to emphasize this dependence (for example, in \S\ref{SmoothToSharpProofSubsection}), we shall write $\partial\vol_{R,N}(\delta,P)$ or (when $L=M$) $\overline{\vol}_{R,N}(\delta,P)$.
\item Under certain geometric conditions on $P$, namely if $d(\nu,P)\asymp d(\nu,P\cap i\mathfrak{h}_L^{\ast})$ for all $\nu\in i\mathfrak{h}_L^{\ast}$, it can be shown that $\overline{\vol}_{R,L}(\delta,P)\asymp\int_{P^{\bullet}(1/R)\cap i\mathfrak{h}_L^{\ast}}\beta_M^G(\delta,\nu)\,\mathrm{d}\nu$, similarly to Lemma~\ref{boundary-term-reduction}. This would be the case, for example, for a ball or a box in $i\mathfrak{h}_M^{\ast}$ centered at the origin. We do not pursue this observation in detail since it is not required for our main application to Theorem~\ref{main-implication}.
\end{enumerate}
\end{remarks}

\subsection{Tempered count for fixed discrete data}\label{sec:temp-bd-2}

The central ingredient to the proof of Theorem \ref{main-implication} is the following result.

\begin{prop}
\label{MainCountingResultGLn}
Assume that Property {\rm (ELM)} holds with respect to $\underline{\delta}\in\mathcal{D}$. There are constants $c,C,\theta>0$ such that for $P\in\mathscr{B}(\underline{\delta})$, integral ideals $\q$ with $\norm\q\geqslant C$, and $1\leqslant R\leqslant c\log(2+\norm\q)$, 
\begin{equation}\label{NqdP}
\begin{aligned}
N(\q,\underline{\delta},P)&=D_F^{n^2/2}\Delta_F^*(1)\varphi_n(\q)\int_P{\rm d}\widehat{\omega}^{\rm pl}_\infty\\
&\qquad+\textnormal{O}\bigg(\varphi_n(\q)\big(\partial\vol_R(\delta,P)+\vol^{\star}_R(\delta,P)\big)+\norm\q^{n-\theta}\overline{\vol}_R(\delta,P)\bigg).
\end{aligned}
\end{equation}
If $\norm\q\leqslant C$, then \eqref{NqdP} holds with the first term replaced by $\textnormal{O}\big(\varphi_n(\q)\int_P {\rm d}\widehat{\omega}_\infty^{\rm pl}\big)$.
\end{prop}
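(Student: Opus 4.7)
The plan is to apply Arthur's trace formula to the test function $\varphi_R=\varepsilon_{K_1(\q)}\otimes f_R^{\delta,P}$, where $f_R^{\delta,P}\in\mathcal{H}(G_\infty^1)_R$ is the archimedean spectral localizer from Section~\ref{spec-loc}. By design, its Fourier transform $h_R^{\delta,P}$ is supported (spectrally) on the $\underline{\delta}$-stratum of $\Pi(G_\infty^1)$ and smoothly approximates $\chi_P$ on $i\mathfrak{h}_M^*$, with transition walls of width $\asymp 1/R$. Writing $J_{\rm disc}(\varphi_R)=J_{\rm cent}(\varphi_R)+J_{\rm error}(\varphi_R)$ as in~\eqref{DefEqh}, the goal is to identify $N(\q,\underline{\delta},P)$ as the dominant part of $J_{\rm disc}(\varphi_R)$ and the predicted main term as the dominant part of $J_{\rm cent}(\varphi_R)$, while Property~(ELM) bounds $J_{\rm error}(\varphi_R)$. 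The constraint $R\leqslant c\log(2+\norm{\q})$ is calibrated so that the factor $e^{CR}$ in Definition~\ref{TFhyp} is absorbed into $\norm{\q}^{\theta/2}$, preserving a genuine power-saving $\norm{\q}^{n-\theta/2}$ in the level.

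On the spectral side, I would decompose $J_{\rm disc}(\varphi_R)=J_{\rm temp}(\varphi_R)+J_{\rm comp}(\varphi_R)$ according to whether $\pi_\infty$ is tempered. For tempered $\pi_\infty$, the near-indicator behavior of $h_R^{\delta,P}$ gives $J_{\rm temp}(\varphi_R)=N(\q,\underline{\delta},P)+\mathrm{O}(N(\q,\underline{\delta},\partial P(1/R)))$, where the boundary count will itself be estimated via smooth majorization: dominate $\chi_{\partial P(1/R)}$ by a non-negative Paley--Wiener majorant, apply the trace formula and Property~(ELM) to this auxiliary test function, and appeal to Lemma~\ref{boundary-term-reduction} to convert the Plancherel majorizer integral into $\varphi_n(\q)\partial\vol_R(\delta,P)+\norm{\q}^{n-\theta}\overline{\vol}_R(\delta,P)$. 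For non-tempered $\pi_\infty$, the exponential growth $e^{R\|\mathrm{Re}\,\nu_\pi\|}$ of $h_R^{\delta,P}$ off $i\mathfrak{h}_M^*$ is tamed by Proposition~\ref{comp-mu} of Section~\ref{ExceptionalSpectrumSection}, whose positive-definite archimedean majorants convert the exponentially weighted discrete count into a bound of the shape $\varphi_n(\q)\vol_R^{\star}(\delta,P)$; the extra savings $R^{-2|W(M,L)_\delta|}$ in each summand of $\vol_R^{\star}$ reflect the vanishing of the Plancherel density along the strata $i\mathfrak{h}_L^*$ for $L\in\mathcal{L}_\infty(\delta)$ with $L\supsetneq M$.

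On the geometric side, for $\norm{\q}\geqslant C$ and $R\leqslant c\log(2+\norm{\q})$ with $c$ small, the support condition on $f_R^{\delta,P}$ combined with the congruence condition at finite places forces $\gamma=1$ to be the only central element contributing, giving $J_{\rm cent}(\varphi_R)=J_1(\varphi_R)=\vol(\GL_n)\varphi_n(\q)\,f_R^{\delta,P}(1)$. Plancherel inversion together with the construction of $h_R^{\delta,P}$ yields $f_R^{\delta,P}(1)=\int_P\mathrm{d}\widehat{\mu}_\infty^{\rm pl}+\mathrm{O}(\partial\vol_R(\delta,P))$. Property~(ELM) applied to $\varphi_R$ then gives $J_{\rm error}(\varphi_R)\ll e^{CR}\norm{\q}^{n-\theta}\overline{\vol}_R(\delta,P)$, after identifying the Plancherel majorizer integral as $\overline{\vol}_R(\delta,P)$ via the localization of $h_R^{\delta,P}$ and Lemma~\ref{boundary-term-reduction}. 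Assembling
\[ N(\q,\underline{\delta},P)=J_1(\varphi_R)+J_{\rm error}(\varphi_R)-J_{\rm comp}(\varphi_R)+\mathrm{O}(\text{boundary})\]
and collecting the four error sources yields the claimed asymptotic. For $\norm{\q}\leqslant C$, a bounded number of non-identity central classes may survive in $J_{\rm cent}$, each contributing $\mathrm{O}\bigl(\varphi_n(\q)\int_P\mathrm{d}\widehat{\mu}_\infty^{\rm pl}\bigr)$ by the same Plancherel inversion, which accounts for the weaker version of the statement in that range.

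\textbf{The main obstacle} will be controlling $J_{\rm comp}(\varphi_R)$: the Paley--Wiener nature of $h_R^{\delta,P}$ forces exponential weights $e^{R\|\mathrm{Re}\,\nu_\pi\|}$ on any non-tempered $\pi_\infty$, and Property~(ELM) applied naively to $\varphi_R$ cannot distinguish this exponential blow-up from tempered contributions. The resolution, carried out in Section~\ref{ExceptionalSpectrumSection}, requires the delicate construction of auxiliary positive-definite archimedean test functions that dominate $|h_R^{\delta,P}|$ on the full Hermitian dual while retaining controlled Plancherel mass; separating the ``good'' from ``bad'' contributions relies on the lattice structure of $\mathcal{L}_\infty(\delta)$ and the stratified vanishing of the Plancherel density. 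A secondary subtlety is to keep the boundary estimate non-circular: the fattened-boundary count $N(\q,\underline{\delta},\partial P(1/R))$ must be majorized by an auxiliary Paley--Wiener object directly amenable to a single application of Property~(ELM), so that the main-term and error produced by that application reproduce $\varphi_n(\q)\partial\vol_R(\delta,P)+\norm{\q}^{n-\theta}\overline{\vol}_R(\delta,P)$ without triggering a second layer of boundary analysis.
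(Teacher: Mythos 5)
Your proposal follows essentially the same route as the paper: apply the trace formula to $\varepsilon_{K_1(\q)}\otimes f_R^{\delta,P}$, split $J_{\rm disc}=J_{\rm temp}+J_{\rm comp}$, extract the main term from $J_{\rm cent}$ via Lemma~\ref{cor-central2}, bound $J_{\rm error}$ by (ELM), control the complementary spectrum by the exponentially weighted bounds of Section~\ref{ExceptionalSpectrumSection}, and majorize the fattened-boundary counts $N(\q,\underline{\delta},\partial P(\ell/R))$ by a further smooth application of (ELM) exactly as in Lemma~\ref{UpperBoundPartialP}. The only slip is a label: the bound of shape $\varphi_n(\q)\vol_R^{\star}(\delta,P)$ for $J_{\rm comp}$ is Proposition~\ref{KR} (the $K_R$ sum with power decay off $P$), not Proposition~\ref{comp-mu}, though you describe the correct statement.
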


Proposition \ref{MainCountingResultGLn} will be proved in Section \ref{sec:smooth-to-sharp-tempered}, after having introduced an appropriate class of test functions in Section \ref{spec-loc} and estimated the (exponentially weighted) discrete spectrum in Section \ref{ExceptionalSpectrumSection}. Then, in Section \ref{sec:temp-cor}, we make the deduction from Proposition \ref{MainCountingResultGLn} to Theorem \ref{main-implication}.

Our methods in fact yield a main term in Proposition~\ref{MainCountingResultGLn} for every $\q$; in the case $\norm\q\leqslant C$, its shape is mildly affected by the roots of unity in $F$ (see \eqref{DensityChiqLDefinition}). Since the bound $\textnormal{O}\big(\varphi_n(\q)\int_P {\rm d}\widehat{\omega}_\infty^{\rm pl}\big)$ in that range is sufficient for the proof of Theorem \ref{main-implication}, we have not included the finer result for small $\q$.

Proposition~\ref{MainCountingResultGLn} provides a Weyl law with explicit level savings, with full flexibility and uniformity with respect to the discrete parameter $\underline{\delta}\in\mathcal{D}$ and the region $P\in\mathscr{B}(\underline{\delta})$. For example, specializing to the ball $B(0,r)\subseteq i\mathfrak{h}_M^{\ast}$ of radius $r\gg 1/\log(2+\norm\q)$, we obtain
\[ N(\q,\underline{\delta},B(0,r))=\left(1+\mathrm{O}_{\delta}\left(\frac1{r\cdot \log(2+\norm\q)}+\frac1{\norm\q^{\theta+o(1)}}\right)\right)D_F^{n^2/2}\Delta_F^*(1)\varphi_n(\q)\widehat{\omega}_{\infty}^{\pl}(\delta, B(0,r)). \]
Note that, for every fixed $\delta$,
\[
\widehat{\omega}_{\infty}^{\pl}(\delta,B(0,r))\sim A_{\delta} r^{d_M+\dim\mathfrak{h}_M} \quad\text{as}\quad r\to\infty,
\]
where $d_M$ is defined in \eqref{def-dM}, and
\[
\widehat{\omega}_{\infty}^{\pl}(\delta, B(0,r))\sim a_{\delta}r^{2|W(M,G)_\delta|+\dim\mathfrak{h}_M}\quad\text{as}\quad r\to 0^+,
\] 
for some explicit constants $A_{\delta},a_{\delta}>0$.

\section{Spectral localizing functions}\label{spec-loc}

In this section we introduce a class of Paley--Wiener functions which localize about given arch\-i\-me\-dean spectral parameters, and then estimate the central contributions to the geometric side of the trace formula for the test functions giving rise to them.

\subsection{Definition of spectral localizer}

Let $\underline{\delta}\in\mathcal{D}$ and a real number $R>0$ be given.
Recall the space $\mathcal{PW}_{R,\underline{\delta}}$ from \S\ref{PWCD}. Let $(M,\delta)$ be a standard representative for $\underline{\delta}$, and fix a spectral parameter $\mu\in i\mathfrak{h}_M^*$. As in \S\ref{rep-notation}, we let $[\delta,\mu]$ denote the $W(A_M)$-orbit of $(\delta,\mu)$.
In this section, we will be interested in functions $h_R^{\delta,\mu}\in\mathcal{PW}_{R,\underline{\delta}}$ which localize about the $W(A_M)_\delta$-orbit of $\mu$ in $i\h^*_M$ (equivalently, under the identification from \S\ref{PWCD}, about $[\delta,\mu]$).

\begin{defn}\label{defn-localizer}
A function $h_R^{\delta,\mu}\in\mathcal{PW}_{R,\underline{\delta}}$ will be called a \emph{spectral localizer} (about $[\delta,\mu]$) if
\begin{enumerate}
\item \label{nonneg-item} $h_R^{\delta,\mu}(\nu)\geqslant 0$ for all $\nu\in i\h_M^*$;
\item the normalization
\[
\int_{i\h_M^*}h_R^{\delta,\mu}(\nu)\, {\rm d}\nu=R^{-\dim\h_M}
\]
holds;
\item \label{rapid-decay-item} the estimate
\[
h_R^{\delta,\mu}(\nu)\ll_N e^{R\|{\rm Re}\,\nu\|}\max_{w\in W(A_M)_{\delta}}(1+R\|\nu-w\mu\|)^{-N}
\]
holds for all $\nu\in\h_{M,\mathbb{C}}^*$ and all $N>0$;
\item\label{h-factorization} there are functions
\[
h_R^{\mu_Z}\in\mathcal{PW}(\mathfrak{h}_{G,\C}^*)_R\qquad\text{and}\qquad h_R^{\delta,\mu^0}=\prod_{v\mid\infty}h_R^{\delta_v,\mu^0_v}\in\mathcal{PW}((\h_M^G)_\C^*)
\]
with $h_R^{\delta_v,\mu^0_v}\in\mathcal{PW}((\aa_{M_v}^{G_v})_{\C}^{\ast})_R$ such that $h^{\delta,\mu}_R(\nu)=h_R^{\mu_Z}(\nu_Z)h_R^{\delta,\mu^0}(\nu^0)$. Here, we write $M=\prod_{v\mid\infty}M_v$ and $\delta=\prod_{v\mid\infty}\delta_v$ and use the decomposition of $\mathfrak{h}_M^*$ from \S\ref{sec:hM-decomp} to write $\mu=\mu_Z+\mu^0$, with $\mu_Z\in\mathfrak{h}_{G,\C}^*$ and $\mu^0=(\mu^0_v)_{v\mid\infty}\in (\h_M^G)_\C^*$, $\mu^0_v\in(\aa_{M_v}^{G_v})_{\C}^{\ast}$.
\end{enumerate}

Clearly, items \eqref{nonneg-item}--\eqref{rapid-decay-item} are ensured by the factorized conditions (with a suitably small $c>0$)
\begin{equation}
\label{factorized-spec-loc}
\begin{aligned}
&h_R^{\mu_Z}\geqslant 0\text{ on }i\h_G^{\ast},\quad \int_{i\h_G^{\ast}}h_R^{\mu_Z}=R^{-\dim\h_G},\quad h_R^{\mu_Z}(\nu_Z)\ll_Ne^{cR\|\mRe\nu_Z\|}(1+R\|\nu_Z-\mu_Z\|)^{-N},\\
& h_R^{\delta_v,\mu_v^0}\geqslant 0\text{ on }i(\aa_{M_v}^{G_v})^{\ast}, \quad \int_{i(\aa_{M_v}^{G_v})^{\ast}}h_R^{\delta_v,\mu_v^0}=R^{-\dim\aa_{M_v}^{G_v}},\\ &h_R^{\delta_v,\mu_v^0}(\nu_v^0)\ll_Ne^{cR\|\mRe\nu_v^0\|}\max_{w\in W(A_{M_v})_{\delta_v}}(1+R\|\nu_v^0-w\mu_v^0\|)^{-N}.
\end{aligned}\end{equation}

Finally, a \emph{system} of spectral localizers $h_R^{\delta,\mu}$ over all $[\delta,\mu]$ is said to be \emph{normalized} if, additionally, for every $\nu\in i\h_M^{\ast}$,
\[
\int_{i\mathfrak{h}_M^{\ast}}h_R^{\delta,\mu}(\nu)\,\mathrm{d}\mu=R^{-\dim\h_M}.
\]
\end{defn}

\begin{remarks} $\,$
\begin{enumerate}
\item Recall that the notation $\mathcal{PW}_{R,\underline{\delta}}$ includes the assumption of $W(A_M)_\delta$-invariance. Note that $W(A_M)_{\delta}$ acts trivially on $\mathfrak{h}_{G,\C}^*$, so that $h_R^{\delta,\mu^0}$ is necessarily $W(A_M)_{\delta}$-invariant. 
\item We also clarify that the spectral localizers $h_R^{\delta,\mu}$ will be used with varying $R$ and $[\delta,\mu]$, and that the implied constant in \eqref{rapid-decay-item} is understood to be independent of $R$, $[\delta,\mu]$, or any other parameters (apart from $F$ and $n$), unless explicitly indicated otherwise.
\item We note that Property \eqref{rapid-decay-item} is in agreement with the definition of the Paley--Wiener space $\mathcal{PW}(\mathfrak{h}_{M,\C}^*)_R$ in \S\ref{PWCD}, although the former features a factor of $R$ in the polynomial decay factor while the latter does not. Indeed, 
\[
\min(1,R)^k (1+\|\nu\|)^k \ll_k (1 + R\|\nu\|)^k \ll_k \max(1,R)^k (1+\|\nu\|)^k.
\]
Since the definition of $\mathcal{PW}(\mathfrak{h}_{M,\C}^*)_R$ makes no statement about the dependence of the stated supremum on $R$ or $k$, including such a factor would yield the same space.
\item These factorization conditions are imposed (and hence ensured in the constructions in \S\ref{const-spec-loc} and \S\ref{existence-test-functions-subsection}) solely for their utility in Section \ref{PW}, especially the second factorization of $h_R^{\delta,\mu^0}(\nu^0)$ over the individual archimedean places; they play no role in the actual application of spectral localizers in this and the later sections. For a similar decomposition to \eqref{h-factorization} in the literature, see \cite[\S 4.2]{Matz2016} in the context of $M=T_{0,\infty}$.
\end{enumerate}
\end{remarks}

\subsection{Construction of spectral localizers}\label{const-spec-loc}

In this subsection, we show how to construct an explicit normalized system of spectral localizers satisfying Definition~\ref{defn-localizer}, which will be used in the proof of Proposition~\ref{MainCountingResultGLn} in Section~\ref{sec:smooth-to-sharp-tempered}. We remark, however, that everything in \S\ref{UniformUpperBoundsSubsection} (resp., \S\ref{sec:approx-estimates}) applies to any spectral localizer (resp., any normalized system of spectral localizers) verifying Definition~\ref{defn-localizer} and condition \eqref{defn-hR}, independently of any explicit construction.

It suffices to construct the central and semisimple factors of $h_R^{\delta,\mu}$ which appear in the definition.

\begin{itemize}
\item[--] {\it Abelian localizer:} We let $\,\;\widehat{}:C_c^\infty(\mathfrak{h}_G)\rightarrow\mathcal{PW}(\mathfrak{h}_{G,\C}^*)$ be the Fourier transform. Let $g_0\in C^\infty_c(\mathfrak{h}_G)$ be supported in the ball of radius 1, satisfy $g_0(0)=1$, and have non-negative Fourier transform $h_0=\widehat{g_0}$ on $i\mathfrak{h}_G^*$. For a real parameter $R>0$ we write $g_{0,R}(X)=R^{-\dim\mathfrak{h}_G} g_0(R^{-1}X)$ and $h_{0,R}=\widehat{g_{0,R}}$; in particular, $h_{0,R}$ lies in $\mathcal{PW}(\mathfrak{h}_{G,\C}^*)_R$. We let $g_R^{\mu_Z}(X)=g_{0,R}(X)e^{-\langle\mu_Z,X\rangle}$ and $h_R^{\mu_Z}=\widehat{g_R^{\mu_Z}}$. Then we have $h_R^{\mu_Z}(\nu_Z)=h_{0,R}(\nu_Z-\mu_Z)$.

\item[--] {\it Semisimple localizer:} Next we let $g_{1,v}\in C^\infty_c(\aa_{M_v}^{G_v})$ be supported in the ball of radius 1 about the origin and satisfy $g_{1,v}(0)=1$. Assume $h_{1,v}=\widehat{g_{1,v}}$ is  non-negative on $i(\aa_{M_v}^{G_v})^\ast$. Let $g_{1,R,v}(X)=R^{-\dim\aa_{M_v}^{G_v}}g_{1,v}(R^{-1}X)$ and put $g_{R,v}^{\mu^0_{v}}(X)=g_{1,R,v}(X)e^{-\langle \mu^0_{v}, X\rangle}$. Then if $h_{1,R,v}, h_{R,v}^{\mu^0_{v}}\in\mathcal{PW}((\aa_{M_v}^{G_v})_{\C}^{\ast})_R$ are the Fourier transforms of $g_{1,R,v}$ and $g_{R,v}^{\mu^0_{v}}$, we have $h_R^{\mu^0_{v}}(\nu_{v})=h_{1,R,v}(\nu_{v}-\mu^0_{v})$. Finally, for $\nu_0=(\nu_{0,v})_{v\mid\infty}\in (\h_{M_v}^{G_v})_{\C}^{\ast}$, we set $h_R^{\mu_0}(\nu^0)=\prod_{v\mid\infty}h_{R,v}^{\mu^0_v}(\nu^0_v)$.
\end{itemize}

With the above constructions, we now set $h_R^\mu(\nu)=h_R^{\mu^Z}(\nu^Z)h_R^{\mu^0}(\nu^0)$ and let $h^{\delta,\mu}_R$ be the unique function in $\mathcal{PW}_{R,\underline{\delta}}$ such that, for $\nu\in \mathfrak{h}_{M,\C}^*$,
\[
h^{\delta,\mu}_R(\nu)=
\frac{1}{|W(A_M)_\delta|}\sum_{w\in W(A_M)_\delta} h^\mu_R(w\nu).
\]
In other words, the function $h^{\delta,\mu}_R(\sigma,\nu)$ is zero if $\sigma\notin\underline{\delta}$ and is otherwise equal to the displayed equation. (This is in accordance with our notational conventions for functions in $\mathcal{PW}_{R,\underline{\delta}}$, as described before Definition \ref{defn-localizer}.) It is clear that this choice satisfies \eqref{nonneg-item}--\eqref{rapid-decay-item} as well as the normalization property. The factorization property~\eqref{h-factorization} also holds as it is preserved in the $W(A_M)_{\delta}$-average.

\subsection{Central contributions}\label{UniformUpperBoundsSubsection}

Recall the definition of the space $\mathcal{H}(G^1_{\infty})_{R,\underline{\delta}}$ from \S\ref{PWCD}. In this subsection, let $\mu\in i\h_M^{\ast}$ and assume that we have at our disposal a test function $f^{\delta,\mu}_R\in\mathcal{H}(G^1_\infty)_{cR,\underline{\delta}}$ (with a constant $c>0$ depending on $F$, $n$ only) such that 
\begin{equation}\label{defn-hR}
h^{\delta,\mu}_R: \nu\mapsto {\rm tr}\, \pi_{\delta,\nu}(f^{\delta,\mu}_R)
\end{equation}
is a spectral localizer in $\mathcal{PW}_{R,\underline{\delta}}$ about $[\underline{\delta},\mu]$,
as described by Definition~\ref{defn-localizer}. In this subsection, we evaluate and bound the central contributions $J_{\rm cent}$ to the trace formula, used with $f_R^{\delta,\mu}$ as the archimedean component of the test function. From the expression \eqref{eq:def:explicit-Jcentral}, we have
\begin{equation}\label{central-gammas}
J_{\rm cent}(\varepsilon_{K_1(\q)}\otimes f_R^{\delta,\mu})
=D_F^{n^2/2}\Delta_F^*(1)\varphi_n(\q)\sum_{\substack{\gamma\in Z(F)\cap K_1(\q)\\\gamma_{\infty}\in G_{\infty,\leqslant cR}^1}} f_R^{\delta,\mu}(\gamma).
\end{equation}
Note that, by compactness, the sum over $\gamma$ in \eqref{central-gammas} is always finite. The next lemma estimates the number of terms in \eqref{central-gammas} and, in fact, shows that, in the range $1\leqslant R\ll \log(2+\norm\q)$ of interest to us, the sum typically contains only the identity element.

\begin{lemma}
\label{GL1LengthSpectrumGapNewLemma}
There exist constants $c_2,C_2>0$ such that, with the field notation from \S\ref{field}:
\begin{enumerate}
\item\label{general-gamma} the sum over $\gamma$ in \eqref{central-gammas} has at most $\mathrm{O}\big(1+(R/\log(2+\norm\q))^{r_1+r_2-1}\big)$ nonzero terms.
\item\label{gamma-sum} if $1\leqslant R\leqslant c_2\log(2+\norm\q)$, the sum over $\gamma$ in \eqref{central-gammas} consists only of $\gamma=1$ and possibly a subset of non-identity roots of unity in $\mathcal{O}_F^{\times}$.
\item if, additionally $\norm\q>C_2$, then the sum over $\gamma$ in \eqref{central-gammas} reduces to the identity $\gamma=1$.
\end{enumerate}
\end{lemma}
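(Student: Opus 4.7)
The plan is to identify $Z(F)\cap K_1(\q)$ with a congruence subgroup of global units, translate the archimedean ball condition $\gamma_\infty\in G_{\infty,\leqslant R}^1$ into a bounded-height condition on the Dirichlet log-embedding, and then apply a sharp Diophantine lower bound together with a standard geometry-of-numbers count.

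Since $\bm{Z}=\mathbb{G}_m$ embeds in $\GL_n$ via $z\mapsto zI_n$, an element $\gamma=zI_n\in Z(F)$ lies in $K_1(\q)$ if and only if $z\in\mathcal{O}_v^\times$ at every finite place $v$ (hence $z\in\mathcal{O}_F^\times$) and $z\equiv 1\pmod\q$. Write
\[
\Lambda_\q=\{z\in\mathcal{O}_F^\times:z\equiv 1\pmod\q\},
\]
so \eqref{central-gammas} is indexed by those $z\in\Lambda_\q$ for which $\gamma_\infty\in G_{\infty,\leqslant R}^1$. Because $\gamma=zI_n$ is central, its Cartan projection in $G_\infty^1$ is essentially the trace-zero log vector $X(z)=(\log|z|_v)_{v\mid\infty}\in\mathfrak{h}_G$ (membership in $\mathfrak{h}_G$ being automatic by the product formula for units), and the archimedean condition reduces to $\|X(z)\|\leqslant R$ up to a fixed constant depending only on the chosen norm.

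The Diophantine input is elementary: for $z\in\Lambda_\q\setminus\mu_F$, the element $z-1$ is nonzero and the principal ideal $(z-1)$ is divisible by $\q$, so
\[
\norm\q\leqslant|N_{F/\Q}(z-1)|=\prod_{v\mid\infty}|z-1|_v\leqslant\prod_{v\mid\infty}(|z|_v+1)\leqslant(e^{c_0R}+1)^d,
\]
where $c_0>0$ depends only on the normalization of $\|\cdot\|$ on $\mathfrak{h}_G$. This forces $\|X(z)\|\gg\log\norm\q$ for every non-torsion $z\in\Lambda_\q$ satisfying $\|X(z)\|\leqslant R$; combined with the discreteness lower bound $\|X(z)\|\gg_F 1$ valid on all of $\mathcal{O}_F^\times\setminus\mu_F$, we obtain uniformly
\[
z\in\Lambda_\q\setminus\mu_F\ \text{and}\ \|X(z)\|\leqslant R\quad\Longrightarrow\quad\|X(z)\|\gg\log(2+\norm\q).
\]
Choosing $c_2>0$ smaller than the implied constant makes $R\leqslant c_2\log(2+\norm\q)$ incompatible with any contribution from $\Lambda_\q\setminus\mu_F$, leaving only roots of unity: this is~(2). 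For~(3), the finite set $\mu_F\setminus\{1\}$ produces only finitely many integers $|N_{F/\Q}(\zeta-1)|$, so any $C_2$ strictly larger than their maximum rules out every non-identity root of unity as soon as $\norm\q>C_2$.

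Finally, for~(1), the map $z\mapsto X(z)$ embeds $\Lambda_\q/\mu_F$ as a lattice $L_\q$ in $\mathfrak{h}_G\cong\mathbb{R}^{r_1+r_2-1}$ whose shortest nonzero vector has length $\gg\log(2+\norm\q)$. A standard packing bound (disjoint balls of radius half the first minimum of $L_\q$ centered at lattice points fit inside a slightly enlarged ball) then shows that the number of lattice points with $\|X(z)\|\leqslant R$ is at most $\mathrm{O}\bigl(1+(R/\log(2+\norm\q))^{r_1+r_2-1}\bigr)$; accounting for the bounded factor $|\mu_F|=\mathrm{O}_F(1)$ finishes the count. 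The only book-keeping subtlety is matching the normalization of $\|\cdot\|$ on $\mathfrak{h}_G$ against the archimedean absolute values in the Diophantine bound, but once the inequality $\norm\q\leqslant|N_{F/\Q}(z-1)|$ is secured the remainder is routine geometry of numbers.
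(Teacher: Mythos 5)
Your proof is correct and follows essentially the same route as the paper: identify $Z(F)\cap K_1(\q)$ with the units congruent to $1$ mod $\q$, observe that this congruence forces the log-embedded lattice to have first minimum $\gg\log(2+\norm\q)$, and conclude by lattice point counting for (1) and the finiteness of roots of unity for (2) and (3). Your explicit Diophantine step $\norm\q\leqslant|N_{F/\Q}(z-1)|$ is exactly the detail the paper leaves implicit when it asserts that the image of the unit lattice meets $B(0,c_2\log(2+\norm\q))$ trivially.
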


\begin{proof}
Let $V=\prod_{v\mid\infty}\R$ be Minkowski space and denote by $H=\{(x_v)_{v\mid\infty}\in V: \sum_v d_vx_v=0\}$ the trace-zero hyperplane. The logarithm map $\log_F: \mathcal{O}_F^\times\rightarrow V$, $\gamma\mapsto (\log |\gamma^{(v)}|)_{v\mid\infty}$, where $\gamma^{(v)}$ denotes the image of $\gamma$ under a real or complex embedding associated with the place $v$, takes values in $H$. Its kernel consists precisely of the roots of unity in $\mathcal{O}_F^{\times}$.

Note that elements $\gamma\in Z(F)\cap K_1(\q)$ are given by diagonal elements corresponding to a unit $u$ in $\mathcal{O}_F^\times$ congruent to $1$ mod $\q$. If $c_2$ is taken small enough, the image under $\log_F$ of such $u$ is a lattice in $H$ having trivial intersection with $B(0,cc_2\log(2+\norm\q))$. From this, \eqref{general-gamma} follows by a simple lattice point counting argument. Moreover, for $cR\leqslant cc_2\log(\norm\q)$, the only $\gamma\in Z(F)\cap K_1(\q)$ contributing to \eqref{central-gammas} correspond to roots of unity congruent to $1$ mod $\q$. This is a finite set which for $\q$ large enough is just $1$.
\end{proof}

Recall the Plancherel inversion formula given in \eqref{eq:plancherel-inv} and the explicit decomposition \eqref{Pl-decomp}. If $\omega_{\delta,\nu}$ denotes the central character of $\pi_{\delta,\nu}$, we obtain, for $\gamma\in Z(F)$,
\begin{equation}
\label{plancherel-gammas}
f_R^{\delta,\mu}(\gamma)=C_M\deg(\delta)\int_{i\mathfrak{h}_M^*} h_R^{\delta,\mu}(\nu)\omega_{\delta,\nu}^{-1}(\gamma)\mu_M^G(\delta,\nu)\,\mathrm{d}\nu.
\end{equation}
Thus $J_{\rm cent}(\varepsilon_{K_1(\q)}\otimes f_R^{\delta,\mu})$ is given by
\begin{equation}
\label{DensityChiqLDefinition}
D_F^{n^2/2}\Delta_F^*(1)\varphi_n(\q)\sum_{\substack{\gamma\in Z(F)\cap K_1(\q)\\\gamma_{\infty}\in G_{\infty,\leqslant cR}^1}}C_M\deg(\delta)\int_{i\mathfrak{h}_M^*} h_R^{\delta,\mu}(\nu)\omega_{\delta,\nu}^{-1}(\gamma)\mu_M^G(\delta,\nu)\,\mathrm{d}\nu.
\end{equation}
In particular,
\begin{equation}\label{Jcent-gen}
J_{\rm cent}(\varepsilon_{K_1(\q)}\otimes f_R^{\delta,\mu})\ll \varphi_n(\q)\cdot |\{\gamma\in Z(F)\cap K_1(\q):\gamma_{\infty}\in G_{\infty,\leqslant cR}^1\}|\cdot \|h_R^{\delta,\mu}\|_{L^1(\hat\omega_{\infty}^{\pl})}.
\end{equation}

It will be useful to have the following estimate, which is a variation of~\cite[Proposition 6.9]{DuistermaatKolkVaradarajan1979}. Recall the Plancherel majorizer $\beta_M^G$ of Definition \ref{BetaMajorizer} and the subset $W(M,L)_\delta$ of \S\ref{sec:herm}.

\begin{lemma}
\label{L1NormEstimateLemma}
For $L\in\mathcal{L}_{\infty}(\delta)$. Then, for every $\mu\in i\mathfrak{h}_L^{\ast}$ and every $R\gg 1$ we have
\[
\|h_R^{\delta,\mu}\|_{L^1(\hat\omega_{\infty}^{\pl})}\ll R^{-\dim\h_M-2|W(M,L)_\delta|}\beta_M^G(\delta,\mu).
\]
\end{lemma}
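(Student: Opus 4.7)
The plan is to evaluate the $L^1$-norm via Harish-Chandra's Plancherel decomposition~\eqref{Pl-decomp}, and then combine the concentration of $h_R^{\delta,\mu}$ around the $W(A_M)_\delta$-orbit of $\mu$ (from Definition~\ref{defn-localizer}) with the vanishing of the Plancherel density $\mu_M^G(\delta,\cdot)$ along the singular subspace $i\mathfrak{h}_L^{\ast}$, as quantified by Lemma~\ref{prop:density}\eqref{beta3}.

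First I would invoke \eqref{Pl-decomp}. Since $h_R^{\delta,\mu}\in\mathcal{PW}_{R,\underline{\delta}}$ is supported in the discrete variable on the single class $\underline{\delta}$, and is non-negative on $i\mathfrak{h}_M^{\ast}$ by Definition~\ref{defn-localizer}\eqref{nonneg-item}, only one summand contributes and
\[
\|h_R^{\delta,\mu}\|_{L^1(\widehat{\mu}_{\infty}^{\mathrm{pl}})} = C_M\deg(\delta)\int_{i\mathfrak{h}_M^{\ast}}h_R^{\delta,\mu}(\nu)\,\mu_M^G(\delta,\nu)\,\mathrm{d}\nu.
\]
Definition~\ref{defn-localizer}\eqref{rapid-decay-item} gives, for $\nu\in i\mathfrak{h}_M^{\ast}$, the pointwise bound $h_R^{\delta,\mu}(\nu)\ll_N\sum_{w\in W(A_M)_{\delta}}(1+R\|\nu-w\mu\|)^{-N}$. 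Since the Weyl group acts orthogonally on $\mathfrak{h}_M^{\ast}$ and $\mu_M^G(\delta,\cdot)$ is $W(A_M)_{\delta}$-invariant, the substitution $\nu\mapsto w^{-1}\nu$ in each summand identifies all Weyl translates of $\mu$, reducing the task to bounding
\[
\deg(\delta)\int_{i\mathfrak{h}_M^{\ast}}(1+R\|\nu-\mu\|)^{-N}\mu_M^G(\delta,\nu)\,\mathrm{d}\nu.
\]

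To this integral I would apply Lemma~\ref{prop:density}\eqref{beta3} with the given $L\in\mathcal{L}_{\infty}(\delta)$, dominating the integrand by
\[
(1+R\|\nu-\mu\|)^{-N}\|\nu-\mu\|^{2|W(M,L)_{\delta}|}(1+\|\nu-\mu\|)^{d_M-|W(M,L)_{\delta}|}\beta_M^G(\delta,\mu).
\]
The substitution $\eta=R(\nu-\mu)$ extracts the factor $R^{-\dim\mathfrak{h}_M-2|W(M,L)_{\delta}|}\beta_M^G(\delta,\mu)$ and reduces the remaining integrand to
\[
(1+\|\eta\|)^{-N}\|\eta\|^{2|W(M,L)_{\delta}|}(1+\|\eta\|/R)^{d_M-|W(M,L)_{\delta}|}.
\]
For $R\gg 1$ we have $(1+\|\eta\|/R)^{d_M-|W(M,L)_{\delta}|}\ll(1+\|\eta\|)^{\max(d_M-|W(M,L)_{\delta}|,0)}$, and choosing $N$ sufficiently large (depending only on $n$ and $F$) makes the residual integral converge absolutely to a bounded constant, yielding the claimed estimate.

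No serious obstacle is expected: the argument is essentially local, exploiting the vanishing of the Plancherel density along singular parameters and the $R$-scale of the localizer. The extra savings of $R^{-2|W(M,L)_{\delta}|}$ beyond the generic rate $R^{-\dim\mathfrak{h}_M}$ are precisely those responsible for the additional $\log Q$ savings for non-tempered archimedean contributions discussed in Section~\ref{S10-desciption}.
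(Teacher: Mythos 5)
Your proof is correct and follows essentially the same route as the paper's: both expand the $L^1$-norm via the Plancherel decomposition, invoke Lemma~\ref{prop:density}~\eqref{beta3} together with the rapid decay in Definition~\ref{defn-localizer}~\eqref{rapid-decay-item}, and rescale by $R$ to extract $R^{-\dim\mathfrak{h}_M-2|W(M,L)_\delta|}\beta_M^G(\delta,\mu)$ with a convergent residual integral for $N$ large. The only cosmetic difference is that you sum over Weyl translates and fold them together by invariance, whereas the paper simply keeps the maximum over $w\in W(A_M)_\delta$.
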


\begin{proof}
From the definition, we have that
\[ \|h_R^{\delta,\mu}\|_{L^1(\widehat{\omega}_{\infty}^{\pl})}=C_M\deg(\delta)\int_{i\mathfrak{h}_M^{\ast}}h_R^{\delta,\mu}(\nu)\mu_M^G(\delta,\nu)\,\mathrm{d}\nu.\]
Combining Lemma~\ref{prop:density}~\eqref{beta3} with the rapid decay of $h_R^{\delta,\mu}$, the right-hand side is at most
\[
\max_{w\in W(A_M)_{\delta}}\int_{i\mathfrak{h}_M^{\ast}}\big(1+R\|\nu-w\mu\|)^{-N}\|\nu-w\mu\|^{2|W(M,L)_\delta|}\big(1+\|\nu-w\mu\|\big)^{d_M-|W(M,L)_\delta|}\beta_M^G(\delta,\mu)\,\mathrm{d}\nu.
\]
This is in turn bounded by
\[
R^{-\dim\mathfrak{h}_M-2|W(M,L)_\delta|}\beta_M^G(\delta,\mu)\int_{i\mathfrak{h}_M^{\ast}}(1+\|\nu\|)^{-N+d_M+|W(M,L)_\delta|}\,\mathrm{d}\nu
\ll R^{-\dim\mathfrak{h}_M-2|W(M,L)_\delta|}\beta_M^G(\delta,\mu),
\]
uniformly for $N>\dim\mathfrak{h}_M+\max_{M,L}(d_M+|W(M,L)_{\delta}|)$. 
\end{proof}

We obtain the following estimate for the central contributions.

\begin{lemma}\label{cor-central}
For $\underline{\delta}\in\mathcal{D}$ with standard representative $(M,\delta)$, integral ideal $\q$, and $R\gg 1$, the following holds:
 if $\mu\in i\mathfrak{h}_L^{\ast}$ for some $L\in\mathcal{L}_{\infty}(\delta)$, then
\[
J_{\rm cent}(\varepsilon_{K_1(\q)}\otimes f_R^{\delta,\mu})\ll R^{-\dim\h_M-2|W(M,L)_\delta|}\big(1+(R/\log(2+\norm\q))^{r_1+r_2-1}\big)\varphi_n(\q)\beta_M^G(\delta,\mu).
\]
\end{lemma}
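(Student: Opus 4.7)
The proof will be an essentially immediate combination of the three preceding results, so the plan is straightforward.

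First, I would start from the trivial upper bound \eqref{Jcent-gen}, which controls $|J_{\rm cent}(\varepsilon_{K_1(\q)}\otimes f_R^{\delta,\mu})|$ by a product of three factors: $\varphi_n(\q)$, the number of central elements $\gamma\in Z(F)\cap K_1(\q)$ with $\gamma_\infty\in\bm{G}_{\infty,\leqslant R}^1$, and the $L^1(\widehat\mu_\infty^{\rm pl})$-norm of $h_R^{\delta,\mu}$. This bound is obtained by placing absolute values inside the sum over $\gamma$ and the integral over $\nu$ in \eqref{DensityChiqLDefinition} and using $|\omega_{\delta,\nu}^{-1}(\gamma)|=1$ (since $\nu$ is purely imaginary and $\delta$ is unitary).

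Next I would invoke the two pointwise bounds that have just been proved. On the one hand, Lemma~\ref{GL1LengthSpectrumGapNewLemma}\eqref{general-gamma} gives the bound
\[
|\{\gamma\in Z(F)\cap K_1(\q):\gamma_\infty\in\bm{G}_{\infty,\leqslant R}^1\}|\ll 1+(R/\log(2+\norm\q))^{r_1+r_2-1},
\]
which is valid uniformly in $R$ and $\q$ and is the source of the geometric lattice-counting factor in the statement. On the other hand, Lemma~\ref{L1NormEstimateLemma}, applied to $\mu\in i\mathfrak{h}_L^{\ast}$ with $L\in\mathcal{L}_\infty(\delta)$, yields
\[
\|h_R^{\delta,\mu}\|_{L^1(\widehat\mu_\infty^{\rm pl})}\ll R^{-\dim\mathfrak{h}_M-2|W(M,L)_\delta|}\beta_M^G(\delta,\mu),
\]
which supplies both the Plancherel majorizer $\beta_M^G(\delta,\mu)$ and the power of $R$, with the extra savings of $R^{-2|W(M,L)_\delta|}$ coming from the vanishing of the Plancherel density along $i\mathfrak{h}_L^\ast$.

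Finally, multiplying these three estimates together gives the announced bound. There is no real obstacle here: all the substantive work has already been done in \eqref{Jcent-gen}, Lemma~\ref{GL1LengthSpectrumGapNewLemma}, and Lemma~\ref{L1NormEstimateLemma}, and Lemma~\ref{cor-central} is simply the combined statement that will be convenient to quote in later sections (e.g.\ when comparing $J_{\rm cent}$ to $J_1$ and absorbing the non-identity central contributions into the allowed error terms of Proposition~\ref{MainCountingResultGLn}). The only minor point to note is that the hypothesis $R\gg 1$ is what is needed in Lemma~\ref{L1NormEstimateLemma}, and it is automatic for all $R$ of interest in our applications.
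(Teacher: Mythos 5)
Your proposal is correct and follows exactly the paper's own argument: insert absolute values into \eqref{Jcent-gen}, bound the count of central $\gamma$ via Lemma~\ref{GL1LengthSpectrumGapNewLemma}~\eqref{general-gamma}, and bound $\|h_R^{\delta,\mu}\|_{L^1(\widehat{\mu}_\infty^{\rm pl})}$ via Lemma~\ref{L1NormEstimateLemma}. Nothing is missing.
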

\begin{proof}
We use Lemma \ref{GL1LengthSpectrumGapNewLemma} \eqref{general-gamma} to estimate the number of $\gamma$ contributing to the second factor in \eqref{Jcent-gen}. We then apply Lemma \ref{L1NormEstimateLemma} to the last factor in \eqref{Jcent-gen}.
\end{proof}

\subsection{Approximation estimates}\label{sec:approx-estimates}

Let us assume that we have at our disposal a system of test functions $f_R^{\delta,\mu}\in\mathcal{H}(G_{\infty}^1)_{cR,\underline{\delta}}$ as in \S\ref{UniformUpperBoundsSubsection} over all $[\underline{\delta},\mu]$, locally integrable when seen as a function of $(g,\mu)\in G^1_{\infty}\times i\h_M^{\ast}$, and whose spherical transforms $h_R^{\delta,\mu}\in\mathcal{PW}_{R,\underline{\delta}}$ given by \eqref{defn-hR} form a normalized system of spectral localizers in the sense of Definition~\ref{defn-localizer}. In this subsection we will show how this data can be used, by an averaging procedure, to construct additional functions in $\mathcal{PW}_{R,\underline{\delta}}$ which smoothly approximate characteristic functions of certain sets. We will moreover construct  test functions that give rise to the latter via the spherical transform.

Specifically, for a set $P$ lying in the class $\mathscr{B}(\underline{\delta})$ defined in \S\ref{sec:sets}, and a parameter $R>0$, we put
\begin{equation}\label{defhP}
h_R^{\delta,P}(\nu)=R^{{}\dim\h_M}\int_P h^{\delta,\mu}_R(\nu)\,\mathrm{d}\mu, \qquad
f_R^{\delta,P}(g)=R^{{} \dim\h_M}\int_P f^{\delta,\mu}_R(g)\,\mathrm{d}\mu.
\end{equation}
Note that if Property (ELM) holds for $\underline{\delta}$ (see Definition \ref{TFhyp}), then it holds using $h_R^{\delta,P}$, for any $P\in\mathscr{B}(\underline{\delta})$, with the upper bound
\begin{equation}\label{averageELM}
J_{\rm error}(\varepsilon_{K_1(\q)}\otimes f_R^{\delta,P}) \ll e^{CR}\norm\q^{n-\theta}\int_P\beta_M^G(\delta,\nu)\,{\rm d}\nu.
\end{equation} 
Here, as in \eqref{DefEqh}, we have denoted $J_{\rm error}=J_{\rm disc}-J_{\rm cent}$.

We now quantify the extent to which $h_R^{\delta,P}$ approximates the characteristic function of $P$. We shall use the notation $P^\circ(\rho)$ and $P^\bullet(\rho)$ from \S\ref{sec:sets}.

\begin{lemma}
\label{GLnTestFunctionEstimates}
Let notations be as above. Then:
\begin{enumerate}
\item\label{first-guy0} For every $\nu\in\mathfrak{h}_{M,\C}^*$, $h_R^{\delta,P}(\nu)\ll e^{R\|{\rm Re}\,\nu\|}$;
\item\label{first-guy1} For every $\nu\in \mathfrak{h}_{M,\C}^*$ such that $i\mIm\nu\notin P^{\bullet}(\rho)$,
\[
h_R^{\delta,P}(\nu)\ll_N e^{R\|{\rm Re}\,\nu\|}(R\rho)^{-N};
\]
\item\label{second-guy} For every $\nu\in i\mathfrak{h}_M^*$, $0\leqslant h_R^{\delta,P}(\nu)\leqslant 1$, and, for every $\rho>0$, $N\in\mathbb{N}$,
\[
h_R^{\delta,P}(\nu)=\begin{cases}
1+\textnormal{O}_N\left((R\rho)^{-N}\right), &\nu\in P^{\circ}(\rho);\\
\textnormal{O}_N\left((R\rho)^{-N}\right), &\nu\notin P^{\bullet}(\rho).
\end{cases}
\]
\end{enumerate}
\end{lemma}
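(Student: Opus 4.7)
The plan rests on three ingredients: the pointwise rapid-decay bound of Definition~\ref{defn-localizer}(3); the normalization $\int_{i\mathfrak{h}_M^*} h_R^{\delta,\mu}(\lambda)\,\mathrm{d}\mu = R^{-\dim\mathfrak{h}_M}$ from a normalized system of localizers; and the $W(A_M)_\delta$-invariance of $P\in\mathscr{B}(\underline{\delta})$, together with the fact that $W(A_M)_\delta$ acts by isometries. Together these permit the symmetrization $\max_w\leqslant\sum_w$ followed by a change of variables $\mu\mapsto w^{-1}\mu$, which reduces integrals over $P$ involving $\|\nu-w\mu\|$ to ones involving $\|\nu-\mu\|$.

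For (1), I would substitute the pointwise estimate of Definition~\ref{defn-localizer}(3) into the definition of $h_R^{\delta,P}$, enlarge the domain of integration from $P$ to all of $i\mathfrak{h}_M^*$, and evaluate the resulting integral via the change of variables $\mu=\nu+R^{-1}x$; for $N>\dim\mathfrak{h}_M$ this reduces to the convergent integral $\int(1+\|x\|)^{-N}\,\mathrm{d}x$, leaving the bound $\ll e^{R\|{\rm Re}\,\nu\|}$. For (2), if ${\rm Im}\,\nu\notin P^\bullet(\rho)$, then for every $\mu\in P$ and every $w\in W(A_M)_\delta$ we have $w\mu\in P$ by $W$-invariance, so $\|{\rm Im}\,\nu-w\mu\|\geqslant\rho$ and hence $\|\nu-w\mu\|\geqslant\rho$. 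I would then split the decay as $(1+R\|\nu-w\mu\|)^{-N-N'}\leqslant(1+R\rho)^{-N}(1+R\|\nu-w\mu\|)^{-N'}$ and apply the argument from (1) to the second factor; the case $R\rho<1$ follows trivially from (1).

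For (3), non-negativity is immediate from Definition~\ref{defn-localizer}(1), and the upper bound $h_R^{\delta,P}(\nu)\leqslant 1$ follows from the normalization of the system after extending the domain of integration from $P$ to $i\mathfrak{h}_M^*$. The case $\nu\notin P^\bullet(\rho)$ follows from (2) with ${\rm Re}\,\nu=0$. For $\nu\in P^\circ(\rho)$, I would write
\[ h_R^{\delta,P}(\nu) = 1 - R^{\dim\mathfrak{h}_M}\int_{i\mathfrak{h}_M^*\setminus P} h_R^{\delta,\mu}(\nu)\,\mathrm{d}\mu, \]
and observe that for $\mu\notin P$ and $\nu\in P^\circ(\rho)$, $W$-invariance of $P$ forces $w\mu\notin P$ for every $w$, so $w\mu\notin B(\nu,\rho)$ and hence $\|\nu-w\mu\|\geqslant\rho$; the same splitting-and-integration argument as in (2) then bounds the remainder by $\ll(R\rho)^{-N}$. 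The only routine technical point is correctly handling the $\max_w$ in Definition~\ref{defn-localizer}(3) through the symmetrization trick, which hinges on $P$, $P^\circ(\rho)$, and $P^\bullet(\rho)$ all being $W(A_M)_\delta$-invariant; I do not anticipate any substantial obstacle.
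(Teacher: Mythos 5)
Your proposal is correct and follows essentially the same route as the paper: substitute the rapid-decay bound of Definition~\ref{defn-localizer}, use the $W(A_M)_\delta$-invariance of $P$ (and of $P^{\circ}(\rho)$, $P^{\bullet}(\rho)$) to absorb the maximum over Weyl translates, and reduce each estimate to the convergent integral of $(1+R\|\nu-\mu\|)^{-N-\dim\mathfrak{h}_M}$ over the region $\|\nu-\mu\|\geqslant\rho$ (or all of $i\mathfrak{h}_M^*$ for part (1)), with part (3) obtained from the normalization by writing $h_R^{\delta,P}(\nu)$ as the full integral minus the contribution of $i\mathfrak{h}_M^*\setminus P$. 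The only cosmetic difference is that you split off the factor $(1+R\rho)^{-N}$ from the exponent whereas the paper integrates in polar coordinates from $R\rho$ to infinity; these are equivalent.
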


\begin{proof}
From \eqref{rapid-decay-item} of Definition~\ref{defn-localizer}, $i\mIm\nu\notin P^{\bullet}(\rho)$ and the $W(A_M)_{\delta}$-invariance of $P$, it follows that
\begin{equation}
\label{UpperBoundhRdP}
\begin{aligned}
h_R^{\delta,P}(\nu)
&\ll_N R^{\dim\mathfrak{h}_M}e^{R\|\mathrm{Re}\,\nu\|}\int_{\| i\mIm\nu-\mu\|\geqslant\rho}(1+R\| i\mIm\nu-\mu\|)^{-N-\dim\mathfrak{h}_M}\,\mathrm{d}\mu\\
&\ll e^{R\|\mathrm{Re}\,\nu\|}\int_{R\rho}^{\infty}(1+t)^{-N-\dim\mathfrak{h}_M}t^{\dim\mathfrak{h}_M-1}\,\mathrm{d}t\ll e^{R\|\mathrm{Re}\,\nu\|}(R\rho)^{-N},
\end{aligned}
\end{equation}
proving \eqref{first-guy1}. The estimate \eqref{first-guy0} follows similarly.

Next, let $\nu\in i\mathfrak{h}_M^*$. The inequality $0\leqslant h_R^{\delta,P}(\nu)\leqslant 1$ follows immediately from the definition of $h_R^{\delta,P}$ and the normalization of the system $h_R^{\delta,\mu}$. Further, if $\nu\in P^{\circ}(\rho)$, then
\[ h_R^{\delta,P}(\nu)=R^{\dim\h_M}\int_{i\h_M^{\ast}}h_R^{\delta,\mu}(\nu)\,\mathrm{d}\mu+\mathrm{O}\bigg(R^{\dim\h_M}\int_{\|\nu-\mu\|\geqslant\rho}h_R^{\delta,\mu}(\nu)\,\mathrm{d}\mu\bigg)
=1+\mathrm{O}_N((R\rho)^{-N}), \]
by the normalization and estimating the remainder as in \eqref{UpperBoundhRdP}. This establishes the first estimate in \eqref{second-guy}; the second one is analogous.
\end{proof}

\begin{lemma}\label{local-sharp}
For $P\in\mathscr{B}(\underline{\delta})$, $R>0$, and $\gamma\in Z(F)$, we have
\[
\deg(\delta)\int_{i\mathfrak{h}_M^*}h^{\delta,P}_R(\nu)\omega_{\delta,\nu}^{-1}(\gamma)\mu_M^G(\delta,\nu){\rm d}\nu
=\deg(\delta)\int_P\omega_{\delta,\nu}^{-1}(\gamma)\mu_M^G(\delta,\nu){\rm d}\nu+\mathrm{O}\big( \partial\vol_R(\delta,P)\big).
\]
\end{lemma}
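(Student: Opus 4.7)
\medskip

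\noindent\textbf{Proof proposal.} The plan is to rewrite the left-hand side as the right-hand side plus the error term
\[
E_\gamma:=\deg(\delta)\int_{i\mathfrak{h}_M^*}\bigl(h^{\delta,P}_R(\nu)-\chi_P(\nu)\bigr)\,\omega_{\delta,\nu}^{-1}(\gamma)\,\mu_M^G(\delta,\nu)\,\mathrm{d}\nu,
\]
and to show that $|E_\gamma|\ll\partial\vol_R(\delta,P)$, uniformly in $\gamma\in Z(F)$. The key point is that, for $\gamma$ central and $\nu\in i\h_M^*$ tempered, the induced representation $\pi_{\delta,\nu}$ is unitary and hence $|\omega_{\delta,\nu}^{-1}(\gamma)|=1$. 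In particular, the factor $\omega_{\delta,\nu}^{-1}(\gamma)$ plays no role in the size estimate and we may absorb it into an absolute value, reducing the problem to bounding
\[
\deg(\delta)\int_{i\mathfrak{h}_M^*}\bigl|h^{\delta,P}_R(\nu)-\chi_P(\nu)\bigr|\,\mu_M^G(\delta,\nu)\,\mathrm{d}\nu.
\]

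Next, I would estimate the integrand $|h_R^{\delta,P}(\nu)-\chi_P(\nu)|$ pointwise using Lemma~\ref{GLnTestFunctionEstimates}~\eqref{second-guy}. On the bulk $P^{\circ}(\rho)$ one has $h_R^{\delta,P}(\nu)=1+\mathrm{O}_N((R\rho)^{-N})$ while $\chi_P(\nu)=1$, and symmetrically outside $P^{\bullet}(\rho)$ both vanish to the same order; on the thin boundary strip $\partial P(\rho)$ both functions are bounded by $1$. This exactly matches the dyadic structure built into $\partial\vol_R(\delta,P)$, so I would decompose
\[
i\h_M^*=\partial P(1/R)\;\cup\;\bigcup_{\ell\geqslant 1}\Bigl(\partial P\bigl((\ell+1)/R\bigr)\setminus\partial P(\ell/R)\Bigr)\;\cup\;\bigl(i\h_M^*\setminus\textstyle\bigcup_{\ell}\partial P(\ell/R)\bigr),
\]
noting that the complement of the union has measure zero since $P$ is bounded. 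On the $\ell$-th shell, every point $\nu$ lies either in $P^{\circ}(\ell/R)$ or outside $P^{\bullet}(\ell/R)$, so Lemma~\ref{GLnTestFunctionEstimates}~\eqref{second-guy} gives $|h_R^{\delta,P}(\nu)-\chi_P(\nu)|\ll_N\ell^{-N}$. On the innermost layer $\partial P(1/R)$ I simply use the trivial bound $|h_R^{\delta,P}-\chi_P|\leqslant 1$.

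Putting these together and dominating $\deg(\delta)\mu_M^G(\delta,\nu)\ll\beta_M^G(\delta,\nu)$ via Lemma~\ref{prop:density}~\eqref{beta1}, I obtain
\[
|E_\gamma|\ll\int_{\partial P(1/R)}\beta_M^G(\delta,\nu)\,\mathrm{d}\nu+\sum_{\ell\geqslant 1}\ell^{-N}\int_{\partial P((\ell+1)/R)}\beta_M^G(\delta,\nu)\,\mathrm{d}\nu\ll\partial\vol_{R,N}(\delta,P),
\]
after a harmless reindexing $\ell\leftrightarrow\ell+1$ in the dyadic sum; the value of $N$ is fixed large enough once and for all, as in Remark~\ref{N-dependence}. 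The argument is essentially a soft convergence/approximation estimate, with no real obstacle beyond matching the dyadic shell decomposition to the definition \eqref{BoundaryTermsDefinitions1} of $\partial\vol_R(\delta,P)$; the use of unitarity of the central character on the tempered axis is what makes the $\gamma$-dependence harmless.
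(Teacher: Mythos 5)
Your proposal is correct and follows essentially the same route as the paper: both decompose $i\mathfrak{h}_M^*$ into the inner strip $\partial P(1/R)$ (trivial bound there) plus dyadic shells on which Lemma~\ref{GLnTestFunctionEstimates}~\eqref{second-guy} gives the $\ell^{-N}$ decay of $h_R^{\delta,P}-\chi_P$, then majorize $\deg(\delta)\mu_M^G(\delta,\nu)$ by $\beta_M^G(\delta,\nu)$ and sum to recover $\partial\vol_{R,N}(\delta,P)$. Your merging of the paper's two shell families $P^{\bullet}(\ell/R,(\ell+1)/R)$ and $P^{\circ}(\ell/R,(\ell+1)/R)$ into the single family $\partial P((\ell+1)/R)\setminus\partial P(\ell/R)$ is only a cosmetic repackaging of the same decomposition.
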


\begin{proof}
We begin by decomposing the integral according to
\begin{equation}\label{P-decomp}
i\mathfrak{h}_M^*=\partial P(1/R)\cup\big(P^c\setminus \partial P(1/R)\big)\cup\big(P\setminus \partial P(1/R)\big).
\end{equation}
The integral over $i\mathfrak{h}_M^*$ in the lemma may therefore be rewritten as $\int_P\omega_{\delta,\nu}^{-1}(\gamma)\mu_M^G(\delta,\nu)\,\mathrm{d}\nu$ plus
\begin{align*}
\text{O}\bigg(\int_{\partial P(1/R)}\mu_M^G(\delta,\nu)\,\mathrm{d}\nu\bigg) &+\sum_{\ell=1}^{\infty}\int_{P^{\bullet}(\ell/R,(\ell+1)/R)}h^{\delta,P}_R(\nu)\omega_{\delta,\nu}^{-1}(\gamma)\mu_M^G(\delta,\nu)\,\mathrm{d}\nu\\
& -\sum_{\ell=1}^{\infty}\int_{P^{\circ}(\ell/R,(\ell+1)/R)}\big(1-h^{\delta,P}_R(\nu)\big)\omega_{\delta,\nu}^{-1}(\gamma)\mu_M^G(\delta,\nu)\,\mathrm{d}\nu.
\end{align*}
Using Lemma~\ref{GLnTestFunctionEstimates} and accounting for the factor $\deg(\delta)$, the three terms above are majorized by
\[
\int_{\partial P(1/R)}\beta_M^G(\delta,\nu)\,{\rm d}\nu+\sum_{\ell=1}^{\infty}\ell^{-N}\bigg(\int_{P^{\bullet}((\ell+1)/R)\setminus P}\beta_M^G(\delta,\nu)\,{\rm d}\nu +\int_{P\setminus P^{\circ}((\ell+1)/R)}\beta_M^G(\delta,\nu)\,{\rm d}\nu \bigg).
\]
The last error term is indeed $\text{O}\big(\partial\vol_R(\delta,P)\big)$, as desired.
\end{proof}

We return to the estimation and evaluation of the central contributions for the functions $f_R^{\delta,P}$.

\begin{lemma}\label{cor-central2}
Let $\underline{\delta}\in\mathcal{D}$ have standard representative $(M,\delta)$ and take $P\in\mathscr{B}(\underline{\delta})$. Let $\q$ be an integral ideal and $R\gg 1$. Then
\begin{enumerate}
\item\label{Jcent1} the central term $J_{\rm cent}(\varepsilon_{K_1(\q)}\otimes f_R^{\delta,P})$ satisfies
\begin{align*}
J_{\rm cent}(\varepsilon_{K_1(\q)}\otimes f_R^{\delta,P})
&\ll \big(1+(R/\log(2+\norm\q))^{r_1+r_2-1}\big)\varphi_n(\q)\\
&\qquad\times\min\Big(\deg(\delta)\int_P\mu_M^G(\delta,\nu)\,{\rm d}\nu+\partial\vol_R(\delta,P),\int_P\beta_M^G(\delta,\nu)\, {\rm d}\nu\Big);
\end{align*}

\item\label{Jcent3} there exist constants $c_2,C_2>0$ such that if $R\leqslant c_2\log(2+\norm\q)$ and $\norm\q>C_2$, then
\[
J_{\rm cent}(\varepsilon_{K_1(\q)}\otimes f_R^{\delta,P})=D_F^{n^2/2}\Delta_F^*(1)\varphi_n(\q)C_M\deg(\delta)\int_P\mu_M^G(\delta,\nu)\,\mathrm{d}\nu+\mathrm{O}\big( \varphi_n(\q)\partial\vol_R(\delta,P)\big).
\]
\end{enumerate}
\end{lemma}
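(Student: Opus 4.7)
The plan is to begin from the expansion of $J_{\rm cent}$ in \eqref{central-gammas} together with the Plancherel inversion formula \eqref{plancherel-gammas} applied to $f_R^{\delta,P}$, which rewrites
\[
J_{\rm cent}(\varepsilon_{K_1(\q)}\otimes f_R^{\delta,P}) = {\rm vol}(\GL_n)\varphi_n(\q)\sum_{\substack{\gamma\in Z(F)\cap K_1(\q)\\\gamma_{\infty}\in \bm{G}_{\infty,\leqslant R}^1}}\deg(\delta)\int_{i\mathfrak{h}_M^*} h_R^{\delta,P}(\nu)\omega_{\delta,\nu}^{-1}(\gamma)\mu_M^G(\delta,\nu)\,\mathrm{d}\nu.
\]

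For part \eqref{Jcent1}, I would derive two complementary uniform bounds on $|f_R^{\delta,P}(\gamma)|$. Substituting the conclusion of Lemma~\ref{local-sharp} and taking absolute values (using $|\omega_{\delta,\nu}(\gamma)|=1$ for $\nu \in i\h_M^*$) yields the first bound
\[
|f_R^{\delta,P}(\gamma)| \ll \deg(\delta)\int_P \mu_M^G(\delta,\nu)\,\mathrm{d}\nu + \partial\vol_R(\delta,P).
\]
The second bound exploits the defining factorization $h_R^{\delta,P}(\nu) = R^{\dim\h_M}\int_P h_R^{\delta,\mu}(\nu)\,\mathrm{d}\mu$ from \eqref{defhP}: swapping the order of integration and invoking Lemma~\ref{L1NormEstimateLemma} pointwise in $\mu \in P$ (with $L=M$, so that $|W(M,L)_\delta|=0$ and the exceptional power of $R$ is trivial) produces
\[
|f_R^{\delta,P}(\gamma)| \leqslant R^{\dim\h_M}\int_P \|h_R^{\delta,\mu}\|_{L^1(\widehat{\mu}_\infty^{\rm pl})}\,\mathrm{d}\mu \ll \int_P \beta_M^G(\delta,\nu)\,\mathrm{d}\nu,
\]
crucially with no boundary contribution. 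Taking the minimum of the two bounds and multiplying by the count of contributing central elements furnished by Lemma~\ref{GL1LengthSpectrumGapNewLemma}\eqref{general-gamma} then yields \eqref{Jcent1}.

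For part \eqref{Jcent3}, the plan is to apply Lemma~\ref{GL1LengthSpectrumGapNewLemma} in the range $R \leqslant c_2\log(2+\norm\q)$ with $\norm\q > C_2$, where it guarantees that the sum over central elements reduces to the single term $\gamma = 1$. Since $\omega_{\delta,\nu}(1)=1$, Lemma~\ref{local-sharp} gives the exact identity $f_R^{\delta,P}(1) = \deg(\delta)\int_P \mu_M^G(\delta,\nu)\,\mathrm{d}\nu + \mathrm{O}(\partial\vol_R(\delta,P))$, and multiplying by ${\rm vol}(\GL_n)\varphi_n(\q)$ completes the argument. The only point requiring genuine care is the absence of a boundary contribution in the second bound of part \eqref{Jcent1}: this is secured by averaging $L^1$-norms of spectral localizers over $\mu \in P$ rather than attempting to bound the pointwise mass of $h_R^{\delta,P}$ near $\partial P$, a maneuver made possible by the factorization \eqref{defhP}.
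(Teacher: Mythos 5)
Your proposal is correct and follows essentially the same route as the paper: the second bound in part (1) comes from averaging the spectral localizer bound (Lemma~\ref{L1NormEstimateLemma} with $L=M$, equivalently Lemma~\ref{cor-central}) over $\mu\in P$ via the definition \eqref{defhP}, the first bound and part (2) come from Lemma~\ref{local-sharp} combined with the count, respectively the reduction to $\gamma=1$, supplied by Lemma~\ref{GL1LengthSpectrumGapNewLemma}. The only difference is presentational — you bound $|f_R^{\delta,P}(\gamma)|$ first and then multiply by the number of central elements, whereas the paper averages the already-assembled bound of Lemma~\ref{cor-central} — and this changes nothing.
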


\begin{proof}
The second bound in \eqref{Jcent1} follows from an application of Lemma \ref{cor-central}, recalling the definitions \eqref{defhP}. To prove \eqref{Jcent3} we apply the third part of Lemma \ref{GL1LengthSpectrumGapNewLemma} to reduce \eqref{central-gammas} to the identity contribution, and then use the integral representation \eqref{plancherel-gammas} and Lemma \ref{local-sharp}. The first bound in \eqref{Jcent1} is analogous, accounting for contributions from the units in \eqref{central-gammas}, the number of which is bounded in the first part of Lemma~\ref{GL1LengthSpectrumGapNewLemma}.
\end{proof}

\subsection{Application of the Paley--Wiener theorem of Clozel--Delorme}\label{CD-appl}

Let $h_R^{\delta,\mu}\in\mathcal{PW}_{R,\underline{\delta}}$ be as in Definition \ref{defn-localizer}. In this section, we describe how one can 
use the results of Harish-Chandra and Zuckerman \cite[Prop 6.6.7]{Vogan1981} and the Paley--Wiener theorem of Clozel--Delorme \cite{ClozelDelorme1984}, to find test functions $f_R^{\delta,\mu}\in\mathcal{H}(G_\infty^1)_{R,\underline{\delta}}$ which recover $h_R^{\delta,\mu}$ through the relation \eqref{defn-hR}. We present the construction here primarily for the possible interest in its $\underline{\delta}$-aspect. We emphasize, however, that in the cases in which we verify Property (ELM) (that is, in Theorem~\ref{master}), we do not need this construction or rely on the theorem of Clozel--Delorme; instead the test functions $f_R^{\delta,\mu}$ are constructed via explicit spherical inversion (including for non-trivial $K$-types when $n\leqslant 2$) in Section \ref{PW}.

From \cite[Prop 6.6.7]{Vogan1981} there are upper triangular (with respect to the partial ordering $\prec$) arrays $(n(\underline{\sigma},\underline{\sigma}'))_{\underline{\sigma},\underline{\sigma}'\in\mathcal{D}}$ and $(m(\underline{\sigma},\underline{\sigma}'))_{\underline{\sigma},\underline{\sigma}'\in\mathcal{D}}$, with integer coefficients and (multiplied as matrices) inverse to one another, such that for any $f\in\mathcal{H}(G_\infty^1)$ we have
\begin{equation}\label{Vogan-formulae}
{\rm tr}\, \pi(\sigma,\nu)(f)=\sum_{\underline{\sigma}'}n(\underline{\sigma},\underline{\sigma}'){\rm tr}\, \pi_{\sigma',\nu}(f)\quad\text{ and }\quad
\textrm{tr}\, \pi_{\sigma,\nu}(f)=\sum_{\underline{\sigma}'}m(\underline{\sigma},\underline{\sigma}'){\rm tr}\, \pi(\sigma',\nu)(f).
\end{equation}
Note that both sums in \eqref{Vogan-formulae} consist of finitely many terms for every $f\in\mathcal{H}(G_{\infty}^1)$. With the first formula in \eqref{Vogan-formulae} in mind, we set
\[
H_R^{\delta,\mu}(\sigma,\nu)=\sum_{\underline{\sigma}'}n(\underline{\sigma},\underline{\sigma}')h_R^{\delta,\mu}(\sigma',\nu).
\]
Note that, since $h_R^{\delta,\mu}$ is supported on $\delta$ in the first variable, we have $H_R^{\delta,\mu}(\sigma,\nu)=n(\underline{\sigma},\underline{\delta}) h_R^{\delta,\mu}(\delta,\nu)$. In particular, $H_R^{\delta,\mu}$ is supported in the first variable on $\underline{\sigma}\prec\underline{\delta}$. We deduce that $H_R^{\delta,\mu}\in\mathcal{PW}_R$ and hence there exists, by the theorem of Clozel--Delorme, a function $f_R^{\delta,\mu}\in\mathcal{H}(G_\infty^1)_R$ such that $\textrm{tr}\,\pi(\sigma,\nu)(f_R^{\delta,\mu})=H_R^{\delta,\mu}(\sigma,\nu)$. The second formula in \eqref{Vogan-formulae}, when applied to $f=f_R^{\delta,\mu}$, then yields
\[
\textrm{tr}\, \pi_{\sigma,\nu}(f_R^{\delta,\mu})=\sum_{\underline{\sigma}'}m(\underline{\sigma},\underline{\sigma}')H_R^{\delta,\mu}(\sigma',\nu)=h_R^{\delta,\mu}(\delta,\nu)\sum_{\underline{\sigma}'}m(\underline{\sigma},\underline{\sigma}')n(\underline{\sigma}',\underline{\delta})=h_R^{\delta,\mu}(\delta,\nu)\mathbf{1}_{\underline{\sigma}=\underline{\delta}},
\]
showing that $f_R^{\delta,\mu}\in\mathcal{H}(G_\infty^1)_{R,\underline{\delta}}$ verifies the relation \eqref{defn-hR}.

\section{Bounding the discrete and exceptional spectrum}\label{ExceptionalSpectrumSection}

The goal of this section is to provide bounds on two (similarly defined) exponentially weighted sums over the discrete spectrum. Throughout we shall assume Property {\rm (ELM)}, introduced in Definition \ref{TFhyp}. 

Let $\underline{\delta}\in\mathcal{D}$ have standard representative $(M,\delta)$. Let $\mu\in i\h_L^*$ for some $L\in\mathcal{L}_{\infty}(\delta)$. Let $\q$ be an integral ideal. For a real parameter $R>0$ let
\begin{equation}\label{def-DR}
D_R(\q,\underline{\delta},\mu)=\sum_{\substack{\pi\in \Pi_{\rm disc}(\bm{G}(\A_F)^1)_{\underline{\delta}}\\ i\mIm\nu_\pi\in B_M(\mu,1/R)}} \dim V_{\pi_f}^{K_1(\q)}e^{R\|{\rm Re}\,\nu_{\pi}\|}.
\end{equation}

The following result will be used in the proof of Proposition \ref{thm:general-n-stronger}. Recall the subset $W(M,L)_\delta$ from \S\ref{sec:herm}. 

\begin{prop}\label{comp-mu}
Let notations be as above. Assume that Property {\rm (ELM)} holds with respect to $\underline{\delta}$. Then there are constants $C\geqslant 0$, $\theta>0$ such that, for $R\geqslant 1$,
\[
D_R(\q,\underline{\delta}, \mu)\ll \big(R^{-2|W(M,L)_\delta|}+e^{CR}\norm\q^{-\theta}\big)R^{-\dim \h_M}\varphi_n(\q)\beta_M^G(\delta,\mu).
\]
In particular, there is $c>0$ such that for $1\leqslant  R\leqslant c\log(2+\norm\q)$,
\[ 
D_R(\q,\underline{\delta}, \mu)\ll R^{-\dim \h_M-2|W(M,L)_\delta|}\varphi_n(\q)\beta_M^G(\delta,\mu).
\]
\end{prop}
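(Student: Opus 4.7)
The plan is to majorize $D_R(\q,\underline{\delta},\mu)$ via the Arthur trace formula, using a carefully constructed positive-definite archimedean test function $\phi_R^{\delta,\mu}\in\mathcal{H}(G_\infty^1)$. The key properties I would require of $\phi_R^{\delta,\mu}$ are: (i) support in $G_{\infty,\leqslant R}^1$; (ii) $\mathrm{tr}\,\pi(\phi_R^{\delta,\mu})\geqslant 0$ for all unitary $\pi$; (iii) the pointwise lower bound $\mathrm{tr}\,\pi_{\delta,\xi}(\phi_R^{\delta,\mu})\gg e^{R\|\mathrm{Re}\,\xi\|}$ for unitarizable $\pi_{\delta,\xi}$ with $\mathrm{Im}\,\xi\in B_M(\mu,1/R)$; and (iv) the Plancherel majorization $\|\mathrm{tr}\,(\cdot)(\phi_R^{\delta,\mu})\|_{L^1(\widehat{\mu}_\infty^{\mathrm{pl}})}\ll R^{-\dim\mathfrak{h}_M-2|W(M,L)_\delta|}\beta_M^G(\delta,\mu)$. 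A natural template is a convolution $g\ast g^\ast$ of Paley--Wiener functions adapted to $(\delta,\mu)$: positivity (ii) is then automatic, while (iv) will be enforced using Lemma~\ref{prop:density}\eqref{beta3}, which captures the vanishing of the Plancherel density along $i\mathfrak{h}_L^\ast\subseteq i\mathfrak{h}_{\delta,\mathrm{sing}}^\ast$.

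Granting such $\phi_R^{\delta,\mu}$, properties (ii) and (iii) will yield
\[ D_R(\q,\underline{\delta},\mu)\ll J_{\mathrm{disc}}(\varepsilon_{K_1(\q)}\otimes\phi_R^{\delta,\mu})=J_{\mathrm{cent}}(\varepsilon_{K_1(\q)}\otimes\phi_R^{\delta,\mu})+J_{\mathrm{error}}(\varepsilon_{K_1(\q)}\otimes\phi_R^{\delta,\mu}). \]
I would then bound the central contribution as in Lemmas~\ref{cor-central} and~\ref{cor-central2}: combining Plancherel inversion with (iv) and the unit count of Lemma~\ref{GL1LengthSpectrumGapNewLemma}, this gives $J_{\mathrm{cent}}\ll R^{-\dim\mathfrak{h}_M-2|W(M,L)_\delta|}\varphi_n(\q)\beta_M^G(\delta,\mu)$, which accounts for the first term of the claimed bound. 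The error term will be handled by Property~(ELM), invoked for $\phi_R^{\delta,\mu}$ (after domination by a suitable sum of standard spectral localizers of scale $R$ matching Definition~\ref{TFhyp}): this yields $J_{\mathrm{error}}\ll e^{CR}\norm\q^{n-\theta}R^{-\dim\mathfrak{h}_M}\beta_M^G(\delta,\mu)$, which, via $\norm\q^n\asymp\varphi_n(\q)$, supplies the second term.

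The sharper second statement follows by taking $c<\theta/C$: in the range $1\leqslant R\leqslant c\log(2+\norm\q)$ one has $e^{CR}\norm\q^{-\theta}\leqslant\norm\q^{-\theta/2}$, which is absorbed by the first term. For $L=M$ with $|W(M,M)_\delta|=0$ the factor $R^{-2|W(M,L)_\delta|}=1$ and the bound is just the Plancherel majorization, while for $L\supsetneq M$ we have $|W(M,L)_\delta|\geqslant 1$ and the same absorption applies for large enough $\norm\q$ (the finitely many small $\q$ are trivially admissible).

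The hard part will be the actual construction of $\phi_R^{\delta,\mu}$ satisfying (i)--(iv) simultaneously, in particular securing the pointwise exponential lower bound (iii) uniformly across the complex unitarizable locus in $B_M(\mu,1/R)$. A naively built spectral localizer at $\mu$ tends to overcount contributions from strata $i\mathfrak{h}_{L'}^\ast$ with $L'\in\mathcal{L}_\infty(\delta)\setminus\{L\}$, which also carry exceptional $\pi$; separating these ``good'' and ``bad'' contributions will require exploiting the lattice structure of $\mathcal{L}_\infty(\delta)$ recorded in \S\ref{sec:herm}, together with careful analytic control of the characters $\mathrm{tr}\,\pi_{\delta,\xi}(g)$ along $\mathfrak{h}_{\delta,\mathrm{un}}^\ast$.
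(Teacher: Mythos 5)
Your proposal follows essentially the same route as the paper: the paper's Lemma~\ref{Lem1} constructs exactly the function you describe (as a sum of squares of Weyl-averaged Paley--Wiener functions re-centered at the points $\mu_{M'}$, $M'\in\mathcal{L}_\infty(\delta)$), and the reduction $D_R\ll J_{\rm disc}=J_{\rm cent}+J_{\rm error}$ with Lemma~\ref{cor-central} for the central term and Property (ELM) for the error term is precisely the argument of \S\ref{reduction-to-test-functions-subsection}. Two minor calibrations: the pointwise lower bound in (iii) can only be achieved with exponent $e^{cR\|\mathrm{Re}\,\xi\|}$ for some $0<c<1$ (not $c=1$), which costs only a harmless rescaling $R\mapsto R/c$; and you should note that the bulk of the work — your acknowledged ``hard part'' — is the good/bad separation of Weyl translates via the tubular-neighborhood descent on $\mathcal{L}_\infty(\delta)$ (Lemma~\ref{nightmare}) together with the steepest-descent asymptotics for the Paley--Wiener building block (Lemmas~\ref{asymp}--\ref{SPL-inner}), which your sketch correctly identifies but does not carry out.
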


\begin{remark}
For $n=1$, the quantity $D_R(\q,\underline{\delta}, \mu)$ simply counts the number of Hecke characters $\chi$ of conductor $\q$, of discrete archimedean parameter $\delta$, and continuous parameter $\|\nu_\chi-\mu\|\leqslant 1/R$. The proof of Proposition~\ref{comp-mu} bounds this cardinality by $R^{-(r_1+r_2-1)}(1+(R/\log(2+\norm\q))^{r_1+r_2-1})\varphi_1(\q)$ in fact \textit{for all} $R\geqslant 1$.
\end{remark}

Let $\underline{\delta}\in\mathcal{D}$ and $P\in\mathscr{B}(\underline{\delta})$. We shall also need the following sum
\begin{equation}\label{KR-def}
K_R(\q,\underline{\delta},P)=\sum_{\substack{\pi\in \Pi_{\rm disc}(\bm{G}(\A_F)^1)_{\underline{\delta}}\\\nu_{\pi}\not\in i\mathfrak{h}_M^{\ast}}}\dim V_{\pi_f}^{K_1(\q)}e^{R\|{\rm Re}\, \nu_{\pi}\|}\big(1+R\cdot d(i\mIm\nu_{\pi},P)\big)^{-N}.
\end{equation}
Note that, when compared with $D_R(\q,\underline{\delta},\mu)$, the above sum is now over $\pi$ with $\pi_\infty$ non-tempered, and the membership of $i\mIm\nu_{\pi}$ in $B_M(\mu,1/R)$ is replaced by power decay outside of $P$.  As in Remark \ref{N-dependence}, we have suppressed the dependence on $N$ in this sum, but will occasionally revive it for clarity, writing $K_{R,N}(\q,\underline{\delta},P)$.

The following result will be used in Section \ref{sec:smooth-to-sharp-tempered}, in the proof of Proposition~\ref{MainCountingResultGLn}.

\begin{prop}\label{KR}
Let notations be as above. Assume that Property {\rm (ELM)} holds with respect to $\underline{\delta}$. Then there are constants $C\geqslant 0$, $\theta>0$ such that, for $R\geqslant 1$,
\[
K_R(\q,\underline{\delta},P)\ll \big(1+R^{2|W(A_M)_{\delta}|}e^{CR}\norm\q^{-\theta}\big)\varphi_n(\q) {\rm vol}_R^\star(\delta,P).
\]
In particular, there is $c>0$ such that for $1\leqslant  R\leqslant c\log(2+\norm\q)$,
\[ 
K_R(\q,\underline{\delta},P)\ll \varphi_n(\q) {\rm vol}_R^\star(\delta,P). 
\]
\end{prop}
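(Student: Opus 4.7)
The strategy is to partition the non-tempered contributions to $K_R(\q,\underline{\delta},P)$ according to the singular subspace $i\mathfrak{h}_L^*$ on which the imaginary part $\mathrm{Im}\,\xi_\pi$ lies, then reduce each piece to Proposition~\ref{comp-mu} by a Fubini argument. Since any $\pi$ contributing to \eqref{KR-def} satisfies $\xi_\pi\in\mathfrak{h}_{\delta,\mathrm{un}}^{*}\subseteq\mathfrak{h}_{\delta,\mathrm{hm}}^{*}$ and $\mathrm{Re}\,\xi_\pi\neq 0$, the description of the Hermitian dual in \S\ref{sec:herm} forces $\mathrm{Im}\,\xi_\pi\in\mathfrak{h}_{\delta,\mathrm{sing}}^{*}=\bigcup_{L\in\mathcal{L}_\infty(\delta),\,L\supsetneq M}\mathfrak{h}_L^{*}$. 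A union bound over such $L$ reduces the proof to bounding, for each $L\in\mathcal{L}_\infty(\delta)$ with $L\supsetneq M$, the partial sum $K_R^L(\q,\underline{\delta},P)$ obtained by further restricting the summation in \eqref{KR-def} to those $\pi$ with $\mathrm{Im}\,\xi_\pi\in i\mathfrak{h}_L^{*}$; the desired estimate will follow upon recognizing the individual bounds as $\varphi_n(\q)\overline{\vol}_{R,L}(\delta,P)$ and summing as in \eqref{BoundaryTermsDefinitions3}.

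For the smearing step, fix such an $L$ and observe that, since $i\mathfrak{h}_L^{*}\subset i\mathfrak{h}_M^{*}$ inherits its norm from $i\mathfrak{h}_M^{*}$, the set $\{\mu\in i\mathfrak{h}_L^{*}:\mathrm{Im}\,\xi_\pi\in B_M(\mu,1/R)\}$ is precisely the ball $B_L(\mathrm{Im}\,\xi_\pi,1/R)$, of volume $\asymp R^{-\dim\mathfrak{h}_L}$. On this ball the triangle inequality yields $(1+R\cdot d(\mathrm{Im}\,\xi_\pi,P))\leqslant 2(1+R\cdot d(\mu,P))$, so by Fubini
\[
K_R^L(\q,\underline{\delta},P)\ll R^{\dim\mathfrak{h}_L}\int_{i\mathfrak{h}_L^{*}}D_R(\q,\underline{\delta},\mu)\bigl(1+R\cdot d(\mu,P)\bigr)^{-N}\mathrm{d}\mu,
\]
with the standard adjustment of $N$ absorbed in the implicit constant.

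Now apply Proposition~\ref{comp-mu} at each $\mu\in i\mathfrak{h}_L^{*}$ (where the relevant Levi is this $L$) to obtain
\[
D_R(\q,\underline{\delta},\mu)\ll \bigl(R^{-2|W(M,L)_\delta|}+e^{CR}\norm\q^{-\theta}\bigr)R^{-\dim\mathfrak{h}_M}\varphi_n(\q)\beta_M^G(\delta,\mu).
\]
Since $R^{\dim\mathfrak{h}_L-\dim\mathfrak{h}_M}=R^{-\mathrm{codim}_{\mathfrak{h}_M}(\mathfrak{h}_L)}$, inserting this bound and comparing with definition \eqref{BoundaryTermsDefinitions2} immediately gives
\[
K_R^L(\q,\underline{\delta},P)\ll \bigl(1+R^{2|W(M,L)_\delta|}e^{CR}\norm\q^{-\theta}\bigr)\varphi_n(\q)\overline{\vol}_{R,L}(\delta,P).
\]
Because $W(M,L)_\delta\subseteq W(A_M)_\delta$ and $R\geqslant 1$, the exponent $2|W(M,L)_\delta|$ may be replaced by $2|W(A_M)_\delta|$. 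Summing over $L\in\mathcal{L}_\infty(\delta)\setminus\{M\}$ and invoking \eqref{BoundaryTermsDefinitions3} yields the first bound. For $1\leqslant R\leqslant c\log(2+\norm\q)$ with $c$ small enough that $Cc<\theta$, the factor $R^{2|W(A_M)_\delta|}e^{CR}\norm\q^{-\theta}$ is $\ll 1$, producing the second bound.

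\textbf{Main obstacle.} The crux is matching the exponent of $R$ in the volume integral, which requires the somewhat delicate singular-vanishing refinement of Proposition~\ref{comp-mu} (the factor $R^{-2|W(M,L)_\delta|}$) rather than its generic form; this is precisely the strengthening that accounts for the extra decay of the Plancherel density on $i\mathfrak{h}_L^{*}$ and is what makes $\overline{\vol}_{R,L}(\delta,P)$ the correct majorizer. The only other point requiring care is the transfer of the weight $(1+R\cdot d(\cdot,P))^{-N}$ from $\mathrm{Im}\,\xi_\pi$ to $\mu$ under Fubini, which is handled by the triangle inequality and a harmless increase of~$N$.
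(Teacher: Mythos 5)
Your proof is correct and is essentially the paper's own argument: the paper likewise decomposes the non-tempered spectrum over the singular Levis $L\in\mathcal{L}_\infty(\delta)$, smears over $1/R$-balls in $i\mathfrak{h}_L^*$ by Fubini against the weight $(1+R\,d(\mu,P))^{-N}$, and bounds the resulting ball-counts using exactly the ingredients underlying Proposition~\ref{comp-mu} (the test functions of Lemma~\ref{Lem1}, Property (ELM), and Lemma~\ref{cor-central}), before summing to recognize $\vol_R^{\star}(\delta,P)$. Invoking Proposition~\ref{comp-mu} as a black box rather than re-running its proof inline is a legitimate and slightly cleaner packaging of the same argument (and your triangle-inequality display is quoted in the reverse direction from the one you actually use, but both directions hold, so nothing is lost).
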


As will become clear shortly, the bulk of the work necessary to prove Propositions \ref{comp-mu} and \ref{KR} is the construction of appropriate test functions. This turns out to be a highly non-trivial analytic problem.

\subsection{Reduction to test functions}\label{reduction-to-test-functions-subsection}

Let $\underline{\delta}\in\mathcal{D}$ have standard representative $(M,\delta)$. Recall the $\delta$-Hermitian dual from \S\ref{sec:herm}, and let $\mu\in i\h_M^*$ and $R>0$. 

We grant ourselves momentarily the functions $H_R^{\delta,\mu}\in \mathcal{PW}_{R,\underline{\delta}}$ in Lemma \ref{Lem1} and use them to prove Propositions \ref{comp-mu} and \ref{KR}. These are linear combinations with uniformly bounded coefficients of certain spectral localizers $h_R^{\delta,\mu_{M'}}$ ($M'\supseteq M$). Invoking Property (ELM) on each of them and forming the same linear combination of the resulting test functions $f_R^{\delta,\mu_{M'}}$, we obtain a test function we denote by $F_R^{\delta,\mu}\in\mathcal{H}(G_\infty^1)_{c'R,\underline{\delta}}$ (with $c'>0$ depending on $n$, $F$ only), such that $\tr\pi_{\delta,\nu}(F_R^{\delta,\mu})=H_R^{\delta,\mu}(\nu)$ and, for every integral ideal $\q\subseteq\mathcal{O}_F$,  $J_{\mathrm{error}}(\varepsilon_{K_1(\q)}\otimes f_R^{\delta,\mu_{M'}})$ and the same quantity for $F_R^{\delta,\mu}$ verify the upper bound specified in Property (ELM).

\begin{proof}[Proof of Proposition \ref{comp-mu}.] Recall the definition of $J_{\rm disc}$ from \eqref{def:J-Eis}--\eqref{defn:J-spec-G}. Properties \eqref{2} and \eqref{3} of Lemma \ref{Lem1} show that there are constants $C,c>0$ such that
\begin{equation}\label{DR-bound}
D_{cR}(\q,\underline{\delta},\mu)\leqslant C^{-1}J_{\rm disc}(\varepsilon_{K_1(q)}\otimes F_R^{\delta,\mu}).
\end{equation}
For each of the spectral localizers $h_R^{\delta,\mu_{M'}}$ in Property \eqref{new3} of Lemma \ref{Lem1}, we apply the decomposition \eqref{DefEqh} to $J_{\rm disc}(\varepsilon_{K_1(\q)}\otimes f_R^{\delta,\mu_{M'}})$. For $n=1$, $J_{\rm error}=J_{\rm disc}-J_{\rm cent}=0$ trivially; this is the Poisson summation formula. For $n>1$,
we estimate $J_{\rm error}$ using Property {\rm (ELM)}. We bound $J_{\rm cent}$ using Lemma \ref{cor-central}. Note that $\beta_M^G(\delta,\mu_{M'})\asymp\beta_M^G(\delta,\mu)$ in light of $\|\mu^{M'}\|\ll 1$. This proves Proposition \ref{comp-mu}, noting that the bound of Lemma~\ref{cor-central} is dominated by the first term in the upper bound of Proposition~\ref{comp-mu} for $R\ll\log(2+\norm\q)$ and by the second term otherwise.
\end{proof}

\begin{proof}[Proof of Proposition \ref{KR}.] We now let
\[
J_{\rm temp}(\phi)=\sum_{\substack{\pi\subset \Pi_{\rm disc}(\bm{G}(\A)^1)\\ \pi_\infty \textrm{ temp} }}{\rm tr}\,\pi(\phi)
\]
be the contribution to the discrete automorphic spectrum arising from $\pi\simeq\pi_f\otimes \pi_\infty$ for which $\pi_\infty$ is tempered, and write $J_{\rm comp}(\phi)=J_{\rm disc}(\phi)-J_{\rm temp}(\phi)$ as in \eqref{sec3.2-defn-J-comp}. Note that the restriction of $J_{\rm comp}$ to test functions $\varepsilon_{K_1(q)}\otimes \mathcal{H}(G_\infty^1)_{\underline{\delta}}$ is supported on discrete automorphic representations $\pi\simeq\pi_f\otimes\pi_\infty$ for which $V_{\pi_f}^{K_1(q)}\neq 0$ and $\pi_\infty$ is of the form $\pi_{\delta,\nu_\pi}$ with ${\rm Re}\, \nu_\pi\neq 0$. More precisely, in view of the Hermitian structure of $\pi_\infty$, the continuous parameter $\nu_\pi$ must lie in the $\delta$-singular subset $\mathfrak{h}_{\delta,{\rm sing}}^*$ of $\h_{M,\C}^*$, defined in \eqref{delta-sing}. We consider
\begin{equation}\label{sum-int}
\sum_{\substack{L\in\mathcal{L}_{\infty}(\delta)\\L\neq M}}R^{{}\dim \h_L}\int_{i\h_L^*}J_{\rm comp}(\varepsilon_{K_1(q)}\otimes F_R^{\delta,\mu})(1+d(\mu,P)\cdot R)^{-N}\,{\rm d}\mu.
\end{equation}
On one hand, from Lemma \ref{Lem1}, the sum-integral in \eqref{sum-int} is bounded below by
\begin{align*}
C&\sum_{\substack{L\in\mathcal{L}_{\infty}(\delta)\\L\neq M}}R^{{}\dim \h_L}\int_{i\h_L^*}(1+d(\mu,P)\cdot R)^{-N}\sum_{\substack{\| i\mIm\nu_\pi-\mu\|\leqslant R^{-1}\\ \nu_{\pi}\not\in i\h_M^{\ast},\,i\mIm\nu_\pi\in i\h_L^*}}\dim V_{\pi_f}^{K_1(\q)}e^{cR\|{\rm Re}\,\nu_\pi\|}\, {\rm d}\mu\\
&=C\sum_{\substack{L\in\mathcal{L}_{\infty}(\delta)\\L\neq M}}R^{{}\dim \h_L}\sum_{\substack{\nu_{\pi}\not\in i\h_M^{\ast}\\ i\mIm\nu_\pi\in i\h_L^*}}\dim V_{\pi_f}^{K_1(\q)}e^{cR\|{\rm Re}\,\nu_\pi\|}\int\limits_{\substack{\mu\in i\h_L^*\\ \|\mu-i\mIm\nu_\pi\|\leqslant R^{-1}}}(1+d(\mu,P)\cdot R)^{-N}\, {\rm d}\mu\\
&\asymp \sum_{\substack{L\in\mathcal{L}_{\infty}(\delta)\\L\neq M}}\sum_{\substack{\nu_{\pi}\not\in i\h_M^{\ast}\\i\mIm\nu_\pi\in i\h_L^*}}\dim V_{\pi_f}^{K_1(\q)}e^{cR\|{\rm Re}\,\nu_\pi\|}(1+d(i\mIm\nu_\pi,P)\cdot R)^{-N},
\end{align*}
where the automorphic sums are over $\pi\in \Pi_{\rm disc}(\bm{G}(\A_F)^1)_{\underline{\delta}}$. The last quantity is simply $K_{cR}(\q,\underline{\delta},P)$. On the other hand, Property \eqref{2} of Lemma \ref{Lem1} ensures that 
\[
J_{\rm comp}(\varepsilon_{K_1(q)}\otimes F_R^{\delta,\mu})\leqslant J_{\rm comp}(\varepsilon_{K_1(q)}\otimes F_R^{\delta,\mu})+J_{\rm temp}(\varepsilon_{K_1(q)}\otimes F_R^{\delta,\mu})=J_{\rm disc}(\varepsilon_{K_1(q)}\otimes F_R^{\delta,\mu}).
\]
As in the proof of Proposition \ref{comp-mu}, we now use Property (ELM) and Lemma \ref{cor-central}, along with the fact that $\beta_M^G(\delta,\mu_{M'})\asymp\beta_M^G(\delta,\mu)$ for $\|\mu^{M'}\|\ll 1$ to bound \eqref{sum-int} by
\[
\varphi_n(\q)\sum_{\substack{L\in\mathcal{L}_{\infty}(\delta)\\L\neq M}}
R^{-{\rm codim}_{\h_M}(\h_L)}\big(R^{-2|W(M,L)_\delta|}+e^{CR}\norm\q^{-\theta}\big)\int_{i\h_L^*}\beta^G_M(\delta,\mu)(1+d(\mu,P)\cdot R)^{-N}\,{\rm d}\mu.
\]
Recalling the definition of ${\rm vol}_R^\star(\delta,P)$ in \eqref{BoundaryTermsDefinitions2} and \eqref{BoundaryTermsDefinitions3}, the terms involving $R^{-2|W(M,L)_{\delta}|}$ in the above quantity add up exactly to $\varphi_n(\q) {\rm vol}_R^\star(\delta,P)$. Using $W(M,L)_{\delta}\subseteq W(A_M)_{\delta}$ to bound the remaining terms and putting all estimates together completes the proof of Proposition \ref{KR}.
\end{proof}

\subsection{Existence of test functions}\label{existence-test-functions-subsection}

The main result of this subsection is the following lemma, which proves the existence of test functions used in \S\ref{reduction-to-test-functions-subsection} to prove Propositions~\ref{comp-mu} and \ref{KR}. This delicate construction, in turn, also crucially relies on three results of geometric and analytic nature, which we prove as Lemmata~\ref{nightmare}, \ref{NSA}, and \ref{asymp}.

We recall from \S\ref{sec:herm} the $\delta$-unitary and -Hermitian duals $\mathfrak{h}_{\delta,{\rm un}}^*$ and $\h_{\delta,{\rm hm}}^*$.

\begin{lemma}\label{Lem1}
Let $\underline{\delta}\in\mathcal{D}$ have standard representative $(M,\delta)$, and let $\mu\in i\h_M^*$ and $R\geqslant 1$. There is $H_R^{\delta,\mu}\in\mathcal{PW}_{R,\underline{\delta}}$ verifying the following properties:
\begin{enumerate}
\item\label{2} $H_R^{\delta,\mu}\geqslant 0$ on $\mathfrak{h}_{\delta,{\rm hm}}^*$;
\item\label{new3} $H_R^{\delta,\mu}$ is a linear combination, with coefficients bounded independently of $\delta$ or $\mu$, of spectral localizers $h_R^{\delta,\mu_{M'}}$ over $M'\supseteq M$ such that $\|\mu^{M'}\|\ll 1$;
\item\label{3} There are constants $C>0$ and $0<c<1$ such that for all $\nu\in\h_{\delta, {\rm un}}^{\ast}$ satisfying
\begin{equation}\label{range1}
\|i\mIm\nu-\mu\|\leqslant R^{-1}
\end{equation}
we have $H_R^{\delta,\mu}(\nu)\geqslant Ce^{c R\|{\rm Re}\,\nu\|}$.
\end{enumerate}
\end{lemma}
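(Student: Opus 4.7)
The plan is to construct $H_R^{\delta,\mu}$ explicitly as
\[
H_R^{\delta,\mu}(\xi)=\sum_{\substack{M'\in\mathcal{L}_\infty(\delta),\ M'\supseteq M\\ \|\mu^{M'}\|\ll 1}}c_{M'}\,h_R^{\delta,\mu_{M'}}(\xi),
\]
where $\mu_{M'}$ is the projection of $\mu$ onto $\mathfrak{h}_{M'}^{\ast}$ and the coefficients $c_{M'}$, to be determined, are independent of $\delta$, $\mu$, and $R$. This places $H_R^{\delta,\mu}$ in $\mathcal{PW}_{R,\underline{\delta}}$ automatically, and property \eqref{new3} is tautological. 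The finite cardinality of the index set is controlled uniformly by the lattice property of $\mathcal{L}_\infty(\delta)$ noted in \S\ref{sec:herm}, making the coefficient-boundedness claim well-posed.

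To verify positivity on the Hermitian dual, I would work stratum by stratum via the decomposition $\mathfrak{h}_{\delta,\mathrm{hm}}^{\ast}=\bigcup_{w\in W(A_M)_\delta}\mathfrak{h}_{M,w}^{\ast}$. For $\xi\in\mathfrak{h}_{M,w}^{\ast}$, recall that $\mathrm{Im}\,\xi\in\mathfrak{h}_{M_w}^{\ast}$ and $\mathrm{Re}\,\xi$ lies in the $(-1)$-eigenspace of $w$. Using the factorization \eqref{h-factorization} of each spectral localizer, the restriction of $h_R^{\delta,\mu_{M'}}$ to such a stratum splits into a component along $\mathfrak{h}_{M_w}^{\ast}$ and one along its orthogonal complement; for the distinguished index $M'=M_w$, the first component is a Fourier transform of a real non-negative function evaluated in a purely imaginary direction (hence of controllable sign), while the second component carries the exponential growth in $\|\mathrm{Re}\,\xi\|$. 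The contribution from indices $M'\neq M_w$ must then be shown to be absorbed by the distinguished one. A combinatorial construction of $c_{M'}$ — plausibly a M\"obius-inversion on the lattice $\mathcal{L}_\infty(\delta)$ — would determine the coefficients so that on every stratum the undesired oscillatory terms cancel or are dominated. The pointwise analytic estimates needed for this absorption are exactly what Lemmas~\ref{nightmare} and~\ref{NSA} are set up to provide.

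For property \eqref{3}, let $\xi\in\mathfrak{h}_{\delta,\mathrm{un}}^{\ast}$ with $\|\mathrm{Im}\,\xi-\mu\|\leqslant R^{-1}$. Unitarizability forces $\mathrm{Im}\,\xi\in\mathfrak{h}_{M_w}^{\ast}$ for some $w\in W(A_M)_\delta$, so that $\mu$ lies within $R^{-1}$ of $\mathfrak{h}_{M_w}^{\ast}$ and $M'=M_w$ appears in the sum. For this distinguished index, the factorization of $h_R^{\delta,\mu_{M_w}}$ gives a semisimple factor that is $\asymp 1$ at $\mathrm{Im}\,\xi$ (since $\mathrm{Im}\,\xi-\mu_{M_w}$ is $\mathrm{O}(R^{-1})$) and a central factor exhibiting the growth $\gg e^{cR\|\mathrm{Re}\,\xi\|}$ coming from Paley-Wiener inversion. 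That this term is not cancelled by contributions from other $M'\neq M_w$, yielding the required lower bound uniformly in $\xi$, is the content of Lemma~\ref{asymp}.

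The principal obstacle is the simultaneous coordination of the $c_{M'}$ against two competing requirements: non-negativity on every stratum of $\mathfrak{h}_{\delta,\mathrm{hm}}^{\ast}$, and survival of a definite positive lower bound on the unitary dual near $\mu$. The various spectral localizers interact nontrivially through their Weyl-symmetrization and through overlaps of the singular subspaces $\mathfrak{h}_{M_w}^{\ast}$, and all estimates must be uniform in $\delta$, $\mu$, and $R$. This delicate balancing is why the proof is distributed across the three auxiliary Lemmas~\ref{nightmare}, \ref{NSA}, and~\ref{asymp} rather than executed in a single construction.
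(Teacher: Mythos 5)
There is a genuine gap: your mechanism for securing positivity on $\mathfrak{h}_{\delta,{\rm hm}}^*$ does not work, and the missing idea is a \emph{sum-of-squares} construction. A plain linear combination $\sum_{M'}c_{M'}h_R^{\delta,\mu_{M'}}(\xi)$ with bounded real coefficients cannot be made non-negative on the whole Hermitian dual: off the tempered axis each spectral localizer is a complex number of modulus as large as $e^{R\|\mathrm{Re}\,\xi\|}$ whose phase oscillates with $\xi$, and no finite choice of constants (M\"obius-inversion on $\mathcal{L}_\infty(\delta)$ or otherwise) controls the sign of such a sum pointwise. Your stratum-by-stratum argument also overstates what the factorization \eqref{h-factorization} gives you: on a stratum $\mathfrak{h}_{M,w}^*$ the relevant factors are still evaluated at genuinely complex arguments, so they are not ``of controllable sign.''

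What the paper does instead is set
\[
H_R^{\delta,\mu}(\xi)=\sum_{\substack{M'\supseteq M_{\mu}\\ \|\mu^{M'}\|\leqslant B}}\Bigl(\sum_{w\in W(A_M)_\delta}h\bigl(\tfrac12R(w\xi-\mu_{M'})\bigr)\Bigr)^2,
\]
with $h$ the fixed even Paley--Wiener function of Lemma~\ref{asymp} used at scale $R/2$. Because $h(-\xi)=h(\xi)$, $\overline{h(\xi)}=h(\overline{\xi})$, and $W(A_M)_\delta.\xi=W(A_M)_\delta.\xi^{\rm conj}$ on $\mathfrak{h}_{\delta,{\rm hm}}^*$, the inner Weyl sum is \emph{real} there, so its square is automatically non-negative --- no coordination of coefficients is needed. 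Property \eqref{new3} then falls out by expanding the squares and grouping the resulting products (each of exponential type $R$) into spectral localizers. Lemmata~\ref{nightmare}, \ref{NSA}, and \ref{asymp} enter only in the proof of the lower bound \eqref{3}: for a given unitary $\xi$ near $\mu$ one selects a single $M'$ for which the ``good'' Weyl elements $W(A_M^{M'})_\delta$ all contribute $|h(\tfrac12R(\xi-\mu_{M'}))|\gg e^{cR\|\mathrm{Re}\,\xi\|/2}$ while the ``bad'' ones are shown to contribute at most half of that, and one drops the remaining (non-negative) squares. You have correctly identified the roles of the three auxiliary lemmas for step \eqref{3}, but without the squaring device the positivity step \eqref{2} of your proposal cannot be completed. (Two smaller points: the paper first reduces to $\mu\in i\mathfrak{h}_{M_\mu}^*$ and indexes the sum over $M'\supseteq M_\mu$, and it handles the regime $\|\mathrm{Re}\,\xi\|\ll R^{-1}$ separately by citing \cite[Lemma 7.5]{BrumleyMarshall2016}.)
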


\begin{remark}
It will be seen in the proof of Lemma~\ref{asymp} that the constant $0<c<1$ in \eqref{asymp2-eq} may in fact be taken as close to 1 as desired by taking $\eta>0$ (as well as the variable $u>0$ appearing in the proof) very small. In turn, the value of the constant $c$ in the local version of Lemma~\ref{Lem1}~\eqref{3} (see (\ref{three-prime}$'$) below) may also be taken as close to 1 as desired; this conclusion should be contrasted with the obvious upper bound of $\ll e^{R\|{\rm Re}\,\nu_v^0\|}$.
\end{remark}

\begin{proof}
For $A>0$ as in Lemma~\ref{nightmare} and $v\mid\infty$ let
\[
E_{0,v}=\{\nu_v^0\in\h_{\delta_v, {\rm un}}^{\ast}:\|\mathrm{Re}\,\nu_v^0\|\leqslant AR^{-1}\},\quad
E_{1,v}=\{\nu_v^0\in\h_{\delta_v, {\rm un}}^{\ast}:\|\mathrm{Re}\,\nu_v^0\|>AR^{-1}\},
\]
where we recall the notation $\h_{\delta_v,\mathrm{un}}^{\ast}\subset(\aa_{M_v}^{G_v})_{\C}^{\ast}$ from \S\ref{sec:herm}.
Then $\h^{\ast}_{\delta,\mathrm{un}}\subseteq\h^{\ast}_G\oplus\bigoplus_{v\mid\infty}(E_{0,v}\cup E_{1,v})$. Recalling the decomposition $\nu=\nu_Z+\nu^0$ from \S\ref{sec:lie}, we will construct the desired function $H_R^{\delta,\mu}\in\mathcal{PW}_{R,\underline{\delta}}$ as
\begin{equation}
\label{HRdeltamu-factor}
H_R^{\delta,\mu}(\nu)=h_{0,R}^{\mu_Z}(\nu_Z)\prod_{v\mid\infty}\big(H_{0,v,R}^{\delta_v,\mu_v^0}(\nu^0_v)+H_{1,v,R}^{\delta_v,\mu_v^0}(\nu^0_v)\big),
\end{equation}
where $h_{0,R}^{\mu_Z}\in\mathcal{PW}(\h_{G,\C}^{\ast})$ and $W(A_{M_v})_{\delta_v}$-invariant functions $H_{\epsilon,v,R}^{\delta_v,\mu_v^0}\in\mathcal{PW}((\aa_{M_v}^{G_v})^{\ast}_{\C})$ ($\epsilon\in\{0,1\}$) are such that:
\begin{enumerate}[\quad (1$'$)]
\item\label{one-prime} $h_{0,R}^{\mu_Z}\geqslant 0$ on $i\h_G^{\ast}$ and $H_{\epsilon,v,R}^{\delta_v,\mu_v^0}\geqslant 0$ on $\h^{\ast}_{\delta_v,\mathrm{un}}$;
\item\label{two-prime} the functions $h_{0,R}$ and $H_{0,v,R}^{\delta_v,\mu_v^0}$ satisfy the factorized spectral localizer conditions \eqref{factorized-spec-loc}, and $H_{1,v,R}^{\delta_v,\mu_v^0}$ is a linear combination of $H_{1,v,R}^{\delta_v,(\mu_v^0)_{M'_v}}$ as in \eqref{new3} satisfying these conditions.
\item\label{three-prime} there are constants $C>0$ and $0<c<1$ such that:
\begin{itemize}
\item $h_{0,R}^{\mu_Z}(\nu_Z)\geqslant\frac12$ ($\nu_Z\in i\h_G^{\ast}$, $\|\nu_Z-\mu_Z\|\leqslant R^{-1}$);
\item $H_{0,v,R}^{\delta_v,\mu^0_v}(\nu_v^0)\geqslant\frac1{16}$ ($\nu_v^0\in E_{0,v}$, $\|i\mIm\nu_v^0-\mu_v^0\|\leqslant R^{-1}$);
\item $H_{1,v,R}^{\delta_v,\mu^0_v}(\nu_v^0)\geqslant Ce^{cR\|\mathrm{Re}\,\nu_v^0\|}$ ($\nu_v^0\in E_{1,v}$, $\|i\mIm\nu_v^0-\mu_v^0\|\leqslant R^{-1}$).
\end{itemize}
\end{enumerate}
This clearly yields the statement of the theorem. (For $\nu_v^0\in E_{0,v}$, $Ce^{cR\|\mRe\nu_v^0\|}$ is bounded above by an absolute constant. The above conditions thus imply that $H_R^{\delta,\mu}(\nu)\gg e^{cR\sum_{v\mid\infty}\|\mRe\nu_v^0\|}$ for every $\nu\in\h_{\delta,\mathrm{un}}^{\ast}$ satisfying \eqref{range1}, whence \eqref{3}.)

The construction of $h_{0,R}^{\mu_Z}\in\mathcal{PW}(\h_{G,\C}^{\ast})$ is straightforward; we take a sufficiently small $0<\delta'\leqslant 1$, fix a $g_Z\in C_c^{\infty}(\h^{\ast}_G)$ be a real even function supported in the $\delta'$-ball around $0$ and satisfying $\int g_Z=1$, and consider the Fourier transform $h_Z=\widehat{g_Z\star g_Z}$. Then, $h_Z\geqslant 0$ on $i\h_G^{\ast}$, and we may take $\delta'>0$ sufficiently small to assure that $h_Z(\nu_Z)\geqslant\frac12$ for all $\nu_Z\in i\h_G^{\ast}$ with $\|\nu_Z\|\leqslant 1$. Then $h_{0,R}^{\mu_Z}(\nu_Z):=h_Z(R(\nu_Z-\mu_Z))$ has all the desired properties (\ref{one-prime}$'$)--(\ref{three-prime}$'$).

It remains to construct $H_{\epsilon,v,R}^{\delta_v,\mu_v^0}\in\mathcal{PW}((\aa_{M_v}^{G_v})^{\ast}_{\C})$ with the desired properties. This is now a purely local task, and, to lighten the notation, \textit{for the rest of the proof we consistently drop the $_v$ and $^0$ decorations. Thus, $H_{\epsilon,R}^{\delta,\mu},E_{\epsilon},\nu,\mu,\aa_M^G,W(A_M)_{\delta},\dots$ all refer to their local counterparts, as do the archimedean structures referenced from Section \ref{sec:arch-rep-PWCD}.}

We first construct $H_{0,R}^{\delta,\mu}\in\mathcal{PW}((\aa_M^G)^{\ast}_{\C})$. This is essentially accomplished by \cite[Lemma 4.3]{BrumleyMarshall2020}, which we only need to adapt slightly. Let $\delta''>0$ be sufficiently small, let $g_0\in C_c^{\infty}(\aa_M^G)$ be a real even function supported in the $\delta''$-ball around $0$ and satisfying $\int g_0=1$; then, the Fourier transform $h_0=\widehat{g_0\star g_0}\in\mathcal{PW}((\aa_M^G)^{\ast}_{\C})$ satisfies $h_0\geqslant 0$ on $i(\aa_M^G)^{\ast}$.
As in the proof of \cite[Lemma 4.3]{BrumleyMarshall2020}, we may take $\delta''>0$ sufficiently small to assure that $\mathrm{Re}\,h_0(\nu)\geqslant\frac12$ for all $\nu\in (\aa_M^G)^{\ast}_{\C}$ with $\|\mathrm{Im}\,\nu\|\leqslant 1$ and $\|\mathrm{Re}\,\nu\|\leqslant A$, and that $\mathrm{Re}\,h_0(\nu)\geqslant -C''\delta''$ (with a $C''>0$ depending on $G$ only) for all $\nu\in (\aa_M^G)^{\ast}_{\C}$ with $\|\mathrm{Re}\,\nu\|\leqslant A$. We set
\begin{equation}\label{defn-h0R-upperbds}
H_{0,R}^{\delta,\mu}(\nu)=\bigg(\sum_{w\in W(A_M)_{\delta}}h_0\big(R(w\nu-\mu)\big)\bigg)^2.
\end{equation}
Then $H_{0,R}^{\delta,\mu}\in\mathcal{PW}((\aa_M^G)^{\ast}_{\C})_R$ is $W(A_M)_{\delta}$-invariant by construction.
Now, by the definition of $\h_{\delta,{\rm hm}}^*$ in \eqref{hMw*}--\eqref{delta-hm}, we have $W(A_M)_{\delta}.\nu=W(A_M)_{\delta}.(-\overline{\nu})$ for all $\nu\in\h_{\delta,{\rm un}}^*$. It follows that the sum in \eqref{defn-h0R-upperbds} is unchanged under the substitution $\nu\mapsto -\overline{\nu}$ on the $\delta$-Hermitian spectrum. Furthermore, since $g_0$ was taken real and even, then $h_0=\widehat{g_0\star g_0}$ enjoys the relations $h_0(-\nu)=h_0(\nu)$ and $h_0(\overline{\nu})=\overline{h_0(\nu)}$, for all $\nu\in(\aa_M^G)^{\ast}_{\C}$. Finally, using $-\mu=\overline{\mu}$, we conclude that the sum in \eqref{defn-h0R-upperbds} is real-valued. Thus $H_{0,R}^{\delta,\mu}\geqslant 0$ on $\h_{\delta,{\rm hm}}^*$. This establishes (\ref{one-prime}$'$), while (\ref{two-prime}$'$) is immediate.
 Finally, for $\delta''>0$ sufficiently small, we have that for $\nu\in E_0$ satisfying $\|i\mIm\nu-\mu\|\leqslant R^{-1}$,
\[ \sum_{w\in W(A_M)_{\delta}}h_0\big(R(w\nu-\mu)\big)\geqslant \mathrm{Re}\,h_0\big(R(\nu-\mu)\big)-|W(A_M)_{\delta}|C''\delta''\geqslant\tfrac14, \]
and hence $H_{0,R}^{\delta,\mu}(\nu)\geqslant\frac1{16}$ as desired for (\ref{three-prime}$'$).

We now proceed to the principal task of constructing $H_{1,R}^{\delta,\mu}\in\mathcal{PW}((\aa_M^G)^{\ast}_{\C})$, which in particular needs to satisfy $H_{1,R}^{\delta,\mu}(\nu)\geqslant Ce^{cR\|\mathrm{Re}\,\nu\|}$ for $\nu\in E_1$ such that
\begin{equation}\label{range}
\| i\mIm\nu-\mu\|\leqslant R^{-1}\qquad\text{ and }\qquad  \|{\rm Re}\, \nu\|\geqslant AR^{-1}.
\end{equation}
Our approach in this complementary range is inspired by that of \cite[Lemma 4.3]{BrumleyMarshall2020}, although the argument is necessarily much more elaborate. Note that if $\mu$ is such that no $\delta$-Hermitian $\nu$ satisfies \eqref{range}, then condition (\ref{three-prime}$'$) is vacuous and the function $H_{1,R}^{\delta,\mu}$ identically equal to zero satisfies the remaining conditions. Otherwise, $\mu$ should be of distance at most $R^{-1}$ from the $\delta$-singular subset $i\h_{\delta,{\rm sing}}^*$ of \eqref{delta-sing}. Let $M_\mu\in\mathcal{L}(\delta)$ be maximal for the property that $\|\mu^{M_\mu}\|\leqslant R^{-1}$; in view of \eqref{delta-sing}, such a maximal $M_{\mu}\supseteq M$ is distinct from $M$. Now if the lemma is true for $\mu_{M_\mu}$ then it is true for $\mu$ (by taking for $H_{1,R}^{\delta,\mu}$ the function $H_{1,R}^{\delta,\mu_{M_\mu}}$). We may therefore assume that $\mu\in i\h_{M_\mu}^*$.

With $h\in\mathcal{PW}((\aa_M^G)^{\ast}_{\C})_1$ as in Lemma \ref{asymp} below and $B>0$ as in Lemma~\ref{nightmare}, we put, for $\nu\in(\aa_M^G)^{\ast}_{\C}$,
\begin{equation}\label{defn-hR-upperbds}
H_{1,R}^{\delta,\mu}(\nu)=\sum_{\substack{M'\supseteq M_{\mu}\\ \|\mu^{M'}\|\leqslant B}}\bigg(\sum_{w\in W(A_M)_\delta}h\big(\tfrac12R(w\nu-\mu_{M'})\big)\bigg)^2.
\end{equation}
As in the first case we conclude that $H_{1,R}^{\delta,\mu}$ is $W(A_M)_{\delta}$-invariant
and that the inner sum in \eqref{defn-hR-upperbds} is real valued on the $\delta$-Hermitian spectrum, whence $H_{1,R}^{\delta,\mu}\geqslant 0$ on $\h_{\delta,{\rm hm}}^*$. This establishes (\ref{one-prime}$'$), while (\ref{two-prime}$'$) is immediate by expanding and grouping \eqref{defn-hR-upperbds}.

For the proof of (\ref{three-prime}$'$), we shall show that for all $\nu\in\mathfrak{h}_{\delta,{\rm un}}^\ast$ verifying \eqref{range} there is $M'\supset M_\mu$ (depending on $\nu$) such that $\|\mu^{M'}\|\leqslant B$ and
\begin{equation}\label{guy}
\bigg(\sum_{w\in W(A_M)_\delta}h\big(\tfrac12 R(w\nu-\mu_{M'})\big)\bigg)^2\geqslant Ce^{c R\|{\rm Re}\,\nu\|}.
\end{equation}
Dropping the other terms by positivity yields the lemma.

To prove \eqref{guy} we let $\nu\in\mathfrak{h}^*_{\delta,{\rm un}}$ satisfy \eqref{range} and apply Lemma \ref{nightmare} below; this gives rise to an $M'$ satisfying the indicated properties. Recall from \eqref{relative-Weyl} the definition of $W(A^{M'}_M)_\delta$, and write $W_{\rm bad}$ for the complementary set $W(A_M)_\delta\setminus W(A^{M'}_M)_\delta$. Then
\begin{equation}
\begin{aligned}\label{hello}
\bigg|\sum_{w\in W(A_M)_\delta}&h\big(\tfrac12R(w\nu-\mu_{M'})\big)\bigg|\\
&\geqslant \bigg|\sum_{w\in W(A^{M'}_M)_\delta}h\big(\tfrac12R(w\nu-\mu_{M'}))\bigg|-|W_{\rm bad}|\max_{w\in W_{\rm bad}}|h\big(\tfrac12R(w\nu-\mu_{M'})\big)|.
\end{aligned}
\end{equation}
Note that $W(A_M)_\delta\subset {\rm O}(\aa_M^G,\langle,\rangle)$. By property \eqref{asymp1} of Lemma \ref{asymp}, the fact that $M'\supseteq M_\mu$, and the definition of $W(A^{M'}_M)_\delta$, we have 
\begin{equation}\label{WMsum}
\bigg|\sum_{w\in W(A^{M'}_M)_\delta}h\big(\tfrac12R(w\nu-\mu_{M'})\big)\bigg|=|W(A^{M'}_M)_\delta|\big|h\big(\tfrac12R(\nu-\mu_{M'})\big)\big|\geqslant \big|h\big(\tfrac12R(\nu-\mu_{M'})\big)\big|.
\end{equation}

It now suffices to establish an upper bound for the second term in \eqref{hello}. For this, we shall avail ourselves of Lemmata~\ref{nightmare} and \ref{NSA}, along with the remaining properties \eqref{asymp2} and \eqref{asymp3} of Lemma \ref{asymp}. Note that \eqref{bad-guy-range} of Lemma~\ref{nightmare} implies that $w\nu-\mu_{M'}$, for $w\in W_{\rm bad}$, satisfies the first inequality on the left-hand side of \eqref{with-modulus}, with $\kappa=\kappa_{r_{M'}}$ (and $r_{M'}=\dim\h_{M'}$).
Similarly, \eqref{good-guy-range} implies that $\nu-\mu_{M'}$ satisfies the inequality on the right-hand side of \eqref{with-modulus}, with $\eta=\eta_{r_{M'}}$. Moreover, \eqref{good-guy-range}, \eqref{bad-guy-range}, and property~\eqref{thing2} of Lemma~\ref{nightmare} show that the second statement of Lemma \ref{NSA} applies
and that $w\nu-\mu_{M'}$ for $w\in W_{\rm bad}$ also satisfies the second (angle) inequality on the left-hand side of \eqref{with-modulus}. These observations, together with \eqref{range}, allow us to apply property \eqref{asymp3} of Lemma \ref{asymp}. Recalling the value of $\epsilon$ from Lemma~\ref{nightmare} \eqref{thing2}, we deduce that
\begin{equation}\label{2parts2}
|W_{\rm bad}|\max_{w\in W_{\rm bad}}\big|h\big(\tfrac12R(w\nu-\mu_{M'})\big)\big|\leqslant 
\frac12  \big|h\big(\tfrac12R(\nu-\mu_{M'})\big)\big|.
\end{equation}
Inserting \eqref{WMsum} and \eqref{2parts2} into \eqref{hello} yields
\[
\bigg|\sum_{w\in W(A_M)_\delta}h\big(\tfrac12R(w\nu-\mu_{M'})\big)\bigg|\geqslant \frac12 \big|h\big(\tfrac12R(\nu-\mu_{M'})\big)\big|.
\]
From this and property \eqref{asymp2} of Lemma \ref{asymp}, with $\eta=\eta_{r_{M'}}$, the lower bound \eqref{guy} follows.
\end{proof}

\subsection{Good and bad Weyl group elements}\label{good-bad-subsec}

The proof of Lemma~\ref{Lem1} rests on three key ingredients, the geometric Lemmata~\ref{nightmare} and \ref{NSA} and the analytic Lemma~\ref{asymp}, which we proceed to prove in order. Moreover, Lemma~\ref{Lem1} produces spectral localizing functions in their factorable form \eqref{HRdeltamu-factor} in connection with the factorability condition in Definition~\ref{defn-localizer} of spectral localizers, which in turn is introduced purely for purposes of verifying the (ELM) property in Section \ref{PW}.

With this in mind, continuing the convention from the proof of Lemma~\ref{Lem1}, \textit{for the rest of sections \S\ref{good-bad-subsec} and \S\ref{angles-subsec} as well as throughout Section \ref{sec:Paley-Wiener}, we work with local groups and spaces $G=G_v,M=M_v,\h_M=\h_{M_v}=\aa_{M_v}^{G_v},\dots$ and consistently drop the $_v$ and $^0$ decorations.} However we note that all statements and proofs in these sections are true verbatim also for $G=G_{\infty},M=M_{\infty},\h_M=\h_{M_{\infty}},\dots$, and that those statements could just as well be used for a proof of Lemma~\ref{Lem1} except for the factorability condition in Definition~\ref{defn-localizer}.

In this subsection, we prove a geometric lemma which was a crucial ingredient in the proof of Lemma \ref{Lem1}, and which in turn references statements of the forthcoming geometric Lemma~\ref{NSA} and analytic Lemma~\ref{asymp}.
For $M\in\mathcal{L}$ we let $r_M=\dim\h_M$; we also write $r_0=r_{T_0}$. (Thus, $r_G=0$ with our local convention; though we keep the notation $r_G$ for compatibility with the $G_{\infty}$ case.)

\begin{lemma}\label{nightmare}
Let $\underline{\delta}\in\mathcal{D}$ have standard representative $(M,\delta)$. Let $\mu\in i\h_M^*$ be contained in a $\delta$-singular subspace $i\h_{M_\mu}^*$ for some $M_\mu\in\mathcal{L}(\delta)$ strictly containing $M$. Let the constants $\theta_0,A_0>0$ be as in the second part of Lemma~\ref{NSA}.

There are constants $A,B>0$ and two finite systems $(\eta_i)_{i=r_G}^{r_{M_{\mu}}}$, $(\kappa_i)_{i=r_G+1}^{r_{M_{\mu}}}$ of positive constants, depending only on $G$, satisfying the following properties:
\begin{enumerate}
\item\label{thing1} for every $i=r_G,\dots,r_{M_\mu}$ the constants $(\eta_i,A)$ verify Property~\eqref{asymp2} of Lemma \ref{asymp};
\item\label{thing2} for every $i=r_G+1,\ldots ,r_{M_\mu}$ the constants $(\kappa_i,\eta_i,A)$ satisfy $\kappa_i/\eta_i\geqslant A_0$ and verify Property~\eqref{asymp3} of Lemma \ref{asymp} with $\epsilon=\frac12 |W(A_M)_\delta|^{-1}$ and $\theta_0$ as fixed above;
\item\label{thing3} for all $\nu\in\mathfrak{h}^\ast_{\delta,{\rm un}}$ verifying \eqref{range} there is $M'\in\mathcal{L}(\delta)$ containing $M_\mu$ (depending on $\mu$ and $\nu$) such that $\|\mu^{M'}\|\leqslant B$ and
\begin{align}
\| i\mIm\nu-\mu_{M'}\|&\leqslant\eta_{r_{M'}}\|\mRe\nu\|\label{good-guy-range}\\
\| w \cdot i\mIm\nu - \mu_{M'} \| &> \kappa_{r_{M'}} \| \mRe \nu \|\quad (M'\neq G,\,\,w\in W(A_M)_\delta\setminus W(A^{M'}_M)_\delta),\label{bad-guy-range}
\end{align}
where $W(A^{M'}_M)_\delta$ is as in \eqref{relative-Weyl}.
\end{enumerate}
\end{lemma}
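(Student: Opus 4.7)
The plan is to construct $M'$ by an iterative enlargement procedure and to build the cascade of constants $(\kappa_i,\eta_i)$ in tandem with it. Starting with $M'_0=M_\mu$, for which $\mu^{M'_0}=0$ since $\mu\in i\h_{M_\mu}^*$, I check at stage $k$ whether \eqref{good-guy-range} and \eqref{bad-guy-range} hold for $M'=M'_k$ with the prescribed $(\eta_{r_{M'_k}},\kappa_{r_{M'_k}})$. If both hold, I set $M'=M'_k$ and stop. Otherwise the bad-guy-range fails for some $w\in W(A_M)_\delta\setminus W(A_M^{M'_k})_\delta$, and I enlarge by setting $M'_{k+1}=\langle M'_k,M_w\rangle$, which lies in $\mathcal{L}_\infty(\delta)$ by the lattice property recalled in \S\ref{sec:herm} and satisfies $r_{M'_{k+1}}<r_{M'_k}$. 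Since ranks are bounded below by $r_G$, the procedure terminates in at most $r_{M_\mu}-r_G$ steps.

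\paragraph{Construction of constants.} The pairs $(\kappa_i,\eta_i)$ are defined by reverse induction on $i=r_{M_\mu},r_{M_\mu}-1,\ldots,r_G$. For each $i$, I first choose $\eta_i>0$ small enough that $(\eta_i,A)$ satisfies Property~\eqref{asymp2} of Lemma~\ref{asymp}, and then $\kappa_i$ large enough relative to $\eta_i$ so that simultaneously $\kappa_i/\eta_i\geqslant A_0$ (the constant from Lemma~\ref{NSA}) and $(\kappa_i,\eta_i,A)$ satisfies Property~\eqref{asymp3} of Lemma~\ref{asymp} with $\epsilon=\tfrac12|W(A_M)_\delta|^{-1}$. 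The key coupling is an additional \emph{cascading constraint} $\eta_{i-1}\geqslant C_0\kappa_i$, where $C_0$ is a uniform geometric constant bounding the operator norm of the pseudoinverse of $(1-w)$ restricted to $\mathfrak{h}_{M_w}^\perp$ as $w$ ranges over the finite group $W_0$. This constraint is what allows the good-guy-range condition to propagate from one stage of the iteration to the next. The sequence $\kappa_i$ (and consequently $\eta_i$) grows as $i$ decreases, but only through finitely many values, and $B$ is chosen to absorb all cumulative perturbations, which remain bounded since $\|{\rm Re}\,\xi\|\leqslant\|\rho\|$ by the unitarizability condition in \S\ref{sec:herm}.

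\paragraph{Key geometric propagation.} The heart of the argument is verifying that the enlargement procedure preserves both \eqref{good-guy-range} (at the new stage) and the bound $\|\mu^{M'_{k+1}}\|\leqslant B$. Suppose \eqref{good-guy-range} holds at stage $k$ (otherwise the procedure stops with an appropriate declaration at an earlier stage) while \eqref{bad-guy-range} fails with witness $w$. Subtracting the two bounds,
\[
\|(1-w){\rm Im}\,\xi\|\leqslant(\eta_{r_{M'_k}}+\kappa_{r_{M'_k}})\|{\rm Re}\,\xi\|.
\]
Since $w$ acts orthogonally and $\mathfrak{h}_{M_w}$ is precisely its fixed subspace, $(1-w)$ has operator-norm-bounded inverse on $\mathfrak{h}_{M_w}^\perp$, forcing ${\rm Im}\,\xi$ to lie within $O(\kappa_{r_{M'_k}})\|{\rm Re}\,\xi\|$ of $\mathfrak{h}_{M_w}^*$. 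Combined with the existing proximity of ${\rm Im}\,\xi$ to $\mu_{M'_k}\in\mathfrak{h}_{M'_k}^*$, this yields proximity of ${\rm Im}\,\xi$ to $\mathfrak{h}_{M'_{k+1}}^*=\mathfrak{h}_{M'_k}^*\cap\mathfrak{h}_{M_w}^*$, and hence of $\mu_{M'_k}$ as well. Projecting $\mu$ onto the nested subspace $\mathfrak{h}_{M'_{k+1}}^*\subseteq\mathfrak{h}_{M'_k}^*$ gives the Pythagorean identity $\|\mu^{M'_{k+1}}\|^2=\|\mu^{M'_k}\|^2+\|\mu_{M'_k}-\mu_{M'_{k+1}}\|^2$, with the second summand now controlled by $O(\kappa_{r_{M'_k}}^2)$. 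Moreover, the cascading constraint guarantees $\|{\rm Im}\,\xi-\mu_{M'_{k+1}}\|\leqslant\eta_{r_{M'_{k+1}}}\|{\rm Re}\,\xi\|$, closing the induction.

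\paragraph{Main obstacle.} The principal difficulty lies in making the two subspace-proximity estimates combine cleanly at each enlargement, i.e.~controlling the distance from ${\rm Im}\,\xi$ (or $\mu_{M'_k}$) to $\mathfrak{h}_{M'_k}^*\cap\mathfrak{h}_{M_w}^*$ from the individual distances to each subspace. This is a finite-dimensional linear-algebra fact depending on the minimal nonzero principal angle between pairs of subspaces drawn from the finite collection of $\{\mathfrak{h}_L^*:L\in\mathcal{L}_\infty\}$, uniformly controlled by the root-system geometry of $G$. Folding this constant into $C_0$ and tracking its accumulation through the finitely many stages is the delicate bookkeeping that drives the entire construction, and ensures that $B$ can be chosen depending only on $G$.
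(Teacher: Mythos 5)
Your proposal is correct in substance, but it takes a genuinely different route from the paper. The paper defines $M'$ in one shot as the \emph{maximal} element of $\mathcal{L}_\infty(\delta)$ containing $M_\mu$ with $\|\mu^{M'}\|\leqslant\frac12\eta_{r_{M'}}\|\mRe\xi\|$ (well-defined by the lattice property of $\mathcal{L}_\infty(\delta)$), deduces \eqref{good-guy-range} from the triangle inequality, and proves \eqref{bad-guy-range} by \emph{contradiction}: if some bad $w$ violated it, the averaging identity $(\mu_{M'})_{M_w}=|W(A_M)_\delta|^{-1}\sum_k w^k\mu_{M'}$ would force $\|(\mu_{M'})^{M_w}\|$ to be small, contradicting the bootstrap lower bound \eqref{bootstrap}, which itself encodes the maximality of $M'$. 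Its constants are tied to a fixed compatible system of tube radii around the $\mathfrak{h}_L^*$ satisfying \eqref{TubeIntersectionProperty}, together with the projection inequality \eqref{LM-geometric}. You instead run a greedy enlargement $M'_{k+1}=\langle M'_k,M_w\rangle$ driven by failures of \eqref{bad-guy-range}, propagate \eqref{good-guy-range} inductively (each failure forces $\mIm\xi$ close to $\mathfrak{h}_{M_w}^*$ via the bounded inverse of $(1-w)$ off its fixed space, hence close to the intersection $\mathfrak{h}_{M'_{k+1}}^*$ by a uniform principal-angle bound), and obtain \eqref{bad-guy-range} for free from the stopping criterion; termination is guaranteed by strict rank descent since the witness $w$ satisfies $M_w\not\subseteq M'_k$. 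Your principal-angle constant is exactly the same geometric input as the paper's \eqref{TubeIntersectionProperty} and \eqref{LM-geometric}, so neither argument is more elementary; your version is closer to the classical Duistermaat--Kolk--Varadarajan descent and makes the mechanism more transparent, at the cost of a cascade in which the $\eta_i$ must \emph{increase} along the iteration (to absorb the $\mathrm{O}(\kappa_{r_{M'_k}})$ recentering at each step) rather than being anchored to fixed tube radii. Two presentational points to tighten: in Lemma~\ref{asymp}~\eqref{asymp3} the quantifier order is ``for every $\kappa$ there is $\eta(\kappa)$,'' so you should fix a reference $\kappa^*$ and choose each $\eta_i\leqslant\eta(\kappa^*)$ before enlarging $\kappa_i$, rather than choosing $\eta_i$ first and then $\kappa_i$ ``large enough''; and the parenthetical ``otherwise the procedure stops with an appropriate declaration'' should be deleted, since your own induction (together with the base case $\mu_{M'_0}=\mu$ and \eqref{range}, which gives $\|\mIm\xi-\mu\|\leqslant A^{-1}\|\mRe\xi\|$) shows \eqref{good-guy-range} never fails along the iteration.
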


In item \eqref{thing1}, we mean that Property~\eqref{asymp2} of Lemma~\ref{asymp} holds with constants $A,B,\eta_i,c>0$, for a suitable choice of $B,c>0$. Similarly, in item~\eqref{thing2}, with the specified $\epsilon,\theta_0>0$, it is meant that $\eta_i,A$ are suitable constants to be returned by Property~\eqref{asymp3} of Lemma~\ref{asymp} for the choice of inputs $\epsilon,\kappa_i,\theta_0>0$.

To deconstruct what Lemma~\ref{nightmare} is saying, consider that Lemma~\ref{Lem1} provides for a combination $H_R^{\delta,\mu}$ of spectral localizers that suitably detect $\nu\in\mathfrak{h}_{\delta,\mathrm{un}}^*$ with $i\mIm\nu$ close to $\mu\in i\mathfrak{h}_M^*$, and which are in turn built out of averages over Weyl group translates of the form $h(R(w\nu-\mu))$ as in \eqref{defn-hR-upperbds}.

For $\mu$ (and thus $\mathrm{Im}\,\nu$) close to $i\mathfrak{h}_G^*$, all occurring $h(R(w\nu-\mu))$ can be made large; in fact, $\nu$ can then be at least as well detected by re-centering the spectral localizer from $\mu$ to $\mu_G$. For $\mu$ and $\nu\in\mathfrak{h}_{\delta,\mathrm{un}}^*$ in a more generic position, one faces terms with very different sizes of $\mathrm{Im}(w\nu-\mu)$ and asymptotics of $h(R(w\nu-\mu))$ (cf. Remark~\ref{SPL-remark}), and a way is needed to clearly distinguish between the ``good'' and ``bad'' Weyl elements $w$ according to the size of $\|w\cdot i\mIm\nu-\mu\|$. This difficulty is how to make such a discrete distinction for any given $\mu$;  certainly it cannot be made in a continuous fashion as $\mu$ varies.

To resolve this problem, the basic idea is that, in the intermediate range, when $\|w\cdot i\mIm\nu-\mu\|$ is sufficiently small to prevent $w$ from being ``bad'' yet not sufficiently small to make $w$ ``good'', then both $i\mIm\nu$ and $\mu\in i\mathfrak{h}_M^*$ must be reasonably close to $i\mathfrak{h}_{M,w,+1}^*$, and $\nu$ might in fact be better detected by re-centering to a (nearby) more singular point such as $\mu_{M_w}$. Such re-centering, though, affects whether other $w$ should be labeled good or bad.

Lemma~\ref{nightmare} makes this descent precise and provides, for each singular $\mu\in i\mathfrak{h}_M^*$ and a nearby $\nu\in\mathfrak{h}_{\delta,\mathrm{un}}^*$, a specific choice of $M'\in\mathcal{L}(\delta)$ such that, re-centering the localizer to the nearby $\mu_{M'}$, all Weyl translates are either ``clearly good'' or ``clearly bad'' (all are ``clearly good'' if $M'=G$). We construct tubular neighborhoods around each $i\mathfrak{h}_{M'}^*$ so that each $\nu\in\mathfrak{h}_{\delta,\mathrm{un}}^*$ can be detected by re-centering $\mu$ to $\mu_{M'}$ for the largest $M'$ to whose corresponding tube it belongs.

An inductive scheme of this broad nature, though in the absence of exponentially increasing weights and on a fixed scale, was used first in the foundational work of Duistermaat--Kolk--Varadarajan~\cite[Proposition 7.1]{DuistermaatKolkVaradarajan1979} and then by Lapid--M\"uller~\cite[Proposition 4.5]{LapidMuller2009}. The relevance of the specific scale in \eqref{good-guy-range} and \eqref{bad-guy-range} is transparent on comparing with \eqref{asymp2-eq} and \eqref{with-modulus} in Lemma~\ref{asymp}.

\begin{proof}
The values of $A>0$ and the system $(\kappa_i,\eta_i)$ depend on fixed choices of constants depending only on $G$ which we now specify:
\begin{enumerate}
\item[$\bullet$] For $L\in\mathcal{L}(\delta)$ and $\eta>0$ let
$\mathcal{T}_L(\eta)=\{\nu\in i\h_M^{\ast}:\|\nu^L\|\leqslant\eta\}$
denote the tube of radius $\eta$ about $i\h_L^*$ inside $i\h_M^*$. Let $0<\tilde{\eta}_{r_0}<\tilde{\eta}_{r_0+1}<\cdots <\tilde{\eta}_{r_G}$ be a fixed system of radii such that for any $L_1,L_2\in\mathcal{L}(\delta)$ one has
\begin{equation}
\label{TubeIntersectionProperty}
\mathcal{T}_{L_1}(\tilde\eta_{r_{L_1}})\cap\mathcal{T}_{L_2}(\tilde\eta_{r_{L_2}})\subset \mathcal{T}_L(\tilde\eta_{r_L}),
\end{equation}
where $L=\langle L_1,L_2\rangle\in\mathcal{L}(\delta)$ is generated by $L_1$ and $L_2$.
It is not hard to see that the property \eqref{TubeIntersectionProperty} then also holds for any other system of radii $(\eta_i)_{i=1}^{r_G}$ satisfying $\eta_i/\tilde\eta_i\leqslant\eta_j/\tilde\eta_j$ for every $i>j$. See Figure~\ref{TubularFigure}.

\item[$\bullet$] There exists a constant $C\geqslant 1$ such that for all $L,L'\in\mathcal{L}(\delta)$ and all $\mu\in i\mathfrak{h}_{L'}^{\ast}$ one has
\begin{equation}\label{LM-geometric}
\|\mu^{\langle L,L'\rangle}\|\leqslant C\|\mu^L\|.
\end{equation}
\end{enumerate}
To see the second point (which can be extracted from \cite[p. 136]{LapidMuller2009}), note that it suffices to verify \eqref{LM-geometric} for fixed $L,L'\in\mathcal{L}(\delta)$ and for $\mu\in i(\h_{L'}^{\langle L,L'\rangle})^{\ast}$, and indeed only for nonzero (and hence only for norm one) such $\mu$.
The statement then follows from the continuity of $\mu\mapsto\|\mu\|/\|\mu^L\|$ on the compact 1-ball in $i(\h_{L'}^{\langle L,L'\rangle})^{\ast}$.

\begin{figure}
\centering
\begin{overpic}[width=0.5\linewidth]{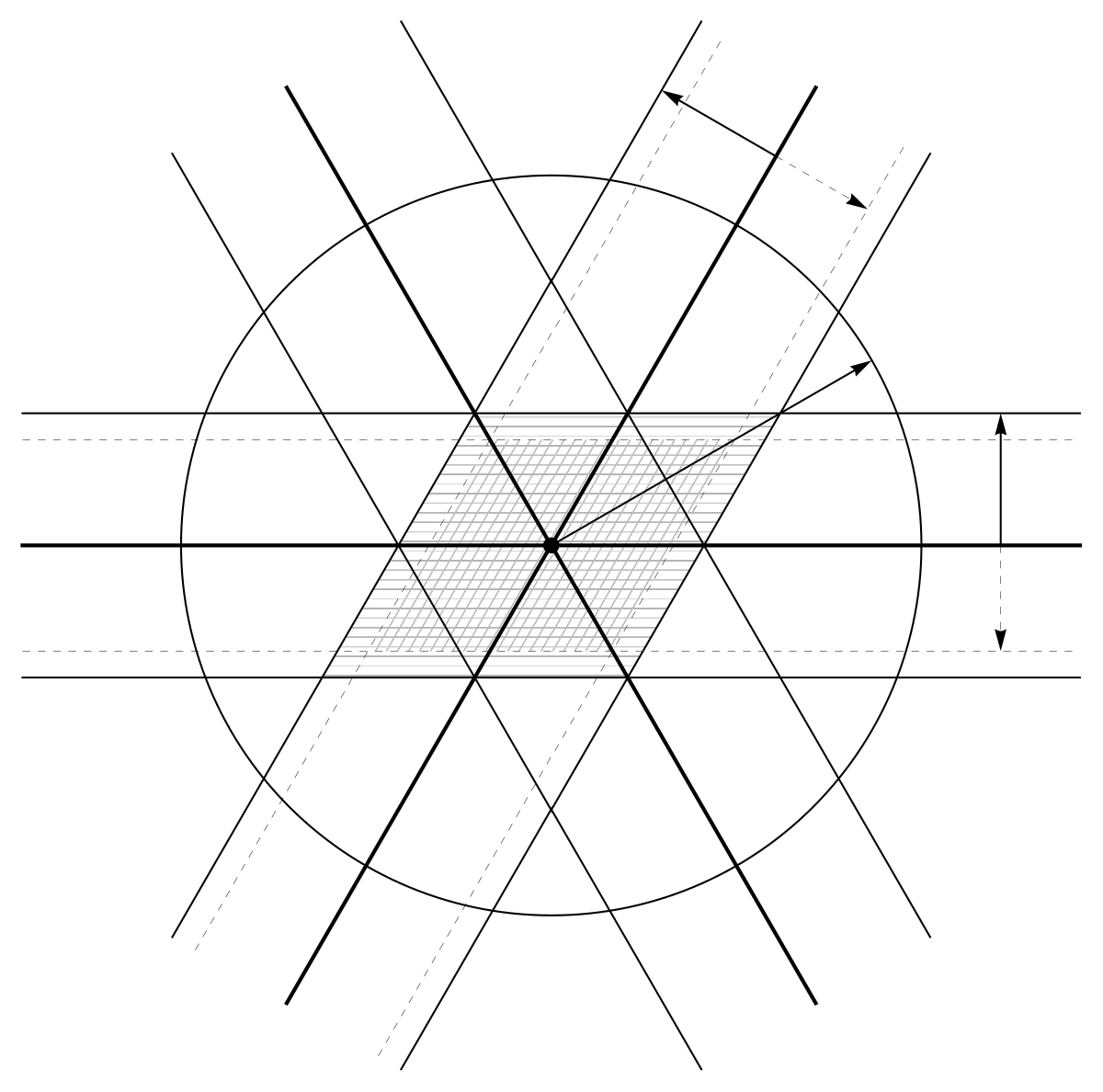}
\put(53,46){$\scriptstyle i\h_G^{\ast}$}
\put(4,46){$\scriptstyle i\h_{L_1}^{\ast}$}
\put(30,10){$\scriptstyle i\h_{L_2}^{\ast}$}
\put(30,88){$\scriptstyle i\h_{L_3}^{\ast}$}
\put(90,24){$\color{gray}{\scriptstyle i\h_{T_0}^{\ast}}$}
\put(92,54){$\scriptstyle\tilde{\eta}_1$}
\put(92,44){$\scriptstyle\eta_1$}
\put(66,89){$\scriptstyle\tilde{\eta}_1$}
\put(75,84){$\scriptstyle\eta_1$}
\put(72,58){$\scriptstyle\tilde{\eta}_2$}
\end{overpic}
\caption{\footnotesize The system of subspaces $i\h_L^{\ast}$ (three 1-dimensional, and one 2- and 0-dimensional each) corresponding to the five semistandard Levi subgroups $L\in\mathcal{L}_{\infty}$ of $\SL_3(\mathbb{R})$, and their corresponding tubular neighborhoods $\mathcal{T}_L(\tilde{\eta}_{r_L})$. Also depicted are the intersection property \eqref{TubeIntersectionProperty} (with $L=\langle L_1,L_2\rangle=G$, and with the intersection on the left-hand side shaded) and its preservation when passing to radii $(\eta_1,\tilde{\eta}_2)$ with $\eta_1<\tilde{\eta}_1$.}
\label{TubularFigure}
\end{figure}

We first set $\eta_{r_G}$ and $A_{r_G}$ to be values of $\eta$ and $A$ for which property~\eqref{asymp2} of Lemma \ref{asymp} hold. (Note that this property is invoked just once and for all. Also note that in the base case $M_{\mu}=G$, $M'=G$ in \eqref{thing3}, $\mu\in i\mathfrak{h}_G^{\ast}$, and $W(A_M^G)_{\delta}=W(A_M)_{\delta}$.)

The remaining constants indexed by $i=r_G+1,\dots,r_{M_\mu}$ will be defined by induction on $i$. We set $c_i=\min_{j<i}\eta_j/\tilde\eta_j$ and then take any $\kappa_i$ satisfying $0<\kappa_i<\frac{c_i\tilde\eta_{r_0}}{8|W(A_M)_\delta|C}$.

Applying Property~\eqref{asymp3} of Lemma \ref{asymp} with $\kappa=\kappa_i$, $\epsilon=\frac12|W(A_M)_\delta|^{-1}$, and $\theta_0$ yields constants $\eta_i'$ and $A_i$. We then set $\eta_i=\min\big\{\eta_i',\frac{\kappa_i}{A_0+1/2}\big\}$. Since $\eta_i/\tilde\eta_i\leqslant\eta_j/\tilde\eta_j$ for $i>j$,
\eqref{TubeIntersectionProperty} holds with the system of $\eta_i$'s in place of $\tilde\eta_i$.
We record that (particularly in view of $\eta_i\leqslant\min(\eta'_i,\eta_{r_G})$) the constants $(\eta_i,A)$ and $(\eta_i,\kappa_i,A)$ verify Properties~\eqref{asymp2} and \eqref{asymp3} of Lemma~\ref{asymp} (with $\epsilon=\frac12|W(A_M)_\delta|^{-1}$ and $\theta_0$) for every $A\geqslant\max(A_i,A_{r_G})$.

Finally we put $A=\max_i\{A_i,2\eta_i^{-1}\}$ and $B=\frac12\eta_{r_G}\|\rho\|$. Recall from \S\ref{sec:herm} that the half-sum of positive roots $\rho$ bounds the real parts in the unitary dual. Our construction of constants $A$, $B$, $(\eta_i)$ and $(\kappa_i)$ ensures that properties \eqref{thing1} and \eqref{thing2} of Lemma~\ref{nightmare} hold; it remains to verify \eqref{thing3}.

For $\nu\in\h_{\delta,{\rm hm}}^*$ satisfying \eqref{range}, let $M'\in\mathcal{L}(\delta)$ containing $M_\mu$ be maximal for the property
\begin{equation}\label{defM'}
\|\mu^{M'}\|\leqslant \frac12\eta_{r_{M'}}\|{\rm Re}\,\nu\|.
\end{equation}
This is well-defined, since if $M_1$ and $M_2$ satisfy this bound, then so does $\langle M_1,M_2\rangle \in\mathcal{L}(\delta)$. Moreover $\|\mu^{M'}\|\leqslant B$ is immediate.

Using $\eqref{range}$ we have that $\|i\mIm\nu-\mu\| \leqslant R^{-1}\leqslant A^{-1}\|\mRe\nu\|$, and so
\begin{equation}\label{funpart}
\| i\mIm\nu-\mu_{M'}\|\leqslant
\|i\mIm\nu-\mu\|+\|\mu^{M'}\|
\leqslant (\frac12\eta_{r_{M'}}+A^{-1})\|{\rm Re}\,\nu\|\leqslant \eta_{r_{M'}}\|{\rm Re}\,\nu\|. \end{equation}
This proves the upper bound \eqref{good-guy-range}. 
 
We proceed to some preliminary estimates toward \eqref{bad-guy-range}. We claim that for all $L\in\mathcal{L}(\delta)$, not contained in $M'$, we have
\begin{equation}\label{warm-up}
\|\mu^L\|>\frac{c_{r_{M'}} \tilde\eta_{r_0}}{2C}\|{\rm Re}\, \nu\|.
\end{equation}
Assuming otherwise, we apply the inequality \eqref{LM-geometric} with $L'=M_\mu$ to obtain the upper bound $\|\mu^{\langle L,M_\mu\rangle}\|\leqslant  \frac12 c_{r_{M'}} \tilde\eta_{r_{\langle L,M_\mu\rangle}}\|{\rm Re}\, \nu\|$. Now from \eqref{defM'} and using $\eta_{r_{M'}}\leqslant c_{r_{M'}}\tilde\eta_{r_{M'}}$ we also have $\|\mu^{M'}\|\leqslant \frac12c_{r_{M'}}\tilde\eta_{r_{M'}}\|{\rm Re}\,\nu\|$. Setting $M''=\langle M',\langle L,M_\mu\rangle\rangle=\langle L,M'\rangle$, the compatibility of the constants $\tilde{\eta_i}$ then shows that $\|\mu^{M''}\|\leqslant \frac12c_{r_{M'}}\tilde\eta_{r_{M''}}\|{\rm Re}\,\nu\|$. Now since $L\not\subset M'$, the subgroup $M''$ is strictly larger than $M'$. Thus $c_{r_{M'}}\leqslant \eta_{r_{M''}}/\tilde\eta_{r_{M''}}$ and $\|\mu^{M''}\|\leqslant \frac12\eta_{r_{M''}}\|{\rm Re}\,\nu\|$. But this contradicts the maximality of $M'$, establishing \eqref{warm-up}. Moreover, we may bootstrap \eqref{warm-up} to show that
\begin{equation}\label{bootstrap}
\| (\mu_{M'})^L\|\geqslant  \frac{3c_{r_{M'}} \tilde\eta_{r_0}}{8C}\|\mRe\nu\|
\end{equation}
for all $L\in\mathcal{L}(\delta)$ not contained in $M'$. This can be seen from
\[
\|\mu^L-(\mu_{M'})^L\|=\|(\mu-\mu_{M'})^L\|=\|(\mu^{M'})^L\|\leqslant \|\mu^{M'}\|\leqslant \frac12\eta_{r_{M'}}\|{\rm Re}\,\nu\|\leqslant \frac{c_{r_{M'}}\tilde\eta_{r_0}}{8C} \|{\rm Re}\,\nu\|
\]
along with the triangle inequality.

We can now prove \eqref{bad-guy-range}. Arguing by contradiction, we let $w\in W(A_M)_\delta\setminus W(A^{M'}_M)_\delta$ and suppose that $\| w\cdot i\mIm\nu - \mu_{M'} \|\leqslant \kappa_{r_{M'}}\| \mRe \nu \|$. Then, using this and \eqref{funpart}, we get
\[ \| \mu_{M'} - w\mu_{M'} \|
\leqslant \| \mu_{M'} - w\cdot i\mIm\nu \| + \| w\cdot(i\mIm\nu - \mu_{M'} )\|
\leqslant (\kappa_{r_{M'}}+ \eta_{r_{M'}})\| \mRe\nu \|. \]
From this it follows by induction on $k$ that $\| \mu_{M'} - w^k\mu_{M'} \| \leqslant k(\kappa_{r_{M'}}+ \eta_{r_{M'}}) \| \mRe\nu \|$. From this and the expression $(\mu_{M'})_{M_w}=|W(A_M)_\delta|^{-1}\sum_{k=1}^{|W(A_M)_\delta|}w^k\mu_{M'}$, we conclude that 
\[
\|(\mu_{M'})^{M_w} \| = \| \mu_{M'}- (\mu_{M'})_{M_w} \| \leqslant |W(A_M)_\delta|(\kappa_{r_{M'}}+ \eta_{r_{M'}}) \| \mRe\nu \|<(\frac18+\frac14)\frac{c_{r_{M'}} \tilde\eta_{r_0}}{C}\| \mRe\nu \|.
\]
Since $w\not\in W(A^{M'}_M)_\delta$, $M_w\not\subset M'$, we may now apply \eqref{bootstrap} with $L=M_w$ to get a contradiction. 
\end{proof}

\subsection{Angles}\label{angles-subsec}

\emph{We keep the local notational conventions from \S\ref{good-bad-subsec}.}
The following result provides geometric information on vectors $\nu\in\mathfrak{h}_{M,\mathbb{C}}^{\ast}$ which will be used as inputs in our application of Lemma~\ref{asymp}. Put briefly, it says that the real and imaginary parts of the argument of $h$ on the left-hand side of \eqref{2parts2} are bounded away from being parallel. To quantify this, here and in Section~\ref{sec:Paley-Wiener} we will write $\angle(\nu_1,\nu_2)$ for the unoriented positive angle between the subspaces spanned by $\nu_1,\nu_2\in\h^{\ast}_M$ (that is, the shortest positive distance from the angle between $\nu_1$ and $\nu_2$ to $\pi\mathbb{Z}$).

\begin{lemma}\label{NSA}
There exists $\theta_0>0$ (depending only on $M$) such that, for every $\nu=\mathrm{Re}\,\nu+i\mIm\nu\in\mathfrak{h}^{\ast}_{\delta,\mathrm{hm}}\setminus i\mathfrak{h}^{\ast}_M$ and every $w\in W(A_M)_{\delta}$ not fixing $\nu$, we have $\angle(\mathrm{Re}\,\nu,w\mathrm{Im}\,\nu-\mathrm{Im}\,\nu)\geqslant\theta_0$.

In particular, there are $\theta_0,A_0>0$ such that, for every $\nu\in\mathfrak{h}^{\ast}_{\delta,\mathrm{hm}}\setminus i\mathfrak{h}^{\ast}_M$, $\mu\in i\mathfrak{h}^{\ast}_M$, and $w\in W(A_M)_{\delta}$ satisfying $\|w\cdot i\mIm\nu-\mu\|\geqslant A_0\| i\mIm\nu-\mu\|$, $\nu_1=w\nu-\mu$ satisfies $\angle(\mathrm{Re}\,\nu_1,\mathrm{Im}\,\nu_1)\geqslant\theta_0$.
\end{lemma}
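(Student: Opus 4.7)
The plan is to prove the first (principal) angle bound first, from which the ``in particular'' form follows by a short perturbation argument. Since $W(A_M)_\delta$ is finite, it suffices to produce, for each pair $(w_0,w)\in W(A_M)_\delta\times W(A_M)_\delta$ with $w_0\neq I$ and $w$ not acting trivially on the $+1$-eigenspace $V^+$ of $w_0$ in $\mathfrak{h}_M^{\ast}$, a positive constant $c(w_0,w)$ such that $\sin\angle(\varrho,w\nu-\nu)\geqslant c(w_0,w)$ uniformly in nonzero $\varrho\in V^-$ (the $-1$-eigenspace of $w_0$) and $\nu\in V^+$ with $w\nu\neq\nu$; the uniform $\theta_0$ of the lemma is then obtained as the minimum of the finitely many $\arcsin c(w_0,w)$.

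The core computation exploits the orthogonality $V^-\perp V^+$, inherent from $w_0$ being orthogonal. Writing $P^\pm$ for the orthogonal projections onto $V^\pm$, for $\varrho\in V^-$ and $\nu\in V^+$ one has $\langle\varrho,w\nu-\nu\rangle=\langle\varrho,P^-(w\nu)\rangle$, while
\[
\|w\nu-\nu\|^2=\|P^+(w\nu)-\nu\|^2+\|P^-(w\nu)\|^2.
\]
For unit $\varrho$, Cauchy--Schwarz then gives $\cos^2\angle(\varrho,w\nu-\nu)\leqslant\|P^-(w\nu)\|^2/\|w\nu-\nu\|^2$, whence
\[
\sin^2\angle(\varrho,w\nu-\nu)\geqslant\frac{\|P^+(w\nu)-\nu\|^2}{\|w\nu-\nu\|^2}.
\]
The right-hand side no longer involves $\varrho$, so the problem reduces to bounding this ratio from below, uniformly over $\nu\in V^+$ with $w\nu\neq\nu$.

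The crucial algebraic observation is that the two linear maps $A=(P^+\circ w-I)|_{V^+}$ and $B=(w-I)|_{V^+}$ on the finite-dimensional space $V^+$ share the same kernel, namely $V^+\cap\ker(w-I)$. The inclusion $\ker B\subseteq\ker A$ is immediate; for the reverse, if $P^+(w\nu)=\nu$ with $\nu\in V^+$, then $\|\nu\|=\|P^+(w\nu)\|\leqslant\|w\nu\|=\|\nu\|$, and equality in the projection bound forces $w\nu\in V^+$, hence $P^+(w\nu)=w\nu=\nu$. On the orthogonal complement of this common kernel inside $V^+$ both $A$ and $B$ are injective, so there exist $c_1,c_2>0$ depending only on $(w_0,w)$ with $\|A\nu\|\geqslant c_1\|\nu\|$ and $\|B\nu\|\leqslant c_2\|\nu\|$ there; writing a general $\nu\in V^+$ as the sum of a piece in the common kernel and a piece in its orthogonal complement, we obtain $\|A\nu\|/\|B\nu\|\geqslant c_1/c_2$ whenever $B\nu\neq 0$, which is the sought uniform lower bound.

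Finally, to deduce the second statement, write $\mu=i\mu'$ with $\mu'\in\mathfrak{h}_M^*$ and use orthogonality of $w$ to rewrite
\[
\angle(\mRe\,\xi_1,\mIm\,\xi_1)=\angle(w\varrho,w\nu-\mu')=\angle(\varrho,\nu-w^{-1}\mu');
\]
now decompose $\nu-w^{-1}\mu'=-(w^{-1}\nu-\nu)+w^{-1}(\nu-\mu')$. The principal statement applied to $w^{-1}$ (which also does not fix $\nu$) yields $\angle(\varrho,w^{-1}\nu-\nu)\in[\theta_0,\pi-\theta_0]$. The hypothesis together with the triangle inequality gives $\|w\nu-\nu\|\geqslant(A_0-1)\|\nu-\mu'\|$, hence $\|w^{-1}(\nu-\mu')\|\leqslant(A_0-1)^{-1}\|w^{-1}\nu-\nu\|$; choosing $A_0$ sufficiently large in terms of $\theta_0$, a routine angle-continuity estimate shows that the perturbation moves $\cos\angle$ by at most $\tfrac12(1-\cos\theta_0)$, so the perturbed angle still lies in some $[\theta_0',\pi-\theta_0']$ with $\theta_0'>0$ depending only on $\theta_0$. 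The principal obstacle in the argument is identifying the shared-kernel structure of $A$ and $B$: this is what converts an \emph{a priori} non-trivial compactness issue near $V^+\cap\ker(w-I)$, where numerator and denominator vanish simultaneously, into clean finite-dimensional linear algebra.
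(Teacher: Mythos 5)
Your proof is correct, and your argument for the principal angle bound takes a genuinely different route from ours. We coordinatize $\mathfrak{h}_M^{\ast}$ so that $W(A_M)_\delta$ permutes coordinates, prove the transversality $\mathfrak{h}^{\ast}_{M,w_0,-1}\cap(w-\mathrm{id})\mathfrak{h}^{\ast}_{M,w_0,+1}=\{0\}$ by a combinatorial maximality argument on the coordinates of $\nu$, and then extract a uniform angle by compactness over the finitely many pairs $(w,w_0)$. You work only with the orthogonality of the action: after observing that $\langle\varrho,w\nu-\nu\rangle$ sees only $P^-(w\nu)$, you reduce the angle bound to $\|(P^+\circ w-I)\nu\|\gg\|(w-I)\nu\|$ on $V^+$, and your key step --- that $P^+\circ w-I$ and $w-I$ have the same kernel on $V^+$, via the equality case of $\|P^+x\|\leqslant\|x\|$ --- simultaneously recovers our transversality statement and makes it quantitative, bypassing both the permutation combinatorics and the separate compactness argument. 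This is more elementary and more general (any finite group of orthogonal transformations would do). Two small corrections. First, your identity $\|w\nu-\nu\|^2=\|P^+(w\nu)-\nu\|^2+\|P^-(w\nu)\|^2$ holds only when $V^+\oplus V^-=\mathfrak{h}_M^{\ast}$, i.e.\ when $w_0$ is an involution; a general $w_0\in W(A_M)_\delta$ contributes a third orthogonal component of $w\nu$, so the $=$ should be $\geqslant$ --- fortunately the inequality points in the direction your estimate needs, so nothing breaks. Second, in the second claim you invoke the principal statement in the two-sided form $\angle(\varrho,w^{-1}\nu-\nu)\in[\theta_0,\pi-\theta_0]$ rather than the stated one-sided bound; this is justified (your $\sin$-bound is two-sided, or replace $\varrho$ by $-\varrho\in V^-$), but should be said explicitly. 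Your perturbation argument for the second claim is otherwise the same as ours.
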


\begin{proof}
As in Section~\ref{sec:arch-rep-PWCD} we may use the standard basis to coordinatize $\mathfrak{a}^{\ast}_M$ so that $\mathfrak{a}_M^{\ast}$ and $\mathfrak{h}_M^{\ast}$ are identified with $\mathbb{R}^{\dim\mathfrak{a}_M}$ and its trace zero subspace, respectively, and so that
$W(A_M)_\delta$ acts on $\mathfrak{h}_{M,\C}^*$ by permuting its coordinates: $w\nu=w(\nu_i)=(\nu_{w(i)})$.

Recalling the notations of \S\ref{sec:herm}, we claim that, for arbitrary $w_0,w\in W(A_M)_{\delta}$,
\begin{equation}
\label{ToProve-Transverse}
\mathfrak{h}^{\ast}_{M,w_0,-1}\cap (w-\mathrm{id})\mathfrak{h}^{\ast}_{M,w_0,+1}=\{0\}.
\end{equation}
Indeed, suppose, contrary to \eqref{ToProve-Transverse}, that $\mathrm{Im}\,\nu\in\mathfrak{h}^{\ast}_{M,w_0,+1}$ is such that $0\neq w\mathrm{Im}\,\nu-\mathrm{Im}\,\nu\in\mathfrak{h}^{\ast}_{M,w_0,-1}$. Let $S=S(\mathrm{Im}\,\nu)$ be the subset of indices $i\leqslant n$ of $\mathrm{Im}\,\nu$ such that $\{\mathrm{Im}\,\nu_{w^k(i)}:k\in\mathbb{Z}\}$  is not reduced to $\{\mathrm{Im}\,\nu_i\}$. From $w\mathrm{Im}\,\nu\neq\mathrm{Im}\,\nu$, it is clear that $S$ is non-empty. We may choose $i_0\in S$ such that $\mathrm{Im}\,\nu_{i_0}\geqslant\mathrm{Im}\,\nu_i$ for all $i\in S$ and $\mathrm{Im}\,\nu_{i_0}>\mathrm{Im}\,\nu_{w(i_0)}$. Let $j_0=w_0(i_0)$. Then $\mathrm{Im}\,\nu_{j_0}=\mathrm{Im}\,\nu_{i_0}$ since $\mathrm{Im}\,\nu\in\mathfrak{h}_{M,w_0,+1}^*$, and $\mathrm{Im}\,\nu_{w(j_0)}-\mathrm{Im}\,\nu_{j_0}=-(\mathrm{Im}\,\nu_{w(i_0)}-\mathrm{Im}\,\nu_{i_0})$ since $w\mathrm{Im}\,\nu-\mathrm{Im}\,\nu\in\mathfrak{h}_{M,w_0,-1}^*$. We find that $\mathrm{Im}\,\nu_{w(j_0)}=2\mathrm{Im}\,\nu_{i_0}-\mathrm{Im}\,\nu_{w(i_0)}>\mathrm{Im}\,\nu_{i_0}=\mathrm{Im}\,\nu_{j_0}$. This, in particular, shows that $j_0\in S$, and by the $w$-invariance of $S$ we obtain $w(j_0)\in S$. But then $\mathrm{Im}\,\nu_{w(j_0)}>\mathrm{Im}\,\nu_{i_0}$ contradicts the maximality of $\mathrm{Im}\,\nu_{i_0}$.

It follows from \eqref{ToProve-Transverse} that there exists an angle $\theta_0(w,w_0)>0$ such that $\angle(\mathrm{Re}\,\nu,(w-\mathrm{id})\mathrm{Im}\,\nu)\geqslant\theta_0(w,w_0)$ for every $0\neq\mathrm{Re}\,\nu\in\mathfrak{h}^{\ast}_{M,w_0,-1}$ and every $\mathrm{Im}\,\nu\in\mathfrak{h}^{\ast}_{M,w_0,+1}$ with $w\mathrm{Im}\,\nu\neq\mathrm{Im}\,\nu$; indeed, angles between unit vectors in the two subspaces $\mathfrak{h}^{\ast}_{M,w_0,-1}$ and $(w-\mathrm{id})\mathfrak{h}^{\ast}_{M,w_0,+1}$ are strictly positive and we may by compactness take the smallest one, with the statement vacuously true if one of these subspaces is $\{0\}$.

Recall now the description of $\mathfrak{h}^{\ast}_{\delta,\mathrm{hm}}$ in \eqref{delta-hm}, along with the decomposition $\mathfrak{h}_{M,w}^*=\mathfrak{h}_{M,w,-1}^*+i\mathfrak{h}_{M,w,+1}^*$. The first claim of the lemma now follows by taking $\theta_0>0$ to be the minimum value of $\theta_0(w,w_0)$, as $M$ and $w,w_0\in W(A_M)_{\delta}$ vary over the finitely many choices.

For the second claim, let $\nu=\mathrm{Re}\,\nu+i\mathrm{Im}\,\nu$, and note that 
\[
\angle( {\rm Re}\, \nu_1, {\rm Im}\, \nu_1-({\rm Im}\,\nu-\mu/i))=\angle(w\mathrm{Re}\,\nu, w\mathrm{Im}\,\nu -\mathrm{Im}\,\nu)=\angle(\mathrm{Re}\,\nu, w^{-1}\mathrm{Im}\,\nu-\mathrm{Im}\,\nu).
\]
Now $\nu\in\mathfrak{h}^{\ast}_{\delta,\mathrm{hm}}\setminus i\mathfrak{h}^{\ast}_M$ and $w^{-1}$ cannot fix $\mathrm{Im}\,\nu$, so that $\angle(\mathrm{Re}\,\nu, w^{-1}\mathrm{Im}\,\nu-\mathrm{Im}\,\nu)\geqslant\theta_0$ by the first claim. Since $\|{\rm Im}\,\nu_1\|\geqslant A_0\|({\rm Im}\,\nu-\mu/i)\|$, the second claim follows by taking $A_0=1/\sin\frac12\theta_0$ and noting that then $\angle({\rm Re}\,\nu_1,{\rm Im}\,\nu_1)\geqslant\theta_0-\arcsin(A_0^{-1})\geqslant\frac12\theta_0$.
\end{proof}

\section{A Paley--Wiener function}\label{sec:Paley-Wiener}

\emph{Throughout this section, we keep the local notational conventions from \S\ref{good-bad-subsec}.}
The principal result of this section is Lemma~\ref{asymp}. This technical lemma provides for Paley--Wiener functions $h(\nu)$ with desirable asymptotics that are crucial in the proof of Lemma~\ref{Lem1}. 

The proof of Lemma~\ref{asymp} is based on the principle of stationary phase, and in particular derives inspiration from standard treatments of the Fourier transform of the uniform measure on the round sphere. Our setting is non-standard (relative to the existing literature) due the presence of a complex phase, which leads to an integral transform \emph{a priori} interpolating between Laplace- and Fourier-type integrals with competing exponential and oscillatory behavior in two independent (but not necessarily orthogonal) directions.

\subsection{Construction of Paley-Wiener function}

In the proof of the crucial Lemma~\ref{Lem1}, essential use is made of a Paley--Wiener function $h$ on $\mathfrak{h}_{M,\C}^{\ast}$ with properties that ensure that the average in \eqref{defn-hR-upperbds} detects the complementary spectrum with suitable exponential weights. Lemma~\ref{asymp}, which is of purely analytic nature, spells out these properties and constructs such $h(\nu)$. 

\newcommand\minstar{\mathop{\min\nolimits^*}}
\newcommand\maxstar{\mathop{\max\nolimits^*}}

\begin{lemma}\label{asymp}
Let $M\in\mathcal{L}$. There is a real-valued $g\in C^\infty_c(\h_M)$ whose Fourier transform $h\in\mathcal{PW}(\h_{M,\C}^*)_1$ satisfies:
\begin{enumerate}
\item\label{asymp1} $h(k\nu)=h(\nu)$ for all $k\in {\rm O}(\h_M,\langle,\rangle)$ and all $\nu\in\h_{M,\C}^*$;
\item\label{asymp2} there are constants $A,B,\eta,c>0$ such that for $R\geqslant 1$, $\sigma\geqslant AR^{-1}$, 
\begin{equation} \label{asymp2-eq}
\mathop{\min}_{\substack{\|{\rm Im}\,\nu\|\leqslant \eta\sigma\\ \|{\rm Re}\,\nu\|=\sigma}} |h(R\nu)|\geqslant Be^{cR\sigma};
\end{equation}
\item\label{asymp3} for every $\epsilon,\kappa,\theta_0>0$ there is $0<\eta\leqslant 1$ (depending only on $\kappa$, $\theta_0$) and $A>1$ (depending on $\epsilon$, $\kappa$, $\theta_0$) such that for $R\geqslant 1$, $\sigma\geqslant AR^{-1}$,
we have
\begin{equation}\label{with-modulus}
\mathop{\max}_{\substack{\|{\rm Im}\,\nu\|> \kappa\sigma,\,\|{\rm Re}\,\nu\|=\sigma,\\ \angle(\mathrm{Re}\,\nu,\mathrm{Im}\,\nu)\geqslant\theta_0}} |h(R\nu)|
\leqslant \epsilon \mathop{\min}_{\substack{\|{\rm Im}\,\nu\|\leqslant \eta\sigma\\\|{\rm Re}\,\nu\|=\sigma}} |h(R\nu)|.
\end{equation}
\end{enumerate}
\end{lemma}

\begin{remark}
\label{SPL-remark}
Weyl group invariance~\eqref{asymp1} is required to guarantee non-negativity in \eqref{defn-hR-upperbds}.

Properties \eqref{asymp2} and \eqref{asymp3} require perhaps
greater explanation. 
As a Paley-Wiener function of exponential type $R$, $h(R\nu)$ exhibits exponential growth as high as $e^{(R+o(1))\|\mathrm{Re}\,\nu\|}$ in the non-tempered directions and rapid decay along the tempered subspace as soon as $\|{\rm Im}\,\nu\|\gg 1/R$. Properties \eqref{asymp2} and \eqref{asymp3} express the delicate interplay between these two asymptotic behaviors. The angular condition in \eqref{asymp3} is used to show clear distinction in exponential behavior in \eqref{with-modulus}; cf.~\eqref{claim-1}--\eqref{claim-2}.

It might also assist the reader to recall how these two properties are used in the proof of Lemma \ref{Lem1}. Property \eqref{asymp2} is used to show that the ``good Weyl elements'' make a large contribution to the average in \eqref{defn-hR-upperbds}. These good elements, by Lemma \ref{nightmare}, have orbits whose tempered components remain in a small ball about the origin. Property \eqref{asymp3} is used to show that the ``bad Weyl elements'' are well-controlled; these (again by Lemma \ref{nightmare}) have orbits which lie far from the origin.
\end{remark}

\begin{proof}
Let $b\in C^\infty_c(\R)$ be the bump function equal to $e^{-1/(1-x^2)}$ in $[-1,1]$ and vanishing outside of this interval. For $H\in\h_M$, let $g(H)=b(\|H\|)$. In the purely archimedean-analytic Lemma~\ref{lem:SPL} below, which is also of independent interest, we shall obtain precise asymptotics of $h=\widehat{g}$ (critically as a Paley--Wiener function on all of $\h_{M,\C}^{\ast}$). Using these asymptotics, we shall now show that $g$ satisfies all properties of Lemma~\ref{asymp}.

It is clear that $h\in\mathcal{PW}(\h_{M,\C}^*)_1$ since $g$ is supported in the ball of radius $1$. To see property \eqref{asymp1}, identify $\h_M$ equipped with the non-degenerate bilinear form $\langle\,,\rangle$ of \S\ref{norms-weyl-groups-subsec} with a Euclidean $\mathbb{R}^d$, and, as above, let $\omega$ denote the surface measure of the unit sphere $S^{d-1}$ in $\mathbb{R}^d$. The Fourier transform at $\nu\in\C^d$ of the function $X\mapsto b(\|X\|)$ can be written as
\begin{equation}\label{radial}
\int_0^1 e^{-1/(1-r^2)}\widehat{{\rm d}\omega}(r\nu)r^{d-1}{\rm d}r.
\end{equation}
The ${\rm O}(d,\R)$-invariance of this expression then follows from that of $\omega$. This proves \eqref{asymp1}.

Statements \eqref{asymp2} and \eqref{asymp3} are consequences of Lemma \ref{lem:SPL} below, where the notation $\mathcal{N}(\nu)$ derives from \eqref{xi-xi}. Specifically, denoting $\|\mathrm{Re}\,\nu\|=\sigma$, $\|\mathrm{Im}\,\nu\|=t$, and $\angle(\mathrm{Re}\,\nu,\mathrm{Im}\,\nu)=\theta$, we will prove that:
\begin{itemize}
\item[--] for every $\epsilon>0$, there exist $\eta,A>0$ such that, whenever $\sigma\geqslant A$ and $t\leqslant\eta\sigma$,
\begin{equation}
\label{claim-1}
\mathrm{Re}\big(\mathcal{N}(\nu)-\sqrt{2\mathcal{N}(\nu)}\big)\geqslant (1-\epsilon)\sigma\quad\text{and}\quad |\widehat{g}(\nu)|\gg e^{(1-2\epsilon)\sigma};
\end{equation}
\item[--] for every $\kappa,\theta_0>0$, there exist $\delta,A>0$ such that, whenever $\sigma\geqslant A$, $t\geqslant\kappa\sigma$, and $\theta\geqslant\theta_0$,
\begin{equation}
\label{claim-2}
\mathrm{Re}\big(\mathcal{N}(\nu)-\sqrt{2\mathcal{N}(\nu)}\big)\leqslant (1-\delta)\sigma\quad\text{and}\quad |\widehat{g}(\nu)|\ll e^{(1-\delta)\sigma}.
\end{equation}
\end{itemize}
It is clear that \eqref{asymp2} and \eqref{asymp3} follow from the above statements. Indeed, let $B,B'>0$ be implied constants in the second estimates in \eqref{claim-1} and \eqref{claim-2}. Fixing any $0<\epsilon<\frac12$, \eqref{claim-1} yields $\eta,A>0$ such that for $R\sigma\geqslant A$,
\[ \min_{\substack{\|\mathrm{Im}(R\nu^0)\|\leqslant\eta(R\sigma)\\\|\mathrm{Re}(R\nu^0)\|=R\sigma}}|\widehat{g}(R\nu^0)|\geqslant Be^{cR\sigma} \]
with $c=1-2\epsilon>0$. Similarly, given $\epsilon,\kappa,\theta_0>0$, let $\delta, A'>0$ be values yielded by \eqref{claim-2}, let $\eta,A>0$ be values yielded by \eqref{claim-1} invoked with $\epsilon=\min(\frac14\delta,\frac14)$, and let $A''=\log(B''/B'\epsilon)/(\delta/2)$; then, for $R\sigma\geqslant\max(A,A',A'')$,
\[ \max_{\substack{\|\mathrm{Im}(R\nu)\|>\kappa(R\sigma),\,\|\mathrm{Re}(R\nu)\|=R\sigma\\\angle(\mathrm{Re}(R\nu),\mathrm{Im}(R\nu))\geqslant\theta_0}}|\widehat{g}(R\nu)|\leqslant \frac{B''}{B'} e^{-(\delta/2) R\sigma}\cdot B'e^{(1-2\epsilon)R\sigma}\leqslant\epsilon\min_{\substack{\|\mathrm{Im}(R\nu)\|\leqslant\eta(R\sigma)\\\|\mathrm{Re}(R\nu)\|=R\sigma}}|\widehat{g}(R\nu)|. \]

Writing $\langle\nu,\nu\rangle=u+iv$, we have that
\[ u=\sigma^2-t^2,\quad v=\pm 2\sigma t\cos\theta,\quad u^2+v^2=(\sigma^2+t^2)^2-4\sin^2\theta \sigma^2t^2, \]
as well as $\mathrm{Re}\,\mathcal{N}(\nu)=\mathrm{Re}\sqrt{u+iv}=\big(\frac12(u+\sqrt{u^2+v^2})\big)^{1/2}$. For $t\leqslant\eta\sigma$ with $\eta<1$, this gives
\[ \mathrm{Re}\big(\mathcal{N}(\nu)-\sqrt{2\mathcal{N}(\nu)}\big)\geqslant u^{1/2}-\sqrt2(u^2+v^2)^{1/8}\geqslant\sigma\sqrt{1-\eta^2}-2\sqrt{\sigma}. \]
This clearly implies the first claim in \eqref{claim-1}; the second claim in \eqref{claim-1} follows by invoking Lemma~\ref{lem:SPL} and noting that, for $\eta>0$ sufficiently small, $|\mathrm{arg}\,\mathcal{N}(\nu)|\leqslant\frac{\pi}3$ and $|\mathcal{N}(\nu)|^{-1}\geqslant e^{-\epsilon\sigma/d}$.

On the other hand, for $t\geqslant\kappa\sigma$ and $\theta\geqslant\theta_0$, we find, using the elementary inequality $\sqrt{1-x}\leqslant 1-\frac12x$ for $x\leqslant 1$, that
\[ \sqrt{u^2+v^2}=(\sigma^2+t^2)\sqrt{1-4\sin^2\theta\sigma^2t^2/(\sigma^2+t^2)^2}\leqslant\sigma^2+t^2-2\sigma^2\sin^2\theta_0\kappa^2/(1+\kappa^2), \]
and therefore (since the square root is the principal branch)
\[ \mathrm{Re}\big(\mathcal{N}(\nu)-\sqrt{2\mathcal{N}(\nu)}\big)\leqslant\mathrm{Re}\,\mathcal{N}(\nu)\leqslant\sigma\big(1-\sin^2\theta_0\kappa^2/(1+\kappa^2)\big)^{1/2}. \]
This estimate implies the first claim in \eqref{claim-2}; the second claim in \eqref{claim-2} then follows by Lemma~\ref{lem:SPL} by using simply $|\mathcal{N}(\nu)|\gg 1$.
\end{proof}

\subsection{(Complex) Fourier transform of the sphere}\label{sec:FT-sphere}

Consider $\R^d$ equipped with its standard inner product $\langle\,,\rangle$, and let $\omega$ denote the surface measure of the corresponding unit sphere $S^{d-1}$ in $\mathbb{R}^d$. We now collect some facts about the Fourier transform $\widehat{\mathrm{d}\omega}(\nu)$, for $\nu\in(\mathbb{C}^d)^{\ast}$.

When $\nu$ is either real or imaginary, then we may use the rotational invariance of $\omega$ to assume that $\nu=\nu_1$ points in the first standard coordinate direction. Then, using spherical coordinates, we obtain
\begin{equation}\label{omega-hat1}
\widehat{\mathrm{d}\omega}(\nu)=|\omega_{d-2}|\int_0^\pi e^{\nu_1 \cos\varphi} (\sin\varphi)^{d-2}\,\text{d}\varphi
=(2\pi)^{d/2}(i\nu_1)^{-d/2+1}J_{d/2-1}(i\nu_1),
\end{equation}
where $|\omega_{d-2}|=2\pi^{(d-1)/2}/\Gamma(\frac12(d-1))$ is the volume of the $(d-2)$-sphere, and $J_{d/2-1}(\nu)$ is the usual $J$-Bessel function~\cite[(10.9.4)]{NIST2010}. The above oscillatory integral has non-degenerate stationary points at $\varphi=0,\pi$. The classical method of stationary phase when $\nu_1$ is imaginary, or Laplace's method when $\nu_1$ is real, can then be used to obtain asymptotics of $\widehat{\mathrm{d}\omega}(\nu)$, which of course agree with the usual asymptotics for the $J$-Bessel function with large argument.

Returning to general $\nu\in\mathbb{C}^d$, extend $\langle \,,\rangle$ complex linearly to all of $(\mathbb{C}^d)^{\ast}$; we denote this extension by $\langle \,,\rangle_{\mathbb{C}}$. Thus, if $\nu\in(\mathbb{C}^d)^{\ast}$ is given by $\nu=(\nu_1,\ldots ,\nu_d)$ in the canonical orthonormal basis for $\langle\,,\rangle$, then $\langle \nu,\nu\rangle_{\mathbb{C}}=\sum_i \nu_i^2$. Moreover, if both ${\rm Re}\,\nu$ and ${\rm Im}\,\nu$ are non-zero, and $\theta\in [0,\pi]$ is defined by the relation $\langle {\rm Re}\,\nu,{\rm Im}\,\nu\rangle=\| {\rm Re}\,\nu\|\|{\rm Im}\,\nu\|\cos\theta$, then
\begin{equation}\label{xi-xi}
\langle \nu,\nu\rangle_{\mathbb{C}}=\|{\rm Re}\, \nu\|^2-\|{\rm Im}\, \nu\|^2+2i\| {\rm Re}\,\nu\|\|{\rm Im}\,\nu\|\cos\theta.
\end{equation}
We also set $\mathcal{N}(\nu)=\langle \nu,\nu\rangle_{\mathbb{C}}^{1/2}$, where we fix the standard branch of the square root.

We claim that
\begin{equation}
\label{alt-sphere}
\widehat{\mathrm{d}\omega}(\nu)=|\omega_{d-2}|\int_0^{\pi}e^{\mathcal{N}(\nu)\cos\varphi}(\sin\varphi)^{d-2}\,\mathrm{d}\varphi,
\end{equation}
extending \eqref{omega-hat1} to complex $\nu\in(\mathbb{C}^d)^{\ast}$. It is clear that $\widehat{\mathrm{d}\omega}(\nu)$ is an even entire function of $\nu\in(\mathbb{C}^d)^{\ast}$. Moreover, the above argument by rotational invariance shows that, for $\nu\in(\mathbb{R}^d)^{\ast}$,
\begin{equation}\label{omega-hat2}
 \widehat{\mathrm{d}\omega}(\nu)=(2\pi)^{d/2}(i\mathcal{N}(\nu))^{-d/2+1}J_{d/2-1}(i\mathcal{N}(\nu)).
 \end{equation}
The right-hand side, understood as a function of $z=\mathcal{N}(\nu)$, is an \emph{even}, \emph{entire} function of $z$, so in turn it defines (for example via its series expansion in powers of $z^2=\mathcal{N}(\nu)^2$) an analytic function of $\nu\in(\mathbb{C}^d)^{\ast}$. Thus the two sides of \eqref{omega-hat2} are analytic functions of $\nu\in(\mathbb{C}^d)^{\ast}$ that agree along $(\mathbb{R}^d)^{\ast}$. The claim \eqref{alt-sphere} follows by analytic continuation.

In particular, using the asymptotic expansion for the $J$-Bessel function~\cite[(10.17.3)]{NIST2010},
\begin{equation}
\label{AsymptoticsFourierTransformSphere}
\widehat{\mathrm{d}\omega}(\nu)=
\left(\frac{2\pi}{|\mathcal{N}(\nu)|}\right)^{(d-1)/2}\big(c_{+}e^{\mathcal{N}(\nu)}+c_{-}e^{-\mathcal{N}(\nu)}\big)+\mathrm{O}\bigg(\frac{e^{|\mathrm{Re}\,\mathcal{N}(\nu)|}}{|\mathcal{N}(\nu)|^{(d+1)/2}}\bigg),
\end{equation}
where $c_{\pm}^2=(\pm\mathcal{N}(\nu)/|\mathcal{N}(\nu)|)^{-d+1}$, as well as $\widehat{\mathrm{d}\omega}(\nu)\ll 1$ for $\mathcal{N}(\nu)=\mathrm{O}(1)$. The main term in \eqref{AsymptoticsFourierTransformSphere} arises from the stationary points at $\varphi=0,\pi$.

The expression \eqref{alt-sphere} and the corresponding asymptotics \eqref{AsymptoticsFourierTransformSphere} point to the importance of the complex parameter $\mathcal{N}(\nu)$, which can be thought of as encoding the relative sizes and positioning of $\mathrm{Re}\,\nu,\mathrm{Im}\,\nu\in\mathfrak{h}_M^{\ast}$ via \eqref{xi-xi}.

\subsection{Fourier transform of a bump function}

Let $b\in C^\infty_c(\R)$ be the bump function equal to $e^{-1/(1-x^2)}$ in $[-1,1]$ and vanishing outside of this interval. Define $g\in C^\infty_c(\R^d)$ radially by the formula $g(H)=b(\|H\|)$. The principal result of this section is the following technical Lemma~\ref{lem:SPL}, which gives asymptotics for the size of the Fourier transform $\hat{g}(\nu)$ for $\nu\in \C^d$.

\begin{lemma}\label{lem:SPL}
For $|\mathcal{N}(\nu)|\gg 1$ we have
\begin{equation}
\label{SPL-asymptotics}
\hat{g}(\nu)=\frac{(2\pi)^{d/2}}{\root{4}\of{8e}}\sum_{\pm}\frac{e^{\pm \mathcal{N}(\nu)-\sqrt{\pm 2\mathcal{N}(\nu)}}}{(\pm \mathcal{N}(\nu))^{d/2+1/4}}\big(1+\mathrm{O}(|\mathcal{N}(\nu)|^{-1/2})\big),
\end{equation}
as well as $\hat{g}(\nu)=\mathrm{O}(1)$ when $\mathcal{N}(\nu)=\mathrm{O}(1)$. 
\end{lemma}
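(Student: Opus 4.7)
The proof rests on the radial Fourier representation of $g$ combined with a careful saddle-point analysis of a one-dimensional integral. Since $g(H) = b(\|H\|)$ with $b(x)=e^{-1/(1-x^2)}\mathbf{1}_{|x|<1}$, the spherical-coordinate calculation in \eqref{radial}--\eqref{alt-sphere} gives
\[
\hat g(\xi) = \int_0^1 e^{-1/(1-r^2)}\,\widehat{\mathrm d\omega}(r\xi)\,r^{d-1}\,\mathrm dr.
\]
Since $\mathcal{N}(r\xi)=r\mathcal{N}(\xi)$ for $r>0$ (by the branch choice of the principal square root), the bound $\hat g(\xi)=\mathrm{O}(1)$ when $\mathcal{N}(\xi)=\mathrm{O}(1)$ is immediate from the uniform bound on $\widehat{\mathrm d\omega}$ over compacta. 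For $|\mathcal{N}(\xi)|\gg 1$, I would insert the asymptotic expansion \eqref{AsymptoticsFourierTransformSphere}, writing $\hat g(\xi)=I_{+}+I_{-}+E$ with
\[
I_\pm = c_\pm\Big(\tfrac{2\pi}{|\mathcal{N}(\xi)|}\Big)^{(d-1)/2}\int_0^1 e^{\phi_\pm(r)}\,r^{(d-1)/2}\,\mathrm dr,\qquad \phi_\pm(r)=\pm r\mathcal{N}(\xi)-\tfrac{1}{1-r^2}.
\]
Here $E$ comes from the remainder in \eqref{AsymptoticsFourierTransformSphere} and, after the same saddle-point analysis below, is smaller than $I_\pm$ by a factor $|\mathcal{N}(\xi)|^{-1}$, hence absorbed into the $\mathrm{O}(|\mathcal{N}(\xi)|^{-1/2})$ error.

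The plan is then to apply Laplace's method to each $I_\pm$. Write $\lambda=\pm\mathcal{N}(\xi)$ and $r=1-s$; the critical-point equation $\phi_\pm'(r)=0$ becomes $\lambda=2(1-s)/(s(2-s))^2$, which for small $|s|$ yields
\[
s_0 = (2\lambda)^{-1/2}\big(1+\mathrm{O}(|\lambda|^{-1})\big),\qquad r_0=1-s_0.
\]
Expanding $\phi_\pm(r_0)=\lambda(1-s_0)-1/(s_0(2-s_0))$ to sufficient order (keeping all terms down to $\mathrm{O}(|\lambda|^{-1/2})$, which requires expanding $1/(s_0(2-s_0))=\tfrac{1}{2s_0}+\tfrac14+\mathrm{O}(s_0)$) produces the crucial identity
\[
\phi_\pm(r_0) = \lambda-\sqrt{2\lambda}-\tfrac14+\mathrm{O}(|\lambda|^{-1/2}).
\]
The constant $-\tfrac14$ is not visible at naive leading order but is essential: it is precisely what generates the factor $e^{-1/4}$ in $(8e)^{-1/4}$. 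A similar computation gives $-\phi_\pm''(r_0)=(2\lambda)^{3/2}\big(1+\mathrm{O}(|\lambda|^{-1})\big)$.

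The standard saddle-point formula then produces
\[
\int_0^1 e^{\phi_\pm(r)}r^{(d-1)/2}\,\mathrm dr = \sqrt{2\pi/(-\phi_\pm''(r_0))}\cdot r_0^{(d-1)/2}\,e^{\phi_\pm(r_0)}\big(1+\mathrm{O}(|\lambda|^{-1/2})\big),
\]
and combining the prefactors via $c_\pm\,|\mathcal{N}(\xi)|^{-(d-1)/2}=(\pm\mathcal{N}(\xi))^{-(d-1)/2}$, $(2\pi)^{(d-1)/2}\sqrt{2\pi}=(2\pi)^{d/2}$, and $2^{-3/4}e^{-1/4}=(8e)^{-1/4}$ recovers \eqref{SPL-asymptotics} exactly. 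The principal technical obstacle is justifying Laplace's method uniformly for complex $\mathcal{N}(\xi)$: the saddle $r_0^\pm$ leaves the real axis when $\pm\mathcal{N}(\xi)$ is not positive real, so the contour $[0,1]$ must be deformed to pass through $r_0^\pm$ along a steepest-descent direction. This deformation is valid because the integrand is holomorphic on a neighborhood of $[0,1)$, and the essential singularity of $e^{-1/(1-r^2)}$ at $r=1$ contributes rapid decay along the relevant directions; uniform control over the tail of the contour as $\mathcal{N}(\xi)$ ranges over the complex plane is the main work.
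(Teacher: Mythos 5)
Your saddle-point computation of the main term is correct and does reproduce all the constants in \eqref{SPL-asymptotics} (the location $r_0=1-(2\lambda)^{-1/2}(1+\mathrm{O}(\lambda^{-1}))$, the value $\phi_\pm(r_0)=\lambda-\sqrt{2\lambda}-\tfrac14+\mathrm{O}(\lambda^{-1/2})$, and $-\phi_\pm''(r_0)\sim(2\lambda)^{3/2}$ all check out, and the prefactors assemble into $(2\pi)^{d/2}(8e)^{-1/4}$ exactly as you say). The fatal problem is the very first step: substituting the truncated Bessel asymptotic \eqref{AsymptoticsFourierTransformSphere} into the radial integral and claiming the remainder $E$ is negligible "after the same saddle-point analysis.'' The remainder in \eqref{AsymptoticsFourierTransformSphere} is controlled only in \emph{absolute value}, by $\mathrm{O}\big(e^{|\mathrm{Re}\,\mathcal{N}(r\xi)|}|\mathcal{N}(r\xi)|^{-(d+1)/2}\big)$. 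Integrating this bound against $e^{-1/(1-r^2)}r^{(d-1)/2}\,\mathrm{d}r$ by Laplace's method (now with the \emph{real} phase $r\,\mathrm{Re}\,\mathcal{N}-1/(1-r^2)$) gives $E\ll e^{\mathrm{Re}\,\mathcal{N}-\sqrt{2\,\mathrm{Re}\,\mathcal{N}}}\,|\mathcal{N}|^{-(d+1)/2-3/4}$, whereas the main term has modulus $\asymp e^{\mathrm{Re}\,\mathcal{N}-\mathrm{Re}\sqrt{2\mathcal{N}}}\,|\mathcal{N}|^{-d/2-1/4}$. Since $\mathrm{Re}\sqrt{2\mathcal{N}}=\sqrt{|\mathcal{N}|+\mathrm{Re}\,\mathcal{N}}\geqslant\sqrt{2\,\mathrm{Re}\,\mathcal{N}}$ with equality only for $\mathcal{N}$ real, the ratio $E/(\text{main term})$ is $\asymp|\mathcal{N}|^{-1/2}e^{\mathrm{Re}\sqrt{2\mathcal{N}}-\sqrt{2\,\mathrm{Re}\,\mathcal{N}}}$, which blows up like $|\mathcal{N}|^{-1/2}e^{c(\arg\mathcal{N})^2\sqrt{|\mathcal{N}|}}$ as soon as $\arg\mathcal{N}(\xi)$ exceeds about $|\mathcal{N}|^{-1/4}$ in size. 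In the extreme case of purely imaginary $\mathcal{N}$ the main term is $\asymp|\mathcal{N}|^{-d/2-1/4}e^{-\sqrt{|\mathcal{N}|}}$, i.e.\ sub-exponentially small, while your $E$ (and also the region $r\ll|\mathcal{N}|^{-1}$ where \eqref{AsymptoticsFourierTransformSphere} is not even valid) is only polynomially small. No finite number of terms of the Bessel expansion repairs this: a remainder of size $|\mathcal{N}|^{-K}$ can never be beaten down below $e^{-c\sqrt{|\mathcal{N}|}}$. The whole content of the lemma is the sub-exponential factor $e^{-\sqrt{2\mathcal{N}}}$ produced by the smooth cutoff, and that factor does not survive any absolute-value estimate on the sphere transform; this is also why the asymptotic, used two-sidedly as a lower bound in the range $|\arg\mathcal{N}|\leqslant\pi/3$ later in the paper, would not follow from your argument even there.

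The paper avoids this by never expanding $\widehat{\mathrm{d}\omega}$. It writes $\hat g(\xi)$ as the exact double integral $|\omega_{d-2}|\int_0^1\int_{-1}^1 e^{\mathcal{N}(\xi)x-u_t/(1-x^2)}(1-x^2)^{(d-1)/2}\,\mathrm{d}x\;t^{d-2}\,\mathrm{d}t$ in cylindrical coordinates, performs a genuine steepest-descent contour shift on the exact holomorphic inner integral $I(s,u)$ uniformly in the auxiliary parameter $u\geqslant1$ (Lemma~\ref{SPL-inner}, which is where the contour near the essential singularities at $x=\pm1$ is handled), and only afterwards integrates over $t$; the factor $e^{-\sqrt{2us}}$ is thereby extracted exactly with a controlled relative error. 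If you want to keep your one-dimensional radial starting point, you would have to replace \eqref{AsymptoticsFourierTransformSphere} by the exact integral representation \eqref{alt-sphere} and run a two-variable steepest descent, which amounts to the same computation.
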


\begin{remark}
Here and below, we use the standard branch of the square root (and all fractional powers); this does not cause confusion with the following convention in force. If $|\arg\mathcal{N}(\nu)\pm\frac{\pi}2|>\frac{\pi}6$, say, then one of the summands in \eqref{SPL-asymptotics} is clearly dominated by the error term in the other regardless of the choice of the branch, and we interpret the asymptotics \eqref{SPL-asymptotics} as simply the dominant summand; otherwise both summands appear (although one will still dominate unless $\arg\mathcal{N}(\nu)$ is very close to $\pm\frac{\pi}2$). In either case, the square root and each fractional power that appears is the standard branch, which is then only taken on $\{z\in\mathbb{C}:|\arg z|\leqslant\frac23\pi\}$.
\end{remark}

\begin{remark}
The proof of Lemma~\ref{lem:SPL} relies on the method of stationary phase (for complex phases). While the asymptotics in \eqref{SPL-asymptotics} resemble the asymptotics of the Fourier transform of the sphere for general complex arguments \eqref{AsymptoticsFourierTransformSphere}, the smooth cut-off nature of the test function $g$ (and in fact its particular choice) is reflected in new features in the proof and the delicate subexponential behavior within $\hat{g}$. A rough main term analysis for the asymptotic behavior of $\hat{g}$, in dimension 1 and for $\nu$ imaginary, as well as a numerical verification of constants, can be found in the note \cite{Johnson2015}.
\end{remark}

\begin{proof}
The second claim is clear, so we may assume that $|\mathcal{N}(\nu)|\geqslant 100$, say. It follows from \eqref{alt-sphere} that $\widehat{\mathrm{d}\omega}(\nu)$ depends only on $\mathcal{N}(\nu)$. Thus $\widehat{\mathrm{d}\omega}(\nu)=\widehat{\mathrm{d}\omega}(\mathcal{N}(\nu)\mathbf{e}_1)$ and, for $d\geqslant 2$,
\begin{align}
\hat{g}(\nu)&=\int_{\{\mathbf{x}\in\mathbb{R}^d:\|\mathbf{x}\|\leqslant 1\}}e^{\langle \mathcal{N}(\nu)\mathbf{e}_1,\mathbf{x}\rangle-1/(1-\|\mathbf{x}\|^2)}\,|\mathrm{d}\mathbf{x}|\nonumber\\
&=|\omega_{d-2}|\int_{-1}^1e^{\mathcal{N}(\nu)x}\int_0^{\sqrt{1-x^2}}e^{-1/(1-x^2-\rho^2)}\rho^{d-2}\,\mathrm{d}\rho\,\mathrm{d}x\nonumber\\
\label{hxi}
&=|\omega_{d-2}|\int_0^1\int_{-1}^1e^{\mathcal{N}(\nu)x-u/(1-x^2)}(1-x^2)^{(d-1)/2}\,\mathrm{d}x\,t^{d-2}\mathrm{d}t,
\end{align}
where $u=u_t=1/(1-t^2)$. The heart of matter is in the asymptotic evaluation of the inner integral $I(\mathcal{N}(\nu),u)$; we state this result as Lemma~\ref{SPL-inner}.

\begin{lemma}
\label{SPL-inner}
For $s\in\mathbb{C}$ and $u\geqslant 1$, let
\[ I(s,u):=\int_{-1}^1e^{\phi_{s,u}(x)}p_d(x)\,\mathrm{d}x,\qquad \phi_{s,u}(x)=sx-u/(1-x^2),\quad p_d(x)=(1-x^2)^{(d-1)/2}. \]
Then, for $u\ll |s|$,
\begin{equation}
\label{Isu-to prove}
 I(s,u)=\sum_{\pm}\frac{\sqrt{\pi}}{2u^{1/2}}\left(\frac{2u}{\pm s}\right)^{(d+2)/4}e^{\pm s-\sqrt{\pm 2us}-\frac14u+\mathrm{O}(u^{3/2}/|s|^{1/2})},
\end{equation}
as well as $I(s,u)\ll e^{|\mathrm{Re}\,s|-c|s|}$ for $u\geqslant c|s|$.
\end{lemma}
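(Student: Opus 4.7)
The plan is to evaluate $I(s,u)$ by the method of steepest descent. Since $\phi_{s,u}$ is meromorphic on $\mathbb{C}\setminus\{\pm1\}$ and the factor $e^{-u/(1-x^2)}$ decays superexponentially on real lines approaching $\pm 1$, the contour of integration can be deformed freely in a complex neighborhood of $[-1,1]$. Critical points of $\phi_{s,u}$ satisfy $s(1-x^2)^2=2ux$; in the regime $u\ll|s|$ the two solutions cluster near $\pm 1$. Writing $x_\pm=\pm(1-\delta_\pm)$ and solving perturbatively, one finds
\[
\delta_\pm=\bigl(\pm u/(2s)\bigr)^{1/2}+u/(4s)+\mathrm{O}\bigl(u^{3/2}/|s|^{3/2}\bigr).
\]
These saddles collide with the singularities at $\pm1$ as $u/|s|\to 0$; this is the source of the delicate scaling behaviour.

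The first concrete step is a careful evaluation of the phase at the saddles. Using $1-x^2=2\delta(1-\delta/2)$, the expansion
\[
u/(1-x^2)=u/(2\delta)+u/4+u\delta/8+\mathrm{O}(u\delta^2)
\]
combined with the perturbative expansion of $\delta_\pm$ produces a non-trivial cancellation of a $u/4$ contribution coming from $s\delta_\pm$ against the corresponding term in $u/(1-x_\pm^2)$, which ultimately yields the critical value
\[
\phi_{s,u}(x_\pm)=\pm s-\sqrt{\pm 2us}-u/4+\mathrm{O}\bigl(u^{3/2}/|s|^{1/2}\bigr).
\]
Computing $\phi''_{s,u}(x)=-2u(1+3x^2)/(1-x^2)^3$ at the saddles gives $\phi''_{s,u}(x_\pm)\sim-(\pm 2s)^{3/2}/u^{1/2}$, and the amplitude satisfies $p_d(x_\pm)\asymp(2\delta_\pm)^{(d-1)/2}=(2u/(\pm s))^{(d-1)/4}$.

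Next, I would carry out the complex Laplace estimate at each saddle. Using the rescaled variable $x=x_\pm+\tau\cdot(-\phi''_{s,u}(x_\pm))^{-1/2}$ along the steepest descent direction, the local contribution reduces to a standard Gaussian model integral; combining the amplitude, the Gaussian width $\sqrt{2\pi/(-\phi''_{s,u}(x_\pm))}$, and the critical value with the exponential bookkeeping of the powers of $2,u,s$ gives the contribution of $x_\pm$ as
\[
\frac{\sqrt\pi}{2u^{1/2}}\bigl(2u/(\pm s)\bigr)^{(d+2)/4}\,e^{\pm s-\sqrt{\pm 2us}-u/4+\mathrm{O}(u^{3/2}/|s|^{1/2})},
\]
and summing yields \eqref{Isu-to prove}. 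The auxiliary bound $I(s,u)\ll e^{|\mathrm{Re}\,s|-c|s|}$ in the complementary range $u\geqslant c|s|$ requires no stationary phase at all: on the real interval $|e^{\phi_{s,u}(x)}|\leqslant e^{|\mathrm{Re}\,s|}e^{-u/(1-x^2)}$, the bulk of the integral (where $1-x^2$ is bounded away from $0$) is dominated by $e^{-cu}\leqslant e^{-c^2|s|}$, while the tails gain even more decay.

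The main obstacle is uniformity in $s\in\mathbb{C}$ and $u$ with $u\ll|s|$. The steepest descent directions at $x_\pm$ depend on $\arg s$, and the two saddle contributions interpolate between a Laplace-type single-exponential regime (for $\arg s$ near $0,\pi$) and an oscillatory stationary-phase regime (for $\arg s$ near $\pm\pi/2$), with neither saddle being a priori dominant. This forces one to choose a single piecewise-smooth descent contour that depends on $\arg s$ and stays bounded away from $\pm 1$ while simultaneously passing through both saddles along their respective descent directions, and to carry out a Morse-lemma reduction whose error bounds are uniform in the combined shrinking parameter $u/|s|$. The scaling $\delta_\pm\asymp\sqrt{u/|s|}$ means that the neighbourhood of each saddle on which the quadratic model is accurate also shrinks, so the Gaussian reduction must be carried out after an explicit rescaling that trades this shrinking for a fixed-size integral with small perturbative corrections of order $u^{3/2}/|s|^{1/2}$ in the exponent.
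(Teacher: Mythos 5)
Your proposal is correct and follows essentially the same steepest-descent route as the paper's proof: saddle points colliding with the singularities at $\pm 1$ on the scale $\sqrt{u/|s|}$, the same critical values $\pm s-\sqrt{\pm 2us}-\tfrac14u$, the same Hessian $\asymp -(\pm 2s)^{3/2}/u^{1/2}$, and the same Gaussian bookkeeping, with the paper implementing the contour you describe as a short linear segment through each saddle joined to rays emanating from $\pm 1$ along which the integrand decreases monotonically. One minor slip: the correction to $\delta_{\pm}$ beyond $(\pm u/(2s))^{1/2}$ is $\mathrm{O}(u^{3/2}/|s|^{3/2})$ rather than $u/(4s)$ (the order-$u/s$ term vanishes), but since the phase is stationary there this does not affect the critical value to the stated accuracy.
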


\begin{remark}
The proof of Lemma~\ref{SPL-inner} shows that, more precisely,
\[ I(s,u)=\sum_{\pm}\frac{\sqrt{\pi}}{2u^{1/2}}\left(\frac{2u}{\pm s}\right)^{(d+2)/4}\tilde{p}_d(\sqrt{\pm u/s})e^{\pm s\tilde{\phi}(\sqrt{\pm u/s})}\big(1+\mathrm{O}((u|s|)^{-1/2})\big)\quad (u\ll |s|), \]
where $\tilde{p}_d(t),\tilde{\phi}(t)\in 1+t\mathbb{C}[[t]]$ are explicit analytic functions around 0. Computing the first few terms explicitly, we find that $\tilde{\phi}(t)=1-\sqrt2t-\frac14t^2+\mathrm{O}(t^3)$, say, recovering \eqref{Isu-to prove}.
\end{remark}

We postpone the proof of Lemma~\ref{SPL-inner}, which is by the method of steepest descent, for later in this section, and return to \eqref{hxi}. Denoting by $I_0(\pm s,u)$ the summand in \eqref{Isu-to prove}, we find that
\begin{equation}
\label{hatg}
\hat{g}(\nu)=|\omega_{d-2}|\sum_{\pm}\int_0^1I_0(\pm s,u_t)t^{d-2}\,\mathrm{d}t,
\end{equation}
where for brevity we set $s=\mathcal{N}(\nu)$. By symmetry, it suffices to asymptotically evaluate the first term, and it also suffices to consider the case when $|\arg s|\leqslant\frac{2\pi}3$, say, as otherwise the corresponding summand will be seen to be clearly dominated.

For $1\leqslant u\leqslant c|s|$, the expression $-\mathrm{Re}\sqrt{2us}-\frac14u+\frac d4\log u+Cu^{3/2}/|s|^{1/2}$ has derivative $\leqslant -\frac{1}{2\sqrt{c}}+\frac{(d-1)}4+\frac32Cc^{1/2}$ and is thus a decreasing function of $1\leqslant u\leqslant c|s|$ for sufficiently small $c$. Therefore,
\eqref{Isu-to prove} along with the complementary bound for the range $|u|\geqslant c|s|$ shows that
\begin{equation}
\label{hatg-error}
\int_{|s|^{-1/8}}^1I_0(s,u_t)t^{d-2}\,\mathrm{d}t\ll\frac1{|s|^{(d+2)/4}}e^{\mathrm{Re}(s-\sqrt{2s})-\frac{\sqrt2}4|s|^{1/4}},
\end{equation}
which will momentarily be seen as negligible. In the remaining, principal range, we can write
\begin{equation}
\label{hatg-main}
\begin{aligned}
&\int_0^{|s|^{-1/8}}I_0(s,u_t)t^{d-2}\,\mathrm{d}t\\
&\quad=\frac{\sqrt{\pi}}2\left(\frac 2s\right)^{(d+2)/4}e^{s-\sqrt{2s}-\frac14}\int_0^{|s|^{-1/8}}e^{-\frac12\sqrt{2s}\cdot t^2}t^{d-2}\big(1+\mathrm{O}(t^2+|s|^{1/2}t^4+|s|^{-1/2})\big)\,\mathrm{d}t\\
&\quad=\frac{\sqrt{\pi}}{4\root{4}\of{e}}\left(\frac 2s\right)^{(2d+1)/4}\Gamma\big(\tfrac12(d-1)\big)e^{s-\sqrt{2s}}\big(1+\mathrm{O}(|s|^{-1/2})\big).
\end{aligned}
\end{equation}
Combining \eqref{hatg}, \eqref{hatg-error}, \eqref{hatg-main}, and $|\omega_{d-2}|=2\pi^{(d-1)/2}/\Gamma\big(\frac12(d-1)\big)$, with $s=\mathcal{N}(\nu)$, we obtain \eqref{SPL-asymptotics} for $d\geqslant 2$. In the case $d=1$, \eqref{SPL-asymptotics} follows by simply applying Lemma~\ref{SPL-inner} directly to the integral in the first line of \eqref{hxi}.
\end{proof}

Having proved Lemma~\ref{lem:SPL}, we now turn to the unfinished business of Lemma~\ref{SPL-inner}.\par

\begin{proof}[Proof of Lemma~\ref{SPL-inner}]
The complementary bound for $u\geqslant c|s|$ is obvious. Note that we may thus assume that in fact $u\ll |s|$ with a sufficiently small implied constant in \eqref{Isu-to prove}, since for $u\asymp |s|$ \eqref{Isu-to prove} holds vacuously (with an adjusted implied constant) in light of the complementary bound.

The exponential integral $I(s,u)$ will be analyzed using the method of stationary phase. The phase $\phi_{s,u}$ is holomorphic in $\mathbb{C}\setminus\{\pm 1\}$. Consider first the range when $|\arg s|\leqslant\frac{3\pi}5$, say; we may assume $0\leqslant\arg s\leqslant\frac{3\pi}5$ by symmetry. Inside the disk $\Omega_1=\{x\in\mathbb{C}:0<|x-1|\leqslant\frac74\}$, we have
\[ \phi_{s,u}(x)=sx-\tfrac12u/(1-x)+u\mathrm{O}(1), \]
where $\mathrm{O}(1)$ is an explicit analytic function (independent of $u$ or $s$).

There are four total stationary points: two close to $1$ and two close to $-1$. To describe them, it will be convenient to denote by $\iota_{\vartheta}(\varrho)$, a real analytic function defined on a small neighborhood of 0 (uniform in $\vartheta$), which satisfies $\iota_{\vartheta}(\varrho)=1+\mathrm{O}(\varrho)$, $(\partial\iota_{\vartheta}/\partial\vartheta)(\varrho)=\mathrm{O}(\varrho)$, and $(\partial\iota_{\vartheta}/\partial\varrho)(\varrho)=\mathrm{O}(\varrho^2)$ uniformly in $\vartheta$, and which may be different from one line to another. Using this notation, one finds two points $x_0^{\pm}=1\pm\varrho_0e^{i\vartheta_0}\in\Omega_1$ at which $(\mathrm{d}/\mathrm{d}x)|_{x=x_0^{\pm}}\phi_{s,u}=0$; they satisfy
\[ s-\tfrac12ue^{-2i\vartheta_0}/\varrho^2_0\iota_{\vartheta_0}(\varrho_0)=0 \]
with $\vartheta_0=-\frac12\arg s+\mathrm{O}(\varrho_0)$. In fact, $x_0^{\pm}=1-w$, where $w$ is one of the two solutions close to $0$ of the equation $w^2(1-\frac12w)^2/(1-w)=\frac12u/s$, which can be solved as an implicit function problem (with the choice of $\pm$ as just specified).

We now isolate a critical point of interest, say $x_0=x_0^{-}\in\Omega_1$. Writing $\tilde{\phi}(x_0)=\phi_{s,u}(x_0)$, we find
\begin{align*}
\tilde{\phi}(x_0)&=s-\sqrt{2us}-\tfrac14u+\mathrm{O}(u^{3/2}/|s|^{1/2}),\\
\phi''_{s,u}(x_0)&=-ue^{-3i\vartheta_0}/\varrho_0^3+\mathrm{O}(u),
\end{align*}
where in particular $\phi''_{s,u}(x_0)=(-u/w^3)(1+\frac18w^3)/(1-\frac12w)^3=-u/w^3+\mathrm{O}(u)$.
With this we may evaluate the corresponding principal contribution to the asymptotics of $I(s,u)$.

The segment
\[ I_{1,0}=\{x_t=x_0+te^{-i\arg(-\phi''_{s,u}(x_0))/2},\; |t|\leqslant t_0\} \]
is the best linear approximation to a curve of steepest descent. Let
\[
\varphi_k=u(u/|s|)^{-(k+1)/2},\quad \psi_k=p_d(x_0)(u/|s|)^{-k/2},\quad\text{ and }\quad t_0=u^{1/3}/|s|^{2/3}\asymp\varrho_0/(u|s|)^{1/6}. 
\]
Then along $I_{1,0}$ we have $|1-x_t|\asymp (u/|s|)^{1/2}$, $|\phi_{s,u}^{(k)}(x_t)|\asymp \varphi_k$,
\[ \phi_{s,u}(x_t)=\phi_{s,u}(x_0)-\tfrac12|\phi''_{s,u}(x_0)|t^2+\tfrac16\epsilon\phi'''_{s,u}(x_0)t^3+\mathrm{O}(\varphi_4t^4) \]
with a fixed $|\epsilon|=1$, while $|p_d^{(k)}(x_t)| \asymp\psi_k$. Thus by standard estimates (using also that contributions of the odd higher order terms vanish)
\begin{align*}
&\int_{I_{1,0}}e^{\phi_{s,u}(x)}p_d(x)\,\mathrm{d}x\\
&\qquad =\frac{(2\pi)^{1/2}}{(-\phi''_{s,u}(x_0))^{1/2}}{e^{\phi_{s,u}(x_0)}}p_d(x_0)\left(1+\mathrm{O}\left(\frac{\varphi_3^2}{\varphi_2^3}+\frac{\varphi_4}{\varphi_2^2}+\frac{\varphi_3\psi_1}{\varphi_2^2\psi_0}+\frac{\psi_2}{\varphi_2\psi_0}+e^{-\frac12(u|s|)^{1/6}}\right)\right)\\
&\qquad =\frac{\sqrt{\pi}}{2u^{1/2}}\left(\frac{2u}{s}\right)^{(d+2)/4}e^{s-\sqrt{2us}-\frac14u+\mathrm{O}(u^{3/2}/|s|^{1/2})}\big(1+\mathrm{O}((u|s|)^{-1/2})\big).
\end{align*}
This recovers one of the principal terms in \eqref{Isu-to prove}, with acceptable error terms.

\begin{figure}
\centering
\begin{overpic}[width=0.4\linewidth]{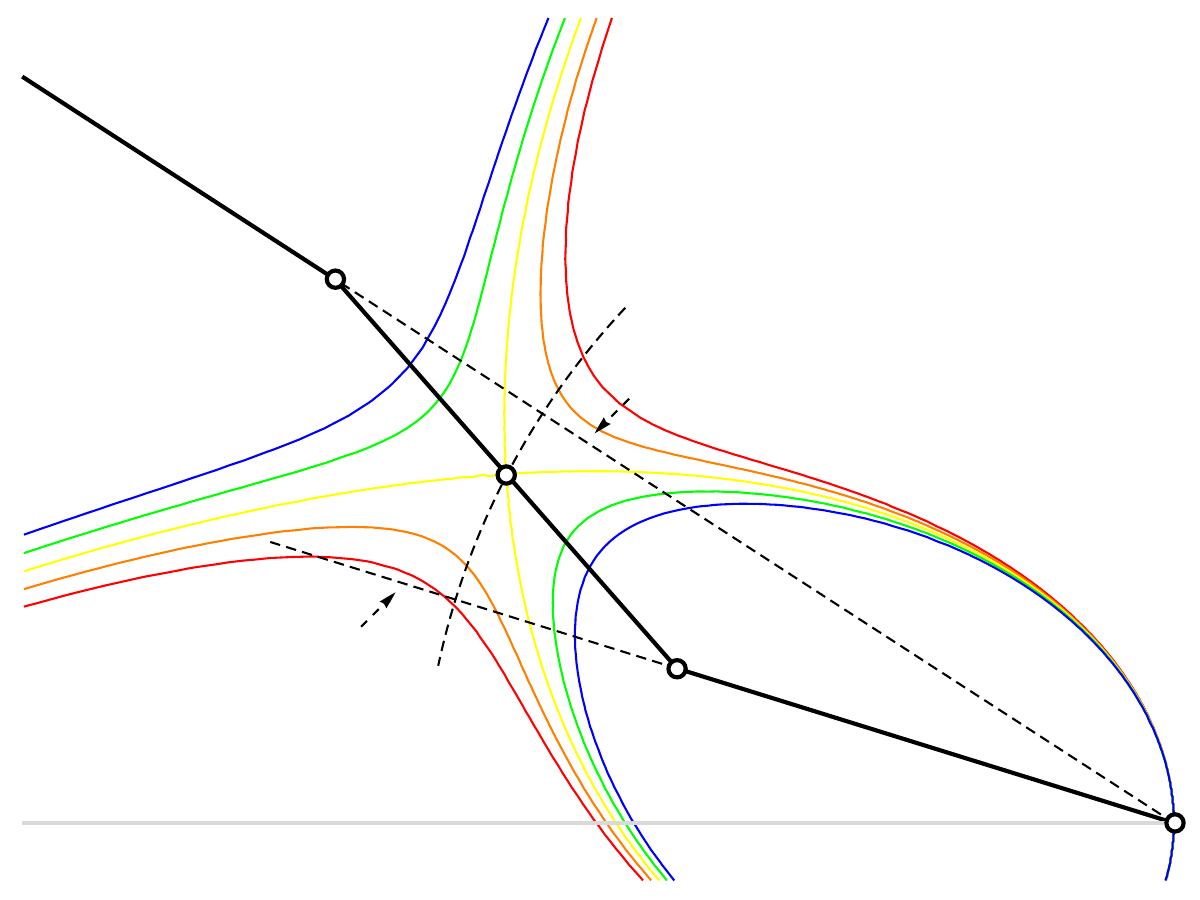}
\put(28,54){$\scriptstyle x_{-t_0}$}
\put(36,35){$\scriptstyle x_0$}
\put(53,16){$\scriptstyle x_{t_0}$}
\put(67,11){$\scriptstyle I_{1,+}$}
\put(52,26){$\scriptstyle I_{1,0}$}
\put(11,56){$\scriptstyle I_{1,-}$}
\put(100,5){$\scriptstyle 1$}
\put(50,52){$\scriptstyle\varrho_0$}
\put(54,42){$\scriptstyle\varrho_{\vartheta_{-t_0}}$}
\put(26,20){$\scriptstyle\varrho_{\vartheta_{t_0}}$}
\end{overpic}
\caption{\footnotesize Contour shift in Lemma~\ref{SPL-inner}, showing several level curves for $\mathrm{Re}\,\phi_{s,u}$, with warmer colors indicating higher values.}
\label{ContourShiftFigure}
\end{figure}

We will shift the contour of integration in $I(s,u)$ from $[-1,1]$, beginning with the part within $\Omega_1$, to a polygonal contour consisting of $I_{1,0}$ (where a contribution to the main term arises) and two segments along rays $I_{1,\pm}$ originating at $x=1$; see Figure~\ref{ContourShiftFigure}. Keeping in mind that we are working with $\sigma=\arg s$ satisfying (by symmetry) $0\leqslant\sigma\leqslant\frac{3\pi}5$, we will analyze the phase along such rays $x=1-\varrho e^{i\vartheta}$, with the goal of showing that the exponential term $e^{\phi_{s,u}(x)}$ decreases in size along $I_{1,+}$ and $I_{1,-}$ away from $x_{\pm t_0}$ and then concluding that the corresponding integrals are negligible.
For any fixed $\vartheta$ with $|\vartheta|,|\vartheta+\sigma|\leqslant\frac{2\pi}5$ (say), the critical point condition $(\partial/\partial\varrho)\mathrm{Re}\,\phi_{s,u}=0$ only has solutions close to 1 and may be written as
\[ -\mathrm{Re}(se^{i\vartheta})+\tfrac12u\mathrm{Re}(e^{-i\vartheta})/\tilde{\varrho}^2=0 \]
for some explicit $\tilde{\varrho}=\varrho\iota_{\vartheta}(\varrho)$. This equation has exactly two roots $\tilde{\varrho}=\pm\tilde{\varrho}_{\vartheta}$, where
\[ \tilde{\varrho}_{\vartheta}=(\cos\vartheta/\cos(\sigma+\vartheta))^{1/2}(\tfrac12u/|s|)^{1/2}, \]
with a global maximum of $\mathrm{Re}\,\phi_{s,u}$ at the corresponding $1-\varrho_{\vartheta}e^{i\vartheta}$. As $\vartheta$ varies within the same range, $\mathrm{d}\tilde{\varrho}_{\vartheta}/\mathrm{d}\vartheta=\frac14(u/|s|\tilde{\varrho}_{\vartheta})\sin\sigma/\cos^2(\sigma+\vartheta)\geqslant 0$, and consequently $\mathrm{d}\varrho_{\vartheta}/\mathrm{d}\vartheta\geqslant -\mathrm{O}(\varrho_{\vartheta}^2)$.

In particular, $\varrho_{\vartheta_0}=\varrho_0$. At the endpoints of $I_{1,0}$, we have
\[ \phi_{s,u}(x_{\pm t_0})=\phi_{s,u}(x_0)-\tfrac12(u|s|)^{1/6}+\mathrm{O}(1). \]
Writing $x_t=1-\varrho_te^{i\vartheta_t}$, in light of
\[
|\tfrac12\arg(-\phi''_{s,u}(x_0))-\vartheta_0|=\tfrac12|\vartheta_0|+\mathrm{O}((u/|s|)^{3/2})<\tfrac{2\pi}5,
\]
we have $\varrho_{\vartheta_0}-\varrho_{t_0}\gg\varrho_{\vartheta_0}|\vartheta_{t_0}-\vartheta_0|$ while $\varrho_{\vartheta_{t_0}}>\varrho_{\vartheta_0}+\mathrm{O}(\varrho_{\vartheta_0}^2|\vartheta_{t_0}-\vartheta_0|)$; this shows that $\varrho_{t_0}<\varrho_{\vartheta_{t_0}}$ (since $\varrho_{\vartheta_0}\asymp(u/|s|)^{1/2}$ is sufficiently small). Analogously, $\varrho_{-t_0}>\varrho_{\vartheta_{-t_0}}$. Denoting
\[ I_{1,+}=\{1-\lambda\varrho_{t_0}e^{i\vartheta_{t_0}}:\lambda\in [0,1]\},\quad I_{1,-}=\{1-\lambda\varrho_{-t_0}e^{i\vartheta_{-t_0}}:\lambda\geqslant 1\}\cap\Omega_1, \]
the term $e^{\phi_{s,u}(x)}$ decreases in size along $I_{1,+}$ and $I_{1,-}$ away from $x_{\pm t_0}$ and thus the integrals over $I_{1,\pm}$ of $e^{\phi_{s,u}(x)}p_d(x)\ll e^{\phi_{s,u}(x_0)-\frac12|s|^{1/6}}$ are (in light of $u\ll |s|$) absolutely dominated by the error term in the integral over $I_{1,0}$. This completes the description of the contour shift near the point $1$ (within $\Omega_1$) for $|\arg s|\leqslant\frac{3\pi}5$.

It remains to do the analogous analysis near $-1$ and put everything together. If $|\mathrm{arg}\,s-\pi|\leqslant\frac{3\pi}5$, say, the same argument applies, \textit{mutatis mutandis}, in the disc $\Omega_{-1}$ given by $|x+1|\leqslant\frac74$, with stationary points at $1\pm i\varrho_0 e^{i\vartheta_0}$, where $\varrho_0$ and $\vartheta_0$ satisfy the same asymptotics as above, and with analogous segments $I_{-1,0}$ and $I_{-1,\pm}$ such that
\[ \int_{I_{-1,0}}e^{\phi_{s,u}(x)}p(x)\,\mathrm{d}x=\frac{\sqrt{\pi}}{2u^{1/2}}\left(\frac{2u}{-s}\right)^{(d+2)/4}e^{-s-\sqrt{-2us}-\frac14u+\mathrm{O}(u^{3/2}/|s|^{1/2})}\big(1+\mathrm{O}((u|s|)^{-1/2})\big), \]
and the integrals over $I_{-1,\pm}$ are dominated by the error terms in the above.

If $\frac{2\pi}5\leqslant|\arg s|\leqslant\frac{3\pi}5$, the claim \eqref{Isu-to prove} now follows by shifting contours to the union of $I_{\pm 1,+}$, $I_{\pm 1,0}$, and sub-segments of $I_{\pm 1,-}$ formed by their intersection (which lies in $\Omega_1\cap\Omega_{-1}$ as, say, $\vartheta_{\pm t_0}=-\frac12\sigma+\mathrm{O}((u/|s|)^{1/2}+1/(u|s|)^{1/6})$). If $0\leqslant|\arg s|<\frac{2\pi}5$, we can simply use the $I_{1,+/0/-}$ contour and connect to $-1$ via a straight-line segment, and analogously for $\frac{3\pi}5<|\arg s|\leqslant\pi$.
\end{proof}

\section{Proof of Proposition~\ref{MainCountingResultGLn}}\label{sec:smooth-to-sharp-tempered}

Similarly to the existing literature on Weyl laws, our basic strategy in the proof of Proposition~\ref{MainCountingResultGLn} is to relate the sharp count $N(\q,\underline{\delta},P)$ to a corresponding smooth count; we can control the smooth count via trace formula input, as represented by Property (ELM), applied to the test function $f_R^{\delta,P}$ constructed in \S\ref{sec:approx-estimates}. 

We need to pass from a smooth to a sharp test function in two terms: the central contribution $J_{\rm cent}(\varepsilon_{K_1(\q)}\otimes f_R^{\delta,P})$ and the smooth count over the discrete spectrum $J_{\rm disc}(\varepsilon_{K_1(\q)}\otimes f_R^{\delta,P})$. The central contributions were already addressed in \S\ref{UniformUpperBoundsSubsection}. In the following paragraphs we treat the discrete spectrum.

\subsection{Upper bounds}

We must first bound the total deviations of $h_R^{\delta,P}$ from the sharp-cutoff condition of belonging to $P$ over the ``transition zones'' (where the smooth test function is transitioning between 0 and 1). This is done in Lemma~\ref{UpperBoundPartialP} below. For the statement, recall the notion of $\rho$-containment from Definition~\ref{ContainedInDefinition} and the sum $K_R(\q,\underline{\delta},P)$ from \eqref{KR-def}.

\begin{lemma}\label{UpperBoundPartialP}
Let $c_2>0$ be as in Lemma~\ref{cor-central2}, and let $1\leqslant R<c_2\log (2+\norm\q)$. For a suitably large $a>0$ and any $W(A_M)_\delta$-invariant set $S\subseteq i\mathfrak{h}_M^*$ which is $a/R$-contained in some $B\in\mathscr{B}(\underline{\delta})$, we have
\[
N(\q,\underline{\delta},S)\ll \varphi_n(\q)\int_B\beta_M^G(\delta,\nu)\, {\rm d}\nu+|J_{\rm error}(\varepsilon_{K_1(\q)}\otimes f_R^{\delta, B})|+K_R(\q,\underline{\delta},B).
\]
In particular, this holds with $S=\partial P(\rho)$ and $B=\partial P(\rho+a/R)$ where $P\in\mathscr{B}(\underline{\delta})$ and $\rho>0$.
\end{lemma}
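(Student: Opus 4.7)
The plan is to apply the trace formula to the test function $\varepsilon_{K_1(\q)}\otimes f_R^{\delta,B}$ and exploit the sharp-approximation properties of $h_R^{\delta,B}$ from Lemma~\ref{GLnTestFunctionEstimates} to extract a lower bound for $N(\q,\underline{\delta},S)$. Concretely, since $S$ is $a/R$-contained in $B$, part~\eqref{second-guy} of that lemma (applied with $\rho=a/R$) gives $h_R^{\delta,B}(\nu)=1+\mathrm{O}(a^{-N})$ for every $\nu\in S$. Choosing $a$ large enough (in terms of $N$ alone) ensures $h_R^{\delta,B}(\nu)\geqslant\tfrac12$ on $S$. This is the mechanism by which the smooth count will dominate $N(\q,\underline{\delta},S)$ from above.

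Next I would expand the discrete-spectrum side of the trace formula. By the Clozel--Delorme construction of \S\ref{CD-appl}, $\operatorname{tr}\pi_\infty(f_R^{\delta,B})$ vanishes unless $\underline{\delta}_{\pi_\infty}=\underline{\delta}$, so
\[
J_{\rm disc}(\varepsilon_{K_1(\q)}\otimes f_R^{\delta,B})=\sum_{\substack{\pi\in\Pi_{\rm disc}(\GL_n(\A_F)^1)\\ \underline{\delta}_\pi=\underline{\delta}}}\dim V_{\pi_f}^{K_1(\q)}\,h_R^{\delta,B}(\xi_\pi).
\]
Split this sum into tempered and non-tempered parts at infinity. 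By the Moeglin--Waldspurger classification \cite{MoeglinWaldspurger1989}, the residual discrete spectrum of $\GL_n$ consists of (non-tempered) Speh representations, so the tempered-at-infinity part of the discrete spectrum coincides with its cuspidal part. For tempered $\pi$, $\xi_\pi\in i\h_M^*$ and $h_R^{\delta,B}(\xi_\pi)\geqslant 0$, so restricting to those with $\nu_\pi\in S$ and using the lower bound above yields
\[
\sum_{\substack{\pi\text{ cuspidal},\,\underline{\delta}_\pi=\underline{\delta}\\\nu_\pi\in S}}\dim V_{\pi_f}^{K_1(\q)}\,h_R^{\delta,B}(\nu_\pi)\geqslant \tfrac12\,N(\q,\underline{\delta},S).
\]

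For the non-tempered contribution, combining parts~\eqref{first-guy0} and \eqref{first-guy1} of Lemma~\ref{GLnTestFunctionEstimates} (taking $\rho=d(\mathrm{Im}\,\xi_\pi,B)$ when positive and the trivial bound otherwise) gives $|h_R^{\delta,B}(\xi_\pi)|\ll e^{R\|\mathrm{Re}\,\xi_\pi\|}(1+R\cdot d(\mathrm{Im}\,\xi_\pi,B))^{-N}$, so its absolute value is controlled by a constant times $K_R(\q,\underline{\delta},B)$. Writing $J_{\rm disc}=J_{\rm cent}+J_{\rm error}$ by \eqref{DefEqh}, we arrive at
\[
\tfrac12\,N(\q,\underline{\delta},S)\leqslant J_{\rm cent}(\varepsilon_{K_1(\q)}\otimes f_R^{\delta,B})+|J_{\rm error}(\varepsilon_{K_1(\q)}\otimes f_R^{\delta,B})|+\mathrm{O}\big(K_R(\q,\underline{\delta},B)\big).
\]
Lemma~\ref{cor-central2}\eqref{Jcent1} bounds the central term by $\varphi_n(\q)\int_B\beta_M^G(\delta,\nu)\,\mathrm{d}\nu$, since the prefactor $1+(R/\log(2+\norm\q))^{r_1+r_2-1}$ is absorbed into the constant under the assumption $R<c_2\log(2+\norm\q)$. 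Finally, the particular case $S=\partial P(\rho)$, $B=\partial P(\rho+a/R)$ is immediate from Lemma~\ref{SetContainments}\eqref{ClaimPartialP}, which furnishes precisely the required $a/R$-containment.

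There is no genuine conceptual obstacle here; the argument is essentially a careful bookkeeping combining the sharp/smooth comparison of Lemma~\ref{GLnTestFunctionEstimates}, the isolation of the non-tempered spectrum into $K_R(\q,\underline{\delta},B)$, and the central-term bound of Lemma~\ref{cor-central2}. The one non-trivial input worth flagging is the use of \cite{MoeglinWaldspurger1989} to identify the tempered discrete automorphic spectrum of $\GL_n$ with its cuspidal part, which is what lets the lower bound on $h_R^{\delta,B}$ over $S$ be transferred cleanly from the whole discrete sum to the cuspidal sum defining $N(\q,\underline{\delta},S)$.
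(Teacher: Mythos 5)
Your proposal is correct and follows essentially the same route as the paper: fix $a$ via Lemma~\ref{GLnTestFunctionEstimates}~\eqref{second-guy} so that $h_R^{\delta,B}\geqslant\frac12$ on $S$, lower-bound the tempered part of $J_{\rm disc}(\varepsilon_{K_1(\q)}\otimes f_R^{\delta,B})$ by $\frac12 N(\q,\underline{\delta},S)$ using positivity, absorb the non-tempered part into $K_R(\q,\underline{\delta},B)$ via the decay estimates, and then use $J_{\rm disc}=J_{\rm cent}+J_{\rm error}$ together with Lemma~\ref{cor-central2}~\eqref{Jcent1} and Lemma~\ref{SetContainments}. (The appeal to M{\oe}glin--Waldspurger is harmless but not needed here: since $N$ sums over cuspidal representations, one only needs the trivial containment of the tempered cuspidal spectrum in the tempered discrete spectrum, plus non-negativity of $h_R^{\delta,B}$ on $i\mathfrak{h}_M^*$.)
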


\begin{proof}
Using Lemma~\ref{GLnTestFunctionEstimates}~\eqref{second-guy}, we fix $a>0$ (independent of $B\in\mathscr{B}(\underline{\delta})$) such that, for every $\nu\in B^{\circ}(a/R)$, $h_R^{\delta,B}(\nu)\geqslant\frac12$.

Let $B\in\mathscr{B}(\underline{\delta})$. As in \S\ref{reduction-to-test-functions-subsection}, we decompose the discrete distribution
\[
J_{\rm disc}(\varepsilon_{K_1(\q)}\otimes f_R^{\delta,B})
=J_{\rm temp}(\varepsilon_{K_1(\q)}\otimes f_R^{\delta,B})+J_{\rm comp}(\varepsilon_{K_1(\q)}\otimes f_R^{\delta,B}),
\]
according to whether or not the archimedean component of the $\pi$ contributing to $J_{\rm disc}(\varepsilon_{K_1(q)}\otimes f_R^{\delta,B})$ is tempered. Using the non-negativity of $h_R^{\delta,B}$ on $i\mathfrak{h}_M^{\ast}$ and the fact that $S$ is $(a/R)$-contained in $B$, we have that $h_R^{\delta,B}(\nu)\geqslant\frac12$ for every $\nu\in S$ and
\[ J_{\rm temp}(\varepsilon_{K_1(q)}\otimes f_R^{\delta,B})\geqslant\tfrac12 N(\q,\underline{\delta},S). \]

On the other hand, by definition, we have
\begin{equation}
\label{Jtemp-decomp}
J_{\rm temp}(\varepsilon_{K_1(\q)}\otimes f_R^{\delta,B})=-J_{\rm comp}(\varepsilon_{K_1(\q)}\otimes f_R^{\delta,B})+J_{\rm cent}(\varepsilon_{K_1(\q)}\otimes f_R^{\delta,B})+J_{\rm error}(\varepsilon_{K_1(\q)}\otimes f_R^{\delta,B}).
\end{equation}
It suffices to bound the first two terms on the right-hand side. We use Lemma~\ref{cor-central2}~\eqref{Jcent1} to bound $J_{\mathrm{cent}}(\varepsilon_{K_1(\q)}\otimes f_R^{\delta,B})$. Further, by Lemma~\ref{GLnTestFunctionEstimates} we have that
\[ h_R^{\delta,B}(\nu)\ll_N e^{R\|{\rm Re}\,\nu\|}\big(1+R\cdot d(i\mIm\nu,B)\big)^{-N} \]
for every $\nu\in\mathfrak{h}_{M,\C}^*$, so that
\begin{equation}\label{comp-maj}
J_{\rm comp}(\varepsilon_{K_1(\q)}\otimes f_R^{\delta,B})\ll K_R(\q,\underline{\delta},B).
\end{equation}

The last statement of the proposition follows from Lemma~\ref{SetContainments}, where it is shown that $\partial P(\rho)$ is $(a/R)$-contained in $\partial P(\rho+a/R)$.
\end{proof}

\subsection{Proof of Proposition~\ref{MainCountingResultGLn}}\label{SmoothToSharpProofSubsection}

With $C$ and $\theta$ as in Property (ELM) and $c_2$ as in Lemma \ref{cor-central2}, let $c=\min(\theta/(2C),c_2)$. Assume that $1\leqslant R<c\log (2+\norm\q)$. Then, according to \eqref{averageELM}, we have for every $\underline{\delta}\in\mathcal{D}$ and $P\in\mathscr{B}(\underline{\delta})$,
\begin{equation}
\label{ErrorTermELM}
J_{{\rm error}}(\varepsilon_{K_1(\q)}\otimes f_R^{\delta,P})\ll\mathbf{N}\q^{n-\theta'}\int_P\beta_M^G(\delta,\nu)\,\mathrm{d}\nu,
\end{equation}
with $\theta'=\theta-Cc>0$.

We begin by applying the decomposition \eqref{Jtemp-decomp}. We use the bound \eqref{comp-maj} and then Proposition \ref{KR} to bound the complementary term by $\varphi_n(\q){\rm vol}_R^\star(\delta,P)$. We bound the $J_{\rm error}$ term as in \eqref{ErrorTermELM}. Finally, when $\norm\q> C_2$, where $C_2$ is as in Lemma \ref{cor-central2}~\eqref{Jcent3}, that lemma allows us to conclude that
\begin{equation}\label{temp-is-main}
\begin{aligned}
J_{\rm temp}(\varepsilon_{K_1(q)}\otimes f_R^{\delta,P})
&=D_F^{n^2/2}\Delta_F^*(1)\varphi_n(\q)\deg(\delta)\int_P\mu_M^G(\delta,\nu)\, {\rm d}\nu\\
&+\text{O}\left(\varphi_n(\q)\big(\partial\vol_R(\delta,P)+{\rm vol}_R^\star(\delta,P)\big)+\norm\q^{n-\theta'}\int_P\beta_M^G(\delta,\nu)\,\mathrm{d}\nu\right).
\end{aligned}
\end{equation}
The case $\norm\q\leqslant C_2$ can be treated similarly, yielding an upper bound, using Lemma \ref{cor-central2}~\eqref{Jcent1}.

We now decompose the sum in $J_{\rm temp}(\varepsilon_{K_1(\q)}\otimes f_R^{\delta,P})$ according to \eqref{P-decomp}. Note that
\[
N(\q,\underline{\delta},P)=\sum_{\substack{\pi\in\Pi_{\rm disc}(\bm{G}(\A_F)^1)_{\underline{\delta}}\\ \nu_\pi\in P}}\dim V_{\pi_f}^{K_1(\q)}.
\]
Indeed, this follows from the description of the discrete spectrum by M{\oe}glin--Waldspurger \cite{MoeglinWaldspurger1989}, where it is shown that any $\pi\in\Pi_{\rm disc}(\bm{G}(\A_F)^1)$ such that $\pi_\infty$ is tempered is necessarily cuspidal. Using this and Lemma~\ref{GLnTestFunctionEstimates}, we find that $J_{\rm temp}(\varepsilon_{K_1(q)}\otimes f_R^{\delta,P})$ equals
\begin{align*}
&N(\q,\underline{\delta},P)+\text{O}\big(N(\q,\underline{\delta},\partial P(1/R))\big)
+\text{O}\bigg(\sum_{\ell=1}^{\infty}\ell^{-N}\sum_{\bullet/\circ}N\big(\q,\underline{\delta},P^{\bullet/\circ}(\ell/R,(\ell+1)/R)\big)\bigg)\\
&\qquad =N(\q,\underline{\delta},P)+\text{O}\bigg(\sum_{\ell=1}^{\infty}\ell^{-N-1}N\big(\q,\underline{\delta},\partial P(\ell/R)\big)\bigg).
\end{align*}
It follows from this and Lemma~\ref{UpperBoundPartialP} that $J_{\rm temp}(\varepsilon_{K_1(q)}\otimes f_R^{\delta,P})-N(\q,\underline{\delta},P)$ is majorized by
\begin{equation}\label{smoothing-error}
\begin{aligned}
\sum_{\ell=1}^{\infty}\ell^{-N-1}&\Big(\varphi_n(\q)\int_{\partial P((\ell+a)/R)}\beta_M^G(\delta,\nu)\,{\rm d}\nu\\
& +K_{R,N+1}(\q,\underline{\delta},\partial P\big((\ell+a)/R))+\big|J_{\rm error}(\varepsilon_{K_1(\q)}\otimes f_R^{\delta,\partial P((\ell+a)/R)})\big|\Big).
\end{aligned}
\end{equation}

The first of the error terms in \eqref{smoothing-error} is at most $\varphi_n(\q)\partial\vol_R(\delta,P)$. The second of the error terms in \eqref{smoothing-error} needs only elementary manipulation to be put into the required form. Indeed, expanding into a double sum, we have 
\begin{equation*}
\begin{aligned}
\sum_{\ell=1}^{\infty}\ell^{-N-1}K_{R,N+1}(\q,\underline{\delta},\partial P\big((\ell+a)/R))
&\leqslant\sum_{\ell=1}^{\infty}\sum_{m=1}^{\infty}\ell^{-N-1}m^{-N-1}\!\!\!\!\!\!\!\!\!\sum_{i\mIm \nu_\pi\in \partial P((\ell+m+a)/R)}\!\!\!\!\!\!\!\!\!\!\dim V_{\pi_f}^{K_1(\q)}e^{R\|{\rm Re}\, \nu_{\pi}\|}\\
&\ll{}\sum_{\ell=2}^{\infty}\sum_{i\mIm\nu_\pi\in \partial P(\ell/R)}\dim V_{\pi_f}^{K_1(\q)}e^{R\|{\rm Re}\, \nu_{\pi}\|} \sum_{\ell=k_1+k_2}(k_1k_2)^{-N-1}
\\
&\ll{}\sum_{\ell=1}^{\infty}\ell^{-N}\sum_{i\mIm\nu_\pi\in \partial P(\ell/R)}\dim V_{\pi_f}^{K_1(\q)}e^{R\|{\rm Re}\, \nu_{\pi}\|},
\end{aligned}
\end{equation*}
where the automorphic sums run over $\pi\in \Pi_{\rm disc}(\bm{G}(\A_F)^1)_{\underline{\delta}}$ with $\nu_{\pi}\not\in i\mathfrak{h}_M^{\ast}$. The last of these is at most $K_{R,N}(\q,\underline{\delta}, P)$, which by Proposition \ref{KR} we can bound by $\varphi_n(\q) {\rm vol}_R^\star(\delta,P)$.

It only remains to treat the $J_{{\rm error}}$ terms in \eqref{smoothing-error}. Indeed, applying \eqref{ErrorTermELM} as was done in \eqref{temp-is-main}, their total contribution is bounded by
\[
\pushQED{\qed}
\norm\q^{n-\theta'}\sum_{\ell=1}^{\infty}\ell^{-N-1}\int_{\partial P(\ell/R)}\beta_M^G(\delta,\nu)\,{\rm d}\nu\ll\norm\q^{n-\theta'}\overline{\vol}_R(\delta,P).
\qedhere
\popQED
\]

\section{Deducing Theorem \ref{main-implication}}\label{sec:temp-cor}

To pass from Proposition \ref{MainCountingResultGLn} to Theorem \ref{main-implication}, we need to sum over the various discrete data coming from the decomposition of $|\mathfrak{F}(Q)|$ in \eqref{univ-decomp}. In this section, we package together the terms arising in Proposition \ref{MainCountingResultGLn} after executing this summation, evaluate the main term, and bound the boundary errors. The main results are Lemma \ref{temp-cor} and Proposition \ref{temp-err}, from which we deduce, in Corollary \ref{cor:ELM-implies-Conj}, the statement of Theorem \ref{main-implication}.

\subsection{Summing over discrete data}

Recalling the set $\Omega_{\underline{\delta},X}$ of \eqref{DefinitionOmegaX}, we put
\[
P_{\underline{\delta},X}=\Omega_{\underline{\delta},X}\cap i\h_M^*\quad\text{and}\quad P_X=\bigcup_{\underline{\delta}\in\mathcal{D}}P_{\underline{\delta},X}.
\]
Then $P_X$ can be identified with $\Omega_X\cap\Pi_{\rm temp}(G^1_\infty)$, where $\Omega_X=\{\pi\in\Pi(G_\infty^1): q(\pi)\leqslant X\}$ and $\Pi_{\rm temp}$ indicates the tempered unitary dual. We use this notation to extend the definition of the boundary terms of \eqref{BoundaryTermsDefinitions1} and \eqref{BoundaryTermsDefinitions2} by writing
\begin{equation}\label{Omega-vol}
\overline{\vol}_R(P_X)=\sum_{\underline{\delta}\in\mathcal{D}}\overline{\vol}_R(\delta,P_{\underline{\delta},X}),
\end{equation}
and similarly for $\partial\vol_R(P_X)$ and $\vol_R^{\star}(P_X)$. We shall also need to introduce the notation
\[
\partial P_X(r)=\bigcup_{\underline{\delta}\in\mathcal{D}}\{\pi_{\delta,\nu}: \nu\in \partial P_{\underline{\delta},X}(r)\}.
\]

For a sequence $\mathscr{R}=(R(\mathfrak{n}))_{\mathfrak{n}\subseteq\mathcal{O}_F}$ of real numbers indexed by integral ideals $\mathfrak{n}$, and $\theta>0$, let 
\[
\overline{\vol}{}_{\mathscr{R}}^{\theta}(Q)=\sum_{\norm\q\leqslant Q}\sum_{\dd\mid\q}|\lambda_n(\q/\dd)|\norm\dd^{n-\theta}\overline{\vol}_{R(\dd)}(P_{Q/\norm\q}).
\]
Analogously notation holds for $\partial\vol_{\mathscr{R}}(Q)$ and $\vol_{\mathscr{R}}^{\star}(Q)$ with $\norm\dd^{n-\theta}$ replaced by $\varphi_n(\dd)$. 

\begin{defn}
We shall say that $\mathscr{R}=(R(\mathfrak{n}))_{\mathfrak{n}\subseteq\mathcal{O}_F}$ is {\it admissible} if $1\leqslant R(\mathfrak{n})<c\log (2+\norm\mathfrak{n})$, where $c$ is the constant in Proposition \ref{MainCountingResultGLn}.
\end{defn}

The following result reduces the remaining work to an estimation of error terms.

\begin{lemma}\label{temp-cor}
Assume Property {\rm (ELM)}. There is $\theta>0$ such that for any admissible sequence $\mathscr{R}=(R(\mathfrak{n}))$ we have 
\[
|\mathfrak{F}(Q)|=\mathscr{C}(\mathfrak{F}) Q^{n+1}+{\rm O}\big(Q^{n+1-\theta}+\partial\vol_{\mathscr{R}}(Q)+\vol_{\mathscr{R}}^{\star}(Q)+\overline{\vol}{}_{\mathscr{R}}^{\theta}(Q)).
\]
\end{lemma}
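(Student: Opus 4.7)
The plan is to decompose $|\mathfrak{F}(Q)|$ via \eqref{univ-decomp}, apply Proposition~\ref{MainCountingResultGLn} to each tempered sharp-count $N(\dd,\underline{\delta},P_{\underline{\delta},Q/\norm\q})$, and bound the non-tempered cuspidal contribution separately. For each triple $(\q,\dd,\underline{\delta})$ with $\dd\mid\q$ and $\norm\q\leqslant Q$, write
\[
N(\dd,\underline{\delta},\Omega_{\underline{\delta},Q/\norm\q})=N(\dd,\underline{\delta},P_{\underline{\delta},Q/\norm\q})+\Delta(\dd,\underline{\delta},Q/\norm\q),
\]
where $\Delta$ gathers the cuspidal contribution of $\pi$ with $\nu_\pi\notin i\mathfrak{h}_M^{\ast}$. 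The admissibility of $\mathscr{R}$ is precisely the condition $0<R(\dd)\leqslant c\log(2+\norm\dd)$ needed to invoke Proposition~\ref{MainCountingResultGLn}. For the remainder $\Delta$, the $\delta$-Hermitian structure of \S\ref{sec:herm} forces $\mathrm{Im}\,\nu_\pi\in i\mathfrak{h}_{M'}^{\ast}$ for some $M'\in\mathcal{L}_\infty(\delta)$ with $M'\neq M$; newform theory supplies $\dim V_{\pi_f}^{K_1(\dd)}=1$ when $\q(\pi)=\dd$, so covering the relevant part of each singular stratum $i\mathfrak{h}_{M'}^{\ast}$ by balls of radius $1/R(\dd)$ and applying Proposition~\ref{comp-mu} at each center (with the savings $R(\dd)^{-2|W(M,M')_\delta|}$ for the singular Plancherel density) packages the non-tempered count into the form \eqref{BoundaryTermsDefinitions2}, contributing $\mathrm{O}(\vol^{\star}_\mathscr{R}(Q))$ in total.

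To extract the main term, sum the leading quantity $\vol(\GL_n)\varphi_n(\dd)\int_{P_{\underline{\delta},Q/\norm\q}}d\widehat{\mu}_\infty^{\rm pl}$ from Proposition~\ref{MainCountingResultGLn} over $\underline{\delta}\in\mathcal{D}$, using the Plancherel decomposition \eqref{Pl-decomp}, to obtain $\vol(\GL_n)\varphi_n(\dd)\int_{q(\pi_\infty)\leqslant Q/\norm\q}d\widehat{\mu}_\infty^{\rm pl}$. A subsequent summation over $\dd\mid\q$ with weights $\lambda_n(\q/\dd)$ collapses via the identity $\lambda_n\star\varphi_n=w_n$ (using $\varphi_n=\mu_F\star p_n$ and $\lambda_{n+1}=\mu_F\star\lambda_n$) to $\vol(\GL_n)w_n(\q)\int_{q(\pi_\infty)\leqslant Q/\norm\q}d\widehat{\mu}_\infty^{\rm pl}$. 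The final summation over $\norm\q\leqslant Q$ matches the adelic Plancherel volume in \eqref{adelic-vol}, which Proposition~\ref{MainTermLemma} evaluates as $\mathscr{C}(\mathfrak{F})Q^{n+1}+\mathrm{O}(Q^{n+1-\theta})$. The three error terms from Proposition~\ref{MainCountingResultGLn}, summed against $|\lambda_n(\q/\dd)|$ over $\underline{\delta}$ (invoking \eqref{Omega-vol}), $\dd\mid\q$, and $\norm\q\leqslant Q$, reproduce precisely $\partial\vol_\mathscr{R}(Q)$, $\vol^{\star}_\mathscr{R}(Q)$, and $\overline{\vol}^{\theta}_\mathscr{R}(Q)$.

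The main obstacle concerns the small levels $\norm\dd\leqslant C$ where Proposition~\ref{MainCountingResultGLn} provides only an upper bound in place of the asymptotic, so that the putative main term is not delivered. For these finitely many $\dd$, the discrepancy with the full asymptotic (which we have implicitly used in the main-term collapse above) is dominated by
\[
\sum_{\norm\dd\leqslant C}\varphi_n(\dd)\sum_{\norm\mathfrak{m}\leqslant Q/\norm\dd}|\lambda_n(\mathfrak{m})|\int_{q(\pi_\infty)\leqslant Q/(\norm\dd\norm\mathfrak{m})}d\widehat{\mu}_\infty^{\rm pl}.
\]
Lemma~\ref{Plemma} majorizes the archimedean Plancherel volume of $\{q(\pi_\infty)\leqslant X\}$ by $\mathrm{O}(X^{n+1-1/d})$, and $|\lambda_n(\mathfrak{m})|\leqslant d_n(\mathfrak{m})\ll_\epsilon\norm\mathfrak{m}^\epsilon$; substituting and summing the arithmetic series yields $\mathrm{O}(Q^{n+1-1/d+\epsilon})$, absorbed into $\mathrm{O}(Q^{n+1-\theta})$ for any sufficiently small $\theta>0$. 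Collecting the main term, the three summed error quantities, the non-tempered bound, and this small-level correction yields the claimed estimate.
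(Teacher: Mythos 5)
Your proof is correct and follows essentially the same route as the paper: decompose via \eqref{univ-decomp}, feed Proposition~\ref{MainCountingResultGLn} into the tempered count, collapse the main term through $\lambda_n\star\varphi_n=w_n$ and Proposition~\ref{MainTermLemma} (i.e.\ \eqref{MT-is-vol}), and dispose of the small levels $\norm\dd\leqslant C$ with \eqref{triv-bd} and Lemma~\ref{Plemma}. The only cosmetic difference is that for the non-tempered cuspidal contribution the paper bounds the count by $K_{R(\dd)}(\dd,P_{\underline{\delta},X})$ and invokes Proposition~\ref{KR}, whereas you re-derive the same $\vol_{\mathscr{R}}^{\star}(Q)$ bound from the pointwise Proposition~\ref{comp-mu} via a $1/R$-ball covering of the singular strata --- which is in substance how Proposition~\ref{KR} is itself proved.
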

\begin{proof}
We begin by decomposing $\mathfrak{F}(Q)=\mathfrak{F}_{\rm temp}(Q)\bigcup \mathfrak{F}_{\rm comp}(Q)$, according to whether the archi\-mede\-an component $\pi_\infty$ of the cusp form $\pi\in\mathfrak{F}(Q)$ is tempered or not. 

We first dispatch with the contribution from $|\mathfrak{F}_{\rm comp}(Q)|$. Using \eqref{univ-decomp} along with the inequality
\[
\sum_{\substack{\pi\in\Pi_{\rm cusp}(\bm{G}(\A_F)^1)_{\underline{\delta}}\\ \nu_\pi\in \Omega_{\underline{\delta},X}\setminus P_{\underline{\delta},X}}}\!\!\!\!\!\dim V_{\pi_f}^{K_1(\dd)}\leqslant \sum_{\substack{\pi\in\Pi_{\rm cusp}(\bm{G}(\A_F)^1)_{\underline{\delta}}\\ i\mIm\nu_\pi\in P_{\underline{\delta},X}}}\!\!\!\!\!\dim V_{\pi_f}^{K_1(\dd)}\leqslant \sum_{\substack{\pi\in\Pi_{\rm disc}(\bm{G}(\A_F)^1)_{\underline{\delta}}\\ i\mIm\nu_\pi\in P_{\underline{\delta},X}}}\!\!\!\!\!\dim V_{\pi_f}^{K_1(\dd)}\leqslant K_{R(\dd)}(\dd,P_{\underline{\delta},X}),
\]
valid for any $R(\dd)>0$, we find that
\[
|\mathfrak{F}(Q)_{\rm comp}|\leqslant \sum_{1\leqslant\norm\q\leqslant Q}\; \sum_{\dd|\q}
|\lambda_n(\q/\dd)|K_{R(\dd)}(\dd,P_{Q/\norm\q}).
\]
Proposition \ref{KR} then bounds the latter sum by $\vol_{\mathscr{R}}^{\star}(Q)$.

For the tempered contribution, we again apply \eqref{univ-decomp} to get
\[
|\mathfrak{F}(Q)_{\rm temp}|=\sum_{1\leqslant\norm\q\leqslant Q}\; \sum_{\dd|\q}
\lambda_n(\q/\dd)N(\dd,P_{Q/\norm\q}).
\]
Proposition \ref{MainCountingResultGLn} show that $N(\q,P_X)$ is
\begin{equation}\label{applyS2S}
D_F^{n^2/2}\Delta_F^*(1)\varphi_n(\q)\int_{P_X}{\rm d}\widehat{\omega}_{\infty}^{\pl}+{\rm O}\big(\varphi_n(\q)(\partial\vol_R(P_X)+\vol_R^{\star}(P_X))+\norm\q^{n-\theta}\overline{\vol}_R(P_X)\big),
\end{equation}
for $1\leqslant R<c\log (2+\norm\q)$ and $\norm\q\geqslant C$. Summing the error terms in \eqref{applyS2S} over all $\q$ and $\dd\mid\q$, we recover the three boundary error terms in the lemma.

The second part of Proposition \ref{MainCountingResultGLn} furthermore shows that $N(\q,P_X)\ll\varphi_n(\q)\int_{P_X}{\rm d}\widehat{\omega}_{\infty}^{\pl}$ for $\norm\q\leqslant C$. From the trivial estimate
\begin{equation}\label{triv-bd}
\sum_{\substack{\dd\mid\q\\\norm\dd\leqslant X}}|\lambda_n(\q/\dd)|\varphi_n(\dd)\ll_\epsilon X^n\norm\q^{\epsilon}
\end{equation}
and Corollary~\ref{Plemma2}, we easily deduce
\[
\sum_{\norm\q\leqslant Q}\sum_{\substack{\dd\mid\q\\\norm\dd\leqslant C}}|\lambda_n(\q/\dd)|\varphi_n(\dd) \int_{P_{Q/\norm\q}}\mathrm{d}\widehat{\omega}_{\infty}^{\pl}\ll_\epsilon Q^{n-1/d+\epsilon},
\]
which is clearly admissible as an error term.

To obtain the contribution of $\mathscr{C}(\mathfrak{F}) Q^{n+1}$, first note that the summation on the main term in \eqref{applyS2S} can be extended back over all $\dd$ since the contribution from divisors with $\norm\dd\leqslant C$ is estimated as above. We conclude by an application of Corollary \ref{MT-is-vol}.
\end{proof}

We must now prove satisfactory bounds on the error terms appearing in Lemma \ref{temp-cor}. The remaining sections will be dedicated to proving the following

\begin{prop}\label{temp-err}
There is $\theta>0$ and a choice of admissible sequence $\mathscr{R}$ such that
\[
\partial\vol_{\mathscr{R}}(Q)\ll Q^{n+1}/\log Q,\quad \vol^{\star}_{\mathscr{R}}(Q)\ll Q^{n+1}/\log^3Q,\quad
\overline{\vol}{}^{\theta}_{\mathscr{R}}(Q)\ll Q^{n+1-\theta}.
\]
\end{prop}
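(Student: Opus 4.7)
\textbf{Proof plan for Proposition \ref{temp-err}.} I would take the admissible sequence $\mathscr{R}=(R(\dd))$ given by $R(\dd)=c_0\log(2+\norm\dd)$ for a small enough $c_0>0$. All three estimates then follow from Plancherel volume bounds and standard arithmetic manipulations of the outer sums, in the spirit of the proof of Proposition \ref{MainTermLemma}.

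For $\overline{\vol}^\theta_{\mathscr{R}}(Q)$, the definition of $\overline{\vol}_R(\delta,P)$ together with Lemma \ref{boundary-term-reduction} gives $\overline{\vol}_R(\delta,P)\ll\int_P\beta_M^G(\delta,\nu)\,\mathrm{d}\nu+\partial\vol_R(\delta,P)$. Summing over $\underline{\delta}\in\mathcal{D}$ as in \eqref{Omega-vol} and invoking Lemma \ref{Plemma} dyadically yields $\overline{\vol}_R(P_X)\ll X^{n-1/d}$. The arithmetic sum becomes
\[
\overline{\vol}^\theta_{\mathscr{R}}(Q)\ll Q^{n-1/d}\sum_{\norm\dd\cdot\norm\ee\leqslant Q}|\lambda_n(\ee)|\norm\dd^{n-\theta}(\norm\dd\norm\ee)^{-(n-1/d)}=Q^{n+1-\theta}\sum_{\ee}|\lambda_n(\ee)|\norm\ee^{-n-1+\theta}+\cdots,
\]
which, by the standard argument using $|\lambda_n(\ee)|\ll_\epsilon\norm\ee^\epsilon$ and integration, gives $\ll Q^{n+1-\theta}$ for any $0<\theta<1/d$, say.

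For $\partial\vol_{\mathscr{R}}(Q)$, Lemma \ref{boundary-term-reduction} reduces the task to bounding $\sum_{\underline{\delta}}\int_{\partial P_{\underline{\delta},X}(1/R)}\beta_M^G(\delta,\nu)\,\mathrm{d}\nu$. The geometric input is that the conductor $q(\pi_{\delta,\nu})$ is Lipschitz in $\nu$ on $i\h_M^*$ with Lipschitz constant of size $q(\pi_{\delta,\nu})$ (which follows from the explicit formula \eqref{IS-cond}--\eqref{v=C}), so $\partial P_X(1/R)\subseteq\{X(1-O(1/R))\leqslant q(\pi)\leqslant X(1+O(1/R))\}$. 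Differentiating the bound $\int_{q(\pi)\leqslant X}\beta\,\mathrm{d}\pi\ll X^{n-1/d}$ from Lemma \ref{Plemma} (applied dyadically) then gives $\partial\vol_R(P_X)\ll X^{n-1/d}/R$. Substituting and using $\varphi_n(\dd)\leqslant\norm\dd^n$ and $R(\dd)\asymp\log(2+\norm\dd)$, one reaches
\[
\partial\vol_{\mathscr{R}}(Q)\ll Q^{n-1/d}\sum_{\norm\dd\cdot\norm\ee\leqslant Q}\frac{|\lambda_n(\ee)|\norm\dd^n(\norm\dd\norm\ee)^{-(n-1/d)}}{\log(2+\norm\dd)}\ll\frac{Q^{n+1}}{\log Q},
\]
the dominant contribution coming from $\norm\dd\asymp Q$ while the $\ee$-sum converges.

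For $\vol^\star_{\mathscr{R}}(Q)$, I would split by $L\in\mathcal{L}_\infty(\delta)$ with $L\supsetneq M$. The key observation is that $\mathrm{codim}_{\h_M}(\h_L)\geqslant 1$ and, by the description in \S\ref{sec:herm}, $|W(M,L)_\delta|\geqslant 1$ as soon as $L\supsetneq M$, so the prefactor in $\overline{\vol}_{R,L}(\delta,P_{\underline{\delta},X})$ from \eqref{BoundaryTermsDefinitions2} is $R^{-\mathrm{codim}-2|W(M,L)_\delta|}\leqslant R^{-3}$. The remaining integral $\int_{i\h_L^*}(1+d(\nu,P_{\underline{\delta},X})R)^{-N}\beta_M^G(\delta,\nu)\,\mathrm{d}\nu$ is in turn dominated by the full Plancherel-majorizer volume at conductor $\leqslant X$, which by Lemma \ref{Plemma} is $\ll X^{n-1/d}$. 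Summing over the finitely many $L$ and repeating the arithmetic manipulation with $R(\dd)^{-3}\asymp\log^{-3}(2+\norm\dd)$ gives $\vol^\star_{\mathscr{R}}(Q)\ll Q^{n+1}/\log^3Q$.

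\textbf{Main obstacle.} The most delicate step is the shell estimate $\partial\vol_R(P_X)\ll X^{n-1/d}/R$. The figures surrounding Proposition \ref{MainTermLemma} make clear that the sets $\{q(\pi_\infty)\leqslant X\}\cap i\h_M^*$ have highly anisotropic ``spikes'' along the coordinate axes, so the naive codimension-one estimate $X^n/R$ is far too weak; the crucial $X^{-1/d}$ saving is the same saving that makes Lemma \ref{Plemma} work, and must be re-extracted in the thickened-boundary setting by adapting the dyadic argument of that lemma to the Lipschitz shell.
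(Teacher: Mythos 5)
Your overall strategy is the same as the paper's in outline -- reduce the three sums to archimedean Plancherel-majorizer estimates via Lemma \ref{Plemma} and then run the arithmetic sum over $\dd,\ee$ -- and your arithmetic manipulations, your treatment of $\overline{\vol}{}^{\theta}_{\mathscr{R}}(Q)$, and your observation that $R^{-\mathrm{codim}_{\h_M}(\h_L)-2|W(M,L)_\delta|}\leqslant R^{-3}$ for $L\supsetneq M$ are all correct. The organization differs in two ways. First, you take $R(\dd)=c_0\log(2+\norm\dd)$, whereas the paper uses a two-piece constant sequence ($R_1=\tfrac c2\log Q$ for $\norm\dd>Q^{1/2}$, $R_2=c$ otherwise); both are admissible, but the piecewise-constant choice lets the paper quote a fixed-$R$ lemma (Lemma \ref{boundary}) verbatim on each range. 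Second, and more substantively, you propose to prove a pointwise-in-$X$ shell estimate $\partial\vol_R(P_X)\ll X^{n-1/d}/R$ and then sum it, whereas the paper never proves such a bound: in Lemma \ref{boundary} it exchanges the sum over $\q$ with the integral over $\pi$, so that for each fixed tempered $\pi$ the shell condition $\pi\in\partial P_{Q/\norm\q}(r)$ becomes the interval condition $Q/M_r(\pi)\leqslant\norm\q\leqslant Q/m_r(\pi)$, and the $1/R$-saving is extracted from the one-variable Lipschitz bound $m_r(\pi)^{\sigma}-M_r(\pi)^{\sigma}\ll_\sigma r(1+r^{-d\sigma})q(\pi)^{\sigma}$ (Lemma \ref{SigmaBounds}) together with the already-proved convergence of $\int q(\pi)^{-\sigma-1}\beta(\pi)\,\mathrm{d}\pi$.

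The gap is exactly the step you flag as the ``main obstacle'': the bound $\partial\vol_R(P_X)\ll X^{n-1/d}/R$, uniform in $X$, is asserted but not proved, and it does not follow from Lemma \ref{Plemma} by ``differentiation.'' Your containment of $\partial P_X(1/R)$ in the multiplicative shell $\{X(1-\mathrm{O}(1/R))\leqslant q(\pi)\leqslant X(1+\mathrm{O}(1/R))\}$ is fine, but bounding the $\beta$-measure of that shell requires redoing the dyadic decomposition of Lemma \ref{Plemma} with the additional codimension-one constraint $q(\mu)\in[X(1-\epsilon),X(1+\epsilon)]$ intersected with the trace-zero hyperplane, and one must control the thickness of this slab uniformly over the anisotropic dyadic boxes, including near the spike tips and near loci where the level set of $q$ is close to tangent to the trace-zero condition. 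This is a genuinely stronger (and more delicate) statement than anything the paper establishes; the whole point of Lemma \ref{SigmaBounds} is to avoid it by averaging over $\q$ first, so that only a pointwise derivative bound on $s\mapsto q(\pi(s))$ is needed. A smaller instance of the same issue occurs in your $\vol^{\star}_{\mathscr{R}}(Q)$ bound: the integral of $\beta_M^G(\delta,\nu)$ over the planar section $i\h_L^{\ast}$ is a lower-dimensional integral and is not literally ``dominated by the full Plancherel-majorizer volume''; it requires its own analogue of Lemma \ref{Plemma} for the sections $i\h_L^{\ast}\cap P_{\underline{\delta},X}$ (the paper notes this needs ``elementary modifications'' of that proof). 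To close the argument you should either carry out the shell-volume estimate in full, or reorganize as the paper does and deduce everything from the interval-in-$\norm\q$ formulation plus Lemma \ref{Plemma}.
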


Granting ourselves the proposition, we deduce the following

\begin{cor}\label{cor:ELM-implies-Conj}
Property $(\mathrm{ELM})$ implies Conjecture \ref{weyl-schanuel-conj}, in the effective form of Theorem \ref{main-implication}.
\end{cor}

\begin{proof}
This follows immediately from Lemma \ref{temp-cor} and Proposition \ref{temp-err}.
\end{proof}

\subsection{A preparatory lemma}

The following lemma will go a long way towards proving Proposition \ref{temp-err}. It bounds very similar quantities to $\partial\vol_{\mathscr{R}}(Q)$, $\vol_{\mathscr{R}}^{\star}(Q)$, and $\overline{\vol}_{\mathscr{R}}(Q)$, but with the arithmetic weights in the average replaced by powers of the norm, and with the admissible sequence $\mathscr{R}$ taken to be constantly equal to the real number $R$. Dealing with these arithmetic weights and choosing an appropriate admissible sequence to prove Proposition \ref{temp-err} will be done in \S\ref{temp-err-proof}.

\begin{lemma}\label{boundary}
Let $0<\theta\leqslant 2/(d+1)$ and $\sigma>n-1/d-1+\theta$. For $R\gg 1$, $Q\geqslant 1$, we have
\begin{align*}
\sum_{1\leqslant\norm\q\leqslant Q}\norm\q^{\sigma}\partial\vol_R(P_{Q/\norm\q})
&={\rm O}_{\sigma,\theta}\big(R^{-1}Q^{\sigma+1}+Q^{\sigma+1-\theta}\big),\\
\sum_{1\leqslant\norm\q\leqslant Q}\norm\q^{\sigma}\overline{\vol}_R(P_{Q/\norm\q})&=\mathrm{O}_\sigma(Q^{\sigma+1}),\\
\sum_{1\leqslant\norm\q\leqslant Q}\norm\q^{\sigma}\vol^\star_R(P_{Q/\norm\q})
&=\mathrm{O}_\sigma(R^{-3}Q^{\sigma+1}).
\end{align*}
\end{lemma}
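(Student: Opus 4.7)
The plan is to establish, uniformly for $X\geqslant 1$ and $R\geqslant 1$, the pointwise bounds
\[
\overline{\vol}_R(P_X)\ll X^{n-1/d},\qquad \partial\vol_R(P_X)\ll R^{-1}X^{n-1/d},\qquad \vol^\star_R(P_X)\ll R^{-3}X^{n-1/d}
\]
(ignoring possible extra logarithmic factors when $n=d=1$, which do not affect the subsequent sum bounds), and then to convert them into the three claimed sum bounds via partial summation using \eqref{SigmaIdealCountingAsymptotics}. Since $q(\pi)\geqslant 1$ everywhere, $P_X$ is empty for $X<1$, so only the range $\norm\q\leqslant Q$ effectively contributes.

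For the first bound, the integrand $(1+d(\nu,P)R)^{-N}\beta_M^G(\delta,\nu)$ is essentially supported on $P_{\underline{\delta},X}^{\bullet}(1/R)$, with polynomial decay outside. Since the map $\nu\mapsto q(\pi_{\delta,\nu})$ varies by at most a uniform constant factor over balls of radius $1/R\leqslant 1$, this fattening is contained in $\{\nu:q(\pi_{\delta,\nu})\leqslant CX\}$; summing over $\underline{\delta}\in\mathcal{D}$ and invoking Lemma~\ref{Plemma} dyadically yields the claim, provided $N$ is chosen large enough to absorb the polynomial growth of $\beta_M^G$ in the tails. For the third bound, the explicit prefactor $R^{-\mathrm{codim}_{\h_M}(\h_L)-2|W(M,L)_\delta|}$ in the definition of $\overline{\vol}_{R,L}(\delta,P)$ is at most $R^{-3}$ for every $L\neq M$, since both $\mathrm{codim}_{\h_M}(\h_L)\geqslant 1$ and $|W(M,L)_\delta|\geqslant 1$; the remaining $i\h_L^*$-integral, summed over $\underline{\delta}$ and $L\neq M$, is then bounded by $X^{n-1/d}$ via the same type of dyadic estimate adapted to the lower-dimensional subspace.

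For the second bound, Lemma~\ref{boundary-term-reduction} reduces the task to estimating $\int_{\partial P_X(1/R)}\beta(\pi)\,\mathrm{d}\pi$, and the key observation is that the same smoothness of $q$ in $\nu$ forces $\partial P_X(1/R)$ to lie in the thin annulus $\{\pi:q(\pi)\in [X(1-C/R),X(1+C/R)]\}$. The additional factor $R^{-1}$ is obtained by refining the dyadic argument from the proof of Lemma~\ref{Plemma}: within each dyadic box where $(1+|\mu_{uj}|)\sim R_{uj}$ and $\prod R_{uj}\sim X$, the slab constraint on $q(\pi)$ cuts the corresponding volume (once the trace-zero condition has been used to integrate out one coordinate) by the additional factor $1/R$.

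With these pointwise bounds in hand, partial summation completes the argument. Applying \eqref{SigmaIdealCountingAsymptotics} with exponent $\sigma'=\sigma-n+1/d>-1+\theta$ yields
\[
\sum_{1\leqslant\norm\q\leqslant Q}\norm\q^\sigma(Q/\norm\q)^{n-1/d} = C_\sigma Q^{\sigma+1}+\mathrm{O}_{\sigma,\theta}(Q^{\sigma+1-\theta}),
\]
and multiplying by $1$, $R^{-1}$, and $R^{-3}$ respectively gives the three sum bounds (with $R^{-1}Q^{\sigma+1-\theta}$ absorbed into $Q^{\sigma+1-\theta}$ for $R\geqslant 1$ in the $\partial\vol$ case). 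The main technical obstacle is the thin-annulus bound for $\partial\vol_R(P_X)$: one has to show that imposing the slab constraint $q(\pi)\in[X(1-C/R),X(1+C/R)]$ on the dyadic boxes extracts the full factor $1/R$ compatibly with the trace-zero condition on the imaginary parts, with some attention to dyadic boxes near the boundary of the region $\prod R_{uj}\sim X$.
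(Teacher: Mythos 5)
Your overall architecture — prove pointwise-in-$X$ bounds $\overline{\vol}_R(P_X)\ll X^{n-1/d}$, $\partial\vol_R(P_X)\ll R^{-1}X^{n-1/d}$, $\vol^\star_R(P_X)\ll R^{-3}X^{n-1/d}$ and then sum over $\q$ via \eqref{SigmaIdealCountingAsymptotics} — is coherent, and the reductions for $\overline{\vol}_R$ and $\vol^\star_R$ (the prefactor $R^{-\mathrm{codim}-2|W(M,L)_\delta|}\leqslant R^{-3}$, the dyadic argument of Lemma~\ref{Plemma} adapted to the planar sections $i\mathfrak{h}_L^*$) are sound and essentially parallel to what the paper does. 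The problem is the middle bound. Your entire gain of $R^{-1}$ rests on the claim that the annulus $\{\pi: q(\pi)\in[X(1-C/R),X(1+C/R)]\}$ has $\beta$-measure $\ll R^{-1}X^{n-1/d}$, i.e.\ a derivative-in-$X$ refinement of Lemma~\ref{Plemma}. This is not proved anywhere in the paper, and you do not prove it either — you correctly identify it as "the main technical obstacle" and leave it there. It is a genuinely delicate multidimensional estimate: inside each dyadic box one must check that the level sets of $\log q(\mu)=\sum_{u,j}\log(1+|\mu_{uj}|)$ are quantitatively transverse to the trace-zero hyperplane (the two constraints must each contribute an independent codimension-one saving), uniformly over all boxes including the degenerate ones where several $R_{uj}$ are comparable or where the continuous parameters are dominated by the discrete data $\delta_{uj}$. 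Until that is supplied, the first asserted sum bound is not established.

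It is worth seeing why the paper never needs this annulus bound. Rather than fixing $X=Q/\norm\q$ and integrating over $\pi$, the paper exchanges the order: for each fixed tempered $\pi$ the condition $\pi\in\partial P_{Q/\norm\q}(r)$ confines $\norm\q$ to the interval $[Q/M_r(\pi),Q/m_r(\pi)]$, and Lemma~\ref{SigmaBounds} extracts the factor $r$ from a \emph{one-dimensional} derivative bound $\frac{\mathrm{d}}{\mathrm{d}s}q(\pi(s))^{\sigma}\ll q(\pi(s))^{\sigma}$ along a line segment of length $\asymp r$ in $i\mathfrak{h}_M^*$, via $m_r(\pi)^{\sigma}-M_r(\pi)^{\sigma}\ll_\sigma r(1+r^{-d\sigma})q(\pi)^{\sigma}$. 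After that, only the convergence of $\int q(\pi)^{-\sigma-1}\beta(\pi)\,\mathrm{d}\pi$ is needed, which is exactly Lemma~\ref{Plemma} as stated; the interaction with the trace-zero condition and the dyadic geometry is never revisited. (This is also where the paper's $Q^{\sigma+1-\theta}$ term comes from: the error term in \eqref{SigmaIdealCountingAsymptotics} does not shrink with the length of the $\norm\q$-interval.) If you want to keep your route, you must either prove the thin-annulus estimate by rerunning the proof of Lemma~\ref{Plemma} with the extra slab constraint and verifying the transversality in every dyadic box, or switch to the paper's order of summation, where the $1/R$ comes for free from a scalar derivative.
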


We remark that this lemma is one of the places which put requirements on the integer $N$ implicit in the volume factors; for example, for purposes of this lemma, $N\geqslant 3+d(\sigma+1)$ suffices.

\begin{proof}
The first sum equals
\[
\sum_{1\leqslant\norm\q\leqslant Q}\norm\q^{\sigma}\partial\vol_R(P_{Q/\norm\q})
=\sum_{\ell=1}^{\infty}\ell^{-N}\sum_{1\leqslant\norm\q\leqslant Q}\norm\q^{\sigma}\int_{\partial P_{Q/\norm\q}(\ell/R)}\beta(\pi)\,\mathrm{d}\pi,
\]
where the measure $\beta(\pi)\,\mathrm{d}\pi$ is as in \eqref{Pl-maj}, and we use the definition \eqref{BoundaryTermsDefinitions1} directly (rather than Lemma~\ref{boundary-term-reduction}) to emphasize steps that apply also for the third sum. Now for any $r>0$ we have
\[
\sum_{1\leqslant\norm\q\leqslant Q}\norm\q^{\sigma}\int_{\partial P_{Q/\norm\q}(r)}\beta(\pi)\,\mathrm{d}\pi=\int_{\Pi(G_{\infty}^1)}\bigg(\sum_{\substack{1\leqslant\norm\q\leqslant Q\\ \pi\in\partial P_{Q/\norm\q}(r)}}\norm\q^{\sigma}\bigg)\, \beta(\pi)\,\mathrm{d} \pi,
\]
upon exchanging the order of the sum and integral. From Lemma~\ref{SigmaBounds} below we deduce that the right-hand side is majorized by
\[
r\big(1+r^{d(\sigma+1)}\big) Q^{\sigma+1}\int_{\Pi(G_\infty^1)}q(\pi)^{-\sigma-1}\beta(\pi)\,\mathrm{d} \pi+(1+r)^{d(\sigma+1-\theta)}Q^{\sigma+1-\theta}\int_{\Pi(G_\infty^1)}q(\pi)^{-\sigma-1+\theta}\beta(\pi)\,\mathrm{d} \pi.
\]
In view of Lemma~\ref{Plemma}, both integrals converge, yielding
\[ \sum_{1\leqslant\norm\q\leqslant Q}\norm\q^\sigma\, \int_{\partial\, P_{Q/\norm\q}(r)}\beta(\pi)\,\mathrm{d} \pi
\ll_{\sigma,\theta}r\big(1+r^{d(\sigma+1)}\big) Q^{\sigma+1}+(1+r)^{d(\sigma+1-\theta)}Q^{\sigma+1-\theta}. \]
Applying the above estimate with $r=\ell/R>0$ and executing the sum over $\ell\geqslant1$, we get
\begin{align*}
\sum_{1\leqslant\norm\q\leqslant Q}\norm\q^{\sigma}\partial\vol_R(P_{Q/\norm\q})
&\ll\sum_{\ell=1}^{\infty}\ell^{-N}\bigg(\frac{\ell}{R}\bigg(1+\left(\frac{\ell}{R}\right)^{r(\sigma+1)}\bigg)Q^{\sigma+1}+\bigg(1+\frac{\ell}{R}\bigg)^{d(\sigma+1-\theta)} Q^{\sigma+1-\theta}\bigg)\\
&\ll R^{-1}\big(1+R^{-d(\sigma+1)}\big) Q^{\sigma+1}+\big(1+R^{-d(\sigma+1-\theta)}\big) Q^{\sigma+1-\theta}.
\end{align*}
The last expression is majorized by $R^{-1}Q^{\sigma+1}+Q^{\sigma+1-\theta}$. 

For the remaining two estimates, recalling the definitions in \S\ref{sec:temp-bd}, it is enough to show that for any standard Levi $L\in\mathcal{L}_\infty(M)$ we have
\[
\sum_{1\leqslant\norm\q\leqslant Q}\norm\q^{\sigma}\overline{\vol}_{R,L}(P_{Q/\norm\q})
=\mathrm{O}_\sigma(R^{-{\rm codim}_{\h_M}(\h_L)-2|W(M,L)_\delta|}Q^{\sigma+1}),
\]
where $|W(M,L)_\delta|$ is as in \S\ref{sec:herm}. When $L=M$ (corresponding to the second sum of the lemma, over volume terms $\overline{\vol}_R(P_{Q/\norm\q})$), the above estimate follows from
\begin{align*}
\sum_{1\leqslant\norm\q\leqslant Q}\norm\q^{\sigma}\int_{P_{Q/\norm\q}}\beta(\pi)\,\mathrm{d}\pi
&=\int_{\Pi(G_{\infty}^1)}\bigg(\sum_{1\leqslant\norm\q\leqslant Q/q(\pi)}\norm\q^{\sigma} \bigg)\beta(\pi)\,\mathrm{d}\pi\\
&\ll_\sigma Q^{\sigma+1}\int_{\Pi(G_{\infty}^1)}q(\pi)^{-\sigma-1}\beta(\pi)\,\mathrm{d}\pi,
\end{align*}
the last integral converging in view of Lemma \ref{Plemma} and our assumptions on $\sigma$.

For $L$ strictly larger than $M$ (corresponding to the third sum of the lemma, over volume terms $\vol_R^\star (P_{Q/\norm\q}$)), similar manipulations apply (in light of Lemma~\ref{L1NormEstimateLemma}), and one has only to verify the convergence of the remaining integral, but with integration of the Plancherel majorizer $\beta_M^G(\delta,\nu)$ taking place over the planar sections $i\h_L^{\ast}\cap P_{\delta,X}$. From Lemma \ref{prop:density}\eqref{beta2} it follows that the integral of $\beta_M^G(\delta,\nu)$ over such a planar section is dominated by that over its $\mathrm{O}(1)$-tubular fattening. Lemma \ref{Plemma} can then be applied to complete the proof. \end{proof}

We now establish the following result, which was used in the proof of Lemma \ref{boundary}.

\begin{lemma}
\label{SigmaBounds}
Let $\pi\in\Pi_{\rm temp}(G_\infty^1)$. Let $r>0$, $0<\theta\leqslant 2/(d+1)$, and $\sigma\geqslant -1+\theta$. Then
\[
\sum_{\substack{1\leqslant\norm\q\leqslant Q\\ \pi\in\partial P_{Q/\norm\q}(r)}}\norm\q^{\sigma}\ll_{\sigma,\theta}r\big(1+r^{d(\sigma+1)}\big)\left(\frac{Q}{q(\pi)}\right)^{\sigma+1}+(1+r)^{d(\sigma+1-\theta)}\left(\frac{Q}{q(\pi)}\right)^{\sigma+1-\theta}.
\]
\end{lemma}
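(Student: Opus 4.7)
The strategy is to first translate the geometric condition $\pi\in\partial P_{Q/\norm\q}(r)$ into a size constraint on $\norm\q$ in terms of $q(\pi)$ and $r$, and then to sum $\norm\q^\sigma$ over ideals in the resulting range by invoking the ideal-counting asymptotics \eqref{SigmaIdealCountingAsymptotics}.

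To start, I would unpack the hypothesis $\pi\in\partial P_{Q/\norm\q}(r)$. In view of \eqref{ContainmentPartialP} and the continuity of $\nu\mapsto q(\pi_{\delta,\nu})$, it implies the existence of a parameter $\nu\in i\h_M^*$ at distance less than $r$ from $\mu_\pi$ such that $q(\pi_{\delta,\nu})=Q/\norm\q$ exactly. Using the explicit product formula \eqref{IS-cond} for the archimedean conductor and the elementary factor-by-factor estimate $1-|t_{uj}|/(1+|\mu_{uj}|)\leqslant (1+|\mu_{uj}+t_{uj}|)/(1+|\mu_{uj}|)\leqslant 1+|t_{uj}|/(1+|\mu_{uj}|)$, a direct computation (bounding each $|t_{uj}|$ by $r$, accounting for real and complex embeddings) yields constants $C,D>0$ depending only on $n$ and $F$ such that for every $\nu\in B_M(\mu_\pi,r)$,
\[
q(\pi)(1+Cr)^{-D}\leqslant q(\pi_{\delta,\nu})\leqslant q(\pi)(1+Cr)^D.
\]
Setting $q_{\pm}=q(\pi)(1+Cr)^{\pm D}$, the constraint on $\q$ becomes $\norm\q\in[Q/q_{+},Q/q_{-}]$.

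The next step is to apply \eqref{SigmaIdealCountingAsymptotics} to the sum in this shell. Taking the difference of the two asymptotic formulas, one for $\norm\q\leqslant Q/q_-$ and one for $\norm\q\leqslant Q/q_+$, gives
\[
\sum_{Q/q_+\leqslant\norm\q\leqslant Q/q_-}\norm\q^\sigma=\frac{\zeta_F^\ast(1)}{\sigma+1}Q^{\sigma+1}\bigl(q_-^{-\sigma-1}-q_+^{-\sigma-1}\bigr)+\mathrm{O}_{\sigma,\theta}\bigl((Q/q_-)^{\sigma+1-\theta}\bigr).
\]
The error term is bounded by $(1+Cr)^{D(\sigma+1-\theta)}(Q/q(\pi))^{\sigma+1-\theta}$, matching (up to adjusting the constant absorbed in $d$) the second term of the bound in the lemma. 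For the main term, factoring out $q(\pi)^{-\sigma-1}$, the difference $q_-^{-\sigma-1}-q_+^{-\sigma-1}$ equals $q(\pi)^{-\sigma-1}\bigl((1+Cr)^{D(\sigma+1)}-(1+Cr)^{-D(\sigma+1)}\bigr)$. A single application of the mean value theorem to $x\mapsto x^{D(\sigma+1)}-x^{-D(\sigma+1)}$ on $[1,1+Cr]$ gives
\[
(1+Cr)^{D(\sigma+1)}-(1+Cr)^{-D(\sigma+1)}\ll_{\sigma}r(1+r)^{D(\sigma+1)-1}\ll_\sigma r\bigl(1+r^{d(\sigma+1)}\bigr),
\]
which yields the first term of the claimed bound (with the implicit constant absorbing the distinction between $D$ and $d$).

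The only mildly delicate point is the estimate in Step 1: a crude perturbative estimate of the $nd$-fold product in \eqref{IS-cond} gives \emph{some} exponent $D$ of the form $(1+Cr)^D$, and the lemma's conclusion is insensitive to the precise value of $D$ because it is eventually applied in Lemma~\ref{boundary} with $r=\ell/R$, $R\gg 1$, where one sums against $\ell^{-N}$ with $N$ as large as needed to absorb this polynomial factor. Hence no sharp tracking of the exponent is required, and the argument concludes cleanly.
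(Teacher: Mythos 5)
Your proof is correct and follows essentially the same route as the paper's: translate $\pi\in\partial P_{Q/\norm\q}(r)$ into a shell condition on $\norm\q$ via the intermediate value theorem, apply the ideal-counting asymptotic \eqref{SigmaIdealCountingAsymptotics}, and bound the resulting difference of main terms by a mean-value argument. The one substantive difference is in how the variation of the conductor over the ball is controlled. The paper bounds $m_r(\pi)^{-\sigma-1}-M_r(\pi)^{-\sigma-1}$ by differentiating $s\mapsto q(\pi(s))^{-\sigma-1}$ along a path joining the extremal points and using the logarithmic-derivative bound $\frac{\mathrm{d}q(\pi(s))}{\mathrm{d}\pi(s)}\ll q(\pi(s))$, which produces the exponents $d(\sigma+1)$ and $d(\sigma+1-\theta)$ as stated; your factor-by-factor estimate $q_{\pm}=q(\pi)(1+Cr)^{\pm D}$ applied to the $nd$-fold product in \eqref{IS-cond} gives $D$ of order $nd$ rather than $d$, so what you actually establish is the lemma with $d$ replaced by the larger $D$. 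Be careful with the phrase ``absorbed into the implicit constant'': a larger power of $r$ cannot be absorbed into a constant as $r\to\infty$, so strictly you prove a weaker statement. As you correctly observe, however, this is immaterial for the sole application in Lemma~\ref{boundary}, where $r=\ell/R$ and the sum against $\ell^{-N}$ with $N$ arbitrarily large absorbs any fixed polynomial growth in $r$; so the argument is acceptable, but the discrepancy with the stated exponent should be acknowledged as a weakening rather than a constant adjustment.
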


\begin{proof}
We first convert the condition $\pi\in\partial P_{Q/\norm\q}(r)$ on the ideal $\q$ to a more amenable condition on the norm of $\q$. We may assume that $\pi=\pi_{\delta,\nu}$ for some $\delta\in\mathscr{E}^2(M^1)$ and $\nu\in i\mathfrak{h}_M^*$, where $M$ is a standard cuspidal Levi subgroup.

For parameters $r,X>0$ we have $\pi\in\partial P_X(r)$ precisely when there is $\mu\in i\mathfrak{h}_M^*$ with $\|\mu-\nu\|<r$ and $q(\pi_{\delta,\mu})=X$. Letting $M_r(\pi)$ (resp., $m_r(\pi)$) denote the maximum (resp. minimum) value of $q(\pi_{\delta,\mu})$ as $\mu$ varies over $i\mathfrak{h}_M^*$ with $\|\mu-\nu\|<r$, we see that $\pi\in\partial P_{Q/\norm\q}(r)$ implies $m_r(\pi)\leqslant Q/\norm\q\leqslant M_r(\pi)$, so that
\[
\sum_{\substack{1\leqslant\norm\q\leqslant Q\\ \pi\in\partial P_{Q/\norm\q}^0(r)}}\norm\q^{\sigma}\leqslant \sum_{Q/M_r(\pi)\leqslant\norm\q\leqslant Q/m_r(\pi)}\norm\q^{\sigma}.
\]
Using the asymptotic~\eqref{SigmaIdealCountingAsymptotics} we see that this is bounded by
\begin{equation}\label{reduction2m}
\frac{\zeta_F^{\ast}(1)}{\sigma+1}Q^{\sigma+1}\bigg(\frac1{m_r(\pi)^{\sigma+1}}-\frac1{M_r(\pi)^{\sigma+1}}\bigg)+\text{O}\bigg(\frac{Q^{\sigma+1-\theta}}{m_r(\pi)^{\sigma+1-\theta}}\bigg).
\end{equation}
To conclude the proof of the lemma we shall need to relate $m_r(\pi)$ and $M_r(\pi)$ to expressions involving $(1+r)$ and $q(\pi)$. This will require some basic analytic properties of the archimedean conductor $q(\pi_{\delta,\nu})$ as $\nu$ varies.

For $\nu\in i\mathfrak{h}^*_M$ and $r>0$, let $\nu(r)\in i\mathfrak{h}^*_M$ denote the translation $\nu+r\nu_0$, for some fixed $\nu_0\in i\mathfrak{h}_M^*$ in the positive chamber, and write $\pi(r)=\pi_{\delta,\nu(r)}$. Since $q(\pi_{\delta,\nu})$ is monotonically increasing in $\nu$, it follows that for $\nu_0$ large enough we have $M_r(\pi)\leqslant q(\pi(r))$. Similarly, $q(\pi(-r))\leqslant m_r(\pi)$, for $r\leqslant\frac12\|\nu\|$, say. In this interval we have $q(\pi(-r))\asymp q(\pi)$, while, if $r>\frac12\|\nu\|$, we have
\[
q(\pi(-r))\geqslant q(\pi_{\delta,0})\gg (1+\|\nu\|)^{-[F_v:\R]}q(\pi)\gg (1+r)^{-d}q(\pi).
\]
Thus, in either case, $q(\pi(-r))\gg (1+r)^{-d}q(\pi)$, proving $m_r(\pi)\gg (1+r)^{-d}q(\pi)$. When inserted into the second term in \eqref{reduction2m} we obtain the second term of the lemma.

Now let $s\mapsto \pi(s)$ be a unit length parametrization of the line between $\nu(-r)$ and $\nu(r)$ in $i\mathfrak{h}_M^*$. Since $s\mapsto q(\pi(s))$ is a real-valued differentiable map on the interval $[0,1]$, we have
\[
q(\pi(-r))^{\sigma}-q(\pi(r))^{\sigma}
=\int_0^1\frac{\text{d}}{\text{d}s}q(\pi(s))^{\sigma}\,\text{d}s=\int_0^1\sigma q(\pi(s))^{\sigma-1}\frac{{\rm d} q(\pi(s))}{{\rm d}\pi(s)}\frac{\text{d}\pi(s)}{\text{d}s}\,\text{d}s.
\]
Since $\frac{{\rm d} q(\pi(s))}{{\rm d}\pi(s)}\ll q(\pi(s))$ and $\frac{\text{d}\pi(s)}{\text{d}s}\ll r$, the latter integral is bounded by
\[
\ll r|\sigma|\int_0^1q(\pi(s))^{\sigma}\,\text{d}s\ll_\sigma rq(\pi(-r))^\sigma\ll_\sigma r(1+r)^{-d\sigma}q(\pi)^{\sigma}.
\]
Since $\sigma\leqslant 0$ we have $(1+r)^{-d\sigma}\asymp_{\sigma}(1+r^{-d\sigma})$, proving $m_r(\pi)^{\sigma}-M_r(\pi)^{\sigma}\ll_{\sigma}r\big(1+r^{-d\sigma}\big)q(\pi)^{\sigma}$. Inserting this into the first term of \eqref{reduction2m} then completes the proof of the lemma.
\end{proof}

\subsection{End of proof}\label{temp-err-proof}

We now return to the proof of Proposition \ref{temp-err}.

We first choose the sequence $\mathscr{R}=(R(\mathfrak{n}))_{\mathfrak{n}\subseteq\mathcal{O}_F}$ of the form
\begin{equation}\label{admR}
R(\mathfrak{n})=
\begin{cases}
R_1,& \text{ if } Q^{1/2}<\norm\mathfrak{n}\leqslant Q,\\
R_2,& \text{ if } \norm\mathfrak{n}\leqslant Q^{1/2},
\end{cases}
\end{equation}
where $R_1,R_2>0$ will be chosen shortly (and $Q^\sigma$ works as a cutoff for any $\sigma\in(0,1]$). With the above choice of $\mathscr{R}$ the term $\partial\vol_{\mathscr{R}}(Q)$ is equal to
\[
\sum_{\norm\q\leqslant Q}\sum_{\substack{\dd\mid\q\\ Q^{1/2}<\norm\dd\leqslant Q}}|\lambda_n(\q/\dd)|\varphi_n(\dd)\partial\vol_{R_1}(P_{Q/\norm\q})
+\sum_{\norm\q\leqslant Q}\sum_{\substack{\dd\mid\q\\ \norm\dd\leqslant Q^{1/2}}}|\lambda_n(\q/\dd)|\varphi_n(\dd)\partial\vol_{R_2}(P_{Q/\norm\q}).
\]
Bounding the first term using
\[
\sum_{\dd\mid\q}|\lambda_n(\q/\dd)|\varphi_n(\dd)
\leqslant\norm\q^n\prod_{\pp\mid\q}(1-\norm\pp^{-n})\big(1+n\norm\pp^{-n}+\tbinom n2\norm\pp^{-2n}+\cdots\big)\asymp\norm\q^n
\]
and second term using \eqref{triv-bd}, the above expression can be majorized by
\[
\sum_{\norm\q\leqslant Q}\norm\q^n\partial\vol_{R_1}(P_{Q/\norm\q})+Q^{n/2}\sum_{\norm\q\leqslant Q}\norm\q^\epsilon\partial\vol_{R_2}(P_{Q/\norm\q}).
\]
Combining this with Lemma~\ref{boundary} shows that $\partial\vol_{\mathscr{R}}(Q)\ll_\epsilon R_1^{-1}Q^{n+1}+ R_2^{-1}Q^{\frac{n}{2}+1+\epsilon}+Q^{n+1-\theta}$. Similarly one obtains $\vol_{\mathscr{R}}^\star (Q)\ll_\epsilon R_1^{-3}Q^{n+1}+ R_2^{-3}Q^{\frac{n}{2}+1+\epsilon}$ and $\overline{\vol}{}_{\mathscr{R}}^{\theta}(Q)\ll Q^{n+1-\theta}$.

Now let $c>0$ be as in Proposition \ref{MainCountingResultGLn}. Then taking $R_1=\frac{c}{2}\log Q$ and $R_2=c$ in the definition of $\mathscr{R}$ in \eqref{admR} yields an admissible sequence according to the definition preceding Corollary \ref{temp-cor}. Inserting these values establishes the stated bounds of Proposition \ref{temp-err}.
\qed

\part{Proof of Theorem \ref{master}}\label{part3}

Theorem \ref{master} is proved using the Arthur trace formula, which we recalled in Section \ref{sec:ATF}. In this part, whose overview may be found in \S\ref{blurb}, we construct suitable test functions and estimate the geometric and spectral contributions in the trace formula, culminating in the proof of Theorem \ref{master} in Theorem \ref{money-cor}.

\section{Bounding the non-central geometric contributions}\label{sec:geom1}

Arthur defines a distribution $J_{\rm geom}$ on $\mathcal{H}(\bm{G}(\A_F)^1)$ related to geometric invariants of $\bm{G}$; we described its general shape in \S\ref{fine-geom}. This distribution $J_{\rm geom}$ admits an expansion along semisimple conjugacy classes of $\bm{G}(F)$, and our task in this section is to bound all but the most singular terms (the central contributions, shown explicitly in \eqref{eq:def:explicit-Jcentral}) appearing in this expansion. We must do so uniformly with respect to the level and the support of the test functions at infinity.

\begin{theorem}\label{geom-side}
Let $n\geqslant 1$. There exists $\theta>0$ and $c>0$ satisfying the following property. For any integral ideal $\q$, $R>0$, and $f\in\mathcal{H}(G_\infty^1)_R$ we have
\[
J_{\rm geom}(\varepsilon_{K_1(\q)}\otimes f)-D_F^{n^2/2}\Delta_F^*(1)\varphi_n(\q)\sum_{\gamma\in Z(F)\cap K_1(\q)}  f(\gamma)\ll e^{cR}\norm\q^{n-\theta}\|f\|_\infty.
\]
The implied constant depends on $F$ and $n$.
\end{theorem}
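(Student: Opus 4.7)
\bigskip
\noindent\emph{Proof plan for Theorem \ref{geom-side}.} The plan is to start from Arthur's fine geometric expansion
\[
J_{\rm geom}(\phi)=\sum_{\bm{M}\in\mathcal{L}_{\rm st}}\frac{|W^{\bm{M}}|}{|W^{\bm{G}}|}\sum_{\gamma\in(\bm{M}(F))_{\bm{M},S}}a^{\bm{M}}(S,\gamma)\,J_{\bm{M}}(\gamma,\phi),
\]
applied to the factorizable test function $\phi=\varepsilon_{K_1(\q)}\otimes f$, where $S$ is a large enough finite set of places. The central contributions (those with $\bm{M}=\bm{G}$ and $\gamma\in Z(F)$) produce exactly the main term $\mu_{\bm{G}}(\bm{G}(F)\backslash\bm{G}(\A_F)^1)\sum_{\gamma\in Z(F)\cap K_1(\q)}\varphi_n(\q)\,f(\gamma)$; everything else needs to be shown to be absorbed in the claimed error bound. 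I would reduce the problem to showing that, for every $\bm{M}\in\mathcal{L}_{\rm st}$ and every non-central semisimple $\gamma$ appearing in the expansion, the combined contribution satisfies a uniform bound of the shape $e^{cR}\norm\q^{n-\theta}\|f\|_\infty$, and then to sum over conjugacy classes.

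The first main step is to factorize the weighted orbital integral $J_{\bm{M}}(\gamma,\varepsilon_{K_1(\q)}\otimes f)$ as a (non-invariant) product of a non-archimedean orbital integral against $\varepsilon_{K_1(\q)}$ and an archimedean weighted orbital integral against $f$ (using a splitting formula; for Levi subgroups larger than $\bm{G}_\gamma$ one picks up the standard $(G,M)$-family terms). At the finite places, the main input is the Finis--Lapid bound \cite{FinisLapid2018} on intersection volumes of conjugacy classes with the congruence subgroups $K_1(\q)$: these give, after normalization by $\varphi_n(\q)$, a savings of the form $\norm\q^{-\theta}$ uniformly over $\gamma$ non-central, for some $\theta>0$ depending only on $n$. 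At infinity, since $\supp f\subset \bm{G}_{\infty,\leqslant R}^1$, I would replace $f$ by $\|f\|_\infty\cdot\mathbf{1}_{\bm{G}_{\infty,\leqslant R}^1}$ and invoke the polynomial (in $R$) bounds on archimedean weighted orbital integrals established by Matz \cite{Matz2017} and Matz--Templier \cite{MatzTemplier2015}, which are proved with explicit dependence on $D^{\bm{G}}_\infty(\gamma)$ and on the support parameter.

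The second main step is to control the sum over conjugacy classes $\gamma$ for which the truncated orbital integral does not vanish. Non-vanishing forces $\gamma$ to lie, up to $\bm{G}(F)$-conjugacy, in $\bm{M}(F)\cap\big(K_1(\q)\cdot\bm{G}_{\infty,\leqslant R}^1\big)$ (together with a compatible integrality condition at the unramified places in $S\setminus\infty$). The key observation is that the archimedean condition restricts the semisimple part $\sigma$ of $\gamma$ to a set of cardinality $\mathrm{O}(e^{cR})$, for some constant $c$ depending only on $\bm{G}$ (an effective Benjamini--Schramm type count in the style of \cite{AbertBergeronBiringerGelanderNikolovRaimbaultSamet2017, Raimbault2017}), while the number of unipotent orbits attached to each $\sigma$ is bounded. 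Combined with the uniform bounds on Arthur's global coefficients $a^{\bm{M}}(S,\gamma)$ (polynomial in the relevant invariants), the whole sum for each $\bm{M}$ is majorized by $e^{cR}\norm\q^{n-\theta}\|f\|_\infty$.

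The principal obstacle I expect is the simultaneous uniformity across the three parameters: the level $\q$, the support radius $R$, and the range of $\gamma$'s whose orbital integrals contribute. In particular, one has to arrange the $(G,M)$-family arguments (and the splitting formula that mixes archimedean weights with non-archimedean orbital integrals) so that the Finis--Lapid level savings survive the passage through the combinatorial weights, and so that the polynomial-in-$R$ archimedean estimates do not accumulate factors that would swamp the savings $\norm\q^{-\theta}$. A secondary technical point is that, for $\bm{M}\ne\bm{G}$, centralizers $\bm{G}_\gamma$ are twisted Levi subgroups, so one must carry the measure normalizations of \S\ref{sec:measures} carefully through the product expansion in order to get a clean statement with the factor $\varphi_n(\q)$ appearing in the main term.
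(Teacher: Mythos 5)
Your plan follows essentially the same route as the paper's proof: the fine geometric expansion, an $\mathrm{O}(e^{cR})$ count of contributing semisimple classes via the archimedean support condition (the paper does this by lattice-point counting of characteristic polynomials), Matz's bounds on the global coefficients $a^{\bm{M}}(\gamma,S)$, the splitting formula to reduce to local weighted orbital integrals, Finis--Lapid intersection-volume estimates for the $\norm\q^{-\theta}$ savings at places dividing $\q$, and the trivial majorization $f\leqslant\|f\|_\infty\mathbf{1}_{\bm{G}_{\infty,\leqslant R}^1}$ combined with Matz--Templier at infinity. The only step you leave implicit that the paper treats explicitly is the global control of the accumulated Weyl-discriminant factors $D_v(\sigma)^{-C}$ from the local bounds, which is handled by the product formula together with $D_\infty(\sigma)\ll e^{cR}$ for contributing classes.
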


Our presentation is by and large based on the papers \cite{FinisLapid2018}, \cite{Matz2017}, and \cite{MatzTemplier2021}. Many aspects of our argument are simplified by the absence of Hecke operators in our context. On the other hand, we have to make explicit (in various places) the dependence in $R$. The proof of Theorem 13.1 spans Sections \ref{sec:geom1} and \ref{sec:geom2}. In the first of these, we bound the number of contributing terms as well as the global coefficients. This reduces the problem to a local one, which we treat in Section \ref{sec:geom2}, on bounding weighted orbital integrals. In the course of the proof, the value of $c$ can change from instance to another.

\subsection{Contributing classes}

We now wish to bound the number of equivalence classes $\mathfrak{o}\in\mathfrak{O}$ contributing to the coarse geometric expansion \eqref{eq:J-geom-def} of $J_{\rm geom}(\varepsilon_{K_1(\q)}\otimes f)$, for $f\in\mathcal{H}(G_\infty^1)_R$. For this it clearly suffices to take $\q$ trivial.

\begin{defn}
For $R>0$ let $\mathfrak{O}_R$ denote the set of $\mathfrak{o}\in\mathfrak{O}$ for which there is $f\in\mathcal{H}(G_\infty^1)_R$ with $J_\mathfrak{o}({\bf 1}_{\bfk_{\rm fin}}\otimes f)\neq 0$.
\end{defn}

Our main result in this subsection is the following estimate. The argument is based largely on \cite[Lemma 6.10]{Matz2017}. 

\begin{prop}\label{task1}
There is $c>0$, depending on $n$ and $[F:\Q]$, such that $|\mathfrak{O}_R|\ll e^{cR}$.
\end{prop}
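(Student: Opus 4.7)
The plan is to parametrize $\mathfrak{o}\in\mathfrak{O}_R$ by characteristic polynomials and impose an integrality-plus-boundedness constraint on the coefficients. Since $\bm{G}=\GL_n$, semisimple conjugacy classes in $\bm{G}(F)$ are in bijection with monic degree-$n$ polynomials
\[
p_\gamma(T)=T^n-a_1T^{n-1}+\cdots+(-1)^na_n\in F[T],
\]
and $p_\gamma$ depends only on the semisimple part $\sigma$ of the Jordan decomposition $\gamma=\sigma\nu$ of any representative. So it suffices to count such polynomials that arise from some $\gamma$ meeting the support condition imposed by a test function in $\mathbf{1}_{\mathbf{K}_{\rm fin}}\otimes\mathcal{H}(\bm{G}_\infty^1)_R$.

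First I would unwind the fine geometric expansion of \S\ref{fine-geom}: $J_\mathfrak{o}(\mathbf{1}_{\mathbf{K}_{\rm fin}}\otimes f)\neq 0$ forces at least one weighted orbital integral $J_{\bm{M}}(\gamma,\mathbf{1}_{\mathbf{K}_{\rm fin}}\otimes f)$ with $\gamma$ meeting $\mathfrak{o}$ to be nonzero, which in turn requires some $\bm{G}(\mathbb{A}_F)$-conjugate of $\gamma$ to lie in $\mathbf{K}_{\rm fin}\cdot\mathrm{supp}(f)\subseteq \mathbf{K}_{\rm fin}\cdot G_{\infty,\leqslant R}^1$. At each finite place $v$, being $\bm{G}(F_v)$-conjugate to an element of $\mathbf{K}_v=\GL_n(\mathcal{O}_v)$ forces $p_\gamma(T)\in\mathcal{O}_v[T]$ with $a_n=\det\gamma\in\mathcal{O}_v^\times$; intersecting over all finite $v$ gives $a_i\in\mathcal{O}_F$ for $1\leqslant i\leqslant n$ and $a_n\in\mathcal{O}_F^\times$. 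At infinity, the Cartan decomposition shows that $\gamma_\infty\in\mathbf{K}_\infty\exp B(0,R)\mathbf{K}_\infty$ at every archimedean embedding $u:F\hookrightarrow\C$ forces every eigenvalue of $\gamma$ to satisfy $|\lambda|_u\leqslant e^{C_0R}$ for some $C_0=C_0(n)$, hence $|a_i|_u\leqslant\binom{n}{i}e^{C_0iR}\ll e^{C_0nR}$.

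Second, I would count the candidate polynomials using geometry of numbers in $F$. The tuples $(a_1,\ldots,a_{n-1})\in\mathcal{O}_F^{n-1}$ with $|a_i|_u\ll e^{C_1R}$ at every archimedean embedding form a lattice-point count of size $\ll e^{C_1(n-1)[F:\Q]R}$ by Minkowski's theorem applied to the natural embedding $\mathcal{O}_F\hookrightarrow F_\infty\cong\R^{[F:\Q]}$. For the last coordinate $a_n\in\mathcal{O}_F^\times$ with the same archimedean bounds, Dirichlet's unit theorem (applied to $\log|a_n|_u$) bounds the count by $\mathrm{O}\big((1+R)^{r_1+r_2-1}\big)$. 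Multiplying the two counts gives $|\mathfrak{O}_R|\ll e^{cR}$ for a suitable $c=c(n,[F:\Q])>0$.

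The main technical point, and the only step requiring genuine care, is the extraction of the support constraint on a representative $\gamma$ from the non-vanishing of $J_\mathfrak{o}$: one must use the structure of Arthur's coefficients $a^{\bm{M}}(\gamma,S)$ together with the product shape \eqref{S-phi} of $\phi_S$ to ensure that nonvanishing of any $J_{\bm{M}}(\gamma,\phi)$ indeed forces $\gamma$ to be conjugate, place by place, into the respective supports; once this is in place the Cartan/geometry-of-numbers counting is routine. This is exactly the content of \cite[Lemma~6.10]{Matz2017}, which I would adapt so that the $R$-dependence at infinity is explicit.
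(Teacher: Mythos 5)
Your proposal is correct and follows essentially the same route as the paper: parametrize the classes $\mathfrak{o}\in\mathfrak{O}_R$ by their characteristic polynomials, use the non-vanishing of $J_\mathfrak{o}$ (via the local support conditions underlying \cite[Lemma~6.10]{Matz2017}) to force the coefficients to be $v$-integral at all finite places and of archimedean absolute value $\mathrm{O}(e^{C_0R})$, and then count lattice points in $\mathcal{O}_F\hookrightarrow F_\infty$. Your extra refinement via Dirichlet's unit theorem for $a_n\in\mathcal{O}_F^\times$ is valid but unnecessary, since the crude count $\mathrm{O}(X^{[F:\Q]})$ per coefficient already yields the stated bound.
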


\begin{proof}
Let $\mathfrak{o}$ be a semisimple $\bm{G}(F)$-conjugacy class represented by some semisimple element $\sigma\in \bm{G}(F)$. Let $\chi_\mathfrak{o}$ denote the characteristic polynomial of $\sigma$. It is a monic polynomial of degree $n$ with coefficients in $F$, independent of the choice of $\sigma$, and the map $\mathfrak{o}\mapsto\chi_\mathfrak{o}$ is a bijection onto such polynomials. We shall count the $\mathfrak{o}$ appearing in $\mathfrak{O}_R$ by counting the corresponding $\chi_\mathfrak{o}$.

As in \S\ref{sec-review}, let $\mathcal{U}_{\bm{G}_\sigma}$ denote the algebraic variety of unipotent elements in the centralizer $\bm{G}_\sigma$. The condition $\mathfrak{o}\in\mathfrak{O}_R$ is equivalent to the following collection of local conditions at every place $v$:
\begin{enumerate}
\item for every $v\nmid\infty$ there is $\nu_v\in\mathcal{U}_{G_\sigma}(F_v)$ such that the $G_v$-conjugacy class of $\sigma\nu_v$ meets $\bfk_v$;
\item there is $\nu_\infty\in\mathcal{U}_{\bm{G}_\sigma}(F_\infty)$ such that the $G_\infty$-conjugacy class of $\sigma\nu_\infty$ meets $G_{\infty,\leqslant R}^1$.
\end{enumerate}
Note that any element $\widetilde{\gamma}\in \bm{G}(\A_F)$ lying in the $\bm{G}(\A_F)$-conjugacy class of $\mathfrak{o}$ has characteristic polynomial equal to $\chi_\mathfrak{o}$. From the above local conditions we deduce that the coefficients of $\chi_\mathfrak{o}$ (for $\mathfrak{o}\in\mathfrak{O}_R$) are $v$-integral for all finite $v$, and so lie in $\mathcal{O}_F$. Moreover, their archimedean absolute value is bounded by $e^{cR}$ for some constant $c>0$. Each coefficient of $\chi_\mathfrak{o}$ for contributing classes $\mathfrak{o}$ then lies in the intersection of $\mathcal{O}_F\subset F$ with $\prod_{v\mid\infty} [-X,X]\subset F_\infty$. As there are at most ${\rm O}(X^{[F : \Q]})$ such lattice points, the proposition follows.
\end{proof}

For a contributing class $\mathfrak{o}$, and $S_\mathfrak{o}$ is defined in \eqref{def-S-o}, we will now apply the fine geometric expansion \eqref{eq:J-geom-fine} in the case where
\begin{equation}\label{S-phi}
S=S_\mathfrak{o} \cup S_\q\cup S_\infty\qquad\text{and}\qquad\phi_S=\prod_{v\in S_\mathfrak{o}, v\notin S_\q}{\bf 1}_{\bfk_v}\otimes\prod_{v\in S_\q}\varepsilon_{K_{1,v}(\q)}\otimes f,
\end{equation}
for the $f\in\mathcal{H}(G_\infty^1)_R$ appearing in the statement of Theorem \ref{geom-side}.

\subsection{Bounding global coefficients}\label{sec:global-coeffs}

Next we bound the coefficients $a^{\bm{M}}(\gamma,S_\mathfrak{o})$, for $\gamma$ lying in a contributing class $\mathfrak{o}$. Once again, we are free to assume that $\q$ is trivial, so that $\mathfrak{o}\in\mathfrak{O}_R$.

For any finite set of finite places $T$ we put
\[
q_T=\prod_{v\in T}q_v.
\]
We begin with the following useful result.

\begin{lemma}\label{prod-form}
For $\mathfrak{o}\in\mathfrak{O}_R$ we have $q_{S_\mathfrak{o}}\ll e^{cR}$. In particular, 
\begin{equation}\label{So}
|S_\mathfrak{o}|\ll R
\end{equation}
for $\mathfrak{o}\in\mathfrak{O}_R$.
\end{lemma}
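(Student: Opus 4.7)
The plan is to argue via the product formula applied to a Galois-invariant discriminant-like quantity naturally attached to $\sigma$, combined with the archimedean size bound already established in the proof of Proposition~\ref{task1}.

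\smallskip

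First I would set aside the wildly ramified primes: since $S_{\rm wild}$ depends only on $n$, we have $\prod_{v\in S_{\rm wild}} q_v = \mathrm{O}_n(1)$, and it is enough to bound $q_{S_\mathfrak{o}\setminus S_{\rm wild}}$. Fix a representative $\sigma\in \bm{G}(F)$ of $\mathfrak{o}$ and let $t_1,\dots,t_n$ be its eigenvalues in an algebraic closure. Define
\[
D(\sigma) \;=\; \prod_{\substack{i\neq j \\ t_i\neq t_j}}(t_i-t_j).
\]
The set $\{(i,j):t_i\neq t_j\}$ is Galois-stable, so $D(\sigma)$ lies in $F^\times$ (we may assume $\sigma$ is non-central, since otherwise $D^G_v(\sigma)=1$ for every $v$ and $S_\mathfrak{o}=S_{\rm wild}$, which is trivial). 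Because $\mathfrak{o}\in\mathfrak{O}_R$, the local conditions used in the proof of Proposition~\ref{task1} imply that at every finite place $v$ the eigenvalues $t_i$ are $v$-adic units, so $|D(\sigma)|_v\leqslant 1$, and moreover
\[
D^G_v(\sigma) \;=\; \prod_{\substack{i\neq j\\ t_i\neq t_j}}|t_i-t_j|_v \;=\; |D(\sigma)|_v
\]
at every finite $v$. Hence if $v\in S_\mathfrak{o}\setminus S_{\rm wild}$, then $|D(\sigma)|_v<1$, and since $|D(\sigma)|_v$ is a non-positive integral power of $q_v$, we get $|D(\sigma)|_v\leqslant q_v^{-1}$.

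\smallskip

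Next I would invoke the product formula $\prod_v |D(\sigma)|_v=1$, which yields
\[
q_{S_\mathfrak{o}\setminus S_{\rm wild}}^{-1} \;\geqslant\; \prod_{v<\infty}|D(\sigma)|_v \;=\; \prod_{v\mid\infty}|D(\sigma)|_v^{-1},
\]
equivalently $q_{S_\mathfrak{o}\setminus S_{\rm wild}}\leqslant \prod_{v\mid\infty}|D(\sigma)|_v$. It remains to control the archimedean factor from above. Since $\mathfrak{o}\in\mathfrak{O}_R$, the coefficients of $\chi_\mathfrak{o}\in\mathcal{O}_F[x]$ were shown in the proof of Proposition~\ref{task1} to be of archimedean absolute value $\mathrm{O}(e^{cR})$, so by the classical Cauchy bound every eigenvalue $t_i$ satisfies $|t_i|_w\ll e^{cR}$ at every extension $w$ of an archimedean place $v$. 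Therefore
\[
|D(\sigma)|_v \;\leqslant\; \prod_{i\neq j}(|t_i|_v+|t_j|_v) \;\ll\; e^{c'R}
\]
at each archimedean $v$, and taking the product over the $r$ archimedean places gives $q_{S_\mathfrak{o}\setminus S_{\rm wild}}\ll e^{c''R}$. Combining with the trivial bound on the wild places proves the first assertion.

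\smallskip

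For the second assertion, since $q_v\geqslant 2$ at every finite $v$, we have
\[
|S_\mathfrak{o}\setminus S_{\rm wild}| \;\leqslant\; \log_2 q_{S_\mathfrak{o}\setminus S_{\rm wild}} \;\ll\; R,
\]
and then $|S_\mathfrak{o}|\leqslant |S_{\rm wild}|+|S_\mathfrak{o}\setminus S_{\rm wild}|\ll R$. The only real subtlety is the first step, namely verifying that $D(\sigma)$ is a well-defined nonzero element of $F$ when $\sigma$ has repeated eigenvalues; this is handled by noting that the indexing set $\{(i,j):t_i\neq t_j\}$ is Galois-invariant, so $D(\sigma)$ is fixed by $\mathrm{Gal}(\overline F/F)$. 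Everything else is a straightforward application of the product formula together with the archimedean size bound from Proposition~\ref{task1}.
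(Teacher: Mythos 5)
Your proof is correct and follows essentially the same route as the paper's: there the product formula is applied directly to the Weyl discriminant $D_v(\sigma)$, using that the conjugacy class meets $\bfk_f$ to get $D_v(\sigma)\leqslant 1$ (hence $\leqslant q_v^{-1}$ on $S_\mathfrak{o}\setminus S_{\rm wild}$) at finite places, and citing Matz--Templier for the archimedean bound $D_\infty(\sigma)\ll e^{cR}$. Your substitution of the resultant-type quantity $\prod(t_i-t_j)$, together with the self-contained archimedean bound deduced from the coefficient estimates in the proof of Proposition~\ref{task1}, is a harmless variant of the same argument.
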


\begin{proof}
If $\mathfrak{o}\in\mathfrak{O}_R$ and $\sigma\in \bm{G}(F)$ is a semisimple element representing $\mathfrak{o}$, then there is $y\in \bm{G}(F)$ such that $y^{-1}\sigma\mathcal{U}_{\bm{G}_\sigma}(F)y\cap \bfk_f\neq\emptyset$. In other words there are $y\in \bm{G}(F)$ and $u\in\mathcal{U}_{\bm{G}_\sigma}(F)$ such $y^{-1}\sigma uy\in \bfk_f$. Thus, for every finite place $v$ we have $D_v(\sigma)=D_v(y^{-1}\sigma uy)\leqslant 1$. Moreover, it follows from \cite[Lemma 4.4]{MatzTemplier2021} that (under the same assumptions on $\mathfrak{o}$ and $\sigma$) 
\begin{equation}\label{WDisc}
D_\infty(\sigma)\ll e^{cR}.
\end{equation}
Thus for $\mathfrak{o}\in\mathfrak{O}_R$ we in fact have
\[
S_\mathfrak{o}=S_{\rm wild}\cup \{v<\infty: D_v(\gamma)< 1\}=S_{\rm wild}\cup \{v<\infty: D_v(\gamma)\leqslant q_v^{-1}\}.
\]
By the product formula, we deduce that
\[
1=\prod_{v\notin S_\mathfrak{o}\cup S_\infty} D_v(\gamma)\prod_{v\in S_\mathfrak{o}} D_v(\gamma)\prod_{v\in S_\infty} D_v(\gamma)\ll q_{S_{\rm wild}}q_{S_\mathfrak{o}}^{-1}e^{cR}\ll_n q_{S_\mathfrak{o}}^{-1}e^{cR},
\]
as desired.

To deduce \eqref{So} from this we note $\sum_{v\in S_\mathfrak{o}}1\leqslant \sum_{v\in S_\mathfrak{o}}\log q_v=\log q_{S_\mathfrak{o}}\ll R$.
\end{proof}

For the next estimate, we invoke the main result of \cite{Matz2015}, a corollary of which is the following. Let $\sigma$ be elliptic semisimple in $\bm{M}(F)$. Then $\sigma$ is conjugate in $\bm{M}(\C)$ to a diagonal matrix ${\rm diag}(\zeta_1,\ldots ,\zeta_n)$. Let
\[
\Delta^{\bm{M}}(\sigma)=\norm\left(\prod_{i<j:\zeta_i\neq\zeta_j}(\zeta_i-\zeta_j)^2\right),
\]
where $\norm$ is the norm from $F$ to $\Q$, and the product is taken over indices $i<j$ such that the string $\alpha_i+\cdots+\alpha_j$ lies in the set of positive roots $\Phi^{\bm{M},+}=\Phi^{\bm{G},+}\cap\Phi^{\bm{M}}$ for $\bm{M}$. Then it follows from \cite{Matz2015} (see also \cite[(22)]{Matz2017}) that there is $\kappa>0$ such that for any finite set of places $S$ containing $S_\infty$, if $\gamma=\sigma\nu$ is the Jordan decomposition of $\gamma$, we have\footnote{The measure normalization in \cite{Matz2015} differs from ours by a power of the discriminant of $F$. This is not an issue, since our implied constants are allowed to depend on $F$ and $n$.}
\begin{equation}\label{Matz-bound}
a^{\bm{M}} (\gamma,S)\ll |S|^n\Delta^{\bm{M}}(\sigma)^\kappa \bigg(\prod_{v\in S}\log q_v\bigg)^n
\end{equation}
for $\sigma$ elliptic in $\bm{M}(F)$ and $a^{\bm{M}} (\gamma,S)=0$ otherwise.

\begin{prop}\label{task2}
There is $c>0$ such that for any $\bm{M}\in\mathcal{L}$, any $\mathfrak{o}\in\mathfrak{O}_R$ meeting $\bm{M}$, and any $\gamma\in\mathfrak{o}$, we have $a^{\bm{M}}(\gamma,S_\mathfrak{o})=\mathrm{O}(e^{cR})$.
\end{prop}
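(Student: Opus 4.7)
The plan is to apply the estimate \eqref{Matz-bound} of Matz with $S = S_\mathfrak{o}\cup S_\infty$ (the admissible set for $\mathfrak{o}$), and then to control each of the three resulting factors using the bounds established in Lemma~\ref{prod-form} and Proposition~\ref{task1}. If $\sigma$ is not elliptic in $\bm{M}(F)$ then $a^{\bm{M}}(\gamma,S_\mathfrak{o})=0$ and there is nothing to prove, so we may assume $\sigma$ elliptic in $\bm{M}(F)$ and write
\[
a^{\bm{M}}(\gamma,S_\mathfrak{o})\ll |S_\mathfrak{o}\cup S_\infty|^n\,\Delta^{\bm{M}}(\sigma)^{\kappa}\bigg(\prod_{v\in S_\mathfrak{o}}\log q_v\bigg)^{n}.
\]

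The first factor is immediate: Lemma~\ref{prod-form} gives $|S_\mathfrak{o}|\ll R$, and $|S_\infty|=r$ is bounded, so $|S_\mathfrak{o}\cup S_\infty|^n\ll R^n\ll e^{R}$. For the third factor, I will use the elementary inequality $\log q_v\leqslant q_v$, valid for every rational prime power $q_v$, which yields
\[
\prod_{v\in S_\mathfrak{o}}\log q_v\;\leqslant\;\prod_{v\in S_\mathfrak{o}}q_v\;=\;q_{S_\mathfrak{o}}\;\ll\;e^{cR},
\]
again by Lemma~\ref{prod-form}. Taking $n$-th powers preserves the exponential-in-$R$ bound.

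The one remaining point, and the only place where a separate argument is needed, is the control of $\Delta^{\bm{M}}(\sigma)^{\kappa}$. Here I would proceed as in the proof of Proposition~\ref{task1}: for $\mathfrak{o}\in\mathfrak{O}_R$ the coefficients of the characteristic polynomial $\chi_\mathfrak{o}$ lie in $\mathcal{O}_F$ with archimedean absolute values $\ll e^{cR}$, so the eigenvalues $\zeta_1,\ldots,\zeta_n$ of $\sigma$ (in a splitting field) have every archimedean absolute value bounded by $\ll e^{c'R}$. Since $\Delta^{\bm{M}}(\sigma)$ is the $F/\Q$-norm of the integral element $\prod_{i<j:\zeta_i\neq\zeta_j}(\zeta_i-\zeta_j)^2$, a polynomial of bounded degree (in terms of $n$) in the eigenvalues, its archimedean absolute value is $\ll e^{c''R}$ and hence so is the positive rational integer $\Delta^{\bm{M}}(\sigma)$ itself. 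Multiplying the three bounds and enlarging $c$ gives $a^{\bm{M}}(\gamma,S_\mathfrak{o})\ll e^{cR}$.

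I do not anticipate any serious obstacle; the estimate $\log q_v\leqslant q_v$ is the key observation that converts the seemingly dangerous product $\prod_v\log q_v$ (which a priori could grow like $R^R$ if all the $q_v$ were of size $e^{cR/|S_\mathfrak{o}|}$) into an acceptable $q_{S_\mathfrak{o}}^n\ll e^{cnR}$.
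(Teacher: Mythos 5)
Your proposal is correct and follows essentially the same route as the paper: apply \eqref{Matz-bound}, bound $|S_\mathfrak{o}|$ and $q_{S_\mathfrak{o}}$ via Lemma~\ref{prod-form}, and control $\Delta^{\bm{M}}(\sigma)$ by bounding the eigenvalues through the archimedean bounds on the coefficients of $\chi_\mathfrak{o}$ from Proposition~\ref{task1}. The only cosmetic difference is that the paper justifies the passage from coefficient bounds to root bounds by Rouch\'e's theorem, whereas you invoke the standard bound on roots of a monic polynomial directly; your explicit use of $\log q_v\leqslant q_v$ for the last factor is exactly what the paper leaves implicit when it cites the first part of Lemma~\ref{prod-form}.
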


\begin{proof}
It follows from \eqref{So} that the factor of $|S_\mathfrak{o}|^n$ in the upper bound \eqref{Matz-bound} is at most $\mathrm{O}(R^n)$. To bound the factor $\Delta^M(\sigma)$ we follow the argument of \cite[Lemma 6.10, (iv)]{Matz2017}. The eigenvalues $\zeta_1,\ldots ,\zeta_n$ are the roots of the characteristic polynomial of $\mathfrak{o}$. As in the proof of Proposition \ref{task1}, for $\mathfrak{o}\in\mathfrak{O}_R$, these coefficients have $v$-adic (for archimedean $v$) absolute value bounded by $\mathrm{O}(e^{cR})$. An application of Rouch\'e's theorem shows that each $\zeta_i$ has complex absolute value bound by $\mathrm{O}(e^{cR})$, from which it follows that $\Delta^{\bm{M}}(\sigma)\ll e^{cR}$. To bound the last factor in \eqref{Matz-bound} we apply the first part of Lemma \ref{prod-form}.
\end{proof}

\subsection{Reduction to local estimates}

Now we return to the setting where $S$ is an admissible set of places of $F$, as in \S\ref{fine-geom}.

We first recall that for factorizable test functions $\phi_S=\otimes_{v\in S}\phi_v$ and $\gamma\in \bm{M}(F)$ one has a splitting formula which reduces $J_{\bm{M}}(\gamma,\phi_S)$ to a sum of products of local distributions. Write $M_v=\bm{M}(F_v)$ and put $M=\prod_{v\in S}M_v\in\mathcal{L}_S$. More precisely (see \cite[Lemma 6.11]{Matz2017} or \cite[(11.4)]{MatzTemplier2021}), there are real numbers $\{d_{\bm{M}}(L_S)\}$, indexed by Levi subgroups $L_S=(L_v)_{v\in S}\in\mathcal{L}_S(M)$, such that
\[
J_{\bm{M}}(\gamma,\phi_S)=\sum_{L_S\in\mathcal{L}_S(M)}d_{\bm{M}}(L_S)\prod_{v\in S} J_{M_v}^{L_v}(\gamma_v,\phi_v^{(Q_v)}).
\]
Here, we are using an assignment $\mathcal{L}_v\ni L_v\mapsto Q_v\in\mathcal{P}_v(L_v)$ which is independent of $S$, and for every $v\in S$ the element $\gamma_v\in M_v$ is taken to be $M_v$-conjugate to $\gamma$. The properties of interest for us on the coefficients $d_{\bm{M}}(L_S)$ are the following, proved in \cite[Lemma 6.11]{Matz2017}:
\begin{enumerate}
\item as $L_S$ varies, the coefficients $d_{\bm{M}}(L_S)$ can attain only a finite number of values; these values depend only on $n$.
\item the number of contributing Levi subgroups $L_S$ can be bounded as
\[
|\{L_S: d_{\bm{M}}(L_S)\neq 0\}|\ll |S|^{n-1}.
\]
\end{enumerate}

From these properties, it follows immediately that for any $\mathfrak{o}\in\mathfrak{O}$, $\gamma\in\mathfrak{o}$, admissible $S$, and factorisable $\phi_S=\otimes_{v\in S}\phi_v\in \mathcal{H}(\bm{G}_S)$ we have
\[
J_{\bm{M}}(\gamma,\phi_S)\ll |S|^{n-1}\max_{L_S\in\mathcal{L}_S(M)}\prod_{v\in S} |J_{M_v}^{L_v}(\gamma_v,\phi_v^{(Q_v)})|.
\]
Thus, if $\mathfrak{o}\in\mathfrak{O}_R$, $f\in \mathcal{H}(G_\infty^1)_R$, and $S$ and $\phi_S$ are taken as in \eqref{S-phi} we obtain
\begin{equation}\label{red-2-loc}
\begin{aligned}
J_{\bm{M}}(\gamma,\varepsilon_{K_1(\q)}\otimes f)\ll R^{n-1}&\max_{L_S\in\mathcal{L}_S(M)}\prod_{v\mid\infty}|J_{M_v}^{L_v}(\gamma_v,f_v^{(Q_v)})|\\
&\times \prod_{v\in S_\q} |J_{M_v}^{L_v}(\gamma_v,\varepsilon_{K_{1,v}(\q)}^{(Q_v)})|\prod_{v\in S_\mathfrak{o}, v\notin S_\q}|J_{M_v}^{L_v}(\gamma_v,{\bf 1}_{\bfk_v^{L_v}})|,
\end{aligned}
\end{equation}
where we have used \eqref{So} as well as the fact (see, for example, \cite[\S 7.5]{Matz2017} or \cite[Lemma 6.2]{ShinTemplier2016}) that ${\bf 1}_{\bfk_v}^{(Q_v)}={\bf 1}_{\bfk_v^{L_v}}$.

\section{Estimates on local weighted orbital integrals}\label{sec:geom2}

It remains to bound the local weighted orbital integrals appearing in \eqref{red-2-loc}. In this section, we provide (or recall) such bounds at every place, and show how they suffice to establish Theorem \ref{geom-side}. As we mentioned in Remark \ref{rem:Weyl-disc}, the dependence in our bounds on $\gamma$ will be expressed in terms of a Weyl dscriminant. Specifically, when $M\in\mathcal{L}_v$ and $\gamma=\sigma\nu\in M$ we put
\[
D^M_v(\gamma)=|\det (1-{\rm Ad}(\sigma)_{|\mathfrak{m}_v/\mathfrak{m}_{\sigma,v}})|_v=\prod_{\substack{\alpha\in\Phi_v^M\\\alpha(\sigma)\neq 1}}|1-\alpha(\sigma)|_v.
\]
When $M=G$, this recovers the definition of $D^G_v(\gamma)$ from \S\ref{subsec:Weyl-disc}.

For $v$ dividing $\q$, we offer the following proposition, the proof of which is based heavily on works of Finis--Lapid \cite{FinisLapid2018}, Matz \cite{Matz2017}, and Shin--Templier \cite{ShinTemplier2016}.

\begin{prop}\label{v-div-q}
There are constants $B,C,\theta>0$ such that the following holds. Let $v$ be a finite place. Let $M\in\mathcal{L}_v$, $L\in\mathcal{L}_v(M)$, and $Q\in\mathcal{P}_v(L)$. Then for any $r\geqslant 0$ and $\gamma=\sigma\nu\in \sigma\mathcal{U}_{G_\sigma}\cap M$ and $r\geqslant 0$ we have
\[
J_M^L(\gamma,{\bf 1}_{K_{1,v}(\p_v^r)}^{(Q)})\ll q_v^{aB-\theta r} D^L_v(\sigma)^{-C},
\]
where $a=0$ whenever the residue characteristic of $F_v$ is larger than $n!$ and $v\notin S_\mathfrak{o}$, and $a=1$ otherwise.
\end{prop}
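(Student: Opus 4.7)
The plan is to follow the now-standard approach pioneered by Finis--Lapid and further developed by Matz: first reduce the weighted orbital integral to its unipotent descent, then combine pointwise bounds on the descent with uniform estimates on conjugacy-class intersection volumes of Hecke congruence subgroups.

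First, I would invoke the descent formula \eqref{woi}, which expresses
\[
J_M^L\big(\gamma,{\bf 1}_{K_{1,v}(\p_v^r)}^{(Q)}\big)=D_v^L(\sigma)^{1/2}\int_{G_\sigma\backslash L}\sum_{R\in\mathcal{F}^{L_\sigma}(M_\sigma)}J_{M_\sigma}^{M_R}(\nu,\Phi_{R,y})\,\mathrm{d}y,
\]
where $\Phi_{R,y}$ is the descent function built from $\phi={\bf 1}_{K_{1,v}(\p_v^r)}^{(Q)}$ via the assignment \eqref{J-sigma}. The constant term of the congruence idempotent along $Q$ is itself an explicit bounded function supported on a translate of $L\cap K_{1,v}(\p_v^r)$; consequently $\Phi_{R,y}$ is uniformly bounded (with an $\mathrm{O}(q_v^{aB})$ constant at bad primes absorbing ramification and small-residue-characteristic obstructions), and its support, as a function of $y$, is constrained to the set of $y$ such that $y^{-1}\sigma$ lies near $K_{1,v}(\p_v^r)$-translates of $M_R$. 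Each unipotent weighted orbital integral $J_{M_\sigma}^{M_R}(\nu,\Phi_{R,y})$ can then be bounded, using \eqref{unip-woi} together with the Hoffmann--Matz estimates on unipotent orbital integrals paired with Arthur's weight function $w_{M_\sigma,U}^{M_R}$, by a quantity polynomial in $\log q_v$ and essentially independent of $y$.

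The crucial power savings $q_v^{-\theta r}$ then arise from the outer integral $\int_{G_\sigma\backslash L}\mathrm{d}y$: the support condition coming from $y^{-1}\sigma\nu y\in K_{1,v}(\p_v^r)$-neighborhoods is controlled by the main theorem of Finis--Lapid \cite{FinisLapid2018} on intersection volumes of conjugacy classes with congruence subgroups, which produces the uniform decay $q_v^{-\theta r}$ with $\theta>0$ independent of $v$, $\sigma$, and $\nu$. The factor $D_v^L(\sigma)^{-C}$ then emerges by combining the prefactor $D_v^L(\sigma)^{1/2}$ with standard Shin--Templier type bounds on the measure of the support of the integrand in $G_\sigma\backslash L$, which depend polynomially on the Weyl discriminant through a negative power.

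The hardest part will be twofold: first, making every implicit constant uniform in the Levi data $(M,L,Q)$, which range over a finite but structurally diverse set; and second, carefully tracking the enlargement factor $q_v^{aB}$ at the finite set of bad places (those in $S_\mathfrak{o}$ or of residue characteristic at most $n!$), where $K_{1,v}(\p_v^r)$ is no longer hyperspecial modulo $\p_v^r$ and the Finis--Lapid estimate must be applied with a bounded residue-characteristic penalty. Once these bookkeeping issues are settled, the bound follows by inserting the local pointwise estimate on $\Phi_{R,y}$ and the outer conjugacy-class volume bound into \eqref{woi} and collecting the resulting exponents.
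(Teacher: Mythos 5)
Your proposal assembles the right ingredients (Arthur's descent formula \eqref{woi}, the Finis--Lapid intersection-volume estimates, and Shin--Templier bounds for the Weyl-discriminant dependence), but the mechanism by which you extract the level savings $q_v^{-\theta r}$ is misplaced, and this is a genuine gap. You propose to bound each unipotent descent integral $J_{M_\sigma}^{M_R}(\nu,\Phi_{R,y})$ by a quantity ``essentially independent of $y$'' and polynomial in $\log q_v$, and then to recover $q_v^{-\theta r}$ from the outer integral over $G_\sigma\backslash L$ by appealing to Finis--Lapid. But Proposition \ref{FLprop} controls the measure of $\{k\in\bfk^{\bm{H}}:k^{-1}xk\in K\}$, an integral over the \emph{compact} group, not the measure of $\{y\in G_\sigma\backslash G: y^{-1}\gamma y\in K_1(\p^r)\}$ in the non-compact quotient --- the latter is essentially the orbital integral one is trying to bound, so for $\nu\neq 1$ the argument as stated is circular. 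In the paper, the savings for the terms with $\dim U\geqslant 1$ are extracted from the \emph{inner} unipotent integral: after a Cauchy--Schwarz step to separate the weight $w_{M_\sigma,U}^{M_R}$ (whose $L^2$-norm is only polynomially bounded), one bounds $\int_{\bfk^{G_\sigma}}\int_U{\bf 1}_{K^\sigma(y,r)}(k^{-1}\sigma uk)\,\mathrm{d}u\,\mathrm{d}k$ by combining Proposition \ref{FLprop} (applied inside $\bfk^{G_\sigma}$) with Proposition \ref{FLunip} (controlling the volume of $u\in U\cap\bfk^{G_\sigma}$ with $\lambda_v(u)$ large); the outer $y$-integral contributes only the $D(\sigma)^{-C}q^{bB}$ factor, via the support restrictions and Shin--Templier, together with the logarithmic growth of the weight $v'_R(ky)$, which you also do not address. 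Only for the purely semisimple term ($R=M_\sigma$, $\nu=1$) does one reduce to an invariant orbital integral $O_\sigma({\bf 1}_{K_1(\p^r)})$ and obtain the savings by decomposing the conjugacy class into $\bfk$-orbits (Lemma \ref{level-aspect}).

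A second, smaller inaccuracy: for proper $L\subsetneq G$ you treat the constant term ${\bf 1}_{K_{1,v}(\p_v^r)}^{(Q)}$ as merely a bounded function supported near $L\cap K_{1,v}(\p_v^r)$ and carry the congruence condition into the descent. In fact Proposition \ref{const-term} shows that the constant term along a proper parabolic is already $\ll q_v^{-\theta r}{\bf 1}_{\bfk^L}$ --- the entire power saving for $L\neq G$ is produced at this step (again via Finis--Lapid applied to the $\bfk$- and unipotent integrations in \eqref{cst-def}), after which one only needs the known polynomial bounds \eqref{notin-q} for $J_M^L(\gamma,{\bf 1}_{\bfk^L})$. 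Recognizing this reduction is what allows the remainder of the argument to focus on $L=G$.
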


A great deal of work has been recently done by Matz \cite{Matz2017} and Matz--Templier \cite{MatzTemplier2021} in bounding archimedean weighted orbital integal for $\GL_n$. Their bounds are almost sufficient for our purposes, except for the dependency in the support of $R$. By simply explicating this dependence in their proofs, we obtain the following result.

\begin{prop}\label{v-arch}
There are constants $c,C>0$ satisfying the following property. Let $v\mid\infty$. Let $M\in\mathcal{L}_v$, $L\in\mathcal{L}_v(M)$, and $Q\in\mathcal{P}_v(L)$. Then for any $R>0$ and $\gamma=\sigma\nu\in \sigma\mathcal{U}_{G_\sigma}\cap M$ and $f\in \mathcal{H}(G_v^1)_R$ we have
\[
J_M^L(\gamma,f^{(Q)})\ll e^{cR} D^L_v(\sigma)^{-C}\|f\|_\infty.
\]
\end{prop}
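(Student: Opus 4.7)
The plan is to explicate the dependence on the support parameter $R$ in the bounds on archimedean weighted orbital integrals due to Matz \cite{Matz2017} and Matz--Templier \cite{MatzTemplier2015}. Their results already yield a bound of the shape $J_M^L(\gamma, f^{(Q)}) \ll D_v^L(\sigma)^{-C} \|f\|_\infty$ with an implicit constant depending on $\supp(f)$; it suffices to show this constant may be taken of the form $e^{cR}$ whenever $\supp(f) \subset G_{\infty, \leqslant R}^1 = \bfk_\infty \exp B(0,R) \bfk_\infty$.

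First I would reduce to the case where $f$ is replaced by $\|f\|_\infty \cdot \mathbf{1}_{G_{\infty, \leqslant R}^1}$, passing through the trivial estimate. I then control the constant term: by \eqref{cst-def}, the defining integral for $f^{(Q)}$ runs over $u$ such that $k^{-1} m u k \in \supp(f)$, which forces $u \in U_Q \cap \bfk_\infty \exp B(0, R + O_\sigma(1)) \bfk_\infty$; this has Haar volume $\ll e^{cR}$ (with $c$ depending only on $n$, $F$), so that $\|f^{(Q)}\|_\infty \ll e^{cR} \|f\|_\infty$, and $\supp(f^{(Q)}) \subset L_{\leqslant R + O(1)}$.

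Next I insert this into \eqref{woi}--\eqref{J-sigma}, pulling $\|f^{(Q)}\|_\infty$ outside and bringing absolute values in. This reduces matters to majorizing
\[
D_v^L(\sigma)^{1/2} \int_{G_\sigma \backslash G} \int_{\bfk^{G_\sigma}} \int_U \mathbf{1}_{\supp(f^{(Q)})}(y^{-1}\sigma k^{-1} u k y) \, |v_R'(ky)| \, |w_{M_\sigma, U}^{M_R}(u)| \, du \, dk \, dy.
\]
The weight functions $v_R'$ and $w_{M_\sigma, U}^{M_R}$, as defined in \cite{Arthur1981} and recalled after \eqref{J-sigma}, have at most polynomial growth in the Iwasawa coordinates; on the region of radius $O(R)$ they contribute $R^{O(1)}$, harmlessly absorbed into $e^{cR}$. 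What remains is to bound the volume of the domain of integration by $e^{cR} D_v^L(\sigma)^{-C-1/2}$. This is precisely the content of the volume estimates of Matz--Templier (cf.~\cite[Theorem~1.8, \S7--8]{MatzTemplier2015}), which control, for fixed semisimple $\sigma$, the measure of the set of $y \in G_\sigma \backslash G$ for which a conjugate of $\sigma \mathcal{U}_{G_\sigma}$ meets a bounded subset of $G$.

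The main obstacle is the final step: carefully tracking the polynomial dependence on the ball radius in the Matz--Templier volume estimate. Their argument proceeds by a finite induction on Levi subgroups and ultimately reduces the bound to estimates on distributions of roots of the characteristic polynomial of $\sigma u$; each step introduces implicit constants depending only on $n$ and $[F:\Q]$, and contributes a polynomial factor in the support radius. Since the induction has depth bounded in terms of $n$, the cumulative contribution is polynomial in $R$, hence absorbed into $e^{cR}$. The exponent $C$ is dictated by the Matz--Templier argument and depends only on $n$.
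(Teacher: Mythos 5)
Your proposal is correct and follows essentially the same route as the paper: reduce via the constant term map to the case $L=G$ (with support radius and sup-norm inflated only by $e^{\mathrm{O}(R)}$), replace $f$ by $\|f\|_\infty\cdot\mathbf{1}_{G^1_{\infty,\leqslant R}}$, and then track the dependence on the support radius through the Matz and Matz--Templier archimedean weighted orbital integral estimates, noting that the weight functions and the finite Levi induction contribute only polynomially in $R$. The paper's proof is the same explication, carried out by pointing at the specific lemmas of \cite{MatzTemplier2015} (Lemmas 6.3, 6.8, 6.9 and Propositions 6.6, 6.10) whose constants must be taken of the form $e^{\kappa R}$.
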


\subsection{Deduction of Theorem \ref{geom-side}}

We now show how the above results imply Theorem \ref{geom-side}. We will need an additional result for places $v\in S_\mathfrak{o}, v\notin S_\q$ (as in the last factor of \eqref{red-2-loc}). Namely, it is proved in \cite[Corollary 10.13]{Matz2017} that there are constants $B,C>0$ such that for any finite place $v$, and any $M\in\mathcal{L}_v$, $L\in\mathcal{L}_v(M)$, and $\gamma\in L$, one has
\begin{equation}\label{notin-q}
J_M^L(\gamma, {\bf 1}_{\bfk^L_v})\ll q_v^B D^L_v(\sigma)^{-C}.
\end{equation}

Returning to the global situation of Theorem \ref{geom-side}, we let $\mathfrak{o}\in\mathfrak{O}$ be such that $\mathfrak{o}\cap \bm{M}(F)$ is non-empty, and let $\sigma\in \bm{M}(F)$ be a semisimple element representing $\mathfrak{o}$. We may assume that $\mathfrak{o}\in\mathfrak{O}_R$, for otherwise $J_{\bm{M}}(\gamma, \varepsilon_{K_1(\q)}\otimes f)=0$. We apply \eqref{red-2-loc} to reduce to a product of local factors. Then, using Proposition \ref{v-div-q} (at finite places $v\in S_\q$), display \eqref{notin-q} (at finite places $v\in S_\mathfrak{o}$, $v\notin S_\q$), and Proposition \ref{v-arch} (at $v\in S_\infty$), we deduce that for $\gamma=\sigma\nu\in\sigma\mathcal{U}_{\bm{G}_\sigma}(F)\cap \bm{M}(F)$:
\[
J_{\bm{M}}(\gamma,\varepsilon_{K_1(\q)}\otimes f)\ll e^{cR}\norm\q^{n-\theta}q_{S_\mathfrak{o}}^B \max_{L_S\in\mathcal{L}_S(\bm{M}_S)}\prod_{v\in S_\mathfrak{o}\cup S_\infty}D^{L_v}_v(\sigma)^{-C_v} .
\]
Here we have incorporated the $(\prod_{{\rm char}(F_v)\leqslant n!}q_v)^B$ into the implied constant, which is allowed to depend on the number field $F$ and $n$. We may furthermore apply Lemma \ref{prod-form} to absorb $q_{S_\mathfrak{o}}^B$ into the exponential factor $e^{cR}$ (at the cost of a larger value of $c$).

To treat the product of Weyl discriminants, we argue as in the proof of Lemma \ref{prod-form}. Observe that for $M\in\mathcal{L}_v$ we have $D^G_v(\gamma)=D^G_M(\gamma)D^M_v(\gamma)$, where
\begin{equation}\label{DML}
D^G_M(\gamma)=\prod_{\substack{\alpha\in\Phi^G-\Phi^M \\\alpha(\sigma)\neq 1}}|1-\alpha(\sigma)|_v.
\end{equation}
We first note that for $\mathfrak{o}\in\mathfrak{O}_R$, represented by a semisimple element $\sigma\in \bm{M}(F)$, we have $D_{L_v}^G(\sigma)\leqslant 1$ for finite $v$ and $D_{L_v}^G(\sigma)\ll e^{cR}$ for archimedean $v$. We may therefore replace $D_v^{L_v}(\sigma)$ by $D_v(\sigma)=D_v^G(\sigma)$ in the statements of Propositions \ref{v-div-q} and \ref{v-arch}, as well as in display \eqref{notin-q}. Moreover, since $D_v(\sigma)\leqslant 1$ for every $v<\infty$, we may increase the value of $C$ in Proposition \ref{v-div-q} and display \eqref{notin-q} at the cost of a worse bound. Let $C_v$ denote the value of $C$ at each place $v\in S_\mathfrak{o}\cup S_\infty$ and put $C=\max_{v\in S_\mathfrak{o}\cup S_\infty} C_v$. An application of the product formula yields
\[
\prod_{v\in S_\mathfrak{o}\cup S_\infty}D_v(\sigma)^{-C_v}\leqslant \prod_{v\in S_\mathfrak{o}}D_v(\sigma)^{-C}\prod_{v\in S_\infty}D_v(\sigma)^{-C_v}=\prod_{v\in S_\infty}D_v(\sigma)^{C-C_v}.
\]
Since $C-C_v\geqslant 0$ we deduce from \eqref{WDisc} that $\prod_{v\in S_\mathfrak{o}\cup S_\infty}D_v(\sigma)^{-C_v}\ll e^{cR}$, which completes the proof of Theorem \ref{geom-side}.

\subsection{Proof of Proposition \ref{v-div-q}}\label{sec:steps}

In this paragraph we let $v$ denote a finite place. Where possible, we will drop the subscript $v$. So, for example, $G=G_v$, $G_\sigma=\bm{G}_{\sigma,v}$, $\mathcal{U}_{G_\sigma}=\mathcal{U}_{\bm{G}_\sigma}(F_v)$, $\bfk=\bfk_v$, $\p=\p_v$, $\varpi=\varpi_v$, $K_1(\p^r)=K_{1,v}(\p_v^r)$, $q=q_v$, and $D(\sigma)=D_v(\sigma)$.

The basic idea of the proof of Proposition \ref{v-div-q} is to show that the semisimple conjugacy class $\mathfrak{o}$ has small intersection with $K_1(\p^r)$. One has to do this in the framework of the definition of the general weighted orbital integrals \eqref{woi}, which involve various weight functions. 
(No confusion should arise between the parabolic subgroups $R\in\mathcal{F}^{G_{\sigma}}(M_{\sigma})$ in \eqref{woi} and the archimedean parameter $R>0$.) We shall divide the proof into three steps as follows:

\bigskip

\noindent {\it Step 1.} Reduce to the case that $L=G$. We do this by showing that whenever $L$ is a proper Levi subgroup of $G$ we can get savings in the level by means of the constant term alone. 

\bigskip

\noindent {\it Step 2.} Reduce to the case of $M=G$ and $\gamma$ semisimple non-central. This involves bounding the parenthetical expression in \eqref{woi}, as a function of $y\in G_\sigma\backslash G$, $\gamma$, and the level $\p^r$.

\medskip

$\bullet$ If $\gamma$ is not semisimple, then for every $R\in\mathcal{F}^{G_\sigma}(M_\sigma)$ we get savings in the level for the weighted unipotent integrals $J^{M_R}_{M_\sigma}(\nu,\Phi_{R,y})$ of \eqref{J-sigma} by bounding the intersection of unipotent conjugacy classes (in the centralizer of $\sigma$) with congruence subgroups (which depend on $y$).

\medskip

$\bullet$ If $\gamma$ is semisimple non-central, but $M\neq G$, then the same argument as above applies to all terms except the one associated with $R=M_\sigma$, since in that case the unipotent integral collapses and one has simply $J^{M_\sigma}_{M_\sigma}(1,\Phi_{M_\sigma,y})={\bf 1}_{K_1(\p^r)}(y^{-1}\sigma y)v_{M_\sigma}'(y)$.

\bigskip

\noindent {\it Step 3.} Bound the invariant orbital integrals of ${\bf 1}_{K_1(\p^r)}$ for semisimple non-central $\gamma$.

\bigskip

\noindent In all cases, the central ingredient to bounding intersections of conjugacy classes with open compact subgroups is the powerful work of Finis--Lapid \cite{FinisLapid2018}. We shall give a brief overview of their results in \S\ref{sec:FL} below.

It is instructive to examine the division into Steps 2 and 3 in the case where $G_\gamma\subset M$, as the notation greatly simplifies under this assumption. As usual, let $\gamma=\sigma\nu\in\sigma\mathcal{U}_{G_\sigma}\cap M$. Let $\mathcal{V}\subset\mathcal{U}_{G_\sigma}$ be the $G_\sigma$-conjugacy class of $\nu$ in $G_\sigma$, endowed with the natural measure. Then we are to estimate the integral
\[
J_M^G(\gamma,{\bf 1}_{K_1(\p^r)})=\int_{G_\sigma\backslash G}{\rm vol}_{y^{-1}\sigma\mathcal{V}y}(y^{-1}\sigma\mathcal{V}y\cap K_1(\p^r) )v_M'(y)\, \text{d}y.
\]
We proceed differently according to whether $\nu$ is trivial or not.
\medskip

\noindent $\bullet$ If $\nu$ is trivial, then the inner $y^{-1}\sigma\mathcal{V}y$-volume is just ${\bf 1}_{K_1(\p^r)}(y^{-1}\sigma y)$. Thus Step 2 is vacuous is this case, and Step 3 then bounds
\[
\int_{G_\sigma\backslash G}{\bf 1}_{K_1(\p^r)}(y^{-1}\sigma y)v_M'(y)\, \text{d}y
\]
by estimating the intersection volume of the conjugacy class of $\sigma$ with the congruence subgroup $K_1(\p^r)$. 

\medskip

\noindent $\bullet$ If $\nu$ is non-trivial, then $\mathcal{V}$ is of positive dimension and Step 2 bounds the inner $y^{-1}\sigma\mathcal{V}y$-volume by a quantity which is roughly of the form $q^{-r}{\bf 1}_{B(t_\sigma)}(y\sigma y^{-1})$. Here, for a real parameter $t>0$, we have denoted by ${\bf 1}_{B(t)}$ the characteristic function of the ball $B(t)$ of radius $t$ about the origin, and $t_\sigma$ roughly of size $D(\sigma)^{-C}$. One may then estimate the volume of $G_\sigma\backslash B(t)$ (a compact piece of the tube of radius $t$ about $G_\sigma$) by appealing to the work of Shin--Templier \cite{ShinTemplier2016}.

\bigskip

\noindent In either case, the weight function $v_M'$ is easy to control, as it grows by a power of log with the norm of $y$, using results from \cite{Matz2017}.

The more general case, when $G_\gamma$ is not necessarily contained in $M$, is complicated by the presence of the various terms parametrized by $R\in\mathcal{F}^{G_\sigma}(M_\sigma)$ in \eqref{woi}. For example, whenever $R\neq G_\sigma$ these terms include the unipotent weight functions $w_{\mathcal{V},U}^{G_\sigma}$, which must be dealt with. Nevertheless the case $G_\gamma\subset M$ described above already contains most of the difficulties.

\subsubsection{The work of Finis--Lapid}\label{sec:FL}

It will be convenient to use the results of \cite{FinisLapid2018}. We now recall their notation (specialized to our setting) and describe two of their main theorems. As we will sometimes need the global group $\bm{G}$ alongside the local group $G_v$, where $v$ is a finite place, in this subsection we temporarily return to the notational convention of the rest of the paper, and restore all subscripts $v$.

For $r\geqslant 0$ let $K_v(\p_v^r)=\{k\in \bfk_v: k\equiv {\rm Id}\pmod{\p_v^r}\}$ be the principal congruence subgroup of level exponent $r$. Let $\mathfrak{g}_v=M_n(F_v)$ be the Lie algebra of $G$ and write $\Lambda=M_n(\mathcal{O}_v)$. Following \cite[Definition 5.2]{FinisLapid2018} for $\gamma\in G_v$ we put
\[
\lambda_v(\gamma)=\max\{r\in\Z\cup\infty: ({\rm Ad}(\gamma)-1)\Lambda\subset \varpi_v^r\Lambda\}.
\]
In other words, if we make the identification $\GL(\mathfrak{g}_v)=\GL_{n^2}(F_v)$, then $\lambda_v(\gamma)$ is the maximal $r\in\Z\cup\infty$ such that ${\rm Ad}(\gamma)$ lies in the principal congruence subgroup of $\GL_{n^2}(\mathcal{O}_v)$ of level exponent $r$, {\it c.f.} \cite[Remark 5.23]{FinisLapid2018}. The function $\lambda_v$ on $G_v$ descends to one on $\PGL_n(F_v)$, and one has $\lambda_v(\gamma)\geqslant 0$ whenever $\gamma\in \bfk_v$. 

For a twisted Levi subgroup $\bm{H}$ recall that $\bfk_v^{\bm{H}}=\bm{H}_v\cap \bfk_v$. Then $K^{\bm{H}}_v(\p_v^r)=K_v(\p_v^r)\cap \bfk_v^{\bm{H}}$ is the principal congruence subgroup $\bfk_v^{\bm{H}}$ of level exponent $r$. We define the level exponent of an arbitrary open compact subgroup $K$ of $\bfk_v^{\bm{H}}$ as the smallest non-negative integer $f$ such that $K_v^{\bm{H}}(\p_v^f)$ is contained in $K$. For example, the level exponent of $K_{1,v}(\p_v^r)$ is $r$. 

We shall make critical use of the following result, which can be deduced from Propositions 5.10 and 5.11 of \cite{FinisLapid2018}. See \cite{BrumleyMarshall2020} for more details on this deduction. Recall the measures $\mu_{\bm{H},v}$ from \S\ref{sec:measures-on-subgps}.

\begin{prop}[Finis--Lapid \cite{FinisLapid2018}]
\label{FLprop}
For every $\epsilon>0$ small enough there is $\theta>0$ such that the following holds. Let $r$ be a non-negative integer, $v$ a finite place of $F$, $\bm{H}$ a twisted Levi subgroup of $G$, and $x\in \bfk_v^{\bm{H}}$. If $\lambda_v(x)<\epsilon r$ then for any open compact subgroup $K$ of $\bfk_v^{\bm{H}}$ of level exponent $r$ we have
\[
\mu_{\bm{H},v}\{k\in \bfk_v^{\bm{H}}: k^{-1}xk\in K\}\ll q_v^{-\theta r}.
\]
\end{prop}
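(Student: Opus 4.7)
The plan is to deduce the claimed bound directly from Propositions 5.10 and 5.11 of Finis--Lapid \cite{FinisLapid2018}, following the detailed argument worked out in \cite{BrumleyMarshall2016}. Those two results, originally formulated for the ambient group $\bm{G} = \GL_n$, together control the measure of $k \in \bfk_v$ for which $k^{-1}xk$ lies in a prescribed open compact subgroup of level exponent $r$, in terms of the invariant $\lambda_v(x)$.

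First I would reduce to principal congruence subgroups. Since $K \supseteq K_v^{\bm{H}}(\p_v^r)$ by the very definition of the level exponent, the problem descends to the finite quotient $\bfk_v^{\bm{H}}/K_v^{\bm{H}}(\p_v^r)$, in which both $x$ and $K$ have well-defined images. Proposition 5.11 of \cite{FinisLapid2018} is tailored precisely to this passage, converting a bound on the conjugation orbit modulo the principal congruence subgroup into the desired estimate for an arbitrary $K$ of the same level exponent (the point being that no finer information about $K$ than its level exponent is retained).

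Second, I would invoke Proposition 5.10 of \cite{FinisLapid2018}, which supplies the core analytic input: it bounds the number of $k$ (modulo centralizers) for which $\mathrm{Ad}(k^{-1}xk)$ agrees with $\mathrm{Ad}(x)$ to a prescribed depth, yielding a power saving in $r$ as soon as $\lambda_v(x)$ is small compared to $r$. Choosing $\epsilon$ sufficiently small relative to the implied constants in \cite{FinisLapid2018}, the hypothesis $\lambda_v(x) < \epsilon r$ then produces the stated bound $q_v^{-\theta r}$, with $\theta = \theta(\epsilon, n) > 0$.

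The hard part will be the adaptation from $\bm{G} = \GL_n$ to a twisted Levi $\bm{H}$. The key observation is that $\lambda_v$ is defined via the adjoint action on the ambient Lie algebra $\mathfrak{g} = M_n(F_v)$, so for $x \in \bm{H}_v$ the condition $\lambda_v(x) < \epsilon r$ registers non-centrality in the full group $\bm{G}$, which is at least as stringent as non-centrality in $\bm{H}$. Since any twisted Levi is a restriction of scalars of a product of general linear groups, $\bfk_v^{\bm{H}}$ decomposes as a corresponding product of maximal compacts, and the Finis--Lapid argument applies factor-by-factor with uniform constants. This compatibility, together with the reduction and orbit estimate above, is exactly what is carried out in \cite{BrumleyMarshall2016}, and their argument transfers verbatim to the present setting.
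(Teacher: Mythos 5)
Your proposal follows exactly the route the paper itself takes: the paper offers no independent proof of this proposition, stating only that it ``can be deduced from Propositions 5.10 and 5.11 of \cite{FinisLapid2018}'' and referring to \cite{BrumleyMarshall2016} for the details of that deduction, which is precisely the reduction-plus-orbit-bound scheme you outline. Since your plan invokes the same two external results and the same intermediary reference, it is essentially the same approach as the paper's.
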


We shall also need the following result, which can be deduced from \cite[Lemma 5.7]{FinisLapid2018}; see also the proof of \cite[Corollary 5.28]{FinisLapid2018}.

\begin{prop}[Finis--Lapid \cite{FinisLapid2018}]
\label{FLunip}
Let $\bm{H}$ be a twisted Levi subgroup of $\bm{G}$. Let $\bm{P}$ be a proper parabolic subgroup of $\bm{H}$, with unipotent radical $\bm{U}$. Let $v$ be a finite place of $F$. Then
\[
{\rm vol}\{u\in \bm{U}_v\cap \bfk_v^{\bm{H}}: \lambda_v(u)\geqslant m\}\ll q_v^{-m},
\]
uniformly in $v$.
\end{prop}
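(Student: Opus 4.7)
\medskip

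\noindent\textbf{Proof plan for Proposition \ref{FLunip}.}

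\emph{Overall strategy.} The plan is to parametrize $\bm{U}_v\cap\bfk_v^{\bm{H}}$ by affine coordinates over $\mathcal{O}_v$ and show that the constraint $\lambda_v(u)\geqslant m$ forces at least one coordinate to lie in $\varpi_v^m\mathcal{O}_v$; the remaining coordinates then range freely over $\mathcal{O}_v$, giving the desired bound. Since $\bm{H}$ is a twisted Levi, i.e. isomorphic to $\mathrm{Res}_{E/F}(\GL_{n_1}\times\cdots\times\GL_{n_m})$ for some \'etale $F$-algebra $E=E_1\times\cdots\times E_m$, all structures in sight descend component by component, so we may and shall argue for a single factor. For such a factor, $\bm{U}_v\cap\bfk_v^{\bm{H}}$ is naturally an $\mathcal{O}_{E_i,v}$-module of the form $\prod_{\alpha}\mathcal{O}_{E_i,v}$, with the product running over the roots $\alpha$ of a maximal torus appearing in the Lie algebra $\mathfrak{u}$ of $\bm{U}$. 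Ordering these roots compatibly with a height function, we obtain an iterated coordinate description $u=\prod_{\alpha}\exp(x_\alpha X_\alpha)$ with $x_\alpha\in\mathcal{O}_v$.

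\emph{Selecting a root.} Since $\bm{P}$ is a \emph{proper} parabolic of $\bm{H}$, there is at least one such root $\alpha$, and we may choose one that is simple relative to an ordering realizing $\bm{U}$ as the positive unipotent radical. The key algebraic input is that the corresponding opposite root vector $X_{-\alpha}\in\mathfrak{h}$ satisfies $[X_\alpha,X_{-\alpha}]=H_\alpha\neq 0$ in the Lie algebra of $\bm{H}\subseteq\bm{G}=\GL_n$, and that $H_\alpha$ is a primitive element of the lattice $\Lambda=M_n(\mathcal{O}_v)$ (up to a bounded index depending only on $n$, which we absorb in the implied constant).

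\emph{Extracting the constraint.} With this choice, compute $(\mathrm{Ad}(u)-1)(X_{-\alpha})$ using the Baker--Campbell--Hausdorff-type polynomial expansion. Because $\alpha$ is minimal among the roots entering $u$, the coordinate $x_\alpha$ contributes a leading term $x_\alpha H_\alpha$, while all other contributions involve commutators with root vectors $X_\beta$ for $\beta>\alpha$ and produce elements of the form $x_\alpha x_\beta\cdot(\ast)$ or purely higher-order terms in the remaining $x_\beta$'s with values in root spaces \emph{distinct} from $\mathbb{Z} H_\alpha$. Projecting onto the component of $\Lambda$ spanned by $H_\alpha$, the condition $(\mathrm{Ad}(u)-1)\Lambda\subseteq\varpi_v^m\Lambda$ therefore forces $x_\alpha\in\varpi_v^m\mathcal{O}_v$.

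\emph{Conclusion and main obstacle.} The volume estimate is then immediate, since $\mathrm{vol}(\varpi_v^m\mathcal{O}_v)=q_v^{-m}$ while all other coordinates range over $\mathcal{O}_v$ of volume $1$; the uniformity in $v$ follows from the fact that all structure constants (the pairing of $X_\alpha$ with $X_{-\alpha}$, the denominators appearing in the polynomial expansion of $\mathrm{Ad}(\exp)$, and the change-of-basis matrices relating the root-space decomposition to the standard lattice $\Lambda$) depend only on $n$, and so are $v$-integral outside a finite set of places which we absorb into the implied constant. The main subtlety I expect is the bookkeeping required to isolate the $H_\alpha$-component cleanly from the higher commutator terms, ensuring that no cancellation can occur that would weaken the constraint on $x_\alpha$; this is precisely the content of Lemma~5.7 of \cite{FinisLapid2018}, on which we rely for the careful version of this argument in the presence of twisted Levi structures.
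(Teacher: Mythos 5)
Your plan is sound and captures the correct mechanism, but note that the paper itself does not prove this proposition: it is quoted from Finis--Lapid and ``deduced from'' their Lemma 5.7 (see also the proof of their Corollary 5.28), so there is no internal argument to compare against. Your sketch is essentially the standard proof of that lemma in the $\GL_n$ setting: pick a root $\alpha$ of minimal height among those occurring in $\mathfrak{u}$, test $(\mathrm{Ad}(u)-1)$ against $X_{-\alpha}\in\Lambda$, observe that by height-minimality no iterated commutator $[X_{\beta_1},[\cdots,X_{-\alpha}]]$ with $k\geqslant 2$ factors from $\mathfrak{u}$ can land in the Cartan, so the Cartan component is exactly $\pm x_\alpha H_\alpha$, forcing $x_\alpha\in\varpi_v^m\mathcal{O}_v$ and yielding volume $\leqslant q_v^{-m}$. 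This is correct, and in fact for $\GL_n$ you can avoid the Baker--Campbell--Hausdorff denominators entirely (which would otherwise cause trouble at residue characteristic $\leqslant n$) by writing $u=1+N$ with $N$ strictly block-upper-triangular, since $\mathrm{Ad}(1+N)-1$ is then polynomial in $N$ with integer coefficients.

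The one point where your justification is genuinely shaky is uniformity in $v$ for the twisted Levi structure. You cannot absorb ``a finite set of places'' into the implied constant here: the fields $E_i$ vary with $\bm{H}$ (they are generated by eigenvalues of varying semisimple $\sigma$), so any exceptional set depending on ramification of $E_i/F$ would destroy the uniformity that the proposition asserts and that the application (Lemma \ref{hard-lemma}) requires. The correct mechanism is that $\mathcal{O}_{E_i,w}$ is a free $\mathcal{O}_v$-module at \emph{every} place, so $M_{n_i}(\mathcal{O}_{E_i,w})$ embeds in $\Lambda=M_n(\mathcal{O}_v)$ with no denominators, and $x\,\mathcal{O}_{E_i,w}\subseteq\varpi_v^m\mathcal{O}_{E_i,w}$ forces $x\in\varpi_v^m\mathcal{O}_{E_i,w}$, a set of volume $q_v^{-m[E_{i,w}:F_v]}\leqslant q_v^{-m}$. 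Even then there is a residual subtlety you have glossed over: $\bfk_v^{\bm{H}}=\bm{H}_v\cap\bfk_v$ is defined by intersecting with the maximal compact of the ambient $\bm{G}_v$ and need not coincide with the standard hyperspecial subgroup $\prod_i\GL_{n_i}(\mathcal{O}_{E_i,w})$ of $\bm{H}_v$, so the identification of $\bm{U}_v\cap\bfk_v^{\bm{H}}$ with a product of rings of integers requires an argument; this is precisely the bookkeeping carried out in [Finis--Lapid, Lemma 5.7], which you correctly defer to, but it is the crux rather than a formality.
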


Finally we remark that in \cite[Lemma 6.10]{BrumleyMarshall2020} it is shown that for semisimple $\sigma\in \bfk_v$ we have
\begin{equation}\label{lambda-Weyl}
q_v^{\lambda_v(\sigma)}\ll D_v(\sigma)^{-1}.
\end{equation}
This inequality will be occasionally used to convert from large values of $\lambda_v(\sigma)$ to large values of $-\log_{q_v}D_v(\sigma)$.

\subsubsection{Reduction to $L=G$}

We return to the purely local setting of Proposition \ref{v-div-q} and drop all subscripts $v$ where possible, as explained in the opening paragraph of \S\ref{sec:steps}. 

We recall the definition of the constant term. Let $M\in\mathcal{L}_S$ and $P\in \mathcal{P}_S(M)$ with Levi decomposition $P=MU$. Let $\phi\in C_c^\infty(G)$. Then the constant term of $\phi$ along $P$ is defined by
\begin{equation}\label{cst-def}
\phi^{(P)}(m)=\delta_P(m)^{1/2}\int_U\int_\bfk\phi(k^{-1}m uk)\, \text{d}k \, \text{d}u\qquad (m\in M).
\end{equation}
Then $\phi^{(P)}\in C_c^\infty(M)$.

We begin by reducing the proof of Proposition \ref{v-div-q} to the case $L=G$. The first step in this reduction is to estimate the constant terms of the functions ${\bf 1}_{K_1(\p^r)}$, uniformly in $r$ and $\gamma$. 

\begin{prop}\label{const-term}
There is $\theta>0$ such that the following holds. Let $M\in\mathcal{L}_v$, $M\neq G$, and $P\in\mathcal{P}_v(M)$. Then for $\gamma\in M$, and $r\geqslant 0$ we have
\[
{\bf 1}_{K_1(\p^r)}^{(P)}(\gamma)\ll q^{-\theta r}{\bf 1}_{\bfk^M}(\gamma).
\]
\end{prop}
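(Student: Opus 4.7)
The plan is to combine an Iwasawa-style vanishing argument with the conjugation-invariant intersection-volume bounds of Finis--Lapid (Proposition~\ref{FLprop}) in order to exploit the deep ramification of $K_1(\p^r)$. First I would dispatch the vanishing: if ${\bf 1}_{K_1(\p^r)}^{(P)}(\gamma)\neq 0$, then some $k\in\bfk$ and $u\in U$ satisfy $k^{-1}\gamma u k\in K_1(\p^r)\subset\bfk$, and since $k\bfk k^{-1}=\bfk$ this forces $\gamma u\in\bfk\cap P=\bfk^M(U\cap\bfk)$; by uniqueness of the $MU$-factorization, $\gamma\in\bfk^M$ and $u\in U\cap\bfk$. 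In particular $\delta_P(\gamma)=1$, and the integral in \eqref{cst-def} collapses to
\[
{\bf 1}_{K_1(\p^r)}^{(P)}(\gamma)=\int_\bfk\int_{U\cap\bfk}{\bf 1}_{K_1(\p^r)}(k^{-1}\gamma u k)\,du\,dk.
\]

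Next I would split this integral according to the Finis--Lapid invariant $\lambda_v(\gamma u)$. Choose $\epsilon>0$ small enough that Proposition~\ref{FLprop} provides a savings exponent $\theta>0$ applicable to open compact subgroups of level exponent $r$. In the regime $\lambda_v(\gamma u)<\epsilon r$, Proposition~\ref{FLprop} applied with $\bm{H}=\bm{G}$ and $K=K_1(\p^r)$ bounds the inner $k$-integral by $q^{-\theta r}$, and since $U\cap\bfk$ has bounded volume this produces a contribution of $\mathrm{O}(q^{-\theta r})$ to the constant term.

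The hard part will be the complementary regime $\lambda_v(\gamma u)\geqslant\epsilon r$, in which $\gamma u$ is nearly scalar modulo $\varpi^{\lfloor\epsilon r\rfloor}$ and Proposition~\ref{FLprop} no longer applies. To handle it I would show directly that the set of such $u\in U\cap\bfk$ has measure $\mathrm{O}(q^{-\theta' r})$ for some $\theta'>0$. The idea is that $\lambda_v(\gamma u)\geqslant m$ is equivalent, by definition of $\lambda_v$, to $\gamma u\equiv\zeta I\pmod{\varpi^m}$ for some $\zeta\in\mathcal{O}_v^{\times}$; since $\gamma$ is block-diagonal in $M$ and $u$ is block-upper-unipotent, this forces the diagonal blocks of $\gamma$ all to be congruent to $\zeta I$ modulo $\varpi^m$ (a condition on the fixed $\gamma$ alone) and, given that constraint, forces $u\equiv I\pmod{\varpi^m}$ in the off-diagonal blocks---the $\GL_2$ prototype with $\gamma=\mathrm{diag}(a,b)$ and $u=I+xE_{12}$ already exhibits this through the elementary implication $\lambda_v(\gamma u)\geqslant m\Rightarrow x\equiv 0,\,a\equiv b\pmod{\varpi^m}$. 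Since $M\neq G$, $U$ has positive dimension, and the resulting congruence cuts out a subset of $U\cap\bfk$ of measure $\ll q^{-\theta' r}$, in analogy with Proposition~\ref{FLunip}. Taking $\theta$ to be the minimum of the two savings exponents then yields the bound of Proposition~\ref{const-term}.
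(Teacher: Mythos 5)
Your proof is correct and follows essentially the same route as the paper's: the same vanishing/Iwasawa reduction to $\gamma\in\bfk^M$ and $u\in U\cap\bfk$, the same split according to whether $\lambda_v(\gamma u)$ exceeds $\epsilon r$, and the same appeal to Proposition~\ref{FLprop} in the generic regime. The only difference is that in the nearly-scalar regime the paper invokes the inequality $\lambda_v(\gamma u)\leqslant\lambda_v(u)$ together with Proposition~\ref{FLunip}, whereas you verify the measure bound directly from the congruence $\gamma u\equiv\zeta I\pmod{\varpi^m}$ --- a correct, self-contained substitute for the same step.
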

\begin{proof}
It follows from \cite[Lemma 7.3 (i)]{Matz2017} or \cite[Lemma 6.2]{ShinTemplier2016} that ${\bf 1}_\bfk^{(P)}={\bf 1}_{\bfk^M}$. Now $0\leqslant f\leqslant g$ implies that $f^{(P)}\leqslant g^{(P)}$. From this we deduce that ${\bf 1}_{K_1(\p^r)}^{(P)}(\gamma)=0$ unless $\gamma\in \bfk^M$. Henceforth we may and will assume that $\gamma\in \bfk^M$; note that $\delta_P=1$ on $\bfk^M$.

Recall the definition of the constant term map in \eqref{cst-def}. Note that if $u\in U$ is such that $k^{-1}\gamma uk\in\bfk$ then $u\in U\cap \bfk$. Fixing $u\in U\cap \bfk$ the inner integral is
\[
\mu_G(k\in \bfk: k^{-1}\gamma uk\in K_1(\p^r)).
\]
From Proposition \ref{FLprop}, for every $\epsilon>0$ small enough there is $\theta>0$ such that
\[
{\bf 1}_{K_1(\p^r)}^{(P)}(\gamma)\ll {\rm vol}\{u\in U\cap \bfk: \lambda(\gamma u)>\epsilon r\}+q^{-\theta r}{\rm vol}\{u\in U\cap \bfk: \lambda(\gamma u)\leqslant \epsilon r\}.
\]
We apply the trivial bound ${\rm vol}(U\cap \bfk)=1$ to the latter volume. To deal with the former, we note that $\lambda(\gamma u)\leqslant \lambda(u)$ for $\gamma\in \bfk^M$ and $u\in U\cap \bfk$ and then apply Proposition \ref{FLunip} to finish the proof.\end{proof}

We now prove Proposition \ref{v-div-q} in the the case that $L\neq G $. Let $\gamma=\sigma\nu\in\sigma\mathcal{U}_{G_\sigma}\cap M$. From Proposition \ref{const-term} we deduce
\[
J_M^L(\gamma, {\bf 1}_{K_{1,v}(\p^r)}^{(Q)})\ll
q^{bB-\theta r} \widetilde{J_M^L}(\gamma, {\bf 1}_{\bfk^L}),
\]
where $b=0$ or $1$ according to whether the residual characteristic of $F_v$ is $> n!$ or not, and $\widetilde{J_M^L}$ denotes the weighted orbital integral $J_M^L$ but with absolute values around the weight functions in \eqref{J-sigma}. If $v\in S_\mathfrak{o}$ we apply \eqref{notin-q} to the latter integral (which is valid with $J_M^G$ replaced by $\widetilde{J_M^G}$). Otherwise, if the finite place $v$ is not in $S_\mathfrak{o}$, then it follows from \cite[Lemma 10.12]{Matz2017} (which, again, is valid with $J_M^G$ replaced by $\widetilde{J_M^G}$) and the identity $J_L^L(\sigma,{\bf 1}_{\bfk^L})=1$ (for semisimple $\sigma$) that
\[
\widetilde{J_M^L}(\gamma,{\bf 1}_{\bfk^L})\ll q^{bB},
\]
with the same convention on $b$ as before. This yields the desired estimate in both cases.

\subsubsection{Bounding the weighted unipotent orbital integrals on descent functions}

We shall now bound the parenthetical expression in \eqref{woi}, with $\phi$ the characteristic function of $K_1(\p^r)$. Before doing so, we shall need to introduce slightly more notation.

If $y=k_1{\rm diag}(\varpi^{m_1},\ldots ,\varpi^{m_n})k_2\in G$, where $k_1,k_2\in \bfk$ and $m_1\geqslant \cdots \geqslant m_n$ are integers, then we write
\[
\|y\|=q^{\max\{|m_1|,|m_n|\}}.
\]
For $t>0$ we write $B(t)=\{g\in G: \|g\|\leqslant t\}$ for the ball of radius $t$ about the origin in $G$. Then ${\bf 1}_{B(t)}$ is the characteristic function of $B(t)$.

\begin{lemma}
\label{hard-lemma}
There are constants $B,C,\theta>0$ such that the following holds. Let $\gamma=\sigma\nu\in \sigma\mathcal{U}_{G_\sigma}\cap M$ be non-central. Let $b=0$ or $1$ according to whether the residual characteristic of $F_v$ is $> n!$ or not, and put $t_\sigma=D(\sigma)^{-C}q^{bB}$. Let $r\geqslant 0$ be an integer. Then there is a set of representatives $y\in G_\sigma\backslash G$ such that the expression
\[
\sum_{R\in\mathcal{F}^{G_\sigma}(M_\sigma)}J^{M_R}_{M_\sigma}(\nu,\Phi_{R,y}),
\]
where the descent functions $\Phi_{R,y}$ are associated with ${\bf 1}_{K_1(\p^r)}$, is majorized by
\[
\left(1+\log t_\sigma\right)^{n-1}{\bf 1}_{K_1(\p^r)}(y^{-1}\sigma y)+q^{bB-\theta r}D(\sigma)^{-C}{\bf 1}_{B(t_\sigma)}(y^{-1}\sigma y).
\]
\end{lemma}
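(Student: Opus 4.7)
The plan is to begin from expression \eqref{J-sigma} for each summand $J^{M_R}_{M_\sigma}(\nu,\Phi_{R,y})$, specializing $\phi={\bf 1}_{K_1(\p^r)}$. Using $k\sigma=\sigma k$ for $k\in\bfk^{G_\sigma}$, the integrand involves ${\bf 1}_{K_1(\p^r)}((ky)^{-1}(\sigma u)(ky))$, where the Jordan decomposition of $\sigma u$ has semisimple part $\sigma$. A standard support analysis via matrix-norm comparisons, as in \cite[\S 7--10]{Matz2017} and \cite[\S 7]{MatzTemplier2015}, then forces $\|y^{-1}\sigma y\|\leqslant t_\sigma$. I would choose representatives $y\in G_\sigma\backslash G$ of minimal norm in each coset, so that $\|y\|$ is polynomially bounded by $t_\sigma$ (absorbable into $D(\sigma)^{-C}$ by enlarging $C$) and $v'_R(ky)\ll(1+\log\|y\|)^{n-1}\ll(1+\log t_\sigma)^{n-1}$ holds uniformly in $k\in\bfk^{G_\sigma}$.

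Next, I would estimate the inner double integral over $\bfk^{G_\sigma}\times U$ for each $R$ by invoking the Finis--Lapid invariant $\lambda$ from \S\ref{sec:FL}. Writing $\tilde\sigma\tilde u=(ky)^{-1}(\sigma u)(ky)$, I split the $k$-domain according to whether $\lambda(\tilde\sigma)\geqslant\epsilon r$ or $<\epsilon r$. In the former range, inequality~\eqref{lambda-Weyl} together with the Jordan decomposition forces $y^{-1}\sigma y\in K_1(\p^r)$, producing the main-term bound $(1+\log t_\sigma)^{n-1}{\bf 1}_{K_1(\p^r)}(y^{-1}\sigma y)$ after integration against the bounded unipotent weight. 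In the latter range, Proposition~\ref{FLprop} yields $q^{-\theta r}$ savings in the $k$-integral, while Proposition~\ref{FLunip} applied to the non-trivial unipotent radical---namely $U_1$ when $\nu\neq 1$, or $N_R$ when $R\neq G_\sigma$---keeps the $u$-integral bounded. Since $\gamma$ is non-central, at least one of $U_1$, $N_R$ is non-trivial for every $R$ contributing after the main-term extraction. Summing over the finitely many $R\in\mathcal{F}^{G_\sigma}(M_\sigma)$ (whose cardinality depends only on $n$) yields the second term.

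The principal obstacle will be controlling the unipotent weight $w^{M_R}_{M_\sigma,U}$ in the case $\nu\neq 1$, where it is a genuinely non-constant complex-valued function on the Richardson piece $U_1$. I plan to exploit its $\bfk^{L_1}$-conjugation invariance together with its at most polynomial-in-$\log\|u\|$ growth, combining a layered decomposition of $U_1$ along the Richardson structure with Proposition~\ref{FLunip} applied through this filtration. The ensuing logarithmic losses are then absorbable into the $(1+\log t_\sigma)^{n-1}$ main-term factor without damaging the $q^{-\theta r}$ savings in the error term. The semisimple case $\nu=1$ is markedly simpler, as $U_1=\{1\}$ and $U=N_R$, so the argument reduces to direct applications of Propositions~\ref{FLprop} and \ref{FLunip} with no weight complications: $R=G_\sigma$ produces the first term directly while $R\neq G_\sigma$ yields the second.
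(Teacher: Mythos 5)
Your skeleton is right in outline (support restriction via \cite[Corollary 8.4]{Matz2017}, the $(1+\log t_\sigma)^{n-1}$ bound on $v_R'$, isolating the collapsed term $R=M_\sigma$, $\nu=1$ as the main term, and Finis--Lapid input for the rest), but the mechanism you propose for extracting the $q^{-\theta r}$ savings does not work. You split the $k$-integral according to $\lambda(\tilde\sigma)$ where $\tilde\sigma\tilde u=(ky)^{-1}\sigma u(ky)$; but since $k\in\bfk^{G_\sigma}$ commutes with $\sigma$, one has $\tilde\sigma=y^{-1}\sigma y$ \emph{independently of $k$}, so this is not a decomposition of the $k$-domain at all, and Proposition~\ref{FLprop} applied to $\tilde\sigma$ says nothing about the integral $\int_{\bfk^{G_\sigma}}{\bf 1}_{K_1(\p^r)}(y^{-1}\sigma k^{-1}uky)\,\mathrm{d}k$, in which conjugation by $k$ moves only $u$. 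Moreover the claim that $\lambda(\tilde\sigma)\geqslant\epsilon r$ together with \eqref{lambda-Weyl} ``forces $y^{-1}\sigma y\in K_1(\p^r)$'' is false: large $\lambda$ only says $y^{-1}\sigma y$ is congruent to a central element modulo $\p^{\epsilon r}$ in $\mathrm{Ad}$, and via \eqref{lambda-Weyl} that $D(\sigma)\ll q^{-\epsilon r}$; the dichotomy on $\lambda(\sigma)$ belongs to Step 3 (the invariant orbital integral, Lemma~\ref{level-aspect}), not here. The correct argument runs the Finis--Lapid dichotomy on the \emph{unipotent} variable: one reduces (after an index computation bounding $[K^\sigma(y):K^\sigma(y)\cap\bfk^{G_\sigma}]\ll D(\sigma)^{-C}q^{bB}$, and after checking that $yK_1(\p^r)y^{-1}\cap\bfk^{G_\sigma}$ has level exponent at least $r$) to $\int_U\int_{\bfk^{G_\sigma}}{\bf 1}_{yK_1(\p^r)y^{-1}\cap\bfk^{G_\sigma}}(k^{-1}uk)\,\mathrm{d}k\,\mathrm{d}u$, applies Proposition~\ref{FLprop} with $H=G_\sigma$ to $x=u$ when $\lambda(u)<\epsilon r$ (giving $q^{-\theta r}$), and Proposition~\ref{FLunip} to bound $\mathrm{vol}\{u:\lambda(u)\geqslant\epsilon r\}\ll q^{-\epsilon r}$ — i.e.\ \ref{FLunip} supplies the savings on the exceptional set, it does not merely ``keep the $u$-integral bounded.''

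A second, smaller gap: the unipotent weight $w_{M_\sigma,U}^{M_R}$ is unbounded, and you must keep the $k$-integral (which carries the $q^{-\theta r}$) decoupled from the $u$-integral against $|w|$. The paper does this by Cauchy--Schwarz, bounding $\int_{U\cap\mathrm{Den}(\rho_\sigma)}|w_{M_\sigma,U}^{M_R}(u)|^2\,\mathrm{d}u\ll D(\sigma)^{-C}q^{bB}$ via \cite[Lemma 10.5]{Matz2017} and placing the indicator integral in the other factor. Your proposed ``layered decomposition of $U_1$ along the Richardson structure'' with logarithmic losses absorbed into the main term is not substantiated and, as stated, would mix the two integrals in a way that loses the level savings; some concrete decoupling device of this kind is needed.
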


\begin{proof}
Let $(y,u,k)\in G\times \mathcal{U}_{G_\sigma}\times \bfk^{G_\sigma}$ be such that $y^{-1}\sigma k^{-1}uky\in \bfk$. From \cite[Corollary 8.4]{Matz2017}, there are constants $B,C>0$, and for a triplet as above there is $g\in G_\sigma$ (which can be taken to be independent of $u$) such that 
\begin{align}
\|gy\|&\leqslant  D(\sigma)^{-C}q^{bB},\label{restr-supp1}\\
\|gug^{-1}\|&\leqslant  D(\sigma)^{-C}q^{bB},\label{restr-supp2}
\end{align}
where the convention on $b$ is as in the lemma. Henceforth we take a set of representatives $y\in G_\sigma\backslash G$ whose norm is bounded by the right-hand side of \eqref{restr-supp1}, in which case it can be assumed that the norm of $u$ is bounded by the right-hand side of \eqref{restr-supp2}.

We furthermore recall the bound on the weight function $|v'_Q(x)|\ll (1+\log \|x\|)^{n-1}$ established in \cite[Corollary 10.9]{Matz2017}, valid for any parabolic $Q\in\mathcal{F}(M)$. Thus, using \eqref{def:weight}, we deduce that for any $y$ as above and any $k\in \bfk^{G_\sigma}$ we have
\[
|v'_R(ky)|\ll \left(1+\log D(\sigma)^{-C}q^{bB}\right)^{n-1}.
\]
In particular, if $\nu=1$ and $R=M_\sigma$ then we may go ahead and bound the integral $J_{M_\sigma}^{M_\sigma}(1,\Phi_{R,y})$ appearing in \eqref{J-sigma}. Indeed, the unipotent subgroup $U$ of that formula is reduced to the identity in this case, so that the $U$ integral collapses and one has 
\[
J_{M_\sigma}^{M_\sigma}(1,\Phi_{M_\sigma,y})={\bf 1}_{K_1(\p^r)}(y^{-1}\sigma y)v_{M_\sigma}'(y).
\]
Using the above bound on the weight factor, we obtain the first term of the majorization of the lemma.

Next, for $\rho\in\R$ let ${\rm Den}(\rho)$ denote the set of matrices in $M_n(F_v)$ all of whose coefficients have valuation at least $-\rho$. Note that if $g\in G$ is such that $\|g\|\leqslant q^\rho$ then $g\in {\rm Den}(\rho)$; indeed it suffices to establish this for diagonal elements in the positive chamber, where it is immediate. Thus, for $u$ as above, we have $u\in U\cap {\rm Den}(\rho_\sigma)$, where $\rho_\sigma=bB-C\log_{q}D(\sigma)$. 

We return to estimation of the integral $J_{M_\sigma}^{M_R}(\nu,\Phi_{R,y})$, this time in the case where either $\gamma$ is not semisimple or $M\neq G$. In either of these cases, the $U$ appearing in \eqref{J-sigma} satisfies $\dim U\geqslant 1$. From the above discussion we deduce that $J_{M_\sigma}^{M_R}(\nu,\Phi_{R,y})$ is majorized by
\begin{equation}\label{weightedCT}
\left(1+\log D(\sigma)^{-C}q^{bB}\right)^{n-1}\int_{U\cap {\rm Den}(\rho_\sigma)}|w_{M_\sigma,U}^{M_R}(u)|\int_{\bfk^{G_\sigma}}{\bf 1}_{K_1(\p^r)}(y^{-1}\sigma k^{-1}uky)\, \text{d}k\, \text{d}u.
\end{equation}

For $y\in G$ and $r\geqslant 0$ let us put $K^\sigma(y,r)=yK_1(\p^r)y^{-1}\cap G_\sigma$. In the special case when $r=0$ we shall simply write $K^\sigma(y)=K^\sigma(y,0)$. With this notation, the inner integral in \eqref{weightedCT} is $\int_{\bfk^{G_\sigma}}{\bf 1}_{K^\sigma(y,r)}(k^{-1}\sigma vk)\,  \text{d}k$. After an application of the Cauchy--Schwarz inequality, we see that the double integral in \eqref{weightedCT} is bounded by
\begin{equation}\label{separated}
\left(\int_{\bfk^{G_\sigma}}\int_U {\bf 1}_{K^\sigma(y,r)}(k^{-1}\sigma uk)\, \text{d}u\, \text{d}k\right)^{1/2}\left(\int_{U\cap {\rm Den}(\rho_\sigma)}|w_{M_\sigma,U}^{M_R}(u)|^2 \, \text{d}u\right)^{1/2}.
\end{equation}
Using \cite[Lemma 10.5]{Matz2017}, we see that the second factor in \eqref{separated} is $\mathrm{O}(D(\sigma)^{-C}q^{bB})$, with the same convention on $b$. (The aforementioned result in fact bounds the integral $\int_{U\cap {\rm Den}(\rho_\sigma)}|w_{M_\sigma,U}^{M_R}(u)| \, \text{d}u$, but the same proof applies with $|w_{M_\sigma,U}^{M_R}(u)|^2$ as integrand, simply by replacing the polynomial $q$ in \cite[Lemma 10.4]{Matz2017} by its square.)

Next, we treat the first factor in \eqref{separated}. We follow closely the presentation in \cite[Corollary 5.28]{FinisLapid2018}, explicating several small differences. We first write the double integral as
\[
\int_{\bfk^{G_\sigma}}{\rm vol}_U(U\cap \sigma^{-1} K^\sigma(ky,r))\,\text{d}k.
\]
We may suppose that $U\cap \sigma^{-1} K^\sigma(ky,r)$ is non-empty, in which case, fixing any $u_0$ in this intersection, we have $U\cap \sigma^{-1} K^\sigma(ky,r)=u_0(U\cap K^\sigma(ky,r))$. By invariance of the Haar measure on $U$ we obtain in this case
\[
{\rm vol}_U(U\cap \sigma^{-1} K^\sigma(ky,r))={\rm vol}_U(U\cap K^\sigma(ky,r)).
\]

We claim that we can reduce to the case where $K^\sigma(y,r)$ is replaced by $K^\sigma(y,r)\cap \bfk^{G_\sigma}=yK_1(\p^r)y^{-1}\cap \bfk^{G_\sigma}$. Indeed, this double integral is
\begin{equation}\label{set-up}
\int_{\bfk^{G_\sigma}}{\rm vol}_U(U\cap K^\sigma(ky,r))\,\text{d}k=\int_{\bfk^{G_\sigma}}i(k)\, {\rm vol}_U(U\cap K^\sigma(ky,r)\cap\bfk^{G_\sigma})\,\text{d}k,
\end{equation}
where
\begin{align*}
i(k)&=\big[U\cap K^\sigma(ky,r):U\cap K^\sigma(ky,r)\cap\bfk^{G_\sigma}\big]\leqslant [K^\sigma(ky,r):K^\sigma(ky,r)\cap\bfk^{G_\sigma}]\\
&=[K^\sigma(y,r):K^\sigma(y,r)\cap\bfk^{G_\sigma}]\leqslant [K^\sigma(y):K^\sigma(y)\cap\bfk^{G_\sigma}].
\end{align*}
Therefore the expression in \eqref{set-up} bounded by
\begin{equation}\label{applyFL}
[K^\sigma(y):K^\sigma(y)\cap\bfk^{G_\sigma}]\int_U\int_{\bfk^{G_\sigma}}{\bf 1}_{yK_1(\p^r)y^{-1}\cap \bfk^{G_\sigma}}(k^{-1}uk)\,  \text{d}k\, \text{d}u.
\end{equation}
Continuing, it now follows from \eqref{restr-supp1} that
\[
[K^\sigma(y):K^\sigma(y)\cap\bfk^{G_\sigma}]\ll  D(\sigma)^{-C}q^{bB}.
\]
It remains to bound the double integral in \eqref{applyFL}.

We first note that the level exponent of $yK_1(\p^r)y^{-1}\cap \bfk^{G_\sigma}$ is at least $r$. That is to say, $yK_1(\p^r)y^{-1}\cap \bfk^{G_\sigma}$ cannot contain $K^{G_\sigma}(\p^{r-1})$. This follows, for example, from the fact that the central element $1+\varpi^{r-1}$ lies in $K^{G_\sigma}(\p^{r-1})$ but not in $y^{-1}K_1(\p^r)y\cap \bfk^{G_\sigma}$. In light of this, we may apply Proposition \ref{FLprop}, with $H=G_\sigma$, to find $\theta,\epsilon>0$ such that
\[
\int_{\bfk^{G_\sigma}}{\bf 1}_{yK_1(\p^r)y^{-1}\cap \bfk^{G_\sigma}}(k^{-1} uk)\,  \text{d}k\ll q^{-\theta r}
\]
whenever $\lambda(u)<\epsilon r$. The double integral in \eqref{applyFL} is therefore bounded by
\[
{\rm vol}\{u\in U\cap \bfk^{G_\sigma}:\lambda( u)> \epsilon r\}+q^{-\theta r}{\rm vol}\{u\in U\cap \bfk^{G_\sigma}:\lambda( u)\leqslant \epsilon r\},
\]
We bound the second volume factor trivially by ${\rm vol}\{u\in U\cap \bfk^{G_\sigma}\}=1$. Finally, an application of Proposition \ref{FLunip} (with $H=G_\sigma$) shows that the first volume factor is majorized by $q^{-\epsilon r}$, finishing the proof.
\end{proof}

\subsubsection{Invariant orbital integrals}

In view of Lemma \ref{hard-lemma}, it now remains to establish good bounds on the invariant orbital integrals of ${\bf 1}_{K_1(\p^r)}$  and ${\bf 1}_{B(t_\sigma)}$. It suffices to estimate the unnormalized orbital integral
\[
O_\sigma(\phi)=\int_{G_\sigma\backslash G}\phi(x^{-1}\gamma x){\rm d}\mu_{\sigma}(x),
\]
since $J_G^G(\sigma,\phi)=D(\sigma)^{1/2}O_\sigma(\phi)$.

We first handle the invariant orbital integral for ${\bf 1}_{B(t_\sigma)}$. If $\sigma\notin S_\mathfrak{o}$ and the residue characteristic of $F_v$ is $>n!$ then $t_\sigma\ll 1$, and we may apply \cite[Theorem A.1]{ShinTemplier2016} to deduce that $O_\sigma({\bf 1}_{B(t_\sigma)})\ll 1$ in this case. If either the residue characteristic of $F_v$ is $\leqslant n!$ or $\gamma\in S_\mathfrak{o}$ then we proceed as follows. For every $\lambda\in X_*^+(T_0)$ let $\tau(\lambda)$ denote the associated Hecke operator, namely the characteristic function of $\bfk\lambda(\varpi)\bfk$. Then
\[
O_\sigma({\bf 1}_{B(t_\sigma)})=\sum_{\substack{\lambda\in X_*^+(T_0)\\ \|\lambda\|\leqslant \log_{q} t_\sigma}} O_\sigma(\tau_\lambda).
\]
Note that there are only ${\rm O}(\log_{q}^n t_\sigma)$ cocharacters $\lambda$ satisfying the bound in the sum. For each of the above orbital integrals, it follows from \cite[Theorem 7.3]{ShinTemplier2016} (see also \cite[Theorem B.1]{ShinTemplier2016} for a stronger result) that there are constants $B,C\geqslant 0$ such that $O_\sigma (\tau_\lambda)\ll t_\sigma^B D(\sigma)^{-C}$. Inserting this into the above expression (and recalling the definition of $t_\sigma$ from Lemma \ref{hard-lemma}) we deduce the bound $O_\sigma({\bf 1}_{B(t_\sigma)})\ll q^B D(\sigma)^{-C}$ in this case. We conclude that in all cases we have
\[
O_\sigma({\bf 1}_{B(t_\sigma)})\ll q^{bB}D(\sigma)^{-C},
\]
where $b=0$ or $1$ according to whether the residue characteristic of $F_v$ is $>n!$ or not. 

It remains then to estimate the invariant orbital integral $O_\sigma ({\bf 1}_{K_1(\p^r)})$ uniformly in the level $\p^r$ and the semisimple element $\sigma$. We accomplish this in the next lemma; our presentation follows closely that of \cite[Proposition 6.2]{BrumleyMarshall2020}.

\begin{lemma}\label{level-aspect}
There are constants $B,C,\theta>0$ such that the following holds. Let $v$ be a finite place. For any $r\geqslant 0$ and semisimple $\sigma\in G$, $\sigma\notin Z$, we have
\[
O_\sigma({\bf 1}_{K_1(\p^r)})\ll q^{aB- \theta r}D(\sigma)^{-C},
\]
where $a=1$ or $0$ according to whether $v\in S_\mathfrak{o}$ or not.
\end{lemma}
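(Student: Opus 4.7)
The plan is to combine the Matz--Templier support restriction, already invoked in the proof of Lemma~\ref{hard-lemma}, with the Finis--Lapid conjugation volume bound of Proposition~\ref{FLprop}, following the strategy of \cite[Proposition~5.3]{BrumleyMarshall2016} (where the analogous bound is carried out for principal congruence subgroups). First I would note that if the integrand ${\bf 1}_{K_1(\p^r)}(y^{-1}\sigma y)$ is nonzero, then in particular $y^{-1}\sigma y\in\bfk$, so by the Shin--Templier type bound (Matz \cite[Corollary 8.4]{Matz2017}) the coset $G_\sigma y$ admits a representative of norm $\|y\|\leqslant t_\sigma:=D(\sigma)^{-C}q^{aB}$. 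The total measure in $G_\sigma\backslash G$ of the effective support region is then $O_\sigma({\bf 1}_{B(t_\sigma)})\ll q^{aB}D(\sigma)^{-C}$, exactly as established in the paragraph preceding the lemma.

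Next I would fiber $G_\sigma\backslash G$ over $G_\sigma\backslash G/\bfk$ on this precompact region. Picking a system of representatives $\{g_i\}$ for the double cosets meeting the region, with $\|g_i\|\leqslant t_\sigma$, one obtains a majorization
\[
O_\sigma({\bf 1}_{K_1(\p^r)})\leqslant \sum_i c_i\int_{\bfk}{\bf 1}_{K_1(\p^r)}(k^{-1}\sigma_i k)\,dk,\qquad \sigma_i:=g_i^{-1}\sigma g_i,
\]
with nonnegative weights $c_i$ whose sum against the fiber Haar volumes is the ambient volume just computed. For each nonzero summand one has $\sigma_i\in\bfk$, so Proposition~\ref{FLprop}, applied with $\bm{H}=\bm{G}$ to the open compact subgroup $K_1(\p^r)$ of level exponent $r$, provides for every small $\epsilon>0$ a constant $\theta>0$ such that the inner integral is $\ll q^{-\theta r}$ whenever $\lambda_v(\sigma_i)<\epsilon r$. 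In the complementary range $\lambda_v(\sigma_i)\geqslant\epsilon r$, the inequality \eqref{lambda-Weyl} together with the conjugation invariance of the Weyl discriminant gives $q^{\epsilon r}\leqslant q^{\lambda_v(\sigma_i)}\ll D_v(\sigma_i)^{-1}=D_v(\sigma)^{-1}$, so the trivial bound $1$ on the integral is absorbed as $1\ll q^{-\theta r}D_v(\sigma)^{-\theta/\epsilon}$.

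Combining the three steps yields
\[
O_\sigma({\bf 1}_{K_1(\p^r)})\ll q^{-\theta r}D(\sigma)^{-C'}\cdot q^{aB}D(\sigma)^{-C''}\ll q^{aB-\theta r}D(\sigma)^{-C},
\]
which is the claimed estimate after adjusting constants. The main technical obstacle is the fibering of the second step: one must cover the precompact slice of $G_\sigma\backslash G$ cut out by Matz's support bound by $\bfk$-translates in a way compatible with the ambient Haar measure, so that the volume already controlled by $O_\sigma({\bf 1}_{B(t_\sigma)})$ transfers cleanly to a weighted sum of $\bfk$-integrals to which Proposition~\ref{FLprop} applies. The non-centrality of $\sigma$ ensures that $G_\sigma$ is a proper reductive subgroup so that this decomposition is well-posed; the passage from principal congruence subgroups as treated in \cite{BrumleyMarshall2016} to the Hecke congruence subgroups $K_1(\p^r)$ is seamless because Proposition~\ref{FLprop} is stated uniformly for arbitrary open compact subgroups of a given level exponent.
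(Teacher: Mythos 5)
Your proposal is correct and follows essentially the same route as the paper: the paper decomposes $C_{\sigma,G}\cap K_1(\p^r)$ into finitely many $\bfk$-conjugacy classes $C_{x_i,\bfk}$ (equivalently, your double cosets $G_\sigma g_i\bfk$), and the ``fibering'' you flag as the main obstacle is resolved by the standard quotient-measure identity $\mu_\sigma(C_{x,\bfk}\cap K_1(\p^r))=\mu_G(k\in\bfk:k^{-1}xk\in K_1(\p^r))/\mu_{G_x}(G_x\cap\bfk)$, after which Proposition~\ref{FLprop} combined with \eqref{lambda-Weyl} gives exactly your dichotomy on $\lambda_v(\sigma_i)$ versus $\epsilon r$. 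The only cosmetic difference is that the paper majorizes the ambient volume by $O_\sigma({\bf 1}_{\bfk})$ (equal to $1$ for $v\notin S_\mathfrak{o}$ and $\ll q^{B}D(\sigma)^{-C}$ otherwise, by Shin--Templier) rather than by $O_\sigma({\bf 1}_{B(t_\sigma)})$, which is what yields the exponent $a$ keyed to $v\in S_\mathfrak{o}$ rather than to the residue characteristic; your version loses only a bounded factor at the finitely many places with residue characteristic $\leqslant n!$.
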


\begin{proof}
Letting $C_{\sigma,G}$ denote the conjugacy class of $\sigma$, we have
\[
O_\sigma({\bf 1}_{K_1(\p^r)})=\mu_{\sigma}(C_{\sigma,G}\cap K_1(\p^r)),
\]
where $\mu_{\sigma}=\mu_G/\mu_{G_x}$. Now $C_{\sigma,G}$ is closed since $\sigma$ is semisimple. The compact set $C_{\sigma,G}\cap K_1(\p^r)$ is then a disjoint union of finitely many (open) $\bfk$-conjugacy classes $C_{x_i,\bfk}$ meeting $K_1(\p^r)$. This gives
\[
O_\sigma({\bf 1}_{K_1(\p^r)})=\sum_{i=1}^t \frac{\mu_{\sigma}(C_{x_i,\bfk}\cap K_1(\p^r))}{\mu_{\sigma}(C_{x_i,\bfk})}\mu_{\sigma}(C_{x_i,\bfk}).
\]
From the definition of the quotient measure, for any $x\in \bfk$, we have
\[
\mu_{\sigma}(C_{x,\bfk}\cap K_1(\p^r))=\frac{\mu_{G}(k\in \bfk: k^{-1}xk\in K_1(\p^r))}{\mu_{G_{x}}(G_{x}\cap \bfk)}.
\]

Using \eqref{lambda-Weyl}, we may deduce from Proposition \ref{FLprop} that if $x\in \bfk$ is semisimple and non-central, and $D(x)\gg q^{-\epsilon r}$ for some $\epsilon>0$, then
\[
\mu_{G}(k\in \bfk: k^{-1}xk\in K_1(\p^r))\ll q^{-(1-\epsilon) r}.
\]
Thus for every $\epsilon>0$ there is $\theta>0$ such that if $D(\sigma)=D(x_i)\gg q^{-\epsilon r}$ then
\[
\frac{\mu_{\sigma}(C_{x,\bfk}\cap K_1(\p^r))}{\mu_{\sigma}(C_{x,\bfk})}=\mu_{G}(k\in \bfk: k^{-1}xk\in K_1(\p^r))\ll q^{-\theta r}.
\]
In this case we obtain
\[
O_\sigma({\bf 1}_{K})\ll q^{-\theta r}\sum_{i=1}^t\mu_{\sigma}(C_{x_i,\bfk})=q^{-\theta r}O_\sigma({\bf 1}_{\bfk}),
\]
since
\[
\sum_{i=1}^t\mu_{\sigma}(C_{x_i,\bfk})=\sum_{i=1}^t\mu_{\sigma}(C_{x_i,G}\cap \bfk)=\mu_{\sigma}(C_{\sigma}\cap \bfk)=O_\sigma({\bf 1}_{\bfk}).
\]
If, on the other hand, $D(\sigma)\ll q^{-\epsilon r}$ (so that $1\ll q^{-\theta r}D(\sigma)^{-\theta/\epsilon} $) then we may apply the trivial bound $O_\sigma({\bf 1}_{K_1(\p^r)})\leqslant O_\sigma({\bf 1}_{\bfk})$ to obtain
\[
O_\sigma({\bf 1}_{K_1(\p^r)})\ll q^{-\theta r}D(\sigma)^{-\theta/\epsilon}O_\sigma({\bf 1}_{\bfk}).
\]
If $v\notin S_\mathfrak{o}$ then $O_\sigma({\bf 1}_{\bfk})=1$. If $v\in S_\mathfrak{o}$ we apply the bound $O_\sigma({\bf 1}_{\bfk})\ll q^B D(\sigma)^{-C}$ of \cite[Theorems 7.3 and B.1]{ShinTemplier2016}. This proves the desired estimate in either case.
\end{proof}

\subsection{Proof of Proposition \ref{v-arch}}

The statement of Proposition \ref{v-arch}, without the explicit dependency in $R$, follows from the proof of \cite[Theorem 1.12]{MatzTemplier2021} (in the case of $v$ real) and \cite{Matz2017} (in the case of $v$ complex). To prove Proposition \ref{v-arch} it therefore suffices to make explicit the dependence in $R$ in these works. For simplicity, we shall concentrate on the real case here. Once again, we drop all $v$ subscripts from the notation, so in particular $G=G_v$.

It suffices to take $L=G$, since on one hand the constant term map $f\mapsto f^{(Q)}$ takes $C_c^\infty(G^1)_R$ to $C_c^\infty(L^1)_{cR}$ (see, for example, \cite[Lemma 7.1 (iii)]{Matz2017}), and on the other the factor $\delta_Q^{1/2}$ is bounded by $\mathrm{O}(e^{c'R})$ on $G_{\leqslant R}^1\cap L$. Here $c, c'>0$ are constants depending only on $n$.

We would like to make explicit the dependency in $C(f_1)$ in \cite[Theorem 1.12]{MatzTemplier2021} on both $\|f_1\|_\infty$ and the support of $f_1$. (Note that, since we do not seek any savings in the spectral parameter, our interest is in $\eta=0$.) It is clearly enough to bound the modified weighted orbital integral $\widetilde{J^G_M}(\gamma,f)$, where the weight functions are replaced by their absolute values. The dependency on $\|f_1\|_\infty$ is easy enough to make explicit, for in the proof of \cite[Theorem 1.12]{MatzTemplier2021} (see Proposition 7.4, Corollaries 8.3--8.4, and Proposition 8.5 of \cite{MatzTemplier2021}), one replaces the function $f_1$ with a majorizer of the characteristic function of its support. We can thus assume that $f$ is the characteristic function of $G_{\leqslant R}^1$. 

We now supplement a few of the lemmas and propositions leading up to the proof of \cite[Theorem 1.12]{MatzTemplier2021}, pointing out how the dependency in $R$ can be made explicit.
\begin{itemize}

\item[$\bullet$] The constant $c$ in \cite[Lemma 7.3]{MatzTemplier2021} can be taken to be of the form $\mathrm{O}(R)$, with an implied constant depending only on $n$. To see this, first note that the constants in Lemmata 4.6 and 4.7 depend only on $n$. It can then readily be seen that each of the constants $a_i$ in the proof of Lemma 7.3 can be taken to be of the form $e^{\kappa_i R}$, for $\kappa_i=\kappa_i(n)$. (For example, $a_1=ce^{c_1R}$, where $c=c(n)>0$ and $c_1=c_1(n)>0$ are given in Lemma 4.6.)

\medskip

\item[$\bullet$] Inspecting then the proof of \cite[Proposition 7.4]{MatzTemplier2021}, one then applies Lemma 7.5 with $c=\mathrm{O}(R)$ and $c_1=\mathrm{O}(1)$, and Lemma 7.6 with $s=c\Delta^-(\gamma_s)^{\mathrm{O}(1)}$ and the same $c=\mathrm{O}(R)$. These dependencies are admissible, as the integral in the statement of \cite[Lemma 7.5]{MatzTemplier2021} is bounded exponentially in $r(\gamma_s)=c+c_1\log (\Delta^-(\gamma_s))$, as is remarked in the proof, and the statement of Lemma 7.6 is polynomial in $s$.

\medskip

\item[$\bullet$] We now consider the proof of \cite[Proposition 7.7]{MatzTemplier2021}. One is led to consider integrals of the form $\int_V{\bf 1}_{G_{\leqslant R}^1}(v)|\log |p(v)||^k\, {\rm d}v$, where $V$ is the unipotent radical of a proper parabolic of $G$, $k\geqslant 1$ is an integer, and $p:V\rightarrow\R$ is a polynomial function on the coordinates. (Once again, recall that $\eta=0$.) This can be bounded by $aR^b\int_V{\bf 1}_{G_{\leqslant R}^1}(v)\, {\rm d}v$, where $a>0$ and $b\geqslant 0$ depend on $p$, $k$, and $n$. The latter integral is $\mathrm{O}(e^{cR})$.
\end{itemize}

\section{Construction of test functions}\label{PW}

In this section, we construct (in certain cases) explicit realizations of test functions $f\in \mathcal{H}(G^1)_{cR,\underline{\delta}}$ having prescribed spectral transform $h\in\mathcal{PW}_{R,\underline{\delta}}$. (We are again dropping the subscript $\infty$ in the notation; thus $G=G_\infty$, $G^1=G_\infty^1$, $\bfk=\bfk_\infty$, $\mathcal{L}=\mathcal{L}_\infty$, etc.) We also provide bounds on these which will be important in the estimation of associated orbital integrals.

Let $G'=\prod_{v\mid \R}\SL_n^\pm(\R)\prod_{v\mid\C}\SL_n(\C)$, where $\SL_n^\pm(\R)=\{g\in\GL_n(\R): \det (g)=\pm 1\}$, and set $Z^1=Z\cap G^1$. The construction of $G'$ guarantees that we may write every $g\in G^1$ as $g=z_0g_1$, where $g_1\in G'$ and $z_0\in Z^1$. Let $Z^1_n=Z^1\cap G'$, and write $\Delta_n^1=\{(u,u^{-1}): u\in Z^1_n\}\subset Z^1\times G'$. There is a short exact sequence $1\to\Delta_n^1\to Z^1\times G'\to G^1\to 1$, and so if $f_0\in\mathcal{H}(Z^1)$ and $f_1\in\mathcal{H}(G')$ are such that $f_0\times f_1$ on $Z^1\times G'$ is invariant under $\Delta_n^1$ then one can construct $f\in\mathcal{H}(G^1)$ by setting $f(g)=f_0(z_0)f_1(g_1)$. In particular, in either of the two situations \eqref{hyp-n2}--\eqref{hyp-sph} in Proposition~\ref{test-fn-bd} below, one immediately verifies that every element of $Z_n^1$ acts by scalar multiplication on the representation space of $\delta\in\mathscr{E}^2(M^1)$, giving rise to a character we denote (via a small abuse) by $\delta|_{Z_n^1}$; then, the previous invariance condition is satisfied if both $f_0$ and $f_1$ transform by the complex conjugate $\bar{\delta}|_{Z_n^1}$ under the regular action by $Z_n^1$.

Note that $Z^1=(Z\cap\mathbf{K}) \exp(\mathfrak{h}_G)$. For $R>0$ we denote by $\mathcal{H}(Z^1)_R$ the space of functions in $\mathcal{H}(Z^1)$ whose support is contained in $(Z\cap\mathbf{K})\exp (B(0,R))$. Analogously, writing $\mathbf{K}'=\mathbf{K}\cap G'=\prod_{v\mid\infty}\mathbf{K}'_v$ with $\mathbf{K}'_v$ equal to $\mathrm{O}_n(\mathbb{R})$ for $v\mid\mathbb{R}$ and $\mathrm{SU}(n)$ for $v\mid\mathbb{C}$, we denote by $\mathcal{H}(G')_R$ the space of functions in $\mathcal{H}(G')$ with support contained in $\mathbf{K}'\exp(B(0,R))\mathbf{K}'$, and by $\mathcal{H}(G'_v)_R$ the space of functions in $\mathcal{H}(G'_v)$ with support contained in $\mathbf{K}'_v\exp(B(0,R))\mathbf{K}'_v$.

For $M\in\mathcal{L}$ recall the Lie algebra decompositions $\mathfrak{h}_M=\mathfrak{h}_G\oplus\h_M^G$ and $\h_M^G=\bigoplus_{v\mid\infty}\aa_{M_v}^{G_v}$ from \S\ref{sec:hM-decomp}. Recall furthermore the notation $\mathcal{H}(G^1)_{\underline{\delta}}$ from \S\ref{disc-aut} and $\mathcal{PW}_{\underline{\delta}}$ from \S\ref{PWCD}.

\begin{prop}\label{test-fn-bd}
Let $n\geqslant 1$. Let $\underline{\delta}\in\mathcal{D}$ have standard representative $(M,\delta)$. Suppose that either
\begin{enumerate}
\item\label{hyp-n2} $n\leqslant 2$, or
\item\label{hyp-sph} $n\geqslant 1$ is arbitrary, but $M=T_{0,\infty}$ and $\delta$ is the trivial character of $T_{0,\infty}^1$. 
\end{enumerate}

Let $h\in\mathcal{PW}_{\underline{\delta}}$ factorize as
\[ h=h_0h_1,\quad h_1=\prod_{v\mid\infty}h_v,\]
for some $h_0\in\mathcal{PW}(\mathfrak{h}_{G,\C}^*)_{R_0}$ and $h_1\in\mathcal{PW}((\h_M^G)_\C^*)$ with $h_v\in\mathcal{PW}((\aa_{M_v}^{G_v})_{\C}^{\ast})_{R_v}$. Then there exist $f_0\in \mathcal{H}(Z^1)_{R_0}$ and $f_1=\prod_{v\mid\infty}f_v\in\mathcal{H}(G')$ with $f_v\in\mathcal{H}(G'_v)_{R_v}$, both transforming by $\overline{\delta}|_{Z_n^1}$ under $Z_n^1$, and such that the product $f=f_0f_1$ is the unique function in $\mathcal{H}(G^1)_{\underline{\delta}}$ for which ${\rm tr}\,\pi_{\delta,\nu}(f)=h(\nu)$. Moreover,
\begin{equation}\label{prop-bound}
\|f\|_\infty\ll \|h\|_1,
\end{equation}
the latter being taken with respect to the measure $\deg(\delta)\mu_M^G(\delta,\nu)\,\mathrm{d}\nu$ supported on $i\mathfrak{h}_M^*$.
\end{prop}

To facilitate the exposition that follows, we spell out the first stated property on $f$. Namely, for any $\underline{\sigma}\in\mathcal{D}$, with standard representative $(L,\sigma)$, and $\nu\in\mathfrak{h}_{L,\C}^*$, one has
\begin{equation}\label{take-trace}
{\rm tr}\,\pi_{\sigma,\nu}(f)=\begin{cases}
h(w^{-1}.\nu), & \sigma=w.\delta,\; w\in W(A_M);\\
0, & \textrm{otherwise}.
\end{cases}
\end{equation}
Both sides of \eqref{take-trace} depend only on the $W(A_L)$-orbit $[\sigma,\nu]$ of the pair $(\sigma,\nu)$.

\subsection{The operator version of Proposition~\ref{test-fn-bd}}\label{rem:operator-version}

In this subsection, we state an operator version of Proposition~\ref{test-fn-bd} which yields the scalar version upon taking traces.

The key to picking out a particular $\underline{\delta}$ is a sort of M\"obius inversion process on $\bfk'$-types, which we now explain. For $v\mid\infty$ and $\pi_v\in\Pi(G'_v)$, recall that the restriction of $\pi_v$ to $\mathbf{K}'_v$ decomposes as the orthogonal direct sum over $\Pi(\mathbf{K}'_v)$, with each appearing with multiplicity at most one and exactly one of them being the lowest $\mathbf{K}'_v$-type $\tau(\pi_v)$ (relative to the norm $\|\tau_v\|$ as in \S\ref{rep-notation}). Given any $\tau_v\in\Pi(\mathbf{K}'_v)$ we say that $\tau_v^{-}\in\Pi(\bfk_v')$ is an ``immediate predecessor'' of $\tau_v$ if, for every $\pi_v$ in which $\tau_v$ appears without being its lowest $\mathbf{K}'_v$-type, then $\tau_v^-$ also appears in $\pi_v$. It is easily verified that, when $n\leqslant 2$, only the characters of $\bfk_v'$ admit no immediate predecessors, and that immediate $\tau_v^-$ is uniquely determined for the others. For $\tau=\prod_{v\mid\infty}\tau_v,\tau'=\prod_{v\mid\infty}\tau'_v\in\Pi(\mathbf{K}')$, write
\[\mu(\tau:\tau')=\begin{cases} (-1)^{\#\{v\mid\infty:\tau'_v=\tau_v^{-}\}},& \textrm{if } \tau'_v\in\{\tau_v,\tau_v^{-}\} \textrm{ for every } v\mid\infty,\\
 0&\textrm{ otherwise.}
 \end{cases}
 \]
For general $n\geqslant 1$ and $\tau=\tau_0$ the trivial $\mathbf{K}'$-type, we naturally extend this definition simply by $\mu(\tau_0:\tau')=\mathbf{1}_{\tau'=\tau_0}$.

Now, let $(M,\delta)$ and $h$ be as in the statement of Proposition \ref{test-fn-bd}. Let $\tau(\pi_\delta)\in\Pi(\bfk')$ be defined as in \S\ref{rep-notation}. To prove Proposition \ref{test-fn-bd} we shall construct an $f\in \mathcal{H}(G^1)$, verifying the stated factorization and transformation properties, as well as a function $H$ defined on pairs $(\sigma,\nu)\in\mathscr{E}^2(T_{0,\infty}^1)\times\h_{0,\C}^{\ast}$, such that
\begin{enumerate}
\item\label{HM-support} $H(\sigma,\nu)$ is supported on $\sigma\prec\delta$ and verifies
\begin{equation}\label{prop-trace}
\pi(f)=H(\sigma,\nu)\sum_{\tau\in\Pi(\bfk')}\mu(\tau(\pi_{\delta}):\tau)\frac{1}{\dim\tau}\Pi_{\tau}\qquad (\pi\in\{\pi_{\sigma,\nu},\pi(\sigma,\nu)\}),
\end{equation}
where $\Pi_{\tau}$ denotes the orthogonal projection onto the $\tau$-isotypic component of $\pi$;
\item\label{HM-value} $H(\sigma,\nu)=h(w^{-1}.\nu)$ when $\sigma=w.\delta$;
\item\label{HM-components} when $n=2$, we have $H(\sigma,\nu)=h_0(\nu)\prod_{v\mid\infty}H_{1,v}(\sigma_v,\nu_v)$, where $H_{1,v}$ is given by the expressions in \eqref{defn-hpnu} and \eqref{h-eta-s} at complex and real places, respectively.
\end{enumerate}

The identity \eqref{prop-trace}, in the case $\pi=\pi_{\sigma,\nu}$, along with property \eqref{HM-value}, together imply \eqref{take-trace} upon taking the trace. We will use \eqref{prop-trace} when $\pi=\pi(\sigma,\nu)$ in Section~\ref{EisensteinSection} in the course of proving Theorem \ref{spec-est}.

\begin{remark}\label{CD-support}
The existence of $f\in\mathcal{H}(G^1)$ satisfying \eqref{take-trace} is a consequence of the Paley--Wiener theorem of Clozel--Delorme \cite{ClozelDelorme1984}, which is of course valid without assuming either \eqref{hyp-n2} or \eqref{hyp-sph}. In general, it does not make sense to ask for the bound \eqref{prop-bound} to hold, since, given $h$, the function $f$ is unique only up to addition by functions whose orbital integrals vanish identically. 
\end{remark}

\subsection{Reduction to semisimple component}

We now prepare the proof of Proposition \ref{test-fn-bd} by reducing it to the corresponding statement for the function $h_1$. For $\pi$ a finite length admissible representation of $G^1$ with central character $\omega_\pi$, the restriction $\pi'=\pi|_{G'}$ is again of finite length as a representation of $G'$ and we have
\[
\pi (f)= \widehat{f_0}(\overline{\omega_\pi})\pi' (f_1).
\]
For $\delta\in\mathscr{E}^2(M^1)$ we let $\delta_0$ denote the restriction of $\delta$ to $Z\cap\mathbf{K}\subset M^1$. If $\pi\in {\rm Rep}(G^1)_\delta$ then $\omega_\pi|_{Z\cap\mathbf{K}}=\delta_0$. We write $\delta'$ for the restriction of $\delta$ to $M^1\cap G'$. Let $\mathcal{D}'$ denote the corresponding set of conjugacy classes $\underline{\delta'}$ on $G'$. If we set $M'=M\cap G'$, then we may identify $\h_M^G=\h_{M'}^{G'}=\h_{M'}$; indeed $\h_{G'}=0$ since $G'$ has finite center.

From the product description in \S\ref{sec-arch-loc-int}, $\mu_M^G(\delta,\nu)$ is plainly $i\h_G^{\ast}$-translation invariant in $\nu$, and so
the measure $\deg(\delta)\mu_M^G(\delta,\nu)\rm{d}\nu$ on $i\mathfrak{h}_M^*=i\mathfrak{h}_G^*\oplus i(\h_M^G)^*$ factorizes as the Lebesgue measure on $i\mathfrak{h}_G^*$ times $\deg(\delta')\mu_M^G(\delta',\nu')\rm{d}\nu'$ on $i(\h_M^G)^*$. Accordingly, we have $\|h\|_1=[W(A_M):W(A_M)_{\delta}]\cdot\|h_0\|_1\|h_1\|_1$, the Weyl group index coming from the $\sigma\in W(A_M).\delta$.

To prove Proposition \ref{test-fn-bd} it therefore suffices to construct test functions $f_0\in \mathcal{H}(Z^1)_{R_0}$ and $f_1=\prod_{v\mid\infty}f_v\in \mathcal{H}(G')$ with $f_v\in\mathcal{H}(G'_v)_{R_v}$, both transforming by $\overline{\delta}|_{Z_n^1}$ under $Z_n^1$, such that
\begin{enumerate}
\item  for $\chi_0e^{\nu}\in\widehat{Z^1}$, where $\chi_0$ is a character of $Z\cap\mathbf{K}$ and $\nu\in \mathfrak{h}_{G,\C}^*$, we have
\[
\widehat{f_0}(\chi_0e^{\nu})=\mathbf{1}_{\overline{\chi}_0=\delta_0}h_0(\nu)
\]
and $\|f_0\|_\infty\leqslant \|h_0\|_1$;
\item for any $\underline{\sigma}\in\mathcal{D}$, with standard representative $(L',\sigma)$, and $\nu\in \h_{L',\C}^*$, we have
\begin{equation}\label{take-trace'}
{\rm tr}\,\pi_{\sigma,\nu}(f_1)=\begin{cases}h_1(w^{-1}.\nu),&\sigma=w.\delta',\,w\in W(A_{M'});\\ 0,&\text{otherwise,}\end{cases}
\end{equation}
and
\begin{equation}\label{prop-bound'}
\|f_1\|_\infty\ll \|h_1\|_1.
\end{equation}
\end{enumerate}

The first point is simple: let $g_0\in C^\infty_c(\mathfrak{h}_G)_R$ denote the (inverse) Fourier transform of $h_0$. For $u\in Z\cap\mathbf{K}$ and $H\in \mathfrak{h}_G$, we set $f_0(ue^H)=\overline{\delta_0}(u)g_0(H)$. The inequality $\|f_0\|_\infty\leqslant \|h_0\|_1$ follows from Fourier inversion.

It remains to carry out the second point, as announced. The main idea is that, for $\tau\in\Pi(\mathbf{K}')$ and $f$ in the $\tau$-isotypic Hecke algebra $\mathcal{H}(G',\tau)$, the transform $f\mapsto\tr\pi(f)$ is described by the spherical transform of type $\tau$, with an inversion formula when $\mathcal{H}(G',\tau)$ is commutative. We review this abstract theory in \S\ref{sec:sph} and then in \S\ref{sec:subq} its reformulation in terms of (non-unitary) principal series representations $\pi(\eta,\xi)$ and the corresponding $\tau$-spherical transform $H(\eta,\xi)=\mathscr{H}^{\tau}(\eta,\xi)$. For the actual construction of the test function in \S\S\ref{sec:construction-sph}--\ref{sec:construction-n2}, we first rewrite the target function on the right-hand side of \eqref{take-trace'} as a linear combination of finitely many eligible $H(\eta,\xi)$, for which we rely on the Paley--Wiener theorem for the $\tau$-spherical transform. We then explicitly describe the types $\underline{\delta'}$ appearing in the corresponding inversion formula \eqref{sph-inv2} and show that this inversion indeed provides a test function satisfying \eqref{take-trace'}. Finally, we prove \eqref{prop-bound'} by estimating all the terms in \eqref{sph-inv2}. We review the theory of spherical transforms over the entire group $G'$ and only pass to factors $G'_v$ at the end, but the reader will notice that the entire setup for $G'$ in fact factorizes over all archimedean places.

\subsection{Spherical transform of type $\tau$: abstract theory}\label{sec:sph} 

We begin by recalling the spherical functions (and trace spherical functions) of a given $\bfk'$-type $\tau$ on a semisimple Lie group with finite center. These will then be used to define the associated spherical transform on the $\tau$-isotypic Hecke algebra. For these definitions, see, for example, \cite[\S 6.1]{Warner1972} and \cite{Camporesi1997}.

Fix $\tau\in\Pi(\bfk')$. For $\pi\in\Pi (G')$, acting on the space $V_\pi$, let $\Pi_\tau$ be the canonical projection onto the $\tau$-isotypic subspace $V_\pi^\tau$. Then the {\it spherical function of type $\tau$ for $\pi$} is defined by
\begin{equation}\label{sph-fn-pi}
\Phi_\pi^\tau(g)=\Pi_\tau \circ\pi(g)\circ\Pi_\tau.
\end{equation}
Note that $\Phi_\pi^\tau(g)$ is an endomorphism of the finite dimensional space $V_\pi^\tau$, which is zero if $\tau$ is not a $\bfk'$-type of $\pi$. Similarly, we may define the {\it spherical trace function of type $\tau$ for $\pi$} by
\begin{equation}\label{sph-tr-fn-pi}
\varphi_\pi^{\,\tau}(g)={\rm tr}\,\Phi_\pi^{\,\tau}(g).
\end{equation}
Note that $\Phi_\pi^{\,\tau}(e)=\Pi_\tau$ and $\varphi_\pi^{\,\tau}(e)=\dim V_\pi^\tau$. Furthermore, since $\pi$ is unitary, we have
\begin{equation}\label{sph1}
|\varphi_\pi^\tau (g)|\leqslant \dim V_\pi^\tau \|\Phi_\pi^\tau(g)\|\leqslant \dim V_\pi^\tau.
\end{equation}
Moreover, both $\Phi_\pi^{\,\tau}$ and $\varphi_\pi^{\,\tau}$ transform under $Z'$ by the central character of $\pi$.

For $\tau\in\Pi(\bfk')$ let  $\xi_\tau$ denote the character of $\tau$ and write $\chi_\tau=(\dim\tau)\,  \xi_\tau$. We then let $\mathcal{H}(G',\tau)$ denote the space of functions $f\in C^\infty_c(G')$ such that
\begin{enumerate}
\item $f(kgk^{-1})=f(g)$ for all $g\in G'$ and $k\in \bfk'$,
\item $\overline{\chi_\tau}*f=f=f*\overline{\chi_\tau}$.
\end{enumerate}
Then for any $f\in\mathcal{H}(G',\tau)$ and any $\pi\in\Pi(G')$ we have $\Pi_\tau\circ \pi(f)\circ\Pi_\tau=\pi(f)$; see, for example, \cite[Prop.~3.2]{Camporesi1997}. In particular $\pi(f)=0$ on $\mathcal{H}(G',\tau)$ unless $\tau$ is a $\bfk'$-type of $\pi$. Note as well that any $f\in \mathcal{H}(G',\tau)$ transforms under $Z'$ by the conjugate of the central character of $\tau$.

We define the spherical transform of type $\tau$ of a function $f\in \mathcal{H}(G',\tau)$ by
\[
\mathscr{H}^{\, \tau}(f)(\pi)=\int_{G'}f(g)\, \varphi_\pi^\tau(g)\, {\rm d}g.
\]
It follows from the definitions (see \cite[(14)]{Camporesi1997}) that, for $f\in \mathcal{H}(G',\tau)$ we have
\begin{equation}\label{sph-proj}
\pi(f)=\mathscr{H}^{\, \tau}(f)(\pi)\cdot \frac{1}{\dim V_\pi^\tau} \Pi_\tau
\end{equation}
and hence
\begin{equation}\label{traceH}
{\rm tr}\, \pi(f)=\mathscr{H}^{\, \tau}(f)(\pi).
\end{equation}

The convolution algebra $\mathcal{H}(G',\tau)$ is commutative if and only if $\tau$ appears with multiplicity at most $1$ in every irreducible admissible representation $\pi$ of $G'$ \cite[Proposition 6.1.1.6]{Warner1972}. This is the case, for example, for arbitrary $\bfk'$-types of archimedean $\GL_2$, and for the trivial $\bfk'$-type for archimedean $\GL_n$; these are the two cases described in the hypotheses \eqref{hyp-n2} and \eqref{hyp-sph} of Proposition \ref{test-fn-bd}. Whenever $\mathcal{H}(G',\tau)$ is commutative, we may invert the spherical transform of type $\tau$. Indeed, it is shown in \cite[p.43]{Camporesi1997} that in this case one has the inversion formula
\[
f(g)=\frac{1}{\dim\tau}\int_{\Pi(G')}\mathscr{H}^{\,\tau}(f)(\pi)\, \varphi_\pi^\tau(g^{-1})\, {\rm d}\hat{\omega}^{\rm pl}(\pi)
\]
for all $f\in\mathcal{H}(G',\tau)$. In such situations we see, using \eqref{sph1} and the equality $\dim V_\pi^\tau=\dim\tau$ valid in this case, that
\begin{equation}\label{inftyL1}
\|f\|_\infty\leqslant \|\mathscr{H}^{\,\tau}(f)\|_{L^1(\hat{\omega}^{\rm pl})}.
\end{equation}

\subsection{Reformulation using the subquotient theorem}\label{sec:subq}

Using the Harish-Chandra subquotient theorem, we may complement the abstract theory of the previous section to give an explicit integral representation of $\tau$-spherical functions, and explicit formulae for the associated $\tau$-spherical transforms and (in the commutative case) their inversions.

We begin by extending the definitions \eqref{sph-fn-pi} and \eqref{sph-tr-fn-pi} to the principal series representations $\pi(\eta,\xi)$, where $\eta\in\mathscr{E}^2(T_{0,\infty}')$ and $\xi\in (\h_0^{G'})_\C^*$, which are not necessarily unitary nor irreducible, but admit a central character $\eta|_{Z'}$. For $\tau\in\Pi(\bfk')$, we write
\[
\Phi^{\, \tau}_{\eta,\xi}(g)=\Pi_\tau \circ \pi(\eta,\xi)(g)\circ \Pi_\tau,
\]
where $\Pi_\tau$ is the projection of $I(\eta,\xi)$ onto its $\tau$-isotypic component $I(\eta,\xi)^\tau$. Here, $I(\eta,\xi)$ is the space on which $\pi(\eta,\xi)$ acts. Similarly, we put
\[
\varphi_{\eta,\xi}^{\, \tau}(g)={\rm tr}\,\Phi^{\, \tau}_{\eta,\xi}(g).
\]
Both $\Phi^{\, \tau}_{\eta,\xi}$ and $\varphi_{\eta,\xi}^{\, \tau}$ transform under $Z'$ by $\eta|_{Z'}$. We have the integral representation of Harish-Chandra (see \cite[Corollary 6.2.2.3]{Warner1972})
\begin{equation}
\label{integral-representation}
\varphi_{\eta,\xi}^\tau(g)=\int_{\bfk'}(\chi_\tau * \eta)(\kappa(k^{-1}gk))e^{\langle \xi-\rho,H(gk)\rangle}\,\text{d}k,
\end{equation}
where $\text{d}k$ is the probability Haar measure on $\bfk'$, $\rho$ is the half-sum of positive roots, and $g=\kappa(g)\exp(H(g))n(g)$ in the Iwasawa decomposition $G'=\mathbf{K}'A_0'N'$. Moreover, if $\tau\in\Pi(\bfk')$ appears as a $\bfk'$-type of $\pi(\eta,\xi)$, we associate with a function $f\in\mathcal{H}(G',\tau)$ the transform
\begin{equation}
\label{transform-fullyinduced}
\mathscr{H}^{\, \tau}(f)(\eta,\xi)=\int_{G'}f(g)\varphi_{\eta,\xi}^{\, \tau}(g)\,{\rm d}g.
\end{equation}

The relation between $\varphi_\pi^\tau$ (resp., $\mathscr{H}^{\tau}(f)(\pi)$), defined on unitary representations $\pi\in\Pi(G')$, and $\varphi_{\eta,\xi}^\tau$ (resp., $\mathscr{H}^\tau(f)(\eta,\xi)$) is given by the Harish-Chandra subquotient theorem \cite[Theorem 5.5.1.5]{Warner1972a}. This theorem states (in particular) that for any $\pi\in\Pi(G')$ there is $\eta=\eta(\pi)\in\mathscr{E}^2(T_{0,\infty}')$ and $\xi=\xi(\pi)\in (\h_0^{G'})_\C^*$ such that $\pi$ is infinitesimally equivalent to a subquotient of the principal series representation $\pi(\eta,\xi)$. Thus for $\pi\in\Pi(G')$ appearing as a subquotient of $\pi(\eta,\nu)$, and for any $\bfk'$-type $\tau$ of $\pi$, we have
\begin{equation}\label{sph-fn2}
\varphi_\pi^{\,\tau}=\varphi_{\eta,\xi}^{\,\tau}\qquad\text{and}\qquad \mathscr{H}^{\, \tau}(f)(\pi)=\mathscr{H}^{\, \tau}(f)(\eta,\xi).
\end{equation}
Note the importance of the assumption that $\tau\in\Pi(\bfk')$ appearing as a $\bfk'$-type of $\pi$ for this formula to hold: if $\tau$ is not a $\bfk'$-type of $\pi$, then both $\varphi_\pi^{\,\tau}$ and $\mathscr{H}^{\, \tau}(f)(\pi)$ are zero, whereas this is not necessarily the case for $\varphi_{\eta,\xi}^{\,\tau}$ and $\mathscr{H}^{\, \tau}(f)(\eta,\xi)$.

\textit{Now assume $\mathcal{H}(G',\tau)$ commutative.} Following \cite[(46)]{Camporesi1997}, we write the inverse spherical transform of type $\tau$ more explicitly. Let $\mathcal{D}_\tau'$ denote the subset of $\underline{\delta'}\in\mathcal{D}'$ having standard representative $(M',\delta')$ for which $\tau$ appears as a $\bfk'$-type in $\pi(\delta',0)$. For any $f\in\mathcal{H}(G',\tau)$ we have
\begin{equation}\label{sph-inv2}
f(g)=\frac{1}{\dim\tau}\sum_{\underline{\delta'}\in\mathcal{D}_\tau'} c_{M'}\deg(\delta')\int_{i\h_{M'}^*}\mathscr{H}^{\,\tau}(f)(\eta,\xi)\varphi_{\eta,\xi}^{\, \tau}(g^{-1})\mu_{M'}^{G'}(\delta',\nu)\, {\rm d}\nu,
\end{equation}
where the $c_{M'}>0$ are constants, and $\eta=\eta(\pi_{\delta',\mu}), \xi=\xi(\pi_{\delta',\nu})$. We remark that for all $\underline{\delta'}\in\mathcal{D}_\tau'$ we have $\delta'|_{Z'}=\tau|_{Z'}=\eta|_{Z'}$, and thus each $\varphi_{\eta,\xi}^\tau(g^{-1})$ appearing on the right-hand side of \eqref{sph-inv2} transforms under $Z'$ by $\overline{\delta'}|_{Z'}$.

\subsection{Proof of Proposition \ref{test-fn-bd} in the spherical case} \label{sec:construction-sph}

For the trivial $\bfk'$-type $\tau_0\in\Pi(\bfk')$, and the trivial character $\eta_0\in\mathscr{E}^2(T_{0,\infty}')$ we write $\varphi_\nu=\varphi_{\eta_0,\nu}^{\tau_0}$ for the associated trace spherical function. Moreover, for $f_1\in\mathcal{H}(G',\tau_0)$, we write $h_1(\nu)=\mathscr{H}^{\tau_0}(f_1)(\eta_0,\nu)$ for the associated spherical transform. Then the inversion formula \eqref{sph-inv2} becomes
\begin{equation}\label{classical-inversion}
f_1(g)=\int_{i(\h_0^{G'})^*}h_1(\nu)\varphi_{-\nu}(g) |c(\nu)c(\rho)^{-1}|^{-2}\, {\rm d}\nu,
\end{equation}
where $c(\nu)$ is the Harish-Chandra $c$-function whose definition in our setting was recalled in \eqref{eq:H-C-c-function}. In this case, $(f_1,h_1)$ forms the familiar spherical transform pair \cite[IV,\S7 Equations (3),(14)]{Helgason2000}, which we recall is a bijection from $\mathcal{H}(G',\tau_0)$ onto the space of $W(A_0')$-invariant functions in $\mathcal{PW}((\h_0^{G'})_\C^*)$ \cite[IV, \S7 Theorem 7.1]{Helgason2000}.

We now turn to the proof of Proposition \ref{test-fn-bd} in the spherical case \eqref{hyp-sph}. From the discussion directly preceding \S\ref{sec:sph}, it only remains to consider
$h_1\in \mathcal{PW}((\h_0^{G'})_\C^*)$. We use \eqref{classical-inversion} to define $f_1$, that is, we let $f_1\in\mathcal{H}(G',\tau_0)$ be the inverse $\tau_0$-spherical transform of $h_1$, as shown in \eqref{classical-inversion}. Moreover, the factorization $h_1=\prod_{v\mid\infty}h_v$ gives rise to $f_1=\prod_{v\mid\infty}f_v$ in the obvious way, with $(f_v,h_v)$ forming a spherical transform pair over $G'_v$. The support conditions that $f_v\in\mathcal{H}(G'_v,\tau_0|_{G'_v})_{R_v}$ are contained in \cite[IV, \S7 Theorems 7.1 and 7.3]{Helgason2000}. Since $f_1\in\mathcal{H}(G',\tau_0)$, the operator $\pi(f_1)$ acts on $V_\pi$ as zero whenever $\pi$ does not contain the trivial $\bfk'$-type. Moreover, writing $\pi_\nu=\pi_{\eta_0,\nu}$, we deduce from \eqref{sph-proj} that $\pi_\nu(f_1)=h_1(\nu) \Pi_{\tau_0}$. These two observations together imply \eqref{prop-trace}. The bound \eqref{prop-bound'} follows simply from \eqref{inftyL1}.

\subsection{Proof of Proposition \ref{test-fn-bd} for $n\leqslant 2$}\label{sec:construction-n2}

We now remove the spherical hypothesis of the preceding paragraph, but restrict to $n\leqslant 2$. Let $\underline{\delta}'$ be represented by $\delta'=\prod_{v\mid\infty}\delta'_v$ for some $\delta'_v\in\mathscr{E}^2(M'_v)$, with $M'_v\in\{T'_{0,v},G'_v\}$ for every $v\mid\infty$. In the following three subsections, we will construct functions $f_v\in\mathcal{H}(G'_v)_{R_v}$ that verify the local versions of \eqref{prop-trace} (and along with it \eqref{take-trace'}) as well as \eqref{prop-bound'}; the corresponding statements for $f=\prod_{v\mid\infty}f_v\in\mathcal{H}(G')$ follow immediately.

In our construction, we rely on the explicit $\tau$-spherical transform and its inversion in \S\S\ref{sec:sph}--\ref{sec:subq}, as well as their local versions for $\tau_v$-spherical transform on $G'_v$. Here, we denote by $\tau=\prod_{v\mid\infty}\tau_v$ the minimal $\mathbf{K}'$-type of $\pi_{\delta'}$, so that each $\tau_v$ is the minimal $\mathbf{K}'_v$-type of $\pi_{\delta'_v}$, and $\mathcal{D}_\tau=\prod_{v\mid\infty}\mathcal{D}_{\tau_v}$.

\textit{From now on, we refresh the notation and drop the prime decorations and ``local'' notations. In particular, we shall write $f,h,\delta,\tau,G,R$ for $f_v,h_v,\delta'_v,\tau_v,G'_v,R_v$, and $\h_0$ for $\h_0^{G'}$}.

\subsubsection{The case of $M_v=T_0$ and $v$ real}\label{hT-subsec}

We begin by parametrizing the irreducible dual of ${\rm O}(2)$ as follows. For $k\geqslant 1$ we put $\tau_k={\rm Ind}_{\SO(2)}^{{\rm O}(2)}(e^{ik\theta})$, a two dimensional representation. Then $\Pi({\rm O}(2))=\{\tau_0, \det\}\cup\{\tau_k\}_{k\geqslant 1}$. We let $\eta_0,\eta_-$, and $\eta_1$ denote the characters $(1,1)$, $(\mathrm{sgn},\mathrm{sgn})$, and $(1,\mathrm{sgn})$ on $T_0$, respectively. Thus in this case $\tau$ is either $\tau_0$ (when $\delta=\eta_0$), $\det$ (when $\delta=\eta_-$), or $\tau_1$ (when $\delta=\eta_1$).

We also note that no elements of $\mathscr{E}^2(\SL_2^{\pm}(\R))$ appear in $\mathcal{D}_{\tau}$. Indeed, $\mathscr{E}^2(\SL_2^\pm(\R))= \{D_k: \, k\geqslant 2\}$ where $D_k$ is the weight $k$ discrete series representation for $\SL_2^\pm (\R)$, and the lowest ${\rm O}(2)$-type of $D_k$ is $\tau_k$ (see \eqref{KtypeDk} below) with $k\geqslant 2$ (thus type $\tau$ does not appear in $D_k$). Hence, in fact, $\mathcal{D}_{\tau}=\{\delta\}$, and this $\delta\in\mathscr{E}^2(T_0)$ is the only term contributing non-trivially to the summation in the inversion formula \eqref{sph-inv2}.

According to Ehrenpreis--Mautner\footnote{In fact, Ehrenpreis--Mautner treat the group $\SL_2(\R)$ rather than $\SL_2^\pm(\R)$, but their methods extend to this case.} \cite[Theorem 2.1]{EhrenpreisMautner1957}, the $\tau$-spherical (or Fourier) transform is a bijection from $\mathcal{H}(G,\tau)$ onto the space of $W(A_0)_{\delta}$-invariant functions in $\mathcal{PW}(\h_{0,\C}^*)$. We define $f$ as the inverse $\tau$-spherical transform \eqref{sph-inv2} of $h$. Moreover, noting that for any $\tilde{\delta}\in\mathscr{E}^2(T_0)$ we have $\eta=\eta(\pi_{\tilde{\delta},\mu})=\tilde{\delta}$ and $\xi=\xi(\pi_{\tilde{\delta},\nu})=\nu$, the formula \eqref{sph-inv2} reads as
\[
f(g)=\frac{c}{\dim\tau}\int_{i\h_0^*}h(\nu)\varphi_{\delta,\nu}^{\, \tau}(g^{-1})\mu_0^G(\delta,\nu)\, {\rm d}\nu.
\]

Recalling that $h\in \mathcal{PW}(\h_{0,\C}^*)_R$, the condition that $f\in\mathcal{H}(G)_R$ now follows from \cite[Theorem 2.1, Proposition 2.1]{EhrenpreisMautner1957}.
The verification of \eqref{prop-trace} (and hence \eqref{take-trace'}) follows from \eqref{sph-proj} and \eqref{sph-fn2}. As in the spherical case, the bound \eqref{prop-bound'} comes directly from the above explicit realization of \eqref{sph-inv2} and \eqref{inftyL1}.

\subsubsection{The case of $M_v=T_0$ and $v$ complex}\label{sec:hv-complex}

For $\nu\in\C$ and $p\in\frac12\mathbb{Z}$, let the character $\chi_{\nu,p}\in T_0^{\ast}$ be given by $\chi_{\nu,\ell}(\mathrm{diag}(z,z^{-1}))=|z|^{2i\nu}(z/|z|)^{2p}$; we also write $\delta_p=\chi_{0,p}\in\mathscr{E}^2(T_0)$. For every $\ell\in\frac12\mathbb{Z}_{\geqslant 0}$, $\tau_{\ell}=\mathop{\mathrm{sym}}^{2\ell}(\C^2)$ is an irreducible $(2\ell+1)$-dimensional representation of $\mathrm{SU}(2)$, and in fact $\Pi(\mathrm{SU}(2))=\{\tau_{\ell}\}_{\ell\in\frac12\mathbb{Z}_{\geqslant 0}}$. For $\nu\in i\mathbb{R}$ and $p\in\frac12\mathbb{Z}$, the corresponding irreducible representation $\pi(\delta_p,\nu)=\pi_{\delta_p,\nu}=\pi_{p,\nu}$ satisfies $\eta(\pi_{\delta_p,\nu})=\delta_p$ and $\xi(\pi_{\delta_p,\nu})=\nu$ and decomposes according to the $\mathrm{SU}(2)$-action as
\[ \pi_{p,\nu}\simeq\bigoplus_{\substack{\ell\in\frac12\mathbb{Z},\,\,\ell\geqslant |p|,\\ \ell\equiv p\bmod 1}}\tau_{\ell}; \]
in particular, $\tau_{\ell}$ with $\ell=|p|$ is the lowest $\mathrm{SU}(2)$-type appearing in $\pi_{p,\nu}$, and each $\tau_{\ell}$ with $\ell\geqslant |p|$, $\ell\equiv p\pmod 1$ appears with multiplicity one. Put another way, $\mathcal{D}_{\tau_{\ell}}=[[\ell]]:=\{p\in\frac12\mathbb{Z}: p\leqslant|\ell|,\,\, p\equiv\ell\pmod 1\}$.

The formulas \eqref{transform-fullyinduced} and \eqref{sph-inv2} for the $\tau_{\ell}$-spherical transform are then explicitly realized as the transform from $f\in\mathcal{H}(G,\tau_{\ell})$ to the system of functions $(H(p,\nu))_{|p|\in[[\ell]]}$, namely $H(p,\nu)=\mathscr{H}^{\tau_{\ell}}(f)(\delta_p,\nu)$ via
\begin{equation}
\label{tauell-expl}
H(p,\nu)=\int_Gf(g)\varphi_{\nu,\delta_p}^{\tau_{\ell}}(g)\,\mathrm{d} g,\quad f(g)=\frac{c}{2\ell+1}\sum_{p\in[[\ell]]}\int_{i\h_0^{\ast}}H(p,\nu)\varphi_{\nu,\delta_p}^{\tau_{\ell}}(g^{-1})
(|\nu|^2+p^2)\,\mathrm{d}\nu,
\end{equation}
with an absolute $c>0$. Note that $\mathscr{H}^{\tau_{\ell}}(f)(\delta_p,\nu)$ vanishes for $p\not\in[[\ell]]$. This explicit description will be particularly handy when verifying properties \eqref{prop-trace} and \eqref{prop-bound'}.

The following Paley--Wiener theorem of Wang~\cite[Proposition 4.5, Lemma 4.4]{Wang1974} fully characterizes the image of $\mathcal{H}(G,\tau_{\ell})$ under the $\tau_{\ell}$-spherical transform $\mathscr{H}^{\tau_{\ell}}$: for $R>0$, a system of functions $(H(p,\nu))_{p\in[[\ell]]}$ lies in the image of $\mathcal{H}(G,\tau_{\ell})_R$ if and only if it satisfies the following conditions:
\begin{enumerate}
\item\label{req0} for every $p\in[[\ell]]$, $H(p,\cdot)\in\mathcal{PW}(\h_{0,\C}^{\ast})_R$;
\item\label{req2} $H(p,\nu)=H(-p,-\nu)$ for every $|p|\leqslant\ell$, $\nu\in\h_{0,\C}^{\ast}$;
\item\label{req3} $H(p,\nu)=H(\nu,p)$ if $\nu\in\frac12\mathbb{Z}$ and $|p|,|\nu|\leqslant\ell$.
\end{enumerate}
We denote the set of such systems by $\mathcal{PW}_{[[\ell]],R}$. It will be convenient to denote a system $(H(p,\nu))_{p\in[[\ell]]}$ in $\mathcal{PW}_{[[\ell]],R}$ by $H_{[[\ell]]}$, and, if $\ell\geqslant 1$, by $H_{[[\ell-1]]}=(H(p,\nu))_{p\in[[\ell]-1]}$ its restriction to $|p|\leqslant\ell-1$.

The (perhaps initially counterintuitive) ``additional symmetry'' \eqref{req3} (a case of the so-called Arthur--Campoli relations) corresponds to a discrete set of (for $(\nu,p)\neq(0,0)$) non-unitary representations and can be understood both in terms of the infinitesimal characters~\cite[Corollary 2 and its proof]{Wang1974} or in terms of irreducible subquotients of the fully induced representations $\pi(\nu,p)$ \cite[(2.7)]{BlomerHarcosMagaMilicevic2022}. It makes analytically interesting (and nontrivial) the problem of extending from given target functions $H(\pm\ell,\nu)=h_{\pm\ell}(\nu)\in\mathcal{PW}(\h_{0,\C}^{\ast})_R$ to the full system $(H(p,\nu)_{p\in[[\ell]]}\in\mathcal{PW}_{[[\ell]],R}$ without increasing the Paley--Wiener support parameter $R$ or (essentially) the Plancherel $L^1$ norm, which features, for example, in our announced bound \eqref{prop-bound'}. We record our solution to this extension problem as a formal lemma for its independent interest.

\begin{lemma}
Let $\ell\in\frac12\mathbb{Z}_{\geqslant 0}$, $R>0$, and a pair of functions $h_{\pm\ell}\in\mathcal{PW}(\h_{0,\C}^{\ast})_R$ be given, satisfying $h_{-\ell}(\nu)=h_{\ell}(-\nu)$ for all $\nu\in\h_{0,\C}^{\ast}$. Then, the system $H_{[[\ell]]}$, given explicitly by \eqref{defn-hpnu} below for $\ell>0$ (and simply by $H(0,\nu):=h_0(\nu)$ if $\ell=0$) satisfies
\[ H_{[[\ell]]}\in\mathcal{PW}_{[[\ell]],R},\quad H(\pm\ell,\nu)=h_{\pm\ell}(\nu)\quad (\nu\in\h_{0,\C}^{\ast}), \]
as well as $H_{[[\ell-1]]}\in\mathcal{PW}_{[[\ell-1]],R}$ for $\ell\geqslant 1$.
Moreover, denoting
\[ f=(\mathscr{H}^{\tau_{\ell}})^{-1}(H_{[[\ell]]})-\mathbf{1}_{\ell\geqslant 1}(\mathscr{H}^{\tau_{\ell-1}})^{-1}(H_{[[\ell-1]]}), \]
we have that $f\in C_c^{\infty}(G_{\leqslant R})$,
\begin{equation}
\label{req4}
\tr\pi_{\delta,\nu}(f)=\mathbf{1}_{\delta=\delta_{\pm\ell}}h_{\pm\ell}(\nu)\quad\text{and}\quad \|f\|_{\infty}\ll\|h_{\ell}\|_1,
\end{equation}
with the latter $1$-norm being taken with respect to the standard Plancherel measure $(|\nu|^2+\ell^2)\,\mathrm{d}\nu$, and the implied constant is independent of $\ell$, $R$, and $h_{\pm\ell}$.
\end{lemma}

\begin{proof}
For $\ell\in\{0,\frac12\}$, there is virtually nothing to do, so we assume from now on that $\ell\geqslant 1$. We may identify $i\h_0^{\ast}\simeq i\mathbb{R}$ (and thus $\h_{0,\C}^{\ast}\simeq\mathbb{C}$) so that, as in \S\ref{PWCD}, the fact that $h_{\ell}\in\mathcal{PW}(\h_{0,\C}^{\ast})_R$ means equivalently that the standard inverse Fourier transform $\widecheck{h_{\ell}}\in C_c^{\infty}([-R,R])$, the latter normalized so that $h_{\ell}(\nu)=\int_{\mathbb{R}}\widecheck{h_{\ell}}(x)e^{-\nu x}\,\mathrm{d}x$. We define functions $(H(p,\nu))_{p\in[[\ell]]}$ in $\mathcal{PW}(\h_{0,\C}^{\ast})_R$ by
\begin{equation}
\label{defn-hpnu}
\begin{aligned}
H(p,\nu):=&\int_{\mathbb{R}}e^{(-\nu-p)x}\frac{-e^{-\ell x}\widecheck{h_{\ell}}(x)+e^{\ell x}\widecheck{h_{\ell}}(-x)}{e^{2\ell x}-e^{-2\ell x}}\,\mathrm{d}x\\
&\qquad +\int_{\mathbb{R}}e^{(-\nu+p)x}\frac{-e^{-\ell x}\widecheck{h_{\ell}}(-x)+e^{\ell x}\widecheck{h_{\ell}}(x)}{e^{2\ell x}-e^{-2\ell x}}\,\mathrm{d}x.
\end{aligned}
\end{equation}
Noting that both fractions appearing above are even functions of $x$, one immediately verifies that the system $H_{[[\ell]]}=(H(p,\nu))_{p\in[[\ell]]}$ indeed belongs to $\mathcal{PW}_{[[\ell]],R}$ (which of course implies that $H_{[[\ell-1]]}\in\mathcal{PW}_{[[\ell-1]],R}$) and that $H(\pm\ell,\nu)=h_{\pm\ell}(\nu)$.

By the $\tau_{\ell}$- and $\tau_{\ell-1}$-Paley--Wiener theorem, the systems $H_{[[\ell]]}$ and $H_{[[\ell-1]]}$ have the pre-images $f_{+}=(\mathscr{H}^{\tau_{\ell}})^{-1}(H_{[[\ell]]})\in\mathcal{H}(\tau_{\ell})_R$ and $f_{-}=(\mathscr{H}^{\tau_{\ell-1}})^{-1}(H_{[[\ell-1]]})\in\mathcal{H}(\tau_{\ell-1})_R$, which, by \eqref{traceH} and \eqref{sph-fn2}, satisfy
\[ \tr\pi_{\delta_p,\nu}(f_{+})=\mathbf{1}_{|p|\leqslant\ell}H(p,\nu),\quad\tr\pi_{\delta_p,\nu}(f_{-})=\mathbf{1}_{|p|\leqslant\ell-1}H(p,\nu).\]
Hence $f=f_{+}-f_{-}\in C_c^{\infty}(G_{\leqslant R})$ satisfies the first claim of part \eqref{req4}.
Finally, for the second part of \eqref{req4}, denote for an integer $|u|<2\ell$ the even function
\[ H_{u,2\ell}(\nu):=\int_{\mathbb{R}}e^{-\nu x}\frac{e^{ux}-e^{-ux}}{e^{2\ell x}-e^{-2\ell x}}\,\mathrm{d}x= 
\frac{\pi\sin(\pi u/2\ell)}{2\ell [\cos(\pi\nu/2\ell)+\cos(\pi u/2\ell)]}, \]
where the evaluation comes from \cite[Table 17.23, Entry 20, renormalized]{GradshteynRyzhik2007}. We see that, for $t\in\mathbb{R}$, $0<H_{u,2\ell}(it)\ll 1/(2\ell-|u|)$, $H_{u,2\ell}(it)\ll(1/\ell)e^{-\pi|t|/2\ell}$ for $|t|\gg\ell$, and $\int_{\mathbb{R}}H_{u,2\ell}(it)\,\mathrm{d}t=u/(4\pi\ell)$. We may now write, for $p\in[[\ell-1]]$,
\begin{equation}\label{defn:Hpnu}
H(p,\nu)=\int_{\mathbb{R}}e^{-\nu x}\big(\widecheck{H_{\ell+p,2\ell}}(x)\widecheck{h_{\ell}}(x)+\widecheck{H_{\ell-p,2\ell}}(-x)\widecheck{h_{\ell}}(-x)\big)\,\mathrm{d}x\\
=H_{\ell+p,2\ell}\ast h_{\ell}+H_{\ell-p,2\ell}\ast h_{-\ell},
\end{equation}
where $\ast$ denotes the usual convolution, and so by spherical inversion \eqref{tauell-expl}
\begin{align*}
f(g)&=(\mathscr{H}^{\tau_{\ell}})^{-1}(H_{[[\ell]]})(g)-(\mathscr{H}^{\tau_{\ell-1}})^{-1}(H_{[[\ell-1]]})(g)\\
&=\sum_{\pm}\frac{c}{2\ell+1}\int_{\mathbb{R}}h_{\pm\ell}(it)\varphi_{\delta_{\pm\ell},it}^{\tau_{\ell}}(g^{-1})(t^2+\ell^2)\,\mathrm{d}t\\
&\qquad +\sum_{p\in[[\ell-1]]}\sum_{\pm}\int_{\mathbb{R}}(H_{\ell\pm p,2\ell}\ast h_{\pm\ell})(it)\Big(\frac{c}{2\ell+1}\varphi_{\delta_p,it}^{\tau_{\ell}}-\frac{c}{2\ell-1}\varphi_{\delta_p,it}^{\tau_{\ell-1}}\Big)(g^{-1})(t^2+p^2)\,\mathrm{d}t.
\end{align*}
The announced bound on $|f(g)|$ in \eqref{req4} follows using the estimates $\varphi_{\delta_{\pm\ell},it}^{\tau_{\ell}}\ll\ell$ and $\varphi_{\delta_p,it}^{\tau_{\ell}}-\varphi_{\delta_p,it}^{\tau_{\ell-1}}\ll 1$, which are immediate from \eqref{sph1} and \eqref{integral-representation} (noting that $\chi_{\tau_\ell}-\chi_{\tau_{\ell-1}}\ll 1$), and the resulting estimate
\[ \Big[\Big(\frac{c}{2\ell+1}\varphi_{\delta_p,i\cdot}^{\tau_{\ell}}-\frac{c}{2\ell-1}\varphi_{\delta_p,i\cdot}^{\tau_{\ell-1}}\Big)(g^{-1})(\cdot^2+p^2)\ast H_{\ell\pm p,2\ell}\Big](it)\ll t^2+\ell^2. \qedhere \]
\end{proof}

The verification of \eqref{prop-trace} (and hence \eqref{take-trace'}) then follows from \eqref{sph-proj} and \eqref{sph-fn2}. As in the spherical case, the bound
\eqref{prop-bound'} comes directly from \eqref{inftyL1}.

\subsubsection{The case of $M_v=G$.}\label{sssec:GL2-const}

Before coming to the proof of Proposition \ref{test-fn-bd} in this case, we shall restate the formula \eqref{sph-inv2} explicitly. Recall that $D_k$ ($k\geqslant 2$) appears as the unique irreducible subrepresentation of $I(\eta_\epsilon;(k-1)/2)$, where $\epsilon\equiv k$ mod $2$. Indeed, there is an exact sequence
\begin{equation}\label{es}
1\longrightarrow D_k \longrightarrow I(\eta_\epsilon,(k-1)/2)\longrightarrow {\rm Sym}^{k-2}\longrightarrow 1.
\end{equation}
The ${\rm O}(2)$-type decomposition of $D_k$ can therefore be deduced from that of $I(\eta_\epsilon,(k-1)/2)$ and ${\rm Sym}^{k-2}$. Namely, for any $k\geqslant 2$ we have
\begin{equation}\label{KtypeDk}
{\rm Res}_{{\rm O}(2)}(D_k)=\displaystyle\bigoplus_{\substack{n\geqslant k\\ n\equiv k\, \text{mod}\, 2}} \tau_n,
\end{equation}
while $\mathrm{Sym}^{k-2}$ consists of lower ${\rm O}(2)$-types.

Using \eqref{es} and \eqref{KtypeDk} we find that for $k\geqslant 2$ and $j\geqslant k$ with $j\equiv k\equiv\epsilon \bmod 2$, we have
\[
\varphi_{D_k}^{\tau_j}=\varphi_{\eta_{\epsilon,\frac{k-1}{2}}}^{\tau_j}.
\]
For $k\geqslant 2$ and $f\in\mathcal{H}(\SL_2^\pm(\R),\tau_k)$ we put $H(\eta,\nu)=\mathscr{H}^{\tau_k}(f)(\eta,\nu)$. Then, when $k\geqslant 2$, we have
\[ {\rm tr}\, D_j(f)=
\begin{cases}
H(\eta_{\epsilon},(j-1)/2),& 2\leqslant j\leqslant k,\; j\equiv k\equiv\epsilon\bmod 2;\\
0,& \text{else}.
\end{cases}
\]
Then \eqref{sph-inv2} states that there are constants $a,b>0$ such that the following holds: if $k\geqslant 2$ and $f\in\mathcal{H}(\SL_2^\pm(\R),\tau_k)$ then $f(g)$ is given by
\begin{equation}
\label{inversion-even}
a\!\!\!\sum_{\eta=\eta_0,\eta_-}\int_\R\varphi_{\eta,it}^{\tau_k}(g^{-1})H (\eta,it)t\tanh(\pi t/2) \, {\rm d}t+ b\!\!\sum_{\substack{2\leqslant j\leqslant k\\ j\; \text{even}}}\varphi_{\eta_0,\frac{j-1}{2}}^{\tau_k}(g^{-1})H(\eta_0,(j-1)/2)(j-1)
\end{equation}
for $k$ even, and
\begin{equation}
\label{inversion-odd}
a\int_\R\varphi_{\eta_1,it}^{\tau_k}(g^{-1})H(\eta_1,it)t\coth(\pi t/2)\, {\rm d}t+ b\sum_{\substack{3\leqslant j\leqslant k\\ j\; \text{odd}}}\varphi_{\eta_1,\frac{j-1}{2}}^{\tau_k}(g^{-1})H(\eta_1,(j-1)/2)(j-1)
\end{equation}
for $k$ odd. These inversion formulae hold also for $k\in\{0,1\}$, in which case the latter sums are empty. 

Let $\epsilon\in\{0,1\}$, with $\epsilon\equiv k\bmod 2$, record the parity of $k$. We introduce the space $\mathcal{PW}_\epsilon(\h_{0,\C}^*)_R$ consisting of functions $H(\eta,\nu)$ on $E_\epsilon\times\h_{0,\C}^*$, where $E_0=\{\eta_0,\eta_-\}$, $E_1=\{\eta_1\}$, and $\nu\mapsto H(\eta,\nu)$ is a $W(A_0)_\eta$-invariant function in $\mathcal{PW}(\h_{0,\C}^*)_R$. With this notation, the $\tau_k$-spherical transform \eqref{transform-fullyinduced} is a bijection from $\mathcal{H}(\mathrm{SL}_2^{\pm}(\mathbb{R}),\tau_k)_R$ onto $\mathcal{PW}_\epsilon(\h_{0,\C}^*)_R$, as in~\cite[Theorem 2.1, Proposition 2.1]{EhrenpreisMautner1957}. Formulas \eqref{inversion-even} and \eqref{inversion-odd} then explicitly invert this transform.

With formula \eqref{sph-inv2} explicitly restated, we now prove Proposition \ref{test-fn-bd} for the delta mass on some $D_k$, for some $k\geqslant 2$, that is, we construct an $f\in\mathcal{H}(\mathrm{SL}_2^{\pm}(\mathbb{R}))_R$ satisfying \eqref{prop-trace} (and hence \eqref{take-trace'}) and \eqref{prop-bound'}. (In fact we shall produce an $f\in\mathcal{H}(\mathrm{SL}_2^{\pm}(\mathbb{R}))_1$.)

Let $r\in C_c^\infty(\R)$ be supported in $[-1,-\frac12]$ and satisfy $\int_\R r(x)\,\textrm{d}x=1$. Let $\widehat{r}\in\mathcal{PW}(\C)$ denote the Fourier transform of $r$. Then $\widehat{r}(0)=1$ and, for $y>0$, the Paley--Wiener estimate $\widehat{r}(x+iy)\ll_N e^{-y/2} (1+y)^N (1+|x|)^{-N}$ holds. Let $\epsilon\in\{0,1\}$ be of the same parity as $k$. Then we put
\begin{equation}
\label{h-eta-s}
H(\eta,s)=\begin{cases}
\sum_\pm \widehat{r}((\pm s-(k-1)/2)/i), & \eta=\eta_\epsilon;\\
0, & \mathrm{else}.
\end{cases}
\end{equation}
Then $H(\eta,\cdot)\in\mathcal{PW}(\C)_1$ and $c_k:=H(\eta_\epsilon,(k-1)/2)=1+\mathrm{O}(e^{-k/2})$. Recalling that $\tau_k$ is the lowest ${\rm O}(2)$-type of $D_k$, we let $f_+\in\mathcal{H}(\SL_2^\pm(\R),\tau_k)_1$ be the inverse $\tau_k$-spherical transform of $H$, as in \eqref{inversion-even} and \eqref{inversion-odd}. Observe that $\tau_{k-2}$ is the ${\rm O}(2)$-type directly preceding $\tau_k$ (in the natural ordering) in the induced representation containing $D_k$. We let $f_-\in\mathcal{H}(\SL_2^\pm(\R),\tau_{k-2})_1$ be the inverse $\tau_{k-2}$-spherical transform of $H$. We then put $f_G=(f_{+}-f_{-})/c_k\in\mathcal{H}(\SL_2^\pm(\R))_1$. Recall the ordering $\prec$ on $\mathcal{D}$ introduced in \S\ref{rep-notation}. From \eqref{traceH}, \eqref{sph-fn2}, and the definition of $f_+$ and $f_-$, it follows that, for $\pi_{\sigma,\nu}$ occurring as a subquotient of $\pi(\eta,\xi)$, we have
\begin{align*}
\tr\pi_{\sigma,\nu}(f_G)
&=\mathscr{H}^{\tau_k}(f_{+}/c_k)(\pi_{\sigma,\nu})-\mathscr{H}^{\tau_{k-2}}(f_{-}/c_k)(\pi_{\sigma,\nu})\\
&=\mathbf{1}_{\eta_{\epsilon}\prec\underline{\sigma}\prec D_k}\mathscr{H}^{\tau_k}(f_{+})(\eta,\xi)/c_k-\mathbf{1}_{\eta_{\epsilon}\prec\underline{\sigma}\prec D_{k-2}}\mathscr{H}^{\tau_{k-2}}(f_{-})(\eta,\xi)/c_k\\
&=\mathbf{1}_{\underline{\sigma}=D_k}H(\eta,\xi)/c_k=\mathbf{1}_{\underline{\sigma}=D_k}.
\end{align*}
This establishes \eqref{take-trace'}. In fact, as in the previous cases, \eqref{prop-trace} follows in the same way as invoking \eqref{sph-proj} yields
\[ \pi_{\sigma,\nu}(f_G)=H(\eta,\xi)/c_k\cdot\left(\frac1{\dim\tau_k}\Pi_{\tau_k}-\frac1{\dim\tau_{k-2}}\Pi_{\tau_{k-2}}\right). \]

It remains to prove \eqref{prop-bound'} for this choice of $f_G$. The term corresponding to $j=k$ in \eqref{inversion-even} and \eqref{inversion-odd} can be bounded, using \eqref{sph1}, by $H(\eta_\epsilon, (k-1)/2)(k-1)\ll k-1=\deg(D_k)$. Unlike \eqref{sph-inv2}, it is not the case that all other terms in \eqref{inversion-even} and \eqref{inversion-odd} vanish; nevertheless, using \eqref{sph1} and the properties of $r$, we can estimate
\begin{align*}
f(g)&\ll\sum_{2\leqslant j\leqslant k} \big(\big|\widehat{r}((j-k)/2i)\big|+\big|\widehat{r}((-j-k+2)/2i))\big|\big)(j-1)+e^{-\frac{k}{4}} k^N\int_\R |x|(1+|x|)^{-N}\,\mathrm{d}x\\
&\ll \sum_{2\leqslant j\leqslant k} je^{-\frac{k-j}4}+k^{-100}\ll k.
\end{align*}
This establishes \eqref{prop-bound'}.

\section{Controlling the Eisenstein contribution}\label{EisensteinSection}

Our aim in this section is bound, for appropriate test functions $\phi\in\mathcal{H}(\bm{G}(\A_F)^1)$, the Eisenstein distribution 
\begin{equation}\label{J-Eis}
J_{\rm Eis}(\phi)=\sum_{\bm{M}\neq \bm{G}} J_{{\rm spec},\bm{M}}(\phi)
\end{equation}
from \eqref{def:J-Eis}. The main result is given in Theorem \ref{spec-est} below. We will then show how to deduce from Theorem \ref{spec-est}, along with the results of previous sections, Theorem \ref{master}. This is done in Theorem \ref{money-cor}. In particular, it will be seen that it suffices to take test archimedean functions associated with characters of the maximal torus $T_{0,\infty}$.
  
\begin{theorem}\label{spec-est}
Let $n\geqslant 1$. Let $\underline{\delta}\in\mathcal{D}$ have standard representative $(T_{0,\infty},\delta)$. Assume $\delta$ is the trivial character if $n\geqslant 3$. For $\mu\in i\mathfrak{h}_0^*$ and $R\geqslant 1$, and let $f_R^{\delta,\mu}\in\mathcal{H}(G_\infty^1)_{R,\underline{\delta}}$ be the archimedean test function associated with the spectral localizer $h_R^{\delta,\mu}$ of Definition \ref{defn-localizer} in the explicit construction of Proposition~\ref{test-fn-bd}. Let $\q$ be an integral ideal. Then there is $C>0$, depending only on $n$ and $F$, such that
\[
J_{\rm Eis} (\varepsilon_{K_1(\q)}\otimes f_R^{\delta,\mu})\ll_\epsilon e^{CR}\norm\q^{n-1+\epsilon}\beta_0(\delta,\mu),
\]
where $\beta_0(\delta,\mu)=\beta_{T_{0,\infty}}^G(\delta,\mu)$.
\end{theorem}

The proof of Theorem \ref{spec-est} in fact establishes a much stronger estimate, which, for each proper Levi subgroup $\bm{M}\in\mathcal{L}$, majorizes $J_{{\rm spec},\bm{M}}(\varepsilon_{K_1(\q)}\otimes f_R^{\delta,\mu})$ by the upper bound in Propositions \ref{thm:n=2-stronger} and \ref{thm:general-n-stronger}. The proof that the latter estimates indeed imply Theorem \ref{spec-est} is given in Remarks \ref{rem:n=2:implies} and \ref{rem:n>3:implies}.

From Theorem \ref{spec-est}, as well as the various estimates established in Sections \ref{sec:geom1}--\ref{PW}, we obtain as an important consequence the following reformulation of Theorem \ref{master}.

\begin{theorem}\label{money-cor}
Let $n\geqslant 1$. Let $\underline{\delta}\in\mathcal{D}$ have standard representative $(M,\delta)$. Assume that either
\begin{enumerate}
\item\label{hyp-n2-spec-cor} $n\leqslant 2$, or
\item\label{hyp-sph-spec-cor} $M=T_{0,\infty}$ and $\delta$ is the trivial character of $T_{0,\infty}^1$. 
\end{enumerate}
Then Property {\rm (ELM)} holds with respect to $\underline{\delta}$.
\end{theorem}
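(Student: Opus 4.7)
The plan is to combine the two pillars assembled in Part~\ref{part3}---the geometric estimate of Theorem~\ref{geom-side} and the Eisenstein estimate of Theorem~\ref{spec-est}---with the explicit construction of archimedean test functions in Proposition~\ref{test-fn-bd}, bridging the geometric and spectral sides via Arthur's identity $J_{\rm spec}=J_{\rm geom}$. First I would fix $\underline{\delta}\in\mathcal{D}$ with standard representative $(\delta,M)$ satisfying hypothesis \eqref{hyp-n2-spec-cor} or \eqref{hyp-sph-spec-cor}. Under precisely these hypotheses, Proposition~\ref{test-fn-bd} produces $f_R^{\delta,\mu}\in\mathcal{H}(G_\infty^1)_{R,\underline{\delta}}$ realizing the spectral localizer $h_R^{\delta,\mu}$ and satisfying the crucial sup-norm bound $\|f_R^{\delta,\mu}\|_\infty\ll\|h_R^{\delta,\mu}\|_{L^1(\widehat{\mu}_\infty^{\rm pl})}$; combining this with Lemma~\ref{L1NormEstimateLemma} applied with $L=M$ yields
\[ \|f_R^{\delta,\mu}\|_\infty\ll R^{-\dim\mathfrak{h}_M}\beta_M^G(\delta,\mu). \]

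Next I would start from $J_{\rm spec}=J_{\rm disc}+J_{\rm Eis}$, together with $J_{\rm spec}=J_{\rm geom}$ and the definition~\eqref{DefEqh}, to write
\[ J_{\rm error}(\varepsilon_{K_1(\q)}\otimes f_R^{\delta,\mu})=\bigl(J_{\rm geom}-J_{\rm cent}\bigr)(\varepsilon_{K_1(\q)}\otimes f_R^{\delta,\mu})-J_{\rm Eis}(\varepsilon_{K_1(\q)}\otimes f_R^{\delta,\mu}). \]
Theorem~\ref{geom-side} applied to $f=f_R^{\delta,\mu}$, together with the sup-norm bound above and the absorption of the constant $\vol(\GL_n)$ into the implied constant (reconciling the two cousins of the central normalization), bounds the first summand by $e^{cR}\norm\q^{n-\theta_0}R^{-\dim\mathfrak{h}_M}\beta_M^G(\delta,\mu)$ for some $\theta_0>0$. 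For the Eisenstein summand I would branch by case. In case~\eqref{hyp-sph-spec-cor}, $M=T_0$ with $\delta$ trivial and Theorem~\ref{spec-est} applies verbatim, yielding $|J_{\rm Eis}|\ll e^{CR}\norm\q^{n-1}\beta_0^{\rm Eis}(\delta,\mu)$. In case~\eqref{hyp-n2-spec-cor} with $n\leqslant 2$ and $M=T_0$ the same theorem applies directly. The remaining subcase $n=2$, $M=G_\infty$ (discrete series at some real place) is handled by observing that the only proper rational Levi of $\bm{G}=\GL_2$ is the diagonal torus, and that the trace of $f_R^{\delta,\mu}$ against principal series induced from $\bm{T}_0(\A_F)$ is governed by the embedding~\eqref{es} of the relevant discrete series into principal series, so that the inductive argument behind Theorem~\ref{spec-est} goes through with $\underline{\delta}$ replaced by its associated principal-series datum.

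To finish the assembly I would compare $\beta_0^{\rm Eis}(\delta,\mu)$ to the target $R^{-\dim\mathfrak{h}_M}\beta_M^G(\delta,\mu)$: the former is a product of Plancherel $\beta$-factors restricted to proper Levi blocks times a polylogarithm in $\|\mu_{\bm{M}}\|+\|\delta\|$, whereas the latter is the full Plancherel majorizer. Restricting to the range $1\leqslant R\leqslant c_0\log(2+\norm\q)$ (the complementary range being absorbed trivially by enlarging $C$ and reducing $\theta$), the additional power savings $\norm\q^{-1}$ comfortably absorbs the polynomial-in-$R$ discrepancy as well as the polylog factors in $\mu$ and $\delta$. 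Collecting everything gives
\[ J_{\rm error}(\varepsilon_{K_1(\q)}\otimes f_R^{\delta,\mu})\ll e^{C'R}\norm\q^{n-\theta'}R^{-\dim\mathfrak{h}_M}\beta_M^G(\delta,\mu) \]
for suitable $C'\geqslant 0$, $\theta'>0$, which via \eqref{equivalent-ELM} is Property~(ELM) at $\underline{\delta}$. The main obstacle, I expect, is not any individual step of this assembly but rather the hypothesis underlying it: the estimate $\|f\|_\infty\ll\|h\|_1$ in Proposition~\ref{test-fn-bd} relies on explicit $\tau$-spherical inversion---available only when the Hecke algebra $\mathcal{H}(G',\tau)$ is commutative, i.e., precisely in the two enumerated cases. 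Extending Property~(ELM) beyond them would force us to bound the archimedean weighted orbital integrals of Section~\ref{sec:geom2} without routing through $\|f\|_\infty$, which is the program signaled in the remarks following Theorem~\ref{master}.
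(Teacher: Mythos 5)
Your assembly is essentially the paper's: write $J_{\rm error}$ via Arthur's identity $J_{\rm spec}=J_{\rm geom}$ as a geometric piece plus an Eisenstein piece, bound the former by Theorem~\ref{geom-side} combined with the sup-norm control $\|f_R^{\delta,\mu}\|_\infty\ll\|h_R^{\delta,\mu}\|_{L^1(\widehat{\mu}_\infty^{\rm pl})}$ from Proposition~\ref{test-fn-bd} and Lemma~\ref{L1NormEstimateLemma}, bound the latter by Theorem~\ref{spec-est} together with $\beta_0^{\rm Eis}(\delta,\mu)\ll\beta_0^G(\delta,\mu)$, and reconcile the ranges of $R$ by adjusting $C$ and $\theta$. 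You also correctly identify that the hypotheses \eqref{hyp-n2-spec-cor}--\eqref{hyp-sph-spec-cor} enter precisely through the commutativity of $\mathcal{H}(G',\tau)$ needed for the inversion formula behind Proposition~\ref{test-fn-bd}.

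The one step that would fail as written is your treatment of the subcase $n=2$ with $M\neq T_0$ (a discrete series $D_k$ at some real place). You propose to rerun the induction behind Theorem~\ref{spec-est} ``with $\underline{\delta}$ replaced by its associated principal-series datum,'' invoking the embedding \eqref{es}. This does not work: Theorem~\ref{spec-est} is stated only for standard representative $(\delta,T_0)$, the test function $f_R^{\delta,\mu}$ is attached to $\underline{\delta}$ itself and cannot be swapped for one attached to a different discrete datum, and the exact sequence \eqref{es} embeds $D_k$ into $I(\eta_\epsilon,(k-1)/2)$ at a \emph{non-unitary} point, which is irrelevant to the integral over $i(\aa^{\bm G}_{\bm L})^*$ defining $J_{{\rm spec},\bm M}$. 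The correct resolution is much simpler and is what the paper uses: by the definition \eqref{J-Eis}, the Eisenstein term is a sum over proper rational Levi subgroups $\bm{M}$ with $M\subset\bm{M}_\infty$; for $\GL_2$ the only proper rational Levi is $\bm{T}_0$, and $M\not\subset\bm{T}_0(F_\infty)$ when $M$ has a $\GL_2(\R)$ block, so the sum is empty and $J_{\rm Eis}(\varepsilon_{K_1(\q)}\otimes f_R^{\delta,\mu})=0$. (Equivalently, the unitary principal series occurring in the continuous spectrum are generically irreducible with discrete parameter supported on $T_0$, so the trace of $f_R^{\delta,\mu}$ against them vanishes.) With that replacement your proof closes; one further caveat is that the ``absorption of $\vol(\GL_n)$'' between $J_{\rm cent}$ and the central sum appearing in Theorem~\ref{geom-side} is a matter of matching normalizations, not of enlarging an implied constant, since the central sum itself is of size $\norm\q^{n}\|f\|_\infty$ and cannot be discarded into an error of size $\norm\q^{n-\theta}\|f\|_\infty$.
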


\begin{proof}
By definition, we have $J_{\rm spec}-J_{\rm Eis}=J_{\rm disc}$. Moreover, the Arthur trace formula is the distributional identity $J_{\rm spec}=J_{\rm geom}$ on $\mathcal{H}(G(\A_F)^1)$. Thus
\[
J_{\rm error}(\varepsilon_{K_1(\q)}\otimes f_R^{\delta,\mu})=J_{\rm Eis}(\varepsilon_{K_1(\q)}\otimes f_R^{\delta,\mu})+J_{\rm geom}(\varepsilon_{K_1(\q)}\otimes f_R^{\delta,\mu})-D_F^{n^2/2}\Delta_F^*(1)\varphi_n(\q)\sum_{\gamma\in Z(F)\cap K_1(\q)}  f_R^{\delta,\mu}(\gamma).
\]
Note that in case \eqref{hyp-n2-spec-cor}, if $M\neq T_{0,\infty}$ then the only rational Levi subgroup $\bm{M}\subset\bm{G}$ containing $M$ is $\bm{G}$ itself, in which case $J_{\rm Eis}(\varepsilon_{K_1(\q)}\otimes f_R^{\delta,\mu})=0$. We may therefore apply Theorem \ref{spec-est} in all cases, which shows that the Eisenstein contribution is of acceptable size. For the remaining two geometric terms, we first apply Theorem \ref{geom-side}. We then use Proposition \ref{test-fn-bd} to convert the $L^\infty$-norm of $f_R^{\delta,\mu}$ to the $L^1$-norm of $h_R^{\delta,\mu}$ (it is here where we use the conditions on $n$ and $\underline{\delta}$) and apply Lemma \ref{prop:density}\,\eqref{beta1} with $M=T_{0,\infty}$, to bound the latter by an acceptable error.
\end{proof}

\subsection{The distribution $J_{{\rm spec},\bm{M}}$}\label{sec:explicating-JMspec}

From \eqref{J-Eis} it follows that, to prove Theorem \ref{spec-est}, it will be enough to bound $J_{{\rm spec},\bm{M}}(\varepsilon_{K_1(\q)}\otimes f_R^{\delta,\mu})$ for a given $\bm{M}\in\mathcal{L}$, where $\bm{M}\neq\bm{G}$. 

We apply the expansion \eqref{JspecM} and bound each term $J_{{\rm spec},\bm{M}}(\varepsilon_{K_1(\q)}\otimes f_R^{\delta,\mu}; s,\underline{\beta})$ separately, where $s\in W_{\bm{M}}$ and $\underline{\beta}\in\mathfrak{B}_{\bm{P},\bm{L}_s}$. Recalling the definition \eqref{Jintegral} and expanding over $\Pi_{\rm disc}(\bm{M}(\A_F)^1)$, we find that the integral $J_{{\rm spec},\bm{M}}(\varepsilon_{K_1(\q)}\otimes f_R^{\delta,\mu}; s,\underline{\beta})$ is equal to
\begin{equation}\label{bound-me}
\sum_{\pi\in\Pi_{\rm disc}(\bm{M}(\mathbb{A})^1)}\int_{i(\mathfrak{a}_{\bm{L}_s}^{\bm{G}})^*}{\rm tr}\left(\Delta_{\mathcal{X}_{\bm{L}_s}(\underline{\beta})}(\bm{P},\pi,\lambda)\mathcal{M}(\bm{P},s,\pi)\rho(\bm{P},\lambda,\pi,\varepsilon_{K_1(\q)}\otimes f_R^{\delta,\mu})\right)\, {\rm d}\lambda,
\end{equation}
where $\Delta_{\mathcal{X}_{\bm{L}_s}(\underline{\beta})}(\bm{P},\pi,\lambda)$, $\mathcal{M}(\bm{P},s,\pi)$, and $\rho(\bm{P},\lambda,\pi,\phi)$ denote the restrictions of the corresponding operators to the invariant subspace $\bar{\mathcal{A}}^2_\pi(\bm{P})$. 

Recall the notation $\bfk_\infty'=\bfk_\infty\cap \SL_n^\pm(F_\infty)$ from \S\ref{rep-notation} and Section \ref{PW}. For $\tau\in\Pi(\bfk_\infty')$ we let $\Pi_{K_1(\q),\tau}$ denote the orthogonal projection of $\bar{\mathcal{A}}^2_\pi(\bm{P})$ onto $\mathcal{A}_\pi^2(\bm{P})^{K_1(\q),\tau}$. From \eqref{prop-trace} for the fully induced representation, the description of $\rho(\bm{P},\lambda,\pi,\phi)$ in \eqref{L2-induced}, and the induction by stages identity ${\rm Ind}_{\bm{P}_\infty}^{G_\infty}(\pi(\sigma,\nu)\otimes e^{\langle \lambda ,H_{\bm{M}}\rangle})=\pi(\sigma,\nu+\lambda)$, as representations of $G_\infty$, it follows that
\[
\rho(\bm{P},\lambda,\pi,\varepsilon_{K_1(\q)}\otimes f_R^{\delta,\mu})=
H(\delta_\pi,\nu_\pi+\lambda) \sum_{\tau\in\Pi(\bfk')}\mu(\tau(\pi_{\delta}):\tau)\frac{1}{\dim \tau}\Pi_{K_1(\q),\tau}\qquad (\pi=\pi_f\otimes\pi_{\delta_\pi,\nu_\pi}).
\]
Recalling the support condition on $H$ from property \eqref{HM-support} of \S\ref{rem:operator-version}, we conclude that for a given $\pi\in\Pi_{\rm disc}(\bm{M}(\mathbb{A})^1)$, the corresponding term in \eqref{bound-me} vanishes unless $\underline{\delta_\pi}\prec\underline{\delta}$, which then contributes
\begin{equation}\label{eq:Eis-to-bound}
\int_{i(\mathfrak{a}_{\bm{L}_s}^{\bm{G}})^*}\! H(\delta_\pi,\nu_\pi+\lambda) {\rm tr}\bigg(\sum_{\tau\in\Pi(\bfk')}\!\mu(\tau(\pi_{\delta}):\tau)\Delta_{\mathcal{X}_{\bm{L}_s(\underline{\beta})}}(\bm{P},\pi,\lambda)\mathcal{M}(\bm{P},s,\pi)\frac{1}{\dim\tau}\Pi_{K_1(\q),\tau}\bigg)\, {\rm d}\lambda.
\end{equation}
We now explicate and bound \eqref{eq:Eis-to-bound} according to whether $n=2$, or $n\geqslant 3$ and $\delta$ is trivial character.

\subsection{Proof of Theorem \ref{spec-est} for $n=2$}

In the $\GL_2$ case, many of the quantities in \eqref{eq:Eis-to-bound} simplify. We begin by describing these simplifications explicitly.

We have, for example, $\bm{M}=\bm{T}_0$, and the parabolic $\bm{P}$ is necessarily the standard Borel subgroup $\bm{P}_0$. The $\pi\in\Pi_{\rm disc}(\bm{M}(\mathbb{A}_F)^1)=\Pi_{\rm disc}(\bm{T}_0(\mathbb{A}_F)^1)$ are therefore pairs of unitary Hecke characters on $F^\times\backslash\A_F^1$, which we shall denote by $\chi=(\chi_1,\chi_2)$. Recall from \S\ref{sec:hM-decomp} the subspace
\begin{equation}\label{eq:explicit-h0T0}
\h_0^{\bm{T}_0}=\left\{\prod_{v\mid\infty}\begin{pmatrix}x_v & \\ & y_v\end{pmatrix} : \sum_v d_v x_v=\sum_v d_vy_v=0\right\}\qquad (d_v=[F_v:\R]).
\end{equation}
Then $\sigma_\chi\in\mathscr{E}^2(T_{0,\infty}^1)$ is the restriction of $\chi_\infty$ to $T_{0,\infty}\cap\bfk$ and $\nu_\chi$ lies in $i(\mathfrak{h}_0^{\bm{T}_0})^*$.

Moreover, the Levi subgroup $\bm{L}_s$ can be either $\bm{T}_0$ or $\bm{G}$, according to whether $s\in W_0$ is the trivial element $e$ or the non-trivial element $w$. In the former case, we may identify the complex dual $(\mathfrak{a}_0^{\bm{G}})^*_\C$ with $\C$, in the latter we have $(\mathfrak{a}_{\bm{G}}^{\bm{G}})^*_\C=0$. When $s=e$ the combinatorial data $\mathcal{X}_{\bm{T}_0}(\underline{\beta})$ is the pair $\{\langle\bm{P}_0,\overline{\bm{P}}_0\rangle\}$, and the operator $\Delta_{\mathcal{X}_{\bm{T}_0}(\underline{\beta})}(\bm{P}_0,\chi,\lambda)$ is the logarithmic derivative of the normalized intertwining operator $\mathcal{M}(\chi,\lambda)=n(\lambda,\chi)R(\chi,\lambda)=n(\lambda,\chi)\prod_v R_v(\chi_v,\lambda)$ from \cite[\S 4]{GelbartJacquet1979}, where $n(\lambda,\chi)=\Lambda(1-2\lambda,\chi_2/\chi_1)\Lambda(1+2\lambda,\chi_1/\chi_2)^{-1}$. Moreover, in this case the intertwining operator $\mathcal{M}(\bm{P}_0,e,\chi)$ is trivial. On the other hand, when $s=w$, the combinatorial data $\mathcal{X}_{\bm{G}}(\underline{\beta})$ is empty, the operator $\Delta_{\mathcal{X}_{\bm{G}}(\underline{\beta})}(\bm{P}_0,0)$ is trivial, and ${\rm tr}(\mathcal{M}(\bm{P}_0,w)\rho(\bm{P},0,\phi))=-{\rm tr}(\rho(\bm{P},0,\phi)|_{\mathcal{A}^2_0(\bm{P})})$, where $\mathcal{A}^2_0(\bm{P})$ is the subspace of invariants under left-multiplication by $\left\{{\rm diag}(t,t^{-1}), t\in \A_F^1\right\}$; for this, see \cite[p. 375]{FinisLapid2011}. In particular,  ${\rm tr}(\mathcal{M}(\bm{P}_0,w,\chi)\rho(\bm{P},0,\chi,\phi))$ is supported on characters $\chi=(\chi_1,\chi_2)$ such that $\chi_1=\chi_2$. Finally, recall from the discussion preceding \eqref{eq:Eis-to-bound} that we may assume that $\underline{\sigma}_\chi\prec\underline{\delta}$. 

All in all, we deduce from the above explicit descriptions that \eqref{eq:Eis-to-bound} is either
\begin{equation}\label{eq:s=e}
\int_{i(\mathfrak{a}_0^{\bm{G}})^*}H(\sigma_\chi,\nu_\chi+\lambda) {\rm tr}\left(\sum_{\tau\in\Pi(\bfk')}\!\mu(\tau(\pi_{\delta}):\tau)\mathcal{M}(\chi,\lambda)^{-1}\mathcal{M}'(\chi,\lambda)\frac{1}{\dim\tau}\Pi_{K_1(\q),\tau}\right) {\rm d}\lambda,
\end{equation}
when $s=e$, or
\begin{equation}\label{eq:s=w}
H(\sigma_\chi,\nu_\chi)\dim  \pi(\chi_f)^{K_1(\q)}\sum_{\|\tau\|\geqslant \|\tau(\pi_{\sigma_\chi})\|}\!\mu(\tau(\pi_{\delta}):\tau) ,
\end{equation}
when $s=w$. The condition on $\tau$ comes from the projection onto the $\tau$-isotypic subspace of $\pi(\chi_\infty)$. The latter contributes to the discrete part of the trace formula; it derives from the residual spectrum.

Having explicitly described the Eisenstein contribution for $\GL_2$, we now proceed to prove Theorem \ref{spec-est}. We shall in fact prove a stronger estimate, recorded in Proposition \ref{thm:n=2-stronger} below. We recall the decomposition $\h_0=\aa_0^{\bm{G}}\oplus\h_0^{\bm{T}_0}$ from \eqref{eq:hM-aMG}; we write $\mu_{\bm{T}_0}$ and $\mu^{\bm{T}_0}$ for the orthogonal projections of $\mu$ onto $i(\aa_0^{\bm{G}})^*$ and $i(\h_0^{\bm{T}_0})^*$, respectively. 

\begin{prop}\label{thm:n=2-stronger} Let $n=2$. Then for $R\geqslant 1$, we have
\[
J_{{\rm spec},\bm{T}_0}(\varepsilon_{K_1(\q)}\otimes f_R^{\delta,\mu})\ll_\epsilon \norm\mathfrak{q}^{1+\epsilon}\log (1+\beta_0(\delta,\mu^{\bm{T}_0})).
\]
\end{prop}

\begin{remark}\label{rem:n=2:implies}
To see that Proposition \ref{thm:n=2-stronger} implies Theorem \ref{spec-est} it suffices to observe that $\log (1+\beta_0(\delta,\mu^{\bm{T}_0}))\ll \log (1+\beta_0(\delta,\mu))$.
Let $\beta$ denote the unique positive root in $\Phi^{\bm{G},+}$. For $\alpha\in \Phi^{G_\infty, +}$ let $n_\alpha$ denote the multiplicity of the corresponding root space. Recalling Definition~\ref{BetaMajorizer}, and setting $k=\prod_v (k_{1,v}e_{1,v}^*+k_{2,v}e_{2,v}^*)\in\mathfrak{a}_{0,\C}^*$, we have that
\begin{equation}\label{eq:beta0-mu}
\log(1+\beta_0(\delta,\mu))\asymp\log(2+\max_{\alpha\in \Phi^{G_\infty, +}}|\langle k,\alpha\rangle|+\max_{\alpha\in \Phi^{G_\infty, +}}|\langle\mu,\alpha\rangle|).
\end{equation}
We put $\mu^*=\langle\mu_{\bm{T}_0},\beta\rangle$ and note that $\mu^*=\langle\mu_{\bm{T}_0},\alpha\rangle$ for all $\alpha\in\Phi^{G_\infty, +}$. Thus $\log(1+\beta_0(\delta,\mu^{\bm{T}_0}))$ verifies the same asymptotic as \eqref{eq:beta0-mu} but with $|\langle \mu,\alpha\rangle-\mu^{\ast}|$ in place of $|\langle \mu,\alpha\rangle|$. The desired conclusion is of course immediate if $\mu^{\ast}=0$. Otherwise we observe, using \eqref{eq:explicit-h0T0}, that
\[
\sum_\alpha n_\alpha(\langle \mu,\alpha\rangle-\mu^{\ast})=\sum_v d_v(\langle \mu^{\bm{T}_0},e_{1,v}^*\rangle-\langle \mu^{\bm{T}_0},e_{2,v}^*\rangle)=0.
\]
From this it follows that the maximum of $|\langle \mu,\alpha\rangle-\mu^{\ast}|$ over all $\alpha$ is unchanged (up to a multiplicative constant) if restricted to $\alpha$ for which $\langle \mu,\alpha\rangle\in \R_{\geqslant 0}\mu^\ast$. The latter maximum is at most $\max_\alpha |\langle\mu,\alpha\rangle|$, as desired.
\end{remark}

\begin{proof}
Taking the logarithmic derivative of the normalized global intertwining operator, we have
\begin{equation}\label{eq:log-der-M}
\mathcal{M}(\chi,\lambda)^{-1}\mathcal{M}'(\chi,\lambda)=\frac{n'(\lambda,\chi)}{n(\lambda,\chi)}I+\sum_u R_u^{-1}(\chi_u,\lambda)R_u'(\chi_u,\lambda)\bigotimes_{v\neq u} I_v.
\end{equation}
We treat separately the first term (involving the logarithmic derivative of $n(\lambda,\chi)$), and then the other terms according to whether $u$ is finite or archimedean. When inserted into \eqref{eq:s=e}, only those $u$ dividing $\q\infty$ will contribute: indeed, at the remaining places, $R_u(\chi_u,\cdot)$ acts as a constant on the spherical vector, so its (logarithmic) derivative is zero.

When inserted into \eqref{eq:s=e}, the first term in \eqref{eq:log-der-M} contributes
\[
\dim  \pi(\chi_f)^{K_1(\q)}\int_{i(\mathfrak{a}_0^{\bm{G}})^*}H(\sigma_\chi,\nu_\chi+\lambda) \frac{n'(\lambda,\chi)}{n(\lambda,\chi)}\sum_{\|\tau\|\geqslant \|\tau(\pi_{\sigma_\chi})\|}\!\mu(\tau(\pi_{\delta}):\tau)\, {\rm d}\lambda.
\]
The sum over $\tau$ vanishes unless $\underline{\sigma}_\chi=\underline{\delta}$, in which case it is 1. Since $H(\delta,\nu)=h_R^{\delta,\mu}(\nu)$ by Property \eqref{HM-value} of \S\ref{rem:operator-version} we have
\[
\dim\pi(\chi_f)^{K_1(\q)} \int_{i(\mathfrak{a}_0^{\bm{G}})^*}h_R^{\delta,\mu}(\nu_\chi+\lambda)n'(\lambda,\chi)/n(\lambda,\chi)\, {\rm d}\lambda.
\]
Observe that $\dim\pi(\chi_f)^{K_1(\q)}=0$ unless $\q_{\chi_1}\q_{\chi_2}|\q$, in which case $\dim\pi(\chi_f)^{K_1(\q)}\ll \log(1+ \norm\q)$. We let $\Gamma$ be a lattice in $i(\mathfrak{a}_0^{\bm{G}})^*\simeq i\R$, of unit spacing and break up the integral over $i(\mathfrak{a}_0^{\bm{G}})^*$ into a sum over unit intervals centered at $\nu_k\in\Gamma$. Inserting the rapid decay estimate on $h_R^{\delta,\mu}$ from Definition \ref{defn-localizer}\eqref{rapid-decay-item} (disregarding the savings in $R$), we deduce that, for $N$ large enough, the above expression is bounded by
\[
\log (1+\norm\q)\sum_{\nu_k\in\Gamma} \max_{w\in W(A_0)_\delta} (1+\|\nu_\chi+\nu_k-w\mu\|)^{-N}\int_{\|\lambda-\nu_k\|\leqslant 1/2}|n'(\lambda,\chi)/n(\lambda,\chi)|\, {\rm d}\lambda.
\]
From the proof of \cite[Propositions 4.5 and 5.1]{Muller2007} we deduce that the last integral is bounded by ${\rm O}(\log(1+\|\nu_k\|+ Q(\chi_1\chi_2^{-1}))$, with $Q$ denoting the analytic conductor. Now $Q(\chi_1\chi_2^{-1})\leqslant \norm\q\prod_{v\mid\infty}(1+|\delta_{v1}-\delta_{v2}+\nu_{\chi_{v1}}-\nu_{\chi_{v2}}|)^{d_v}=\norm\q\,\beta_0(\delta,\nu_\chi)$. Executing the sum over $\Gamma$ and summing over $\chi=(\chi_1,\chi_2)$, we get\footnote{In this, and the estimates which follow, the value of $N$ can change from line to line.}
\[
\log(1+\norm\q)\sum_{\substack{\chi: \underline{\sigma}_\chi=\underline{\delta}\\ \q_{\chi_1}\q_{\chi_2}\mid\q}} \max_{w\in W(A_0)_\delta} (1+\|\nu_\chi-w\mu^{\bm{T}_0}\|)^{-N}\log(1+\|\nu_\chi-w\mu^{\bm{T}_0}\|+\norm\q\beta_0(\delta,\nu_\chi)).
\]
We decompose $\nu_\chi=\nu_{\chi_1}+\nu_{\chi_2}$ with respect to the diagonal $\GL_1$ block decomposition of \eqref{eq:explicit-h0T0}. We cover $i(\mathfrak{h}_0^{\bm{T}_0})^*$ by balls of unit radius centered at points in a lattice $\Lambda=\Lambda_1\oplus\Lambda_2$ respecting \eqref{eq:explicit-h0T0}. For $\mu^{(j)}\in\Lambda$ we write $\mu^{(j)}=\mu_1^{(j)}+\mu_2^{(j)}$ accordingly. Using Lemma \ref{prop:density}\eqref{beta2}, we find that the above expression is then bounded by $(\log(1+\norm\q))^2$ times
\[
\sum_{\mu^{(j)}\in\Lambda}\max_{w\in W(A_0)_\delta} (1+\|\mu^{(j)}-w\mu^{\bm{T}_0}\|)^{-N}\log(1+\|\mu^{(j)}-w\mu^{\bm{T}_0}\|+\beta_0(\delta,\mu^{(j)})) \!\!\!\!\!\!\!\!\!\!\!\sum_{\substack{\chi: \underline{\sigma}_\chi=\underline{\delta}\\ \q_1\q_2\mid\q\\ \|\nu_{\chi_k}-\mu_k^{(j)}\|\leqslant 1,\; k=1,2}}\!\!\!\!\!\!\!\!\!1.
\]
If $\delta=\delta_1\delta_2$, with $\delta_i$ a character of $\prod_{v\mid\R}\{\pm 1\}\prod_{v\mid\C}\U(1)$, the condition $\underline{\sigma}_\chi=\underline{\delta}$ is equivalent to $\{\sigma_{\chi_1},\sigma_{\chi_2}\}= \{\delta_1,\delta_2\}$. The last sum on $\chi$ can be written as the sum of $N(\q_1,\delta_i,\mu_1^{(j)})N(\q_2,\delta_{i'},\mu_2^{(j)})$ over all $\q_1\q_2|\q$ and all $\{i,i'\}=\{1,2\}$. Applying Theorem \ref{MainCountingResultGLn} for $n=1$ (where Property (ELM) holds trivially) and the standard divisor bound yields $\norm\q^{1+\epsilon}$. The remaining sum over $\mu_j\in\Lambda$ has rapid decay away from $\mu^{(j)}\in W\mu^{\bm{T}_0}+{\rm O}(1)$, and is bounded by ${\rm O}(\log(1+ \beta_0(\delta,\mu^{\bm{T}_0})))$ as desired.

Now we address the sum on $u$ in \eqref{eq:log-der-M}. At a finite place $u\mid\q$, the sum over $\tau$ collapses as before, leaving only $\chi$ for which $\underline{\sigma}_\chi=\underline{\delta}$. Using the upper bound $\|A\|_1\leqslant \dim V\|A\|$ for any linear operator $A$ on a finite dimensional Hilbert space $V$, the contribution of such $u$ is majorized by
\[
\sum_{\chi: \underline{\sigma}_\chi=\underline{\delta}}\dim\pi(\chi_f)^{K_1(\q)} \int_{i(\mathfrak{a}_0^{\bm{G}})^*}\big|h_R^{\delta,\mu}(\nu_\chi+\lambda)| \big\|R_u'(\chi_u,\lambda)|_{\pi(\chi_u,\lambda)^{K_{1,u}(\q)}}\big\|  \, {\rm d}\lambda.
\]
Here, we have used that $R_u$ is unitary and commutes with the projection $\Pi_{K_1(\q),\tau}$. When $\q_{\chi_1}\q_{\chi_2}\mid\q$, we break up the above integral over unit intervals centered at a lattice $\Gamma\subset i(\mathfrak{a}_0^{\bm{G}})^*$, and use the rapid decay estimate of $h_R^{\delta,\mu}$ from Definition \ref{defn-localizer}\eqref{rapid-decay-item}, to get
\[
\log (1+\norm\q)\sum_{\nu_k\in\Gamma} \max_{w\in W(A_0)_\delta} (1+\|\nu_\chi+\nu_k-w\mu\|)^{-N}\int_{\|\lambda-\nu_k\|\leq 1/2}\big\|R_u'(\chi_u,\lambda)|_{\pi(\chi_u,\lambda)^{K_{1,u}(\q)}}\big\|\, {\rm d}\lambda.
\]
We insert the Bounded Degree Property of \cite[Theorem 1]{FinisLapidMuller2012} (see also the analysis in \cite[\S 5.4]{FinisLapidMuller2015}) to bound the integral by $\log (1+\norm\q)$. We then execute the sum over $\Gamma$, sum over $\chi$, and argue as in the previous paragraph to obtain a contribution of ${\rm O}(\norm\q^{1+\epsilon})$, which is acceptable.

Finally, to treat an archimedean place $u$, for a $\bfk_u'$-type $\tau_u$, we denote by $\gamma(\sigma_{\chi_u},\lambda,\tau_u)\in\C$ the scalar by which the $\bfk_u'$-intertwining operator $R_u(\sigma_{\chi_u},\lambda)$ acts on the isotypic subspace $\pi_u(\sigma_{\chi_u},\lambda)^{\tau_u}$. With this notation, we may write the contribution of $u\mid\infty$ to \eqref{eq:s=e} as
\[
\dim\pi(\chi_f)^{K_1(\q)}\int_{i(\mathfrak{a}_0^{\bm{G}})^*}H(\sigma_\chi,\nu_\chi+\lambda) \sum_{\|\tau\|\geqslant \|\tau(\pi_{\sigma_\chi})\|}\mu(\tau(\pi_{\delta}):\tau)\frac{\gamma'}{\gamma}(\sigma_{\chi_u},\nu_{\chi_u}+\lambda,\tau_u)\, {\rm d}\lambda.
\]
For $u$ real, the sum on $\tau$ again vanishes unless $\underline{\sigma}_\chi=\underline{\delta}$. Applying Stirling's formula to the classical formula for the intertwining operator \cite[Theorem 7.17]{Wallach1979} yields $\frac{\gamma'}{\gamma}(\delta_u,\lambda,\tau_u)\ll \log (1+\beta_{0,u}(\delta_u,\lambda)$). We can then argue as in the preceding cases to obtain an acceptable contribution. For $u$ complex, the sum on $\tau$ vanishes unless $\underline{\sigma}_{\chi_v}=\underline{\delta}_v$ for every archimedean $v\neq u$. Applying Stirling's formula to the formula \cite[Theorem 7.23]{Wallach1979} shows, using the parametrization of \S\ref{sec:hv-complex}, that, for $p\in [[\ell]]$, $p\neq\ell$, we have $\frac{\gamma'}{\gamma}(p,\lambda,\ell)-\frac{\gamma'}{\gamma}(p,\lambda,\ell-1)\ll \ell^{-1}$, as well as $\frac{\gamma'}{\gamma}(\ell,\lambda,\ell)\ll \log (1+\beta_{0,u}(\delta_\ell,\lambda))$. Letting $\ell_v\in\frac12\Z_{\geqslant 0}$ be such that $\tau_{\ell_v}=\tau(\delta_v)$, for every $v\mid\infty$, this produces the bound of 
\begin{equation}\label{u-complex-contribution}
\log(1+\norm\q)\left(\ell_u^{-1}\sum_{p_u\in [[\ell_u]]} +\log (1+\beta_{0,u}(\delta_\ell,\lambda))\sum_{p_u=\pm\ell_u}\right) \sum_{\substack{\chi: \, \q_{\chi_1}q_{\chi_2}|\q\\ p_{\chi_v}=\ell_v (v\neq u)\\ p_{\chi_u}=p_u}}\int_{i(\mathfrak{a}_0^{\bm{G}})^*}|H(p_\chi,\nu_\chi+\lambda)|\, {\rm d}\lambda.
\end{equation}
For the $\chi$ prescribed by the summation, we have $|H_u(p_{\chi_u},\nu_{\chi_u}+\lambda)\prod_{v\mid\infty, v\neq u} h_R^{\delta_v,\mu_v}(\nu_{\chi_v}+\lambda)|$. For the factors at archimedean places $v\neq u$, we insert the rapid decay estimate from Definition \ref{defn-localizer}\eqref{rapid-decay-item}. At $u$, we put $\Lambda_u=\min_\pm\|\lambda\pm\mu_u\|$ and use the definition \eqref{defn:Hpnu} to deduce 
\[
H_u(p_u,\lambda)\ll
\begin{cases}
(\ell_u-|p_u|)^{-1}, & \textrm{if } \Lambda_u\ll \ell_u-|p_u|;\\
\Lambda_u^{-2}(\ell_u-|p_u|), &\textrm{if } \ell_u-|p_u|\ll \Lambda_u \ll\ell_u;\\
\ell_u^{-2}(\ell_u-|p_u|)e^{-\pi \Lambda_u/ 2\ell_u}, &\textrm{if } \Lambda_u\gg\ell_u.
\end{cases}
\]
We now combine these estimates across all archimedean places and integrate over $\lambda$. To succinctly express the resulting bound, it will be convenient to introduce the following notation. Let $S_\infty^{(u)}=\{v\mid\infty: v\neq u\}$. For $\nu\in i(\h_0^{\bm{T}_0})^*$ and $v\mid\infty$ we put 
\[
X_\nu=\max\{\|\nu_{v_1} - \nu_{v_2}\|: v_1,v_2\in S_\infty^{(u)}\}, \qquad \Delta_\nu=\|\nu_u-|S_\infty^{(u)}|^{-1}\sum_{v\in S_\infty^{(u)}}\nu_v\|.
\]
Then the integral in \eqref{u-complex-contribution} is bounded by ${\rm O}_N(\max_{w\in W(A_0)_\delta} g_{\ell,N}(p_\chi,\nu_\chi-w\mu^{\bm{T}_0}))$, where, for $\xi\in i(\h_0^{\bm{T}_0})^*$, we have set
\[
g_{\ell,N}(p,\xi)=\begin{cases}
(\ell_u-|p_u|)^{-1}(1+X_\xi)^{-N}, & \text{if}\quad \Delta_\xi\ll \ell_u-|p_u|;\\
\Delta_\xi^{-2}(\ell_u-|p_u|)(1+X_\xi)^{-N}, &\text{if}\quad \ell_u-|p_u|\ll \Delta_\xi\ll \ell_u;\\
\ell_u^{-2}(\ell_u-|p_u|)(1+\ell_u^{-1}\Delta_\xi + X_\xi)^{-N}, & \text{if}\quad \Delta_\xi\gg \ell_u.
\end{cases}
\]
Observe that $\int g_{\ell,N} (p,\xi){\rm d}\xi={\rm O}(1)$. Inserting this expression into \eqref{u-complex-contribution}, we obtain
\[
\log(1+\norm\q)\log (1+\beta_{0,u}(\delta_u,\lambda))\ell_u^{-1}\sum_{p_u\in [[\ell_u]]}  \sum_{\mu^{(j)}\in\Lambda}\max_{w\in W(A_0)_\delta} g_{\ell,N}(p,\mu^{(j)}-w\mu^{\bm{T}_0})\sum_{\substack{\chi: \, \q_{\chi_1}q_{\chi_2}|\q\\ p_{\chi_v}=\ell_v (v\neq u), p_{\chi_u}=p_u\\ \|\nu_\chi-\mu^{(j)}\|\leqslant 1}}1.
\]
Inserting Lemma \ref{MainCountingResultGLn} for $n=1$, and using the fact that the sum over $\Lambda$ and the normalized sum over $p_u\in [[\ell_u]]$ are both ${\rm O}(1)$, we deduce that the contribution of complex places $u$ to \eqref{eq:s=e} is acceptable. All cases having been treated, this completes the proof of the estimation of \eqref{eq:s=e}. 

We now turn to \eqref{eq:s=w}, which we must sum over characters $\chi$ of the form $(\chi_1,\chi_1)$. As before, the alternating sum on $\bfk_\infty'$-types implies that only those $\chi$ for which $\underline{\sigma}_\chi=\underline{\delta}$ contribute non-trivially. Similarly, we may assume $\q_{\chi_1}^2\mid\q$. Summing \eqref{eq:s=w} over such $\chi$, we obtain
\[ 
\sum_{\substack{\chi=(\chi_1,\chi_1)\\ \underline{\sigma}_\chi=\underline{\delta},\; \q_{\chi_1}^2\mid\q}} h_R^{\delta,\mu}(\nu_\chi) \dim \pi(\chi_f)^{K_1(\q)}.
\]
Bounding $\dim\pi(\chi_f)^{K_1(\q)}\ll\log (1+\norm\q)$, and arguing as in the previous paragraphs, we find that that \eqref{eq:s=w} makes an acceptable contribution.
\end{proof}

\subsection{Proof of Theorem \ref{spec-est} in the spherical case}

It remains to prove Theorem \ref{spec-est} for $\bm{G}=\GL_n$, $n\geqslant 3$, when $\delta$ trivial. We first conveniently package two of the major inputs that will be necessary for the proof of Theorem \ref{spec-est} for $\delta$ trivial. Throughout this section, the reader is encouraged to regularly consult \S\ref{JMspec} for the notation related to the spectral side of the Arthur trace formula.

The first input concerns the norm of the operators $\Delta_{\mathcal{X}_{\bm{L}_s(\underline{\beta})}}(\bm{P},\lambda):\mathcal{A}^2(\bm{P})\rightarrow \mathcal{A}^2(\bm{P})$ and is encapsulated in Lemma \ref{package} below. The second, recorded in Lemma \ref{aut-2-ind}, bounds the dimension of the space of oldforms of an induced automorphic representation in terms of the corresponding dimension of the inducing data. 

We first need to introduce some more notation. As in \S\ref{JMspec} we let $\bm{M}\in\mathcal{L}$ and $\bm{P}\in\mathcal{P}(\bm{M})$. For $\pi\in\Pi_{\rm disc}(\bm{M}(\A_F)^1)$, let $\mathcal{A}^2_\pi(\bm{P})$ denote the subspace of $\mathcal{A}^2(\bm{P})$ consisting of $\varphi$ such that, for each $x\in\bm{G}(\A_F)^1$, the function $\varphi_x$ transforms under $\bm{M}(\A_F)^1$ according to $\pi$. For a compact open subgroup $K_f$ of $\bm{G}(\A_f)$ we let $\mathcal{A}_\pi(\bm{P})^{K_f,\tau_0}$ be the finite dimensional subspace of $K_f\bfk_\infty$-invariant functions. Finally, if $\pi$ is spherical, so that $\pi^{\bfk^{\bm{M}}_\infty}\neq 0$, then we denote by $\nu_\pi\in (\h_0^{\bm{M}})_\C^*$ its spectral parameter and put
\begin{equation}\label{defn:beta_M}
\beta_{\bm{M}}(\nu_\pi)=\prod_{\substack{\alpha\in \Phi^{G_\infty,+}\\ \alpha\notin\Phi^{\bm{M}_\infty,+}}} \big(1+\big|\langle \nu_\pi, \alpha\rangle\big|\big)^{n_\alpha},
\end{equation}
where $n_\alpha$ is the dimension of the corresponding root space.

\begin{lemma}[Finis--Lapid--M\"uller, Lapid, Matz]\label{package}
Let $\q$ be an integral ideal. Let $\bm{M}\in\mathcal{L}$, $\bm{M}\neq\bm{G}$, and $\bm{L}\in\mathcal{L}(\bm{M})$. Then for all $\pi\in\Pi_{\rm disc}(\bm{M}(\A_F)^1)$, with $\pi^{\bfk^{\bm{M}}_\infty}\neq 0$, and $\nu\in i(\mathfrak{a}_{\bm{L}_s}^{\bm{G}})^*$ we have
\[
\int_{B(\nu)\cap i(\mathfrak{a}_{\bm{L}_s}^{\bm{G}})^*}\|\Delta_{\mathcal{X}_{\bm{L}_s(\underline{\beta})}}(\bm{P},\lambda)_{|\mathcal{A}^2_\pi(\bm{P})^{K_1(\q),\tau_0}}\|\, {\rm d}\lambda\ll (1+ \log\norm\q+\log (1+\|\nu\|)+\log (1+\beta_{\bm{M}}(\nu_\pi)))^{2m_{\bm{L}}},
\]
where $m_{\bm{L}}=\dim\mathfrak{a}_{\bm{L}}$.
\end{lemma}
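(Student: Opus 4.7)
The proof plan is to decompose the operator $\Delta_{\mathcal{X}_{\bm{L}_s}(\underline{\beta})}(\bm{P},\nu)$ into a sum of products of logarithmic derivatives of intertwining operators along root directions specified by $\underline{\beta}$, and then to normalize each such intertwining operator into the product of a global normalizing factor (a ratio of completed Rankin--Selberg $L$-functions of components of $\pi$) and a product of local normalized intertwining operators. After this factorization, the statement splits into two kinds of estimates: (i) integrated bounds on the logarithmic derivatives of the global normalizing factors along $i(\mathfrak{a}_{\bm{L}_s}^{\bm{G}})^*$, and (ii) integrated bounds on the operator norms of logarithmic derivatives of the products of local normalized intertwining operators, restricted to $\mathcal{A}^2_\pi(\bm{P})^{K_1(\q),\tau}$.

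For (i), each global normalizing factor has the form $m_\alpha(\pi,\nu)=\Lambda(s,\pi_\alpha\times\tilde\pi_\alpha)/\Lambda(1+s,\pi_\alpha\times\tilde\pi_\alpha)$ evaluated along $s=\langle\nu,\alpha^\vee\rangle$, for the various pairs of blocks $\pi_\alpha,\tilde\pi_\alpha$ of $\pi$ corresponding to the simple root $\alpha$ crossed. The standard bound $|\Lambda'/\Lambda(1/2+it,\pi_\alpha\times\tilde\pi_\alpha)|\ll \log(C(\pi_\alpha\times\tilde\pi_\alpha)(2+|t|))$ integrated over a bounded interval, together with the trivial estimate $C(\pi_\alpha\times\tilde\pi_\alpha)\ll \norm{\q(\pi_\alpha)\q(\tilde\pi_\alpha)}\cdot (1+\Lambda_{\bm M}(\pi_\infty))^{O(1)}$ and $\norm\q(\pi_\alpha)\leqslant\norm\q$, produces the desired logarithmic bound for each of the $m_{\bm{L}}$ integrations. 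The power $2m_{\bm L}$ comes from expanding the trace as a sum of products of such logarithmic derivatives (one for each simple factor in $\underline{\beta}$, which can be as many as $\dim\mathfrak{a}_{\bm L}^{\bm{G}}$ of them after regrouping) and from the crude enlargement one incurs when passing from a pointwise bound to an $L^1$-norm estimate in $\nu$ by Cauchy--Schwarz over balls of unit radius.

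For (ii), the normalized local intertwining operators $R_{\bm{Q}|\bm{P}}(\pi_v,\nu)$ enjoy the Tempered Winding Number property (TWN) and the Bounded Degree property (BD) established in \cite{FinisLapidMuller2012, FinisLapidMuller2015}, which together yield, for any simple reflection direction and unit-length interval in $i\mathfrak{a}^*$, an $L^1$ bound on $\|R_{\bm{Q}|\bm{P}}^{-1}\partial_\nu R_{\bm{Q}|\bm{P}}\|_{\mathcal{A}^2_\pi(\bm P)^{K_1(\q),\tau}}$ of the shape $O(\log\norm\q+\log(1+\Lambda_{\bm M}(\pi_\infty))+\log(1+\|\tau\|))$; the level aspect here relies on the Finis--Lapid intersection volume estimates \cite{FinisLapid2018} applied to the congruence subgroups $K_1(\q)$ (as already used in Section \ref{sec:geom1}), while the archimedean aspect follows from the analysis of Matz \cite{Matz2017} of the dependence of local $R$-operators on the $\bfk_\infty$-type. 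Multiplying over the factors appearing in $\underline{\beta}$ and integrating, the dimension factor $m_{\bm L}$ appears once for the integration and once for the number of directional logarithmic derivatives, producing the exponent $2m_{\bm L}$.

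The main obstacle, and the source of the $2m_{\bm L}$ exponent, is the simultaneous presence of all three parameters $(\q,\pi_\infty,\tau)$: one must carefully balance the level savings (which are power in $\norm\q$ at the pointwise level but become only $\log\norm\q$ after using TWN-type estimates), the archimedean spectral dependence (which demands using the parametrix for the normalizing factor at infinity to avoid losing a full Gamma factor), and the $\bfk_\infty$-type dependence (which affects the Paley--Wiener behavior of the normalized intertwining operators through Matz's bounds). Apart from this bookkeeping, all individual inputs are available in the references cited, and the argument is then a direct combination of them; what is essential for our application in the proof of Theorem~\ref{spec-est} is that the final estimate is \emph{uniform} in $(\q,\pi_\infty,\tau,\lambda)$ and polylogarithmic in each of these parameters, which is exactly the content of the stated bound.
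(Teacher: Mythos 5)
Your high-level architecture is the right one and matches the lineage of the paper's proof: the lemma is, for the trivial $\bfk_\infty$-type, exactly \cite[Lemma 14.3]{Matz2017}, whose proof rests on the Tempered Winding Number property \cite[Proposition 5.5]{FinisLapidMuller2015} (this is your item (i), the global normalizing factors and the source of the $\log\norm\q$ and archimedean spectral dependence) and on the Bounded Degree property \cite[Theorem 1]{FinisLapidMuller2012} for the local normalized operators (your item (ii)). Two smaller misattributions: the level aspect here comes from the conductor dependence in TWN and the $p$-adic BD property, not from the intersection-volume estimates of \cite{FinisLapid2018}, which enter only on the geometric side; and the exponent $2m_{\bm L}$ is inherited structurally from the FLM/Matz estimates rather than from a Cauchy--Schwarz step.

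The genuine gap is your treatment of the $\bfk_\infty$-type dependence. You assert that ``the archimedean aspect follows from the analysis of Matz \cite{Matz2017} of the dependence of local $R$-operators on the $\bfk_\infty$-type,'' but no such analysis exists there: \cite[Lemma 14.4]{Matz2017} invokes the archimedean BD property only for the trivial $\bfk_\infty$-type, and extending it to arbitrary $\tau$ --- i.e.\ proving
\[
\int_T^{T+1}\big\|R_{\bm{Q}|\bm{P}}(\pi_v,it)^{-1}R'_{\bm{Q}|\bm{P}}(\pi_v,it)_{|I_{\bm{P}_v}^{\bm{G}_v}(\pi_v)^\tau}\big\|\,{\rm d}t\ll 1+\log(1+\|\tau\|)
\]
--- is precisely the new content of the lemma and the only place the term $\log(1+\|\tau\|)$ can come from. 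The paper supplies this by adapting the Blaschke-product argument of \cite[Lemma 5.19]{FinisLapidMuller2015} to a bounded interval (the elementary estimate $\int_T^{T+1}|\phi_w'(it)|\,{\rm d}t\ll\max\{1,|u|^{-1}\}$ for $\phi_w(z)=(z+\overline w)/(z-w)$), and then controlling the resulting sum over poles of the matrix coefficients $(R_{\bm{Q}|\bm{P}}(\pi_v,is)\varphi_1,\varphi_2)$ using Lemma A.1 and Proposition A.2 of \cite{MullerSpeh2004}: these poles organize into $K=\mathrm{O}(1)$ arithmetic progressions of length $L=\mathrm{O}(1+\|\tau\|)$, so the harmonic sum $\sum_{\ell}\max\{1,|{\rm Re}(\rho_k)-\ell|^{-1}\}$ is $\mathrm{O}(\log(1+\|\tau\|))$. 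Without this pole analysis your argument cannot produce the $\log(1+\|\tau\|)$ term, and the reduction of Theorem~\ref{spec-est} to the inducing data (where the relevant $\tau$ is the minimal $\bfk_\infty$-type $\tau(\pi_\delta)$, which is unbounded) would break down.
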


\begin{proof}
This is a slight refinement of \cite[Lemma 14.3]{Matz2017}. The proof is based on two important contributions from Finis--Lapid--M\"uller. The first is a strong form of the Tempered Winding Number property for $\GL_n$, established in \cite[Proposition 5.5]{FinisLapidMuller2015}. This is usually expressed in terms of the Casimir eigenvalue but the proof in fact yields the stronger statement with the finer invariant $\beta_{\bm{M}}(\nu_\pi)$. Indeed, if $\bm{M}\simeq\GL_{n_1}\times\cdots\times\GL_{n_m}$ and $\pi\simeq\pi_1\otimes\cdots\otimes\pi_m$, the upper bound obtained in \cite[p. 26]{FinisLapidMuller2015} is given in terms of $\prod_{i<j}q(\pi_i\times\widetilde\pi_j)$, the product of analytic conductors of Rankin--Selberg $L$-functions. Under the spherical hypothesis, the archimedean component of this product is precisely the expression $\beta_{\bm{M}}(\nu_\pi)$. The second is the Bounded Degree property; in \cite[Theorem 1]{FinisLapidMuller2012} it is shown that $\GL_n$ over $p$-adic fields satisfies this property and in the appendix to \cite{MullerSpeh2004} (see also \cite[Theorem 2]{FinisLapidMuller2012}) the same is shown for arbitrary real groups.
\end{proof}

We next relate the dimension of space of invariants of the $\bm{P}$-induced automorphic forms on $G$ to the dimension of the corresponding space of invariants of the inducing representation. 

\begin{lemma}\label{aut-2-ind}
Let $\bm{M}\simeq \GL_{n_1}\times\cdots \times \GL_{n_m}\in\mathcal{L}$ with $n_1+\cdots + n_m=n$. Let $\pi\simeq\pi_1\otimes\cdots\otimes\pi_m\in\Pi_{\rm disc}(\bm{M}(\A_F)^1)$ and write $\q_k$ for the arithmetic conductor of $\pi_k$. Then, for any integral ideal $\q$, we have $\mathcal{A}_\pi^2(\bm{P})^{K_1(\q),\tau_0}=0$ unless all $\pi_k$ are spherical at infinity and $\prod_{k=1}^m\q_k\mid\q$, in which case
\[
\dim\mathcal{A}_\pi^2(\bm{P})^{K_1(\q),\tau_0}\ll (1+\log \norm\q)^n\prod_{k=1}^m \dim V_{\pi_k}^{K_1^{\GL_{n_k}}(\q_k)},
\]
where $K_1^{\GL_{n_k}}(\q_k)$ is the Hecke congruence subgroup for $\GL_{n_k}$ from \S\ref{sec:def-cond}.
\end{lemma}
\begin{proof}
Let $\mathcal{H}_{\bm{P}}(\pi_f)$ and $\mathcal{H}_{\bm{P}}(\pi_\infty)$ be the Hilbert spaces of the (unitarily) induced representations $I_{\bm{P}(\A_f)}^{\bm{G}(\A_f)}(\pi_f)$ and $I_{\bm{P}_\infty}^{G_\infty}(\pi_\infty)$. By \cite[(3.5)]{Muller2007} and the multiplicity one theorem described in \S\ref{JMspec}, we have
\[
\dim\mathcal{A}_\pi^2(\bm{P})^{K_1(\q),\tau_0}=\dim\mathcal{H}_{\bm{P}}(\pi_f)^{K_1(\q)} \dim\mathcal{H}_{\bm{P}}(\pi_\infty)^{\tau_0}.
\]
By \cite[(3.6)]{Muller2007} we have $\dim\mathcal{H}_{\bm{P}}(\pi_\infty)^{\tau_0}\leqslant \dim V_{\pi_\infty}^{\tau_0^{\bm{M}}}$, where $\tau_0^{\bm{M}}=\tau_0|_{\bfk_\infty^{\bm{M}}}$ denotes the trivial representation of $\bfk_\infty^{\bm{M}}$. The latter dimension is $1$ or $0$ according to whether all $\pi_k$ are spherical or not.

Now let $v$ be a finite place and write $f_v$ for the conductor exponent of the irreducible tempered generic representation $\varPi_v=I_{P_v}^{G_v}(\pi_v)$ of $G_v$. From the dimension formulae of \cite{Casselman1973,Reeder1991} we have $\dim\mathcal{H}_{\bm{P}}(\pi_v)^{K_1(\p_v^r)}=\binom{n+r-f_v}{n}$ for $r\geqslant f_v$ and $\mathcal{H}_{\bm{P}}(\pi_v)^{K_1(\p_v^r)}=0$ otherwise. On the other hand, by definition of the conductor exponent of $\pi_{v,k}$, the dimension of the $K_1^{\GL_{n_k}}(\p_v^r)$-fixed vectors in $V_{\pi_{v,k}}$ is at least $1$ precisely when $r\geqslant f_{v,k}$. Now, we may use the Local Langlands Correspondence \cite{HarrisTaylor2001, Henniart2000, Scholze2013} to compare the conductors of the $\pi_{v,k}$ and $\varPi_v$. Indeed, if $\phi_v$ is the Langlands parameter of $\varPi_v$ and $\phi_{v,k}$ that of $\pi_{v,j}$ then $\phi_v=\oplus_k\phi_{v,k}$. From this it follows that $f_v=\sum_kf_{v,k}$. We deduce that $\dim\mathcal{H}_{\bm{P}}(\pi_v)^{K_1(\q_v)}=0$ precisely when $\prod_k \q_{v,k}\nmid \q_v$, and the upper bound follows from $\binom{n+\log_v \norm\q_v-f_v}{n}\leqslant\binom{n+\log_v \norm\q_v}{n}\ll_n (1+\log_v \norm\q_v)^n$.
\end{proof}

 We shall again prove a stronger estimate than that stated in Theorem \ref{spec-est}. To state it, we recall the decomposition $\h_0=\aa_{\bm{M}}^{\bm{G}}\oplus\h_0^{\bm{M}}$ from \eqref{eq:hM-aMG} to write $\mu=\mu_{\bm{M}}+\mu^{\bm{M}}$. 
\begin{prop}\label{thm:general-n-stronger}
Let $n\geqslant 1$. Let $\bm{M}\in\mathcal{L}$, $\bm{M}\neq\bm{G}$. When $\delta$ is the trivial character, we have
\[
J_{{\rm spec},\bm{M}}(\varepsilon_{K_1(\q)}\otimes f_R^{\delta,\mu})\ll_\epsilon e^{CR}\norm\mathfrak{q}^{n-1+\epsilon}(\log(1+\beta_{\bm{M}}(\mu^{\bm{M}}))^{2m_{\bm{M}}}\beta^{\bm{M}}_0(\mu^{\bm{M}}).
\]
\end{prop}

\begin{remark}\label{rem:n>3:implies}
Similarly to the $n=2$ case, we now show how the spherical case of Theorem \ref{spec-est} can be deduced from Proposition \ref{thm:general-n-stronger}. When $M=T_{0,\infty}$ we may rewrite Definition \ref{BetaMajorizer} as
\[
\beta_0(\mu)=\prod_{\alpha\in\Phi^{G_\infty,+}}\big(1+\big|\langle \mu, \alpha\rangle\big|\big)^{n_\alpha}.
\]
In this way, $\beta_0^{\bm{M}}(\mu^{\bm{M}})$ is given by the same expression as $\beta_0(\mu)$, but with $\Phi^{G_\infty,+}$ replaced by $\Phi^{\bm{M}_\infty,+}$. Recalling the definition \eqref{defn:beta_M}, it follows that $\beta_0(\mu)=\beta_0^{\bm{M}}(\mu^{\bm{M}})\beta_{\bm{M}}(\mu)$. It therefore suffices to verify that $\log(1+\beta_{\bm{M}}(\mu^{\bm{M}}))\ll \log(1+\beta_{\bm{M}}(\mu))$. For every $\beta\in \Phi^{\bm{G},+}\setminus \Phi^{\bm{M},+}$, let $\bm{M}_\beta<\bm{M}$ be the Levi subgroup on which $\beta$ is non-trivial, and $\bm{G}_\beta$ the smallest Levi subgroup properly containing $\bm{M}_\beta$. Then $\bm{M}_{\beta,\infty}$ contains $A_{0,\beta}=A_0\cap\bm{G}_{\beta,\infty}$, and we may consider $\Phi^{\bm{M}_{\beta,\infty}}=\Phi(A_{0,\beta},M_{\beta,\infty})$ as a subset of $\Phi^{\bm{G}_{\beta,\infty}}=\Phi(A_{0,\beta},\bm{G}_{\beta,\infty})$. With this notation, we may reorganize the $\alpha$ appearing in \eqref{defn:beta_M} according to the disjoint union over \textit{rational} roots $\beta\in\Phi^{\bm{G},+}$ not in $\Phi^{\bm{M},+}$ of the various complements $\Phi^{\bm{G}_{\beta,\infty},+}\setminus \Phi^{\bm{M}_{\beta,\infty},+}$ of archimedean roots. Thus
\begin{equation}\label{eq:log-beta0}
\log(1+\beta_{\bm{M}}(\mu))\asymp\log\bigg(2+\sum_{\substack{\beta\in \Phi^{\bm{G},+}\\ \beta\notin \Phi^{\bm{M},+}}}\big(\max_{\substack{\alpha\in \Phi^{\bm{G}_{\beta,\infty},+}\\ \alpha\notin\Phi^{\bm{M}_{\beta,\infty},+}}}|\langle\mu,\alpha\rangle|\big)\bigg).
\end{equation}
Moreover, $\log(1+\beta_{\bm{M}}(\mu^{\bm{M}}))$ verifies the same asymptotic as in \eqref{eq:log-beta0} but with $
\langle \mu,\alpha\rangle-\langle \mu^{\bm{M}_\beta},\beta\rangle$ in place of $\langle \mu,\alpha\rangle$. We conclude by arguing as in Remark~\ref{rem:n=2:implies} for each $\beta$ separately.
\end{remark}

\begin{proof}
The proof is by induction on $n$. For $n=1$ there is no continuous spectrum so Proposition  \ref{thm:general-n-stronger} is trivially true in that case. Now assume the result for $\GL_m$ for all $m<n$.

We adapt the discussion in \S\ref{sec:explicating-JMspec} to the present context. When $\delta$ is the trivial character of $T_{0,\infty}^1$ and $\tau(\pi_\delta)$ is the trivial $\bfk'$-type $\tau_0$, the condition $\underline{\delta_\pi}\prec\underline{\delta}$ implies $\delta_\pi=\delta$. Thus \eqref{bound-me} becomes
\[
\sum_{\pi\in\Pi_{\rm disc}(\bm{M}(\mathbb{A})^1)_{\underline{\delta}}}\int_{i(\mathfrak{a}_{\bm{L}_s}^{\bm{G}})^*}\! h_R^{\delta,\mu}(\nu_\pi+\lambda) {\rm tr}\big(\Delta_{\mathcal{X}_{\bm{L}_s(\underline{\beta})}}(\bm{P},\pi,\lambda)\mathcal{M}(\bm{P},s,\pi)\Pi_{K_1(\q),\tau_0}\big)\, {\rm d}\lambda.
\]
We apply the upper bound $\|A\|_1\leqslant \dim V\|A\|$  to deduce that the above expression is bounded by
\[
\sum_{\pi\in\Pi_{\rm disc}(\bm{M}(\mathbb{A})^1)_{\underline{\delta}}}\dim\mathcal{A}_\pi^2(\bm{P})^{K_1(\q), \tau_0}\int_{i(\mathfrak{a}_{\bm{L}_s}^{\bm{G}})^*}|h_R^{\delta,\mu}(\nu_\pi+\lambda)| \|\Delta_{\mathcal{X}_{\bm{L}_s(\underline{\beta})}}(\bm{P},\lambda)_{|\mathcal{A}^2_\pi(\bm{P})^{K_1(\q),\tau_0}}\| \, {\rm d}\lambda.
\]
We have omitted  $\mathcal{M}(\bm{P},s,\pi)$ from the norm $\|\cdot \|$, since it is a unitary operator commuting with $\Pi_{K_1(\q),\tau_0}$.

Following \cite[\S 6]{LapidMuller2009}, we now proceed to break up the sum-integral, so that $i{\rm Im}\,\nu_\pi+\lambda$ lies in $(1/R)$-balls centered at lattice points. Note that since $\pi\in\Pi_{\rm disc}(\bm{M}(\A_F)^1)_{\underline{\delta}}$, and $\underline{\delta}$ is represented by $(\delta,T_{0,\infty})$, the spectral parameter $\nu_\pi$ lies in $(\mathfrak{h}^{\bm{M}}_0)^*_\C$. As $\mathfrak{h}^{\bm{M}}_0$ is, by definition, orthogonal to $\mathfrak{a}_{\bm{M}}^{\bm{G}}$ it is orthogonal to $\mathfrak{a}_{\bm{L}}^{\bm{G}}$. We choose lattices $\Lambda\subset i(\mathfrak{h}^{\bm{M}}_0)^*$ and $\Gamma\subset i(\mathfrak{a}_{\bm{L}_s}^{\bm{G}})^*$ of covering radius $1/R$. We deduce that the previous display is bounded above by the sum over all $\mu_j\in\Lambda$ and $\nu_k\in\Gamma$ of
\[
\sum_{\substack{\pi\in\Pi_{\rm disc}(\bm{M}(\mathbb{A})^1)_{\underline{\delta}}\\ i{\rm Im}\, \nu_\pi\in B^{\bm{M}}(\mu_j,1/R)}}\dim\mathcal{A}_\pi^2(\bm{P})^{K_1(\q), \tau_0}\!\!\!\!\!\!\int\limits_{B(\nu_k,1/R)\cap i(\mathfrak{a}_{\bm{L}_s}^{\bm{G}})^*}\!\!\!\!|h_R^{\delta,\mu}(\nu_\pi+\lambda)| \|\Delta_{\mathcal{X}_{\bm{L}_s}(\underline{\beta})}(\bm{P},\lambda)_{|\mathcal{A}^2_\pi(\bm{P})^{K_1(\q),\tau_0}}\|\, {\rm d}\lambda.
\]
Note that Lemma \ref{aut-2-ind} implies, in particular, that only those $\pi$ which are spherical at infinity contribute non-trivially to the above sum.

We now use the Paley--Wiener estimate
\[
h_R^{\delta,\mu}(\nu_\pi+\lambda)\ll_N e^{R\|{\rm Re}(\nu_\pi)\|} \max_{w\in W(A_0)}(1+R\| i{\rm Im}\,\nu_\pi+\lambda-w\mu\|)^{-N},
\]
where $\lambda\in i(\mathfrak{a}_{\bm{L}_s}^{\bm{G}})^*$, as well as Lemma \ref{package}, to bound the contribution of the integral. We then execute the sum over $\Gamma$. For $N$ large enough, we obtain bound on \eqref{bound-me} of the form
\begin{equation}\label{bound-me5}
(1+\log\norm\q)^{2m_{\bm{L}_s}} \sum_{\mu_j\in\Lambda}\mathcal{L}_{N,R}(\mu_j,\mu) \sum_{\substack{\pi\in\Pi_{\rm disc}(\bm{M}(\mathbb{A})^1)_{\underline{\delta}}\\i{\rm Im}\, \nu_\pi\in B^{\bm{M}}(\mu_j,1/R)}}e^{R\|{\rm Re}(\nu_\pi)\|} \dim\mathcal{A}_\pi^2(\bm{P})^{K_1(\q), \tau_0},
\end{equation}
where, using $\mu_j^{\bm{L}_s}=\mu_j$,
\[
\mathcal{L}_{N,R}(\mu_j,\mu)=\max_{w\in W(A_0)}(1+R\|\mu_j-w\mu^{\bm{L}_s}\|)^{-N} (1+\log (1+\|\mu_j-w\mu^{\bm{L}_s}\|)+\log(1+\beta_{\bm{M}}(\mu_j)))^{2m_{\bm{L}_s}}.
\]
From Lemma \ref{aut-2-ind} it follows that if $\pi\simeq\pi_1\otimes\cdots\otimes\pi_m$ contributes non-trivially to \eqref{bound-me5} then $\prod_k\q_k\mid\q$. Writing $\mu_j=\sum_k \mu_{j,k}$, and applying the triangle inequality to $\|{\rm Re}(\nu_\pi)\|=\|\sum_k{\rm Re}(\nu_{\pi_k})\|$, the last sum is then majorized by
\[
(1+\log\norm\q)^n\sum_{\q_1\cdots\q_m\mid\q}\prod_{k=1}^mD_R^{\GL_{n_k}}(\q_k,\underline{\delta}_k,\mu_{j,k}),
\]
where $\delta_k$ is the trivial character on the diagonal torus of $\GL_{n_k}(F_\infty)$. It follows from the induction hypothesis that the spherical case of Theorem \ref{money-cor} holds for $\GL_m$, establishing Property (ELM) in that case. This then renders Proposition \ref{comp-mu} for $\GL_m$ unconditional when $\delta$ is the trivial character of $T_{0,\infty}$. We conclude that there is $C>0$ such that
\[
D_R^{\GL_{n_k}}(\q_k,\underline{\delta}_k,\mu_{j,k})\ll e^{CR}\norm\q_k^{n_k}\beta^{\GL_{n_k}}_0(\mu_{j,k}).
\]
Summing over all $\q_k$, using $n_k\leqslant n-1$ and the standard divisor bound, we deduce that \eqref{bound-me5} is bounded by
\[
e^{CR}\norm\q^{n-1+\epsilon}\sum_{\mu_j\in\Lambda}\; \mathcal{L}_{N,R}(\mu_j,\mu) \beta^{\bm{M}}_0(\mu_j).
\]
The above sum over $\Lambda$ has rapid decay away $\mu_j\in W\mu^{\bm{L}_s}$, and is therefore dominated by $(\log(1+\beta_{\bm{M}}(\mu^{\bm{L}_s}))^{2m_{\bm{L}_s}}\beta^{\bm{M}}_0(\mu^{\bm{L}_s})$. This completes the proof.
\end{proof}

\bibliographystyle{amsplain}
\bibliography{ccf-references}

\end{document}